\DeclareRobustCommand*{\arxiv}[1]{\href{http://www.arxiv.org/abs/#1}{\nolinkurl{arXiv:#1}}}
\DeclareRobustCommand*{\doi}[1]{\href{http://dx.doi.org/#1}{\nolinkurl{doi: #1}}}
\DeclareRobustCommand*{\surname}[1]{\textsc{#1}}
\def\town#1{\def\@town{#1}}               \def\@town{???}
\def\refereea#1{\def\@refa{#1}}           \def\@refa{???}
\def\refereeb#1{\def\@refb{#1}}           \def\@refb{???}
\def\submitteddate#1{\def\@subdate{#1}}   \def\@subdate{???}
\def\submittedyear#1{\def\@subyear{#1}}   \def\@subyear{\number\year}
\def\examinationdate#1{\def\@exdate{#1}}  \def\@exdate{???}
\def\programm#1{\def\@prog{#1}}           \def\@prog{???}
\def\publicationyear#1{\def\@pubyear{#1}} \def\@pubyear{\number\year}
\def\isbn#1{\def\@isbn{#1}}               \def\@isbn{???}
\renewcommand*{\maketitle}{%
\begin{titlepage}
   \vspace*{20mm}
   \begin{center}
   {\Huge\bfseries\@title\par}
   \vspace{30mm}
   {\large Dissertation\\[1mm]
    zur Erlangung des mathematisch-naturwissenschaftlichen Doktorgrades\\[1mm]
    ,,Doctor rerum naturalium``\\[1mm]
    der Georg-August-Universit\"at G\"ottingen\\[5mm]
    im Promotionsprogramm \@prog\\[1mm]
    der Georg-August University School of Science (GAUSS)
    \vfill
    vorgelegt von\\[2mm]
    {\Large\bfseries\@author}\\[2mm]
    aus \@town
    \par\vspace{30mm}
    G\"ottingen,\space\@subyear}
    \end{center}
\end{titlepage}
\newpage
\thispagestyle{empty}
\vspace*{100mm}
    \noindent {\large
    Betreuungsausschuss\\[2mm]
    \@refa{} (Mathematisches Institut)\\[1mm]
    \@refb{} (Mathematisches Institut)\\[5mm]
    Mitglieder der Prüfungskommission\\[2mm]
    Referentin: \@refa\\[1mm]
    Korreferent: \@refb\\[2mm]
    Weitere Mitglieder der Prüfungskommission:\\[2mm]
    Dr. Christian \surname{Blohmann} (Max-Planck-Institut f\"ur Mathematik)\\[1mm]
    Prof. Dr. Karl-Henning \surname{Rehren} (Institut f\"ur Theoretische Physik)\\[1mm]
    Prof. Dr. Thomas \surname{Schick} (Mathematisches Institut)\\[1mm]
    Prof. Dr. Max \surname{Wardetzky} (Institut f\"ur Num. und Angew. Mathematik)\\[5mm]
    Tag der m\"undlichen Pr\"ufung: \@exdate\par}
}
\def\cleardoublepage{\clearpage\if@twoside\ifodd\c@page\else
   \hbox{}\thispagestyle{empty}\newpage
   \if@twocolumn\hbox{}\newpage\fi\fi\fi}
\def\@makechapterhead#1{\vspace*{50\p@}{\parindent\z@\raggedright
   \normalfont\interlinepenalty\@M\Huge\bfseries{%
   \@hangfrom{\hskip\z@\relax\thechapter\hskip2ex}#1\@@par}
   \par\nobreak\vskip40\p@}}
\tikzset{snake arrow/.style={->,decorate,decoration={snake,amplitude=.4mm,segment length=2mm,post length=1mm}}}
\tikzset{snake back/.style={<-,decorate,decoration={snake,amplitude=.4mm,segment length=2mm,pre length=1mm}}}
\tikzset{mydot/.style={draw,circle, inner sep=1.5pt}}
\numberwithin{equation}{chapter}
\theoremstyle{plain}
\newtheorem{theorem}{Theorem}[chapter]
\newtheorem{proposition}[theorem]{Proposition}
\newtheorem{lemma}[theorem]{Lemma}
\newtheorem{corollary}[theorem]{Corollary}
\theoremstyle{definition}
\newtheorem{definition}[theorem]{Definition}
\newtheorem{example}[theorem]{Example}
\newtheorem{assumption}[theorem]{Assumption}
\theoremstyle{remark}
\newtheorem{remark}[theorem]{Remark}
\setlist[itemize]{
  align=left,     
  labelsep=*,     
  leftmargin=*,   
  topsep=1mm,     
  itemsep=0mm     
}
\setlist[enumerate]{
  align=left,       
  labelsep=*,       
  leftmargin=1.7em,   
  topsep=1mm,       
  itemsep=0mm       
}
\setlist[enumerate,1]{label=\textup{(\roman*)}}
\setlist[enumerate,2]{label=\textup{(\alph*)}}
  \let\oldref\ref
  \renewcommand{\ref}[1]{\textup{\oldref{#1}}}%
\renewcommand{\mkbibbrackets}[1]{\textup{[}#1\textup{]}}
\DeclareCiteCommand{\cite}[\mkbibbrackets]
  {\usebibmacro{cite:init}%
   \usebibmacro{prenote}}
  {\textup{\usebibmacro{citeindex}%
   \usebibmacro{cite:comp}}}
  {}
  {\usebibmacro{cite:dump}%
   \usebibmacro{postnote}}
\def\nbdash{\nobreakdash-\penalty0\hskip0pt\relax}
\newcommand{\bR}{\mathbb{R}}
\newcommand{\bZ}{\mathbb{Z}}
\newcommand{\bN}{\mathbb{N}}
\newcommand{\SAlg}{\mathsf{SAlg}}
\newcommand{\even}{\ensuremath{\mathrm{e}}}
\newcommand{\odd}{\ensuremath{\mathrm{o}}}
\newcommand{\reduce}[1]{\ensuremath{\tilde{#1}}}
\DeclarePairedDelimiter\parity{\lvert}{\rvert}
\DeclarePairedDelimiter\oset{\langle}{\rangle} 
\DeclareMathOperator{\bhom}{\boldsymbol{\hom}}
\DeclareMathOperator{\im}{im}
\DeclareMathOperator{\Nil}{Nil}
\DeclareMathOperator{\card}{\#}                
\newcommand{\ul}[1]{\underline{#1}}            
\DeclareMathOperator{\Hom}{Hom}
\DeclareMathOperator{\Fun}{Fun}
\DeclareMathOperator{\id}{id}
\DeclareMathOperator{\pr}{pr}
\DeclareMathOperator{\ev}{ev}
\DeclareMathOperator*{\colim}{colim}
\newcommand{\bD}{\boldsymbol{\Delta}}        
\DeclareMathOperator{\face}{\mathsf{d}}      
\DeclareMathOperator{\de}{\mathsf{s}}        
\DeclareMathOperator{\tr}{tr}                
\DeclareMathOperator{\Kan}{Kan}              
\DeclareMathOperator{\Acyc}{Acyc}            
\DeclareMathOperator{\sk}{sk}                
\DeclareMathOperator{\cosk}{cosk}            
\DeclareMathOperator{\dec}{dec}              
\DeclareMathOperator{\Dec}{Dec}              
\DeclareMathOperator{\Sp}{Sp}                
\newcommand{\Fib}[1]{\mathrm{Fib}(#1)}       
\newcommand*{\Simp}[1]{{\Delta^{#1}}}        
\newcommand\Simp*[2][*]{{\Delta^{#2}_{#1}}}
\newcommand*{\Horn}[2]{{\Lambda^{#1}_{#2}}}  
\newcommand*{\horn}[2]{{\lambda^{#1}_{#2}}}  
\newcommand{\Cat}[1][C]{\mathscr{#1}}
\newcommand{\initial}{\boldsymbol{0}}        
\newcommand{\terminal}{\boldsymbol{1}}       
\newcommand{\op}{{\mathrm{op}}}              
\newcommand{\blank}{\mathord{-}}             
\newcommand{\covers}{\mathcal{T}}            
\newcommand{\etale}{\textup{\'et}}           
\newcommand{\open}{\textup{open}}            
\newcommand{\subm}{\textup{subm}}            
\newcommand{\surj}{\textup{surj}}            
\DeclareMathOperator{\Psh}{\mathsf{Psh}}     
\newcommand{\Mfd}{\mathsf{Mfd}}              
\newcommand{\Topo}{\mathsf{Top}}             
\newcommand{\SMfd}{\mathsf{SMfd}}            
\newcommand{\Cats}{\mathsf{Cats}}            
\newcommand{\Sets}{\mathsf{Sets}}            
\newcommand{\SSet}{\mathsf{SSet}}            
\newcommand{\biSSet}{\mathsf{biSSet}}        
\newcommand{\Bun}{\mathsf{Bun}}              
\newcommand{\BUN}{\mathsf{BUN}}              
\newcommand{\Gpd}{\mathsf{Gpd}}              
\newcommand{\GPD}{\mathsf{GPD}}              
\newcommand{\Gen}{\mathsf{Gen}}              
\newcommand{\GEN}{\mathsf{GEN}}              
\newcommand{\ANA}{\mathsf{ANA}}              
\newcommand{\FinSet}{\mathsf{FinSet}}        
\newcommand{\SCat}{\mathsf{S}\mathscr{C}}    
\newcommand{\biSCat}{\mathsf{biS}\mathscr{C}}
\DeclareMathOperator{\pt}{pt}                
\DeclareMathOperator{\Bund}{\mathfrak{B}}    
\newcommand{\trivial}[1]{{\underline{#1}}}   
\newcommand{\unit}{{\mathsf{u}}}             
\newcommand{\source}{{\mathsf{s}}}           
\newcommand{\target}{{\mathsf{t}}}           
\newcommand{\inv}{{\mathsf{i}}}              
\newcommand{\mult}{{\mathsf{m}}}             
\newcommand{\action}{{\mu}}                  
\newcommand{\arrow}[1]{\mathrm{\Theta(#1)}}  
\newcommand{\bigon}[1]{B(#1)}                
\title{Higher Groupoid Actions, Bibundles,\\ and Differentiation}
\author{Du \surname{Li}}
\begin{document}

\frontmatter
\pdfbookmark[0]{Title}{title}
\maketitle

\newpage
\thispagestyle{plain}
\begin{center}
\vspace*{20mm}
{\Huge \bf Abstract}
\phantomsection
\addcontentsline{toc}{chapter}{Abstract}

\vspace*{15mm}
\begin{minipage}[c]{125mm}
{\setlength{\parindent}{2em}
In this thesis, we employ simplicial methods to study actions, principal bundles, and bibundles of higher groupoids. Roughly, we use Kan fibrations to model actions of higher groupoids; we use pairs of a Kan fibration and a special acyclic fibration to model principal bundles of higher groupoids; we use inner Kan fibrations over the interval to model bibundles of higher groupoids. In particular, we show that our definitions given by the simplicial method agree with those given by the categorification approach to actions, principal bundles, and bibundles of 2\nbdash{}groupoids.}

{\setlength{\parindent}{2em}
In addition, we use the simplicial technique to prove a theorem on differentiation of higher Lie groupoids, which shows that the differentiation functor sends a higher Lie groupoid to a higher Lie algebroid.}
\end{minipage}
\end{center}
%



\cleardoublepage
\thispagestyle{plain}
\begin{center}
\vspace*{20mm}
{\huge \bf Acknowledgments}
\phantomsection
\addcontentsline{toc}{chapter}{Acknowledgments}

\vspace*{15mm}
\begin{minipage}[c]{125mm}
{\setlength{\parindent}{2em}
I would first like to express my sincere gratitude to my advisor Chenchang \surname{Zhu} for her encouragement, guidance, and inspiration during my graduate studies. I am also greatly indebted to Ralf \surname{Meyer} for his advice and assistance in writing this thesis.} 

{\setlength{\parindent}{2em}
My special thanks go to my M.S.\ supervisor at Peking University, \surname{Liu} Zhangju, for supporting me in pursuing a Ph.D. career in Göttingen.}

{\setlength{\parindent}{2em}
I am very thankful to the Courant Research Centre ``Higher Order Structures in Mathematics'' and Mathematisches Institut at Georg-August-Universität Göttingen, which supported my study during the last four years.}

{\setlength{\parindent}{2em}
I am glad to acknowledge the following persons for helpful discussions, suggestions, and comments: Christian \surname{Blohmann}, \surname{Liu} Bei, Weiwei \surname{Pan}, Christopher L. \surname{Rogers}, \surname{Zheng} Jiguang.}

{\setlength{\parindent}{2em}
I am indebted to the following persons for useful discussions via email, at \href{http://math.stackexchange.com}{Mathematics Stack Exchange}, or at \href{http://mathoverflow.net}{MathOverflow}: David \surname{Carchedi}, David \surname{Roberts}, Chris \surname{Schommer-Pries}, and Laura \surname{Scull} about groupoids; Zhen Lin \surname{Low} and Michał R. \surname{Przybyłek} about extensive categories.}

{\setlength{\parindent}{2em}
My appreciation also go to Suliman \surname{Albandik}, Martin \surname{Callies}, Malte \surname{Dehling}, Rohit \surname{Holkar}, George \surname{Nadareishvili} for their willingness to share their mathematical knowledge with me.}

{\setlength{\parindent}{2em}
I am grateful to Tathagata \surname{Banerjee}, \surname{Qi} Zhi, \surname{Wu} Xiaolei and \surname{Xiao} Guohui for their linguistic help and other suggestions. I thank \surname{Nguyen} Thi Thu Huong and Sutanu \surname{Roy} for sharing their \LaTeX{} templates with me.}

{\setlength{\parindent}{2em}
I wish to thank \surname{Nan} Xi for the continued encouragement. I also thank all of my friends for their help during my stay in Göttingen.}

{\setlength{\parindent}{2em}
Finally, I would like to thank my parents and sisters for their love, understanding, and support. Without them, I would never have been able to continue my education.}

{\setlength{\parindent}{2em}
This dissertation is dedicated to the memory of my dear maternal grandmother.}
\end{minipage}
\end{center}

\cleardoublepage
\phantomsection
\pdfbookmark{\contentsname}{toc}
\tableofcontents

\mainmatter
\setcounter{chapter}{-1}

\chapter{Introduction}\label{chap0}
\thispagestyle{empty}

The purpose of this thesis is twofold. First, we use the simplicial method to study actions and bibundles of higher groupoids. In particular, we compare the results for 2\nbdash{}groupoids given by the simplicial approach and those given by the usual categorification approach. Second, we use the simplicial technique to study differentiation of higher Lie groupoids.

\section{Simplicial sets and categories}
The simplicial approach is one of the most successful approaches to higher categories. The notion of quasi-categories, introduced by Boardman and Vogt~\cite{Boardman-Vogt} under the name of weak Kan complexes, is a simplicial model of \((\infty,1)\)-categories. Quasi-categories are further extensively studied by Joyal~\cite{Joyal2002,Joyal2008} and Lurie~\cite{Lurie}, among others.

A simplicial set \(X\) is said to satisfy the condition \(\Kan(m,k)\) (\(\Kan!(m,k)\)) if every horn \(\Horn{m}{k}\to X\) has a (unique) lift \(\Simp{m}\to X\); in other words, the canonical map
\begin{equation}\label{intro:eq:kan-map}
\hom(\Simp{m}, X)\to \hom(\Horn{m}{k}, X)
\end{equation}
is surjective (bijective). The horn lifting conditions play a key role in the simplicial approach to higher categories.

To illustrate, let us begin with usual categories. The nerve of a category \(\Cat\) is a simplicial set \(N\Cat\) given by
\[
[n]\mapsto \hom([n], \Cat).
\]
The nerve \(N\Cat\) satisfies \(\Kan!(m, k)\) for \(m>1\) and \(0< k<m\). Unraveling the definition, \(\Kan!(2,1)\) means that \(\Cat\) has a unique composition for composable pairs of morphisms, and \(\Kan!(3,1)\) or \(\Kan!(3,2)\) means that the composition is associative. Conversely, a simplicial set satisfying the above Kan conditions is the nerve of a category. Furthermore, groupoids correspond to simplicial sets satisfying \(\Kan!(m,k)\) for \(m>1\) and \(0\le k\le m\).

Similarly, a \((2,1)\)-category has a geometric nerve which is a simplicial set satisfying \(\Kan(m, k)\) for \(m> 1\) and \(0<k<m\) and \(\Kan!(m, k)\) for \(m>2\) and \(0<k<m\). Moreover, Duskin~\cite{Duskin02} proved that the converse is also true and that 2\nbdash{}groupoids correspond to simplicial sets satisfying \(\Kan(m, k)\) for \(m\ge 1\) and \(0\le k\le m\) and \(\Kan!(m, k)\) for \(m>2\) and \(0\le k\le m\).

Along these lines, a quasi-category is a simplicial set satisfying all inner Kan conditions. If, in addition, the unique Kan conditions are satisfied above dimension \(n\), then we get a notion of \(n\)-categories. As Joyal advocates, ``most concepts and results of category theory can be extended to quasi-categories.''

\section{2-Groupoids}

Groupoids internal to a category with a singleton Grothendieck pretopology (internal groupoids) can also be given by simplicial objects. For simplicity, let us consider Lie groupoids, which are groupoids in the category of manifolds equipped with surjective submersions as covers. The Kan conditions in this case ask the map~\eqref{intro:eq:kan-map} to be a surjective submersion or a diffeomorphism. This leads to an alternative characterisation of Lie groupoids and a definition of higher Lie groupoids~\cite{Henriques}.

There is a subtlety about the equivalence between the usual 2\nbdash{}groupoids and those given by simplicial sets. Given a simplicial set satisfying appropriate Kan conditions, to construct a 2\nbdash{}groupoid, we must choose a composition for composable pairs of 1\nbdash{}morphisms. There is no problem for sets, but we cannot do this for other categories, like manifolds. Zhu~\cite{Zhu:ngpd} observed that the composition can be canonically constructed as an HS bibundle between Lie groupoids (see below) instead of a functor. Moreover, there is a one-to-one equivalence between Lie 2\nbdash{}groupoids given by simplicial manifolds and categorified Lie groupoids (stacky Lie groupoids in~\cite{Zhu:ngpd}).

More precisely, a categorified Lie groupoid \(G\rightrightarrows \trivial{M}\) consists of a manifold of objects~\(M\) and a Lie groupoid of arrows \(G\) with HS bibundles: source and target \(\source,\target\colon G\to \trivial{M}\), unit \(\unit\colon \trivial{M}\to G\), inverse \(\inv\colon G\to G\) and multiplication \(G\times_{\source\trivial{M},\target} G\to G\); the unitality and associativity equalities are relaxed to isomorphisms of HS bibundles, and these isomorphisms as part of the structure satisfy certain coherence conditions.

\section{Actions of 2-groupoids}

Let us recall that the action of a groupoid \(X\) has three formulations:
\begin{enumerate}
  \item a set \(Y_0\) with a moment map \(J\colon Y_0\to X_0\) and an action map \(\action\colon Y_0\times_{X_0} X_1\to Y_0\) satisfying the unitality and associativity laws;
  \item a discrete fibration of groupoids \(Y\to X\), whose fiber is a trivial groupoid;
  \item a functor \(X\to \Sets\);
\end{enumerate}
We pass from formulation (i) to (ii) by taking the action groupoid, from (iii) to (ii) by the Grothendieck construction. The third formulation is the simplest one. Unfortunately, it does not apply to actions of groupoids with extra structures (like Lie groupoid actions on manifolds).

Along these lines, we should have three approaches to actions of 2\nbdash{}groupoids.

For the first approach, we follow the idea of categorification. A categorified groupoid action of \(G\rightrightarrows \trivial{M}\) on a groupoid \(E\) consists of a moment morphism \(E\to \trivial{M}\) and an action morphism, which is given by an HS bibundle \(\action\colon E\times_{\trivial{M}}G\to E\); the action satisfies the unitality and associativity laws up to isomorphisms of HS bibundles, and these isomorphisms satisfy certain coherence conditions.

The second formulation now becomes a Kan fibration between 2\nbdash{}groupoids, whose fiber is a 1\nbdash{}groupoid. The third approach does not apply to 2\nbdash{}groupoids with extra structures (like Lie 2\nbdash{}groupoids). Nevertheless, the Grothendieck construction suggests an action 2\nbdash{}groupoid construction linking the first and the second approaches. Our first main theorem establishes the equivalence between these two approaches to 2-groupoid actions.

Among groupoid actions, the principal bundles are of particular interest. A principal bundle \(Y_0\to N\) of a groupoid \(X\) is given by an \(X\)-action on \(Y_0\) such that the map \(Y_0\to N\) is invariant and that the shear map
\[
Y_0\times_{X_0} X_1\to Y_0\times_N Y_0, \quad (y, x)\mapsto (y, y\cdot x),
\]
is an isomorphism. This condition is equivalent to \(Y\to \sk_0 N\) being acyclic in the simplicial language, where \(Y\) is the nerve of the action groupoid. Both approaches can be generalised to 2\nbdash{}groupoids, and our second main theorem shows that the results are equivalent.

Our simplicial approach to actions and principal bundles is closely related to the work of Duskin~\cite{Duskin} and Glenn~\cite{Glenn:Realization}; partial results are obtained by Bakovi\'{c}~\cite{Bakovic:Bigroupoid_torsors}. Special cases of the categorification approach are studied in~\cite{Nikolaus-Waldorf:Four_gerbes, Bakovic:Bigroupoid_torsors}.

\section{Bibundles of 2-groupoids}

Bibundles (or bimodules) between groupoids \(X\) and \(Y\) also have three formulations:
\begin{enumerate}
  \item a set with a left action of \(X\) and a right action of \(Y\) subject to certain conditions;
  \item a category over the interval category \(\{0\leftarrow 1\}\) such that the two ends are \(X\) and~\(Y\);
  \item a functor \(Y^\op \times X\to \Sets\);
\end{enumerate}

The cograph construction links the first formulation and the second. The third formulation, also known as a profunctor~\cite{Benabou:dist}, is the simplest, in which the composition of bibundles can be simply written as a coend; see, for instance, \cite{Cattani-Winskel}.

The third approach may not work for internal groupoids (like Lie groupoids); the composition may not be well-behaved in general. For certain internal groupoids (like Lie groupoids), the composition of right principal bibundles (or HS bibundles) works well. There is a weak 2\nbdash{}category~\(\BUN\) whose objects are internal groupoids, 1\nbdash{}morphisms are HS bibundles, and 2\nbdash{}morphisms are isomorphisms of HS bibundles. This 2\nbdash{}category plays a key role in the study of groupoids and 2\nbdash{}groupoids; see, for instance, \cite{Moerdijk-Mrcun2003,Moerdijk-Mrcun2005,Mrcun,Schommer-Pries,Blohmann}.

Let us now consider bibundles of 2\nbdash{}groupoids. It is straightforward to generalise the first formulation above by categorification. The second formulation now becomes a simplicial object over the simplicial interval~\(\Simp{1}\) that is an inner fibration satisfying certain additional Kan conditions. Our third main theorem shows that these two approaches are equivalent.

We then construct the composition of two right principal bibundles of 2\nbdash{}groupoids. Our construction is a higher analogue of the  composition of HS bibundles of groupoids. It is closely related to 2\nbdash{}categorical coends.

We emphasise that we are working with higher groupoids in a category with a Grothendieck pretopology satisfying further appropriate conditions. These include higher Lie groupoids and higher topological groupoids as special cases.

\section{Differentiation of higher Lie groupoids}

The infinitesimal analogue of Lie groups are Lie algebras, which are sometimes called infinitesimal groups. For a Lie group the differentiation procedure gives the associated Lie algebra. Generalisations of this classical fact are well-known for Lie groupoids, crossed modules of Lie groups, and simplicial Lie groups.

Higher Lie groupoids are simplicial manifolds satisfying certain Kan conditions. Two natural questions then arise: what is a higher Lie algebroid as the infinitesimal analogue of a higher Lie groupoid and how to associate a higher Lie algebroid with a higher Lie groupoid?

\v{S}evera proposed an answer to these two questions~\cite{Severa05,Severa06}. Recall that a Lie algebra can be equivalently given by its Chevalley--Eilenberg cochain complex, which is a differential graded commutative algebra (DGCA). For a general DGCA, denote its degree operator by \(N\) (that is, the Euler vector \(Nx=\parity{x}x\)) and the differential by \(Q\).  Then the following commutation relations hold
\[
  [N, N]=0,\quad [N, Q]=Q, \quad [Q, Q]=0.
\]
As observed by Kontsevich~\cite{Kontsevich}, a DGCA structure thus can be regarded as a representation of the super Lie algebra \(\bR\ltimes D\) (or the monoid \(D^D\)), where \(D\) is the odd line.

A higher Lie algebroid, or NQ-manifold introduced in~\cite{Severa06}, is a supermanifold \(M\) equipped with an action of \(D^D\) (satisfying an additional compatibility condition which is not important for the moment). Such an action is essentially the same as a DGCA structure on the algebra of functions on~\(M\).

Let \(X\) be a Lie \(n\)\nbdash{}groupoid. \v{S}evera's differentiation of \(X\), introduced in~\cite{Severa06}, is the presheaf on the category of supermanifolds
\[
\bhom(P,X)\colon T\mapsto \hom(P\times T, X),
\]
where \(P\) is the nerve of the pair groupoid of~\(D\). This presheaf can be informally thought of as the space of all infinitesimal lines in \(X\). If this presheaf is representable, then we get a higher Lie algebroid associated with a higher Lie groupoid. The proof of the representability of this presheaf given in~\cite{Severa06} is incomplete and contains some mistakes. We present a detailed proof of this claim in the last chapter.

\section{Structure of the thesis}

The content of this thesis is organised as follows.

Chapter 1 reviews the theory of Lie groupoids. We introduce the notions of Lie groupoid actions, principal bundles, and bibundles, which we intend to generalise to higher groupoids. We also establish three equivalent 2\nbdash{}categories of Lie groupoids, of which the one given by HS bibundles will be used to defined 2\nbdash{}groupoids.

Chapter 2 collects some basic facts about simplicial sets. The lifting properties of simplicial sets will be central to our approach to higher groupoids and 2\nbdash{}groupoids. Skeleton and coskeleton functors are introduced. Collapsible extensions of simplicial sets are also studied for later use.

In Chapter 3, we define higher groupoid actions and principal bundles. We first introduce higher groupoids in a category with a Grothendieck pretopology, which are the main objects of study in this thesis. Kan fibrations and acyclic fibrations of higher groupoids are then introduced. A higher action is defined as a Kan fibration of higher groupoids (Definition~\ref{chap3:def:higher-groupoid-action}); a higher principal bundle is a Kan fibration together with a special acyclic fibration (Definition~\ref{chap3:def:higher-principal-bundles}). As an example of higher groupoid action and principal bundle, the décalage is studied (Examples~\ref{chap3:exa:decalage} and \ref{chap3:exa:decalage-principal}).

Chapter 4 is concerned with bibundles between higher groupoids. The theory of bimodules of categories is reviewed in the first section. Bibundles between two higher groupoids are defined as simplicial objects over the simplicial interval that satisfy appropriate Kan conditions; bibundles that are right or left principal and two-sided principal (Morita equivalent) are further characterised (Definition~\ref{chap4:def:bibundles=inner-over-I}). In particular, a right principal bibundle, called cograph (bundlisation), is constructed for a higher groupoid morphism. This bibundle is two-sided principal for an acyclic fibration (Proposition~\ref{chap4:prop:cograph-acyclic-is-kan}).

In Chapter 5, we study actions of 2\nbdash{}groupoids. Categorification approach to 2-groupoids and its relation with the simplicial approach are first recalled. We then define 2\nbdash{}groupoid actions on groupoids and 2\nbdash{}bundles by categorifying the notions of groupoid actions and bundles. We show that they are equivalent to those given by the simplicial approach to 2\nbdash{}groupoid actions and 2\nbdash{}bundles (Theorems~\ref{chap5:thm:2gpd-action-Kan} and~\ref{chap5:thm:princiapl-2-bundle-cat=simp}).

Chapter 6 studies bibundle of 2\nbdash{}groupoids. We first define bibundles between two 2\nbdash{}groupoids by categorifying the notion of groupoid bibundles. The equivalence between the categorification approach and the simplicial approach to 2\nbdash{}groupoid bibundles is established (Theorem~\ref{chap6:thm:bibundle-cat=simplicial}). Weak equivalences between 2\nbdash{}groupoids are then discussed. We show that they are weak acyclic fibrations in the simplicial picture (Proposition~\ref{chap6:prop:weak-equi=weak-acyc}), hence a 2-groupoid morphism is a weak equivalence if and only if its cograph gives a two-sided principal bibundle (Morita equivalence). Some examples are considered in Section~\ref{chap6:sec:example}.

Chapter 7 concentrates on the composition of bibundles between 2\nbdash{}groupoids. This is a 2\nbdash{}groupoid analogue of the composition of HS bibundles of groupoids. We also show that the bundlisation respects compositions and that the bibundle composition satisfies unitality and associativity laws up to certain equivalences (Propositions~\ref{chap6:prop:bibundle-functorial}, \ref{chap6:prop:2gpd-bibundle-comp-unit}, and~\ref{chap6:prop:2gpd-bibundle-comp-ass}).

Chapter 8 deals with differentiation of higher Lie groupoids. Our main aim is to give a complete and detailed proof of \v{S}evera's representability theorem (Theorem~\ref{chap7:thm:rep}). Some preparation on supermanifolds and NQ-manifolds are provided before proving the theorem. Additionally, three special cases of \v{S}evera's differentiation are worked out in detail. It turns out that the results coincide with the usual notion of differentiation.

\section{Categorical conventions}\label{chap0:sec:cat-convention}

Basic category and 2\nbdash{}category theory are assumed; see, for instance, \cite{MacLane,Benabou67}.

All 2\nbdash{}categorical concepts are weak ones by default. By 2\nbdash{}categories we mean weak 2\nbdash{}categories or bicategories. Indeed, all 2\nbdash{}categories in the following will be \((2,1)\)\nbdash{}categories with invertible 2\nbdash{}morphisms. Similarly, 2\nbdash{}functors mean pseudofunctors or homomorphisms, 2\nbdash{}limits mean bilimits or pseudolimits, and so on. Strict 2\nbdash{}categories are explicitly specified if necessary.

We will often need to speak of limits of some diagrams before knowing that they exist. To this end, we use the Yoneda embedding
\[
  h \colon \Cat \to [\Cat^\op, \Sets]
\]
to embed any category~\(\Cat\) into a complete category. Thus any diagram in~\(\Cat\) has a presheaf on~\(\Cat\) as a limit. It has a limit in~\(\Cat\) if and only if this presheaf is representable by some object of~\(\Cat\). Dually, we use the contravariant Yoneda embedding to embed~\(\Cat\) into the category of copresheaves if we deal with colimits.

\chapter{Lie Groupoids}\label{chap1}
\thispagestyle{empty}

In this chapter, we review the theory of Lie groupoids. After giving the definition and some first examples, we introduce some basic constructions with Lie groupoids. Three 2\nbdash{}categories of Lie groupoids are then established, one by fractions with respect to weak equivalences, one by anafunctors, and one by HS bibundles. In the end, we show that these three 2\nbdash{}categories are equivalent.

The 2\nbdash{}category given by HS bibundles will be the most important one in the following, so we devote more attention to the theory of HS bibundles.

\section{Definition and first examples}

We assume that the reader is familiar with basic category theory; see, for instance, \cite[Chapter I--V]{MacLane}. For groupoids and Lie groupoids, see~\cite{Mrcun, Moerdijk-Mrcun2003,Moerdijk-Mrcun2005}, which we follow closely.

\begin{definition}\label{chap1:def:groupoid}
  A \emph{groupoid} is a small category in which every arrow is invertible. A \emph{functor} between two groupoids is a functor between the underlying categories. A \emph{natural transformation} between two groupoid functors is a natural transformation between the underlying functors of categories.
\end{definition}

More explicitly, a groupoid \(G\) consists of the following data:
\begin{itemize}
  \item two sets: the \emph{set of object} \(G_0\) and the \emph{set of arrows} \(G_1\),
  \item five structure maps: \emph{source} and \emph{target} \(\source, \target\colon G_1\to G_0\), \emph{unit} \(\unit\colon G_1\to G_0\), \emph{inverse} \(\inv\colon G_1\to G_1\), \emph{composition} \(\circ\colon G_1\times_{\source, G_0, \target} G_1\to G_1\),
\[
\xymatrix{
G_1\times_{G_0} G_1\ar[r]^-{\circ} & G_1\ar@(dr,dl)[]^{\inv}\ar@<0.5ex>[r]^{\source} \ar@<-0.5ex>[r]_{\target}& G_0\ar@(dl, dr)[l]^{\unit}
}.
\]
\end{itemize}
These maps satisfy a series of identities:
\begin{itemize}
  \item source and target relations
  \[
  \xymatrix{
  G_0 \ar[dr]_{\unit}\ar[r]^{=}\ar[d]_{=} & G_0\\
  G_0 & G_1\ar[l]^{\target}\ar[u]_{\source}\rlap{\ ,}
  }
  \qquad
  \xymatrix{
  G_1\ar[rd]^{\inv}\ar[r]^{\source}\ar[d]_{\target} & G_0\\
  G_0 & G_1\ar[u]_{\target}\ar[l]^{\source}\rlap{\ ,}
  }
  \qquad
  \xymatrix{
  G_1\ar[d]_{\target} &G_1\times_{G_0} G_1\ar[d]^{\circ}\ar[l]_-{\pr_1}\ar[r]^-{\pr_2}& G_1 \ar[d]^{\source}\\
  G_0 &G_1\ar[l]_{\target}\ar[r]^{\source} & G_0\rlap{\ ,}
  }
  \]
  \item left and right unit identities
  \[
  \xymatrix{
  & G_1\times_{G_0} G_1\ar[d]^{\circ} & \\
  G_1\ar[ur]^-{(\id,\unit\circ\source)}\ar[r]^{=}    & G_1 & G_1\ar[ul]_-{(\unit\circ \target, \id)}\ar[l]_{=}\rlap{\ ,}
  }
  \]
  \item associativity of composition
  \[
  \xymatrix{
  G_1\times_{G_0} G_1\times_{G_0} G_1\ar[r]^-{(\id,\circ)}\ar[d]_{(\circ, \id)} &G_1 \times_{G_0} G_1\ar[d]^{\circ}\\
  G_1\ar[r]_{\circ} \times_{G_0} G_1 & G_1\rlap{\ ,}
  }
  \]
  \item inverse identities
  \[
  \xymatrix{
   & G_1\times_{G_0} G_1\ar[d]_{\circ} &\\
  G_1\ar[r]^{\unit\circ \target}\ar[ur]^{(\id,\inv)} & G_1 & G_1\ar[l]_{\unit\circ \source} \ar[ul]_{(\inv, \id)}\rlap{\ .}
  }
  \]
\end{itemize}
Here we choose to express identities by commutative diagrams. It may be inconvenient at first sight. We can easily translate a commutative diagram to a usual equation on elements. For instance, the diagram of associativity tells us exactly that
\[
(\gamma_3\circ \gamma_2) \circ \gamma_1=\gamma_3\circ (\gamma_2 \circ \gamma_1)
\]
holds for composable arrows \(\gamma_1, \gamma_2, \gamma_3 \in G_1\). The advantage of commutative diagrams is that they work in any category, even if elements are meaningless.

A functor between groupoids \(\varphi\colon G\to H \) consists of two functions \(\varphi_0\colon G_0\to H_0\) and \(\varphi_1\colon G_1\to H_1\) that respect all structure maps. We also call a functor between groupoids a \emph{morphism of groupoids}.

A natural transformation between functors \(\tau\colon \varphi \Rightarrow \psi \colon G\to H\) is a function \(\tau\colon G_0\to H_1\) such that \(\tau(y)\circ\varphi(\gamma)=\psi(\gamma)\circ\tau(x)\) for every arrow \(\gamma\colon x\to y\) in \(G\). Such a natural transformation is invertible since every arrow in a groupoid is invertible.

The composition in a groupoid is illustrated in Figure~\ref{chap1:fig:composition-of-groupoid}, where dots represent objects and arrows represent arrows in the groupoid. Two arrows in the groupoid are composable if the source and target of the corresponding arrows match. We draw arrows of a groupoid from right to left, so the composite of two arrows is in the diagrammatic order.
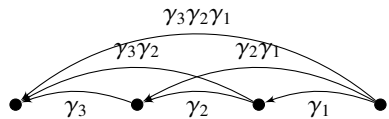
\begin{figure}[htbp]
  \centering
  \begin{tikzpicture}%
  [>=latex',every label/.style={scale=0.8},scale=0.8]
  \node[mydot,fill=black] at (6,0) (x0) {};
  \node[mydot,fill=black] at (4,0) (x1) {};
  \node[mydot,fill=black] at (2,0) (x2) {};
  \node[mydot,fill=black] at (0,0) (x3) {};
  \path[->]
     (x0) edge[out=160, in=20] node[scale=0.8,below]{$\gamma_1$} (x1)
	 edge[out=150, in=30] node[scale=0.8,above]{$\gamma_2\gamma_1$}(x2)
   edge[out=140, in=40] node[scale=0.8,above]{$\gamma_3\gamma_2\gamma_1$}(x3)
     (x1) edge[out=160, in=20] node[scale=0.8,below]{$\gamma_2$} (x2)
   edge[out=150, in=30] node[scale=0.8,above]{$\gamma_3\gamma_2$}(x3)
     (x2) edge[out=160, in=20] node[scale=0.8,below]{$\gamma_3$} (x3);
  \end{tikzpicture}
  \caption{Composition of arrows in a groupoid}\label{chap1:fig:composition-of-groupoid}
\end{figure}

Denote by \(\Mfd\) the category of manifolds, whose objects are finite-dimensional manifolds that may have different dimensions for different connected components, and whose morphisms are smooth maps.

\begin{definition}\label{chap1:def:lie-groupoid}
  A \emph{Lie groupoid} is a groupoid object internal to \(\Mfd\) such that the source and target maps \(\source, \target\) are surjective submersions. \emph{Functors between Lie groupoids} and \emph{natural transformations between Lie groupoid functors} are defined by internalization to \(\Mfd\).
\end{definition}

For internal categories, see~\cite[Chapter XII]{MacLane}. Explicitly, a Lie groupoid~\(G\) has two manifolds: \(G_0\) objects, \(G_1\) arrows, and five smooth structure maps:
source and target \(\source, \target\colon G_1\to G_0\), unit \(\unit\colon G_0\to G_1\), composition \(\circ\colon G_1\times_{\source, G_0, \target} G_1\to G_1\), and inverse \(\inv\colon G_1\to G_1\), satisfying the same identities as above. Notice that \(G_1\times_{\source, G_0, \target} G_1\) is an object in \(\Mfd\), since \(\source\) and \(\target\) are surjective submersions.

\begin{proposition}
  Lie groupoids and functors between Lie groupoids form a category, denoted by \(\Gpd\); Lie groupoids, functors between Lie groupoids, and natural transformations form a strict 2\nbdash{}category, denoted by \(\GPD\).
\end{proposition}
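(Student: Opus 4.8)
The plan is to check the axioms directly, using throughout the principle that each piece of structure is assembled from the smooth structure maps by composition and fibre products, so that the only point beyond the set\nbdash{}theoretic case is smoothness together with the existence of the relevant fibre products in \(\Mfd\). The latter is guaranteed because the source and target maps are surjective submersions.

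For the category \(\Gpd\), I would define composition of functors componentwise, \((\psi\circ\varphi)_0=\psi_0\circ\varphi_0\) and \((\psi\circ\varphi)_1=\psi_1\circ\varphi_1\), with the identity functor given by the identity maps on objects and arrows. These are smooth as composites of smooth maps. Compatibility with the five structure maps follows by pasting the commuting squares expressing that \(\varphi\) and \(\psi\) are functors; for example \(\source_K\circ(\psi\circ\varphi)_1=\psi_0\circ\source_H\circ\varphi_1=(\psi\circ\varphi)_0\circ\source_G\), and analogously for target, unit, inverse, and composition. Associativity and the unit laws then reduce to the corresponding laws for composition of maps, applied separately on \(G_0\) and \(G_1\).

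For the strict 2\nbdash{}category \(\GPD\), I would equip natural transformations with the usual vertical and horizontal compositions and verify they stay within the smooth setting. For \(\tau\colon\varphi\Rightarrow\psi\) and \(\sigma\colon\psi\Rightarrow\chi\) between functors \(G\to H\), set \((\sigma\cdot\tau)(x)=\sigma(x)\circ\tau(x)\); since \(\source_H(\sigma(x))=\psi_0(x)=\target_H(\tau(x))\), the pair \((\sigma(x),\tau(x))\) lands in \(H_1\times_{\source, H_0,\target}H_1\), which is a manifold, so the composite is a smooth function of~\(x\). For horizontal composition of \(\tau\colon\varphi\Rightarrow\psi\colon G\to H\) with \(\tau'\colon\varphi'\Rightarrow\psi'\colon H\to K\), set \((\tau'*\tau)(x)=\tau'(\psi_0(x))\circ\varphi'_1(\tau(x))\), check smoothness the same way, and confirm that the two standard formulas for it agree via naturality of \(\tau'\) applied to the arrow \(\tau(x)\). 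Identity 2\nbdash{}morphisms are \(x\mapsto\unit_H(\varphi_0(x))\). The associativity and unit laws of vertical composition descend from those of the composition of \(H\), while associativity of horizontal composition and the interchange law follow from naturality together with functoriality of \(\varphi'\) and \(\psi'\), exactly as for set\nbdash{}theoretic groupoids; each such identity is an equation between smooth maps \(G_0\to H_1\) (or into \(K_1\)) that holds because it holds pointwise.

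I expect the only genuine obstacle to be the bookkeeping of source and target: one must check at every step that the composites used actually land in the appropriate fibre product of manifolds, so that the composition maps of \(H\) and \(K\) may legitimately be applied and the results are smooth. This is precisely where surjective submersivity of the source and target maps enters. Once smoothness and well\nbdash{}definedness are secured, all the categorical and 2\nbdash{}categorical identities are inherited verbatim from the set\nbdash{}theoretic theory, since each is an equality of smooth maps verifiable pointwise.
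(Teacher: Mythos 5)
Your proposal is correct, and it is essentially the argument the paper leaves implicit: the paper states this proposition without any proof, treating it as standard (the surrounding remark just points to the literature on 2\nbdash{}categories). Your direct verification supplies exactly the missing content, and you correctly isolate the one non\nbdash{}set\nbdash{}theoretic point — that vertical and horizontal composites of natural transformations are smooth because the pairing maps land in the fibre product \(H_1\times_{\source,H_0,\target}H_1\), which exists as a manifold precisely because \(\source\) and \(\target\) are surjective submersions, after which every categorical identity is an equality of smooth maps checked pointwise.
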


\begin{remark}
  Our strict 2\nbdash{}categories are sometimes called 2\nbdash{}categories, while our 2\nbdash{}categories are called bicategories or weak 2\nbdash{}categories. In the sequel, all 2\nbdash{}categories will be \((2,1)\)\nbdash{}categories in which 2\nbdash{}morphisms are invertible. For 2\nbdash{}categories see~\cite{Benabou67,Street}.
\end{remark}

\subsection{Examples}

We provide some first examples of Lie groupoids.

\begin{example}
  Let \(G\) be a Lie groupoid. Its \emph{opposite Lie groupoid}, denoted by \(G^\op\), has the same object space and arrow space, but all arrows are formally inverted. The inverse map \(\inv\colon G_1\to G_1\) induces an isomorphism \(G\to G^\op\).
\end{example}

\begin{example}
  A Lie groupoid \(G\) with \(G_0=\pt\) is a Lie group. A functor between two Lie groupoids of this form \(G, H\) is given by a homomorphism of Lie groups \(G_1\to H_1\); a natural transformation between two such functors is given by an element of \(H_1\) conjugating between the two homomorphisms.
\end{example}

\begin{example}
  Let \(f\colon M\to N\) be a surjective submersion. The associated \emph{\v{C}ech groupoid}, denoted by \(C(f)\), is the Lie groupoid with
  \begin{gather*}
  C(f)_0=M,\quad C(f)_1=M\times_N M,\\
  \source(x, y)= y,\quad \target(x, y)= x,\quad \unit(x)=(x, x),\\
  \inv(x, y)=(y, x),\quad (x, y)\circ (y, z)=(x, z).
  \end{gather*}
 When \(f\) is of the form \(M\to \pt\), we get the \emph{pair groupoid}\footnote{Pair groupoids are also called chaotic groupoids or codiscrete groupoids; trivial groupoids are also called discrete groupoids.} \(P(M)\) on~\(M\); when \(f\) is the identity map \(M\to M\), we get the \emph{trivial groupoid} on \(M\), denoted by \(\trivial{M}\). This induces an embedding of categories
  \[
   \Mfd \hookrightarrow \Gpd, \quad  M\mapsto \trivial{M}.
  \]
  For any \(f\colon M\to N\), there is a Lie groupoid functor \(c(f)\colon C(f)\to \trivial{N}\) induced by \(f\).
\end{example}

\begin{example}
  A functor from a Lie groupoid \(G\) to a trivial groupoid \(\trivial{M}\) is a map \(f\colon G_0\to M\) such that \(f\circ \source=f\circ \target\).
\end{example}

\begin{example}
  Given a right action of a Lie group \(G\) on a manifold~\(X\), its \emph{action groupoid} \(X\rtimes G \) is a Lie groupoid defined as follows:
  \begin{gather*}
  (X\rtimes G)_0=X,\quad (X\rtimes G)_1=X\times G,\\
  \source(x, g)=xg,\quad \target(x, g)=x,\quad \unit(x)=(x, e),\quad \inv(x, g)=(xg, g^{-1}),\\
  (x, g)\circ(y, h) =(x, g\circ h)\quad \text{for \(y=xg\).}
  \end{gather*}
Action groupoids for left actions are defined similarly.

If \(P\to M\) is right principal \(G\)-bundle, then \(P\to M\) induces a Lie groupoid functor \(X\rtimes G \to \trivial{M}\).
\end{example}

\section{Constructions with Lie groupoids}

In this section, we introduce some general constructions with Lie groupoids, including induced Lie groupoids, strong pullbacks, and weak pullbacks.

\subsection{Induced Lie groupoids}

\begin{definition}\label{chap1:def:induced-grouoid}
Let \(G\) be a Lie groupoid and let \(f\colon B\to G_0\) be a map. The \emph{induced groupoid} \(f^* (G)\) is a groupoid with \(f^* (G)_0=B\), \(f^* (G)_1= B\times_{G_0,\target} G_1 \times_{\source,G_0} B\), and with composition given by the composition in~\(G\).
\end{definition}

\begin{proposition}\label{chap1:prop:induced-groupoid-exists}
  The induced groupoid \(f^* (G)\) is a Lie groupoid if\/ \(\target\circ\pr_1\colon G_1\times_{\source,G_0} B\to G_0\) is a surjective submersion.
\end{proposition}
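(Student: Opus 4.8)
The plan is to exhibit the arrow space $f^*(G)_1$ as a pullback of a surjective submersion, so that the hypothesis supplies both its smooth structure and the submersion property of the target map; the source map then follows by the inverse of $G$, and the remaining structure maps and identities are inherited from $G$. Write $Z$ for the inner fibre product $G_1\times_{\source,G_0}B$ and $p$ for the map $\target\circ\pr_1\colon Z\to G_0$, $(\gamma,b)\mapsto\target(\gamma)$. First I would record that $Z$ is a manifold: it is the pullback of $f\colon B\to G_0$ along the surjective submersion $\source\colon G_1\to G_0$, and surjective submersions are stable under arbitrary base change, so this fibre product exists in $\Mfd$. Note that $p$ need not be a submersion a priori, since $\pr_1\colon Z\to G_1$ is the base change of $f$ and is a submersion only when $f$ is; thus assuming $p$ to be a surjective submersion is the genuine content of the proposition.

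Next I would observe that $f^*(G)_1=B\times_{G_0,\target}G_1\times_{\source,G_0}B$ is canonically identified with the pullback $B\times_{f,G_0,p}Z$ of $f$ along $p$. Because $p$ is a surjective submersion, this pullback exists as a manifold, which furnishes the smooth structure on $f^*(G)_1$. Moreover the projection to the first factor $B$ is, under this identification, exactly the target map $(b_1,\gamma,b_2)\mapsto b_1$, and it is a surjective submersion as the base change of $p$ along $f$.

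For the source map I would use the inverse of $G$. The inverse diffeomorphism $\inv\colon G_1\to G_1$ swaps $\source$ and $\target$, hence induces a smooth involution $\Phi\colon f^*(G)_1\to f^*(G)_1$, $(b_1,\gamma,b_2)\mapsto(b_2,\inv(\gamma),b_1)$; this is well defined since $\target(\inv\gamma)=\source(\gamma)=f(b_2)$ and $\source(\inv\gamma)=\target(\gamma)=f(b_1)$. The target map of $f^*(G)$ precomposed with $\Phi$ is the source map, so since $\Phi$ is a diffeomorphism the source map is a surjective submersion as well. This same $\Phi$ is the inverse of $f^*(G)$.

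Finally the unit $b\mapsto(b,\unit(f(b)),b)$ and the composition $(b_1,\gamma,b_2)\cdot(b_2,\delta,b_3)=(b_1,\gamma\circ\delta,b_3)$ are well defined and smooth, the space of composable pairs being a manifold now that $\source$ and $\target$ are surjective submersions; the groupoid identities hold because they hold in $G$ and $f^*(G)$ is obtained from $G$ by restriction along $f$. I expect the only real obstacle to be the first two steps, namely recognising the correct bracketing of the triple fibre product so that the hypothesis on $p=\target\circ\pr_1$ applies directly, and then reading off that the target (and, via $\inv$, the source) is a surjective submersion; everything after that is routine verification.
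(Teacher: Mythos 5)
Your proof is correct and follows essentially the same route as the paper's: both bracket the triple fibre product as the pullback of $f$ along $p=\target\circ\pr_1\colon G_1\times_{\source,G_0}B\to G_0$, using the hypothesis to obtain representability of $f^*(G)_1$ and the target map as a surjective submersion. Your explicit handling of the source map via the involution induced by $\inv$ merely spells out a point the paper leaves implicit, so there is nothing substantive to add.
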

\begin{proof}
  Observe that \(f^* (G)_1= B\times_{G_0,\target} G_1 \times_{\source,G_0} B\) can be computed by two pullbacks given by the diagram
  \[
  \xymatrix{
  f^*(G)_1\ar[d]\ar[rr] & & B\ar[d]^{f}\\
  G_1\times_{\source,G_0} B\ar[d]\ar[r] & G_1\ar[d]^{\source} \ar[r]^{\target} & G_0\\
  B\ar[r]^{f}& G_0 \rlap{\ .}&
  }
  \]
  Since the composite \(G_1\times_{\source,G_0} B\xrightarrow{\pr_1} G_1\xrightarrow{\target} G_0\) is a surjective submersion, \(f^*(G)_1\) is representable\footnote{It means that the limit is a manifold; see Sections~\ref{chap0:sec:cat-convention} and \ref{chap2:sec:presheaf}.} and \(f^*(G)_1\to B\) is a surjective submersion. Hence the diagram below is a pullback square in \(\Mfd\):
  \[
  \xymatrix{
  f^*(G)_1\ar[d] \ar[r] &G_1\ar[d]^{(\source, \target)}\\
  B\times B \ar[r] & G_0\times G_0
  }
  \]
  Thus \(f^*(G)\) is a Lie groupoid and \(f\) induces a Lie groupoid functor \(f\colon f^*(G)\to G\).
\end{proof}

\subsection{Strong pullbacks}

\begin{definition}
  Given two Lie groupoid functors \(f\colon G\to K\) and \(g\colon H\to K\), the \emph{strong pullback} groupoid \(G\times_K H \) is a groupoid with \((G\times_K H)_0=G_0\times_{K_0} H_0\), \((G\times_K H)_1=G_1\times_{K_1} H_1\), and with composition defined componentwise.
\end{definition}

\begin{remark}
  If \((f_0,g_0)\) and \((f_1,g_1)\) are both transversal, then \(G\times_K H\) is a Lie groupoid. In this case, the strong pullback satisfies the usual universal property for pullbacks in the category \(\Gpd\).
\end{remark}

\begin{example}
  The induced groupoid can be written as a strong pullback. Let \(G\) and \(f\colon B\to G_0\) be as in Definition~\ref{chap1:def:induced-grouoid}. The diagram below is a strong pullback square:
  \[
  \xymatrix{
  f^*(G) \ar[d]\ar[r] & G\ar[d]\\
  P(B) \ar[r] & P(G_0)\rlap{\ .}
  }
  \]
Here \(P(B)\) and \(P(G_0)\) are pair groupoids, the Lie groupoid functor \(G\to P(G_0)\) is determined by \((\source, \target)\colon G_1\to G_0\times G_0\), and \(P(B)\to P(G_0)\) is induced by \(f\).
\end{example}

\subsection{Weak pullbacks}

For comma categories see~\cite[Section II.6]{MacLane}.
\begin{definition}
  Given two Lie groupoid functors \(f\colon G\to K\) and \(g\colon H\to K\), the \emph{weak pullback} groupoid \(G\times^w_K H \) is the comma category \((f \downarrow g)\). Explicitly, objects of \(G\times^w_K H\) are triples \((x, \alpha, y )\), where \(x\in G_0\), \(y\in H_0\) and \(\alpha\colon f(x) \to g(y)\) is an arrow in~\(K\). Arrows from \((x_1, \alpha_1, y_1)\) to \((x_2, \alpha_2, y_2)\) are pairs \((\beta, \gamma)\), where \(\beta\colon f(x_1)\to f(x_2) \) and \(\gamma\colon g(x_1)\to g(x_2) \) are arrows in~\(K\) such that \(\gamma\circ \alpha_1 =\alpha_2\circ \beta\). The composition is given componentwise.
\end{definition}

\begin{proposition}\label{chap1:prop:weak-pullback-exists-assumption}
The weak pullback \(G\times^w_K H \) is a Lie groupoid if\/ \(\target\circ\pr_2\colon  G_0\times_{K_0,\source} K_1\to K_0\) or \(\source\circ\pr_2\colon H_0\times_{K_0,\target} K_1\to K_0\) is a surjective submersion.
\end{proposition}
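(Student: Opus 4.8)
The plan is to verify directly that $G\times^w_K H$ is a groupoid object in $\Mfd$ whose source and target are surjective submersions. Recall that in the comma category $(f\downarrow g)$ an arrow $(x_1,\alpha_1,y_1)\to(x_2,\alpha_2,y_2)$ is a pair $(b,h)$ with $b\in G_1$, $h\in H_1$ and $g_1(h)\circ\alpha_1=\alpha_2\circ f_1(b)$; consequently the unit, composition and inverse will be assembled componentwise from those of $G$, $H$ and from the smooth $\circ$ and $\inv$ of $K$, so all the groupoid identities are inherited and the only real work is to show that the object and arrow spaces are manifolds and that $\source,\target$ are surjective submersions. Reversing the comparison arrow $\alpha\mapsto\alpha^{-1}$ and exchanging the roles of $(G,f,\source)$ with $(H,g,\target)$ interchanges the two hypotheses, so I would treat only the case in which $q_0:=\target\circ\pr_2\colon W\to K_0$ is a surjective submersion, where $W:=G_0\times_{f_0,K_0,\source}K_1$. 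Here $W$ is already a manifold, being the pullback of the surjective submersion $\source\colon K_1\to K_0$ along $f_0$, and $\pr_1\colon W\to G_0$ is a surjective submersion.

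First I would handle the object space. By definition $(G\times^w_K H)_0=W\times_{q_0,K_0,g_0}H_0$, and since $q_0$ is a surjective submersion this pullback exists in $\Mfd$. For the arrow space, the key observation is that $K$ being a groupoid makes $\alpha_2$ redundant: the compatibility forces $\alpha_2=g_1(h)\circ\alpha_1\circ f_1(b)^{-1}$, so an arrow is determined by the triple $(b,\alpha_1,h)$ with $b\in G_1$, $h\in H_1$ and $\alpha_1\colon f(\source b)\to g(\source h)$. This identifies
\[
(G\times^w_K H)_1\;\cong\;\bigl(G_1\times_{K_0}K_1\bigr)\times_{K_0}H_1,
\]
where $G_1\to K_0$ is $f_0\circ\source$, the two maps $K_1\to K_0$ are $\source$ and $\target$, and $H_1\to K_0$ is $g_0\circ\source$.

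The crux is to see that this iterated pullback is a manifold, using only the single hypothesis on $q_0$. I would rewrite the first factor as $G_1\times_{\source,G_0,\pr_1}W$, the pullback of the surjective submersion $\pr_1\colon W\to G_0$ along $\source\colon G_1\to G_0$; this is a manifold, and the map $(b,\alpha_1)\mapsto\target(\alpha_1)$ it carries to $K_0$ factors as the base-changed submersion $G_1\times_{K_0}K_1\to W$ followed by $q_0$, hence is itself a surjective submersion. Pulling $H_1$ back along it therefore yields a manifold, and the source map $(b,\alpha_1,h)\mapsto(\source b,\alpha_1,\source h)$ is the fibre product over $K_0$ of two surjective submersions, namely the base change of $\source\colon G_1\to G_0$ and $\source\colon H_1\to H_0$, so it too is a surjective submersion. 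The inverse $\inv^w\colon(b,\alpha_1,h)\mapsto\bigl(b^{-1},\,g_1(h)\alpha_1 f_1(b)^{-1},\,h^{-1}\bigr)$ is a smooth involution, and since $\target^w=\source^w\circ\inv^w$ the target map is a surjective submersion as well. With the unit and multiplication assembled smoothly from those of $G$, $H$ and $K$, this exhibits $G\times^w_K H$ as a Lie groupoid.

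The main obstacle is the arrow-space bookkeeping: one must first use invertibility in $K$ to drop $\alpha_2$ as an independent coordinate, and then track how the lone hypothesis on $q_0$ propagates through the two nested pullbacks so that the composite map to $K_0$ — and with it the source map — is a surjective submersion. Once that is in place, the groupoid axioms transfer routinely along the componentwise structure maps.
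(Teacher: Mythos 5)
Your proof is correct and follows essentially the same route as the paper: compute the object space as an iterated pullback (representable thanks to the hypothesis on \(\target\circ\pr_2\)), realize the arrow space as the iterated fibre product \(G_1\times_{K_0}K_1\times_{K_0}H_1\), and check that the structure maps are smooth with source and target surjective submersions. The only difference is one of completeness: the paper dismisses the manifold structure of the arrow space and the submersion property of source/target as ``easy to see,'' while your factorization through \(W=G_0\times_{K_0,\source}K_1\) and the base-change arguments supply exactly those details, with the symmetry reduction replacing the paper's verbatim treatment of the second hypothesis.
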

\begin{proof}
The space of objects of \(G\times^w_K H\) is the limit of the following diagram:
  \[
  \xymatrix{
  G_0\times_{K_0, \source} K_1 \times_{\target, K_0} H_0\ar[d] \ar[rr]&&H_0\ar[d]\\
   G_0\times_{K_0,\source} K_1 \ar[d]\ar[r]& K_1 \ar[d]^{\source}\ar[r]^{\target}&K_0\\
   G_0 \ar[r]& K_0\rlap{\ ,} &
  }\]
which is representable if \(\target\circ\pr_2\colon G_0\times_{K_0,\source} K_1\to K_0\) is a surjective submersion. Similarly, it is representable if \(\source\circ\pr_2\colon H_0\times_{K_0,\target} K_1\to K_0\) is a surjective submersion. In these cases, it is easy to see that the space of arrows,
  \[
  (G\times^w_K H)_1 =G_1\times_{\target\circ f_1,K_0,\source} K_1 \times_{\target,K_0,\source\circ f_1} H_1,
  \]
  is also a manifold, and that source and target maps are surjective submersions. This proves the claim.
\end{proof}

\begin{remark}
  The following maps
  \[
  (x, \alpha, y)\mapsto (y, \alpha^{-1}, x), \quad (\beta,\gamma)\mapsto (\gamma,\beta)
  \]
  give a canonical isomorphism between the groupoids \(G\times^w_K H\) and \(H\times^w_K G\).
\end{remark}

There are Lie groupoid functors \(G\times^w_K H\to G\) and \(G\times^w_K H\to H\) induced by projection. The composites \(G\times^w_K H \to G\to K\) and \(G\times^w_K H\to H\to K\) are not equal, but differ by a natural transformation \(\tau\) given by
\[
(G\times^w_K H)_0\to K_1,\quad (x, \alpha, y)\mapsto \alpha.
\]

\begin{remark}
  The weak pullback of Lie groupoids is a 2\nbdash{}pullback in the 2\nbdash{}category \(\GPD\) \textup(see~\cite[Section 5.3]{Moerdijk-Mrcun2003}\textup). We recall that a \emph{2\nbdash{}pullback} of \(f\colon A\to C\) and \(g\colon B\to C\) in a 2\nbdash{}category is a square
  \[
  \xymatrix{
  P\ar[d]_{q}\ar[r]^{p} & A\ar[d]^{f}\\
  B\ar[r]_{g}\ar@{{}{ }{}}[ru]|{\tau} & C
  }\]
commuting up to a 2\nbdash{}isomorphism \textup(2\nbdash{}commutative\textup), which is universal among such squares in a 2\nbdash{}categorical sense. The universal property determines the 2\nbdash{}pullback up to a unique equivalence. For 2\nbdash{}pullbacks and other 2\nbdash{}limits, see~\cite{Street,Street87}.
\end{remark}

\begin{example}\label{chap1:exa:G1=G0-times-G0-over-G}
  Let \(G\) be a Lie groupoid. The diagram below is a weak pullback square
  \[
  \xymatrix{
  \trivial{G_1} \ar[d]_{\source}\ar[r]^{\target}& \trivial{G_0}\ar[d]^{\unit}\\
  \trivial{G_0} \ar[r]_{\unit}\ar@{{}{ }{}}[ru]|{\tau}       & G \rlap{\ ,}
  }
  \]
  where the natural transformation \(\tau\) is given by \(\id\colon G_1\to G_1\).
\end{example}

\begin{example}
  Let \(M\) be a manifold. Let \(G\to \trivial{M}\) and \(H\to \trivial{M}\) be Lie groupoid functors. Then the strong pullback and the weak pullback are naturally isomorphic if one of them is a Lie groupoid.
\end{example}

\section{Weak equivalences and generalised morphisms}

After introducing the notion of weak equivalence, we will construct a 2\nbdash{}category of Lie groupoids by formally inverting weak equivalences.
\subsection{Weak equivalences}
Recall~\cite{Street} that a 1\nbdash{}morphism \(f\colon A\to B\) in a 2\nbdash{}category is an \emph{equivalence} if it has a quasi-inverse; that is, there are a morphism \(\varphi\colon B\to A\) and 2\nbdash{}isomorphisms \(\varphi\circ f\Rightarrow \id_A\) and \(f\circ \varphi \Rightarrow \id_B\).

Recall from~\cite[Section IV.4]{MacLane} that if a functor \(f\colon \Cat[A]\to \Cat[B]\) of categories is essentially surjective and fully faithful then it is an equivalence in the 2\nbdash{}category of categories.

We now consider similar notions for Lie groupoids. To distinguish various notions of equivalences, let us call equivalences in the 2\nbdash{}category \(\GPD\) \emph{strong equivalences} of Lie groupoids.

\begin{definition}\label{chap1:def:weak-equi-groupoid}
  A Lie groupoid functor \(f\colon G\to H\) is a \emph{weak equivalence} if it is
  \begin{enumerate}
    \item essentially surjective; that is, \(\target\circ \pr_2\colon G_0\times _{H_0, \source} H_1 \to H_0\) is a surjective submersion;
    \item fully faithful; that is, the diagram below is a pullback square in \(\Mfd\):
    \[
    \xymatrix{
    G_1 \ar[d]_{(\source, \target)}\ar[r]^{f_1} & H_1\ar[d]^{(\source, \target)} \\
    G_0\times G_0 \ar[r]^{f_0\times f_0}& H_0\times H_0\rlap{\ .}
    }
    \]

  \end{enumerate}
\end{definition}

\begin{remark}
We will denote a weak equivalence by \(\simeq\). By Proposition~\ref{chap1:prop:induced-groupoid-exists}, the first condition ensures that the pullback in the second condition exists. If \(f\colon G\xrightarrow{\simeq} H\) is a weak equivalence, then \(G\) is isomorphic to the induced groupoid \(f_0^* (H)\).
\end{remark}

\begin{proposition}[{\cite[Proposition 5.11]{Moerdijk-Mrcun2003}}]
  Every strong equivalence of Lie groupoids is a weak equivalence.
\end{proposition}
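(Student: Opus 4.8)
The plan is to unpack the definition of a strong equivalence and produce, by hand, smooth witnesses for the two defining conditions of a weak equivalence. So let $f\colon G\to H$ be a strong equivalence, with quasi-inverse $\varphi\colon H\to G$ and natural isomorphisms $\alpha\colon\varphi\circ f\Rightarrow\id_G$ and $\beta\colon f\circ\varphi\Rightarrow\id_H$; recall these are smooth maps $\alpha\colon G_0\to G_1$, $\beta\colon H_0\to H_1$ with $\source\circ\alpha=\varphi_0\circ f_0$, $\target\circ\alpha=\id_{G_0}$, and symmetrically for $\beta$. First I would replace the equivalence by an adjoint equivalence — a standard move available in any $(2,1)$-category, realised here by smooth formulas in $\alpha$, $\beta$ and the structure maps — so that the triangle identities hold; concretely I will use $\beta\circ f_0=f_1\circ\alpha$ as maps $G_0\to H_1$.

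For fullness and faithfulness I must show that the square in Definition~\ref{chap1:def:weak-equi-groupoid}(ii) is a pullback, i.e. that the canonical smooth map $G_1\to(G_0\times G_0)\times_{H_0\times H_0}H_1$, $\gamma\mapsto(\source\gamma,\target\gamma,f_1\gamma)$, is an isomorphism. I would write down the candidate inverse $\Psi(x_1,x_2,h)=\alpha(x_2)\circ\varphi_1(h)\circ\alpha(x_1)^{-1}$, which is smooth and lands in $G_1$ because $\alpha(x_1)^{-1}\colon x_1\to\varphi_0 f_0(x_1)=\varphi_0(\source h)$, then $\varphi_1(h)\colon\varphi_0(\source h)\to\varphi_0(\target h)=\varphi_0 f_0(x_2)$, then $\alpha(x_2)\colon\varphi_0 f_0(x_2)\to x_2$. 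That $\Psi$ is a left inverse of the canonical map follows from naturality of $\alpha$; that it is a right inverse, i.e. $f_1(\Psi(x_1,x_2,h))=h$, follows from naturality of $\beta$ together with the triangle identity. By the representability convention of Section~\ref{chap0:sec:cat-convention}, exhibiting this smooth inverse shows that the limit defining the fiber product is represented by $G_1$ and that the square is a pullback in $\Mfd$.

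Essential surjectivity asks that $p:=\target\circ\pr_2\colon G_0\times_{H_0,\source}H_1\to H_0$ be a surjective submersion. Surjectivity is immediate from the smooth map $\sigma(y)=(\varphi_0(y),\beta(y))$, which is a section of $p$. The main obstacle is the submersion property: a single global section forces $dp$ to be surjective only along $\sigma(H_0)$, not everywhere, and the section cannot be transported to an arbitrary point by the left $H$-translation $h'\cdot(x,h)=(x,h'\circ h)$ because that action fixes the $G_0$-component. My plan is therefore to construct a local section of $p$ through \emph{every} point $(x_0,h_0)$, which suffices since a smooth map admitting local sections through each point is a submersion. Given such a point, set $y_0=\target h_0$ and $c_0=\varphi_1(h_0)\circ\alpha(x_0)^{-1}\colon x_0\to\varphi_0(y_0)$; choose a local section $c$ of the submersion $\source\colon G_1\to G_0$ with $c(\varphi_0(y_0))=c_0^{-1}$, and define, for $y$ near $y_0$, $\tau(y)=\bigl(\target c(\varphi_0 y),\ \beta(y)\circ f_1(c(\varphi_0 y))^{-1}\bigr)$. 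I would then check that $\tau$ lands in $G_0\times_{H_0,\source}H_1$ with $p\circ\tau=\id$, and, using naturality of $\beta$ and the triangle identity, that $\tau(y_0)=(x_0,h_0)$. The delicate point — the place where strong equivalence is genuinely used beyond mere surjectivity — is exactly this correction of the canonical section $\sigma$ by a local section of the source map through the discrepancy arrow $c_0$, and it is the triangle identity that makes the corrected section pass through the prescribed point.
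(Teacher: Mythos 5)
Your proof is correct, and in fact the paper offers nothing to compare it against: it states this proposition purely as a citation of \cite[Proposition 5.11]{Moerdijk-Mrcun2003} and gives no argument of its own, so your self-contained proof is the standard direct argument that the citation points to. The two genuinely delicate points are both handled properly. First, the right-inverse computation $f_1(\Psi(x_1,x_2,h))=h$ really does fail for an arbitrary choice of $\alpha,\beta$ (one gets conjugation of $h$ by the loops $f_1\alpha(x_i)\circ\beta(f_0x_i)^{-1}$), so the promotion to an adjoint equivalence, i.e.\ arranging $\beta\circ f_0=f_1\circ\alpha$, is not a cosmetic step but exactly what kills these loops; since the correction of $\beta$ is given by groupoid composition of smooth natural transformations, it stays inside $\Mfd$, and interpreting $\Psi$ on $T$-points makes the pullback claim rigorous under the paper's Yoneda convention without ever needing the set-theoretic fibre product to be a submanifold. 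Second, your diagnosis of the submersion issue is the real content of essential surjectivity: the global section $\sigma(y)=(\varphi_0(y),\beta(y))$ only gives submersivity along its image, and your corrected local section $\tau(y)=\bigl(\target\, c(\varphi_0 y),\,\beta(y)\circ f_1(c(\varphi_0 y))^{-1}\bigr)$ does pass through the prescribed point: with $c(\varphi_0 y_0)=c_0^{-1}$ one computes, using naturality of $\beta$ and the triangle identity, $f_1(c_0)=\beta(y_0)^{-1}\circ h_0$, hence $\tau(y_0)=(x_0,h_0)$, so $dp$ is surjective everywhere. A worthwhile aside: once the paper's Proposition~\ref{chap1:prop:weak-equivalence-properties} and the 2-out-of-6 property are available, there is a slicker route (apply invariance of weak equivalences under natural transformations to $\varphi f\Rightarrow\id_G$ and $f\varphi\Rightarrow\id_H$, then 2-out-of-6 to the chain $G\to H\to G\to H$), but that would invert the paper's logical order, whereas your argument is self-contained at this point in the text.
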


The following two examples show, however, that the converse is not true.

\begin{example}\label{chap1:exa:cech-groupoid-weak-equivalence}
  Let \(f\colon M\to N\) be a surjective submersion, and let \(C(f)\) be the corresponding \v{C}ech groupoid. Then \(c(f)\colon C(f)\xrightarrow{\simeq} \trivial{N}\) is a weak equivalence. Suppose that \(g\colon \trivial{N}\to C(f)\) is a quasi-inverse of \(c(f)\). Then \(c(f)\circ g\) is the identity since \(\trivial{N}\) is a trivial groupoid. Hence \(g_0\) is a section of \(f\colon M\to N \), but \(f\) need not admit a section in general.
\end{example}

\begin{example}\label{chap1:exa:principal-bundle-weak-equivalence}
  Let \(G\) be a Lie group and \(P\to M\) a right principal \(G\)-bundle. The functor \(P\rtimes G \xrightarrow{\simeq} \trivial{M}\) is a weak equivalence. That \(P\to M\) is a surjective submersion implies essential surjectivity. Since the action is principal, the diagram
  \[
    \xymatrix{
    P\times G \ar[r]\ar[d] & M\ar[d]\\
    P\times P \ar[r] &M\times M
    }
  \]
  is a pullback square, hence \(P\rtimes G \to \trivial{M}\) is fully faithful. For a similar reason as in the previous example, \(P\rtimes G \to \trivial{M}\) need not be a strong equivalence.
\end{example}

Weak equivalences of Lie groupoids enjoy the following properties:
\begin{proposition}\label{chap1:prop:weak-equivalence-properties}
Let \(G\), \(H\) and \(K\) be Lie groupoids.
\begin{enumerate}
  \item For two functors \(\varphi, \psi\colon G\to H\), if there is a natural transformation \(T\colon \varphi\Rightarrow \psi\), then \(\varphi\) is a weak equivalence if and only if \(\psi\) is.
  \item The composite of weak equivalences is a weak equivalence.
  \item For a weak equivalence \(\varphi\colon G\xrightarrow{\simeq} H\) and a functor \(\psi\colon K\to H\), their weak pullback~\(P\) exists. Moreover, \(P\to K\) is a weak equivalence and \(P_0\to K_0\) a surjective submersion \textup(see~\cite[Proposition~\textup{5.12}]{Moerdijk-Mrcun2003}\textup).
  \item For functors \(\varphi, \psi\colon G\to H\) and a weak equivalence \(\tau\colon H\xrightarrow{\simeq} K\), if there is a natural transformation \(\tau \circ \varphi \Rightarrow\tau \circ \psi\), then there is a natural transformation \(\varphi \Rightarrow \psi\) \textup(see~\cite[Section~\textup{4.1}]{Pronk}\textup).
\end{enumerate}
\end{proposition}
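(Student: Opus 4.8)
The plan is to verify, in each case, the two defining conditions of a weak equivalence in Definition~\ref{chap1:def:weak-equi-groupoid}---essential surjectivity and full faithfulness---reducing everything to the stability of surjective submersions under pullback and composition, together with the pasting lemma for pullback squares in \(\Mfd\).

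For (i), I would use that a natural transformation \(T\colon\varphi\Rightarrow\psi\) is a smooth map \(T\colon G_0\to H_1\) with \(\source\circ T=\varphi_0\) and \(\target\circ T=\psi_0\) whose values are all invertible. Right translation by \(T\) gives an explicit diffeomorphism \(G_0\times_{\varphi_0,H_0,\source}H_1\to G_0\times_{\psi_0,H_0,\source}H_1\), \((x,h)\mapsto(x,h\circ T(x)^{-1})\), which commutes with the two maps \(\target\circ\pr_2\) to \(H_0\); hence one is a surjective submersion iff the other is. Similarly \((x,y,h)\mapsto(x,y,T(y)\circ h\circ T(x)^{-1})\) identifies the two cospans defining full faithfulness and intertwines \(\varphi_1\) with \(\psi_1\) (using \(\psi_1(\gamma)=T(\target\gamma)\circ\varphi_1(\gamma)\circ T(\source\gamma)^{-1}\)), so the canonical comparison map to one pullback is invertible iff the other is. Both maps are smooth since they are built from \(\inv\) and \(\mult\).

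For (ii), full faithfulness composes by pasting the two pullback squares along \(H_1\to H_0\times H_0\). The delicate point, which I expect to be the main obstacle, is essential surjectivity of a composite \(g\circ f\colon G\to K\): unlike the set-theoretic statement, I must produce a \emph{surjective submersion} \(G_0\times_{K_0,\source}K_1\to K_0\), not merely a surjection. My approach is to form the fibre product \(P\) of the two essential-surjectivity submersions \(G_0\times_{H_0,\source}H_1\to H_0\) and \(H_0\times_{K_0,\source}K_1\to K_0\) over \(H_0\); it exists and maps onto \(K_0\) by a surjective submersion (a pullback, then a composite, of surjective submersions). Composition in \(K\), \((x,h,k)\mapsto(x,k\circ g(h))\), defines a map \(P\to G_0\times_{K_0,\source}K_1\) which admits a section by units, hence is surjective, and which commutes with the projections to \(K_0\). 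I would then invoke the elementary fact that if \(s\) is surjective and \(p=q\circ s\) is a surjective submersion, then \(q\) is a surjective submersion (at the tangent level \(\mathrm{d}p=\mathrm{d}q\circ\mathrm{d}s\) forces \(\mathrm{d}q\) to be onto), applied with \(q=\target\circ\pr_2\).

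For (iii), existence of the weak pullback \(P=G\times^w_H K\) follows from Proposition~\ref{chap1:prop:weak-pullback-exists-assumption}, whose hypothesis is exactly the essential surjectivity of \(\varphi\). The object space \(P_0\) is the pullback of the surjective submersion \(\target\circ\pr_2\colon G_0\times_{H_0,\source}H_1\to H_0\) along \(\psi_0\), so \(P_0\to K_0\) is a surjective submersion; essential surjectivity of \(P\to K\) then drops out because \(\pr_2\colon P_0\times_{K_0,\source}K_1\to K_1\) is a pullback of \(P_0\to K_0\) and \(\target\colon K_1\to K_0\) is a surjective submersion. Full faithfulness of \(P\to K\) is a diagram chase: an arrow of \(K\) between the images of two objects of \(P\) determines, through the prescribed relation \(\varphi(\beta)=\alpha_2^{-1}\circ\psi(\delta)\circ\alpha_1\), a unique arrow \(\beta\) of \(G\) by the pullback square expressing full faithfulness of \(\varphi\), and I would translate this into the required pullback statement in \(\Mfd\). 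Finally, for (iv) I would exploit full faithfulness of \(\tau\): the smooth map \(S\colon G_0\to K_1\) underlying \(\tau\varphi\Rightarrow\tau\psi\) together with \((\varphi_0,\psi_0)\colon G_0\to H_0\times H_0\) agree over \(K_0\times K_0\), so the universal property of \(H_1\cong(H_0\times H_0)\times_{K_0\times K_0}K_1\) yields a unique smooth lift \(T\colon G_0\to H_1\) with \(\tau_1\circ T=S\) and endpoints \((\varphi_0,\psi_0)\). Naturality of \(T\) is then checked by applying the faithful \(\tau_1\) to the candidate naturality square and cancelling, since two arrows of \(H\) with equal endpoints and equal \(\tau_1\)-image must coincide.
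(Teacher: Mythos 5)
Your proposal is correct in all four parts. Be aware, however, that the paper itself supplies no proof of this proposition: parts (iii) and (iv) are delegated to \cite[Proposition 5.12]{Moerdijk-Mrcun2003} and \cite[Section 4.1]{Pronk}, and (i), (ii) are left as standard facts; so your write-up fills a gap rather than parallels an argument in the text. What you do is consistent with the toolkit the paper sets up around the statement: for (i), translation by the natural transformation \(T\) (built from \(\mult\) and \(\inv\)); for (ii), pasting of pullback squares for full faithfulness, and for essential surjectivity the observation that a map whose precomposition with a surjection is a surjective submersion is itself a surjective submersion --- this is precisely Lemma~\ref{chap1:lem:surj-sub-2-prop}, which the paper states immediately after the proposition and uses for the 2-out-of-3 property, so your use of it is well aligned; for (iii), Proposition~\ref{chap1:prop:weak-pullback-exists-assumption} plus pullback- and composition-stability of surjective submersions; for (iv), the universal property of the pullback square expressing full faithfulness of \(\tau\). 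Two spots are compressed but harmless. In (ii), your ``fibre product over \(H_0\)'' is really the pullback of \(\target\circ\pr_2\colon G_0\times_{H_0}H_1\to H_0\) along the projection \(\pr_1\colon H_0\times_{K_0}K_1\to H_0\) (not along the second essential-surjectivity map itself); that is exactly what your next sentence uses, so nothing breaks. In (iii), the full-faithfulness step should end with the explicit smooth identification \(P_1\cong (P_0\times P_0)\times_{K_0\times K_0}K_1\): the inverse sends \(\bigl((x_1,\alpha_1,z_1),(x_2,\alpha_2,z_2),\delta\bigr)\) to the unique \(\beta\) with \(\varphi(\beta)=\alpha_2^{-1}\circ\psi(\delta)\circ\alpha_1\), obtained smoothly from the diffeomorphism \(G_1\cong(G_0\times G_0)\times_{H_0\times H_0}H_1\) --- the routine translation you promise, and the same argument found in the cited sources.
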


It is not hard to verify the following results.

\begin{lemma}\label{chap1:lem:surj-sub-2-prop}
  Surjective submersions in \(\Mfd\) satisfy the following two properties:
\begin{enumerate}
 \item Let \(L\xrightarrow{f} M\xrightarrow{g} N\) be morphisms in \(\Mfd\). If \(g\circ f\) is a surjective submersion and \(f\) is surjective, then~\(g\) is a surjective submersion;
 \item Let \(f\colon M\to N\) be a surjective submersion, and let \(g\colon L\to N\) be a morphism in \(\Mfd\). Denote the pullback \(M\times_N L\) by \(P\). If \(P\to M\) is a diffeomorphism, then so is \(L\to N\) \textup(see also Remark~\ref{chap3:rem:pullback-along-cover-iso}\textup).
\end{enumerate}
\end{lemma}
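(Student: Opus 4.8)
The plan is to treat the two statements separately, proving (i) first and then bootstrapping it to obtain (ii).

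For (i), write \(L\xrightarrow{f} M\xrightarrow{g} N\). Surjectivity of \(g\) is immediate: every \(n\in N\) is already hit by \(g\circ f\), hence a fortiori by \(g\). For the submersion property I would use the chain rule together with the surjectivity of \(f\). Given \(m\in M\), pick \(\ell\in L\) with \(f(\ell)=m\); then \(\mathrm d(g\circ f)_\ell=\mathrm dg_m\circ \mathrm df_\ell\) is surjective because \(g\circ f\) is a submersion, and a composite of linear maps can be surjective only if its outer factor is. Hence \(\mathrm dg_m\) is surjective for every \(m\) in the image of \(f\), which is all of \(M\). This settles (i).

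For (ii), I would first record the standard fact that, \(f\) being a surjective submersion, the pullback \(P=M\times_N L\) exists in \(\Mfd\) and the projection \(\pr_2\colon P\to L\) is again a surjective submersion. Since we are told that \(\pr_1\colon P\to M\) is a diffeomorphism, set \(h\coloneqq \pr_2\circ\pr_1^{-1}\colon M\to L\); commutativity of the pullback square gives \(g\circ h=f\), and \(h\) is surjective, being the composite of the bijection \(\pr_1^{-1}\) with the surjection \(\pr_2\). Applying part (i) to \(M\xrightarrow{h} L\xrightarrow{g} N\) then already shows that \(g\) is a surjective submersion. It remains to prove that \(g\) is injective: if \(g(\ell_1)=g(\ell_2)=n\), choose \(m\in M\) with \(f(m)=n\) (possible since \(f\) is surjective), so that \((m,\ell_1)\) and \((m,\ell_2)\) both lie in \(P\) and share the image \(m\) under the injective map \(\pr_1\), forcing \(\ell_1=\ell_2\).

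Finally I would promote the bijective submersion \(g\) to a diffeomorphism. An injective submersion has zero-dimensional fibres, so by the local normal form of submersions one gets \(\dim_\ell L=\dim_{g(\ell)}N\) at every point, which makes each \(\mathrm dg_\ell\) an isomorphism; thus \(g\) is a bijective local diffeomorphism, hence a diffeomorphism. The routine parts—surjectivity, injectivity, and the chain-rule argument—are quick; the step that deserves genuine care is this last one, where a bijective submersion is upgraded to a diffeomorphism. The real content there is the submersion normal form, which forces the source and target dimensions to agree once the fibres are points, together with a little bookkeeping to run the argument on each connected component separately in the variable-dimension setting of \(\Mfd\).
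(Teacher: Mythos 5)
Your proof is correct. Note, however, that the paper itself offers no argument for this lemma --- it is introduced with ``It is not hard to verify the following results'' --- and the only proof-like material it supplies is the cross-reference to Remark~\ref{chap3:rem:pullback-along-cover-iso}, which together with Lemma~\ref{chap3:lemma:iso} proves a generalisation of part~(ii) purely categorically: in any category with a subcanonical pretopology, one shows the map \(L\to N\) is a monomorphism (by a pullback argument using that covers are effective epimorphisms) and an extremal epimorphism, hence an isomorphism. Your route is genuinely different and more elementary: for~(i) you use the chain rule plus the fact that the outer factor of a surjective composite of linear maps is surjective, and for~(ii) you bootstrap off~(i) via the map \(h=\pr_2\circ\pr_1^{-1}\), check injectivity of \(g\) by a point-set argument, and then upgrade the bijective submersion to a diffeomorphism via the submersion normal form (correctly handling the variable-dimension convention of \(\Mfd\) componentwise). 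What each approach buys: yours is self-contained and transparent at the level of charts and differentials, but it is tied to \(\Mfd\) --- the chain-rule step, the fibre-dimension count, and ``bijective local diffeomorphism \(\Rightarrow\) diffeomorphism'' all use smooth structure; the paper's categorical argument never mentions tangent spaces and therefore transfers verbatim to all the pretopologies of Example~\ref{chap3:exa:singleton-pretopoloty} (topological spaces with open surjections, étale covers, etc.), which is what the later chapters actually need. One small remark: in~(ii) you only use surjectivity of \(\pr_2\colon P\to L\), not that it is a submersion, and that surjectivity already follows from surjectivity of \(f\) alone; the submersion property of \(f\) is what guarantees \(P\) is a manifold in the first place, as you note.
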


\begin{proposition}
The class of weak equivalences satisfies the 2-out-of-3 and the 2-out-of-6 properties:
\begin{enumerate}
  \item For Lie groupoid functors \( A\to B\to C\), if any two of the three functors \(A\to B\), \(B\to C\) and \(A\to C\) are weak equivalences, then so is the third.
  \item For Lie groupoid functors \( A\to B\to C\to D\), if \(A\to C\) and \(B\to D\) are weak equivalences, then every functor is a weak equivalence.
\end{enumerate}
\end{proposition}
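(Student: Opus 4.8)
The plan is to check the two defining conditions of Definition~\ref{chap1:def:weak-equi-groupoid} — essential surjectivity and full faithfulness — separately for each functor in question, exploiting that full faithfulness is a pullback condition while essential surjectivity is a surjective-submersion condition, and feeding everything through Proposition~\ref{chap1:prop:weak-equivalence-properties} together with the cancellation properties of Lemma~\ref{chap1:lem:surj-sub-2-prop}.

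For the 2-out-of-3 property I would write the three functors as $f\colon A\to B$, $g\colon B\to C$ and $gf\colon A\to C$. The case in which $f$ and $g$ are weak equivalences is exactly Proposition~\ref{chap1:prop:weak-equivalence-properties}(ii). For the case ``$g$ and $gf$ are weak equivalences imply $f$ is,'' full faithfulness of $f$ is immediate from the pasting law for pullbacks: the outer full-faithfulness rectangle $S_{gf}$ and the right square $S_g$ are pullbacks, hence so is the left square $S_f$. Essential surjectivity of $f$ is equally clean: using that $g$ is fully faithful to rewrite arrows of $B$ as pairs of objects together with an arrow of $C$ between their images, the essential-surjectivity map of $f$ is identified with the pullback of the essential-surjectivity map of $gf$ along $g_0\colon B_0\to C_0$, and surjective submersions are stable under pullback.

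The genuinely delicate case is ``$f$ and $gf$ are weak equivalences imply $g$ is,'' because here the cancellations run in the wrong direction. The point to watch is that essential surjectivity of $f$ only says that $\target\circ\pr_2\colon A_0\times_{f_0,B_0,\source}B_1\to B_0$ is a surjective submersion, which is strictly weaker than $f_0$ being one (as Example~\ref{chap1:exa:cech-groupoid-weak-equivalence} shows), so one cannot cancel along $f_0$. Instead I would transport along arrows: for an object $b$ of $B$, essential surjectivity of $f$ produces, over a cover, an object $a$ and an arrow $f_0(a)\to b$, which I push into $C$ and compose to turn data for $gf$ into data for $g$. Essential surjectivity of $g$ then follows by factoring the essential-surjectivity map of $gf$ through that of $g$ after this transport and applying Lemma~\ref{chap1:lem:surj-sub-2-prop}(i) to the resulting surjection; full faithfulness of $g$ follows by pulling the comparison map $B_1\to (B_0\times B_0)\times_{C_0\times C_0}C_1$ back along the surjective submersion supplied by essential surjectivity of $f$, observing that on the cover the full faithfulness of $f$ and of $gf$ identifies the pullback with a diffeomorphism, and cancelling the cover by Lemma~\ref{chap1:lem:surj-sub-2-prop}(ii). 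I expect this transport-plus-cancellation to be the main obstacle of the 2-out-of-3 part.

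For the 2-out-of-6 property, write $A\xrightarrow{u}B\xrightarrow{v}C\xrightarrow{w}D$ with $vu$ and $wv$ weak equivalences. The reduction is to prove that the middle functor $v$ is a weak equivalence: once $v\in W$, the 2-out-of-3 property forces $u$ and $w$ (from $vu,v$ and from $wv,v$) and then $wvu$. The crux is that neither $vu$ nor $wv$ controls $v$ on its own, so I must transport through both simultaneously. The key manoeuvre is that full faithfulness of $wv$ lets me pull an arrow of $D$ back to a unique arrow of $B$, while essential surjectivity of $vu$ supplies arrows of $C$ (hence, after applying $w$, of $D$) out of the images of the objects I care about; composing these two transports produces the essential-surjectivity and full-faithfulness data for $v$ and for $u$. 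This double cross-transport, together with the verification — again via Lemma~\ref{chap1:lem:surj-sub-2-prop} — that the maps produced are not merely surjective but surjective submersions, is the hardest part of the whole proposition. (Alternatively, once the localisation of $\GPD$ at weak equivalences is in place and $W$ is known to be saturated, 2-out-of-6 becomes the formal statement that a morphism which becomes both split epi and split mono in the homotopy category is invertible; but I would prefer the self-contained transport argument here.)
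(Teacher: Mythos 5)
Your treatment of the 2-out-of-3 property is correct and is in substance the paper's own proof. The case ``\(g\) and \(gf\) imply \(f\)'' via pullback pasting and pullback-stability of surjective submersions is fine (this is the unproblematic case, which the paper defers to the cited reference), and in the hard case ``\(f\) and \(gf\) imply \(g\)'' your transport along arrows is exactly what the paper packages as the replacement of \(A\) by the weak pullback \(A\times^w_B B\), after which \(A_0\to B_0\) is a surjective submersion; from that point on both you and the paper conclude with Lemma~\ref{chap1:lem:surj-sub-2-prop}, part (i) for essential surjectivity of \(g\) and part (ii) for the comparison map \(B_1\to (B_0\times B_0)\times_{C_0\times C_0}C_1\).

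Part (ii) of your proposal, however, has a genuine gap: the middle functor \(v\colon B\to C\) cannot be the entry point of the argument. To prove essential surjectivity of \(v\) you would factor the essential-surjectivity map of \(vu\) as \(A_0\times_{C_0,\source}C_1\xrightarrow{u_0\times\id} B_0\times_{C_0,\source}C_1\to C_0\) and cancel by Lemma~\ref{chap1:lem:surj-sub-2-prop}(i); but that lemma needs \(u_0\times\id\) to be surjective, i.e.\ \(u_0\) surjective, which is not a hypothesis. Moreover, the transport that repaired exactly this defect in part (i) is now unavailable: moving an arbitrary object of \(B\) into the image of \(u\) \emph{is} essential surjectivity of \(u\), which is also not a hypothesis. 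The same wall appears if you try to establish full faithfulness of \(v\) first. So the stated reduction order fails.

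The repair is that your double cross-transport is precisely a proof that \(u\), not \(v\), is a weak equivalence, and \(u\) is the correct entry point. Concretely, pull the essential-surjectivity map of \(vu\) back along \(v_0\colon B_0\to C_0\) to obtain a surjective submersion \(E=(A_0\times_{C_0,\source}C_1)\times_{\target\circ\pr_2,\,C_0,\,v_0}B_0\to B_0\); full faithfulness of \(wv\) converts a point \((a,\delta\colon vu(a)\to v(b),b)\) of \(E\) into the unique \(\beta\colon u(a)\to b\) with \(wv(\beta)=w(\delta)\), giving a smooth map \(E\to A_0\times_{B_0,\source}B_1\) over \(B_0\); this map is surjective, since \((a,\beta)\) is hit by the point with \(\delta=v_1(\beta)\) (uniqueness in full faithfulness of \(wv\)). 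Now Lemma~\ref{chap1:lem:surj-sub-2-prop}(i) does apply and gives essential surjectivity of \(u\); full faithfulness of \(u\) follows from your pasting argument once one checks, by the universal property of pullbacks (using full faithfulness of \(vu\) and of \(wv\)), that the whole rectangle \(A_1\cong (A_0\times A_0)\times_{D_0\times D_0}D_1\) is a pullback. With \(u\) a weak equivalence, 2-out-of-3 cascades: \(u,vu\) give \(v\); then \(v,wv\) give \(w\); then the composites. This corrected route is genuinely different from the paper's, which establishes \(C\to D\) first and needs two ``without loss of generality'' replacements by weak pullbacks; yours avoids them. Finally, your instinct to avoid the saturation argument is right for the right reason: it would be circular, since this proposition is what verifies the hypotheses of the calculus of fractions.
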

\begin{proof}
  (i) is~\cite[Lemma 8.1]{Pronk-Scull2010}. The proof, however, is incorrect for the case that \(A\xrightarrow{\simeq} B\) and \(A\xrightarrow{\simeq} C\) being weak equivalences implies that so is \(B\to C\). To fill this gap, it suffices to show that if \(A\xrightarrow{\simeq} B\) and \(A\xrightarrow{\simeq} C\) are weak equivalences, then the right square below is a pullback diagram\footnote{This does not follow from the usual pullback lemma as suggested by~\cite{Pronk-Scull2010}.}:
  \[
  \xymatrix{
  A_1\ar[d]\ar[r] & B_1\ar[d]\ar[r] & C_1\ar[d]\\
  A_0\times A_0\ar[r] & B_0\times B_0\ar[r] & C_0\times C_0\rlap{\ .}
  }
  \]
  Without loss of generality, we may suppose that \(A_0\to B_0\) is a surjective submersion, otherwise we replace \(A\) by \(A\times^w_B B \). Consider the commutative diagram
\[
\xymatrix{
A_0\times_{C_0,\source} C_1\ar[d]\ar[r] & B_0\times_{C_0,\source} C_1\ar[d]\ar[r] & C_1\ar[d]^{\source}\ar[r]^{\target}& C_0\\
A_0 \ar[r]& B_0\ar[r] &C_0 \rlap{\ ,}
}
\]
where each square is a pullback diagram. Since \(A\to C\) is essentially surjective, so is \(B\to C\) by Lemma~\ref{chap1:lem:surj-sub-2-prop}. Denote by \(B_1'\) the pullback of the following diagram:
  \[
  \xymatrix{
           & C_1\ar[d]\\
  B_0\times B_0\ar[r] & C_0\times C_0 \rlap{\ .}
  }
  \]
  The universal property gives a natural map \(B_1 \to B_1'\). The assumption implies that the lower square and the whole square below are pullback diagrams:
  \[
  \xymatrix{
  A_1\ar[d]_{=}\ar[r] & B_1 \ar[d] \\
  A_1\ar[d]\ar[r] & B_1'\ar[d]\\
  A_0\times A_0 \ar[r]& B_0\times B_0\rlap{\ .}
  }
  \]
 Hence so is the upper square. Since \(A_0\to B_0\) is a surjective submersion, so is the map \(A_1\to B_1'\). Since the pullback of \(B\to B_1'\) along a surjective submersion is a diffeomorphism, Lemma~\ref{chap1:lem:surj-sub-2-prop} shows that \(B_1\to B_1'\) is a diffeomorphism, and we are done.

  To prove (ii), it suffices to show that \(A\to D\) is a weak equivalence; the rest then follows from the 2-out-of-3 property. Without loss of generality, we suppose that \(A_0\to C_0\) is a surjective submersion. The universal property of the pullback implies that the whole square in
  \[\xymatrix{
  A_1\ar[r]\ar[d] & B_1\ar[r]\ar[d]&C_1\ar[r]\ar[d]&D_1\ar[d]\\
  A_0\times A_0\ar[r]&B_0\times B_0\ar[r]&C_0\times C_0\ar[r]&D_0\times D_0
  }
  \]
  is a pullback diagram, so is the left most square. Since \(A_0\times A_0\to C_0\times C_0\) is a surjective submersion, a similar proof as in (i) shows that the right most square is a pullback diagram. Thus every square is a pullback diagram. It remains to show that \(A\to D\) is essential surjective. As above, \(B\to C\) being essential surjective implies that \(C\to D\) is \emph{set-theoretically} essential surjective. Since \(A\to C\) is essential surjective, \(A\to D\) is set-theoretically essential surjective. This proves the proposition \emph{set-theoretically}. Without loss of generality, we suppose that \(B_0\to C_0\) is surjective, then \(B\to D\) being essential surjective implies that \(C\to D\) is essential surjective, and we are done.
\end{proof}

\subsection{Generalised morphisms}

Examples~\ref{chap1:exa:cech-groupoid-weak-equivalence} and~\ref{chap1:exa:principal-bundle-weak-equivalence} show that we cannot invert weak equivalences in general. We can, however, formally invert them.

\begin{definition}
  A \emph{generalised morphism} between Lie groupoids \(G\) and \(H\) is a span of Lie groupoid functors with left leg being a weak equivalence:
  \[
  \xymatrix@1{
  G & K\ar[r]\ar[l]_{\simeq} & H\rlap{\ .}
  }
  \]
  An \emph{equivalence} between generalised morphisms \(G \xleftarrow{\simeq} K\to H\) and \(G \xleftarrow{\simeq} K'\to H\) is a 2\nbdash{}commutative diagram
  \begin{equation}\label{chap1:eq:GEN-2-morphisms}
  \begin{gathered}
  \xymatrix{
          & K\ar[dl]_{\simeq}\ar[dr] & \\
        G & L\ar[u]_{\simeq}\ar[d]^{\simeq} & H\\
          & K'\ar[lu]^{\simeq}\ar[ru]&
  }
  \end{gathered}
  \end{equation}
  with weak equivalences \(L\xrightarrow{\simeq} K\) and \(L\xrightarrow{\simeq} K'\).
\end{definition}

Generalised morphisms are composed by weak pullback:
\[
\xymatrix@=0.6cm{
  &   & K\times^w_{H} M\ar[ld]_{\simeq}\ar[rd] & & \\
  & K\ar[ld]_{\simeq}\ar[rd] &   & M\ar[ld]_{\simeq}\ar[rd]&   \\
G &   & H &   & K \rlap{\ .}
}
\]
The composition is not associative, but associative up to a canonical equivalence by the universal property of weak pullbacks.

\begin{definition}
The \emph{spanisation} of a Lie groupoid functor \(f\colon G\to H\) is the generalised morphism \(\mathfrak{G}(f) \colon G\xleftarrow{\id} G\xrightarrow{f} H\).
\end{definition}

\begin{proposition}[{\cite[Corollary 2.13]{Moerdijk-Mrcun2005}}]
  Lie groupoids and generalised morphism up to equivalences form a category, denoted by \(\Gen\). The spanisation \(\mathfrak{G}\) extends to a functor \(\Gpd \to \Gen\). Moreover, a generalised morphism is an isomorphism in \(\Gen\) if and only if its right leg is also a weak equivalence.
\end{proposition}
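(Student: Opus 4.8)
The plan is to establish the three assertions in order, leaning throughout on the properties of weak equivalences collected in Proposition~\ref{chap1:prop:weak-equivalence-properties}, in particular that a weak pullback along a weak equivalence exists and that the projection off such a weak pullback is again a weak equivalence. First I would check that the relation~\eqref{chap1:eq:GEN-2-morphisms} is an equivalence relation: reflexivity and symmetry are immediate from the symmetric shape of the diagram, while for transitivity, given refinements \(L\xrightarrow{\simeq}K,K'\) and \(L'\xrightarrow{\simeq}K',K''\), I would form the weak pullback \(L\times^w_{K'}L'\), whose projections are weak equivalences by Proposition~\ref{chap1:prop:weak-equivalence-properties}(iii), and compose with the existing legs (using part (ii)) to obtain a common refinement. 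That composition descends to equivalence classes follows from the universal property of the weak pullback together with part (iii): replacing a span by an equivalent one yields weak pullbacks linked by a weak equivalence, giving the required refinement. Associativity up to a canonical equivalence has already been noted, so it is strict on classes; the identity is \(\mathfrak{G}(\id_G)\), and the unit laws hold because the composite of \(G\xleftarrow{\simeq}K\to H\) with \(\mathfrak{G}(\id_H)\) has apex \(K\times^w_H H\), which receives a weak equivalence from \(K\). For functoriality of \(\mathfrak{G}\), the equality \(\mathfrak{G}(\id_G)=\id_G\) is immediate, and for \(G\xrightarrow{f}H\xrightarrow{g}K\) the functor \(G\to G\times^w_H H\) determined by \(f\) and the identity \(2\)-cell is a weak equivalence furnishing a refinement~\eqref{chap1:eq:GEN-2-morphisms} between \(\mathfrak{G}(g)\circ\mathfrak{G}(f)\) and \(\mathfrak{G}(g\circ f)\).

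For the isomorphism criterion, the ``if'' direction is constructive: if both legs of \(G\xleftarrow{\simeq}K\xrightarrow{\simeq}H\) are weak equivalences, the flipped span is an inverse, the two composites having apices \(K\times^w_H K\) and \(K\times^w_G K\) into which the diagonal of \(K\) is a weak equivalence by the 2-out-of-3 property; this diagonal, together with a leg of \(K\), supplies the refinement to \(\mathfrak{G}(\id)\). In particular \(\mathfrak{G}(w)\) is invertible for every weak equivalence \(w\). Since any generalised morphism factors as \(\mathfrak{G}(\phi)\circ\mathfrak{G}(w)^{-1}\), with \(w\) its left leg and \(\phi\) its right leg, for the ``only if'' direction I may cancel \(\mathfrak{G}(w)^{-1}\) and reduce to showing that \(\phi\) is a weak equivalence whenever \(\mathfrak{G}(\phi)\) is invertible.

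Writing \(n\) for the inverse generalised morphism \(H\xleftarrow{\simeq}L\xrightarrow{\rho}G\), the equivalence \(n\circ\mathfrak{G}(\phi)\cong\mathfrak{G}(\id_G)\) has apex the weak pullback \(P:=G\times^w_H L\), and comparing its span with the identity span forces \(\rho\circ\pr_L\colon P\to G\) to be a weak equivalence (the other projection already being one by Proposition~\ref{chap1:prop:weak-equivalence-properties}(iii)); dually \(\mathfrak{G}(\phi)\circ n\cong\mathfrak{G}(\id_H)\) forces \(\phi\circ\rho\colon L\to H\) to be a weak equivalence, via the \(2\)-cell of the weak pullback and Proposition~\ref{chap1:prop:weak-equivalence-properties}(i). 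Feeding these into the chain
\[
P\xrightarrow{\pr_L}L\xrightarrow{\rho}G\xrightarrow{\phi}H,
\]
in which \(P\to G\) and \(L\to H\) are now weak equivalences, the 2-out-of-6 property yields that all three maps, and in particular \(\phi\), are weak equivalences. This last step is the one I expect to be the main obstacle: \(\phi\) sits buried inside the two composite weak pullbacks, and it is precisely the chain above, invoking the 2-out-of-6 property, that liberates it — which is why that property was recorded earlier.

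The remaining care is bookkeeping of two kinds. I must ensure every weak pullback in sight exists; this is guaranteed by the essential-surjectivity half of each weak equivalence through Proposition~\ref{chap1:prop:induced-groupoid-exists}. And I must confirm that ``equivalence of spans'' transports the weak-equivalence property along the refinements in~\eqref{chap1:eq:GEN-2-morphisms}, which rests on the 2-out-of-3 property together with Proposition~\ref{chap1:prop:weak-equivalence-properties}(i)--(ii), occasionally supplemented by the surjective-submersion cancellation in Lemma~\ref{chap1:lem:surj-sub-2-prop} when intermediate maps are only covers rather than equivalences on the nose.
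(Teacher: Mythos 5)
Your proof is correct and follows essentially the same route as the paper: the paper establishes this proposition by citing Moerdijk--Mr\v{c}un and remarking that it follows from Gabriel--Zisman's calculus of fractions, whose conditions are precisely verified by Proposition~\ref{chap1:prop:weak-equivalence-properties}, with the 2-out-of-6 property supplying the isomorphism criterion. Your argument is the explicit unfolding of exactly that machinery --- weak pullbacks for composition and well-definedness on equivalence classes, the listed properties of weak equivalences throughout, and the chain \(P\to L\to G\to H\) resolved by 2-out-of-6 for the final claim --- so it matches the intended proof step for step.
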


\begin{remark}
  The category \(\Gen\) is constructed by Gabriel--Zisman's calculus of fractions~\cite{Gabriel-Zisman} in the category of Lie groupoids; here the morphisms are functors up to natural transformations. This general machinery allows to formally invert a class of weak equivalences in a category. Proposition~\ref{chap1:prop:weak-equivalence-properties} verifies the conditions for a calculus of fractions. The 2-out-of-6 property implies the last statement.
\end{remark}

\begin{example}\label{chap1:exa:principal-bundle-cocycle}
  Let \(G\) be a Lie group and \(M\) a manifold. A right principal \(G\)-bundle over \(M\) can be given by local trivialization data \(\{g_{ij}\}\) with respect to an open cover \(\{U_i\}\). Let \(C(U)\) be the associated \v{C}ech groupoid. Then \(\{g_{ij}\}\) defines a Lie groupoid functor \(C(U)\to G\), and we obtain a generalised morphism
  \[
  \trivial{M} \xleftarrow{\simeq} C(U) \to G.
  \]
  Furthermore, if two trivializations have a common refinement, then the corresponding two generalised morphisms are equivalent. Under some mild assumptions, the geometric realisation of the nerve of this generalised morphism gives the classifying map \(M\to BG\), where \(BG\) is the classifying space of \(G\); see~\cite{Segal}.
\end{example}

\begin{example}\label{chap1:exa:principal-bundle-action-groupoid}
  Let \(G\) be a Lie group and \(P\to M\) a right principal \(G\)\nbdash{}bundle. There is a generalised morphism
  \[
  \trivial{M} \xleftarrow{\simeq} P\rtimes G\to G.
  \]
  Under some mild assumptions, the geometric realisation of the nerve of this generalised morphism gives the classifying map \(M\to BG\); see~\cite{May}.
\end{example}

Moreover, Pronk~\cite{Pronk} showed that there is a 2\nbdash{}category if we remember the higher equivalence relations.  Proposition~\ref{chap1:prop:weak-equivalence-properties} allows for a calculus of fractions in the 2\nbdash{}category setting.

\begin{theorem}
  The 2\nbdash{}category of Lie groupoids \(\GPD\) admits a 2\nbdash{}categorical calculus of fractions with respect to weak equivalences. We denote the resulting 2\nbdash{}category by \(\GEN\). Moreover, the spanisation \(\mathfrak{G}\) extends to a 2\nbdash{}functor \(\GPD\to \GEN\).
\end{theorem}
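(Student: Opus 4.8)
The plan is to invoke Pronk's general construction of a bicategory of fractions~\cite{Pronk}: if a class $W$ of $1$\nbdash{}morphisms in a bicategory $\mathcal{B}$ satisfies the five axioms (BF1)--(BF5) for a right bicategorical calculus of fractions, then there is a bicategory $\mathcal{B}[W^{-1}]$ together with a universal localisation pseudofunctor $\mathcal{B}\to\mathcal{B}[W^{-1}]$ sending each element of $W$ to an equivalence. It therefore suffices to take $\mathcal{B}=\GPD$ and $W$ the class of weak equivalences, and to check these five axioms; almost all the work is already contained in Proposition~\ref{chap1:prop:weak-equivalence-properties}. I note at the outset that $\GPD$ is a $(2,1)$\nbdash{}category, so every $2$\nbdash{}morphism is invertible, which streamlines several of Pronk's conditions.

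First I would dispatch the four routine axioms. Axiom (BF1), that every equivalence (strong equivalence) lies in $W$, is exactly~\cite[Proposition 5.11]{Moerdijk-Mrcun2003}. Axiom (BF2), closure of $W$ under composition, is Proposition~\ref{chap1:prop:weak-equivalence-properties}(ii). Axiom (BF5), closure of $W$ under $2$\nbdash{}isomorphism, follows from Proposition~\ref{chap1:prop:weak-equivalence-properties}(i), since a $2$\nbdash{}isomorphism $w\Rightarrow w'$ is in particular a natural transformation, so $w\in W$ forces $w'\in W$. For the weak-pullback axiom (BF3), given $w\colon A\xrightarrow{\simeq}B$ in $W$ and any $f\colon C\to B$, I would form the weak pullback $P=A\times^w_B C$; Proposition~\ref{chap1:prop:weak-equivalence-properties}(iii) guarantees that it exists and that the projection $P\to C$ is a weak equivalence, while the canonical natural transformation $\tau$ attached to the weak pullback fills the square up to a $2$\nbdash{}isomorphism, producing exactly the data (BF3) requires with $g\colon P\to A$.

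The delicate axiom is (BF4), which controls how $2$\nbdash{}cells cancel against weak equivalences. I would base its verification on the observation that full faithfulness of a weak equivalence $w\colon H\xrightarrow{\simeq}K$ (Definition~\ref{chap1:def:weak-equi-groupoid}(ii)) makes whiskering by $w$ a \emph{bijection} on natural transformations with prescribed source and target: the pullback square defining full faithfulness exhibits $w_1\colon H_1\to K_1$ as a base change of $K_0\times K_0$ along $H_0\times H_0$, so post-composition $\alpha\mapsto w\ast\alpha$ on natural transformations $\varphi\Rightarrow\psi$ is injective, while Proposition~\ref{chap1:prop:weak-equivalence-properties}(iv) supplies surjectivity. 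The existence half of (BF4) — given $\alpha\colon w\varphi\Rightarrow w\psi$, find $v\in W$ and $\beta$ with $w\ast\beta=\alpha\ast v$ — is then met by taking $v=\id$ and letting $\beta$ be the unique preimage of $\alpha$; the essential-uniqueness half holds on the nose because the preimage is unique. I expect the main obstacle to be matching Pronk's precise whiskering and coherence bookkeeping to this Hom-wise bijection, rather than the geometric content itself.

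With (BF1)--(BF5) verified, Pronk's theorem applies and produces the bicategory of fractions, which I would name $\GEN$, together with its universal localisation pseudofunctor. Finally, I would identify this localisation pseudofunctor with the spanisation: on objects it is the identity, and it sends a functor $f\colon G\to H$ to the span $G\xleftarrow{\id}G\xrightarrow{f}H$, which is precisely $\mathfrak{G}(f)$. The pseudofunctoriality — coherent compatibility with composition and units up to the canonical $2$\nbdash{}isomorphisms arising from weak pullbacks — is part of the output of Pronk's construction, so no separate check is needed, and this shows $\mathfrak{G}$ extends to a $2$\nbdash{}functor $\GPD\to\GEN$ as claimed.
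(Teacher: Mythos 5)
Your proposal is correct and follows essentially the same route as the paper: the paper likewise invokes Pronk's bicategorical calculus of fractions~\cite{Pronk} and observes that Proposition~\ref{chap1:prop:weak-equivalence-properties} (together with~\cite[Proposition 5.11]{Moerdijk-Mrcun2003} for equivalences) verifies the required axioms, with the spanisation appearing as Pronk's universal localisation pseudofunctor. Your explicit check of (BF1)--(BF5), including the full-faithfulness bijection argument for (BF4) with \(v=\id\), is a sound filling-in of the details the paper leaves to the reader.
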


In the 2\nbdash{}category \(\GEN\), objects are Lie groupoids, 1-morphisms are generalised morphisms, and 2\nbdash{}morphisms are diagrams of the form~\eqref{chap1:eq:GEN-2-morphisms} \emph{up to a higher equivalence relation}.

\subsection{Anafunctors}
Using the anafunctors of Makkai~\cite{Makkai}, Roberts~\cite{Roberts} constructed a slight variant of the 2\nbdash{}category \(\GEN\). This approach uses the strong pullback and gets representatives of 2\nbdash{}morphisms instead of equivalence classes.

\begin{definition}
An \emph{anafunctor} of Lie groupoids from \(G\) to \(H\) is a generalised morphism
  \[
  \xymatrix@1{ G &K\ar@{->>}[l]_{\simeq}\ar[r] &  H}
  \]
such that \(K_0\to G_0\) is a surjective submersion. An \emph{ananatural transformation} from an anafunctor \(\xymatrix@1{ G &K\ar@{->>}[l]_{\simeq}\ar[r] &  H}\) to \(\xymatrix@1{ G &K'\ar@{->>}[l]_{\simeq}\ar[r] &  H}\) is a natural transformation between the composite functors \(K\times_G K'\to K\to  H\) and \(K\times_G K'\to  K' \to  H\).
\end{definition}

Anafunctors are composed by the strong pullback,
\[
\xymatrix@=0.6cm{
  &   & K\times_{H} M\ar@{->>}[ld]_{\simeq}\ar[rd] & & \\
  & K\ar@{->>}[ld]_{\simeq}\ar[rd] &   & M\ar@{->>}[ld]_{\simeq}\ar[rd]&   \\
G &   & H &   & K\rlap{\ ;}
}
\]
the formulas for vertical and horizontal compositions of natural transformations are involved; see~\cite{Makkai,Roberts} for details.

\begin{proposition}[\cite{Roberts}]
Lie groupoids, anafunctors, and ananatural transformations form a 2\nbdash{}category, denoted by \(\ANA\).
\end{proposition}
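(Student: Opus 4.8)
The plan is to verify the data and axioms of a bicategory, treating the underlying combinatorial structure as the one established by Roberts~\cite{Roberts} for internal categories, and concentrating on the points special to \(\Mfd\): that every strong pullback entering the construction exists as a Lie groupoid and that each composite span is again an anafunctor. The identity \(1\)\nbdash{}morphism on \(G\) is the spanisation \(G\xleftarrow{\id}G\xrightarrow{\id}G\), which is an anafunctor since \(\id\colon G_0\to G_0\) is trivially a surjective submersion. The \(2\)\nbdash{}morphisms are ananatural transformations, with vertical composition given by ordinary pasting over the common strong pullback and horizontal composition by the (more involved) formula of Roberts, whose well-definedness over \(\Mfd\) reduces again to the existence of the strong pullbacks discussed below.

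First I would make composition well-defined. Given anafunctors \(G\xleftarrow{\simeq}K\to H\) and \(H\xleftarrow{\simeq}M\to L\), I form the strong pullback \(K\times_H M\). The essential observation is that the left leg \(M\to H\), being a weak equivalence, identifies \(M\) with the induced groupoid of \(g_0\colon M_0\to H_0\) (Definition~\ref{chap1:def:weak-equi-groupoid} and the following remark); hence \(M_1=M_0\times_{H_0,\target}H_1\times_{\source,H_0}M_0\), and \emph{both} \(M_0\to H_0\) and \(M_1\to H_1\) are surjective submersions, the latter obtained by pulling back \(g_0\) along \(\source\) and \(\target\). Consequently the pairs \((K_0\to H_0,\,M_0\to H_0)\) and \((K_1\to H_1,\,M_1\to H_1)\) are transversal, so \(K\times_H M\) is a Lie groupoid by the transversality criterion for strong pullbacks.

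Next I would check that the resulting span is an anafunctor from \(G\) to \(L\). On objects, \((K\times_H M)_0=K_0\times_{H_0}M_0\to K_0\) is a surjective submersion as a pullback of \(M_0\to H_0\); and the projection \(K\times_H M\to K\) is a weak equivalence, since pulling back the weak equivalence \(M\to H\) along the right leg of \(K\) preserves weak equivalences when \(M_0\to H_0\) is a surjective submersion (a direct verification, identifying \(K\times_H M\) with \((K\times_H M)_0^*K\) exactly as in the fully faithful case; compare Proposition~\ref{chap1:prop:weak-equivalence-properties}(iii), using Lemma~\ref{chap1:lem:surj-sub-2-prop} for essential surjectivity). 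Composing with \(K\xrightarrow{\simeq}G\) and using Proposition~\ref{chap1:prop:weak-equivalence-properties}(ii) shows the left leg \(K\times_H M\to G\) is a weak equivalence, while \((K\times_H M)_0\to K_0\to G_0\) is a composite of surjective submersions. Thus \(K\times_H M\) is again an anafunctor.

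It remains to produce the associator and unitors and to verify the coherence axioms. These I would obtain formally: the associativity isomorphism \((K\times_H M)\times_L N\cong K\times_H(M\times_L N)\) and the unit isomorphisms \(G\times_G K\cong K\cong K\times_H H\) are the canonical comparison maps supplied by the universal property of strong pullbacks, and the pentagon and triangle diagrams commute by uniqueness in that universal property, precisely as in Roberts~\cite{Roberts}. The main obstacle is therefore not this formal bookkeeping but the geometric input isolated in the second and third paragraphs: guaranteeing that each strong pullback is genuinely a manifold. This is exactly what the surjective-submersion condition in the definition of an anafunctor buys us, and it is the reason \(\ANA\) is built from strong rather than weak pullbacks.
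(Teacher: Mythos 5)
The paper itself gives no proof of this proposition: it is imported from Roberts~\cite{Roberts}, with only the remarks that composition is by strong pullback and that ``the formulas for vertical and horizontal compositions of natural transformations are involved; see~\cite{Makkai,Roberts}.'' Your treatment of the \(1\)\nbdash{}cell level is correct and is exactly the \(\Mfd\)\nbdash{}specific content one has to supply: since the left leg \(M\to H\) of an anafunctor is fully faithful with \(M_0\to H_0\) a surjective submersion, \(M\cong g_0^*(H)\) (Definition~\ref{chap1:def:weak-equi-groupoid} and Proposition~\ref{chap1:prop:induced-groupoid-exists}), so \(M_1\to H_1\) is a surjective submersion and the strong pullback \(K\times_H M\) is a Lie groupoid; moreover \(K\times_H M\cong p_0^*K\) for the surjective submersion \(p_0\colon K_0\times_{H_0}M_0\to K_0\), so the projection to \(K\) is a weak equivalence with object part a cover, and by Proposition~\ref{chap1:prop:weak-equivalence-properties} and Lemma~\ref{chap1:lem:surj-sub-2-prop} the composite span is again an anafunctor. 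The formal coherence argument via the universal property of pullbacks is also fine.

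The genuine gap is in your parenthetical claim that well-definedness of the vertical and horizontal compositions of ananatural transformations ``reduces again to the existence of the strong pullbacks.'' It does not. Given ananatural transformations \(a\colon (K)\Rightarrow(K')\) and \(b\colon (K')\Rightarrow(K'')\) between anafunctors from \(G\) to \(H\), the datum \(a\) lives on \(K\times_G K'\) and \(b\) on \(K'\times_G K''\); there is no ``common strong pullback'' on which both are already defined. One pulls both back to the triple pullback \(K\times_G K'\times_G K''\), pastes there, and must then \emph{descend} the resulting natural transformation along the projection to \(K\times_G K''\). This descent is not a consequence of pullback existence: it uses that \(K'_0\to G_0\) is a surjective submersion, hence an effective epimorphism (Lemma~\ref{chap3:lem:subcanonical-effective-epi}), so that a smooth map into \(H_1\) which is constant on the fibres of the projection (constancy being forced by naturality and full faithfulness) descends to a smooth map on \((K\times_G K'')_0\). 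This is the second geometric input Roberts's construction extracts from \((\Mfd,\covers_\subm)\), and it is precisely where the subcanonical pretopology enters; without it the \(2\)\nbdash{}cell compositions, and hence the bicategory \(\ANA\), cannot be assembled. If you defer the combinatorial formulas to~\cite{Roberts}, you must defer (or reproduce) this descent argument as well, rather than fold it into pullback existence.
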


\begin{remark}\label{chap1:rem:ana-gen}
 A generalised morphism \(G\xleftarrow{\simeq} K\to H\) is equivalent to the anafunctor:
\[
 \xymatrix@1{ G & G\times^w_G K\ar@{->>}[l]_-{\simeq}\ar[r] &  H}.
\]
\end{remark}

\section{Groupoid actions and HS bibundles}\label{chap1:sec:action-bibundle}

Groupoid actions, principal bundles, and HS bibundles are introduced in this section. Then we construct the third 2\nbdash{}category of Lie groupoids by HS bibundles.

\subsection{Lie groupoid actions}
\begin{definition}
  Let \(G\) be a Lie groupoid, \(P\) a manifold, and \(J\colon P\to G_0\) a smooth map, called \emph{moment map}. A \emph{right action} of \(G\) on \(P\) along \(J\) is given by an action map
\[
\action\colon P\times_{G_0,\target}G_1\to P,
\]
which is also denoted by \(\action(p, g)=pg\) or \( p\cdot g\). The action map must satisfy
\[
\source(g)=J(pg),\quad p\unit(J(p))=p, \quad (pg)g'=p(gg'),
\]
for \(g, g'\in G_1\) and \(p\in P\) with \(\target(g)=J(p)\) and \(\target(g')=\source(g)\).

Given two right \(G\)\nbdash{}actions \((J_1,\action_1)\) on \(P\) and \((J_2,\action_2)\) on \(Q\), a map \(f\colon P\to Q\) is \emph{equivariant} if \(J_1=J_2\circ f\) and \(f(pg)=f(p)g\) for \((p, g)\in P\times_{G_0,\target} G_1 \).
\end{definition}

\begin{remark}\label{chap1:def:inverse-action}
  Left actions are defined analogously. Given a right \(G\)\nbdash{}action on \(P\) along~\(J\), then \(g p\coloneqq p g^{-1}\) defines a left action on $P$ along $J$, and vise versa.
\end{remark}

An action of \(G\) on \(P\) induces an equivalence relation on \(P\). The quotient space \(P/G\), which is the coequaliser of the diagram
\[
\xymatrix@1{
P\times_{G_0,\target} G_1\ar@<0.5ex>[r]^-{\action}\ar@<-0.5ex>[r]_-{\pr_1} & P,
}
\]
is usually ill-behaved even for a Lie group action.\footnote{For instance, consider the \(\bZ\)\nbdash{}action on \(\bR/\bZ\) by
\((r, n)\mapsto r+n\alpha\) for an irrational \(\alpha\).} We may, however, form a new Lie groupoid which is a better replacement of the quotient space.

\begin{definition}
  Given a right action \((J, \action)\) of a Lie groupoid \(G\) on \(P\), its \emph{action groupoid} \(P\rtimes G \) is a Lie groupoid with \((P\rtimes G)_0=P\) and \((P\rtimes G)_1= P\times_{G_0,\target} G_1\). The source map is~\(\action\), the target map is \(\pr_1\colon P\times_{G_0,\target} G_1\to P\), and the composition is given by the composition in \(G\). There is a Lie groupoid functor
  \[
  \pi\colon P\rtimes G \to G
  \]
 given by \(J\colon P\to G_0\) and \(\pr_2\colon P\times_{G_0,\target} G_1\to G_1\).
\end{definition}

\begin{remark}\label{chap1:rem:gpd-action-imaginary-picture}
  We may think of \(G_1\) as the space of arrows from black points to black points and \(P\) as the space of arrows from black points to white points. The moment map \(J\) specifies the source of an arrow in~\(P\). The action gives us a way to compose an arrow in~\(P\) and an arrow in \(G_1\) if their source and target match; see Figure~\ref{chap1:fig:groupoid-action}.
\begin{figure}[htbp]
  \centering
  \begin{tikzpicture}%
  [>=latex',every label/.style={scale=0.8},scale=0.8]
  \node[mydot,fill=black] at (6,0) (x0) {};
  \node[mydot,fill=black] at (4,0) (x1) {};
  \node[mydot,fill=black] at (2,0) (x2) {};
  \node[mydot]            at (0,0) (x3) {};
  \path[->]
     (x0) edge[out=160, in=20] node[scale=0.8,below]{$g'$} (x1)
	 edge[out=150, in=30] node[scale=0.8,above]{$gg'$}(x2)
   edge[out=140, in=40] node[scale=0.8,above]{$pgg'$}(x3)
     (x1) edge[out=160, in=20] node[scale=0.8,below]{$g$} (x2)
   edge[out=150, in=30] node[scale=0.8,above]{$pg$}(x3)
     (x2) edge[out=160, in=20] node[scale=0.8,below]{$p$} (x3);
  \end{tikzpicture}
    \caption{Groupoid action}\label{chap1:fig:groupoid-action}
\end{figure}
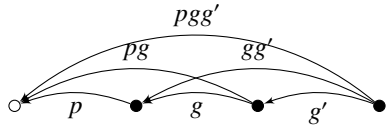
\end{remark}

\begin{example}
 Let \(G\) be a Lie groupoid. There is a natural right action on \(G_0\) with moment map \(\id\colon G_0\to G_0\) and with action map \(\source\colon G_1\cong G_0\times_{\id,G_0,\target} G_1\to G_0\). The action groupoid of this action is \(G\). This action may be regarded as a universal action. Given a right \(G\) action \((J, \action)\) on \(P\), then \(J\colon P\to G_0\) is an equivariant map.
\end{example}

\begin{example}\label{chap1:exa:two-actions-pullback}
  Let a Lie groupoid \(G\) act on manifolds \(P\) and \(Q\). Suppose that either \(P\to G_0\) or \(Q\to G_0\) is a surjective submersion. Then there is a \(G\)\nbdash{}action on \(P\times_{G_0} Q\) given by \((p, q)\cdot g= (p\cdot g, q\cdot g)\). This is the product in the category of spaces with a \(G\)\nbdash{}action.
\end{example}

\subsection{Principal bundles}
\begin{definition}\label{chap1:def:principal-bundle}
  Let \(G\) be a Lie groupoid. A right \(G\)\nbdash{}\emph{bundle} over a manifold \(M\) is a manifold \(P\) with a right \(G\)-action \((J, \action)\) and an invariant map \(\kappa\colon P\to M\); that is, \(\kappa(pg)=\kappa(p)\) for \(p\in P\) and \(g\in G_1\) with \(J(p)=\target(g)\). Left bundles are defined similarly.
  \[
  \xymatrix{
   & P\ar[dl]^{\kappa}\ar[dr]_{J} & G_1\ar@<-0.5ex>[d]_{\target}\ar@<0.5ex>[d]^{\source}\\
  M&   & G_0
  }
  \]
  We say that \(P\) is \emph{principal} or \(G\) acts on \(P\) principally if
  \begin{enumerate}
    \item the map \(\kappa\) is a surjective submersion;
    \item the \emph{shear map} below is a diffeomorphism:
    \begin{equation}\label{chap1:eq:shear-map-(p,g)->(p,pg)}
    (\pr_1, \action)\colon P\times_{G_0,\target} G_1\to P\times_M P, \quad (p, g) \mapsto (p, pg).
    \end{equation}
  \end{enumerate}
  A morphism between principal \(G\)-bundles \(\kappa\colon P\to M\) and \(\kappa'\colon P'\to M'\) is an equivariant map \(f\colon P\to P'\) that commutes with all structure maps.
\end{definition}

\begin{remark}
  Let \(G\) act on \(P\). Example~\ref{chap1:exa:G1=G0-times-G0-over-G} yields a weak pullback square
  \[
  \xymatrix{
  \trivial{P\times_{G_0,\target} G_1}\ar[r]\ar[d]  & \trivial{P}\ar[d] \\
  \trivial{P}\ar[r]& P\rtimes G \rlap{\ .}
  }
  \]
  Comparing with~\eqref{chap1:eq:shear-map-(p,g)->(p,pg)}, we may regard every action as a principal bundle \(\trivial{P}\to P\rtimes G\) with a Lie groupoid as base instead of a manifold; see also~\cite{Carchedi}.
\end{remark}

\begin{remark}
Let \(\kappa\colon P\to M\) be a principal \(G\)-bundle. The shear map~\eqref{chap1:eq:shear-map-(p,g)->(p,pg)} induces an isomorphism \(P\rtimes G\cong C(\kappa)\), where \(C(\kappa)\) is the \v{C}ech groupoid. Thus we obtain an isomorphism of quotient spaces \(P/G\cong M\) \textup(the quotient space of \(C(\kappa)\) is \(M\); see Lemma~\ref{chap3:lem:subcanonical-effective-epi}\textup).
\end{remark}

\begin{example}
  A Lie groupoid \(G\) acts on \(G_1\) on the right with \(J=\source\colon G_1\to G_0\) and with action map equal to the composition in~\(G\). Moreover, \(\target\colon G_1 \to G_0\) is a right principal \(G\)\nbdash{}bundle. Similarly, \(\source\colon G_1 \to G_0\) is a right principal \(G\)\nbdash{}bundle, and these two actions commute.
\end{example}

\begin{definition}
 Let \(\kappa\colon P\to M\) be a right \(G\)-bundle. Given a smooth map \(f\colon B\to M\), there is an \emph{induced bundle} structure on \(f^*P=B\times_M P\to B\) with action given by the action on~\(P\). If \(P\to M\) is principal, then so is \(f^* P\to B\). A right principal bundle is \emph{trivial} if it is of the form \(f^* G_1\) for some map \(f\colon B\to G_0\).
\end{definition}

The following lemma characterises trivial principal bundles. In particular, every principal bundle is locally trivial.
\begin{lemma}[{\cite[Lemma 3.19]{Lerman}}]
A right principal \(G\)-bundle \(\kappa\colon P\to M\) is trivial if and only if \(\kappa\) admits a section.
\end{lemma}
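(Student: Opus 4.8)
The plan is to prove the two implications separately: the forward direction is formal, while the converse requires building an explicit trivialisation out of the section.

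\emph{The easy direction.} Suppose $\kappa$ is trivial, so there is an isomorphism of principal bundles $P\cong f^{*}G_1=M\times_{f,G_0,\target}G_1$ for some $f\colon M\to G_0$, where $G_1$ carries the right $G$-action given by composition and bundle projection $\target$. The unit of $G$ then supplies a canonical section: the map $m\mapsto(m,\unit(f(m)))$ is well defined because $\target(\unit(f(m)))=f(m)$, and it is a section of the projection $f^{*}G_1\to M$. Transporting it through the isomorphism yields a section of $\kappa$.

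\emph{The converse.} Now suppose $s\colon M\to P$ is a section of $\kappa$, and set $f\coloneqq J\circ s\colon M\to G_0$. I would exhibit mutually inverse morphisms of principal bundles between $P$ and $f^{*}G_1$. The crucial ingredient is the inverse of the shear map~\eqref{chap1:eq:shear-map-(p,g)->(p,pg)}: since $\kappa$ is principal, $(\pr_1,\action)\colon P\times_{G_0,\target}G_1\to P\times_M P$ is a diffeomorphism, so there is a smooth division map $\tau\colon P\times_M P\to G_1$ characterised by $p\cdot\tau(p,q)=q$ and $\target(\tau(p,q))=J(p)$ whenever $\kappa(p)=\kappa(q)$. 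Using it, I define $\Phi\colon P\to f^{*}G_1$ by $\Phi(p)=(\kappa(p),\tau(s(\kappa(p)),p))$ and $\Psi\colon f^{*}G_1\to P$ by $\Psi(m,g)=s(m)\cdot g$. Both are smooth; $\Phi$ lands in the fibre product because $\target(\tau(s(\kappa(p)),p))=J(s(\kappa(p)))=f(\kappa(p))$, and $\Psi$ is defined because $\target(g)=f(m)=J(s(m))$. Invoking the section identity $\kappa\circ s=\id_M$, invariance of $\kappa$, the defining property of $\tau$, and uniqueness in the shear diffeomorphism, one checks $\Psi\circ\Phi=\id_P$ and $\Phi\circ\Psi=\id$. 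Finally $\Psi$ is a morphism of principal bundles: equivariance $\Psi((m,g)\cdot h)=\Psi(m,g)\cdot h$ follows from associativity of the action, the moment maps match since $J(s(m)\cdot g)=\source(g)$, and compatibility with the projections holds because $\kappa(s(m)\cdot g)=m$.

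The only place principality is genuinely used is the smoothness of $\tau$, which is precisely the inverse of the shear diffeomorphism; everything else is bookkeeping with moment maps, targets, and the section identity. So I expect the sole real obstacle to be organising this smoothness correctly, the conceptual heart being that a section lets us compare every point of a fibre against the distinguished point $s(\kappa(p))$ by a unique, smoothly varying arrow of $G$.
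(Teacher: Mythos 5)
Your proof is correct. The thesis itself gives no proof of this lemma—it is quoted directly from \cite[Lemma 3.19]{Lerman}—and your argument is exactly the standard one: use the section together with the inverse of the shear map~\eqref{chap1:eq:shear-map-(p,g)->(p,pg)} (your smooth division map $\tau$) to build the trivialising isomorphism. One available shortcut: by Lemma~\ref{chap1:lem:principal-equivariant-map-is-iso}, any equivariant map of principal $G$-bundles over the same base is automatically an isomorphism, so once you have checked that $\Psi(m,g)=s(m)\cdot g$ is an equivariant bundle morphism you need not construct $\Phi$ or verify the two inverse identities at all.
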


\begin{lemma}\label{chap1:lem:principal-equivariant-map-is-iso}
  An equivariant map \(P\to P'\) between principal \(G\)\nbdash{}bundles over the same manifold \(M\) is an isomorphism. If \((f_1, f_0)\) is an equivariant map between principal \(G\)\nbdash{}bundles \(P\to M\) and \(Q\to N\), then \(P\) is isomorphic to the induced bundle \(f_0^*Q\); in particular, the diagram below is a pullback square:
  \begin{equation}\label{chap1:eq:principal-bundle-equi-map}
  \begin{gathered}
  \xymatrix{
  P\ar[r]\ar[d]& M\ar[d]\\
  Q\ar[r]& N\rlap{\ .}
  }
  \end{gathered}
  \end{equation}
\end{lemma}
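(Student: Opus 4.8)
The plan is to prove the isomorphism statement first and then obtain the pullback square as a formal consequence, since the second assertion reduces to the first. Throughout I write $J,\kappa$ and $J',\kappa'$ for the moment and anchor maps of $P$ and $P'$, and recall from Definition~\ref{chap1:def:principal-bundle} that principality means the shear map~\eqref{chap1:eq:shear-map-(p,g)->(p,pg)} is a diffeomorphism; its inverse supplies a smooth \emph{division} map $\delta\colon P'\times_M P'\to G_1$, characterised by $p'\cdot\delta(p',q')=q'$, and likewise for $P$.

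First I would show that an equivariant $\phi\colon P\to P'$ over $M$ (so $J=J'\circ\phi$ and $\kappa=\kappa'\circ\phi$) is a bijection. Injectivity uses principality of both bundles: if $\phi(p_1)=\phi(p_2)$ then $p_1,p_2$ lie in one $\kappa$-fibre, so $p_1 g=p_2$ for some $g\in G_1$; applying $\phi$ gives $\phi(p_2)\cdot g=\phi(p_2)$, and freeness of the $G$-action on $P'$ (injectivity of its shear map) forces $g=\unit(J(p_2))$, whence $p_1=p_2$. Surjectivity uses that $\kappa$ is onto together with transitivity of $G$ on the $\kappa'$-fibres (surjectivity of the shear map of $P'$): given $p'$, pick $p$ with $\kappa(p)=\kappa'(p')$, and then some $g$ with $\phi(p)\cdot g=p'$, so $\phi(pg)=p'$.

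The hard part is upgrading this set-theoretic inverse to a smooth one. Here I would exploit that $\kappa$ is a surjective submersion, so that $P\times_M P'$ is a manifold and $\pr_2\colon P\times_M P'\to P'$ is again a surjective submersion. On it define the smooth map $\rho(p,p')=p\cdot\delta\bigl(\phi(p),p'\bigr)$; by construction $\phi\bigl(\rho(p,p')\bigr)=p'$, and injectivity of $\phi$ shows $\rho(p,p')$ is independent of the chosen $p$, i.e.\ $\rho$ is constant on the fibres of $\pr_2$. Since a surjective submersion is a quotient map in $\Mfd$ (a set map out of its base is smooth once its composite with the submersion is), $\rho$ descends to a smooth $\Psi\colon P'\to P$ with $\Psi\circ\pr_2=\rho$. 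A direct check gives $\phi\circ\Psi=\id_{P'}$ and, evaluating $\Psi$ at $\phi(p)$ via the point $(p,\phi(p))$, $\Psi\circ\phi=\id_P$; hence $\phi$ is a diffeomorphism. The only real subtlety is this descent step, which is exactly where principality (to build $\delta$, hence $\rho$) and the submersion hypothesis on $\kappa$ enter.

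Finally, for the second assertion I would form the induced bundle $f_0^*Q=M\times_{f_0,N,\kappa'}Q$, which is principal because induced bundles of principal bundles are principal. The commuting square of $(f_1,f_0)$ gives the equivariant map $P\to f_0^*Q$, $p\mapsto(\kappa(p),f_1(p))$, over $M$; by the first assertion it is an isomorphism, so $P\cong f_0^*Q=M\times_N Q$, which is precisely the statement that~\eqref{chap1:eq:principal-bundle-equi-map} is a pullback square.
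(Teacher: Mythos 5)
Your proposal is correct. For the second assertion you argue exactly as the paper does: form the induced bundle \(f_0^*Q\), observe that commutativity with the structure maps gives an equivariant map \(P\to f_0^*Q\) over \(M\), and invoke the first assertion to conclude it is an isomorphism, which is precisely the pullback statement. The difference lies in the first assertion itself: the paper does not prove it but cites Moerdijk--Mr\v{c}un (p.~146), whereas you supply a complete argument. Your argument is the standard one and it is sound: injectivity follows from transitivity of \(G\) on the \(\kappa\)-fibres of \(P\) together with freeness on \(P'\) (both encoded in the two shear maps); surjectivity from surjectivity of \(\kappa\) plus transitivity on the \(\kappa'\)-fibres; and---the only genuinely delicate point---smoothness of the inverse, which you obtain by pulling the problem back to \(P\times_M P'\), where the division map \(\delta\) of \(P'\) gives the smooth formula \(\rho(p,p')=p\cdot\delta(\phi(p),p')\), and then descending along the surjective submersion \(\pr_2\). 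The descent step is legitimate because surjective submersions admit local sections, so a set map out of the base is smooth as soon as its composite with the submersion is; and you correctly note that \(P\times_M P'\) is a manifold because \(\kappa\) is a submersion, which is exactly where principality of \(P\) (and not only of \(P'\)) enters the smoothness argument. What your route buys is self-containedness, phrased entirely through shear and division maps, which matches how the paper later encodes principality simplicially; what the paper's citation buys is brevity. There is no gap.
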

\begin{proof}
  The first statement appears in~\cite[p.~146]{Moerdijk-Mrcun2003}. Since the induced map \(P\to f_0^*Q\) is equivariant, the second statement follows.
\end{proof}

The following two lemmas will be used extensively.

\begin{lemma}\label{chap1:lem:invariant-cover-descent}
 Let \(\kappa\colon P\to M\) be a principal \(G\)-bundle. Let \(N\) be a manifold and \(f\colon P\to N\) an invariant surjective submersion. Then the induced map \(M\to N\) is a surjective submersion.
\end{lemma}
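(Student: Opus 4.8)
The plan is to produce the induced map $\bar{f}\colon M\to N$ explicitly as the descent of $f$ along $\kappa$, and then to deduce that it is a surjective submersion from the cancellation property recorded in Lemma~\ref{chap1:lem:surj-sub-2-prop}(i). Concretely, once I know that $f$ factors as $f=\bar{f}\circ\kappa$ with $\bar{f}$ smooth, I am done immediately: the composite $\bar{f}\circ\kappa=f$ is a surjective submersion by hypothesis and $\kappa$ is surjective, being itself a surjective submersion, so Lemma~\ref{chap1:lem:surj-sub-2-prop}(i) applied to $P\xrightarrow{\kappa}M\xrightarrow{\bar{f}}N$ yields that $\bar{f}$ is a surjective submersion.

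So the real content is constructing $\bar{f}$. First I would check that $f$ is constant on the fibres of $\kappa$. Because $P$ is principal, the shear map $(\pr_1,\action)\colon P\times_{G_0,\target}G_1\to P\times_M P$ is a diffeomorphism; hence whenever $\kappa(p)=\kappa(p')$ there is a unique $g\in G_1$ with $p'=pg$, that is, the fibres of $\kappa$ are exactly the $G$\nbdash{}orbits. Invariance of $f$ then gives $f(p')=f(pg)=f(p)$, so $f$ is constant on each fibre and therefore factors set-theoretically through $\kappa$ as a unique map $\bar{f}\colon M\to N$.

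Smoothness of $\bar{f}$ then follows since $\kappa$ is a surjective submersion and hence admits local smooth sections: over an open $U\subseteq M$ carrying a section $\sigma\colon U\to P$ with $\kappa\circ\sigma=\id_U$, one has $\bar{f}|_U=f\circ\sigma$, which is smooth. I expect this descent step -- pinning down that the fibres of $\kappa$ coincide with the $G$\nbdash{}orbits and upgrading the set-theoretic factorisation to a smooth one -- to be the only place requiring care; the final surjective-submersion conclusion is then a one-line application of Lemma~\ref{chap1:lem:surj-sub-2-prop}(i).
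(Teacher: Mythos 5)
Your proof is correct and takes essentially the same route as the paper: descend $f$ to a map $M\to N$, then apply the cancellation property of Lemma~\ref{chap1:lem:surj-sub-2-prop}(i) to $\bar{f}\circ\kappa=f$ with $\kappa$ surjective. The only difference is that the paper compresses the descent step into the single statement that $M$ is the quotient $P/G$ (so an invariant map factors through it), whereas you unpack exactly why this holds---the shear map identifies the fibres of $\kappa$ with the $G$-orbits, and local sections of the submersion $\kappa$ give smoothness of the factorisation---which is precisely the content behind the paper's citation.
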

\begin{proof}
  Since \(M\) is the quotient \(P/G\), an invariant map \(P\to N\) induces a map \(M\to N\). Since the composite map \(P\to M\to N\) is a surjective submersion, the map \(M\to N\) is a surjective submersion by Lemma~\ref{chap1:lem:surj-sub-2-prop}.
\end{proof}

\begin{lemma}[{\cite[Lemma 5.35]{Moerdijk-Mrcun2003}}]\label{chap1:lem:action-over-principal-is-principal}
  Let \(G\) be a Lie groupoid and \(\kappa\colon P\to M\) a right principal \(G\)\nbdash{}bundle. Let \(Q\) be a manifold with a right \(G\)-action, and let \(f\colon Q\to P\) be an equivariant surjective submersion. Then the quotient \(Q/G\) is a manifold, and the induced map \(Q/G\to M \) is a surjective submersion; moreover, \(Q\to Q/G\) is a principal bundle.
\end{lemma}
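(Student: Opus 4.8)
The plan is to reduce everything to the single statement that $Q\to Q/G$ is a principal $G$-bundle, and then read off the remaining assertions. First I would record two easy facts: equivariance of $f$ gives $J_Q=J_P\circ f$ for the moment maps, and the composite $\kappa f\colon Q\to M$ is a surjective submersion (being a composite of two such) which is $G$-invariant, since $\kappa(f(qg))=\kappa(f(q)g)=\kappa(f(q))$. Granting for the moment that $Q\to Q/G$ is principal, the map $Q/G\to M$ induced by $\kappa f$ is then a surjective submersion by Lemma~\ref{chap1:lem:invariant-cover-descent}, applied to the principal bundle $Q\to Q/G$ and the invariant surjective submersion $\kappa f$. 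So the whole content is local over $M$: I must manufacture the manifold $Q/G$ and verify principality.

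For the local analysis I would use that every principal bundle is locally trivial, i.e.\ $\kappa$ admits local sections. Fix an open $U\subseteq M$ with a section $s\colon U\to P$ of $\kappa$, write $\Sigma=s(U)\subseteq P$ (a closed embedded submanifold, with $\kappa|_\Sigma$ a diffeomorphism onto $U$), set $Q_U=(\kappa f)^{-1}(U)$, a $G$-invariant open subset, and $Q^0=f^{-1}(\Sigma)\subseteq Q_U$. Since $f$ is a surjective submersion, $Q^0$ is an embedded submanifold and $f\colon Q^0\to\Sigma$ is a surjective submersion. The principality of $P$ makes the shear map a diffeomorphism, whose inverse I would package as a smooth division map $\delta\colon P\times_M P\to G_1$, where $\delta(p,p')$ is the unique arrow with $p\cdot\delta(p,p')=p'$. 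Using $\delta$ I would exhibit a $G$-equivariant diffeomorphism $Q_U\cong Q^0\times_U P|_U$ (with $G$ acting only on the $P$-factor), via the mutually inverse maps $q\mapsto\bigl(q\cdot\delta(f(q),s(\kappa f(q))),\,f(q)\bigr)$ and $(q^0,p)\mapsto q^0\cdot\delta(f(q^0),p)$; the cocycle identity $\delta(p,p')\,\delta(p',p'')=\delta(p,p'')$ is exactly what makes these inverse and equivariant. Under this identification $Q_U\to Q_U/G$ becomes the projection $Q^0\times_U P|_U\to Q^0$, which is the pullback of the principal bundle $P|_U\to U$ and hence principal; in particular $Q^0\cong (Q/G)|_U$.

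To globalise I would cover $M$ by such trivialising opens $U_i$ and glue the local orbit spaces $Q^0_i$. The $G$-invariant retractions $r_i\colon Q_{U_i}\to Q^0_i$, $r_i(q)=q\cdot\delta(f(q),s_i(\kappa f(q)))$, each present $Q^0_i$ as the set-theoretic orbit space of $Q_{U_i}$, so on overlaps the transition $q^0\mapsto r_j(q^0)$ is a diffeomorphism and these obey the cocycle condition; hence the $Q^0_i$ assemble into a manifold $Q/G$ with $Q\to Q/G$ a surjective submersion. As the diffeomorphism condition on the shear map is local on the base, the local computation shows $Q\to Q/G$ is principal, and the submersion $Q/G\to M$ then follows as above.

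The main obstacle is precisely this globalisation: turning the local cross-sections $Q^0_i$ into a genuine smooth manifold $Q/G$, that is, checking the transitions $r_j|_{Q^0_i}$ are smooth diffeomorphisms satisfying the cocycle condition and that the glued object represents the orbit space with $Q\to Q/G$ principal. Conceptually the clean reason behind all of this is descent: via the isomorphism $P\rtimes G\cong C(\kappa)$ coming from principality of $P$, a $G$-action on $Q$ with equivariant $f\colon Q\to P$ is the same as an action of the \v{C}ech groupoid $C(\kappa)$ along $f$, equivalently a descent datum for $Q$ along the cover $\kappa\colon P\to M$; since surjective submersions are of effective descent, this yields $Q/G$ together with $Q\cong P\times_M(Q/G)$, exhibiting $Q\to Q/G$ as a pullback of $P\to M$ and hence principal. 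I would use this descent reformulation as the conceptual guide, while phrasing the actual argument through local triviality to stay within the tools already established.
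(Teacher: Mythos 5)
The paper does not actually prove this lemma: it is imported wholesale from the literature (the bracketed citation \cite[Lemma 5.35]{Moerdijk-Mrcun2003} is the entire ``proof''), so there is no internal argument to compare against. Your proposal supplies a correct, self-contained proof, and it follows what is essentially the classical textbook route: local triviality of the principal bundle \(\kappa\colon P\to M\) (sections exist locally, by the Lerman lemma quoted in the paper), the smooth division map \(\delta\colon P\times_M P\to G_1\) obtained by inverting the shear map, the \(G\)-equivariant local trivialisation \(Q_U\cong Q^0\times_U P|_U\), and gluing of the local slices \(Q^0_i\). All the formula manipulations check out under the paper's conventions: \(\delta(p,p')\delta(p',p'')=\delta(p,p'')\) and \(\delta(ph,p')=h^{-1}\delta(p,p')\) follow from uniqueness in the shear map, your two maps are indeed mutually inverse and equivariant, and principality of \(Q_U\to Q^0\) is exactly the statement that a pullback of a principal bundle is principal, which the paper has. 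Your initial reduction is also not circular: Lemma~\ref{chap1:lem:invariant-cover-descent} is proved in the paper independently of the present lemma, and you only invoke it after principality of \(Q\to Q/G\) is established.

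Two soft spots, neither fatal. First, the globalisation you correctly identify as the main obstacle is left at the level of a sketch: beyond smoothness and the cocycle identity for the transitions \(r_j|_{Q^0_i}\) (which you verify in effect), one must check that the glued object is Hausdorff and second countable; this is standard — two points of \(Q/G\) over distinct points of \(M\) are separated by preimages of disjoint opens of \(M\), and two points over the same point of \(M\) lie in a common chart \(Q^0_i\) — but it should be said, since Hausdorffness is precisely what can fail for quotients by non-principal actions. Second, the closing descent argument invokes effectiveness of descent along surjective submersions in \(\Mfd\), a fact nowhere established in the paper and itself of the same order of difficulty as the lemma; you are right to demote it to a conceptual guide rather than lean on it.
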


\begin{corollary}\label{chap1:cor:G-bundles-P-times-Q-pullback}
  Suppose that the \(G\)-action on \(P\) in Example~\ref{chap1:exa:two-actions-pullback} is principal and that \(Q\to G_0\) is a surjective submersion. Then the induced action on \(P\times_{G_0}Q\) is principal, and we have a pullback square like~\eqref{chap1:eq:principal-bundle-equi-map}.
\end{corollary}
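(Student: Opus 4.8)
The plan is to realize the projection $\pr_1\colon P\times_{G_0}Q\to P$ as an equivariant surjective submersion and then to feed it successively into Lemmas~\ref{chap1:lem:action-over-principal-is-principal} and~\ref{chap1:lem:principal-equivariant-map-is-iso}.

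First I would check that $\pr_1$ is equivariant. This is immediate from the definition of the diagonal action in Example~\ref{chap1:exa:two-actions-pullback}: since $(p,q)\cdot g=(p\cdot g, q\cdot g)$, the projection $\pr_1$ intertwines the $G$-actions and is compatible with the moment maps. I would then verify that $\pr_1$ is a surjective submersion. Writing $J_P\colon P\to G_0$ and $J_Q\colon Q\to G_0$ for the two moment maps, the space $P\times_{G_0}Q$ is by construction the pullback of $J_Q$ along $J_P$, so $\pr_1$ is the base change of $J_Q$ along $J_P$. As $Q\to G_0$ is a surjective submersion by hypothesis, its pullback $\pr_1$ is one as well.

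With $\pr_1$ an equivariant surjective submersion onto the right principal bundle $P\to M$, Lemma~\ref{chap1:lem:action-over-principal-is-principal} applies directly: the quotient $N\coloneqq(P\times_{G_0}Q)/G$ is a manifold, the induced map $N\to M$ is a surjective submersion, and $P\times_{G_0}Q\to N$ is a principal $G$-bundle. This establishes that the induced action on $P\times_{G_0}Q$ is principal.

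It then remains to produce the pullback square. Now $\pr_1$ is an equivariant map between the two principal bundles $P\times_{G_0}Q\to N$ and $P\to M$, so Lemma~\ref{chap1:lem:principal-equivariant-map-is-iso} yields the pullback square
\[
\xymatrix{
P\times_{G_0}Q\ar[r]\ar[d]& N\ar[d]\\
P\ar[r]& M\rlap{\ ,}
}
\]
which is of the form~\eqref{chap1:eq:principal-bundle-equi-map}. The one point needing genuine attention is the surjective-submersion property of $\pr_1$; this is exactly where the hypothesis on $Q\to G_0$ is used, and once it is in place the remaining two claims follow mechanically from the cited lemmas.
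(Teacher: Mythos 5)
Your proof is correct and follows exactly the route the paper intends (the corollary is stated without proof, as an immediate consequence of the two preceding lemmas): observe that $\pr_1\colon P\times_{G_0}Q\to P$ is an equivariant surjective submersion because it is the pullback of $Q\to G_0$, apply Lemma~\ref{chap1:lem:action-over-principal-is-principal} to get principality of the diagonal action, and then apply Lemma~\ref{chap1:lem:principal-equivariant-map-is-iso} to the equivariant map $\pr_1$ to get the pullback square. Nothing is missing.
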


\subsection{HS bibundles}

\begin{definition}
  Let \(G\) and \(H\) be Lie groupoids. A \(G\)-\(H\) \emph{bibundle} is a manifold \(P\) that carries two bundle structures such that the corresponding two actions commute. There is a right \(H\)-bundle structure over \(J_l\colon P\to G_0\) with moment map \(J_r\colon P\to H_0\), and a left \(G\)-bundle structure over \(J_r\colon P\to H_0\) with moment map \(J_l\colon P\to G_0\). These two actions commute; that is, \((gp)h=g(ph)\) for \(g\in G_1\), \(p\in P\), and \(h\in H_1\) whenever either side is well-defined.
 \[
  \xymatrix{
  G_1\ar@<-0.5ex>[d]_{\target}\ar@<0.5ex>[d]^{\source}  & P\ar[dl]^{J_l}\ar[dr]_{J_r} & H_1\ar@<-0.5ex>[d]_{\target}\ar@<0.5ex>[d]^{\source}\\
  G_0&   & H_0
  }
  \]
  A morphism \(P\to P'\) of \(G\)-\(H\) bibundles is a smooth map which is equivariant with respect to both actions. If the \(H\)-bundle structure is principal, we call \(P\) a \(G\)-\(H\) \emph{Hilsum--Skandalis bibundle} \textup(HS for short\textup) or an HS bibundle from \(G\) to~\(H\).
\end{definition}

\begin{remark}
   Every morphism of right principal \(G\)-\(H\) bibundles \(P\to P'\) is an isomorphism by Lemma~\ref{chap1:lem:principal-equivariant-map-is-iso}.
\end{remark}

\begin{remark}
  There is a more general notion of bimodules between two categories, which we will review in Chapter~\ref{chap4}.
\end{remark}

\begin{remark}
By Remark~\ref{chap1:def:inverse-action}, for a \(G\)-\(H\) bibundle \(P\) there is a natural \(H\)-\(G\) bibundle structure on \(P\), denoted by \(\bar{P}\). If \(P\) is right principal, then \(\bar{P}\) is left principal.
\end{remark}

\begin{example}
  Let \(G\) be a Lie groupoid and \(M\) a manifold. A \textup(HS) bibundle between \(\trivial{M}\) and \(G\) is simply a \textup(principal\textup) \(G\)-bundle over \(M\). In particular, a \(G\)-action on a manifold \(X\) gives a bibundle between~\(*\), the one-object trivial groupoid, and~\(G\).
\end{example}

\begin{definition}
  Let \(P\) be an HS bibundle from \(G\) to~\(H\) and \(Q\) an HS bibundle from \(H\) to~\(K\). The \emph{composite} HS bibundle \(P\otimes_H Q\) is defined as follows. There is a diagonal \(H\)-action on \(P\times_{H_0} Q\) by
  \[
  (p, q) \cdot h \coloneqq (p\cdot h, h^{-1}\cdot q).
  \]
  Since the \(H\)-action on \(P\) is principal, Corollary~\ref{chap1:cor:G-bundles-P-times-Q-pullback} implies that the quotient is a manifold, denoted by \(P\otimes_H Q\). There are induced actions of \(G\) and \(K\) on \(P\otimes_H Q\). Furthermore, \(P\otimes_H Q\) is an HS bibundle from \(G\) to \(K\). For more details, see~\cite[p. 166]{Moerdijk-Mrcun2005}.
\end{definition}

\begin{remark}
The usual tensor product of two bimodules of algebras is a similar quotient \textup(colimit\textup) construction. The universal property of colimits implies that the composition of HS bibundles is associative up to a canonical isomorphism.
\end{remark}

We may think of HS bibundles as an extended notion of functors between Lie groupoids. The \(G\)-\(G\) bibundle \(G_1\) is called the \emph{unit bibundle}, denoted by \(U(G)\). The unit bibundle acts as a unit up to canonical isomorphisms for the composition of bibundles. Therefore, Lie groupoids and isomorphism classes of HS bibundles with their composition form a category, denoted by~\(\Bun\).

\begin{definition}
Let \(P\) be a two-sided principal \(G\)-\(H\) bibundle. We say that \(G\) and \(H\) are \emph{Morita equivalent} and \(P\) is a Morita equivalence between \(G\) and \(H\).
\end{definition}

\begin{proposition}[{\cite[Proposition 2.9]{Moerdijk-Mrcun2005}}]
  An HS-bibundle \(P\) from \(G\) to \(H\) is an isomorphism in the category \(\Bun\) if and only if \(P\) is a Morita equivalence.
\end{proposition}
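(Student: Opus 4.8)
The plan is to prove both implications separately, treating $(\Leftarrow)$ by exhibiting the conjugate bibundle as an explicit inverse, and $(\Rightarrow)$ by extracting principality of the remaining action from the two composition isomorphisms. Throughout I write $J_l,J_r$ for the moment maps of $P$ and $J_l^Q,J_r^Q$ for those of an inverse $Q$.

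\emph{Morita equivalence $\Rightarrow$ invertible.} Suppose $P$ is two-sided principal. Then the left $G$-action on $P$ being principal makes the right $G$-action on $\bar P$ principal, so $\bar P$ is an HS bibundle from $H$ to $G$ and is the natural candidate inverse. I would define a map $P\otimes_H\bar P\to U(G)=G_1$ by sending a class $[p,p']$ (where necessarily $J_r(p)=J_r(p')$) to the unique $g\in G_1$ with $g\cdot p'=p$; such a $g$ exists and is unique exactly because the shear map $G_1\times_{G_0}P\to P\times_{H_0}P$ of the left $G$-principal structure is a diffeomorphism, and it is unchanged if $(p,p')$ is replaced by $(p\cdot h,\,p'\cdot h)$ since the two actions commute, so the map descends to $P\otimes_H\bar P$. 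A short check shows it is a morphism of $G$-$G$ bibundles commuting with both moment maps; as both source and target are right $G$-principal $G$-$G$ bibundles, Lemma~\ref{chap1:lem:principal-equivariant-map-is-iso} upgrades it to an isomorphism $P\otimes_H\bar P\cong U(G)$. The symmetric construction gives $\bar P\otimes_G P\cong U(H)$, so $[\bar P]$ is a two-sided inverse of $[P]$ in $\Bun$.

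\emph{Invertible $\Rightarrow$ Morita equivalence.} Here we are given an HS bibundle $Q$ from $H$ to $G$ together with isomorphisms $\phi\colon P\otimes_H Q\cong G_1$ and $\psi\colon Q\otimes_G P\cong H_1$, and the goal is to show the left $G$-action on $P$ is principal (right $H$-principality is assumed). First I would settle the moment maps. Under $\phi$ the right $G$-moment $[p,q]\mapsto J_r^Q(q)$ of $P\otimes_H Q$ corresponds to $\source\colon G_1\to G_0$; precomposing with the surjective submersion $P\times_{H_0}Q\to P\otimes_H Q$ shows $J_r^Q$ is surjective. Under $\psi$ the right $H$-moment $[q,p]\mapsto J_r(p)$ of $Q\otimes_G P$ corresponds to $\source\colon H_1\to H_0$; since the projection $Q\times_{G_0}P\to P$ is then surjective (its fibre over $p$ is nonempty because $J_r^Q$ surjects onto $G_0$) while $J_r$ composed with it is a surjective submersion, Lemma~\ref{chap1:lem:surj-sub-2-prop}(i) yields that $J_r\colon P\to H_0$ is a surjective submersion. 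The symmetric argument gives the same for $J_r^Q$.

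The main work is to show the shear map $G_1\times_{G_0}P\to P\times_{H_0}P$, $(g,p)\mapsto(g\cdot p,p)$, is a diffeomorphism. Set $\delta(p,q):=\phi([p,q])\in G_1$, which satisfies $\delta(g\cdot p,q)=g\cdot\delta(p,q)$ and $\delta(p,q\cdot g')=\delta(p,q)\cdot g'$, with $\target\delta(p,q)=J_l(p)$ and $\source\delta(p,q)=J_r^Q(q)$. Freeness of the left $G$-action falls out immediately: if $g\cdot p=p$ then $\delta(p,q)=g\cdot\delta(p,q)$, and left multiplication in $G_1$ is free, so $g$ is a unit. To invert the shear map I would define, for $(p_1,p_2)\in P\times_{H_0}P$,
\[
\Theta(p_1,p_2):=\delta(p_1,q)\,\delta(p_2,q)^{-1},
\]
where $q$ is any point with $J_l^Q(q)=J_r(p_1)=J_r(p_2)$ (one exists since $J_l^Q$ is a surjective submersion). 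Because $Q$ is right $G$-principal, the fibres of $J_l^Q$ are single $G$-orbits, and the right-equivariance of $\delta$ makes $\Theta$ independent of $q$; descent along the surjective submersion $P\times_{H_0}P\times_{H_0}Q\to P\times_{H_0}P$ makes $\Theta$ smooth. One then checks $\Theta(g\cdot p,p)=g$ and $\Theta(p_1,p_2)\cdot p_2=p_1$, so $(p_1,p_2)\mapsto(\Theta(p_1,p_2),p_2)$ is a smooth two-sided inverse of the shear map; together with the surjective submersion $J_r$ this makes the left $G$-action principal, i.e. $P$ a Morita equivalence.

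The hard part is precisely the identity $\Theta(p_1,p_2)\cdot p_2=p_1$. Applying $\delta(\blank,q)$ shows $\delta(\Theta(p_1,p_2)\cdot p_2,\,q)=\delta(p_1,q)$, so the claim reduces to injectivity of $\delta(\blank,q)$ on the fibre of $J_r$; unwinding the equivalence relation defining $P\otimes_H Q$, this comes down to the stabiliser of $q$ under the left $H$-action on $Q$ being trivial. This is exactly the symmetric freeness statement, which I would obtain by running the same argument with $\psi$ and the free left multiplication in $H_1$ (using that $J_r^Q$ is a surjective submersion, established above). Thus the whole proof hinges on the interplay of the two composition isomorphisms, and the one genuinely subtle step is transporting freeness between $P$ and $Q$ so that the division map $\Theta$ honestly inverts the shear.
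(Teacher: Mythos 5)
Your proof is correct. The paper itself gives no argument for this proposition (it simply cites Moerdijk--Mr\v{c}un), and what you write is essentially the standard proof from that reference: the conjugate bibundle \(\bar P\) with the division map \([p,p']\mapsto g\) (where \(g\cdot p'=p\)) as the explicit inverse in one direction, and in the other direction the map \(\Theta(p_1,p_2)=\delta(p_1,q)\,\delta(p_2,q)^{-1}\) built from the composition isomorphisms, with freeness transported from \(H_1\) via \(\psi\) to invert the shear map. (One inessential remark: the symmetric freeness of the left \(H\)-action on \(Q\) only needs surjectivity of \(J_l\colon P\to G_0\), which is part of the HS structure, so your parenthetical appeal to \(J_r^Q\) being a surjective submersion is not actually required there.)
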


\begin{definition}
The \emph{bundlisation} of a Lie groupoid functor \(f\colon G\to H\) is an HS bibundle from \(G\) to \(H\) given by the manifold
\[
\Bund(f)\coloneqq G_0\times_{H_0, \target} H_1
\]
with moment maps \(\pr_1\colon G_0\times_{H_0,\target} H_1 \to G_0\) and \(\source\circ\pr_2\colon G_0 \times_{H_0} H_1\to H_0\). The right \(H\)\nbdash{}action is given by the \(H\)-action on \(H_1\), and the left \(G\)-action is given by
\[
g\cdot (x, h)\coloneqq (\target(g), f(g)h)
\]
for \(g\in G_1, x\in G_0\) with \(\source(g)=x\), and \(h\in H_1\) with \(f(x)=\target(h)\).
\end{definition}

It is clear that \(\Bund(\id_G)\) is the unit bibundle. It is not hard to verify that \(\Bund\) preserves the composition up to a canonical isomorphism.
\begin{proposition}[{\cite[Proposition 2.10]{Moerdijk-Mrcun2005}}]
 The bundlisation construction extends to a functor \( \Bund\colon \Gpd \to \Bun\) which is the identity on objects. Moreover, a morphism in \(\Gpd\) is a weak equivalence if and only if its bundlisation is an isomorphism in \(\Bun\).
\end{proposition}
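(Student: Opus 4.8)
The plan is to treat the two assertions in turn, leaning on the two facts recorded immediately above the statement: $\Bund(\id_G)$ is the unit bibundle, and $\Bund$ respects composition up to a canonical isomorphism. Since the morphisms of $\Bun$ are isomorphism classes of HS bibundles, these two facts say precisely that $[\Bund(\id_G)]$ is the identity of $G$ in $\Bun$ and that $[\Bund(g\circ f)]=[\Bund(g)]\circ[\Bund(f)]$; hence $\Bund$ descends to a functor $\Gpd\to\Bun$, which is the identity on objects by construction. So the genuine content is the final biconditional.

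For that, first note that $\Bund(f)$ is by definition an HS bibundle from $G$ to $H$, i.e.\ its right $H$\nbdash{}action is principal. By the characterisation of isomorphisms in $\Bun$ recalled just above, $\Bund(f)$ is an isomorphism if and only if it is a Morita equivalence, i.e.\ two\nbdash{}sided principal; as the right side is already principal, this happens exactly when the left $G$\nbdash{}action on $\Bund(f)$ is principal. Thus it suffices to prove that $f$ is a weak equivalence if and only if $\Bund(f)=G_0\times_{H_0,\target}H_1$ is left principal. Left principality amounts to two conditions: (a) the left moment map $J_r=\source\circ\pr_2\colon G_0\times_{H_0,\target}H_1\to H_0$ is a surjective submersion, and (b) the left shear map $(g,p)\mapsto(g\cdot p,\,p)$ is a diffeomorphism. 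I will match (a) with essential surjectivity and (b) with full faithfulness.

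For (a), the inverse $\inv\colon H_1\to H_1$ induces a diffeomorphism $G_0\times_{H_0,\source}H_1\xrightarrow{\,\cong\,}G_0\times_{H_0,\target}H_1$, $(x,h)\mapsto(x,h^{-1})$, carrying the essential\nbdash{}surjectivity map $\target\circ\pr_2$ to $J_r=\source\circ\pr_2$; hence $J_r$ is a surjective submersion exactly when $f$ is essentially surjective. For (b), I would write the shear map out: its domain is $\{(g,h):\target h=f(\source g)\}$, its codomain is $\{((x_1,h_1),(x_2,h_2)):\source h_1=\source h_2\}$, and it sends $(g,h)$ to $\bigl((\target g,\,f(g)h),\,(\source g,\,h)\bigr)$. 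A preimage of a point $((x_1,h_1),(x_2,h_2))$ forces $h=h_2$, $\source g=x_2$, $\target g=x_1$ and $f(g)=h_1h_2^{-1}$; writing $k=h_1h_2^{-1}$, the compatibility $\target h_i=f(x_i)$ of points of $\Bund(f)$ gives $\source k=f(x_2)$ and $\target k=f(x_1)$. So a unique smooth inverse exists precisely when every such $(x_1,x_2,k)$ comes from a unique $g\in G_1$ with $(\source g,\target g,f_1(g))=(x_2,x_1,k)$, which is exactly the requirement that the square in Definition~\ref{chap1:def:weak-equi-groupoid}(ii) be a pullback, i.e.\ that $f$ be fully faithful. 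Combining (a) and (b) yields the biconditional.

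The main obstacle is bookkeeping rather than conceptual: one must keep the $\source$/$\target$ conventions straight, and — since the codomain $P\times_{H_0}P$ of the left shear map is a manifold only once $J_r$ is a surjective submersion — organise each direction so that the condition playing the role of (a) is in place before invoking (b). Concretely, in the forward direction essential surjectivity supplies (a), after which full faithfulness gives the diffeomorphism in (b); in the reverse direction (a) yields essential surjectivity directly, and the identification above turns the shear diffeomorphism into the full\nbdash{}faithfulness pullback.
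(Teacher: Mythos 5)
Your argument is the standard one (the paper itself offers no proof here, deferring to Moerdijk--Mr\v{c}un), and most of it is complete: the functoriality claim correctly reduces to the two facts stated just before the proposition; isomorphy in \(\Bun\) is correctly reduced, via the Morita characterisation, to left principality of \(\Bund(f)\); your identification of condition (a) with essential surjectivity via \((x,h)\mapsto(x,h^{-1})\) is right; and the forward half of (b) is fine, since the inverse of the shear map you construct is manifestly smooth once the comparison map \(\phi\colon G_1\to Q\coloneqq (G_0\times G_0)\times_{H_0\times H_0}H_1\) has a smooth inverse.

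The step that needs repair is the backward half of (b). Your preimage analysis shows that if the shear map is bijective then \(\phi\) is bijective; but ``the square in Definition~\ref{chap1:def:weak-equi-groupoid}(ii) is a pullback in \(\Mfd\)'' means \(\phi\) is a \emph{diffeomorphism}, and a smooth bijection of manifolds need not be one. So the sentence ``a unique smooth inverse exists precisely when every such \((x_1,x_2,k)\) comes from a unique \(g\)'' silently upgrades a set-level pullback to a pullback in \(\Mfd\), in exactly the direction needed for ``\(\Bund(f)\) iso \(\Rightarrow f\) weak equivalence''. Two one-line fixes are available. First, under your identifications the shear map is precisely the base change of \(\phi\) along the projection \(Q\times_{f\circ\pr_3,H_0,\target}H_1\to Q\), which is a surjective submersion; Lemma~\ref{chap1:lem:surj-sub-2-prop}(ii) then says that if this base change is a diffeomorphism, so is \(\phi\). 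Alternatively, produce a smooth inverse of \(\phi\) directly: compose the section \(Q\to P\times_{H_0}P\), \((x_1,k,x_2)\mapsto\bigl((x_1,k),(x_2,\unit(f(x_2)))\bigr)\), with the inverse of the shear map and the projection to \(G_1\). In either case one also needs \(Q\) to be a manifold, which holds because condition (a) (essential surjectivity) is already in force --- your ordering of (a) before (b) does guarantee this.
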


\begin{proposition}[{\cite[Proposition II.1.5]{Mrcun} and~\cite[Lemma 3.36]{Lerman}}]\label{chap1:prop:is-bundlisation}
  An HS bibundle \(P\) between Lie groupoids \(G\) and \(H\) is isomorphic to \(\Bund(f)\) for some functor \(f\colon G\to H\) if and only if \(P\to G_0\) admits a smooth section.
\end{proposition}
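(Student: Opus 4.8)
The plan is to prove the two implications separately; the reverse direction contains essentially all of the content.

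For the forward direction, suppose \(P\cong\Bund(f)\) as bibundles. The bundlisation \(\Bund(f)=G_0\times_{H_0,\target}H_1\) carries the evident smooth section of its left moment map \(\pr_1\), namely \(x\mapsto(x,\unit(f_0(x)))\), which is well defined since \(\target(\unit(f_0(x)))=f_0(x)\). An isomorphism of bibundles intertwines the left moment maps, so transporting this section through it produces a smooth section of \(P\to G_0\).

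For the reverse direction, let \(s\colon G_0\to P\) be a smooth section of the left moment map \(J_l\colon P\to G_0\). First I would manufacture a functor \(f\colon G\to H\). On objects I set \(f_0\coloneqq J_r\circ s\). On arrows I exploit that the right \(H\)-action is principal: its shear map \(P\times_{H_0,\target}H_1\to P\times_{G_0}P\) is a diffeomorphism, so any two points in the same \(J_l\)-fibre differ by a unique arrow of \(H\). Given \(g\in G_1\), the left translate \(g\cdot s(\source(g))\) and the point \(s(\target(g))\) both lie in \(J_l^{-1}(\target(g))\) (here I use that the left \(G\)-action preserves \(J_r\) and sends a point over \(\source(g)\) to one over \(\target(g)\)), and I define \(f_1(g)\) to be the unique arrow of \(H\) with \(s(\target(g))\cdot f_1(g)=g\cdot s(\source(g))\). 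Smoothness of \(f_1\) follows by composing \(s\), the two action maps, and the smooth inverse of the shear map.

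Next I would verify that \(f=(f_0,f_1)\) is a functor. Matching moment maps in the defining equation gives \(\source(f_1(g))=f_0(\source(g))\) and \(\target(f_1(g))=f_0(\target(g))\); unitality holds because freeness of the principal action forces the only arrow fixing \(s(x)\) to be \(\unit(f_0(x))\); and multiplicativity \(f_1(g_2g_1)=f_1(g_2)f_1(g_1)\) drops out of the two defining equations together with commutativity of the \(G\)- and \(H\)-actions, via the identity \((g_2g_1)\cdot s(x)=s(z)\cdot\bigl(f_1(g_2)f_1(g_1)\bigr)\) and the uniqueness clause of principality. I would then define \(\Phi\colon\Bund(f)\to P\) by \((x,h)\mapsto s(x)\cdot h\) and check that it is a morphism of \(G\)-\(H\) bibundles: it preserves both moment maps and is equivariant for both actions, left-equivariance again using the defining equation for \(f_1\) and commutativity of the actions. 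Since \(\Phi\) is an equivariant map between right principal bibundles, Lemma~\ref{chap1:lem:principal-equivariant-map-is-iso} upgrades it for free to an isomorphism, giving \(P\cong\Bund(f)\).

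The main obstacle is conceptual bookkeeping rather than a hard estimate: one must fix the source and target conventions carefully so that \(f_1\) comes out as a genuine functor rather than its opposite, and confirm that the single defining equation for \(f_1\) interacts correctly with the two commuting actions to yield multiplicativity. Invoking Lemma~\ref{chap1:lem:principal-equivariant-map-is-iso} to produce the isomorphism \(\Phi\) is the device that spares us an explicit diffeomorphism verification.
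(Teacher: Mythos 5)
Your proof is correct, and it is essentially the standard argument: the paper itself does not prove this proposition but cites it to Mr\v{c}un and Lerman, and the proof given there is exactly your construction — define \(f_0=J_r\circ s\), use the inverse of the shear map to define \(f_1\) by \(s(\target(g))\cdot f_1(g)=g\cdot s(\source(g))\), verify functoriality via freeness and commutativity of the actions, and upgrade \((x,h)\mapsto s(x)\cdot h\) to an isomorphism using the fact that equivariant maps of principal bundles over the same base are isomorphisms (Lemma~\ref{chap1:lem:principal-equivariant-map-is-iso}). All the verifications you sketch (source/target matching from the moment maps, unitality and multiplicativity from uniqueness in the principal action, and biequivariance of \(\Phi\)) go through exactly as stated.
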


\begin{corollary}
  Every HS bibundle \(P\) from \(G\) to a trivial Lie groupoid \(\trivial{M}\) is isomorphic to \(\Bund(f)\) for some functor \(f\colon G\to \trivial{M}\).
\end{corollary}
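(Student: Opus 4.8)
The plan is to reduce the statement entirely to the preceding Proposition~\ref{chap1:prop:is-bundlisation}, which says that an HS bibundle is isomorphic to a bundlisation precisely when its anchor to \(G_0\) admits a smooth section. So for an arbitrary HS bibundle \(P\) from \(G\) to \(\trivial{M}\) it suffices to produce a smooth section of \(J_l\colon P\to G_0\), the bundle projection of the right \(\trivial{M}\)\nbdash{}bundle structure.

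First I would make the right \(\trivial{M}\)\nbdash{}action explicit. Since \(\trivial{M}\) has only identity arrows, with \((\trivial{M})_1=M\) and \(\source=\target=\id\), the action along \(J_r\colon P\to M\) is forced to be trivial, \(p\cdot\unit(J_r(p))=p\), and the fibre product \(P\times_{M,\target}(\trivial{M})_1\) is canonically identified with \(P\). Under this identification the shear map \((\pr_1,\action)\colon P\times_M(\trivial{M})_1\to P\times_{G_0}P\) becomes the diagonal \(\Delta\colon P\to P\times_{G_0}P\), \(p\mapsto(p,p)\). Principality of the \(\trivial{M}\)\nbdash{}bundle therefore has two consequences: \(J_l\) is a surjective submersion, and \(\Delta\) is a diffeomorphism.

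From these I would extract the section as follows. As \(\pr_1\circ\Delta=\id_P\), the first projection \(\pr_1\colon P\times_{G_0}P\to P\) is the inverse of \(\Delta\) and hence itself a diffeomorphism; applying Lemma~\ref{chap1:lem:surj-sub-2-prop}(ii) to the surjective submersion \(J_l\) and to \(g=J_l\) (whose pullback along \(J_l\) is exactly \(\pr_1\colon P\times_{G_0}P\to P\)) shows that \(J_l\colon P\to G_0\) is a diffeomorphism. Its inverse is the desired smooth section, and Proposition~\ref{chap1:prop:is-bundlisation} then delivers a functor \(f\colon G\to\trivial{M}\) with \(P\cong\Bund(f)\). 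I do not anticipate a genuine obstacle: the only subtlety is the bookkeeping of which moment map serves as the bundle projection, together with the observation that triviality of \(\trivial{M}\) collapses the shear map to the diagonal; after that the result is a direct application of the earlier proposition and lemma.
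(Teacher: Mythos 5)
Your proof is correct and follows essentially the same route as the paper's: both use principality of the trivial \(\trivial{M}\)\nbdash{}action to identify \(P\times_{G_0}P\cong P\) (via the shear/diagonal map), deduce from Lemma~\ref{chap1:lem:surj-sub-2-prop} that \(J_l\colon P\to G_0\) is a diffeomorphism, and conclude via Proposition~\ref{chap1:prop:is-bundlisation}. Yours merely spells out the bookkeeping (the collapse of the shear map to the diagonal and the role of \(\pr_1\)) that the paper leaves implicit.
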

\begin{proof}
  Since \(P\) is right principal, we have \(P\times_{G_0} P\cong P\times_M M\cong P\). Thus \(P\to G_0\) is a diffeomorphism by Lemma~\ref{chap1:lem:surj-sub-2-prop}. Then the statement follows from Proposition~\ref{chap1:prop:is-bundlisation}.
\end{proof}

\begin{proposition}\label{chap1:prop:2-morphism-of-bundles}
  Let \(f, g\colon G\to H\) be Lie groupoid functors. Morphisms of HS bibundles \(\Bund(f)\to \Bund(g)\) are in a 1-1 correspondence with natural transformations \(f\Rightarrow g\).
\end{proposition}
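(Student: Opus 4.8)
The plan is to exhibit an explicit bijection by reconstructing a bibundle morphism from its restriction to the canonical unit section, exploiting right-principality. Write a point of $\Bund(f)$ as a pair $(x,h)$ with $x\in G_0$, $h\in H_1$ and $f_0(x)=\target(h)$, with right $H$-action $(x,h)\cdot\eta=(x,h\circ\eta)$ and left $G$-action $\gamma\cdot(x,h)=(\target(\gamma),f_1(\gamma)\circ h)$ for $\source(\gamma)=x$; the description of $\Bund(g)$ is identical with $g$ in place of $f$. A morphism $\phi\colon\Bund(f)\to\Bund(g)$ is by definition smooth and equivariant for both actions, hence preserves both moment maps. Since $\phi$ preserves $J_l=\pr_1$, it has the form $\phi(x,h)=(x,\psi(x,h))$; preservation of $J_r=\source\circ\pr_2$ gives $\source(\psi(x,h))=\source(h)$, and landing in $\Bund(g)$ forces $\target(\psi(x,h))=g_0(x)$.

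First I would extract a natural transformation from $\phi$. Set $\tau(x)\coloneqq\psi(x,\unit(f_0(x)))$; this is smooth, being the composite of $\phi$ with the smooth map $x\mapsto(x,\unit(f_0(x)))$, and the source/target computations above give $\source(\tau(x))=f_0(x)$ and $\target(\tau(x))=g_0(x)$, so $\tau$ has the correct type for a natural transformation $f\Rightarrow g$. The key reduction is the identity $(x,h)=(x,\unit(f_0(x)))\cdot h$ together with right $H$-equivariance $\psi(x,h\circ\eta)=\psi(x,h)\circ\eta$, which yields $\psi(x,h)=\tau(x)\circ h$. Thus $\phi$ is completely determined by $\tau$. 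Conversely, any smooth $\tau\colon G_0\to H_1$ with $\source(\tau(x))=f_0(x)$ and $\target(\tau(x))=g_0(x)$ defines a candidate $\phi(x,h)\coloneqq(x,\tau(x)\circ h)$ that is automatically smooth, moment-map preserving, and right $H$-equivariant.

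The heart of the matter is matching left $G$-equivariance of $\phi$ with naturality of $\tau$. Computing both sides of $\phi(\gamma\cdot(x,h))=\gamma\cdot\phi(x,h)$ with $\phi(x,h)=(x,\tau(x)\circ h)$ gives the equation $\tau(\target(\gamma))\circ f_1(\gamma)\circ h=g_1(\gamma)\circ\tau(\source(\gamma))\circ h$. Specialising $h=\unit(f_0(\source(\gamma)))$ produces exactly the naturality square $\tau(y)\circ f_1(\gamma)=g_1(\gamma)\circ\tau(x)$ for $\gamma\colon x\to y$; conversely, post-composing a naturality square with an arbitrary $h$ recovers the equivariance equation for every $h$. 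Hence $\phi\mapsto\tau$ and $\tau\mapsto\phi$ are mutually inverse, establishing the claimed 1-1 correspondence.

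I expect the only real subtlety to be bookkeeping rather than depth: one must check that every construction is internal to $\Mfd$ (smoothness of $\tau$ and of $\phi$, and that the relevant fibre products are the intended manifolds), and one must verify that $\unit(f_0(x))$ genuinely lies in $\Bund(f)$ so that evaluation at the unit section is legitimate. The passage from the all-$h$ equivariance equation to the single naturality square relies on being able to substitute the unit arrow, which is where right-principality (freeness of the right action along the unit orbit) is implicitly used; this is the step I would state most carefully.
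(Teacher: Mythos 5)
Your proof is correct and follows essentially the same route as the paper's: the paper likewise observes that equivariance forces the morphism to have the form \((x,h)\mapsto (x, T(x)\circ h)\) and that left \(G\)\nbdash{}equivariance of this formula is exactly naturality of \(T\). One small correction to your closing remark: no right-principality (freeness) is actually used anywhere — the reduction \(\psi(x,h)=\tau(x)\circ h\) and the specialisation \(h=\unit(f_0(x))\) need only the unit law and associativity of \(H\), so the argument is purely formal once the unit section is observed to lie in \(\Bund(f)\).
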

\begin{proof}
A morphism of HS bibundles \(\tau\colon\Bund(f)\to \Bund(g)\) must have the form \((x, \gamma)\mapsto (x, T(x)\gamma)\), where \(T(x) \in H_1\). It is easily checked that \(T\) defines a natural transformation \(f\Rightarrow g\). The argument may be reversed, and we are done.
\end{proof}

In the category \(\Bun\), unit identities and associativity hold up to canonical isomorphisms. This suggests that we should remember these isomorphisms as well, then we obtain a 2\nbdash{}category.

\begin{proposition}
  Lie groupoids, HS bibundles, and morphism of HS bibundles form a 2\nbdash{}category, denoted by \(\BUN\). The functor \(\Bund\) extends to a 2\nbdash{}functor \(\GPD\to \BUN\) \textup(see~\cite{Pronk} and \cite{Carchedi}\textup).
\end{proposition}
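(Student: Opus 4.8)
The plan is to assemble the bicategory $\BUN$ from the constructions already established and to verify all coherence by appealing to the universal property of principal-bundle quotients, so that no explicit cocycle computation is needed. First I would fix the underlying data. The objects are Lie groupoids, and for each pair $G,H$ the hom-category $\BUN(G,H)$ has HS bibundles from $G$ to $H$ as objects and morphisms of HS bibundles as arrows; since every such morphism is invertible by Lemma~\ref{chap1:lem:principal-equivariant-map-is-iso}, each hom-category is a groupoid, so $\BUN$ is in fact a $(2,1)$-category, as promised. Horizontal composition on objects is the tensor product $P\otimes_H Q$ already defined. To make this into a functor $\BUN(G,H)\times\BUN(H,K)\to\BUN(G,K)$ I would extend it to 2-morphisms: given $\alpha\colon P\to P'$ and $\beta\colon Q\to Q'$, the product map on $P\times_{H_0}Q$ is $H$-equivariant and hence descends, by the universal property of the quotient used in Corollary~\ref{chap1:cor:G-bundles-P-times-Q-pullback}, to a unique morphism $\alpha\otimes\beta\colon P\otimes_H Q\to P'\otimes_H Q'$. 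Compatibility with vertical composition and identities, and hence the interchange law, follows from the uniqueness clause of that universal property.

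Next I would supply the structural isomorphisms. The associator $(P\otimes_H Q)\otimes_K R\cong P\otimes_H(Q\otimes_K R)$ is the canonical isomorphism noted after the definition of the composite bibundle: both sides are realised as the quotient of $P\times_{H_0}Q\times_{K_0}R$ by the diagonal $H$- and $K$-actions, and hence are canonically identified. The unit bibundle $U(G)=G_1$ provides the left and right unitors $U(G)\otimes_G P\cong P$ and $P\otimes_H U(H)\cong P$, again as the canonical isomorphisms already recorded. Naturality of the associator and unitors in every variable follows once more from the uniqueness of maps out of a quotient.

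The coherence axioms then hold automatically, and this uniqueness argument is the technical heart of the construction rather than any genuine obstacle. In the pentagon and in the triangle, each of the two composite isomorphisms is a morphism between iterated quotients, and both are induced by the same equivariant map on the underlying iterated fibre product; by the universal property of the quotient they must coincide. The only real care needed is to confirm that every canonical isomorphism occurring is indeed induced by, and therefore determined by, the universal property of a principal-bundle quotient; once this is checked, the axioms require no computation.

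For the second assertion I would promote the functor $\Bund\colon\Gpd\to\Bun$ to a pseudofunctor $\GPD\to\BUN$. It is already defined on objects and 1-morphisms; on 2-morphisms I would use Proposition~\ref{chap1:prop:2-morphism-of-bundles}, sending a natural transformation $\tau\colon f\Rightarrow g$ to the corresponding morphism of HS bibundles $\Bund(f)\to\Bund(g)$. The compositor is the canonical isomorphism $\Bund(f)\otimes_H\Bund(g)\cong\Bund(g\circ f)$ provided by the fact that $\Bund$ preserves composition up to canonical isomorphism, and the unitor is the identity $\Bund(\id_G)=U(G)$. The two pseudofunctor coherence axioms---compatibility of the compositor with the associators and with the unitors---again reduce, through Proposition~\ref{chap1:prop:2-morphism-of-bundles} and the uniqueness of maps out of quotients, to identities that hold automatically. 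The only genuine work here is bookkeeping: one must check that the correspondence of Proposition~\ref{chap1:prop:2-morphism-of-bundles} is functorial in $f$ and $g$, which I would verify directly from the explicit formula $(x,\gamma)\mapsto(x,T(x)\gamma)$ recorded in its proof.
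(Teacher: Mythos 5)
The paper itself does not prove this proposition: it states it and delegates the verification to the references \cite{Pronk} and \cite{Carchedi} (and, for the equivalence \(\GEN\simeq\BUN\), to the universal property of the 2\nbdash{}categorical calculus of fractions together with Proposition~\ref{chap1:prop:2-morphism-of-bundles}). Your proposal instead gives a self-contained direct construction, and in outline it is the standard correct argument: hom-groupoids of HS bibundles (invertibility of 2\nbdash{}morphisms via Lemma~\ref{chap1:lem:principal-equivariant-map-is-iso}), horizontal composition by \(\otimes_H\), descent of equivariant maps along principal-bundle quotients, and coherence forced by uniqueness of descended maps. What the citation route buys is brevity and the ability to inherit coherence wholesale from Pronk's bicategory of fractions; what your route buys is that the associators and unitors become explicit and the coherence argument (uniqueness of maps out of quotients by surjective submersions) is exactly the technique the thesis reuses later for 2\nbdash{}groupoid bibundles, so the proof is more informative in context.

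Two points in your sketch deserve to be made explicit rather than waved through. First, the assertion that both bracketings of \((P\otimes_H Q)\otimes_K R\) and \(P\otimes_H(Q\otimes_K R)\) "are" the quotient of \(P\times_{H_0}Q\times_{K_0}R\) by the commuting \(H\)- and \(K\)-actions requires an intermediate step: one must know that pulling back the principal \(H\)-quotient \(P\times_{H_0}Q\to P\otimes_H Q\) along \(R\to K_0\) again yields a principal quotient (this is where Corollary~\ref{chap1:cor:G-bundles-P-times-Q-pullback} and Lemma~\ref{chap1:lem:action-over-principal-is-principal} enter), and that iterated quotients by commuting principal actions agree with the simultaneous quotient; only then does your uniqueness argument apply to a common presentation. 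Second, for the pseudofunctor \(\Bund\) you list the compositor and the two coherence axioms, but the pseudofunctor data also requires the compositor \(\Bund(f)\otimes_H\Bund(g)\cong\Bund(g\circ f)\) to be \emph{natural} in \(f\) and \(g\) with respect to the 2\nbdash{}morphisms of Proposition~\ref{chap1:prop:2-morphism-of-bundles}; this again follows from uniqueness of descent, but it is a separate condition from the associator and unitor compatibilities and should be stated and checked alongside them. Neither point breaks the argument; both are the genuine bookkeeping content of the proof.
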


Since HS bibundles are morphisms in a 2\nbdash{}category, we also call them \emph{HS morphisms} to emphasise the morphism feature. The 2\nbdash{}category \(\BUN\) will play a key role in the study of Lie 2\nbdash{}groupoids.

\subsection{2-Pullbacks in the 2-category of HS bibundles}

For some simple cases, we show that 2\nbdash{}pullbacks in the 2\nbdash{}category \(\BUN\) exist.

\begin{proposition}[{\cite[Proposition 31]{Schommer-Pries}}]
  Let \(P\) be an HS bibundle \(G \to K\) and let \(Q\) be an HS bibundle \(H\to K\). Suppose that \(P\to K_0\) or \(Q\to K_0\) is a surjective submersion. Then the 2\nbdash{}pullback of \(P\) and \(Q\) in the 2\nbdash{}category \(\BUN\) exists.
\end{proposition}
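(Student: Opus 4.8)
The plan is to build the 2-pullback explicitly as an action groupoid and then check its universal property. Write $J_r^P\colon P\to K_0$ and $J_r^Q\colon Q\to K_0$ for the moment maps landing in $K_0$, and recall that the right $K$-action on $P$ (resp.\ on $Q$) is principal because $P$ (resp.\ $Q$) is an HS bibundle into $K$. On the fibre product $P\times_{K_0}Q$ there is a diagonal right $K$-action, a left $G$-action on the $P$-factor, and a left $H$-action on the $Q$-factor, and these three actions commute pairwise. I would set
\[
L_0\coloneqq (P\times_{K_0}Q)/K .
\]
This is exactly where the hypothesis is used: since the $K$-action on $P$ (or on $Q$) is principal and $Q\to K_0$ (or $P\to K_0$) is a surjective submersion, Example~\ref{chap1:exa:two-actions-pullback} and Corollary~\ref{chap1:cor:G-bundles-P-times-Q-pullback} show that the diagonal $K$-action on $P\times_{K_0}Q$ is principal, so $L_0$ is a manifold and $L_0\to G_0$ (or $L_0\to H_0$) is a surjective submersion. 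As a sanity check, for $P=\Bund(f)$ and $Q=\Bund(f')$ one verifies that $L_0$ is the object manifold of the weak pullback $G\times^w_K H$ of Proposition~\ref{chap1:prop:weak-pullback-exists-assumption}.

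The left $G$- and $H$-actions descend to $L_0$ and still commute, so I would let $L$ be the action groupoid of $G\times H$ on $L_0$, with moment map $L_0\to G_0\times H_0$ induced by the two left moment maps; since the source and target of an action groupoid are surjective submersions, $L$ is a Lie groupoid. The two projections give functors $\pi_G\colon L\to G$ and $\pi_H\colon L\to H$, and I would take the projection bibundles to be the bundlisations $R\coloneqq\Bund(\pi_G)$ and $S\coloneqq\Bund(\pi_H)$. To produce the 2-cell, observe that both composites $R\otimes_G P$ and $S\otimes_H Q$ are canonically isomorphic to the bibundles $L_0\times_{G_0}P$ and $L_0\times_{H_0}Q$ pulled back along $\pi_G$ and $\pi_H$. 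A class $[p,q]\in L_0$ together with $p'\in P$ over the same object of $G$ determines a unique $k\in K_1$ with $p'=p\cdot k$, and the assignment $[p,q]\mapsto q\cdot k$ is well defined on the $K$-orbit; this supplies the desired isomorphism $\tau\colon R\otimes_G P\xrightarrow{\sim}S\otimes_H Q$ of $L$-$K$ bibundles.

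It remains to verify the universal property, which I expect to be the main obstacle. Given a Lie groupoid $T$, bibundles $A\colon T\to G$ and $B\colon T\to H$, and a 2-isomorphism $\sigma\colon A\otimes_G P\cong B\otimes_H Q$, I would construct a comparison bibundle $C\colon T\to L$ as an iterated fibre product of $A$, $P$, $Q$, $B$ over $G_0$, $K_0$, $H_0$, quotiented by the $G$-, $H$-, and $K$-actions and glued along $\sigma$; the submersion hypothesis again guarantees that the relevant quotients are manifolds, and right $L$-principality of $C$ follows from principality of $A$ and $B$ together with Lemma~\ref{chap1:lem:action-over-principal-is-principal}. One then builds the 2-isomorphisms $C\otimes_L R\cong A$ and $C\otimes_L S\cong B$ compatible with $\sigma$ and $\tau$. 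Essential uniqueness of $C$ is the cleanest part: any two comparison bibundles are related by a morphism of right principal bibundles, which is automatically an isomorphism by Lemma~\ref{chap1:lem:principal-equivariant-map-is-iso}. The bookkeeping needed to package these isomorphisms into an equivalence of hom-groupoids $\BUN(T,L)\simeq\{(A,B,\sigma)\}$, coherently in $T$, is where the real work lies.
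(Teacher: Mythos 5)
Your construction is exactly the paper's: you form \(L_0=(P\times_{K_0}Q)/K\) (a manifold by Corollary~\ref{chap1:cor:G-bundles-P-times-Q-pullback}, using the hypothesis that \(P\to K_0\) or \(Q\to K_0\) is a surjective submersion), take \(L\) to be the action groupoid of \(G\times H\) on \(L_0\), obtain the projection bibundles from the projection functors, and exhibit the canonical 2\nbdash{}isomorphism between the two composites to \(K\) --- which is precisely the square the paper builds. The paper likewise stops at this construction and defers the universal-property verification to the cited result \cite[Proposition 31]{Schommer-Pries} (noting only that the transversality gap in its Lemma 30 is repaired by the surjective-submersion hypothesis), so your admitted remaining ``bookkeeping'' is no further from a complete proof than the paper's own account.
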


An explicit 2\nbdash{}pullback square is constructed similar to the composition of HS bibundles. There is a right \(K\)-action on \(P\times_{K_0} Q\) given by
\[
 (p, q)\cdot k=(p\cdot k, q\cdot k).
\]
Corollary~\ref{chap1:cor:G-bundles-P-times-Q-pullback} implies that the quotient \((P\times_{K_0} Q)/K\) is a manifold. There are commuting left actions of \(G\) and \(H\) on \((P\times_{K_0} Q)/K\). Denote by \(L\) the action groupoid of \(G\times H\) on \((P\times_{K_0} Q)/K\). There are two Lie groupoid functors \(L\to G\) and \(L\to H\) given by projection, hence two HS bibundles \(L\to G\) and \(L\to H\). There is an canonical isomorphism between the composite HS bibundles \(L\to G\to K\) and \(L\to H\to K\), and this gives a 2\nbdash{}pullback square.

\begin{remark}
  The proof of~\cite[Lemma 30]{Schommer-Pries} has a mistake, namely, the maps \(f_0\) and \(g_0\) constructed there need not be transversal. It goes through, however, under our stronger assumption.
\end{remark}

For a weak pullback square in \(\GPD\) \textup(under the assumption in Proposition~\ref{chap1:prop:weak-pullback-exists-assumption}\textup), the 2\nbdash{}pullback square in \(\BUN\) constructed above is isomorphic to the square given by bundlisation. In particular, we have the following lemma.

\begin{corollary}\label{chap1:cor:pullback-bibundle-to-M=pullback}
 Let \(M\) be a manifold. Let \(P\colon G\to \trivial{M}\) and \(Q\colon H\to \trivial{M}\) be two HS bibundles. Assume that \(P=\Bund(\varphi)\) and \(Q=\Bund(\psi)\) for Lie groupoid functors \(\varphi\colon G\to \trivial{M}\) and \(\psi\colon H\to \trivial{M}\). Suppose that either \(\varphi_0\) or \(\psi_0\) is a surjective submersion. Then the 2\nbdash{}pullback constructed above, the weak pullback \(G\times^w_{\trivial{M}} H\), and the strong pullback \(G\times_{\trivial{M}} H \) are all isomorphic.
\end{corollary}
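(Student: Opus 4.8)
The plan is to compute the three constructions explicitly and to observe that, because the common base $\trivial{M}$ has only identity arrows, they all collapse to one and the same Lie groupoid, with object space $G_0\times_M H_0$ (the fibre product along $\varphi_0,\psi_0$) and arrow space $G_1\times_M H_1$ (along $\varphi_1,\psi_1$). Two of the three identifications are cheap: the hypothesis that $\varphi_0$ or $\psi_0$ is a surjective submersion is exactly the hypothesis of Proposition~\ref{chap1:prop:weak-pullback-exists-assumption} in the case $K=\trivial{M}$ (there the arrow space is $M$ and $\source=\target=\id$), so the weak pullback $G\times^w_{\trivial{M}}H$ is a Lie groupoid; the Example comparing the strong and weak pullbacks over a trivial groupoid then supplies a natural isomorphism $G\times_{\trivial{M}}H\cong G\times^w_{\trivial{M}}H$. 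It therefore remains only to identify the explicitly constructed $\BUN$-pullback with the strong pullback. (Alternatively, the Remark immediately preceding this Corollary already asserts that the $\BUN$-pullback is isomorphic to the bundlisation of the weak pullback, which would give the claim at once; the computation below makes this concrete in the present case.)

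To carry this out I would first record the shape of the two bibundles. Since $\trivial{M}$ has arrow space $M$ with $\source=\target=\id$, the bundlisation $\Bund(\varphi)=G_0\times_{M,\target}M$ is canonically $G_0$, carrying the right moment map $\varphi_0\colon G_0\to M$, the trivial right $\trivial{M}$-action, and the tautological left $G$-action $g\cdot x=\target(g)$; symmetrically $\Bund(\psi)\cong H_0$ with right moment map $\psi_0$. Feeding these into the $2$-pullback construction, the space $P\times_M Q$ is $G_0\times_M H_0$, a manifold by Corollary~\ref{chap1:cor:G-bundles-P-times-Q-pullback} (the relevant $\trivial{M}$-action is trivial, hence principal, and $G_0\to M$ or $H_0\to M$ is a surjective submersion). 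The essential simplification is that the right $\trivial{M}$-action on this space is trivial, so the quotient step is vacuous and $(P\times_M Q)/\trivial{M}\cong G_0\times_M H_0$.

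The apex $L$ is then the action groupoid of $G\times H$ on $G_0\times_M H_0$ for the two tautological left actions. Its arrow space is $G_1\times_M H_1$ with componentwise source, target, unit, and multiplication, which is precisely the data of the strong pullback $G\times_{\trivial{M}}H$. Under this identification the projection bibundles $L\to G$ and $L\to H$ are the bundlisations of the strong-pullback projection functors, and the $2$-isomorphism filling the square is the canonical one. Combining this with the two cheap identifications above shows that the $\BUN$-pullback, the weak pullback, and the strong pullback are all isomorphic.

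The main obstacle is purely bookkeeping: one must verify that the two tautological left actions assemble into the $G\times H$-action whose action groupoid reproduces \emph{exactly} the structure maps of the strong pullback, and that the $2$-commuting datum output by the construction matches the canonical natural transformation of the weak pullback under the isomorphism of the preceding Example. Because every arrow of $\trivial{M}$ is an identity, each such check degenerates to a one-line diagram chase, with none of the transversality or quotient issues that complicate the general case.
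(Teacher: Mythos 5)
Your proposal is correct, and it is consistent with, but more explicit than, what the paper actually does. The paper offers no computation at all: the corollary is presented as an immediate consequence of the sentence preceding it (the general, unproved remark that for any weak pullback square in \(\GPD\) satisfying the hypothesis of Proposition~\ref{chap1:prop:weak-pullback-exists-assumption}, the explicit \(\BUN\)-pullback square is isomorphic to the square obtained by bundlisation) together with the earlier example identifying strong and weak pullbacks over a trivial groupoid. You use the same example for the identification \(G\times_{\trivial{M}} H\cong G\times^w_{\trivial{M}} H\), but for the remaining identification you replace the appeal to the general remark by a direct computation in the special case: \(\Bund(\varphi)\cong G_0\) with tautological left \(G\)-action and trivial right \(\trivial{M}\)-action, the quotient step \((P\times_M Q)/\trivial{M}\cong G_0\times_M H_0\) is vacuous, and the resulting action groupoid of \(G\times H\) has arrow space \(G_1\times_M H_1\) with componentwise structure maps, i.e.\ is literally the strong pullback. (One detail worth spelling out in your bookkeeping: the arrow space of the action groupoid is cut out by \(\varphi_0(\source(g))=\psi_0(\source(h))\), and this agrees with the strong-pullback condition \(\varphi_1(g)=\psi_1(h)\) precisely because every arrow of \(\trivial{M}\) is an identity, so \(\varphi_1(g)=\varphi_0(\source(g))\).) What your route buys is self-containedness: since the paper never proves the general remark, your computation actually verifies it in the case at hand, whereas the paper's route is shorter and stresses that the statement is a special case of a general compatibility between \(\BUN\)-pullbacks and weak pullbacks.
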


\section{Comparison of 2-categories of Lie groupoids}\label{chap1:sec:comparison-of-2-cat-of-lie-groupoids}

We explain that the three 2\nbdash{}categories \(\GEN, \ANA, \BUN\) are equivalent. Recall~\cite{Street} that a 2\nbdash{}functor \(f\colon \Cat[A]\to \Cat[B]\) between 2\nbdash{}categories is an \emph{equivalence} if \(f\) is
\begin{itemize}
  \item essentially surjective on objects; that is, for every object \(X\) of \(\Cat[B]\) there is an object \(Y\) of \(\Cat[A]\) such that \(X\) is equivalent to \(f(Y)\) in \(\Cat[B]\);
  \item locally equivalent; that is, the functors \(\Cat[A](X, Y)\to \Cat[B](f(X), f(Y))\) are equivalences of categories for all objects \(X\) and \(Y\) of \(\Cat[A]\).
\end{itemize}

We first compare the 2\nbdash{}categories \(\GEN\) and \(\ANA\). Although the size of 1-morphisms of \(\ANA\) is smaller than that of \(\GEN\), Remark~\ref{chap1:rem:ana-gen} shows that every 1-morphism of \(\GEN\) is equivalent to a 1-morphism that is in \(\ANA\). The following theorem is a direct consequence of the universal property of the calculus of fractions~\cite[Proposition 24]{Pronk}.

\begin{theorem}[{\cite[Theorem 7.2]{Roberts}}]
  The 2\nbdash{}functor \(\Gpd\to \ANA\) extends to an equivalence of 2\nbdash{}categories \(\GEN\to\ANA\).
\end{theorem}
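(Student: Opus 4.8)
The plan is to deduce the statement from Pronk's universal characterisation of the bicategory of fractions \cite[Proposition 24]{Pronk}. Write $F\colon\GPD\to\ANA$ for the canonical 2-functor that sends a functor $f\colon G\to H$ to the anafunctor $G\xleftarrow{\id}G\xrightarrow{f}H$, whose left leg $\id_{G_0}$ is trivially a surjective submersion. Since an anafunctor is an equivalence in $\ANA$ exactly when both of its legs are weak equivalences, $F$ carries every weak equivalence to an equivalence; by the universal property of the 2-categorical calculus of fractions this forces $F$ to factor, uniquely up to equivalence, through the localisation $\mathfrak{G}\colon\GPD\to\GEN$, producing a 2-functor $\bar F\colon\GEN\to\ANA$ with $\bar F\circ\mathfrak{G}\simeq F$. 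It then remains to prove that $\bar F$ is an equivalence, and for this I would verify Pronk's three sufficient conditions, which I label (EF1)--(EF3), on $F$ itself.

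Condition (EF1), essential surjectivity on objects, is immediate: $\GPD$, $\GEN$ and $\ANA$ all have Lie groupoids as objects and each 2-functor in sight is the identity on objects. For (EF2) I must show that every 1-morphism of $\ANA$ is, up to isomorphism, a fraction $F(g)\circ F(w)^{-1}$. Given an anafunctor $G\xleftarrow{w}K\xrightarrow{g}H$ with $w_0\colon K_0\to G_0$ a surjective submersion, the functor $w$ is a weak equivalence, so $F(w)\colon K\to G$ is an equivalence in $\ANA$; its quasi-inverse is the reversed span $G\xleftarrow{w}K\xrightarrow{\id}K$, which is a legitimate anafunctor precisely because $w_0$ is a surjective submersion. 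Composing anafunctors by the strong pullback and using $K\times_K K\cong K$, the fraction $F(g)\circ F(w)^{-1}$ recovers the original anafunctor $G\xleftarrow{w}K\xrightarrow{g}H$. This is exactly the content of Remark~\ref{chap1:rem:ana-gen}.

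For (EF3) I need $F$ to be fully faithful on 2-cells: for functors $f,g\colon G\to H$, the natural transformations $f\Rightarrow g$ must correspond bijectively to the ananatural transformations $F(f)\Rightarrow F(g)$. By the definition of ananatural transformation, a 2-cell $F(f)\Rightarrow F(g)$ is a natural transformation between the two composite functors $G\times_G G\rightrightarrows G\to H$ built from $f$ and $g$ over the common left leg; since the strong pullback $G\times_G G$ along the identities is canonically isomorphic to $G$, such a 2-cell is nothing but a natural transformation $f\Rightarrow g$, and the assignment is manifestly a bijection. With (EF1)--(EF3) established, Pronk's criterion yields that $\bar F\colon\GEN\to\ANA$ is an equivalence of 2-categories.

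The genuinely substantive point, and the one I would treat with the most care, is the claim invoked at the outset that $F$ sends weak equivalences to equivalences in $\ANA$ -- equivalently, that an anafunctor both of whose legs are weak equivalences admits a quasi-inverse together with invertible ananatural transformations witnessing the two triangle identities. Constructing these witnesses (for instance, identifying $K\times_G K$ with the unit anafunctor up to an ananatural isomorphism) is where the weak-equivalence machinery -- full faithfulness, essential surjectivity, and the behaviour of weak and strong pullbacks recorded in Proposition~\ref{chap1:prop:weak-equivalence-properties} -- is actually used; everything else reduces to the strong-pullback bookkeeping of anafunctor composition and the identifications $K\times_K K\cong K$ and $G\times_G G\cong G$. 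Once this coherence is granted, the three conditions above are routine and the asserted equivalence follows.
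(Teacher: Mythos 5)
Your proposal is correct and takes exactly the route the paper does: the paper proves this theorem by citing it to Roberts and declaring it ``a direct consequence of the universal property of the calculus of fractions''~\cite[Proposition 24]{Pronk} together with Remark~\ref{chap1:rem:ana-gen}, which is precisely your skeleton. Your explicit verification of Pronk's conditions (EF1)--(EF3), and your isolation of the fact that \(F\) sends weak equivalences to equivalences in \(\ANA\) (handled via the weak-pullback replacement of Proposition~\ref{chap1:prop:weak-equivalence-properties} when the object map is not a surjective submersion) as the substantive input, is just the content the paper delegates to the cited references.
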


We now compare HS bibundles and generalised morphisms of Lie groupoids. Given an HS bibundle  \(P\colon G\to H\), we form a \emph{biaction groupoid} \(G\ltimes P\rtimes H\) with
\[
  (G\ltimes P\rtimes H)_0=P,\quad (G\ltimes P\rtimes H)_1=G_1\times_{\source,G_0} P\times_{H_0,\target} H_1,
\]
and with composition given by the compositions in \(G\) and \(H\). There is a span
\[
   G\leftarrow G\ltimes P\rtimes H\to H
\]
given by the projections \(G_1\times_{G_0} P\times_{H_0} H_1\to G_1\) and \(G_1\times_{G_0} P\times_{H_0} H_1\to H_1\). Since the \(H\)\nbdash{}action is principal over \(G_0\), the functor \(G\ltimes P\rtimes H\to G\) is a weak equivalence. Thus we get a generalised morphism from \(G\) to~\(H\).

\begin{proposition}[{\cite[Section 2.6]{Moerdijk-Mrcun2005}}]
  The biaction groupoid construction induces a functor \(\Bun \to \Gen\). The bundlisation extends to a functor \(\Gen\to\Bun\). These two functors are mutually inverse functors.
\end{proposition}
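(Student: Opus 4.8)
The plan is to construct the two functors separately, exploiting the localization description of $\Gen$ to get one of them almost for free, and then to verify that the two round-trips are identities.

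First, for the functor $S\colon\Gen\to\Bun$: recall that $\Gen$ is the localization of $\Gpd$ (with functors taken up to natural transformation) at the weak equivalences, and that the bundlisation $\Bund\colon\Gpd\to\Bun$ sends a weak equivalence to an isomorphism in $\Bun$. By Proposition~\ref{chap1:prop:2-morphism-of-bundles}, two naturally isomorphic functors have isomorphic bundlisations (every natural transformation of groupoids being invertible), so $\Bund$ descends to the category of Lie groupoids with functors-up-to-natural-transformation. Since it inverts weak equivalences, the universal property of the calculus of fractions yields a unique functor $S\colon\Gen\to\Bun$ with $S\circ\mathfrak{G}=\Bund$; concretely $S$ sends a generalised morphism $G\xleftarrow{w,\simeq}K\xrightarrow{f}H$ to $\Bund(w)^{-1}\otimes_K\Bund(f)$. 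This single step takes care of well-definedness and functoriality at once.

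Second, for $F\colon\Bun\to\Gen$: send an HS bibundle $P\colon G\to H$ to the biaction span $G\xleftarrow{l,\simeq}G\ltimes P\rtimes H\xrightarrow{r}H$ constructed in the text (the left leg is a weak equivalence because the right $H$-action is principal over $G_0$). I would check this is a functor. Well-definedness on isomorphism classes is immediate, since an isomorphism $P\cong P'$ induces an isomorphism of biaction groupoids commuting with both legs. Functoriality on composites is the substantive point: for $P\colon G\to H$ and $Q\colon H\to K$, I would identify the biaction groupoid of $P\otimes_H Q$ with the weak pullback $(G\ltimes P\rtimes H)\times^w_H(H\ltimes Q\rtimes K)$ that computes the composite span in $\Gen$. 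Both sides are built from $P\times_{H_0}Q$ quotiented by the diagonal $H$-action (the quotients existing by Corollary~\ref{chap1:cor:G-bundles-P-times-Q-pullback}), so the comparison is a direct matching of descriptions; the unit bibundle $U(G)=G_1$ goes to a span equivalent to $\mathfrak{G}(\id_G)$.

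Third, the two round-trips. For $F\circ S=\id_{\Gen}$ I would use that $\mathfrak{G}$ is essentially surjective on morphisms: every generalised morphism is a composite of spanisations of functors and formal inverses of spanisations of weak equivalences, so two functors out of $\Gen$ agreeing after precomposition with $\mathfrak{G}$ coincide. Since $S\circ\mathfrak{G}=\Bund$, this reduces $F\circ S=\id$ to the single \emph{reconstruction lemma} that the biaction span of $\Bund(f)$ is equivalent in $\Gen$ to $\mathfrak{G}(f)$, checked by exhibiting an explicit weak equivalence from $G$ to $G\ltimes\Bund(f)\rtimes H$ over both legs. For $S\circ F=\id_{\Bun}$ I must show that for every HS bibundle $P$ one has $\Bund(l)^{-1}\otimes_{B}\Bund(r)\cong P$, where $B=G\ltimes P\rtimes H$. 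The hard part will be exactly this last isomorphism: since $l$ is a weak equivalence, $\Bund(l)$ is a Morita bibundle with inverse its conjugate, and I would write $\Bund(l)^{-1}\otimes_{B}\Bund(r)$ explicitly as a quotient of a fibre product $\{(p,g,h)\}$ over $B_0=P$, then produce the comparison map to $P$ from the commuting left $G$- and right $H$-actions, e.g.\ $(p,g,h)\mapsto g\cdot p\cdot h$. The content is that this map is $G$-$H$-equivariant and descends to a diffeomorphism on the $B$-quotient, with principality of the two actions (and Corollary~\ref{chap1:cor:G-bundles-P-times-Q-pullback}) furnishing smoothness, bijectivity, and a smooth inverse. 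Once this reconstruction isomorphism is in place, both composites are identities and $S$, $F$ are mutually inverse.
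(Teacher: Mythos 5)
A preliminary remark: the paper does not prove this proposition at all --- it is quoted from \cite[Section 2.6]{Moerdijk-Mrcun2005}, and the surrounding text only supplies the biaction groupoid \(G\ltimes P\rtimes H\) and the observation that its left leg is a weak equivalence. So your proposal has to stand on its own. In outline it does: obtaining \(S\colon\Gen\to\Bun\) from the universal property of the calculus of fractions (using Proposition~\ref{chap1:prop:2-morphism-of-bundles} to descend \(\Bund\) to functors-up-to-natural-transformation, and the fact that \(\Bund\) inverts weak equivalences), the reconstruction lemma \(F(\Bund(f))=\mathfrak{G}(f)\) in \(\Gen\) to get \(F\circ S=\id\), and the explicit isomorphism \(\Bund(l)^{-1}\otimes_{B}\Bund(r)\cong P\) for \(S\circ F=\id\) are all correct reductions, and the last one can indeed be carried out by the quotient computation you describe (modulo the convention that in \(\Bund(l)=P\times_{G_0,\target}G_1\) one has \(\target(g)=J_l(p)\), so the map should read \((p,g,h)\mapsto g^{-1}\cdot p\cdot h\); your ``e.g.'' hedge covers this).

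There is, however, one genuine gap: the functoriality of \(F\colon\Bun\to\Gen\). You assert that the weak pullback \((G\ltimes P\rtimes H)\times^w_H(H\ltimes Q\rtimes K)\) and the biaction groupoid \(G\ltimes(P\otimes_H Q)\rtimes K\) are ``both built from \(P\times_{H_0}Q\) quotiented by the diagonal \(H\)-action,'' so that the comparison is a direct matching of descriptions. That is false: by the paper's definition the weak pullback is a comma category whose object manifold is \(P\times_{H_0}H_1\times_{H_0}Q\) --- no quotient is taken --- whereas the biaction groupoid of the composite has object manifold \((P\times_{H_0}Q)/H\). Their dimensions differ by \(2(\dim H_1-\dim H_0)\), so the two groupoids are not isomorphic unless \(H\) is \'etale; they are only weakly equivalent. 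What this step actually requires is a comparison functor, e.g.\ \((p,\alpha,q)\mapsto[p\cdot\alpha,\,q]\) from the weak pullback to \(G\ltimes(P\otimes_H Q)\rtimes K\), together with a proof that it is a weak equivalence commuting with the two projection legs up to natural transformation, so that the two spans become equivalent in the sense of~\eqref{chap1:eq:GEN-2-morphisms}; the verification of essential surjectivity and full faithfulness uses principality of the \(H\)-action on \(P\) and Lemma~\ref{chap1:lem:action-over-principal-is-principal}. This is the real content of ``the biaction construction is a functor,'' and as written your justification for it would fail.
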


\begin{remark}
 It follows that the two generalised morphisms in Examples~\ref{chap1:exa:principal-bundle-cocycle} and~\ref{chap1:exa:principal-bundle-action-groupoid} are equivalent.
\end{remark}

\begin{theorem}\label{chap1:thm:GPD=BUN}
  The 2\nbdash{}functor \(\Bund\colon \GPD\to \BUN\) extends to an equivalence of 2\nbdash{}categories \(\GEN\to \BUN\).
\end{theorem}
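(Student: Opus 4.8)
The plan is to deduce the theorem from the universal property of the $2$-categorical calculus of fractions used to build $\GEN$. Concretely, Pronk's criterion~\cite[Proposition 24]{Pronk}---the very result invoked to compare $\GEN$ and $\ANA$---says that a $2$-functor out of $\GPD$ that inverts weak equivalences extends, uniquely up to equivalence, over the localisation $\GPD\to\GEN$, and that this extension is an equivalence precisely when the original $2$-functor is (EF1) essentially surjective on objects, (EF2) such that every $1$-morphism of the target between objects in the image factors, up to isomorphism, as the image of a Lie groupoid functor composed with the inverse image of a weak equivalence, and (EF3) fully faithful on $2$-cells. So I would verify these conditions for $\Bund\colon\GPD\to\BUN$, the calculus of fractions itself being already in place.

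First, $\Bund$ sends weak equivalences to equivalences: by the characterisation of weak equivalences, a functor is a weak equivalence if and only if its bundlisation is an isomorphism in $\Bun$, that is, a Morita equivalence, hence an equivalence in $\BUN$. Condition EF1 is trivial, since $\Bund$ is the identity on objects. Condition EF3 is essentially immediate from Proposition~\ref{chap1:prop:2-morphism-of-bundles}: for parallel functors $f,g\colon G\to H$ the $2$-cells of $\BUN$ from $\Bund(f)$ to $\Bund(g)$ are exactly the morphisms of HS bibundles between them, and these are in bijection with natural transformations $f\Rightarrow g$, which are the $2$-cells of $\GPD$.

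The remaining condition EF2 is where the genuine content sits, and it is here that I expect the main work. Given an HS bibundle $P\colon G\to H$, I would form the biaction groupoid $G\ltimes P\rtimes H$; its two projections give a span $G\xleftarrow{\simeq}G\ltimes P\rtimes H\to H$ whose left leg is a weak equivalence because the $H$-action is principal over $G_0$. This exhibits $P$, up to isomorphism of HS bibundles, as $\Bund$ of a generalised morphism---equivalently, the biaction-groupoid and bundlisation functors realising $\Bun\cong\Gen$ at the $1$-categorical level recover $P$---which is exactly the factorisation demanded by EF2. With all of Pronk's hypotheses checked, the extension $\GEN\to\BUN$ is an equivalence of $2$-categories. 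The only delicate points are bookkeeping ones: confirming that the span's left leg really is a weak equivalence in the sense of Definition~\ref{chap1:def:weak-equi-groupoid} and that bundlisation applied to this span returns $P$ and not merely a Morita-equivalent bibundle; both reduce to the principality of the bibundle and to Lemma~\ref{chap1:lem:principal-equivariant-map-is-iso}.
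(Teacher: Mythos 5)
Your proposal is correct and follows essentially the same route as the paper: the paper's proof consists precisely of invoking the universal property of the 2\nbdash{}categorical calculus of fractions (Pronk's Proposition 24) together with Proposition~\ref{chap1:prop:2-morphism-of-bundles} for the 2\nbdash{}cell condition, with the biaction groupoid construction (already established in the comparison of \(\Bun\) and \(\Gen\)) supplying the factorisation of HS bibundles through generalised morphisms. Your write-up simply makes explicit the verification of Pronk's conditions EF1--EF3 that the paper leaves implicit.
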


\begin{remark}
   This well-known theorem follows from the universal property of the 2\nbdash{}categorical calculus of fractions~\cite[Proposition 24]{Pronk} and Proposition~\ref{chap1:prop:2-morphism-of-bundles}. See also~\cite{Carchedi} for a direct proof of the equivalence of \(\GEN\) and \(\BUN\). We notice that the 2\nbdash{}category of \emph{differential stacks} is also equivalent to these three 2\nbdash{}categories. For a detailed treatment of differential stacks, see~\cite{Behrend-Xu, Carchedi}. See~\cite{Carchedi} for a review on the equivalence of these 2\nbdash{}categories.
\end{remark}

\chapter{Simplicial Sets}
\thispagestyle{empty}

This chapter introduces the basics of simplicial sets. The lifting properties of simplicial sets will be central to our approach to higher groupoids and 2-groupoids.

\section{Preliminaries on presheaves}\label{chap2:sec:presheaf}

We collect some basic facts on presheaves that will be used later. All the material is standard; see, for instance, \cite{MacLane}. Let~\(\Cat\) be a locally small category throughout this section.

\begin{definition}
   The \emph{category of presheaves} on \(\Cat\) is the functor category \([\Cat^\op, \Sets]\).
\end{definition}

More concretely, a presheaf \(f\) is a functor \( f\colon \Cat^\op \to \Sets\) from the opposite category of \(\Cat\) to \(\Sets\), the category of sets. A morphism of presheaves is a natural transformation between the corresponding functors. In particular, a morphism of presheaves \(\tau\colon f\to g\) is a \emph{subpresheaf} if for each object \(C\in \Cat\) the map \(\tau\colon f(C)\to g(C) \) is an injection.

\begin{example}\label{chap2:poset-opensubsets-presheaf}
  Given a partially ordered set (poset) \((S, \prec)\), we view it as a category with objects given by elements in \(S\), and with a unique arrow from \(i\) to \(j\) if \(j\prec i\).\footnote{Our convention is the same as in~\cite{Zhu:ngpd}, but differs from the perhaps more common one.} Let \((\Cat[U],\subset)\) be the poset of open subsets of a topological space \(X\). A presheaf on \(\Cat[U]^\op\) as defined above is a presheaf on \(X\) in the usual sense.
\end{example}

\begin{definition}
The presheaf \emph{represented} by an object \(C\in\Cat\) is the presheaf defined by
  \[
    T \mapsto \hom(T, C), \quad  T\in \Cat,
  \]
denoted by \(\hom(\blank, C)\) or \(h_C\). A presheaf isomorphic to one of this form is called \emph{representable presheaf}. Given a morphism \(C\to D\) in \(\Cat\), there is a morphism of presheaves \(h_C\to h_D\). This defines the \emph{Yoneda functor}
\[
 h\colon \Cat \to [\Cat^\op, \Sets].
\]
\end{definition}

\begin{lemma}[Yoneda Lemma]\label{chap2:lem:Yoneda}
  The Yoneda functor is full and faithful. Moreover, given \(C\in \Cat\) and a presheaf~\(f\), there is a canonical isomorphism
  \[
    [\Cat^\op, \Sets](h_C, f)=f(X),
  \]
  which is natural in \(C\) and \(f\).
\end{lemma}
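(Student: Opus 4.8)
The plan is to prove the Yoneda Lemma by constructing an explicit bijection with an explicit two-sided inverse, and then to obtain full-faithfulness as the special case $f = h_D$. (Note that the target of the displayed isomorphism is $f(C)$; I will write $f(C)$ throughout.)

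First I would define the forward map $\Phi \colon [\Cat^\op, \Sets](h_C, f) \to f(C)$ by evaluation at the identity: given a natural transformation $\eta \colon h_C \to f$, its component $\eta_C \colon \hom(C, C) \to f(C)$ may be applied to $\id_C \in \hom(C,C) = h_C(C)$, and I set $\Phi(\eta) \coloneqq \eta_C(\id_C)$. Conversely, I would build a candidate inverse $\Psi \colon f(C) \to [\Cat^\op, \Sets](h_C, f)$ out of the presheaf structure alone: for $x \in f(C)$, declare the $T$\nbdash{}component of $\Psi(x)$ to be
\[
\Psi(x)_T \colon \hom(T, C) \to f(T), \qquad g \mapsto f(g)(x),
\]
which is legitimate because $f$ is contravariant, so $f(g) \colon f(C) \to f(T)$. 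I would first check that $\Psi(x)$ is natural: for $h \colon S \to T$ the naturality square commutes since $f(g \circ h) = f(h) \circ f(g)$ by functoriality of $f$, while precomposition by $h$ is exactly the map $h_C(h)$.

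The key step is verifying $\Psi \circ \Phi = \id$, and this is where the naturality of $\eta$ is used essentially. Writing $x = \eta_C(\id_C)$, for any $g \colon T \to C$ I would invoke the naturality square of $\eta$ along $g$, namely $\eta_T \circ h_C(g) = f(g) \circ \eta_C$. Evaluating both sides at $\id_C$ and using $h_C(g)(\id_C) = \id_C \circ g = g$ yields $\eta_T(g) = f(g)(x) = \Psi(x)_T(g)$, so $\Psi(x) = \eta$. The reverse composite $\Phi \circ \Psi = \id$ is immediate: $\Phi(\Psi(x)) = \Psi(x)_C(\id_C) = f(\id_C)(x) = x$, using $f(\id_C) = \id_{f(C)}$. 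This establishes the bijection.

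It then remains to verify naturality in $C$ and in $f$ and to extract full-faithfulness. Naturality in $f$ asserts that for $\alpha \colon f \to f'$ the two ways around the obvious square agree, which follows by chasing $\eta \mapsto \alpha_C(\eta_C(\id_C))$ against $\alpha \circ \eta \mapsto (\alpha \circ \eta)_C(\id_C)$; naturality in $C$ is analogous, using that a morphism $C \to C'$ induces $h_C \to h_{C'}$ by postcomposition. Finally, specialising to $f = h_D$ gives $[\Cat^\op, \Sets](h_C, h_D) \cong h_D(C) = \hom(C, D)$, and a short check shows this bijection is precisely the action of the Yoneda functor on hom-sets (indeed $\Phi$ sends $h(\varphi) \mapsto \varphi \circ \id_C = \varphi$), so $h$ is full and faithful. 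I expect the only genuinely delicate point to be the direction $\Psi \circ \Phi = \id$, where one must unwind the naturality condition in the right order; the remaining verifications are routine bookkeeping with functoriality.
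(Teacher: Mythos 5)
Your proof is correct: it is the standard Yoneda argument (evaluation at \(\id_C\), the inverse \(x \mapsto (g \mapsto f(g)(x))\), the two composite checks, and full-faithfulness by specialising to \(f = h_D\)), and the paper itself gives no proof — it defers to the standard references, where exactly this argument appears. You were also right to flag that the displayed target \(f(X)\) in the statement is a typo for \(f(C)\).
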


\begin{remark}
  The category of presheaves on \(\Cat\) has all limits and colimits. They are both computed pointwise. The Yoneda functor preserves and reflects all limits, but does not preserve colimits in general.
\end{remark}

\begin{remark}
  We may use the Yoneda embedding
  \[
  h \colon \Cat \to [\Cat^\op, \Sets],
  \]
  to embed any category~\(\Cat\) into a complete category. Thus any diagram in~\(\Cat\) has a presheaf on~\(\Cat\) as a limit. It has a limit in~\(\Cat\) if and only if this presheaf is representable by some object of~\(\Cat\).
\end{remark}

\begin{remark}
There is a dual version of copresheaves and corepresentable copresheaves. Any diagram in~\(\Cat\) has a copresheaf as a colimit. It has a colimit in~\(\Cat\) if and only if this copresheaf is corepresentable.
\end{remark}

\begin{definition}
  Let \(f\) be a presheaf on \(\Cat\). For an object \(T\in \Cat\), elements of \(f(T)\) are called \emph{\(T\)-elements} of~\(f\). In particular, \(T\)-elements of \(h_C\) are also called \(T\)-elements of~\(C\).
\end{definition}

By the Yoneda Lemma, a \(T\)-element of \(f\) is the same as a presheaf morphism \(h_T\to f\), and a \(T\)-element of \(C\) is the same as a morphism \(T\to C\). A morphism between presheaves \(f\to g\) is the same as a family of maps from \(T\)-elements of \(f\) to \(T\)-elements of \(g\) that is functorial in \(T\). This point of view will be important for us: when there is no notion of elements of an object in the category \(\Cat\), we can work with \(T\)-elements.

\begin{definition}
The \emph{category of elements} of a presheaf \(f\) on \(\Cat\) is the category
\begin{itemize}
 \item whose objects are \(T\)-elements \(h_T\to f\), for all \(T\in \Cat\), and
 \item morphisms from \(h_T\to f\) to \(h_S\to f\) are commutative triangles of presheaves
\[
  \xymatrix{
  h_T\ar[rd]\ar[rr] & & h_S\ar[ld]\\
      &f\rlap{\ .}&
 }
\]
\end{itemize}
\end{definition}

The following co-Yoneda Lemma or density theorem~\cite[Exercise 3, p.~62]{MacLane} implies that every presheaf is a colimit of representables.

\begin{lemma}\label{chap2:lem:co-Yoneda}
  Let \(f\) be a presheaf on \(\Cat\). There is a canonical isomorphism
\[
  f \cong\colim_{h_T\to f} h_T,
\]
where the colimit is taken over the category of elements.
\end{lemma}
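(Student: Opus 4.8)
The plan is to verify directly the universal property that characterises the colimit, working inside the cocomplete category \([\Cat^\op,\Sets]\). Write \(\int f\) for the category of elements, with projection sending an object \(h_T\to f\) (equivalently, by the Yoneda Lemma~\ref{chap2:lem:Yoneda}, an element \(x\in f(T)\)) to \(T\); the diagram whose colimit we must compute is \((T,x)\mapsto h_T\). Since colimits of presheaves are computed pointwise, one could also evaluate at each \(C\in\Cat\) and identify \(\colim_{(T,x)}\hom(C,T)\) with \(f(C)\), but the cleanest route is to exhibit \(f\) together with a canonical cocone and show it is initial among all cocones.

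First I would construct the cocone. To each object \((T,x)\) of \(\int f\) the Yoneda Lemma assigns a morphism \(\lambda_{(T,x)}\colon h_T\to f\) corresponding to \(x\). For a morphism \((T,x)\to(S,y)\) in \(\int f\), given by some \(\varphi\colon T\to S\) with \(f(\varphi)(y)=x\), the induced triangle \(\lambda_{(S,y)}\circ h_\varphi=\lambda_{(T,x)}\) commutes: this is exactly the defining commutative-triangle condition for morphisms in the category of elements. Hence the \(\lambda_{(T,x)}\) assemble into a cocone with vertex \(f\).

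Next I would check initiality. Given any presheaf \(g\) and any cocone \(\mu_{(T,x)}\colon h_T\to g\), the Yoneda Lemma lets me regard each \(\mu_{(T,x)}\) as an element of \(g(T)\); I define \(\Phi\colon f\to g\) componentwise by \(\Phi_T(x)=\mu_{(T,x)}\). The compatibility \(\Phi\circ\lambda_{(T,x)}=\mu_{(T,x)}\) and the uniqueness of \(\Phi\) are then immediate from Yoneda, since any comparison map is forced to send \(x\) to \(\mu_{(T,x)}\). The one point requiring care — and the main obstacle — is naturality of \(\Phi\): for \(\psi\colon C\to T\) one must show \(g(\psi)\bigl(\Phi_T(x)\bigr)=\Phi_C\bigl(f(\psi)(x)\bigr)\). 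This follows by applying the cocone condition to the morphism \((C,f(\psi)(x))\to(T,x)\) of \(\int f\) determined by \(\psi\), which expresses precisely this identity. Naturality of the whole isomorphism in \(f\) is then a routine verification on the assignment \(x\mapsto\lambda_{(T,x)}\), completing the argument.
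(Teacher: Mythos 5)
Your proof is correct. Note that the paper itself gives no proof of this lemma at all — it simply cites it as an exercise in Mac Lane \cite{MacLane} — so there is no in-paper argument to compare against; your direct verification of the universal property (building the cocone \(\lambda_{(T,x)}\) from the Yoneda Lemma, defining the comparison map \(\Phi\) componentwise by \(\Phi_T(x)=\mu_{(T,x)}\), and deducing naturality from the cocone condition applied to the morphism \((C,f(\psi)(x))\to(T,x)\) of the category of elements) is exactly the standard argument the citation points to, and you correctly identify naturality of \(\Phi\) as the only step requiring genuine care.
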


\section{Simplicial sets}

In this section, we recall the notion of a simplicial set. Our main references are~\cite{Goerss-Jardine,Hovey}.

\begin{definition}
The \emph{simplex category} \(\bD\) is the category with finite, non-empty, totally ordered sets as objects
\[
[n]=\{0<1<\dots<n\} ,\quad n \ge 0,
\]
and weakly order-preserving maps as morphisms.
\end{definition}

Regarding the poset \([n]\) as a category as in Example~\ref{chap2:poset-opensubsets-presheaf}, then the category \(\bD\) is a full subcategory of \(\Cats\), the category of small categories.

\begin{definition}
 The category of \emph{simplicial sets} \(\SSet\) is the presheaf category \([\bD^\op,\Sets]\). Subpresheaves in this case are called \emph{simplicial subsets}.
\end{definition}

More generally, \emph{simplicial objects} in a category~\(\Cat\) are functors \(\bD^\op\to \Cat\).

Since a map between two sets can be written uniquely as the composite of a surjection followed by an injection, morphisms in \(\bD\) decompose uniquely as composites of two classes of simple maps:
\begin{enumerate}
  \item coface map \(\delta_n^i\colon [n-1] \to [n]\) for \(0\le i\le n\), the unique injection whose image omits~\(i\),
  \item codegeneracy map \(\sigma_n^i\colon [n+1]\to [n]\) for \(0\le i\le n\), the unique surjection that identifies \(i\) and \(i+1\),
\end{enumerate}
These maps satisfy some obvious relations called cosimplicial identities. We get an alternative description of simplicial sets:

\begin{proposition}
A simplicial set \(X\) consists of a collection of sets \(X_n\) for \(n \ge 0\) with \emph{face} maps \(\face^n_i\colon X_n\to X_{n-1}\) and \emph{degeneracy} maps \(\de^n_i\colon X_n\to X_{n+1}\) satisfying the following \emph{simplicial identities}:
\begin{equation*}
\begin{aligned}
    \face^{n-1}_i \face^{n}_j &= \makebox[0pt][l]{\(\face^{n-1}_{j-1} \face^n_i\)}
    \hphantom{\begin{cases} \de^{n-2}_{j-1} \face^{n-1}_i\\0\end{cases}}\kern-\nulldelimiterspace
    \text{if } i<j,  \\
    \de^{n}_i \de^{n-1}_j &= \makebox[0pt][l]{\(\de^{n}_{j+1} \de^{n-1}_i\)}
    \hphantom{\begin{cases} \de^{n-2}_{j-1} \face^{n-1}_i\\0\end{cases}}\kern-\nulldelimiterspace
    \text{if } i\le j,\\
  \face^n_i \de^{n-1}_j &=
    \begin{cases}
      \de^{n-2}_{j-1} \face^{n-1}_i &\text{if } i<j,\\
      \id                         &\text{if } i=j\text{ or } j+1,\\
      \de^{n-2}_j \face^{n-1}_{i-1} &\text{if } i>j+1.
    \end{cases} &
\end{aligned}
\end{equation*}
Accordingly, a simplicial map \(f\colon X\to Y\) consists of a collection of maps \(f_n\colon X_n\to Y_n\) for \(n\ge 0\) that commute with all face and degeneracy maps.
\end{proposition}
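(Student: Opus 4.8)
The plan is to prove the proposition by exhibiting $\bD$ as a category \emph{presented by generators and relations} -- the generators being the coface maps $\delta^i_n$ and the codegeneracy maps $\sigma^i_n$, the relations being the cosimplicial identities -- and then to translate between functors $\bD^\op\to\Sets$ and the combinatorial data through this presentation. A functor $X\colon\bD^\op\to\Sets$ records a set $X_n\coloneqq X([n])$ for each $n$, and being contravariant it sends $\delta^i_n\colon[n-1]\to[n]$ to a map $\face^n_i\colon X_n\to X_{n-1}$ and $\sigma^i_n\colon[n+1]\to[n]$ to a map $\de^n_i\colon X_n\to X_{n+1}$. The claim is that this assignment is a bijection between simplicial sets and combinatorial data satisfying the simplicial identities, compatibly with morphisms.

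First I would record the generating relations. Directly from the descriptions of $\delta^i_n$ and $\sigma^i_n$ as the order-preserving injection omitting $i$ and the surjection repeating $i$, one checks the cosimplicial identities
\[
\delta^j\delta^i=\delta^i\delta^{j-1}\ (i<j),\qquad
\sigma^j\sigma^i=\sigma^i\sigma^{j+1}\ (i\le j),
\]
together with the three cases governing $\sigma^j\delta^i$; these are routine verifications on finite ordinals. Applying the contravariant $X$ reverses each composite and so turns the cosimplicial identities into exactly the displayed simplicial identities. Hence any $X$ produces data satisfying them, which is the direction requiring nothing beyond functoriality.

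For the converse I would use the unique epi--mono factorization already noted in the text: every $\alpha\colon[m]\to[n]$ factors uniquely as a surjection followed by an injection, the injection being a composite of coface maps and the surjection a composite of codegeneracy maps, and the indices may be arranged into a canonical normal form. Given data $(X_n,\face,\de)$ satisfying the simplicial identities, I would define $X(\alpha)$ as the corresponding composite of $\face$'s and $\de$'s taken in the order dual to the normal form of $\alpha$. Well-definedness and functoriality $X(\alpha\beta)=X(\beta)X(\alpha)$ then reduce to showing that any word in the generators can be brought to normal form using only the cosimplicial identities and that the normal form is unique -- precisely the assertion that these identities present $\bD$. For the morphism statement, a natural transformation $X\Rightarrow Y$ is a family $f_n\colon X_n\to Y_n$ whose naturality square commutes for every morphism of $\bD$; since every morphism is a composite of generators, commutation with all $\face^n_i$ and $\de^n_i$ forces commutation with $X(\alpha)$ for arbitrary $\alpha$, while conversely such a family is automatically natural on the generators.

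The main obstacle is the \emph{completeness} of the relations: that the cosimplicial identities generate all relations among the cofaces and codegeneracies, so that $X(\alpha)$ built from the normal form is independent of the chosen word and the assignment is genuinely functorial. Existence and uniqueness of the epi--mono factorization is granted, but the reduction of an arbitrary word to normal form by the identities is the substantive combinatorial lemma. I would either carry it out by induction on word length, repeatedly applying the $\sigma\delta$ and monotonicity relations to sort the indices into decreasing and increasing blocks, or simply invoke the standard presentation of $\bD$ as in~\cite{Goerss-Jardine,MacLane}.
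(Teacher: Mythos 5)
Your proposal is correct and takes essentially the same route as the paper, which states the proposition as an immediate consequence of the unique factorization of morphisms in \(\bD\) into codegeneracies followed by cofaces subject to the cosimplicial identities---exactly the presentation of \(\bD\) on which you build your argument. The one substantive point the paper glosses over, completeness of the cosimplicial relations (every word in the generators reduces to a unique normal form), is the same point you flag, and handling it by induction on word length or by invoking the standard presentation in Mac Lane or Goerss--Jardine is the standard and adequate resolution.
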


If there is no danger of confusion, we will write \(\face^n_i, \de^n_i\) as \(\face_i, \de_i\), respectively.

\begin{example}
  First examples of simplicial sets include the standard \emph{simplicial} \emph{\(n\)\nbdash{}simplex}~\(\Simp{n}\), its \emph{boundary} \(\partial \Simp{n}\), and its \emph{\(k\)-th horn} \(\Horn{n}{k}\) for each \(0\le k\le n\) described as follows. Let~\(\Simp{n}\) be the simplicial set given by the representable functor \(\hom(-, [n])\colon \bD^\op\to \Sets\),
  \[
  [m]\mapsto \hom([m], [n]).
  \]
  The simplicial boundary \(\partial \Simp{n}\) is the simplicial subset of \(\Simp{n}\) defined by
  \[
  [m]\mapsto \{f\in  \hom([m], [n])\mid f([m])\neq [n]\},
  \]
 The \(k\)-th horn \(\Horn{n}{k}\) is the simplicial subset of \(\Simp{n}\) defined by
  \[
  [m]\mapsto \{f\in  \hom([m], [n])\mid [n]-\{k\}\not\subset f([m])\}.
  \]
  We call the natural morphism \(\Horn{n}{k}\to \Simp{n} \) for \(n\ge 1\) and \(0\le k\le n\) \emph{horn inclusion}, and \(\partial \Simp{n}\to \Simp{n}\) for \(n \ge 0\) \emph{boundary inclusion}. Notice that \(\partial\Simp{0}=\emptyset\), the initial simplicial set. The above formula also gives \(\Horn{0}{0}=\emptyset\), but we do not use this definition and left \(\Horn{0}{0}\) undefined. The reason will be explained in Remark~\ref{chap4:rem:horn-0-0}.
\end{example}

As a presheaf category, the category of simplicial sets has all limits and colimits, and these are computed pointwise. For a simplicial set \(X\), the Yoneda Lemma shows that there is a canonical isomorphism
\[
\hom(\Simp{n}, X)\cong X_n.
\]
Thus we call \(X_n\) the set of \emph{\(n\)-simplices}. An \(n\)-simplices \(x_n\in X_n\) is \emph{degenerate} if there are \(x_{n-1}\in X_{n-1}\) and some \(i\) such that \(x_n=\de^{n-1}_i x_{n-1}\).

The category of elements of a simplicial set \(X\) is also called the \emph{category of simplices} in~\(X\). The co-Yoneda Lemma implies that every simplicial set can be expressed by a colimit of standard simplices,
\[
X\cong \colim_{\Simp{n}\to X} \Simp{n},
\]
where the colimit is taken over the category of simplices in \(X\).

\begin{remark}\label{chap2:rem:smaller-category-of-simplicies}
In practice, for a specific simplicial set \(X\), it is often sufficient to consider a much smaller category (see~\cite[Lemma 3.1.4 and Errata]{Hovey}). For instance, the simplicial boundary \(\partial\Delta\) is the union of \(n\) copies of \(\Simp{n-1}\) as the boundary of \(\Simp{n}\), and the \(k\)-th horn \(\Horn{n}{k}\) is the union of \(n-1\) copies of \(\Simp{n-1}\), leaving out the one opposite to the vertex \(k\).
\end{remark}

\begin{remark}
  Covariant functors \([\bD, \Sets]\) and \([\bD, \Cat]\) are called \emph{cosimplicial sets} and \emph{cosimplicial objects} in \(\Cat\), respectively. For instance, the assignment
  \[
  [n]\mapsto \Delta^n
  \]
  is a cosimplicial object in \(\SSet\). Let \(X\) be a cosimplicial object in \(\Cat\). Then for each object \(C\in \Cat\), the sets \(\hom(X_i, C)\) form a simplicial set.
\end{remark}

\begin{example}\label{chap2:exa:nerve}
  The inclusion \(\bD\to \Cats\) is a cosimplicial object in \(\Cats\). The \emph{nerve} of a small category \(\Cat\) is the simplicial set \(N\Cat\) given by
  \[
  N_n\Cat=\Fun([n], \Cat).
  \]
  This defines a functor \(N\colon \Cats\to \SSet\).

  More concretely, \(N_n\Cat\) is the set of composable strings of arrows with length \(n\)
   \[
   C_0 \longleftarrow C_1 \longleftarrow\cdots\longleftarrow C_n.
   \]
   The face map \(\face^n_i\) composes two arrows for \(0<i<n \) or forget the first (last) arrow for \(i=0\) (\(i=n\)); degeneracy maps insert identity arrows.
\end{example}

\section{Lifting properties}

Let \(\Cat\) be a locally small category throughout this section.

\begin{definition}
Let \(i\colon A\to B\) and \(p\colon X\to Y \) be morphisms in \(\Cat\). We say that \(p\) satisfies the \emph{right lifting property} (RLP) with respect to \(i\) and that \(i\) satisfies the \emph{left lifting property} (LLP) with respect to \(p\) if, for every commutative square of the form
\begin{equation}\label{chap2:eq:commutative-square}
\begin{gathered}
  \xymatrix{
  A\ar[d]_{i}\ar[r]   &  X\ar[d]^{p}\\
  B\ar[r]\ar@{.>}[ru]   &  Y \rlap{\ ,}
  }
\end{gathered}
\end{equation}
there is a \emph{lift} \(B\to X\) such that the two triangles commute. If the lift is unique, then we speak of \emph{unique} right and left lifting properties.
\end{definition}

\begin{lemma}\label{chap2:lem:lift-composition-pullback}
  Let \(i\colon A\to B\) be a morphism in \(\Cat\).
  \begin{enumerate}
   \item If two morphisms \(p\colon X\to Y\) and \(q\colon Y\to Z\) have the RLP with respect to~\(i\), then so does~\(q\circ p\);
   \item If a morphism \(p\colon X\to Y\) has the RLP with respect to \(i\), then so does the pullback of~\(p\) along any morphism \(f\colon Z\to Y\).
  \end{enumerate}
\end{lemma}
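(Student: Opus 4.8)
The plan is to prove both statements by direct diagram chases straight from the definition of the right lifting property; the only extra tool needed is the universal property of the pullback for part (ii). In each case I would start from an arbitrary commutative square of the form~\eqref{chap2:eq:commutative-square} whose left edge is \(i\), and manufacture the required lift by reusing the hypotheses on \(p\) (and \(q\)).

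For part (i), suppose given a square with left edge \(i\colon A\to B\), right edge \(q\circ p\), top edge \(a\colon A\to X\), and bottom edge \(b\colon B\to Z\), so \((q\circ p)\circ a=b\circ i\). First I would build an auxiliary square with right edge \(q\): its top edge is \(p\circ a\colon A\to Y\) and its bottom edge is \(b\). This commutes because \(q\circ(p\circ a)=(q\circ p)\circ a=b\circ i\). Since \(q\) has the RLP with respect to \(i\), there is a lift \(\ell\colon B\to Y\) with \(\ell\circ i=p\circ a\) and \(q\circ\ell=b\). Next I would feed \(\ell\) into a square with right edge \(p\), top edge \(a\), and bottom edge \(\ell\); it commutes since \(p\circ a=\ell\circ i\). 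The RLP of \(p\) then yields a lift \(h\colon B\to X\) with \(h\circ i=a\) and \(p\circ h=\ell\). A final check confirms \(h\) is the desired lift for \(q\circ p\), as \(h\circ i=a\) and \((q\circ p)\circ h=q\circ(p\circ h)=q\circ\ell=b\). So part (i) is a two-step application of the hypotheses.

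For part (ii), write \(P=X\times_Y Z\) for the pullback of \(p\) along \(f\colon Z\to Y\), with projections \(\pi_X\colon P\to X\) and \(p'\colon P\to Z\), so that \(p\circ\pi_X=f\circ p'\). Given a square with left edge \(i\), right edge \(p'\), top edge \(a\colon A\to P\), and bottom edge \(b\colon B\to Z\), I would transport it to a square over \(p\) with top edge \(\pi_X\circ a\) and bottom edge \(f\circ b\). Commutativity follows from the pullback relation: \(p\circ\pi_X\circ a=f\circ p'\circ a=f\circ b\circ i\). The RLP of \(p\) produces a lift \(g\colon B\to X\) with \(g\circ i=\pi_X\circ a\) and \(p\circ g=f\circ b\). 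Since the pair \((g,b)\) satisfies \(p\circ g=f\circ b\), the universal property of the pullback gives a unique \(h\colon B\to P\) with \(\pi_X\circ h=g\) and \(p'\circ h=b\).

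The main obstacle, such as it is, lies in the last verification of part (ii): I must confirm \(h\circ i=a\). The plan is to invoke the uniqueness clause of the pullback's universal property: both \(h\circ i\) and \(a\) are maps \(A\to P\), and they agree after each projection, since \(\pi_X\circ(h\circ i)=g\circ i=\pi_X\circ a\) and \(p'\circ(h\circ i)=b\circ i=p'\circ a\). Hence \(h\circ i=a\), and together with \(p'\circ h=b\) this makes \(h\) the required lift. Should one wish to track the \emph{unique} RLP as well, I would observe that uniqueness of the lift over \(p'\) reduces via the pullback to uniqueness of \(g\) over \(p\), so both statements also preserve the unique lifting property.
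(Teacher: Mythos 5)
Your proof is correct, including the final verification in part (ii) via the uniqueness clause of the pullback and the remark about unique lifts. However, it takes a genuinely different route from the paper. You give the classical element-wise diagram chase (the proof the paper itself attributes to Goerss--Jardine): pick a single commutative square, produce the intermediate lift \(\ell\colon B\to Y\) (resp.\ \(g\colon B\to X\)), and assemble the final lift by hand. The paper deliberately avoids choosing individual lifts: it first establishes that certain squares of hom-sets, such as
\[
\xymatrix{
\hom(A\to B, X\to Y)\ar[d]\ar[r]& \hom(B, Y)\ar[d]\\
\hom(A\to B, X\to Z)\ar[r]      & \hom(A\to B, Y\to Z)
}
\]
are pullback squares (Lemmas~\ref{chap2:lem:lift-composition-diagram} and~\ref{chap2:lem:lift-pullback-pushout-diagram}), and then deduces both parts of the lemma purely from the facts that pullbacks and composites of surjections are surjections. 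What your approach buys is familiarity and brevity; what the paper's approach buys is portability: since its argument only uses that \(\hom(\blank,\blank)\) turns colimits in the first variable and limits in the second into limits, it carries over verbatim when \(\hom\) is replaced by the presheaf-valued \(\Hom\) functor and ``surjection'' by ``cover'' in a pretopology (Section~\ref{chap3:sec:Hom}, e.g.\ Proposition~\ref{chap3:prop:comp}), where one cannot argue by picking elements of a hom-set. Your proof, as written, would not transfer to that setting without being recast in exactly this form.
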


\begin{lemma}\label{chap2:lem:lift-composition-pushout}
  Let \(p\colon X\to Y\) be a morphism in \(\Cat\).
  \begin{enumerate}
    \item If two morphisms \(i\colon A\to B\) and \(j\colon B\to C\) have the LLP with respect to~\(p\), then so does~\(j\circ i\);
    \item If a morphism \(i\colon A\to B\) has the LLP with respect to \(p\), then so does the pushout of \(i\) along any morphism \(f\colon A\to C\).
  \end{enumerate}
\end{lemma}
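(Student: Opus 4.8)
The statement is exactly the formal dual of Lemma~\ref{chap2:lem:lift-composition-pullback}: under passage to the opposite category, lifting problems, composites, and pushouts in $\Cat$ become lifting problems, composites, and pullbacks in $\Cat^\op$, so one could simply quote the previous lemma applied to $\Cat^\op$. I would nevertheless prefer to record the direct diagram chase, since both parts follow by elementary manipulation of commutative squares and are self-contained.

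For part (i), suppose $i\colon A\to B$ and $j\colon B\to C$ both have the LLP with respect to $p\colon X\to Y$, and consider a commutative square with left edge $j\circ i$, top edge $a\colon A\to X$, and bottom edge $c\colon C\to Y$. First I would view $a$ together with $c\circ j$ as a square over $i$: it commutes because $p\circ a=c\circ(j\circ i)=(c\circ j)\circ i$, so the LLP of $i$ produces a lift $h\colon B\to X$ with $h\circ i=a$ and $p\circ h=c\circ j$. Then $h$ and $c$ form a square over $j$, and the LLP of $j$ yields $k\colon C\to X$ with $k\circ j=h$ and $p\circ k=c$. The map $k$ solves the original problem, since $k\circ(j\circ i)=(k\circ j)\circ i=h\circ i=a$ and $p\circ k=c$.

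For part (ii), write the pushout of $i\colon A\to B$ along $f\colon A\to C$ as $D=B\sqcup_A C$, with structure maps $g\colon B\to D$ and $i'\colon C\to D$ satisfying $g\circ i=i'\circ f$; I must show that $i'$ has the LLP with respect to $p$. Given a square with left edge $i'$, top edge $c\colon C\to X$, and bottom edge $d\colon D\to Y$, I would form the auxiliary square over $i$ with top edge $c\circ f$ and bottom edge $d\circ g$; it commutes because $p\circ(c\circ f)=d\circ i'\circ f=d\circ g\circ i$, so the LLP of $i$ gives $h\colon B\to X$ with $h\circ i=c\circ f$ and $p\circ h=d\circ g$. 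Since $h\circ i=c\circ f$, the universal property of the pushout produces a unique $k\colon D\to X$ with $k\circ g=h$ and $k\circ i'=c$; this immediately settles the top triangle.

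The only point requiring a little care — and the closest thing to an obstacle here — is the bottom triangle $p\circ k=d$ in part (ii): it cannot be read off directly from the construction of $k$ but must be extracted from the uniqueness clause of the pushout. Concretely, $p\circ k$ and $d$ agree after precomposition with $g$, since $p\circ k\circ g=p\circ h=d\circ g$, and after precomposition with $i'$, since $p\circ k\circ i'=p\circ c=d\circ i'$; as $g$ and $i'$ jointly determine a morphism out of the pushout, uniqueness forces $p\circ k=d$. This completes the chase and hence the proof.
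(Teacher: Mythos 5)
Your proof is correct, and your opening remark---that the lemma is the formal dual of Lemma~\ref{chap2:lem:lift-composition-pullback}---is in fact exactly how the paper disposes of it: the paper proves the pullback/composition version and then simply notes that the present lemma follows by the dual argument. Where you genuinely diverge is in the proof you actually record. Your argument is the classical single-square diagram chase: pick one commutative square, lift first against \(i\) and then against \(j\) in part (i), and in part (ii) build \(k\) from the existence clause of the pushout and extract the bottom triangle \(p\circ k=d\) from its uniqueness clause (a point you correctly flag and handle). This is precisely the ``straightforward proof'' of \cite[Lemma 4.1]{Goerss-Jardine} that the paper cites but deliberately avoids reproducing. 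The paper instead works with the sets of \emph{all} maps: it establishes pullback squares of hom-sets (Lemmas~\ref{chap2:lem:lift-composition-diagram} and~\ref{chap2:lem:lift-pullback-pushout-diagram}), observes that the (unique) lifting property is surjectivity (bijectivity) of the canonical comparison map such as \(\hom(C,X)\to \hom(A\to C, X\to Y)\), and concludes from the facts that surjections of sets are stable under pullback and closed under composition. The payoff of that formulation is that no individual lift is ever chosen: only continuity of the \(\hom\) bifunctor and stability of ``surjections'' are used, so the same template transports verbatim to Section~\ref{chap3:sec:Hom}, where hom-sets become the presheaves \(\Hom(\blank,\blank)\) over a category with a pretopology and surjections become covers; covers need not admit sections, so a chase that selects a lift \(h\) and then a lift \(k\), as yours does, would not carry over to that setting (compare Proposition~\ref{chap3:prop:comp}). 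What your route buys in exchange is elementarity and self-containedness: it needs nothing beyond the universal property of the pushout and is valid in any category.
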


For a straightforward proof of these two lemmas, see, for instance, \cite[Lemma 4.1]{Goerss-Jardine}. We give an alternative proof below. We work with the spaces of \emph{all} maps instead of picking up a single map as in~\cite{Goerss-Jardine}.

\subsection{Arrow category}
Denote by \(I\) the category~\([1]\), which is also called the \emph{interval category}.

\begin{definition}
The \emph{arrow category} of~\(\Cat\) is the functor category \([I, \Cat]\).
\end{definition}

Explicitly, objects of \([I, \Cat]\) are arrows in \(\Cat\), and morphisms of \([I, \Cat]\) are commutative squares in \(\Cat\). For arrows \(i\colon A\to B\) and \(p\colon X\to Y\) in \(\Cat\), let
\[
\hom(A\xrightarrow{i} B, X\xrightarrow{p} Y)\coloneqq [I, \Cat](A\xrightarrow{i} B, X\xrightarrow{p} Y)
=\hom(A, X)\times_{\hom(A, Y)} \hom(B, Y)
\]
be the set of commutative squares~\eqref{chap2:eq:commutative-square} with fixed \((i,p)\). Every commutative square~\eqref{chap2:eq:commutative-square} with fixed \((i,p)\) has a (unique) lift if and only if
\[
  \hom(B, X)\to \hom(A\xrightarrow{i} B, X\xrightarrow{p} Y)
\]
is a surjection (bijection).

\subsection{Technical lemmas}\label{chap2:ssec:technical-lemmas}

Let us collect some simple but useful lemmas.

\begin{lemma}\label{chap2:lem:lift-composition-diagram}
For arrows \(A\to B\) and \(X\to Y\to Z\) in \(\Cat\), there is a pullback square
  \begin{equation}\label{chap2:eq:lift-composition-AB-XYZ}
  \begin{gathered}
  \xymatrix{
  \hom(A\to B, X\to Y)\ar[d]\ar[r]& \hom(B, Y)\ar[d]\\
  \hom(A\to B, X\to Z)\ar[r]      & \hom(A\to B, Y\to Z)\rlap{\ .}
  }
  \end{gathered}
  \end{equation}
Dually, for arrows \(A\to B\to C\) and \(X\to Y\) in \(\Cat\) there is a pullback square
  \[
  \xymatrix{
  \hom(B\to C, X\to Y)\ar[d]\ar[r]&\hom(B, X)\ar[d]\\
  \hom(A\to C, X\to Y)\ar[r]&\hom(A\to B, X\to Y)\rlap{\ .}
  }
  \]
\end{lemma}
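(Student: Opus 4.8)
The plan is to prove both squares by unravelling the defining fibre products and verifying the universal property of a pullback directly; there is no content beyond careful bookkeeping of which legs are pre\nbdash{}compositions and which are post\nbdash{}compositions. I would treat the first square in detail; the second follows by running the identical argument in \(\Cat^\op\).

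First I would spell out the four corners using the definition \(\hom(A\xrightarrow{i}B,\,X\xrightarrow{p}Y)=\hom(A,X)\times_{\hom(A,Y)}\hom(B,Y)\), writing \(X\to Z\) for the composite \(X\xrightarrow{p}Y\xrightarrow{q}Z\). Thus a point of the top\nbdash{}left corner is a commutative square \((a,b)\) with \(a\colon A\to X\), \(b\colon B\to Y\) and \(p\circ a=b\circ i\); a point of \(\hom(A\to B,X\to Z)\) is a pair \((a',b')\) with \(a'\colon A\to X\), \(b'\colon B\to Z\) and \((q\circ p)\circ a'=b'\circ i\); and \(\hom(B,Y)\) is a single arrow \(B\to Y\). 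Then I would pin down the four edges: the top edge is the projection \((a,b)\mapsto b\); the left edge post\nbdash{}composes the lower leg, \((a,b)\mapsto(a,\,q\circ b)\); the bottom edge post\nbdash{}composes the upper leg, \((a',b')\mapsto(p\circ a',\,b')\); and the right edge is \(b\mapsto(b\circ i,\,q\circ b)\).

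Next I would check commutativity, which reduces exactly to the relation defining the top\nbdash{}left corner: the right\nbdash{}then\nbdash{}down route sends \((a,b)\) to \((b\circ i,\,q\circ b)\) and the down\nbdash{}then\nbdash{}right route sends it to \((p\circ a,\,q\circ b)\), and these agree because \(p\circ a=b\circ i\). For the universal property I would begin with compatible data, namely \(b\in\hom(B,Y)\) and \((a',b')\in\hom(A\to B,X\to Z)\) satisfying \(b\circ i=p\circ a'\) and \(q\circ b=b'\), and observe that the right\nbdash{}edge projection forces the second coordinate of any lift to be \(b\) while the left\nbdash{}edge projection forces its first coordinate to be \(a'\); hence \((a',b)\) is the only candidate, and it is a legitimate point of the top\nbdash{}left corner precisely because \(p\circ a'=b\circ i\) is one of the compatibility conditions. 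This gives existence and uniqueness of the lift, so the square is a pullback.

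The closest thing to an obstacle is simply getting the variance of the edges right, since two of them are post\nbdash{}compositions while the right edge combines the pre\nbdash{}composition \(b\mapsto b\circ i\) with the post\nbdash{}composition \(b\mapsto q\circ b\); once these are fixed, everything is formal. Alternatively, one can bypass the chase: the functor \(\hom(A\to B,-)=[I,\Cat](A\xrightarrow{i}B,-)\) on the arrow category is representable and hence preserves limits, and a short check shows that \((X\xrightarrow{p}Y)\) is the pullback in \([I,\Cat]\) of \((X\xrightarrow{qp}Z)\) and \((Y\xrightarrow{\id}Y)\) over \((Y\xrightarrow{q}Z)\) — the pullback being computed objectwise as \(X\times_Y Y=X\) and \(Y\times_Z Z=Y\) — so the claimed square is the image of this pullback and is therefore a pullback of sets. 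The dual square is obtained verbatim by reversing all arrows.
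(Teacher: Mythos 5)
Your proof is correct, but your primary argument takes a genuinely different route from the paper's. You verify the pullback property by an element chase: identify each corner with a set of compatible pairs, check the four edges and commutativity, and show that compatible data \(b\in\hom(B,Y)\), \((a',b')\in\hom(A\to B,X\to Z)\) admits the unique lift \((a',b)\). The paper instead proves the lemma without ever mentioning an element: it extends the square to a \(3\times2\) grid whose extra corners are \(\hom(B,Z)\), \(\hom(A,X)\), \(\hom(A,Y)\), \(\hom(A,Z)\), notes that the squares coming directly from the defining fiber products are pullbacks, and applies the pasting/cancellation law for pullback squares twice. This difference matters for the paper's later use of the lemma: as it remarks right after the proofs, the grid argument uses only that \(\hom(\blank,\blank)\) turns the relevant limits into limits, so it remains valid verbatim when \(\hom\) is replaced by the presheaf-valued bifunctor \(\Hom\) of Section~\ref{chap3:sec:Hom}, where exactly these technical lemmas are reused for simplicial objects in \((\Cat,\covers)\); an element chase, by contrast, is tied to hom-sets and would have to be redone (or reinterpreted pointwise) in that setting. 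Your closing alternative — that \([I,\Cat](A\to B,\blank)\) is representable, hence preserves limits, and that \(X\xrightarrow{p}Y\) is the pullback in the arrow category of \(X\xrightarrow{qp}Z\) and \(Y\xrightarrow{\id}Y\) over \(Y\xrightarrow{q}Z\) — is the better match for the paper's intent: it is element-free, arguably slicker than the grid, and transfers to the \(\Hom\) setting for the same reason the paper's argument does, so it deserves to be the main proof rather than a remark.
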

\begin{proof}
By duality, it suffices to show the first statement. Consider the commutative diagram
  \[
  \xymatrix{
  \hom(A\to B, X\to Y)\ar[r]\ar[d] & \hom(B, Y)\ar[d]&\\
  \hom(A\to B, X\to Z)\ar[r]\ar[d] & \hom(A\to B, Y\to Z)\ar[r]\ar[d]& \hom(B, Z)\ar[d]\\
  \hom(A, X)\ar[r] & \hom(A, Y) \ar[r]& \hom(A, Z) \rlap{\ .}
  }
  \]
As the bottom right square and the bottom whole rectangle are pullback diagrams, so is the bottom left square. Since the left whole rectangle is also a pullback, so is the top left square, as desired.
\end{proof}

\begin{lemma}\label{chap2:lem:lift-pullback-pushout-diagram}
 Given a pullback square in \(\Cat\)
  \[
  \xymatrix{
  P\ar[r]\ar[d] & X\ar[d]\\
  Z\ar[r] & Y\rlap{\ ,}
  }
  \]
the diagram below is a pullback square for every morphism \(A\to B\) in~\(\Cat\):
  \begin{equation}\label{chap2:eq:lift-pullback-AB-XYZ}
  \begin{gathered}
  \xymatrix{
  \hom(B, P)\ar[r]\ar[d] & \hom(B, X)\ar[d] \\
  \hom(A\to B, P\to Z)\ar[r] & \hom(A\to B, X\to Y)\rlap{\ .}
  }
  \end{gathered}
  \end{equation}
Dually, given a pushout square in \(\Cat\),
  \[
  \xymatrix{
  A\ar[r]\ar[d] & C\ar[d]\\
  B\ar[r] & Q\rlap{\ ,}
  }
  \]
the diagram below is a pullback square for every morphism \(X\to Y\) in~\(\Cat\):
  \[
  \xymatrix{
  \hom(Q, X)\ar[r]\ar[d] & \hom(B, X)\ar[d] \\
  \hom(C\to Q, X\to Y)\ar[r] & \hom(A\to B, X\to Y)\rlap{\ .}
  }
  \]
\end{lemma}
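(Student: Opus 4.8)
The plan is to establish the first (pullback) statement and then deduce the second by duality, exactly as in Lemma~\ref{chap2:lem:lift-composition-diagram}. Passing to $\Cat^\op$ turns the given pushout square into a pullback square and interchanges the two arguments of the bifunctor $\hom(-,-)$ (so that $\hom(Q,X)=\hom_{\Cat^\op}(X,Q)$ becomes a hom out of the resulting pullback object, and the fixed arrow $A\to B$ migrates to the second slot as the fixed arrow $X\to Y$). After matching up the fibre products, the second square becomes literally an instance of the first applied in $\Cat^\op$, so I would only prove the first statement.

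For the first statement I would verify the universal property of a pullback of sets directly: since $\Cat$ is locally small, every corner of \eqref{chap2:eq:lift-pullback-AB-XYZ} is a genuine set of morphisms. Unwinding the definition $\hom(A\to B, P\to Z)=\hom(A,P)\times_{\hom(A,Z)}\hom(B,Z)$, an element of the set-theoretic fibre product $\hom(B,X)\times_{\hom(A\to B,\,X\to Y)}\hom(A\to B,\,P\to Z)$ is a map $\beta\colon B\to X$ together with a commuting square $(a\colon A\to P,\ b\colon B\to Z)$ whose images in $\hom(A\to B,X\to Y)$ coincide. The two agreement conditions read $(P\to X)\circ a=\beta\circ(A\to B)$ and $(X\to Y)\circ\beta=(Z\to Y)\circ b$.

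First I would use $P=X\times_Y Z$ to package $(\beta,b)$: the second agreement condition is precisely the compatibility needed, so the universal property of the pullback $P$ yields a unique $\varphi\colon B\to P$ with $(P\to X)\circ\varphi=\beta$ and $(P\to Z)\circ\varphi=b$. This $\varphi$ visibly maps to $\beta$ under the top arrow and to $b$ in the $Z$-component of the left arrow, so the only remaining check is $\varphi\circ(A\to B)=a$ in $\hom(A,P)$. I would verify this by comparing projections into $X$ and $Z$: the $X$-projection matches via the first agreement condition, while the $Z$-projection matches via the square condition $(P\to Z)\circ a=b\circ(A\to B)$ carried by the element of $\hom(A\to B,P\to Z)$; then the uniqueness clause of the pullback $P$ forces $\varphi\circ(A\to B)=a$. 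Uniqueness of $\varphi$ as a filler is inherited from the uniqueness in the universal property of $P$.

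I do not expect a genuine obstacle here — the content is bookkeeping — but the one point needing care is that the identity $\varphi\circ(A\to B)=a$ consumes \emph{both} agreement conditions together with the commutativity already built into $(a,b)$, fed in through different projections, so I would track exactly which hypothesis supplies each. A fully element-free alternative, in the pasting style of Lemma~\ref{chap2:lem:lift-composition-diagram}, is to expand $\hom(B,P)$ and $\hom(A\to B,P\to Z)$ into iterated fibre products using that $\hom(A,-)$ and $\hom(B,-)$ preserve the pullback $P=X\times_Y Z$, and then reassociate the resulting limit; I would resort to this formulation if an element-free proof were preferred.
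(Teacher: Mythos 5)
Your proof is correct, but it takes a genuinely different route from the paper's. For the main (pullback) statement you argue element-wise: you unpack an element of \(\hom(B,X)\times_{\hom(A\to B,\,X\to Y)}\hom(A\to B,\,P\to Z)\), use the universal property of \(P=X\times_Y Z\) to manufacture the unique filler \(\varphi\colon B\to P\), and verify \(\varphi\circ(A\to B)=a\) by comparing \(X\)- and \(Z\)-projections --- correctly isolating that this last identity consumes both agreement conditions together with the commuting-square datum carried inside \(\hom(A\to B,P\to Z)\). The paper never touches elements: it first shows by pasting that the rectangle built from \(\hom(A\to B,P\to Z)\to\hom(A,P)\to\hom(A,X)\) over \(\hom(B,Z)\to\hom(A,Z)\to\hom(A,Y)\) is a pullback (using that \(\hom(A,-)\) preserves the pullback \(P\)), and then applies the pullback pasting lemma twice in a second diagram (now using that \(\hom(B,-)\) preserves the pullback \(P\)) to extract the top left square \eqref{chap2:eq:lift-pullback-AB-XYZ}; this is exactly the ``fully element-free alternative'' you sketch in your last sentence. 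The trade-off: your argument is more concrete and perfectly valid for the lemma as stated, since \(\Cat\) is locally small and every corner is an honest set; the paper's formulation uses only that \(\hom(-,-)\) is a continuous bifunctor, which is what lets it transfer verbatim to the presheaf-valued \(\Hom\) functor of Section~\ref{chap3:sec:Hom} and to the join bifunctor in Lemma~\ref{chap2:lem:join-composition-pushout}, where element-chasing is not the natural primitive. Your duality reduction for the pushout half coincides with the paper's and is carried out correctly.
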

\begin{proof}
By duality, it suffices to show the first statement. Consider the commutative diagram
\[
 \xymatrix{
 \hom(A\to B, P\to Z)\ar[r]\ar[d] & \hom(A, P)\ar[r]\ar[d] & \hom(A, X)\ar[d]\\
 \hom(B, Z)\ar[r] & \hom(A, Z)\ar[r] &\hom(A, Y)\rlap{\ ,}
}
\]
where the left square is a pullback diagram. Since \(P=X\times_{Y}Z\), the right square is a pullback diagram, and so is the whole rectangle. Consider now the commutative diagram
\[
 \xymatrix{
 \hom(B, P)\ar[r]\ar[d] & \hom(B, X)\ar[d] &\\
 \hom(A\to B, P\to Z)\ar[r]\ar[d] & \hom(A\to B, X\to Y)\ar[r]\ar[d] & \hom(A, X)\ar[d]\\
 \hom(B, Z)\ar[r] & \hom(B, Y)\ar[r] &\hom(A, Y) \rlap{\ .}
}
\]
The bottom right square and the bottom whole rectangle are pullback diagrams, hence so is the bottom left square. Since the left whole rectangle is a pullback diagram, so is the top left square, and we are done.
\end{proof}

Lemmas~\ref{chap2:lem:lift-composition-diagram} and~\ref{chap2:lem:lift-pullback-pushout-diagram} combined give the following result:

\begin{corollary}\label{chap2:cor:lift-composition-pushout-combined}
Suppose that the square below is a pushout diagram in \(\Cat\):
  \[
  \xymatrix{
  D\ar[r]\ar[d] & E\ar[d] & \\
  A\ar[r] & B\ar[r] & C\rlap{\ .}
  }
  \]
 For every morphism \(X\to Y\) in~\(\Cat\), the induced square below is a pullback diagram:
  \[
  \xymatrix{
  \hom(B\to C, X\to Y)\ar[r]\ar[d] & \hom(E, X)\ar[d]\\
  \hom(A\to C, X\to Y)\ar[r]       & \hom(D\to E, X\to Y)\rlap{\ .}
  }
  \]

\end{corollary}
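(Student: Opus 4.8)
The plan is to exhibit the desired square as the horizontal pasting of two pullback squares, each delivered verbatim by one of the two preceding technical lemmas, and then to close the argument with the pullback pasting law.

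First I would feed the composable pair \(A\to B\to C\) together with \(X\to Y\) into the dual (second) statement of Lemma~\ref{chap2:lem:lift-composition-diagram}. This produces the pullback square
\[
\xymatrix{
\hom(B\to C, X\to Y)\ar[d]\ar[r]& \hom(B, X)\ar[d]\\
\hom(A\to C, X\to Y)\ar[r]      & \hom(A\to B, X\to Y)\rlap{\ ,}
}
\]
which I will call the left square. Next I would apply the dual (second) statement of Lemma~\ref{chap2:lem:lift-pullback-pushout-diagram} to the given pushout square and to \(X\to Y\); reading the pushout with its two legs \(D\to A\) and \(D\to E\) interchanged (harmless, since a pushout is symmetric in its legs) arranges the output to be
\[
\xymatrix{
\hom(B, X)\ar[d]\ar[r]          & \hom(E, X)\ar[d]\\
\hom(A\to B, X\to Y)\ar[r]      & \hom(D\to E, X\to Y)\rlap{\ ,}
}
\]
which I will call the right square.

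The final step is the pasting. The right column of the left square, namely \(\hom(B,X)\to\hom(A\to B, X\to Y)\), coincides with the left column of the right square: unwinding both, each sends a map \(f\colon B\to X\) to the commutative square \(\bigl(A\to B\xrightarrow{f}X,\ B\xrightarrow{f}X\to Y\bigr)\) obtained by pre- and post-composition. Hence the two squares glue along this shared edge, and since each is a pullback, the pullback pasting law makes the outer rectangle a pullback. Its four corners are precisely \(\hom(B\to C, X\to Y)\), \(\hom(E,X)\), \(\hom(A\to C, X\to Y)\), and \(\hom(D\to E, X\to Y)\), so the outer rectangle is exactly the square in the statement.

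The argument is essentially bookkeeping, and the one place deserving care is matching variables so that the instance of Lemma~\ref{chap2:lem:lift-pullback-pushout-diagram} is fed the pushout in the correct orientation — this is what forces the transposition of legs and guarantees that its output shares the column \(\hom(B,X)\to\hom(A\to B, X\to Y)\) with the output of Lemma~\ref{chap2:lem:lift-composition-diagram}. Once that identification is checked, the conclusion is a single invocation of the pasting law and requires no computation.
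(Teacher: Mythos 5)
Your proof is correct and is exactly the intended argument: the paper derives this corollary simply by ``combining'' Lemmas~\ref{chap2:lem:lift-composition-diagram} and~\ref{chap2:lem:lift-pullback-pushout-diagram}, and your pasting of the two pullback squares from their dual statements (with the harmless transposition of the pushout's legs, and the verification that the shared column \(\hom(B,X)\to\hom(A\to B, X\to Y)\) is the same map in both) is precisely how that combination works. Nothing is missing.
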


\subsection{Proof of Lemma~\ref{chap2:lem:lift-composition-pullback}}
We are now ready to prove Lemma~\ref{chap2:lem:lift-composition-pullback}. There is also a dual argument for Lemma~\ref{chap2:lem:lift-composition-pushout}.

\begin{proof}
(i) Consider the pullback square~\eqref{chap2:eq:lift-composition-AB-XYZ} in Lemma~\ref{chap2:lem:lift-composition-diagram}. By the assumption that
\[
  \hom(B, Y)\to \hom(A\to B, Y\to Z)
\]
is a surjection, we deduce that
\[
  \hom(A\to B, X\to Y)\to \hom(A\to B, X\to Z)
\]
is a surjection. Since \(\hom(B, X)\to \hom(A\to B, X\to Y)\) is also a surjection by assumption, the composite of two surjections
\[
\hom(B, X)\to \hom(A\to B, X\to Z)
\]
is a surjection, as desired.

(ii) Consider the pullback square~\eqref{chap2:eq:lift-pullback-AB-XYZ} in Lemma~\ref{chap2:lem:lift-pullback-pushout-diagram}. By the assumption that
\[
\hom(B, X)\to \hom(A\to B, X\to Y)
\]
is a surjection, we deduce that
\[
\hom(B, P)\to \hom(A\to B, P\to Z)
\]
is a surjection, as claimed.
\end{proof}

The advantage of our approach is that we need not work with elements of the hom-sets of \(\Cat\), so it may work in the more general situation that we will consider in Section~\ref{chap3:sec:Hom}.

\section{Lifting properties of simplicial sets}
In this section, we study the right lifting property of simplicial sets with respect to certain inclusions. A simplicial horn \(\Horn{n}{k}\) is called an \emph{inner horn} if \(0<k<n\), a \emph{left horn} if \(0\le k<n\), a \emph{right horn} if \(0<k\le n\), an \emph{outer horn} if \(k=0\) or \(k=n\).

\begin{definition}
A morphism of simplicial sets \(p\colon X\to Y\) satisfies the (\emph{unique}) \emph{Kan condition} \(\Kan(n,k)\) (\(\Kan!(n,k)\)) if \(p\) satisfies the (unique) RLP with respect to the horn inclusion \(\Horn{n}{k} \to \Simp{n}\). Furthermore, \(p\) is
\begin{itemize}
  \item a \emph{Kan fibration}, if \(p\) satisfies the Kan conditions for all \(n\ge 1\) and \(0\le k\le n\);
  \item an \emph{inner Kan fibration}, if \(p\) satisfies the inner Kan conditions for all \(n\ge 2\) and \(0<k<n\).
\end{itemize}
In particular, \(X\) is a \emph{Kan complex} if the canonical map \(X\to \Simp{0}\) is a Kan fibration, and \(X\) is an \emph{inner Kan complex} if \(X\to \Simp{0}\) is an inner Kan fibration.
\end{definition}

\begin{definition}
  Let \(p\colon X\to Y\) be a morphism of simplicial sets. We say that \(p\) satisfies the (\emph{unique}) \emph{acyclicity condition} \(\Acyc(n)\) (\(\Acyc!(n)\)) if \(p\) has the (unique) RLP with respect to the boundary inclusion \(\partial\Simp{n}\to \Simp{n}\). We call \(p\) an \emph{acyclic fibration} if \(p\) satisfies all conditions \(\Acyc(n)\) for \(n\ge 0\).
\end{definition}

\begin{remark}
 Inner Kan complexes are introduced by Boardman and Vogt~\cite{Boardman-Vogt} under the name of weak Kan complexes, which are also called quasi-categories~\cite{Joyal2002} and \(\infty\)-categories~\cite{Lurie}. Inner Kan fibrations are introduced by Joyal~\cite{Joyal2002} under the name of quasi-fibrations, which are also called weak Kan fibrations~\cite{Goerss-Jardine}. Acyclic fibrations are also called trivial fibrations~\cite{Goerss-Jardine}.
\end{remark}

Adding the \(k\)-th face to the horn \(\Horn{n}{k}\) gives the boundary \(\partial\Simp{n}\). Thus an acyclic fibration is also a Kan fibration. Lemma~\ref{chap2:lem:lift-composition-pullback} implies the following:

\begin{corollary}
Kan \textup(inner Kan, or acyclic\textup) fibrations are closed under composition and stable under pullback.
\end{corollary}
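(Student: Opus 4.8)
The plan is to reduce everything to Lemma~\ref{chap2:lem:lift-composition-pullback}. The crucial observation is that each of the three fibration notions is, by definition, nothing more than the conjunction of right lifting properties against a fixed family of inclusions: a Kan fibration is a map with the RLP against every horn inclusion \(\Horn{n}{k}\to\Simp{n}\) with \(n\ge 1\), \(0\le k\le n\); an inner Kan fibration is the same but only for \(n\ge 2\), \(0<k<n\); and an acyclic fibration is a map with the RLP against every boundary inclusion \(\partial\Simp{n}\to\Simp{n}\), \(n\ge 0\). So I would first fix, once and for all, the relevant index set of inclusions \(i\) for whichever of the three cases is under consideration, and treat the three cases uniformly.

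Next I would handle a \emph{single} inclusion \(i\) at a time. Given composable maps \(p\colon X\to Y\) and \(q\colon Y\to Z\), each satisfying the appropriate Kan (resp. acyclicity) condition, both \(p\) and \(q\) have the RLP with respect to each such \(i\); Lemma~\ref{chap2:lem:lift-composition-pullback}(i) then immediately gives that \(q\circ p\) has the RLP with respect to that same \(i\). Similarly, if \(p\colon X\to Y\) has the RLP with respect to \(i\) and \(f\colon Z\to Y\) is any morphism, Lemma~\ref{chap2:lem:lift-composition-pullback}(ii) gives that the pullback \(Z\times_Y X\to Z\) of \(p\) along \(f\) again has the RLP with respect to \(i\).

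Finally I would let \(i\) range over the entire defining family. Since ``\(p\) satisfies \(\Kan(n,k)\) for all admissible \((n,k)\)'' (resp. ``\(p\) satisfies \(\Acyc(n)\) for all \(n\ge 0\)'') is simply the conjunction over \(i\) of the individual RLPs, and each conjunct is preserved under composition and under pullback by the two applications of the lemma above, the conjunction is preserved as well. This proves closure under composition and stability under pullback for Kan, inner Kan, and acyclic fibrations simultaneously.

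I do not expect any genuine obstacle here: the content is entirely carried by Lemma~\ref{chap2:lem:lift-composition-pullback}, and the only thing to be careful about is the bookkeeping passage from a single inclusion to the whole family, which is purely formal. The one point worth stating explicitly is that the three cases differ only in the index set of inclusions chosen, so that no separate argument is needed for each; in particular, the fact that adding the \(k\)-th face to \(\Horn{n}{k}\) yields \(\partial\Simp{n}\) (so that acyclic fibrations are in particular Kan fibrations) is not even needed for this corollary, though it is consistent with it.
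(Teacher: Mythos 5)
Your proposal is correct and is exactly the paper's argument: the corollary is stated there as an immediate consequence of Lemma~\ref{chap2:lem:lift-composition-pullback}, applied to each horn (resp. inner horn, boundary) inclusion in the defining family and then conjoined over the family, just as you do. Your side remark is also accurate --- the observation that acyclic fibrations are Kan fibrations (by adding the \(k\)-th face to \(\Horn{n}{k}\)) appears in the paper only as a separate comment and plays no role in this corollary.
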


\begin{proposition}\label{chap2:prop:category-is-inner-kan}
  The nerve of a category described in Example~\ref{chap2:exa:nerve} is an inner Kan complex; the nerve of a groupoid is a Kan complex.
\end{proposition}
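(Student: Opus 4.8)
The plan is to read off everything from the explicit description of the nerve in Example~\ref{chap2:exa:nerve}. An \(n\)-simplex of \(N\Cat\) is a functor \([n]\to\Cat\), i.e.\ objects \(C_0,\dots,C_n\) together with morphisms \(f_{ij}\colon C_j\to C_i\) for \(i\le j\) satisfying \(f_{ii}=\id\) and \(f_{il}=f_{ij}\circ f_{jl}\) for \(i\le j\le l\). A horn \(\Horn{n}{k}\to N\Cat\) is then the same data as: the objects \(C_i\), those morphisms \(f_{ij}\) whose edge \(\{i,j\}\) lies in the horn, and the commutativity relations coming from those triangles \(\{i,j,l\}\) that lie in the horn. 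Filling the horn means supplying the remaining morphisms and relations so as to obtain a genuine functor \([n]\to\Cat\). Thus the whole problem is translated into an elementary question about composites, and I would organise it by asking which edges and triangles the horn already contains.

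First I would record the combinatorial bookkeeping. An edge \(\{i,j\}\) (resp.\ triangle \(\{i,j,l\}\)) fails to lie in \(\Horn{n}{k}\) precisely when \(\{0,\dots,n\}\smallsetminus\{k\}\subseteq\{i,j\}\) (resp.\ \(\subseteq\{i,j,l\}\)). Since the left-hand set has \(n\) elements, for \(n\ge 3\) every edge is present, so no morphism is ever missing; moreover exactly one triangle is absent when \(n=3\) (namely the one spanning the omitted face \(\face_k\)) and \emph{no} triangle is absent when \(n\ge 4\). For \(n=2\) exactly one edge is missing, and for \(n=1\) one fills \(\Horn{1}{0},\Horn{1}{1}\) by the degeneracy \(\de_0\) (an identity arrow). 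Consequently all the content is concentrated in dimensions \(1,2,3\) — this is the concrete face of the fact that \(N\Cat\) is \(2\)-coskeletal — and in dimensions \(\ge 4\) every horn fills uniquely in any category, because the given data already satisfies all functoriality relations.

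The crux is the dichotomy between inner and outer horns. For inner horns I claim the filler exists and is unique in \emph{any} category: in dimension \(2\) the horn \(\Horn{2}{1}\) supplies \(\alpha=f_{01}\) and \(\beta=f_{12}\), and the missing edge is defined by the composite \(\gamma=\alpha\circ\beta\); in dimension \(3\) the single missing relation is a purely formal consequence of the three present ones by associativity (for \(k=1\), \(f_{03}=f_{01}f_{13}=f_{01}(f_{12}f_{23})=(f_{01}f_{12})f_{23}=f_{02}f_{23}\), and symmetrically for \(k=2\)). This establishes \(\Kan!(n,k)\) for all inner \(n,k\), so \(N\Cat\) is an inner Kan complex. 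The two outer horns of dimensions \(2\) and \(3\) are exactly where invertibility is forced. In dimension \(2\) one must \emph{solve} \(\gamma=\alpha\circ\beta\) for the missing factor, giving \(\beta=\alpha^{-1}\circ\gamma\) for \(\Horn{2}{0}\) and \(\alpha=\gamma\circ\beta^{-1}\) for \(\Horn{2}{2}\); in dimension \(3\) all edges are present but the missing relation (e.g.\ \(f_{13}=f_{12}f_{23}\) for \(k=0\)) only follows after \emph{cancelling} the extreme morphism \(f_{01}\) (resp.\ \(f_{n-1,n}\) for \(k=n\)) from an identity the horn does provide. In a groupoid each such equation has a unique solution, so \(NG\) satisfies \(\Kan(n,k)\) for all \(n\ge 1\) and \(0\le k\le n\); that is, \(NG\) is a Kan complex (in fact \(\Kan!(n,k)\) for all \(n\ge 2\)).

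The main obstacle is not any hard estimate but correctly localising the failure of the Kan property: one must verify the combinatorial count of present edges and triangles, and then carry out, for each endpoint \(k\), the associativity derivation in the inner case versus the single cancellation in the outer case, so as to see precisely that inverting \emph{one} morphism repairs exactly the outer horns of dimensions \(2\) and \(3\). I would check the dimension-\(3\) relation-derivations explicitly for every \(k\), since that is where the inner/outer distinction first becomes visible and where a general category genuinely fails to fill.
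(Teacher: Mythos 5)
Your proposal is correct and follows essentially the same route as the paper: direct verification from the explicit description of simplices of the nerve, with the composite filling the inner $2$\nbdash{}horn, associativity filling the inner $3$\nbdash{}horns, the observation that horns of dimension $\ge 4$ already contain all edges and triangles, and invertibility entering only for the outer horns in dimensions $2$ and $3$. The only difference is one of completeness, in your favour: the paper checks the inner conditions and dismisses the groupoid (outer) case with ``similarly,'' whereas you carry out the combinatorial bookkeeping and the cancellation arguments for the outer horns explicitly.
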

\begin{proof}
We verify the inner Kan conditions for a category; the outer Kan conditions for a groupoid can be checked similarly. First,  \(\Kan!(2,1)\) is clear, since a horn \(\Horn{2}{1}\) in \(N\Cat\) is the same as two composable arrows in \(\Cat\). Given a horn \(f\colon \Horn{3}{1}\to N\Cat\), considering the 2-simplex with vertices \(\{0,1,2\}\) in \(N\Cat\), we have \(f([0, 2])=f([0, 1])\circ f([1, 2])\). Similarly, \(f([1,3]) =f([1,2])\circ f([2,3])\) and \(f([0,3]) =f([0,1])\circ f([1,3])\), thus \(f([0, 3])=f([0,2])\circ f([2,3])\), which implies that we can uniquely lift \(f\) to a 3-simplex in \(N\Cat\). This proves \(\Kan!(3, 1)\). Similarly, we can check \(\Kan!(3, 2)\).

When \(n>3\), the horn \(\Horn{n}{k}\) contains all 2-simplices of \(\Simp{n}\), hence
\[
f([ik])=f([ij])f([jk]) \quad\text{for \(0\le i<j<k\le n\)},
\]
thus we can uniquely lift every morphism \(\Horn{n}{k}\to N\Cat\) to a morphism \(\Simp{n}\to N\Cat\).
\end{proof}

\begin{remark}\label{chap2:rem:cat-Kan!(n,0)-n>=3}
The proof shows that \(\Kan!(2,1)\) encodes the composition, and \(\Kan!(3,1)\) or \(\Kan!(3,2)\) encodes the associativity of the composition. It is clear that \(\Kan!(n, k)\) holds for all \(n>3\) and \(0\le k\le n\). Every lift in the previous proposition is in fact unique for a category and for a groupoid except for \(n=1\).  It is easy to show that the converse is also true:
\end{remark}

\begin{proposition}\label{chap2:prop:inner-kan-complex-is-category}
If an inner Kan complex \(X\) satisfies \(\Kan!(n, k)\) for all \(n> 1\) and \(0<k<n\), then \(X\) is the nerve of a category. Moreover, if a Kan complex \(X\) satisfies \(\Kan!(n, k)\) for all \(0\le k\le n\) and \(n>1\), then \(X\) is the nerve of a groupoid.
\end{proposition}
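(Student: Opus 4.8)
The plan is to reconstruct a category \(\Cat\) directly from \(X\) and then to exhibit a simplicial isomorphism \(X\cong N\Cat\), reversing the analysis of Proposition~\ref{chap2:prop:category-is-inner-kan}. I set \(\Cat_0=X_0\) for the objects and \(\Cat_1=X_1\) for the arrows, with source and target given by the two faces \(\face_0,\face_1\colon X_1\to X_0\), and with the identity at \(x\in X_0\) given by the degenerate \(1\)-simplex \(\de_0 x\); the simplicial identities \(\face_0\de_0=\face_1\de_0=\id\) show that \(\de_0 x\) is a loop at \(x\). To define composition I note that a horn \(\Horn{2}{1}\to X\) is precisely a pair of arrows whose source and target match, and that \(\Kan!(2,1)\) produces a unique filling \(2\)-simplex \(\sigma\); I declare the composite to be \(\face_1\sigma\). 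Equivalently, every \(2\)-simplex is a ``composition triangle'' whose middle face is the composite of the other two, and this correspondence is a bijection \(X_2\xrightarrow{\cong}\{\text{composable pairs}\}\).

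Next I would verify the category axioms. For the unit laws I use the degenerate \(2\)-simplices \(\de_1 f\) and \(\de_0 f\) attached to an arrow \(f\): the simplicial identities compute their faces and exhibit each as a composition triangle witnessing \(f\circ\unit=f\) and \(\unit\circ f=f\). For associativity I take three composable arrows and build three of the four faces of a prospective \(3\)-simplex by successive \(\Kan!(2,1)\) fillings (the triangles \([0,1,2]\), \([1,2,3]\) and \([0,2,3]\)); these assemble into an inner horn \(\Horn{3}{2}\), whose unique filler exists by \(\Kan!(3,2)\). Reading off the remaining face \([0,1,3]\) and invoking the uniqueness of composition then forces \((f\circ g)\circ h=f\circ(g\circ h)\). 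Thus \(\Cat\) is a genuine category.

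The heart of the argument is to upgrade the degree-\(2\) bijection to an isomorphism of simplicial sets. Since every \(2\)-simplex is a composition triangle, restricting an \(n\)-simplex to its edges \([i,j]\) defines a functor \([n]\to\Cat\), hence a simplicial map \(\psi\colon X\to N\Cat\); naturality is immediate because passing to edges commutes with the structure maps. It remains to show each \(\psi_n\) is a bijection, i.e.\ that an \(n\)-simplex is uniquely determined by, and freely built from, its spine of consecutive edges \([0,1],\dots,[n-1,n]\). I prove the spine map \(X_n\to X_1\times_{X_0}\cdots\times_{X_0}X_1\) is bijective by induction on \(n\), the cases \(n\le 2\) being the above. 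The inductive step rests on the fact that the spine of \(\Simp{n}\) is contained in the inner horn \(\Horn{n}{1}\) for \(n\ge 2\), so that the spine inclusion is obtained from inner horn inclusions by pushouts and composition; the uniqueness-enhanced form of Lemma~\ref{chap2:lem:lift-composition-pushout} then transports the unique lifting property \(\Kan!(m,k)\) of \(X\) along this filtration, yielding the desired bijection. Verifying that the spine inclusion is ``inner anodyne'' and that uniqueness propagates through the filtration is the main obstacle; everything else is bookkeeping with the simplicial identities.

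Finally, for the groupoid statement I would keep the same category \(\Cat\) and show every arrow is invertible. Given \(f\in X_1=\Cat_1\), the existence of outer fillers for the Kan complex \(X\) supplies, via \(\Kan(2,0)\) and \(\Kan(2,2)\), fillings of horns in which one leg is \(f\) and the other is a degenerate edge; their remaining faces are a left and a right inverse of \(f\), and the uniqueness of composition shows these coincide into a two-sided inverse. Hence \(\Cat\) is a groupoid, and \(X\cong N\Cat\) by the first part, which completes the proof. This is consistent with Proposition~\ref{chap2:prop:category-is-inner-kan}, since the nerve of a groupoid is a Kan complex with unique fillers.
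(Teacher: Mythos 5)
Your proposal is correct and follows exactly the route the paper intends: the paper states this proposition without proof (the preceding remark only notes that \(\Kan!(2,1)\) encodes the composition and \(\Kan!(3,1)\) or \(\Kan!(3,2)\) the associativity, calling the converse easy), and your construction of \(\Cat\) from unique inner fillers, the associativity check via the \(\Horn{3}{2}\)-filler, and the spine argument combining Lemma~\ref{chap2:lem:spine-horn-inner} with the unique-lifting version of Lemma~\ref{chap2:lem:lift-composition-pushout} supply precisely the details the paper leaves implicit. The identification \(X\cong N\Cat\) in all dimensions and the two-sided inverse argument via \(\Kan(2,0)\) and \(\Kan(2,2)\) for the groupoid case are both sound.
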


\begin{proposition}\label{chap2:prop:nerve-of-functor-is-inner-kan}
The nerve of a functor of categories \(Nf\colon N\Cat\to N\Cat[D]\) is an inner Kan fibration.
\end{proposition}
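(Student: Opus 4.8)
The plan is to reduce everything to the rigidity of nerves established in Proposition~\ref{chap2:prop:category-is-inner-kan}: the proof there (together with Remark~\ref{chap2:rem:cat-Kan!(n,0)-n>=3}) shows that the nerve of any category satisfies the \emph{unique} inner Kan conditions \(\Kan!(n,k)\) for all \(n\ge 2\) and \(0<k<n\). Both the source \(N\Cat\) and the target \(N\Cat[D]\) are such nerves, so each has unique inner horn fillers. I claim this alone forces \(Nf\) to be an inner Kan fibration, with the lift handed to us for free from the source and its correctness guaranteed by the target.

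Concretely, I would fix \(n\ge 2\) and \(0<k<n\) and consider an arbitrary lifting problem, that is, a commutative square
\[
\xymatrix{
\Horn{n}{k}\ar[d]\ar[r]^-{a} & N\Cat\ar[d]^{Nf}\\
\Simp{n}\ar[r]^-{b}\ar@{.>}[ru] & N\Cat[D]\rlap{\ .}
}
\]
Commutativity says precisely that \(b\) restricts to \(Nf\circ a\) on the horn. First I apply \(\Kan!(n,k)\) to \(N\Cat\) to produce the unique \(n\)-simplex \(\tilde a\colon \Simp{n}\to N\Cat\) extending the horn \(a\); this is the only candidate for a diagonal filler.

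It then remains to verify that \(\tilde a\) actually solves the square, i.e.\ that \(Nf\circ\tilde a=b\). This is the one substantive step. Observe that \(Nf\circ\tilde a\) and \(b\) are both \(n\)-simplices of \(N\Cat[D]\) whose restriction to \(\Horn{n}{k}\) equals \(Nf\circ a\): the former because \(\tilde a\) extends \(a\), the latter by commutativity of the square. Applying \(\Kan!(n,k)\) to the target \(N\Cat[D]\), inner horn fillers downstairs are unique, so \(Nf\circ\tilde a=b\). Hence \(\tilde a\) is a genuine lift, and since \((n,k)\) was an arbitrary inner pair, \(Nf\) satisfies every inner Kan condition and is an inner Kan fibration.

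The only place any real argument happens is this last step, and it is worth flagging as the crux rather than an obstacle: the filler \(\tilde a\) is chosen purely upstairs, so a priori nothing pins down \emph{which} filler of \(Nf\circ a\) its image is; uniqueness of inner horn fillers in the \emph{target} is exactly what removes this ambiguity. Equivalently, one may phrase the whole argument combinatorially, since for inner \(k\) the horn \(\Horn{n}{k}\) already contains the full spine \(C_0\leftarrow C_1\leftarrow\cdots\leftarrow C_n\): the filler \(\tilde a\) is the simplex determined by composing this string, \(Nf\circ\tilde a\) is the string obtained by applying \(f\) edge-by-edge, and \(b\) — being a simplex of a nerve, hence determined by its spine — is the simplex built from the same pushed-forward spine \(f(C_0)\leftarrow\cdots\leftarrow f(C_n)\), so the two coincide. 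I note in passing that the lift is in fact unique, but only its existence is needed for the stated claim.
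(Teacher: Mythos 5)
Your proof is correct, and it rests on the same two pillars as the paper's: the unique inner filler in \(N\Cat\) supplies the only candidate lift, and the rigidity of \(N\Cat[D]\) confirms that it solves the square. The routes differ in organization. The paper treats only \((2,1)\) explicitly, verifying \(Nf\circ\tilde\varphi=\psi\) by the functoriality computation \(f(\tilde\varphi[0,2])=f(\varphi[0,1]\circ\varphi[1,2])=\psi[0,1]\circ\psi[1,2]=\psi[0,2]\), and then dispatches the higher inner Kan conditions in one sentence by appealing to \(\Kan!(2,1)\) together with the unique Kan conditions of both nerves. You instead run a single argument uniformly in all inner \((n,k)\): both \(Nf\circ\tilde a\) and \(b\) are fillers of the horn \(Nf\circ a\colon \Horn{n}{k}\to N\Cat[D]\), hence equal by \(\Kan!(n,k)\) downstairs. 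This buys a little extra generality: your argument never uses functoriality of \(f\) beyond the fact that \(Nf\) is a simplicial map, so it proves the purely simplicial statement that any simplicial map between simplicial sets satisfying the unique inner Kan conditions is automatically an inner Kan fibration (indeed with unique lifts), whereas the paper's version makes the categorical content visible in dimension \(2\) but leaves the bootstrap to higher dimensions terse. Your spine rephrasing is also sound: inner horns contain the spine (implicit in Lemma~\ref{chap2:lem:spine-horn-inner}), and a simplex of a nerve is determined by its spine.
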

\begin{proof}
To verify \(\Kan!(2, 1)\), let us consider a lifting problem
\[
\xymatrix{
\Horn{2}{1}\ar[d]\ar[r]^{\varphi} & N\Cat\ar[d]\\
\Simp{2}\ar[r]^{\psi}\ar@{.>}[ru]^{\tilde{\varphi}} & N\Cat[D] \rlap{\ .}
}
\]
Let \(\tilde{\varphi}\colon \Simp{2}\to N\Cat\) be the unique lift of \(\varphi\). Then we have
\[
f( \tilde{\varphi}[0,2])=f(\varphi([0,1])\circ \varphi([1,2]))=\psi([0,1])\circ\psi([1,2])=\psi([0,2]),
\]
which shows that \(\tilde{\varphi}\) is a lift for \((\varphi,\psi)\). Conversely, every lift for \((\varphi,\psi)\) is given by such \(\tilde{\varphi}\). Therefore, \(\Kan!(2,1)\) holds. Higher Kan conditions follow from \(\Kan!(2, 1)\) and the fact that both \(N\Cat\) and \(N\Cat[D]\) satisfy unique Kan conditions.
\end{proof}

Lemmas~\ref{chap2:prop:category-is-inner-kan} and \ref{chap2:prop:inner-kan-complex-is-category} imply that a category can be equivalently given by its nerve, which is an inner Kan complex satisfying appropriate unique Kan conditions. Along similar lines, we define higher categories and higher groupoids as simplicial sets satisfying appropriate Kan conditions.

\begin{definition}
 An \emph{\(n\)-category} is an inner Kan complex satisfying \(\Kan!(m,k)\) for all \(m> n\) and \(0<k<m\). An \emph{\(n\)-groupoid} is a Kan complex satisfying \(\Kan!(m,k)\) for all \(m> n\) and \(0\le k\le m\).
\end{definition}

When \(n=\infty\), we speak of \emph{\(\infty\)-categories} and \emph{\(\infty\)-groupoids}, which are inner Kan complexes and Kan complexes, respectively. More precisely, our notion of an \(\infty\)-category is a model for \((\infty, 1)\)-categories~\cite{Bergner}, in which each \(k\)-morphism is invertible if \(k>1\). The simplicial approach is one of the most successful approaches to higher category theory. It has been extensively studied by Joyal~\cite{Joyal2002,Joyal2008}, Lurie~\cite{Lurie}, and many others.

\section{Skeleton and coskeleton functors}\label{chap2:sec:sekeleton-and-coskeleton}

We saw that all higher dimensional simplices of the nerve of a category are obtained in an automatic way from the lower dimensions. Skeleton and coskeleton functors provide a general framework to study simplicial sets whose higher dimensions are determined by their lower dimensions.

Let \(\bD_{\le n}\) be the full subcategory of \(\bD\) with objects \([0], \dots, [n]\). The inclusion \(\bD_{\le n}\hookrightarrow \bD\) induces a \emph{truncation functor} remembering the low dimensions of a simplicial set:
\[
  \tr_n \colon \SSet=[\bD^\op, \Sets] \to [\bD_{\le n}^\op, \Sets].
\]

\begin{definition}
 The truncation functor \(\tr_n\) admits a left adjoint, called \emph{\(n\)-skeleton},
\[
\sk'_n\colon [\bD_{\le n}^\op, \Sets]\to \SSet,
\]
and a right adjoint, called \emph{\(n\)-coskeleton},
\[
 \cosk'_n\colon [\bD_{\le n}^\op, \Sets]\to \SSet.
\]
\end{definition}

Let \(X\in [\bD_{\le n}^\op, \Sets]\). The skeleton \(\sk'_n X\) is the smallest simplicial set generated by~\(X\): lower dimensional simplices of \(\sk'_n X\) are the same as that of \(X\) and all higher dimensional simplices of \(\sk'_n X\) above \(n\) are degenerate. The coskeleton \(\cosk'_n X\) is characterised by
\[
(\cosk'_n X)_i=\hom(\tr_n \Simp{i}, X), \quad \forall\, i.
\]
We denote the composite functors by
\begin{align*}
\sk_n&\coloneqq\sk'_n\circ \tr_n\colon \SSet\to \SSet,\\
\cosk_n&\coloneqq\cosk'_n\circ \tr_n\colon \SSet\to \SSet.
\end{align*}
It follows that \(\sk_n\) is left adjoint to \(\cosk_n\). By a slight abuse of language, we also call \(\sk_n\) the \(n\)-skeleton and \(\cosk_n\) the \(n\)-coskeleton. If there is no danger of confusion, we will denote \(\sk'_n\) and \(\cosk'_n\) by \(\sk_n\) and \(\cosk_n\), respectively.

\begin{definition}
  A simplicial set \(X\) is \emph{\(n\)-coskeletal} if the natural morphism \( X\to \cosk_n X\) is an isomorphism.
\end{definition}

\begin{proposition}
  The following statements about a simplicial set \(X\) are equivalent:
  \begin{enumerate}
    \item \(X\) is \(n\)-coskeletal.
    \item \(X_k\to \hom(\tr_n \Simp{k}, \tr_n X)\) is a bijection for all \(k> n\).
    \item \(X\to \Simp{0}\) satisfies \(\Acyc!(k)\) for all \(k>n\). 
  \end{enumerate}
\end{proposition}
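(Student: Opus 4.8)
The plan is to establish $(i)\Leftrightarrow(ii)$ by a degreewise analysis of the coskeleton, and then $(ii)\Leftrightarrow(iii)$ by filtering the boundary $\partial\Simp{k}$ through its skeleta. Throughout I use that, by the adjunction $\sk'_n\dashv\tr_n$, one has $\hom(\tr_n\Simp{k},\tr_n X)=\hom(\sk_n\Simp{k},X)$, so that $(\cosk_n X)_k=\hom(\sk_n\Simp{k},X)$.

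First I would unwind the coskeleton. Using $X_k\cong\hom(\Simp{k},X)$, the unit map $X\to\cosk_n X$ is, in degree $k$, the restriction $\hom(\Simp{k},X)\to\hom(\sk_n\Simp{k},X)$ along the inclusion $\sk_n\Simp{k}\hookrightarrow\Simp{k}$. For $k\le n$ we have $\sk_n\Simp{k}=\Simp{k}$, so this map is the identity and is automatically bijective. Hence $X\to\cosk_n X$ is an isomorphism iff it is bijective in every degree $k>n$, which is exactly $(ii)$; this gives $(i)\Leftrightarrow(ii)$.

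The core of the argument is a combinatorial input, which I would isolate as a lemma: for $k>n$ the inclusion $\sk_n\Simp{k}\hookrightarrow\partial\Simp{k}$ is built from the boundary inclusions $\partial\Simp{j}\to\Simp{j}$ with $n<j<k$ by coproducts, pushouts, and composition. Concretely, $\partial\Simp{k}=\sk_{k-1}\Simp{k}$ carries the skeletal filtration $\sk_n\Simp{k}\subseteq\sk_{n+1}\Simp{k}\subseteq\dots\subseteq\sk_{k-1}\Simp{k}$, and each stage $\sk_{j-1}\Simp{k}\hookrightarrow\sk_{j}\Simp{k}$ is a pushout of $\coprod\partial\Simp{j}\to\coprod\Simp{j}$ indexed by the nondegenerate $j$-simplices of $\Simp{k}$ (the injections $[j]\hookrightarrow[k]$, all proper faces since $j<k$). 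Granting this, Lemma~\ref{chap2:lem:lift-composition-pushout} shows that the class of maps with the LLP against $X\to\Simp{0}$ is closed under these operations; and because the technical Lemma~\ref{chap2:lem:lift-pullback-pushout-diagram} and Corollary~\ref{chap2:cor:lift-composition-pushout-combined} track the full hom-pullback squares (bijections, not merely surjections), the \emph{unique} lifting property is preserved as well. Therefore, if $X\to\Simp{0}$ satisfies $\Acyc!(j)$ for all $n<j<k$, then $\hom(\partial\Simp{k},X)\to\hom(\sk_n\Simp{k},X)$ is a bijection.

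With this lemma, $(ii)\Leftrightarrow(iii)$ follows by comparing the restriction maps along $\sk_n\Simp{k}\subseteq\partial\Simp{k}\subseteq\Simp{k}$. For $(iii)\Rightarrow(ii)$ I would compose the bijection $\hom(\partial\Simp{k},X)\to\hom(\sk_n\Simp{k},X)$ (the lemma, from $\Acyc!(j)$, $n<j<k$) with the bijection $\hom(\Simp{k},X)\to\hom(\partial\Simp{k},X)$ (which is $\Acyc!(k)$) to obtain $(ii)$. For $(ii)\Rightarrow(iii)$ I would induct on $k>n$: the base case $k=n+1$ is immediate since $\sk_n\Simp{n+1}=\partial\Simp{n+1}$, so $(ii)$ is literally $\Acyc!(n+1)$; for the step, the inductive hypothesis supplies $\Acyc!(j)$ for $n<j<k$, so the lemma makes $\hom(\partial\Simp{k},X)\to\hom(\sk_n\Simp{k},X)$ bijective, and since $(ii)$ makes the composite $\hom(\Simp{k},X)\to\hom(\sk_n\Simp{k},X)$ bijective, a two-out-of-three cancellation forces $\hom(\Simp{k},X)\to\hom(\partial\Simp{k},X)$ to be bijective, i.e.\ $\Acyc!(k)$. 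The main obstacle is the combinatorial lemma, and in particular checking that \emph{uniqueness} of lifts (not just existence) passes through the skeletal pushouts — this is precisely what the hom-set pullback formalism of Section~\ref{chap2:ssec:technical-lemmas} is designed to handle.
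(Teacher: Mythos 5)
Your proposal is correct. Its core --- the skeletal filtration \(\sk_n\Simp{k}\subseteq\sk_{n+1}\Simp{k}\subseteq\dots\subseteq\sk_{k-1}\Simp{k}=\partial\Simp{k}\), each stage a pushout along boundary inclusions \(\partial\Simp{j}\to\Simp{j}\), through which \emph{unique} lifting passes because Lemma~\ref{chap2:lem:lift-composition-diagram} and Lemma~\ref{chap2:lem:lift-pullback-pushout-diagram} give pullback squares of hom-sets (so bijections pull back to bijections and compose to bijections) --- is exactly the mechanism the paper uses for the implication (iii)\(\Rightarrow\)(i): there one reads \(\hom(\sk_n\Simp{k},X)=\hom(\sk_{n+1}\Simp{k},X)=\dots=\hom(\partial\Simp{k},X)=X_k\). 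Where you genuinely differ is the other direction. The paper proves (i)\(\Rightarrow\)(iii) directly from the adjunction, with no induction and no lifting machinery: if \(X\) is \(n\)-coskeletal, then \(\hom(\partial\Simp{k},X)=\hom(\tr_n\partial\Simp{k},\tr_n X)=\hom(\tr_n\Simp{k},\tr_n X)=X_k\), using that \(\partial\Simp{k}\) and \(\Simp{k}\) have the same \(n\)-truncation when \(k>n\). You instead prove (ii)\(\Rightarrow\)(iii) by induction on \(k\), re-running the filtration lemma at each stage and cancelling bijections along \(\sk_n\Simp{k}\subseteq\partial\Simp{k}\subseteq\Simp{k}\). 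Both are valid: your route buys uniformity --- the whole proposition reduces to one combinatorial lemma plus formal cancellation, staying entirely inside the lifting-property formalism --- at the price of an induction where the paper has a two-line computation, which also makes visible that (i)\(\Rightarrow\)(iii) is essentially definitional rather than combinatorial. One small economy available to you: since \(\Simp{k}\) has only finitely many nondegenerate \(j\)-simplices, you may attach them one at a time, so your lemma needs only pushouts along a single \(\partial\Simp{j}\to\Simp{j}\) together with finite composition, and no separate coproduct-stability step is required.
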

\begin{proof}
   The first two statements are equivalent by definition. Suppose that \(X\) is \(n\)-coskeletal. Then for \(k>n\)
  \[
  \hom(\partial \Simp{k}, X)=\hom(\tr_n\partial \Simp{k}, \tr_n X)=\hom(\tr_n \Simp{k}, \tr_n X)=X_n,
  \]
  proving the third statement. Conversely, suppose that the third statement is true. Adding all \((n+1)\)-simplices of \(\Simp{k}\) to \(\sk_{n}\Simp{k}\) gives \(\sk_{n+1}\Simp{k}\). It follows that
  \[
  \hom(\sk_n \Simp{k}, X)=\hom(\sk_{n+1}\Simp{k}, X)
  \]
  for \(k>n\), hence
  \[
  \hom(\sk_n \Simp{k}, X)=\dots=\hom(\sk_{k-1}\Simp{k}, X)=X_k,
  \]
  and this shows that \(X\) is \(n\)-coskeletal. This proves the claim.
\end{proof}

An \(n\)-coskeletal simplicial set is clearly \(m\)-coskeletal for \(m>n\). The following proposition shows that unique Kan conditions and being coskeletal are closely related.

\begin{proposition}\label{chap2:prop:coskeletal-kan}
  Let \(X\) be a simplicial set.
\begin{enumerate}
  \item If \(X\) is \(n\)-coskeletal, then \(X\) satisfies \(\Kan!(m, k)\) for all \(m\ge n+2\) and \(0\le k\le m\).
  \item If \(X\) satisfies \(\Kan!(m, k)\) for all \(0\le k\le m\) and \(m\ge n\), then \(X\) is \(n\)-coskeletal.
\end{enumerate}
\end{proposition}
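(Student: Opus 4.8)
The plan is to reduce both parts to the characterisation of $n$-coskeletal simplicial sets proved just above, namely that $X$ is $n$-coskeletal if and only if $X\to\Simp0$ satisfies $\Acyc!(k)$ for all $k>n$, together with the adjunction defining the coskeleton, $\hom(K,\cosk_n X)=\hom(\tr_n K,\tr_n X)$, natural in the simplicial set $K$. The combinatorial input common to both parts is a precise description of which simplices a horn contains: the only nondegenerate simplices of $\Simp{m}$ absent from $\Horn{m}{k}$ are the top simplex (dimension $m$) and its $k$-th face (dimension $m-1$), so $\Horn{m}{k}$ contains every simplex of dimension $\le m-2$.

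For part (i), I would first record that when $m\ge n+2$ every simplex of $\Simp{m}$ of dimension $\le n$ lies in $\Horn{m}{k}$, since such a simplex has dimension $\le n\le m-2$. Hence the horn inclusion $\iota\colon\Horn{m}{k}\to\Simp{m}$ becomes an isomorphism after $\tr_n$, that is $\tr_n\Horn{m}{k}=\tr_n\Simp{m}$. Using $X\cong\cosk_n X$ and the adjunction, naturality in $K$ identifies the restriction map $\iota^\ast\colon\hom(\Simp{m},X)\to\hom(\Horn{m}{k},X)$ with $(\tr_n\iota)^\ast\colon\hom(\tr_n\Simp{m},\tr_n X)\to\hom(\tr_n\Horn{m}{k},\tr_n X)$, which is a bijection because $\tr_n\iota$ is an isomorphism. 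A bijective $\iota^\ast$ is exactly $\Kan!(m,k)$, for every $0\le k\le m$ and every $m\ge n+2$.

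For part (ii), by the cited characterisation it suffices to prove $\Acyc!(k)$ for all $k>n$. Given $\phi\colon\partial\Simp{k}\to X$ with faces $\phi_0,\dots,\phi_k$, I would restrict $\phi$ to a horn $\Horn{k}{j}$ (any $0\le j\le k$) and fill it by $\Kan!(k,j)$, available since $k\ge n+1\ge n$; this yields $\tilde\phi\colon\Simp{k}\to X$ agreeing with $\phi$ on every face except possibly the $j$-th. Uniqueness of the boundary filler is then immediate from uniqueness of the horn filler. The content is existence, i.e. $\face_j\tilde\phi=\phi_j$; both are $(k-1)$-simplices, and they have the same boundary, because all of their $(k-2)$-faces lie in $\Horn{k}{j}$ (the horn contains every simplex of dimension $\le k-2$), and there $\tilde\phi$ and $\phi$ coincide.

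The main obstacle is precisely this matching step, sharpest in the base case $k=n+1$, where $\phi_j$ has dimension $n$ and one cannot invoke $\Acyc!(n)$, which need not hold. I would resolve it with a uniqueness lemma: if $X$ satisfies $\Kan!(d,l)$ for some $0\le l\le d$ with $d\ge1$, then any two $d$-simplices with the same boundary are equal, since they restrict to the \emph{same} horn $\Horn{d}{l}$ (which only records the faces $\ne l$, and these agree) and unique filling forces them to coincide. Applying this in dimension $d=k-1$ is legitimate because $k-1\ge n$ makes $\Kan!(k-1,l)$ one of the hypotheses, and $d=k-1\ge1$ throughout the relevant range; this forces $\face_j\tilde\phi=\phi_j$ and finishes the argument. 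Notably, this explains the asymmetry between the two ranges ($m\ge n+2$ in (i) versus $m\ge n$ in (ii)): the extra dimension in (i) is what makes the horn and the simplex share an $n$-truncation, while in (ii) it is the unique fillability in dimension exactly $n$ that substitutes for the missing $\Acyc!(n)$.
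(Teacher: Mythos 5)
Your proof is correct and takes essentially the same approach as the paper's: part (i) is the same low-dimensional coincidence of horn and simplex (\(\tr_n\Horn{m}{k}=\tr_n\Simp{m}\) for \(m\ge n+2\); the paper phrases it as \(\sk_n\Horn{m}{k}=\sk_n\Simp{m}\)) fed through the coskeleton adjunction, and part (ii) is the same reduction to \(\Acyc!(k)\) for \(k>n\) via the preceding characterisation, filling one horn and then identifying the missing face by unique horn-filling in dimension \(k-1\) --- your ``uniqueness lemma'' is precisely the paper's step that the \(k\)-th face is determined by a lower horn, hence by its boundary. Your closing observation that this needs \(k-1\ge 1\) corresponds to the same implicit restriction in the paper's argument, so nothing is lost relative to the original proof.
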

\begin{proof}
  (i) Since \(X\) is \(n\)-coskeletal and  \(\sk_n \Horn{m}{k}=\sk_n \Simp{m}\) for \(m\ge n+2\), we have
  \[
  \hom(\Horn{m}{k}, X)=\hom(\Simp{m}, X),
  \]
  which proves the first statement.

  (ii) To show \(\Acyc!(m)\) for \(m>n\), consider the commutative diagram
  \[
  \xymatrix{
  \hom(\partial \Simp{m}, X)\ar[r]^{f} & \hom(\Horn{m}{k}, X)\\
  \hom(\Simp{m}, X)\ar[ur]_{h}\ar[u]^{b} \rlap{\ .}&
  }
  \]
The map \(\hom(\Simp{m}, X)\xrightarrow{h} \hom(\Horn{m}{k}, X)\) is a bijection by assumption. Let \(\tilde{f}=b\circ h^{-1}\), we shall prove that \(\tilde{f}\) is the inverse of \(f\). Since \(f\circ \tilde{f}=f\circ b\circ h^{-1}=\id\), it remains to show that \(\tilde{f}\circ f=\id\). Given a morphism \(\varphi\colon \partial \Simp{m}\to  X\), we observe that the \(k\)-th face of the morphism \(\tilde{f}\circ f(\varphi)\colon \partial \Simp{m}\to  X\) is determined by first lifting the horn \(\Horn{m}{k}\to X\) to a simplex \(\Simp{m}\to X\) and then restricting to the \(k\)-th face. We need only show the \(k\)-th faces are the same. By the assumption \(\Kan!(m-1, j)\) for all \(j\), the \(k\)-th face is determined by a lower horn, hence by its boundary. Since \(\varphi\) and \(\tilde{f}\circ f(\varphi)\)  agree on the boundary of the \(k\)-th face, they must be the same, and this proves \(\Acyc!(m)\).
\end{proof}

The following examples for coskeletal simplicial sets are easy to verify.
\begin{example}
  The nerve of a category is 2-coskeletal. The nerve of a \v{C}ech groupoid is 1-coskeletal. The nerve of a pair groupoid is 0-coskeletal.
\end{example}

\section{Collapsible extensions}

Lemma~\ref{chap2:lem:lift-composition-pushout} implies that the collection of morphisms satisfying the LLP with respect to a given morphism is stable under composition and pushouts. In this section, we consider several classes of morphisms of simplicial sets that are stable under composition and pushouts. They will be of important use in our study of Kan fibrations, acyclic fibrations, and inner Kan fibrations of higher groupoids.

The following definition generalises that of collapsible simplicial sets in~\cite{Henriques}.

\begin{definition}\label{chap2:def:collapsible}
  Let~\(S\hookrightarrow T\) be an inclusion of finite simplicial sets (that is, each dimension is finite). We call \(S\hookrightarrow T\) a \emph{collapsible extension}, if there is a filtration
  \[
  S= S_0 \subset S_1 \subset \dotsb \subset S_l=T
  \]
  such that each \(S_i\) is obtained from~\(S_{i-1}\) by filling a horn; that is, there are
  \(n\ge 1\), \(0\le k\le n\), and a morphism~\(\Horn{n}{k}\to  S_{i-1}\) such that \(S_i=S_{i-1}\cup_{\Horn{n}{k}} \Simp{n}\). If there is a collapsible extension \(\Simp{0}\hookrightarrow T\), then we simply call \(T\) \emph{collapsible}.
\end{definition}

Of course, if \(S\hookrightarrow T\) and \(T\hookrightarrow U\) are collapsible extensions, then so is \(S\hookrightarrow U\). By definition, the collection of all collapsible extensions is the closure of horn inclusions \(\horn{n}{k}\colon\Horn{n}{k}\to \Simp{n}\) for \(n\ge 1\) and \(0\le k\le n\) under finitely many compositions and pushouts.

If we replace horn inclusions by inner/right/left horn inclusions or by boundary inclusions, we obtain \emph{inner}/\emph{right}/\emph{left collapsible extensions}, or \emph{boundary extensions}, respectively. These classes of maps are special cases of  (inner/left/right) anodyne extensions, or injections, which are defined as the closure of the corresponding classes of maps under transfinite compositions, pushouts, and retractions; see~\cite{Goerss-Jardine,Joyal2008,Lurie} for more information. Some general statements for (inner/left/right) anodyne extensions often remain valid for our special cases. In future applications, we allow only finite compositions and pullbacks, that is, we require a finite filtration as above. Lemma~\ref{chap2:lem:lift-composition-pushout} shows that a (inner/right/left) Kan fibration satisfies the RLP with respect to all (inner/right/left) collapsible extensions.

\subsection{Join of simplicial sets}

We provide some examples of collapsible extensions using the join operation.

\begin{definition}\label{chap2:def:join-simplicial-sets}
  The \emph{join} of two simplicial sets \(X\) and \(Y\) is the simplicial set \(X\star Y\) given by (formulas for face and degeneracy maps are omitted here)
  \[
  (X\star Y)_k=X_k \sqcup \Big(\bigsqcup_{i+j=k-1} X_i\times Y_j\Big) \sqcup Y_k.
  \]
\end{definition}

The join operation is determined by the following two properties~\cite{Joyal2002}:
  \begin{enumerate}
   \item The join of standard simplices is given by ordinal sum, \(\Simp{n}\star \Simp{m}=\Simp{n+m+1}\);
   \item The join is a bifunctor preserving colimits in both variables.
  \end{enumerate}
The join is better defined for augmented simplicial sets, see Section~\ref{chap4:sec:aug-simplicial-bisimplicial}.

\begin{example}
  For a simplicial set \(T\), the join \(T\star \Simp{0}\) is the cone over \(T\), and \(\Simp{0}\star T \) is the cone under \(T\). The left picture in Figure~\ref{chap2:fig:2cones-simp2*simp0-simp2*simp0} represents \(\Simp{2}\star\Simp{0}\), and the right picture represents \(\Simp{0}\star\Simp{2}\).
\begin{figure}[htb]
  \centering
  \begin{tikzpicture}
  [>=latex', mydot/.style={draw,circle,fill=black, inner sep=1.5pt}, %
    every to/.style={<-},every label/.style={scale=0.8},scale=0.8]
   \draw [fill=gray]
       (0,0) -- (0.5,-0.5) -- (2,0) -- cycle;
  \node[mydot, label=left:{$0$}]  (a) at (0,0) {};
  \node[mydot, label=below:{$1$}] (b) at (0.5,-0.5) {};
  \node[mydot, label=right:{$2$}] (c) at (2,0) {};
  \node[mydot, label=above:{$3$}] (d) at (0.8,1.5) {};
  \foreach \to/\from in {a/b,a/d,b/c,b/d,c/d}
  \draw [<-] (\to)--(\from);
  \draw [<-,dashed] (a)--(c);
  \begin{scope}[xshift=4cm]
      \draw [fill=gray]
      (0.5,-0.5) -- (2,0) -- (0.8,1.5) -- cycle;
  \node[mydot, label=left:{$0$}]  (a) at (0,0) {};
  \node[mydot, label=below:{$1$}] (b) at (0.5,-0.5) {};
  \node[mydot, label=right:{$2$}] (c) at (2,0) {};
  \node[mydot, label=above:{$3$}] (d) at (0.8,1.5) {};
  \foreach \to/\from in {a/b,a/d,b/c,b/d,c/d}
  \draw [<-] (\to)--(\from);
  \draw [<-,dashed] (a)--(c);
  \end{scope}
  \end{tikzpicture}
  \caption{Cones \(\Simp{2}\star\Simp{0}\) and \(\Simp{0}\star\Simp{2}\)}\label{chap2:fig:2cones-simp2*simp0-simp2*simp0}
\end{figure}
\end{example}

The following identities are shown in~\cite[Lemma 3.3]{Joyal2002}:
\begin{equation}\label{chap2:eq:join}
\begin{aligned}
   (\partial\Simp{m}\star\Simp{n})\cup (\Simp{m}\star\partial\Simp{n})&=\partial\Simp{m+1+n},&\text{for \(m\ge 0, n\ge 0\)},\\
   (\Horn{m}{k}\star\Simp{n}) \cup (\Simp{m}\star \partial \Simp{n})&=\Horn{m+1+n}{k},&\text{for \(m>0, n\ge 0\)},\\
   (\partial\Simp{m}\star \Simp{n})\cup (\Simp{m}\star\Horn{n}{k})&=\Horn{m+1+n}{m+1+k},&\text{for \(m\ge 0, n>0\)}.
\end{aligned}
\end{equation}

\begin{lemma}\label{chap2:lem:join-composition-pushout}
\begin{enumerate}
  \item For morphisms of simplicial sets \(A\to B\to C\) and \(X\to Y\), the following diagram is a pushout square:
\[
  \xymatrix{
  A\star Y \cup_{A\star X} B\star X \ar[r]\ar[d] & A\star Y \cup_{A\star X} C\star X\ar[d] \\
  B\star Y \ar[r] & B\star Y \cup_{B\star X} C\star X\rlap{\ .}
 }
\]
  \item Given a morphism of simplicial sets \(X\to Y\) and a pushout square of simplicial sets
\[
  \xymatrix{
  A\ar[r]\ar[d] & C\ar[d] \\
  B\ar[r] & Q \rlap{\ ,}
  }
\]
the following diagram is a pushout square:
\[
  \xymatrix{
  A\star Y \cup_{A\star X} B\star X \ar[r]\ar[d] & C\star Y \cup_{C\star X} Q\star X \ar[d] \\
  B\star Y \ar[r] & Q\star Y\rlap{\ .}
  }
\]
\end{enumerate}
\end{lemma}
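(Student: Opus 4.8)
The plan is to recognise both statements as instances of the standard behaviour of the pushout--product (Leibniz) construction attached to a bifunctor that is cocontinuous in each variable. Write $F(\blank)=\blank\star X$ and $G(\blank)=\blank\star Y$ for the two endofunctors of $\SSet$ obtained by joining with $X$ and with $Y$, and let $\eta\colon F\Rightarrow G$ be the natural transformation induced by $X\to Y$. For a map $u\colon A\to B$ the object $A\star Y\cup_{A\star X}B\star X$ is exactly the pushout $G(A)\cup_{F(A)}F(B)$, and every corner of both asserted squares is assembled from such pushouts. The only external inputs I will use are the universal property of pushouts, the functoriality and naturality of $F$, $G$, $\eta$, and --- for the second statement only --- the fact that the join preserves colimits in each variable (the second defining property recalled after Definition~\ref{chap2:def:join-simplicial-sets}).

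For part (i), fix the composable pair $A\xrightarrow{u}B\xrightarrow{v}C$ and write the three assembled corners as $P=G(A)\cup_{F(A)}F(B)$ (top-left), $Q=G(A)\cup_{F(A)}F(C)$ (top-right) and $S=G(B)\cup_{F(B)}F(C)$ (bottom-right), the bottom-left being $G(B)$. First I paste horizontally: the defining square of $P$ and the outer rectangle computing $Q$ are both pushouts, so by the pasting lemma the residual square $[\,F(B)\to F(C),\,P\to Q\,]$ is a pushout. I then stack this square on top of the defining pushout of $S$ along the common edge $F(B)\to F(C)$, noting that the composite $F(B)\to P\to G(B)$ equals $\eta_B$ so the edges match. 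Since the top square and the outer rectangle are pushouts, vertical pasting forces the bottom square
\[
\xymatrix{
P\ar[r]\ar[d] & Q\ar[d]\\
G(B)\ar[r] & S
}
\]
to be a pushout, which is precisely the claim. This part uses no cocontinuity; it is purely formal.

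For part (ii), the new ingredient is that $F$ and $G$ preserve the given pushout $Q=B\cup_A C$, so that $F(Q)=F(B)\cup_{F(A)}F(C)$ and $G(Q)=G(B)\cup_{G(A)}G(C)$ are themselves pushouts. Using the first of these together with pasting, I rewrite the top-right corner $C\star Y\cup_{C\star X}Q\star X=G(C)\cup_{F(C)}F(Q)$ as $G(C)\cup_{F(A)}F(B)$, while the top-left corner is $G(A)\cup_{F(A)}F(B)$ by definition. It then remains to show that gluing $G(B)$ to $G(C)\cup_{F(A)}F(B)$ along $G(A)\cup_{F(A)}F(B)$ reproduces $G(B)\cup_{G(A)}G(C)=G(Q)$. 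I would verify this by computing the total colimit of the resulting elementary diagram on the objects $F(A),F(B),G(A),G(B),G(C)$: the pushout of $G(B)\leftarrow G(A)\to G(C)$ is $G(Q)$, and the two $F$-objects are absorbed, since $F(B)\to G(B)$ is $\eta_B$ and the two competing maps out of $F(A)$ into $G(C)$ agree by naturality of $\eta$ applied to $A\to C$, so they impose no further identifications.

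The main obstacle is the bookkeeping in part (ii): one must check that each auxiliary map (for instance the two factorisations $F(A)\to F(C)\to G(C)$ and $F(A)\to G(A)\to G(C)$ of the structural map $F(A)\to G(C)$) is well-defined, and that at every stage the squares I stack genuinely share an edge so that the pasting lemma applies. Part (i) becomes routine once the two pasting steps are identified; the entire content of part (ii) is concentrated in the use of cocontinuity to replace the joins $Q\star X$ and $Q\star Y$ by pushouts \emph{before} pasting.
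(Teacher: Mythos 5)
Your proposal is correct, and its skeleton is exactly what the paper intends: the paper's proof is the one-line remark that the argument is dual to Lemmas~\ref{chap2:lem:lift-composition-diagram} and~\ref{chap2:lem:lift-pullback-pushout-diagram}, using only that \(\star\) is a cocontinuous bifunctor, and your part (i) is precisely that dualization (two cancellation-pastings of pushout squares, no cocontinuity needed). Where you diverge is the end of part (ii): after using cocontinuity to rewrite \(C\star Y\cup_{C\star X}Q\star X\) as \(G(C)\cup_{F(A)}F(B)\), you finish with a total-colimit/finality argument ("the \(F\)-objects are absorbed"), whereas the paper's template would finish with two further pastings, dual to the proof of Lemma~\ref{chap2:lem:lift-pullback-pushout-diagram}: first show that \(\bigl[G(A)\to G(C),\ P\to R\bigr]\) is a pushout by cancelling the defining square of \(P\) against the rectangle defining \(R\) (naturality of \(\eta\) identifies the two maps \(F(A)\to G(C)\)), then cancel that square against the square \(\bigl[G(A)\to G(C),\ G(B)\to G(Q)\bigr]\), which is a pushout because \(\star Y\) preserves the given pushout. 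Both endings work, but your absorption step is the only point that needs firming up: to make the \(G\)-span final in the total diagram you must build the naturality relations (e.g.\ that \(F(A)\to F(B)\to G(B)\) equals \(F(A)\to G(A)\to G(B)\)) into the indexing category, or else check the identification objectwise in \(\Sets\), which is available since colimits of simplicial sets are computed levelwise; the uniform pasting finish avoids this bookkeeping entirely and keeps the whole lemma a formal consequence of the pasting lemma plus cocontinuity, which is what the paper's phrasing is pointing at.
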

\begin{proof}
 The proof is parallel to those of Lemmas~\ref{chap2:lem:lift-composition-diagram} and \ref{chap2:lem:lift-pullback-pushout-diagram}.
 Notice that the proofs there only use that \(\hom(\blank,\blank)\) is a continuous bifunctor.
\end{proof}

\begin{proposition}\label{chap2:prop:join-pushout-collapsible}
 Let \(f\colon A\to B\) and \(g\colon X\to Y\) be inclusions of simplicial sets. Suppose that \(f\) is a right collapsible extension and \(g\) is a boundary extension, or that \(f\) is a boundary extension and \(g\) is a left collapsible extension. Then the induced inclusion
\begin{equation}\label{chap2:eq:g-star-g-inclusion}
 A\star Y\cup_{A\star X} B\star X\to B\star Y
\end{equation}
is an inner collapsible extension. If \(f\) is a left collapsible extension, and \(g\) is a boundary extension, then~\eqref{chap2:eq:g-star-g-inclusion} is a left collapsible extension \textup(modified from \cite[Theorem~\textup{3.7}]{Joyal2008}\textup).
\end{proposition}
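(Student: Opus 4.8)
The plan is to recognise the inclusion~\eqref{chap2:eq:g-star-g-inclusion} as the pushout-join of the two inclusions $f$ and $g$, and to prove the statement by reducing to the case in which $f$ and $g$ are single generating inclusions, where the three join identities~\eqref{chap2:eq:join} identify the map explicitly. Throughout I would abbreviate the inclusion~\eqref{chap2:eq:g-star-g-inclusion} by $J(f,g)$. Since $f$ and $g$ are inclusions and $\star$ preserves monomorphisms, the source $A\star Y\cup_{A\star X}B\star X$ is realised as the union of $A\star Y$ and $B\star X$ inside $B\star Y$, so that for generating $f$ and $g$ the map $J(f,g)$ is exactly the union occurring in~\eqref{chap2:eq:join}.

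First I would set up the reduction. For a chain $A\to B\to C$ and a fixed $g$, Lemma~\ref{chap2:lem:join-composition-pushout}(i) exhibits $J(A\to C,\,g)$ as the composite of a pushout of $J(A\to B,\,g)$ with $J(B\to C,\,g)$; and for a pushout $Q=B\cup_A C$, Lemma~\ref{chap2:lem:join-composition-pushout}(ii) exhibits $J(C\to Q,\,g)$ as a pushout of $J(A\to B,\,g)$. The analogous statements with the two variables interchanged hold by the same argument, since the proof only uses that the join is a colimit-preserving bifunctor. Because inner collapsible extensions and left collapsible extensions are, by definition, closed under finite composition and pushout, it follows that the class of pairs $(f,g)$ for which $J(f,g)$ is an inner (respectively left) collapsible extension is closed under composition and pushout separately in each variable. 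As right, left, and boundary extensions are generated from their respective horn and boundary inclusions under exactly these operations, it suffices to verify each assertion when both $f$ and $g$ are generating inclusions.

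The base cases then follow from~\eqref{chap2:eq:join}. If $f=\horn{m}{k}$ is a right horn inclusion, so $0<k\le m$, and $g=\partial\Simp{n}\hookrightarrow\Simp{n}$ is a boundary inclusion, the second identity in~\eqref{chap2:eq:join} shows that $J(f,g)$ is the horn inclusion $\horn{m+1+n}{k}$; here $0<k\le m<m+1+n$, so this is an inner horn inclusion. If instead $f=\partial\Simp{m}\hookrightarrow\Simp{m}$ and $g=\horn{n}{k}$ is a left horn inclusion, so $0\le k<n$, the third identity in~\eqref{chap2:eq:join} shows that $J(f,g)$ is $\horn{m+1+n}{m+1+k}$, and $0<m+1\le m+1+k<m+1+n$ again makes it an inner horn inclusion. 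These two computations settle the first assertion. For the last assertion, take $f=\horn{m}{k}$ a left horn inclusion, so $0\le k<m$, and $g=\partial\Simp{n}\hookrightarrow\Simp{n}$; the second identity in~\eqref{chap2:eq:join} once more yields $\horn{m+1+n}{k}$, and now $0\le k<m<m+1+n$ exhibits it as a left horn inclusion, hence a left collapsible extension.

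The substance of the argument lies in the reduction rather than in the base cases: the point is that the pushout-join carries composites to composites and pushouts to pushouts in each variable, which is precisely what Lemma~\ref{chap2:lem:join-composition-pushout} and its variable-swapped analogue supply, combined with the closure of the collapsible classes under these operations. The index bookkeeping in the base cases is routine; the only thing to watch is which horn index ($k$ or $m+1+k$) the relevant join identity produces, and then to check in each scenario that this index lies strictly between $0$ and $m+1+n$ (for an inner horn) or merely below $m+1+n$ (for a left horn).
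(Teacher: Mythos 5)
Your proof is correct and follows essentially the same route as the paper: reduce via Lemma~\ref{chap2:lem:join-composition-pushout} (and its variable-swapped analogue, which you rightly note holds by the same cocontinuity argument) to the case of generating horn and boundary inclusions, then conclude from the join identities~\eqref{chap2:eq:join}. Your version merely makes explicit the two-variable closure argument and the index bookkeeping that the paper's two-sentence proof leaves to the reader.
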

\begin{proof}
Applying Lemma~\ref{chap2:lem:join-composition-pushout}, we are reduced to the case when \(f\) and \(g\) are horn inclusions or boundary inclusions. This is established by~\eqref{chap2:eq:join}.
\end{proof}

\begin{lemma}
  The inclusion of a face \(\Simp{k} \to \Simp{n}\) is a collapsible extension for \(0\le k\le n\).
\end{lemma}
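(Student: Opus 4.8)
The plan is to realise the face inclusion as a finite sequence of horn fillings organised by a discrete‑Morse‑style pairing. Identify the nondegenerate simplices of $\Simp{n}$ with the nonempty subsets of $[n]=\{0<1<\dots<n\}$, and let $A\subseteq[n]$ with $\card A=k+1$ be the subset spanning the given face, so that the image of $\Simp{k}$ is the simplicial subset whose nondegenerate simplices are the nonempty subsets of $A$. Passing from $\Simp{k}$ to $\Simp{n}$ then amounts precisely to adjoining the simplices $\sigma$ with $\sigma\not\subseteq A$. If $k=n$ there is nothing to adjoin and the filtration is trivial, so I assume $k<n$.

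First I would fix a vertex $w\in A$, which exists since $k\ge 0$ forces $A\neq\emptyset$, and pair the simplices to be adjoined by the involution $\sigma\mapsto\sigma\mathbin{\triangle}\{w\}$ (symmetric difference). Because $w\in A$, this map does not change $\sigma\setminus A$, hence preserves the condition $\sigma\not\subseteq A$; it is therefore a fixed‑point‑free involution on the set of simplices still to be adjoined, pairing them as $(\rho,\rho\cup\{w\})$ with $w\notin\rho$, where $\rho$ is exactly the facet of $\rho\cup\{w\}$ obtained by deleting $w$. Note that $\rho\cup\{w\}\not\subseteq A$ forces $\rho\not\subseteq A$, so $\card(\rho\cup\{w\})\ge 2$ and the relevant horns are genuine.

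Next I would order these pairs by increasing $\dim(\rho\cup\{w\})=\card\rho$ (breaking ties arbitrarily) and build the filtration one pair at a time: at the pair $(\rho,\rho\cup\{w\})$ I attach the subsimplex $\Simp{\rho\cup\{w\}}$ along the horn $\Horn{d}{j}$, with $d=\card\rho$ and $j$ the position of $w$ in $\rho\cup\{w\}$, namely the horn missing only the facet $\rho$. Such a horn may be inner or outer, which is harmless since Definition~\ref{chap2:def:collapsible} allows all horns with $0\le k\le n$. This single pushout is intended to adjoin exactly $\rho$ and $\rho\cup\{w\}$, and since collapsible extensions compose it then suffices to check that each attaching map is well defined.

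The \emph{main point}, and the only step needing care, is to verify that at the step for $(\rho,\rho\cup\{w\})$ every face of $\rho\cup\{w\}$ other than $\rho$ already lies in the subcomplex built so far; this is the acyclicity of the matching. A face $\tau\subseteq A$ lies in $\Simp{k}$ from the outset. For a remaining proper face $\tau\neq\rho$ one has $\tau\not\subseteq A$, and a short case analysis on whether $w\in\tau$ shows that the top cell of $\tau$'s own pair (either $\tau$ itself when $w\in\tau$, or $\tau\cup\{w\}$ when $w\notin\tau$, using $\tau\subsetneq\rho$ in the latter case) has dimension strictly below $\dim(\rho\cup\{w\})$, hence was attached at an earlier stage, so $\tau$ is already present. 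Once this is established the horn maps indeed factor through the current subcomplex, the successive pushouts exhaust $\Simp{n}$, and the claim follows. An alternative, slightly longer route reduces first to the facet case $\Simp{n-1}\hookrightarrow\Simp{n}$ by adjoining one vertex at a time and composing collapsible extensions, with the same pairing argument applied to the single added vertex.
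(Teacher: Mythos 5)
Your proof is correct, but it takes a genuinely different route from the paper. You build the filtration directly by a discrete-Morse-style matching: pair each simplex \(\sigma\not\subseteq A\) with \(\sigma\mathbin{\triangle}\{w\}\) for a fixed vertex \(w\in A\), order pairs by the dimension of the top cell, and fill, for each pair \((\rho,\rho\cup\{w\})\), the horn of \(\rho\cup\{w\}\) opposite the vertex \(w\). The verification you flag as the main point is indeed the crux and your case analysis settles it: any proper face \(\tau\neq\rho\) of \(\rho\cup\{w\}\) with \(\tau\not\subseteq A\) belongs to a pair whose top cell has dimension strictly less than \(\dim(\rho\cup\{w\})\) (using \(\tau\subsetneq\rho\) when \(w\notin\tau\)), so it was adjoined earlier and the horn map lands in the current subcomplex; since the pairs partition the simplices to be adjoined, each pushout adds exactly one pair and the filtration exhausts \(\Simp{n}\). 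The paper argues quite differently: it inducts on \(n\), reduces to the top facet inclusion \(\face^n\colon\Simp{n-1}\to\Simp{n}\), and invokes the join machinery, namely Proposition~\ref{chap2:prop:join-pushout-collapsible} (itself resting on the identities~\eqref{chap2:eq:join}) applied to the collapsible extension \(\Simp{0}\to\Simp{n-1}\) and the boundary inclusion \(\emptyset\to\Simp{0}\), composed with one horn filling \(\Simp{n-1}\to\Simp{n-1}\cup_{\Simp{0}\{0\}}\Simp{1}\{0,n\}\). The paper's route is shorter given that the join proposition is already developed and reused elsewhere in the text; your route is elementary and self-contained (no joins, no induction on \(n\)), handles arbitrary codimension \(k\) in a single pass, and produces a completely explicit filtration — at the cost of the matching bookkeeping, and with the mild caveat (which you correctly note is permitted by Definition~\ref{chap2:def:collapsible}) that some of the horns filled are outer rather than inner.
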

\begin{proof}
  Suppose that the statement holds for \(1,\dots, n-1\), and let us consider \(n\). It suffices to show that the \(n\)-th face inclusion \(\face^n\colon \Simp{n-1}\to \Simp{n}\) is a collapsible extension. By assumption \(\Simp{0}\to \Simp{n-1}\) is collapsible. Proposition~\ref{chap2:prop:join-pushout-collapsible} shows that the morphism
  \[
  \Simp{n-1}\cup_{\Simp{0}\{0\}} (\Simp{0}\{0\}\star \Simp{0}\{n\}) \to \Simp{n-1}\star \Simp{0}\{n\}=\Simp{n}
  \]
  is a collapsible extension. Composing with the collapsible extension
  \[
  \Simp{n-1}\to \Simp{n-1}\cup_{\Simp{0}\{0\}} (\Simp{0}\{0\}\star \Simp{0}\{n\}),
  \]
  we deduce that the composite \(\face^n\colon \Simp{n-1}\to \Simp{n}\) is a collapsible extension.
\end{proof}

\begin{definition}\label{chap2:def:spine}
The \emph{spine} \(\Sp(n)\) of the standard simplicial set \(\Simp{n}\) is the union of all edges \(\Simp{1}\{i, i+1\}\) for \(0\le i\le n-1\).
\end{definition}

\begin{lemma}\label{chap2:lem:spine-horn-inner}
 Let \(\Horn{n}{k}\) be an inner horn. Then the inclusion \(\Sp(n)\to \Horn{n}{k}\) is an inner collapsible extension \textup(compare with \cite[Proposition \textup{2.13}]{Joyal2008}\textup).
\end{lemma}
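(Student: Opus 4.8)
The plan is to argue by induction on $n$, carrying along the auxiliary statement that the spine inclusion $\Sp(n)\to\Simp{n}$ is an inner collapsible extension. The two statements are equivalent up to a single inner horn: composing $\Sp(n)\to\Horn{n}{k}$ with the inner-horn fill $\horn{n}{k}\colon\Horn{n}{k}\to\Simp{n}$ recovers the spine-to-simplex map, and this last fill adds exactly the two simplices of $\Simp{n}$ that are absent from $\Horn{n}{k}$, namely the top $n$-simplex and the face $\Simp{n-1}\{0,\dots,\widehat{k},\dots,n\}$ opposite the vertex $k$. For the base case $n=2$ the spine $\Sp(2)=\Simp{1}\{0,1\}\cup\Simp{1}\{1,2\}$ is literally the inner horn $\Horn{2}{1}$, so the inclusion is the identity and there is nothing to fill.

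For the inductive step I would fix an inner horn $\Horn{n}{k}$ with $0<k<n$, $n\ge 3$, and use a cone decomposition extracted from the join identities \eqref{chap2:eq:join}. Coning from the vertex $n$ (valid when $k\le n-2$, so that $\Horn{n-1}{k}$ is again inner) gives
\[
\Horn{n}{k}=\Simp{n-1}\{0,\dots,n-1\}\ \cup_{\Horn{n-1}{k}}\ \bigl(\Horn{n-1}{k}\{0,\dots,n-1\}\star\Simp{0}\{n\}\bigr),
\]
and coning from the vertex $0$ (valid when $k\ge 2$) gives the symmetric decomposition with lower horn $\Horn{n-1}{k-1}$ and apex $0$. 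Since every inner $k$ satisfies $k\le n-2$ or $k\ge 2$, one of the two decompositions is always available; this asymmetry is exactly what the outer-horn factors in \eqref{chap2:eq:join} force.

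The filtration then splits into two blocks. First, the sub-spine on $\{0,\dots,n-1\}$ (respectively $\{1,\dots,n\}$) is contained in $\Sp(n)$, and by the inductive spine-to-simplex statement in dimension $n-1<n$ it fills, through inner horns, to the full face; every simplex created lies in $\Horn{n}{k}$ because this face is opposite the vertex $n$ (resp. $0$), and $n\ne k$ (resp. $0\ne k$). Second, with the edge $\Simp{1}\{n-1,n\}$ already present from the spine, I would attach the cone $\Horn{n-1}{k}\star\Simp{0}\{n\}$ by filling inner horns that produce the cone simplices $\{n\}\cup\tau$ for $\tau$ ranging over $\Horn{n-1}{k}$; one fills $\{n\}\cup\tau$ using the already-present face $\tau$ together with a previously created cone facet, introducing the new cone edges $\Simp{1}\{a,n\}$ in decreasing order of $a$ (each appearing as the free face of a triangle $\Simp{2}\{a,a+1,n\}$ filled from $\Simp{1}\{a,a+1\}\cup\Simp{1}\{a+1,n\}$) and then the higher cone simplices in increasing dimension.

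The main obstacle is precisely the bookkeeping of this second block: I must produce an ordering of the inner-horn fills such that (i) every horn filled is inner — in particular no attempt is made to fill in dimension $1$, the new edges arising only as byproducts of triangle fills; (ii) at each stage the non-filled faces of the attached simplex are already present; and (iii) no fill ever creates the excluded face $\Simp{n-1}\{0,\dots,\widehat{k},\dots,n\}$ or the top $n$-simplex. Point (iii) is what confines the whole construction to $\Horn{n}{k}$, and it is guaranteed by coning over the \emph{lower horn} $\Horn{n-1}{k}$ rather than over the full face $\Simp{n-1}\{0,\dots,n-1\}$. This ordering is the same combinatorial phenomenon treated for the spine-to-simplex inclusion in \cite[Proposition~2.13]{Joyal2008}, which I would adapt, with the join identities \eqref{chap2:eq:join} and Proposition~\ref{chap2:prop:join-pushout-collapsible} packaging the join-theoretic reductions.
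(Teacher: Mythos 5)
Your skeleton is sound---the induction with the spine-to-simplex auxiliary statement, the base case, the two cone decompositions of \(\Horn{n}{k}\) extracted from~\eqref{chap2:eq:join}, and the first block are all correct---but the second block is a genuine gap, and it is the entire content of the lemma. In a collapsible filtration each step \(S_{i-1}\cup_{\Horn{m}{j}}\Simp{m}\) creates \emph{exactly two} new nondegenerate simplices: the top \(m\)-cell and its \(j\)-th facet. If that facet were already present, the pushout would create a second copy of it, so the filtration would no longer build a subcomplex of \(\Horn{n}{k}\); hence the simplices to be added are forced into a perfect matching, each one created either as a top cell with exactly one absent face or as the free face of a fill one dimension higher. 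Your edge layer respects this, but ``then the higher cone simplices in increasing dimension'' cannot be carried out: already for \(n=4\), once all edges \(\Simp{1}\{a,n\}\) exist, every remaining cone triangle \(\Simp{2}\{a,b,n\}\) with \(b>a+1\) has all three of its faces present and so can never be the top of a legitimate fill---it must instead arise as the free face of a \emph{three}-dimensional fill, and similarly in higher degrees, so the filtration necessarily interleaves dimensions. Producing such a matching and ordering, confined to the cone and never producing the forbidden facet opposite \(k\) (note that the natural pairing \(\tau\leftrightarrow\tau\cup\{n-1\}\) fails exactly at \(\tau=\{0,\dots,n-2\}\setminus\{k\}\), whose partner is the forbidden face), is the real work, and it is what you have deferred. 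Nor is the deferral to \cite[Proposition 2.13]{Joyal2008} routine: that result concerns \(\Sp(n)\to\Simp{n}\), your ``equivalence up to a single inner horn'' only goes in the direction horn\(\Rightarrow\)simplex, and inner collapsible (or inner anodyne) extensions are not left-cancellable, so the simplex version cannot be formally transported to the horn version.

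For comparison, the paper's proof is arranged precisely so that this matching problem never arises, via a different decomposition. Starting from \(\Simp{n-1}\{0,\dots,n-1\}\cup\Simp{1}\{n-1,n\}\), the induction hypothesis fills the \emph{second facet}, giving \(S=\Simp{n-1}\{0,\dots,n-1\}\cup_{\Simp{n-2}\{1,\dots,n-1\}}\Simp{n-1}\{1,\dots,n\}\); every simplex of \(\Horn{n}{k}\) missing from \(S\) contains the edge \(\Simp{1}\{0,n\}\), and writing it as \(\{0\}\cup\sigma\cup\{n\}\) identifies the missing simplices with the simplices \(\sigma\) of the horn of \(\Simp{n-2}\{1,\dots,n-1\}\) at the vertex \(k\). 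Under this correspondence \emph{any} horn fill on the \(\sigma\)-side---outer ones included---becomes an inner fill on the \(\Horn{n}{k}\)-side, because the omitted vertex lies strictly between \(0\) and \(n\); hence the crude fact that \(\Horn{n-2}{k-1}\) is collapsible suffices, and no delicate ordering is needed. If you want to keep your cone decomposition, the cleanest repair is Proposition~\ref{chap2:prop:join-pushout-collapsible} applied to \(f\colon\Simp{0}\{n-1\}\to\Horn{n-1}{k}\) and \(g\colon\emptyset\to\Simp{0}\{n\}\): after pushing out along the inclusion of \(\Horn{n-1}{k}\cup\Simp{1}\{n-1,n\}\) into \(\Simp{n-1}\cup\Simp{1}\{n-1,n\}\), this reduces your second block to the claim that \(\Simp{0}\{n-1\}\to\Horn{n-1}{k}\) is a \emph{right} collapsible extension (symmetrically, a left collapsible one from the initial vertex for the other coning)---but that claim needs its own induction, so it must be proved rather than assumed.
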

\begin{proof}
The statement is clear for \(n=2\). When \(n \ge 3\), we prove the stronger statement that the following inclusion is an inner collapsible extension:
\[
\Simp{n-1}\cup_{\Simp{0}\{n-1\}} \Simp{1}\{n-1,n\}\to \Horn{n}{k}.
\]
Then the lemma follows by applying the result successively.

To show the stronger statement, we show that
\[
f\colon S\coloneqq\Simp{n-1} \cup_{ \Simp{n-2}\{1,\cdots,n-1\} } \Simp{n-1}\{1,\cdots, n \} \to \Horn{n}{k}
\]
is an inner collapsible extension, then the stronger statement follows by induction. All the missing simplices in \(S\) are those containing \(\Simp{1}\{0, n\}\), thus filling a horn of \(S\) corresponds to filling a horn in \(\Simp{n-2}\{1,\dots, n-1\}\). Since \(\Horn{n-2}{k-1}\) is collapsible, \(\Horn{n}{k}\) can be obtained from~\(S\) by filling horns. Since every horn filled contains \(\Simp{1}\{0, n\}\), it is an inner horn. This shows that \(f\) is an inner collapsible extension.
\end{proof}

\chapter{Higher Groupoid Actions}
\thispagestyle{empty}

We have seen that a groupoid can be equivalently given by its nerve, which is a Kan complex satisfying appropriate unique Kan conditions. In this chapter, we establish a similar result for Lie groupoids, which leads to a notion of higher groupoids in a category with a singleton Grothendieck pretopology. We then study higher groupoid Kan fibrations, which can be viewed as actions of higher groupoids. Next, we define principal bundles of higher groupoids by pairs of a Kan fibration and an acyclic fibration.

\section{Singleton Grothendieck pretopology}

In Definition~\ref{chap1:def:lie-groupoid}, we require the source and target maps to be surjective submersions. This ensures that the space of composable arrows is a manifold. The collection of surjective submersions in \(\Mfd\) is a singleton Grothendieck pretopology:

\begin{definition}
  Let \(\Cat\) be a locally small category. A \emph{singleton Grothendieck pretopology} (henceforth pretopology) on \(\Cat\) is a collection of morphisms \(\covers\), called \emph{covers}, such that
  \begin{enumerate}
    \item isomorphisms are covers;
    \item composites of covers are covers;
    \item if \( Y\to X\) is a cover, then, for any morphism \(Z\to X\), the pullback \(Y\times_X Z\) is representable and \(Y\times_X Z\to Z\) is a cover.
  \end{enumerate}
\end{definition}

In the sequel, we assume further that the category \(\Cat\) has a terminal object \(\terminal\) and that, for every object \(X \in \Cat\), the map \(X\to \terminal\) is a cover.

\begin{example}\label{chap3:exa:singleton-pretopoloty}
The notion of pretopology is a natural generalisation of the usual notion of covering, as shown by the following examples:
\begin{enumerate}
  \item surjections in \(\Sets\), denoted by \((\Sets, \covers_\surj)\);
  \item open surjections in \(\Topo\), denoted by \((\Topo, \covers_\open)\);
  \item surjective local homeomorphisms in \(\Topo\), denoted by \((\Topo,\covers_\etale)\);
  \item surjective submersions in \(\Mfd\), denoted by \((\Mfd, \covers_\subm)\);
  \item surjective local diffeomorphism in \(\Mfd\), denoted by \((\Mfd, \covers_\etale)\).
\end{enumerate}
In particular, given an open cover \(\{Y_i\}_{i\in I}\) of \(X\) in \(\Topo\), let \(Y=\bigsqcup_{i\in I} Y_i\). Then the natural map \(Y\to X\) is a cover in \((\Topo, \covers_\open)\) and \((\Topo, \covers_\etale)\).
\end{example}

Let \((\Cat, \covers)\) be a category with a pretopology throughout this chapter.

\begin{definition}\label{chap3:def:sheaf}
  A presheaf \( f\colon \Cat^{\op} \to \Sets\) is a \emph{sheaf} with respect to \(\covers\) if
  \[
  \xymatrix@1{
  f(X)\ar[r] & f(Y)\ar@<-0.5ex>[r]\ar@<0.5ex>[r] & f(Y\times_X Y)
  }
  \]
  is an equaliser diagram for every cover \(Y\to X\). Morphisms of sheaves are morphisms of the underlying presheaves.
\end{definition}

The sheaf condition generalises the fact that, if \(\{Y_i\}\) is an open cover of a topological space \(X\), then a function on \(X\) is the same as a matching family of functions on \(\{Y_i\}\).

\begin{remark}\label{chap3:rem:sheaf}
The category of sheaves on \(\Cat\) with respect to \(\covers\) is a full subcategory of the category of presheaves on \(\Cat\). A presheaf limit of sheaves is a sheaf, and a finite presheaf colimit of sheaves is a sheaf.
\end{remark}

\begin{definition}
  A pretopology \(\covers\) on \(\Cat\) is \emph{subcanonical} if every representable presheaf is a sheaf with respect to \(\covers\).
\end{definition}

\begin{lemma}\label{chap3:lem:subcanonical-effective-epi}
  A pretopology \(\covers\) on \(\Cat\) is subcanonical if and only if every cover is an effective epimorphism.\footnote{A morphism \(f\colon Y\to X\) in \(\Cat\) is an effective epimorphism if~\eqref{chap3:eq:coequaliser-diagram} is a coequaliser diagram. An effective epimorphism is an epimorphism~\cite{Borceux}.}
\end{lemma}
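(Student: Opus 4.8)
The plan is to unwind both sides of the equivalence into statements about hom-sets and to observe that they coincide up to the order of two universal quantifiers; the only non-formal input is the Yoneda characterisation of coequalisers from Section~\ref{chap2:sec:presheaf}.

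First I would fix a cover $c\colon Y\to X$. By axiom (iii) of a pretopology the pullback $Y\times_X Y$ is representable, so the fork
\[
  Y\times_X Y \rightrightarrows Y \xrightarrow{\,c\,} X,
\]
with parallel arrows the two projections $\pr_1,\pr_2$, is a genuine diagram in $\Cat$. I would then invoke the standard consequence of the Yoneda Lemma (Lemma~\ref{chap2:lem:Yoneda}): a fork $A\rightrightarrows B\to C$ in $\Cat$ is a coequaliser if and only if, for every object $Z\in\Cat$, the contravariant functor $\hom(\blank,Z)$ carries it to an equaliser of sets
\[
  \hom(C,Z)\to \hom(B,Z)\rightrightarrows \hom(A,Z).
\]

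Next I would read off the two conditions. By the definition of effective epimorphism, the cover $c$ is an effective epimorphism exactly when the fork above is a coequaliser, i.e.\ by the Yoneda characterisation exactly when
\[
  \hom(X,Z)\to \hom(Y,Z)\rightrightarrows \hom(Y\times_X Y,Z)
\]
is an equaliser for \emph{every} object $Z$, the two maps being $\pr_1^{*}$ and $\pr_2^{*}$. On the other hand, by Definition~\ref{chap3:def:sheaf} the representable presheaf $h_Z=\hom(\blank,Z)$ is a sheaf precisely when this same diagram is an equaliser for \emph{every} cover $c$. Therefore ``every cover is an effective epimorphism'' asserts that the equaliser property holds for all $c$ and all $Z$, while ``$\covers$ is subcanonical'' asserts that it holds for all $Z$ and all $c$; these statements differ only by interchanging the two universal quantifiers, so each implies the other.

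I expect no genuine obstacle: the proof is a definition chase. The single point to state with care is the biconditional form of the Yoneda characterisation of coequalisers (a cocone is colimiting if and only if every representable functor sends it to a limit), together with the trivial remark that the two parallel maps $\hom(Y,Z)\rightrightarrows\hom(Y\times_X Y,Z)$ occurring on the two sides are literally the same pair $\pr_1^{*},\pr_2^{*}$. Both facts follow directly from the presheaf material of Section~\ref{chap2:sec:presheaf}.
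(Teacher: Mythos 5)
Your proposal is correct and follows essentially the same route as the paper: both arguments unwind the sheaf condition for representables and the coequaliser condition for the fork \(Y\times_X Y \rightrightarrows Y \to X\) into the same family of equaliser diagrams of hom-sets, using the Yoneda-style fact that a cocone is colimiting iff every \(\hom(\blank,Z)\) sends it to a limit. The only cosmetic difference is that you make the quantifier interchange explicit, whereas the paper proves one direction and notes the converse is checked the same way.
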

\begin{proof}
If \(\covers\) is subcanonical, then for every object \(Z\) in \(\Cat \), the diagram
  \[
  \xymatrix@1{
  \hom(X, Z) \ar[r] & \hom(Y, Z)\ar@<-0.5ex>[r]\ar@<0.5ex>[r] & \hom(Y\times_X Y, Z)
  }
  \]
is an equaliser diagram for every cover \(Y\to X\). It follows from the definition of limits that
\begin{equation}\label{chap3:eq:coequaliser-diagram}
\xymatrix@1{X & Y \ar[l] & Y\times_X Y \ar@<-0.5ex>[l]\ar@<0.5ex>[l] }
\end{equation}
is a coequaliser diagram for every cover \(Y\to X\). Thus \(Y\to X\) is an effective epimorphism. It is easy to check the converse, and we are done.
\end{proof}

\begin{example}
  All five examples in Example~\ref{chap3:exa:singleton-pretopoloty} are subcanonical. An example of a non-subcanonical pretopology is the pretopology on \(\Sets\) whose covers are given by all morphisms; notice that not every cover is an effective epimorphism.
\end{example}

\subsection{Internal groupoids}\label{chap3:subs:internal-groupoids}
A category \(\Cat\) with a pretopology has enough pullbacks to define groupoids in it.

\begin{definition}\label{chap3:def:groupoid-pretopology}
 A \emph{groupoid object} \(G\) in \((\Cat, \covers)\) consists of two objects \(G_0\) and \(G_1\) with five structure maps: source and target \(\source, \target\colon G_0\to G_1\) in \(\covers\), unit \(\unit\), inverse \(\inv\), and composition \(\circ\colon G_1\times_{\source, G_0, \target} G_1 \to G_1\) satisfying identities as in Definition~\ref{chap1:def:groupoid}.
\end{definition}

\begin{remark}
Lie groupoids are groupoids in \((\Mfd,\covers_\subm)\). Almost all results in Chapter~\ref{chap1} can be translated to groupoids in \((\Cat,\covers)\) if \((\Cat, \covers)\) satisfies some additional axioms; this is discussed thoroughly by Meyer and Zhu~\cite{Meyer-Zhu}. These axioms are fulfilled by each case in Example~\ref{chap3:exa:singleton-pretopoloty}. In particular, there is a 2\nbdash{}category \(\BUN\) given by HS bundles, and Theorem~\ref{chap1:thm:GPD=BUN} is also valid; see also~\cite{Carchedi} for some special cases.
\end{remark}

Since the source and target maps are covers, the space of composable pairs of arrows \(G_1\times_{\source, G_0, \target} G_1\) is representable. Likewise, the space of composable chains of arrows of length~\(n\),
\[
G_1\times_{\source, G_0, \target} G_1\times_{\source, G_0, \target} \dots \times_{\source, G_0, \target} G_1,
\]
is representable. Thus the nerve of a groupoid in \((\Cat, \covers)\) is a simplicial object in \(\Cat\).

\section{The Hom functor}\label{chap3:sec:Hom}

Let \(S\) be a finite simplicial set and \(X\) a simplicial object in a locally small category~\(\Cat\). We will describe a presheaf on \(\Cat\) as ``the space of diagrams of shape \(S\) in \(X\)'', denoted by \(\Hom(S, X)\). This is not the naive hom; when \(\Cat\) is \(\Sets\), the two notions coincide.

First, take \(S\) a finite set and \(X\) an object in \(\Cat\). Since \(S\) and \(X\) are objects in two different categories, \(\hom(S, X)\) does not make sense. We define \(\Hom(S, X)\) to be the presheaf
\[
Z\mapsto \hom(Z, X)^S, \quad Z\in \Cat
\]
where \(\hom(Z, X)^S\) is the \(S\)-power\footnote{Let \(\Cat\) be a category, and \(S\) be a finite set. Recall from~\cite[Chapter III]{MacLane} that the \(S\)-power of an element \(Z\in \Cat\) is the product of \(S\) copies of \(Z\), denoted by \(Z^S\), and that the \(S\)-copower of \(Z\) is the coproduct of \(S\) copies of \(Z\), denoted by \(S\otimes Z\).} of the set \(\hom(Z, X)\). If \(\Cat\) has \(S\)-copowers \(\otimes\), then we may write \(\Hom(S, X)=\hom( S\otimes \blank , X)\). If \(\Cat\) is \(\Sets\), then \(\Hom(S, X)\) is the presheaf represented by \(\hom(S, X)\).

Given two simplicial sets \(R\) and \(S\), we observe that the set of simplicial morphisms \(\hom(R, S)\) fits into an equaliser diagram (more precisely, an end~\cite[Section IX.5]{MacLane}),
\[
\xymatrix@1{
\hom(R, S)\ar[r] &\prod_{j}\hom(R_j, S_j)\ar@<-0.5ex>[r]\ar@<0.5ex>[r] &\prod_{[j]\to [i]} \hom(R_i, S_j)
},
\]
where \([j]\to [i]\) runs over all morphisms in \(\bD\). This allows us to make the following definition.

\begin{definition}
  Let \(S\) be a finite simplicial set, \(X\) a simplicial object in \(\Cat\). We define \(\Hom(S, X)\) to be the presheaf on \(\Cat\) given by the equaliser diagram
  \[
  \xymatrix@1{
  \Hom(S, X)\ar[r] &\prod_{j}\Hom(S_j, X_j)\ar@<-0.5ex>[r]\ar@<0.5ex>[r] &\prod_{[j]\to [i]} \Hom(S_i, X_j)
  }.
  \]
Let \(R\to S\) be a morphism of finite simplicial sets, and let \(X\to Y\) be a morphism of simplicial objects in \(\Cat\). The presheaf \(\Hom(R\to S, X\to Y)\) is the pullback
\[
\Hom(R\to S, X\to Y)\coloneqq \Hom(R, X)\times_{\Hom(R, Y)} \Hom(S, Y).
\]
\end{definition}

\begin{remark}
As before, if \(\Cat\) has copowers, then \(\Hom(S, X)\) has a simpler form
\[
  Z\mapsto \hom(S\otimes Z, X), \quad Z\in \Cat,
\]
where \(S\otimes Z\) is the simplicial object induced by levelwise copower. Similarly, \(\Hom(R\to S, X\to Y)\) is given by
\[
  Z\mapsto \hom( R\otimes Z\to S\otimes Z, X\to Y), \quad Z\in \Cat.
\]
\end{remark}

\begin{example}
If \(S=\Simp{n}\), then \(\Hom(S, X)=\hom(-, X_n)\), the presheaf represented by \(X_n\).
\end{example}

Recall that the \(\hom\) functor turns colimits in the first variable into limits and preserves limits in the second variable. Since limits commute with equalisers, the \(\Hom\) functor turns colimits in the first variable into limits and preserves limits in the second variable. It follows that all lemmas we established in Section~\ref{chap2:ssec:technical-lemmas} remain valid for \(\Hom\).

Recall that a simplicial set \(S\) is the colimit of standard simplices over the category of simplices of \(S\). Thus we can compute \(\Hom(S, X)\) as a limit of presheaves \(\hom(-, X_n)\) over the same category. Remark~\ref{chap2:rem:smaller-category-of-simplicies} indicates that the limit defining \(\hom(S,X)\) often simplifies considerably by choosing a smaller subcategory.

\section{Higher groupoids in a category with pretopology}

We are now ready to give an alternative definition of Lie groupoids in terms of simplicial manifolds. Indeed, we will define higher groupoids in a category with a pretopology.

Let \(G\) be a groupoid in \((\Cat,\covers)\) as in Definition~\ref{chap3:def:groupoid-pretopology}. Its nerve \(NG\) is a simplicial objects in \(\Cat\). Let us consider Kan conditions for \(NG\). First, the map
\[
   G_1=\Hom(\Simp{1}, G)\to \Hom(\Horn{1}{0}, G)=G_0
\]
is \(\target\colon G_1\to G_0\), so it is a cover. Similarly, \(\Hom(\Simp{1}, G)\to \Hom(\Horn{1}{1}, G)\) is also a cover. Second, the map
\[
  \Hom(\Simp{2}, G)\to \Hom(\Horn{2}{1}, G)
\]
is an isomorphism in \(\Cat\). Similar to the case of \(\Sets\), we can show that
\[
\Hom(\Simp{m}, G)\to \Hom(\Horn{m}{k}, G)
\]
is an isomorphism for any \(m\ge 2\) and \(0\le k\le m\). This suggests that we shall replace surjections in Kan conditions by covers in \(\covers\).

\begin{definition}
  A simplicial object \(X\) in \((\Cat, \covers)\) satisfies the \emph{Kan condition} \(\Kan(m, k)\) if the natural morphism
  \[
 \Hom(\Simp{m}, X)\to \Hom(\Horn{m}{k}, X)
  \]
  is a cover. It satisfies the \emph{unique Kan condition} \(\Kan!(m,k)\) if this morphism is an isomorphism in~\(\Cat\). In both cases, the representability of the presheaf \(\Hom(\Horn{m}{k},X)\) is assumed.
\end{definition}

\begin{definition}\label{chap3:def:n-groupoid-pretopology}
  An \(n\)\nbdash{}\emph{groupoid object} (for \(0\le n \le \infty\)) in \((\Cat, \covers)\) is a simplicial object \(X\) in \(\Cat\) that satisfies \(\Kan(m,j)\) for all \(m\ge1\)  and \(0\le k\le m\) and \(\Kan!(m,k)\) for all \(m>n\) and \(0\le k\le m\).
\end{definition}

\begin{remark}\label{chap3:rem:duskin-hypergroupoid}
This definition is due to Henriques~\cite{Henriques}. Duskin~\cite{Duskin,Glenn:Realization} defined a notion of \(n\)-hypergroupoid, which is a simplicial set \(X\) satisfying only the conditions \(\Kan!(m, k)\) for \(m>n\) and \(0\le k\le m\).
\end{remark}

\begin{remark}\label{chap3:rem:Kan_one}
  The conditions \(\Kan(1,j)\) are automatic for simplicial \emph{sets} because the degeneracy map provides a section,  but they are not automatic in general. These conditions tell us that face maps from \(X_1\) to \(X_0\) are covers.
\end{remark}

\begin{remark}\label{chap3:rem:rep}
  The conditions \(\Kan(l,k)\) for \(1\le l<m\) imply that \(\Hom(\Horn{m}{j},X)\) is representable.  This is a special case of the proof of Lemma~\ref{chap3:lem:covers_in_groupoid} below.
\end{remark}

\begin{example}
  \label{chap3:exa:zero-groupoid}
  A \(0\)\nbdash{}groupoid is a constant simplicial object \(\sk_0 X_0\) given by an object~\(X_0\) of~\(\Cat\). Simplicial sets of this type are usually called discrete; this would, however, cause confusion for simplicial topological spaces. To avoid this, we call them 0-groupoids or trivial groupoids.
\end{example}

\begin{example} \label{chap3:exa:one-groupoid}
  A \(1\)\nbdash{}groupoid is the nerve of a groupoid object in~\((\Cat,\covers)\) in the sense of Definition~\ref{chap3:def:groupoid-pretopology}; the object space is~\(X_0\), the arrow space is~\(X_1\), the target and source maps are \(\face_0, \face_1\colon X_1\to X_0\), respectively, and the unit is \(\de_0\colon X_0\to X_1\).  The composition is the composite map
  \[
  \Hom(\Horn{2}{1},X) \cong  \Hom(\Simp{2},X) = X_2 \xrightarrow{\face_1} X_1,
  \]
  where the condition \(\Kan!(2,1)\) is used. The condition \(\Kan!(3,1)\) or \(\Kan!(3,2)\) implies that the composition is associative. The conditions \(\Kan!(2,0)\) and \(\Kan!(2,2)\) give left and right inverses, respectively.
\end{example}

The Kan conditions imply that various maps of the form \(\Hom(S,X) \to \Hom(T,X)\) for simplicial sets \(T\subseteq S\) are covers. The lemma below and Lemma \ref{chap3:lem:representable} later will be used to solve most cases of representability problems. They are similar to~\cite[Lemma 2.4]{Henriques}.

\begin{lemma}\label{chap3:lem:covers_in_groupoid}
  Let~\(T\) be a collapsible extension of a simplicial set~\(S\) and let~\(X\) be a higher groupoid in \((\Cat,\covers)\). Assume that \(\Hom(S,X)\) is representable. Then \(\Hom(T,X)\) is representable and the map \(\Hom(T,X) \to \Hom(S,X)\) is a cover.
\end{lemma}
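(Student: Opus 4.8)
The plan is to induct on the length $l$ of the collapsible filtration
\[
S = S_0 \subset S_1 \subset \dots \subset S_l = T,
\]
reducing the assertion to a single horn-filling step and then composing. The base case $l=0$ is trivial, since then $T=S$ and the identity is a cover. For the inductive step it suffices to prove the following: if $\Hom(S_{i-1},X)$ is representable and $S_i = S_{i-1}\cup_{\Horn{n}{k}}\Simp{n}$ is obtained by filling a horn along some morphism $\Horn{n}{k}\to S_{i-1}$ with $n\ge 1$ and $0\le k\le n$, then $\Hom(S_i,X)$ is representable and $\Hom(S_i,X)\to\Hom(S_{i-1},X)$ is a cover. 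Granting this, composing the covers produced at each step and using that covers are closed under composition (pretopology axiom (ii)) yields the representability of $\Hom(T,X)$ together with the cover $\Hom(T,X)\to\Hom(S,X)$.

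For the single step I would apply $\Hom(\blank,X)$ to the defining pushout square of simplicial sets
\[
\xymatrix{
\Horn{n}{k}\ar[r]\ar[d] & \Simp{n}\ar[d]\\
S_{i-1}\ar[r] & S_i\rlap{\ .}
}
\]
Since $\Hom(\blank,X)$ turns colimits in the first variable into limits, as recalled in Section~\ref{chap3:sec:Hom}, this pushout becomes a pullback square of presheaves on $\Cat$,
\[
\xymatrix{
\Hom(S_i,X)\ar[r]\ar[d] & X_n\ar[d]\\
\Hom(S_{i-1},X)\ar[r] & \Hom(\Horn{n}{k},X)\rlap{\ ,}
}
\]
where I use $\Hom(\Simp{n},X)=X_n$. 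The crucial input is that $X$, being a higher groupoid, satisfies $\Kan(n,k)$ for every such $(n,k)$ by Definition~\ref{chap3:def:n-groupoid-pretopology}: this guarantees both that $\Hom(\Horn{n}{k},X)$ is representable and that the right-hand vertical map $X_n\to\Hom(\Horn{n}{k},X)$ is a cover. As $X_n$ is representable and $\Hom(S_{i-1},X)$ is representable by the induction hypothesis, all three outer corners are objects of $\Cat$, so the pretopology's stability of covers under pullback (axiom (iii)) applies directly: pulling the cover $X_n\to\Hom(\Horn{n}{k},X)$ back along $\Hom(S_{i-1},X)\to\Hom(\Horn{n}{k},X)$ exhibits $\Hom(S_i,X)$ as representable and the projection $\Hom(S_i,X)\to\Hom(S_{i-1},X)$ as a cover.

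The only delicate point — and precisely what the Kan condition is there to supply — is the representability bookkeeping: axiom (iii) can be invoked only for morphisms between objects of $\Cat$, so it is essential that $\Hom(\Horn{n}{k},X)$ be representable \emph{before} the pullback is formed inside $\Cat$ rather than merely in presheaves. The condition $\Kan(n,k)$ furnishes exactly this, so no separate representability argument is needed. I expect the fact that the attaching map $\Horn{n}{k}\to S_{i-1}$ is an arbitrary morphism, rather than the standard inclusion into $\Simp{n}$, to cause no difficulty, since converting the pushout into the pullback above is a purely formal consequence of $\Hom(\blank,X)$ preserving limits and is independent of which maps occur.
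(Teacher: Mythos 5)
Your proof is correct and follows essentially the same route as the paper's: reduce to a single horn-filling step (the paper does this by noting composites of covers are covers, you do it by induction, which is the same reduction), turn the pushout into a pullback of presheaves, and use the Kan condition for \(X\) to get representability of \(\Hom(\Horn{n}{k},X)\) and the cover \(X_n\to\Hom(\Horn{n}{k},X)\), so that the pretopology's pullback axiom finishes the argument. Your closing remark about the representability bookkeeping matches exactly the role the paper assigns to the Kan conditions, so nothing is missing.
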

\begin{proof}
  Let \(S=S_0\subset S_1\subset \dotsb \subset S_l=T\) be a filtration as in Definition~\ref{chap2:def:collapsible}.  Since composites of covers are covers, we may assume without loss of generality that \(l=1\), so that \(T=S\cup_{\Horn{n}{j}} \Simp{n}\) for  some \(n,j\).  Since~\(X\) is a higher groupoid, \(\Hom(\Simp{n},X) \to \Hom(\Horn{n}{j},X)\) is a cover and \(\Hom(\Horn{n}{j},X)\) is representable. Then
  \[
  \Hom(T,X) = \Hom(S,X) \times_{\Hom(\Horn{n}{j},X)} \Hom(\Simp{n},X)
  \]
  is representable, and the map \(\Hom(T,X)\to\Hom(S,X)\) is a cover because it is a pullback of a cover.
\end{proof}

For a category, both \(\Kan!(3,1)\) and \(\Kan!(3,2)\) express the associativity of the composition. As a generalisation, we have

\begin{lemma}\label{chap3:lem:kan(n+1,j)-implies-all}
 Let \(X\) be a simplicial object in \((\Cat, \covers)\) satisfying \(\Kan!(n, k)\) for \(0<k<n\). If\/ \(\Kan!(n+1, j_0)\) holds for some \(j_0\) with \(0<j_0<n+1\), then \(\Kan!(n+1, j)\) hold for all \(0<j<n+1\). The statement is still true if we replace the conditions \(0<k<n\) and \(0<j<n+1\), by \(0<k\le n\) and \(0<j\le n+1\), or by \(0\le k< n\) and \(0\le j< n+1\).
\end{lemma}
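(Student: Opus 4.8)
The plan is to reduce the statement to a manipulation of simplices performed one generalised element at a time. Since $\Hom(\Simp{n+1},X)=X_{n+1}$ is representable and, by the standing representability assumption built into the Kan conditions, so is $\Hom(\Horn{n+1}{j},X)$, the morphism
\[
\Hom(\Simp{n+1},X)\to\Hom(\Horn{n+1}{j},X)
\]
is an isomorphism of presheaves if and only if it is a bijection on $T$-elements for every $T\in\Cat$. As all the maps in play are natural in $T$, I would fix $T$ and argue exactly as for a simplicial \emph{set}, where the hypotheses read: every inner horn $\Horn{n}{k}\to X$ with $0<k<n$ has a \emph{unique} filler, and every horn $\Horn{n+1}{j_0}\to X$ has a unique filler. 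The whole argument then becomes an element chase, and the conclusion $\Kan!(n+1,j)$ amounts to unique filling of $j$-horns.

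So fix an inner $j\neq j_0$ (the case $j=j_0$ is the hypothesis) and a $j$-horn, i.e.\ a compatible family $(x_i)_{i\neq j}$ of $n$-simplices. The simplicial identities pin down the faces of the missing face $z=\face_j y$ of any would-be filler $y$: one gets $\face_i z=\face_{j-1}x_i$ for $i<j$ and $\face_i z=\face_j x_{i+1}$ for $i\ge j$, so every face of $z$ is prescribed by the data. I would then \emph{construct} $z$ by filling a single inner horn $\Horn{n}{k}\to X$ assembled from these faces, using $\Kan!(n,k)$. The crucial choice is $k=j_0$ when $j_0<j$ and $k=j_0-1$ when $j_0>j$; in both cases one verifies $0<k<n$, so the horn is inner and $\Kan!(n,k)$ applies.

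The reason for that choice is the core of the argument. Put $x'_i=x_i$ for $i\neq j,j_0$ and $x'_j=z$; I want $(x'_i)_{i\neq j_0}$ to be a genuine $j_0$-horn. Its compatibility relations $\face_a x'_b=\face_{b-1}x'_a$ ($a<b$, $a,b\neq j_0$) that do not involve the index $j$ hold because $(x_i)$ is a $j$-horn, while those involving $j$ hold by the prescribed faces of $z$. The single face $\face_k z$ left free by the inner filling is, by the choice of $k$, precisely the one whose pinning relation would involve $j_0$ and is therefore \emph{omitted} from the $j_0$-horn; hence no constraint on $\face_k z$ is needed. Filling this $j_0$-horn by $\Kan!(n+1,j_0)$ yields a candidate $y$ with $\face_i y=x_i$ for $i\neq j,j_0$.

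It remains to check $\face_{j_0}y=x_{j_0}$ and to prove uniqueness, and both reduce to one mechanism. Computing through the simplicial identities, the $n$-simplices $\face_{j_0}y$ and $x_{j_0}$ agree at every index except one, $c^\ast=j$ if $j<j_0$ and $c^\ast=j-1$ if $j>j_0$; since $c^\ast$ is an inner index ($0<c^\ast<n$), they share the inner horn $\Horn{n}{c^\ast}$ and are equal by the uniqueness in $\Kan!(n,c^\ast)$. The same comparison shows any filler $y'$ satisfies $\face_j y'=z$, so $y'$ has the same $j_0$-horn as $y$ and coincides with it by $\Kan!(n+1,j_0)$. I expect the \textbf{main obstacle} to be purely the index bookkeeping: confirming that the prescribed faces of $z$ form a valid inner horn, that every required $j_0$-horn compatibility holds, and that the exceptional index $c^\ast$ always lands strictly inside $\{0,\dots,n\}$. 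For the two variants, with ranges $0<k\le n,\ 0<j\le n+1$ and $0\le k<n,\ 0\le j<n+1$, the identical scheme applies: the extra right (resp.\ left) conditions furnish exactly the fillers needed when $j_0$ or $c^\ast$ reaches the boundary value $n$ (resp.\ $0$), the available outer horn there replacing the inner one used above.
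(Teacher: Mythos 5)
Your proof is correct: the reduction to \(T\)-elements is legitimate (for each \(T\) the sets \(\Hom(\Simp{m},X)(T)\) form a simplicial set inheriting the unique horn-filling hypotheses, and a levelwise bijection of presheaves gives back an isomorphism in \(\Cat\) by Yoneda), and the element chase—the choice of \(k\), the exceptional index \(c^*\), and the index ranges in the two boundary variants—checks out. But your route is organized quite differently from the paper's, which never chases elements at all. For \(i<j\) the paper takes the subcomplex \(S\subset\Simp{n+1}\) generated by all \(n\)-faces containing both vertices \(i\) and \(j\), and observes that each horn is obtained from \(S\) by attaching a single \(n\)-simplex along an \(n\)-horn: \(\Horn{n+1}{i}\) by attaching the face opposite \(j\) along \(\Horn{n}{i}\), and \(\Horn{n+1}{j}\) by attaching the face opposite \(i\) along \(\Horn{n}{j-1}\). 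Applying \(\Hom(\blank,X)\) turns these pushouts into pullbacks, so \(\Kan!(n,i)\) and \(\Kan!(n,j-1)\) make both restrictions \(\Hom(\Horn{n+1}{i},X)\to\Hom(S,X)\) and \(\Hom(\Horn{n+1}{j},X)\to\Hom(S,X)\) isomorphisms; hence each of \(\Kan!(n+1,i)\) and \(\Kan!(n+1,j)\) is equivalent to invertibility of the single map \(X_{n+1}\to\Hom(S,X)\). Your argument is this mechanism unpacked pointwise: your data \((x_a)_{a\neq j,j_0}\) is the restriction to \(S\), your reconstruction of \(z\) inverts one pullback isomorphism, and your indices \(k\) and \(c^*\) are exactly the paper's two dimension-\(n\) indices. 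The cost of the pointwise formulation is the extra verification it forces—the \(c^*\)-comparison for existence and the two-step uniqueness—which the structural argument never needs, since it produces no filler whose faces must be checked; the structural argument also yields representability of \(\Hom(\Horn{n+1}{j},X)\) for free. On that last point, note that in your version this representability is not an assumption available at the outset (only \(\Kan!(n+1,j_0)\) supplies it, and only for \(j_0\)); it follows a posteriori, because your levelwise bijection exhibits \(\Hom(\Horn{n+1}{j},X)\) as isomorphic to the representable \(X_{n+1}\). What your approach buys in exchange is elementary transparency and independence from the pushout/pullback formalism.
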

\begin{proof}
   Fix \(0<i<j<n+1\). Let \(S\) be the simplicial set generated by all \(n\)-simplices of \(\Simp{n+1}\) containing \(\{i,j\}\). Attaching to \(S\) the \(n\)-simplex without \(\{j\}\) gives \(\Horn{n+1}{i}\); that is
   \[
   \Horn{n+1}{i}=S\cup_{\Horn{n}{i} \{0,\dots,\hat{j},\dots, n+1\}} \Simp{n}\{0,\dots,\hat{j},\dots, n+1\}.
   \]
   Similarly,
   \[
   \Horn{n+1}{j}=S\cup_{\Horn{n}{j-1} \{0,\dots,\hat{i},\dots, n+1\}} \Simp{n}\{0,\dots,\hat{i},\dots, n+1\}.
   \]
  Applying \(\Kan!(n, i)\) and \(\Kan!(n, j-1)\), we deduce that
  \[
  \Hom(\Horn{n+1}{i},X)\cong \Hom(S, X)\cong \Hom(\Horn{n+1}{j}, X).
  \]
  Therefore, if \(\Kan!(n+1, i)\) holds, that is,
  \[
  \Hom(\Simp{n+1}, X)\cong \Hom(\Horn{n+1}{i},X),
  \]
  then \(\Kan!(n+1, j)\) also holds. The rest follows immediately.
\end{proof}

\begin{remark}
  The notion of being coskeletal is easy to adapt to simplicial objects. An argument parallel to the proof of   Proposition~\ref{chap2:prop:coskeletal-kan} shows that an \(n\)-groupoid object \(X\) in \((\Cat,\covers)\) is \((n+1)\)-coskeletal, hence \(X\) is determined by \(X_{\le n+2}\). Equivalently, \(X\) is fully determined by \(X_{\le n}\) together with \((n+1)\)-multiplications, which come from \(\Kan!(n+1, k)\) for \(0\le k\le n+1\). Moreover, these \((n+1)\)-multiplications satisfy, among other things, some compatibility conditions that come from \(\Kan!(n+2, k)\) for \(0\le k \le n+2\). Lemma~\ref{chap3:lem:kan(n+1,j)-implies-all} implies that it suffices to verify \(\Kan!(n+2, k)\) for a single \(0\le k\le n+2\).
\end{remark}

\section{Higher groupoid Kan fibrations}

Our definition of a groupoid action is based on the notion of Kan fibrations between simplicial objects in a category with pretopology.

\begin{definition}\label{chap3:def:Kan_arrow}
  Let \(f\colon X\to Y\) be a morphism of simplicial objects in~\((\Cat,\covers)\). We say that~\(f\) satisfies the \emph{Kan condition} \(\Kan(n,j)\) if the presheaf \(\Hom(\Horn{n}{j}\to\Simp{n}, X\to Y)\) is representable and the canonical map
  \[
    X_n = \Hom(\Simp{n},X)\to \Hom(\Horn{n}{j}\to\Simp{n}, X\to Y)
  \]
  is a cover. If this map is an isomorphism, then we say that \(f\) satisfies the \emph{unique Kan condition} \(\Kan!(n, j)\).
  We call~\(f\) a \emph{Kan fibration} if it satisfies \(\Kan(m,j)\) for all \(m\ge 1\), \(0\le j\le m\).
\end{definition}

\begin{remark}
  Since \(\Cat\) has a terminal object \(\terminal\), the natural map \(X \to \terminal\) is a Kan fibration if and only if \(X\) is an \(\infty\)-groupoid in \((\Cat, \covers)\).
\end{remark}

\begin{definition}\label{chap3:def:kan-fibration-n-groupoid}
Let \(f\colon X\to Y\) be a morphism between \(n\)-groupoids in \((\Cat, \covers)\). Suppose that \(f\) is a Kan fibration. We call \(f\) an \(n\)-\emph{groupoid Kan fibration} if, in addition, the unique Kan conditions \(\Kan!(n, k)\) for \(0\le k \le n\) are satisfied.
\end{definition}

\begin{remark}
The definition of \(n\)-groupoid Kan fibration has a stronger condition than that of Definition~\ref{chap3:def:Kan_arrow}. It is easy to see that, if \(X\) and \(Y\) are \(n\)\nbdash{}groupoids, then \(\Kan!(m, k)\) for \(m>n\) hold automatically.
\end{remark}

\begin{remark}
We may allow in Definition~\ref{chap3:def:kan-fibration-n-groupoid} that \(X\) is only a simplicial object. Proposition~\ref{chap3:prop:comp} below implies that \(X\) will be an \(n\)\nbdash{}groupoid in \((\Cat, \covers)\). Composing \(f\colon X \to Y\) with \(Y\to \terminal\), we see that \(X\) satisfies the desired Kan conditions.
\end{remark}

The following lemma is a relative version of Lemma~\ref{chap3:lem:kan(n+1,j)-implies-all}.

\begin{lemma}\label{chap3:lem:morphism-kan(n+1,j)-implies-all}
  Let \(f\colon X\to Y\) be a morphism of simplicial objects in \((\Cat,\covers)\) satisfying \(\Kan!(n, k)\) for \(0< k < n\). If \(f\) satisfies \(\Kan!(n+1, j_0)\) for some \(j_0\) with \(0< j_0< n+1\), then it satisfies \(\Kan!(n+1, j)\) for all \(0< j< n+1\). The statement remains true if we replace the conditions \(0<k<n\) and \(0<j<n+1\) by \(0<k\le n\) and \(0<j\le n+1\), or by \(0\le k< n\) and \(0\le j< n+1\).
\end{lemma}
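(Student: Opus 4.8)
The plan is to lift the proof of Lemma~\ref{chap3:lem:kan(n+1,j)-implies-all} to the relative setting by systematically replacing the absolute presheaves \(\Hom(-,X)\) with the relative presheaves \(\Hom(-\to -, X\to Y)\). The combinatorial input is identical: for a fixed pair \(0<i<j<n+1\) I would let \(S\) be the simplicial subset of \(\Simp{n+1}\) generated by all \(n\)-faces containing \(\{i,j\}\), and reuse the two pushout presentations
\[
\Horn{n+1}{i}=S\cup_{\Horn{n}{i}}\Simp{n}\{0,\dots,\hat{j},\dots,n+1\},\qquad
\Horn{n+1}{j}=S\cup_{\Horn{n}{j-1}}\Simp{n}\{0,\dots,\hat{i},\dots,n+1\}
\]
exactly as in the absolute case. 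The only new idea is that, instead of applying \(\Hom(-,X)\) to these pushouts, I would feed each of them, together with its inclusion into \(\Simp{n+1}\), into Corollary~\ref{chap2:cor:lift-composition-pushout-combined}, which remains valid for the relative \(\Hom\) since every lemma of Section~\ref{chap2:ssec:technical-lemmas} does.

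Concretely, applying that corollary to the pushout \(\Horn{n+1}{i}=S\cup_{\Horn{n}{i}}\Simp{n}\) sitting over \(\Simp{n+1}\) produces a pullback square whose right-hand vertical map is
\[
\Hom(\Simp{n},X)\to \Hom(\Horn{n}{i}\to\Simp{n}, X\to Y),
\]
that is, precisely the relative \(\Kan!(n,i)\) map for \(f\), an isomorphism by hypothesis. Since a pullback of an isomorphism is an isomorphism, the opposite vertical map yields \(\Hom(\Horn{n+1}{i}\to\Simp{n+1}, X\to Y)\cong \Hom(S\to\Simp{n+1}, X\to Y)\). Running the same argument on the second pushout, now invoking relative \(\Kan!(n,j-1)\), gives \(\Hom(\Horn{n+1}{j}\to\Simp{n+1}, X\to Y)\cong \Hom(S\to\Simp{n+1}, X\to Y)\). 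Composing the two isomorphisms identifies the relative horn-restriction targets for \(i\) and for \(j\); because both are the evident restriction maps induced by \(S\hookrightarrow\Horn{n+1}{i}\) and \(S\hookrightarrow\Horn{n+1}{j}\), they are compatible with the canonical maps out of \(X_{n+1}=\Hom(\Simp{n+1},X)\). Hence \(\Kan!(n+1,i)\) for \(f\) forces \(\Kan!(n+1,j)\), and conversely; taking \(\{i,j\}=\{j_0,j\}\) for an arbitrary target index transports the hypothesis to every inner \(j\).

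I expect the only genuinely delicate point to be bookkeeping rather than mathematics: verifying that the two relative conditions I invoke, at horn indices \(i\) and \(j-1\) in dimension \(n\), always lie in the permitted range, and confirming that the chain of isomorphisms commutes with the structure maps from \(X_{n+1}\). For the stated variants one repeats the argument verbatim: with \(0<k\le n\) and \(0<j\le n+1\) the pair \(0<i<j\le n+1\) forces \(0<i\le n\) and \(0<j-1\le n\), while with \(0\le k<n\) and \(0\le j<n+1\) the pair \(0\le i<j<n+1\) forces \(0\le i<n\) and \(0\le j-1<n\); in each case the needed conditions \(\Kan!(n,i)\) and \(\Kan!(n,j-1)\) are available. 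The pushout identities themselves are the ones already in hand, so no new geometry is required.
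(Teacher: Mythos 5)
Your proposal is correct and is essentially the paper's own proof: the paper fixes \(0<i<j<n+1\), uses the same simplicial set \(S\) and the same two pushout presentations of \(\Horn{n+1}{i}\) and \(\Horn{n+1}{j}\), and applies Corollary~\ref{chap2:cor:lift-composition-pushout-combined} to obtain pullback squares whose right vertical maps are the relative \(\Kan!(n,i)\) and \(\Kan!(n,j-1)\) isomorphisms, giving \(\Hom(\Horn{n+1}{i}\to\Simp{n+1},f)\cong\Hom(S\to\Simp{n+1},f)\cong\Hom(\Horn{n+1}{j}\to\Simp{n+1},f)\) compatibly with the maps from \(X_{n+1}\). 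Your range bookkeeping for the two boundary variants also matches what the paper leaves implicit in ``the rest follows immediately.''
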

\begin{proof}
  The proof is similar to that of Lemma~\ref{chap3:lem:kan(n+1,j)-implies-all}. Fix \(0<i<j<n+1\). Let \(S\) be the simplicial set generated by all \(n\)-simplices of \(\Simp{n+1}\) containing \(\{i,j\}\). We have
  \begin{gather*}
   \Horn{n+1}{i}=S\cup_{\Horn{n}{i} \{0,\dots,\hat{j},\dots, n+1\}} \Simp{n}\{0,\dots,\hat{j},\dots, n+1\},\\
   \Horn{n+1}{j}=S\cup_{\Horn{n}{j-1} \{0,\dots,\hat{i},\dots, n+1\}} \Simp{n}\{0,\dots,\hat{i},\dots, n+1\}.
  \end{gather*}
 Corollary~\ref{chap2:cor:lift-composition-pushout-combined} implies a pullback square
 \[
 \xymatrix{
 \Hom(\Horn{n+1}{i}\to \Simp{n+1},f)\ar[r]\ar[d] &\Hom(\Simp{n}\{\hat{j}\},f)\ar[d]\\
 \Hom(S\to\Simp{n+1}, f)\ar[r] & \Hom(\Horn{n}{i}\{\hat{j}\}\to \Simp{n}\{\hat{j}\},f)\rlap{\ ,}
 }
 \]
 where \(\Simp{n}\{\hat{j}\}\) and \(\Horn{n}{i}\{\hat{j}\}\) are short for \(\Simp{n}\{0,\dots,\hat{j},\dots, n+1\}\) and \(\Horn{n}{i} \{0,\dots,\hat{j},\dots, n+1\}\), respectively. Applying \(\Kan!(n, i)\) for \(f\) gives
 \[
 \Hom(\Horn{n+1}{i}\to \Simp{n+1},f)\cong \Hom(S\to\Simp{n+1}, f).
 \]
 Similarly, \(\Kan!(n, j-1)\) for \(f\) implies that
 \[
 \Hom(\Horn{n+1}{j}\to \Simp{n+1}, f)\cong\Hom(S\to\Simp{n+1}, f).
 \]
 Therefore, if \(f\) satisfies \(\Kan!(n+1, i)\), that is,
 \[
 \Hom(\Simp{n+1}, X)\cong \Hom(\Horn{n+1}{i}\to \Simp{n+1},f),
 \]
 then \(f\) satisfies \(\Kan!(n+1, j)\). The rest follows immediately.
\end{proof}

We now show that the assumption of representability in Definition~\ref{chap3:def:Kan_arrow} is not necessary to define a higher groupoid Kan fibration; the conditions \(\Kan(l,i)\) for \(l<m\) ensure that the presheaves \(\Hom(\Horn{m}{j}\to\Simp{m},X\to Y)\) are representable.

\begin{lemma} \label{chap3:lem:representable}
  Let \(k\ge 0\) and let \(\Simp{0}\to S\subset\Simp{k}\) be collapsible extensions of simplicial sets. Let \(f\colon X\to Y\) be a morphism of simplicial objects in~\(\Cat\). Assume that~\(f\) satisfies \(\Kan(m,j)\) for \(m<k\) and \(0\le j\le m\) and that \(\Hom(\Simp{0}\to\Simp{k}, f)=X_0\times_{Y_0}Y_k\) is representable. Then the presheaf \(\Hom(S\to\Simp{k}, f)\) is representable. Furthermore, if \(\Simp{0}\subset S \subset T\subset \Simp{k}\) are collapsible extensions, then \(\Hom(T\to\Simp{k}, f)\to \Hom(S\to\Simp{k}, f)\) is a cover.
\end{lemma}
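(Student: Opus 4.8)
The plan is to induct along a collapsible filtration, reducing the whole statement to a single horn-filling step, and then to combine the pretopology axioms with the $\Hom$\nbdash{}version of Corollary~\ref{chap2:cor:lift-composition-pushout-combined}. The latter is legitimate because, as noted after the definition of $\Hom$, the functor $\Hom(\blank,\blank)$ turns colimits in the first variable into limits and preserves limits in the second, so all of the lifting lemmas of Section~\ref{chap2:ssec:technical-lemmas} carry over verbatim.

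First I would fix a filtration $\Simp{0}=S_0\subset S_1\subset\dotsb\subset S_l=S$ witnessing collapsibility of $\Simp{0}\to S$, so that $S_i=S_{i-1}\cup_{\Horn{n_i}{j_i}}\Simp{n_i}$ for suitable $n_i\ge 1$ and $0\le j_i\le n_i$. The base of the induction is $\Hom(\Simp{0}\to\Simp{k},f)=X_0\times_{Y_0}Y_k$, representable by hypothesis. The crucial dimension observation is that if $S\subsetneq\Simp{k}$ then $S$ omits the unique nondegenerate $k$\nbdash{}simplex of $\Simp{k}$, so $\dim S\le k-1$ and hence $n_i\le\dim S_i\le k-1<k$ for every $i$; the extreme case $S=\Simp{k}$ is immediate, since there $\Hom(\Simp{k}\to\Simp{k},f)=X_k$ is representable.

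The inductive step is the heart of the argument. Writing the pushout $S_i=S_{i-1}\cup_{\Horn{n_i}{j_i}}\Simp{n_i}$ and keeping $\Simp{k}$ fixed in the second slot, Corollary~\ref{chap2:cor:lift-composition-pushout-combined} produces a pullback square
\[
\xymatrix{
\Hom(S_i\to\Simp{k},f)\ar[r]\ar[d] & \Hom(\Simp{n_i},X)=X_{n_i}\ar[d]\\
\Hom(S_{i-1}\to\Simp{k},f)\ar[r] & \Hom(\Horn{n_i}{j_i}\to\Simp{n_i},f)\rlap{\ .}
}
\]
Because $n_i<k$, the hypothesis that $f$ satisfies $\Kan(n_i,j_i)$ says exactly that the right-hand vertical map is a cover and that its target $\Hom(\Horn{n_i}{j_i}\to\Simp{n_i},f)$ is representable; the bottom-left corner is representable by the inductive hypothesis. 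Pretopology axiom~(iii) (pullbacks of covers are covers with representable domain) then makes $\Hom(S_i\to\Simp{k},f)$ representable and exhibits $\Hom(S_i\to\Simp{k},f)\to\Hom(S_{i-1}\to\Simp{k},f)$ as a cover. Since composites of covers are covers, this proves representability of $\Hom(S\to\Simp{k},f)$. Running the identical machine on a filtration of $S\to T$ (again with all horns of dimension $<k$, granted $T\subsetneq\Simp{k}$) and composing the resulting covers yields the ``furthermore'' claim that $\Hom(T\to\Simp{k},f)\to\Hom(S\to\Simp{k},f)$ is a cover.

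The main obstacle is the dimension bookkeeping rather than any calculation: I must guarantee that no horn-filling step has dimension $k$, since a $k$\nbdash{}dimensional step would require $\Kan(k,j)$ for $f$, which is \emph{not} assumed. Staying strictly inside $\Simp{k}$ is precisely what rules this out, with the degenerate situation of a filtration reaching the top cell collapsing to the identity $\Hom(\Simp{k}\to\Simp{k},f)=X_k$ and needing separate mention. Everything else is the routine transfer of the set-level statements to the presheaf-valued $\Hom$.
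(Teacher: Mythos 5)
Your proof is correct and is essentially the paper's own argument: induction along the collapsible filtration, the pullback square supplied by Corollary~\ref{chap2:cor:lift-composition-pushout-combined}, the pretopology axioms to propagate representability and covers, and composition of the step-wise covers for the ``furthermore'' part. The paper's version is terser (it defers the induction step to Henriques), so your explicit dimension bookkeeping—that \(n_i\le\dim S_i\le k-1\) whenever the filtration stays strictly inside \(\Simp{k}\)—is a genuine addition: it is precisely the reason the hypothesis \(\Kan(m,j)\) for \(m<k\) suffices, and the paper never spells this out. One further point in your favour: your restriction of the ``furthermore'' claim to \(T\subsetneq\Simp{k}\) is not a defect but a necessary correction to the statement as written. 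If \(T=\Simp{k}\) and, say, \(S=\Horn{k}{j}\), the map \(\Hom(T\to\Simp{k},f)\to\Hom(S\to\Simp{k},f)\) is exactly the \(\Kan(k,j)\) comparison map \(X_k\to\Hom(\Horn{k}{j}\to\Simp{k},f)\), which the hypotheses do not make a cover (already for \(k=1\) the assumptions are vacuous). The paper's own uses of the top-dimensional case, e.g.\ in Lemma~\ref{chap3:lem:fibre_exists}, only occur for Kan fibrations, where all Kan conditions hold, so your reading is the one consistent with how the lemma is actually applied.
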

\begin{proof}
  The difference of this lemma to \cite[Lemma 2.4]{Henriques} is that we do not require \(X_0=Y_0=\terminal\). The induction step is the same as in~\cite{Henriques}. The base case of the induction holds by assumption. For the induction step, suppose that \(S_i=S_{i-1}\cup_{\Horn{n_i}{j_i}}\Simp{n_i}\), and that the statement is true for \(S_{i-1}\).
  Corollary~\ref{chap2:cor:lift-composition-pushout-combined} gives a pullback square
 \[
  \xymatrix{
  \Hom(S_i\to \Simp{n}, X\to Y)\ar[r]\ar[d]&\Hom(\Simp{n_i}, X)\ar[d]\\
  \Hom(S_{i-1}\to \Simp{n}, X\to Y) \ar[r]         &\Hom(\Horn{n_i}{j_i}\to \Simp{n_i}, X\to Y)\rlap{\ .}
  }
  \]
 By assumption, \(\Hom(S_{i-1}\to \Simp{n}, X\to Y)\) is representable and the map
  \[
  \Hom(\Simp{n_i}, X)\to \Hom(\Horn{n_i}{j_i}\to \Simp{n_i}, X\to Y)
  \]
  is a cover. Thus the presheaf \(\Hom(S_i\to \Simp{n}, X\to Y)\) is representable, and
  \[
  \Hom(S_i\to \Simp{n}, X\to Y)\to \Hom(S_{i-1}\to \Simp{n}, X\to Y)
  \]
  is a cover. This proves the first statement. The second statement follows because composites of covers are covers.
\end{proof}

\begin{remark}
  If \(Y\) is a higher groupoid in \((\Cat, \covers)\), then \(X_0\times_{Y_0}Y_k\) is representable, since \(Y_k\to Y_0\) is a cover by Lemma~\ref{chap3:lem:covers_in_groupoid}.
\end{remark}

The argument above still works if we replace \(S\to T\to \Simp{n}\) by arbitrary collapsible extensions \(A\to B\to C\).

\begin{corollary}
Let \(f\colon X\to Y\) be a Kan fibration between simplicial objects in \((\Cat, \covers)\). Let \( A\to B\to C\) be collapsible extensions of finite simplicial sets. Assume that \(\Hom(A\to C, X\to Y)\) is representable.
Then \(\Hom(B\to C, X\to Y)\) is representable, and
\[
 \Hom(B\to C, X\to Y)\to \Hom(A\to C, X\to Y)
\]
is a cover \textup(compare with~\cite[Lemma~\textup{2.10}]{Henriques}\textup).
\end{corollary}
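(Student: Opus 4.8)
The plan is to treat this as the iterated, relative version of Lemma~\ref{chap3:lem:representable}, with the fixed target $C$ playing the role that $\Simp{k}$ plays there. First I would unwind the collapsible extension $A\to B$ into its defining filtration
\[
A = B_0\subset B_1\subset\dotsb\subset B_l=B,
\]
where $B_i = B_{i-1}\cup_{\Horn{n_i}{j_i}}\Simp{n_i}$ fills a single horn at each stage. Since composites of covers are covers and composites of collapsible extensions are collapsible, it suffices to analyse one filtration step and then chain the results; the induction starts from $B_0=A$, for which $\Hom(A\to C, X\to Y)$ is representable by hypothesis.

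For a single step, the defining pushout square with $D=\Horn{n_i}{j_i}$, $E=\Simp{n_i}$ sitting over $B_{i-1}\to B_i\to C$ feeds directly into Corollary~\ref{chap2:cor:lift-composition-pushout-combined}, which remains valid for $\Hom$ by the continuity properties recorded in Section~\ref{chap3:sec:Hom}. This yields the pullback square
\[
\xymatrix{
\Hom(B_i\to C, X\to Y)\ar[r]\ar[d] & \Hom(\Simp{n_i}, X)\ar[d]\\
\Hom(B_{i-1}\to C, X\to Y)\ar[r] & \Hom(\Horn{n_i}{j_i}\to\Simp{n_i}, X\to Y)\rlap{\ .}
}
\]
Because $f$ is a Kan fibration, the condition \(\Kan(n_i, j_i)\) in Definition~\ref{chap3:def:Kan_arrow} guarantees that the bottom-right corner is representable and that the right vertical map $\Hom(\Simp{n_i},X)=X_{n_i}\to\Hom(\Horn{n_i}{j_i}\to\Simp{n_i}, X\to Y)$ is a cover. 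Granting representability of $\Hom(B_{i-1}\to C, X\to Y)$ by the inductive hypothesis, the pretopology axiom that pullbacks of covers are representable covers then makes $\Hom(B_i\to C, X\to Y)$ representable and the left vertical map a cover.

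Composing these covers along the filtration shows that $\Hom(B\to C, X\to Y)$ is representable and that $\Hom(B\to C, X\to Y)\to\Hom(A\to C, X\to Y)$ is a cover, as claimed. I do not expect a genuine obstacle: the proof is structurally identical to Lemma~\ref{chap3:lem:representable}, and the only point requiring care is that Corollary~\ref{chap2:cor:lift-composition-pushout-combined} applies verbatim with an arbitrary finite $C$ in place of a standard simplex. It is worth noting that, unlike Lemma~\ref{chap3:lem:representable}, no auxiliary representability assumption on $X_0\times_{Y_0}Y_k$ is needed here, since the representability of each $\Hom(\Horn{n_i}{j_i}\to\Simp{n_i}, X\to Y)$ is already built into the Kan-fibration hypothesis on $f$.
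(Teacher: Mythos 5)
Your proposal is correct and follows exactly the paper's own route: the paper proves this corollary by remarking that the argument of Lemma~\ref{chap3:lem:representable} goes through verbatim with \(\Simp{k}\) replaced by \(C\), i.e.\ induction over the filtration of \(A\to B\), the pullback square from Corollary~\ref{chap2:cor:lift-composition-pushout-combined}, the Kan-fibration hypothesis supplying the representable target and cover on the right vertical, and stability of covers under pullback and composition. Your closing observation that the base-case representability hypothesis replaces the \(X_0\times_{Y_0}Y_k\) assumption of the lemma is also exactly right.
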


\subsection{Composition of Kan fibrations}

Composites of Kan fibrations are again Kan fibrations for simplicial sets. We show a similar result for Kan fibrations of simplicial objects in \((\Cat,\covers)\).

\begin{proposition} \label{chap3:prop:comp}
  Let \(f\colon X\to Y\) and \(g\colon Y\to Z\) be morphisms of simplicial objects in~\(\Cat\). If\/ \(\Hom(\Horn{n}{j}\to \Simp{n}, X\to Z)\) is representable and \(f\) and~\(g\) satisfy \(\Kan(n,j)\), then so does \(g\circ f\). If
  \(f\) and~\(g\) satisfy \(\Kan!(n,j)\), then so does \(g\circ f\).
\end{proposition}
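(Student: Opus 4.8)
The plan is to mimic the proof of Lemma~\ref{chap2:lem:lift-composition-pullback}(i) one categorical level up, at the level of the $\Hom$ functor, exploiting the fact (noted just after the introduction of $\Hom$) that the technical lemmas of Section~\ref{chap2:ssec:technical-lemmas} remain valid with $\hom$ replaced by $\Hom$. Concretely, I would instantiate the $\Hom$-version of Lemma~\ref{chap2:lem:lift-composition-diagram} with the inclusion $\Horn{n}{j}\to\Simp{n}$ in the first slot and the composable pair $X\xrightarrow{f} Y\xrightarrow{g} Z$ in the second, obtaining a pullback square
\[
\xymatrix{
\Hom(\Horn{n}{j}\to\Simp{n}, X\to Y)\ar[d]\ar[r] & \Hom(\Simp{n},Y)=Y_n\ar[d]\\
\Hom(\Horn{n}{j}\to\Simp{n}, X\to Z)\ar[r] & \Hom(\Horn{n}{j}\to\Simp{n}, Y\to Z)\rlap{\ .}
}
\]

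First I would record representability, so that the square really lives in $\Cat$: by Definition~\ref{chap3:def:Kan_arrow} the hypotheses that $f$ and $g$ satisfy $\Kan(n,j)$ already include that $\Hom(\Horn{n}{j}\to\Simp{n}, X\to Y)$ and $\Hom(\Horn{n}{j}\to\Simp{n}, Y\to Z)$ are representable, while representability of $\Hom(\Horn{n}{j}\to\Simp{n}, X\to Z)$ is assumed outright. The key observation is then that the right-hand vertical map of the square is exactly the canonical map $Y_n\to\Hom(\Horn{n}{j}\to\Simp{n}, Y\to Z)$ witnessing $\Kan(n,j)$ for $g$, hence a cover. Since the square is a pullback and covers are stable under pullback, the left-hand vertical map $\Hom(\Horn{n}{j}\to\Simp{n}, X\to Y)\to\Hom(\Horn{n}{j}\to\Simp{n}, X\to Z)$ is also a cover. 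Composing it with the cover $X_n\to\Hom(\Horn{n}{j}\to\Simp{n}, X\to Y)$ coming from $\Kan(n,j)$ for $f$, I conclude that the resulting composite is a composite of two covers, hence a cover.

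To finish I must check that this composite is precisely the canonical map $X_n\to\Hom(\Horn{n}{j}\to\Simp{n}, X\to Z)$ attached to $g\circ f$; this is the only bookkeeping point and the one place to be careful. It amounts to unwinding both $\Hom$s as fibre products over the horn-restriction maps and verifying that an $n$-simplex $x$ of $X$ is sent, along either route, to its horn restriction together with $g_n(f_n(x))=(g\circ f)_n(x)$ — a routine diagram chase with no genuine content. The unique case is handled identically: replacing ``cover'' by ``isomorphism'' throughout, the right vertical map is an isomorphism by $\Kan!(n,j)$ for $g$, so its pullback (the left vertical map) is too, and precomposing with the isomorphism supplied by $\Kan!(n,j)$ for $f$ shows that $g\circ f$ satisfies $\Kan!(n,j)$.

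I do not expect a real obstacle here: all the analytic weight is carried by the abstract pullback identity of Lemma~\ref{chap2:lem:lift-composition-diagram} together with the two pretopology axioms that covers compose and are stable under pullback, so the argument is essentially formal once the square is in place.
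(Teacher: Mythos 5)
Your proposal is correct and follows essentially the same route as the paper: the paper's proof applies the $\Hom$-version of Lemma~\ref{chap2:lem:lift-composition-diagram} to get exactly your pullback square, then uses $\Kan(n,j)$ for $g$ plus pullback-stability of covers to make the left vertical map a cover, and composes with the cover from $\Kan(n,j)$ for $f$; the unique case is likewise handled by replacing covers with isomorphisms. Your extra remarks on representability and on identifying the composite with the canonical map for $g\circ f$ are points the paper leaves implicit, but they introduce no difference in substance.
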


\begin{proof}
Lemma~\ref{chap2:lem:lift-composition-diagram} gives a pullback square:
\[
\xymatrix{
\Hom(\Horn{n}{j}\to \Simp{n}, X\to Y) \ar [r] \ar[d] &\Hom(\Horn{n}{j}\to \Simp{n}, X\to Z)\ar[d] \\
    \Hom(\Simp{n}, Y) \ar[r] & \Hom(\Horn{n}{j} \to \Simp{n}, Y\to Z) \rlap{\ .}
}
\]
The condition \(\Kan(n, j)\) for \(g\) implies that \(\Hom(\Simp{n}, Y) \to \Hom(\Horn{n}{j} \to \Simp{n}, Y\to Z)\) is a cover. Therefore, the map \(\Hom(\Horn{n}{j}\to \Simp{n}, X\to Y) \to \Hom(\Horn{n}{j}\to \Simp{n}, X\to Z) \) is a cover. The condition \(\Kan(n, j)\) for \(f\) gives a cover \(\Hom(\Simp{n}, X) \to \Hom(\Horn{n}{j}\to \Simp{n}, X\to Y)\). Thus the map \(\Hom(\Simp{n}, X) \to \Hom (\Horn{n}{j}\to \Simp{n}, X\to Z)\), being the composite of two covers, is a cover. This proves the first claim. The second claim follows similarly.
\end{proof}

\begin{corollary}
  Composites of Kan fibrations of simplicial objects in~\((\Cat, \covers)\) are again Kan fibrations.
\end{corollary}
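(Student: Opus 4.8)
The plan is to derive the statement directly from Proposition~\ref{chap3:prop:comp}, whose only missing ingredient is a representability claim. Let $f\colon X\to Y$ and $g\colon Y\to Z$ be Kan fibrations; then both satisfy $\Kan(n,j)$ for every $n\ge 1$ and $0\le j\le n$. Proposition~\ref{chap3:prop:comp} tells us that $g\circ f$ satisfies $\Kan(n,j)$ as soon as the presheaf $\Hom(\Horn{n}{j}\to\Simp{n},X\to Z)$ is representable. So the entire content of the proof is to establish this representability for all $n$ and $j$, after which the cover condition for $g\circ f$ is supplied by the proposition itself.

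First I would prove representability by induction on $n$, interleaving it with the conclusion $\Kan(n,j)$ for $g\circ f$. The induction hypothesis at stage $n$ is that $g\circ f$ satisfies $\Kan(m,i)$ for all $m<n$ and $0\le i\le m$; in particular $\Hom(\Horn{m}{i}\to\Simp{m},X\to Z)$ is representable for $m<n$. For the inductive step I would apply the relative representability Lemma~\ref{chap3:lem:representable} to the single morphism $g\circ f\colon X\to Z$, with the collapsible extensions $\Simp{0}\to\Horn{n}{j}\subset\Simp{n}$. That lemma feeds on exactly the lower-dimensional conditions $\Kan(m,i)$ for $m<n$ provided by the induction hypothesis, and outputs that $\Hom(\Horn{n}{j}\to\Simp{n},X\to Z)$ is representable. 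Proposition~\ref{chap3:prop:comp} then upgrades this to $\Kan(n,j)$ for $g\circ f$, for every $j$, which closes the induction and gives a Kan fibration.

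The step I expect to be the main obstacle is the base hypothesis demanded by Lemma~\ref{chap3:lem:representable}: the representability of $\Hom(\Simp{0}\to\Simp{n},X\to Z)=X_0\times_{Z_0}Z_n$. This is not automatic for an arbitrary simplicial object $Z$; a sufficient condition is that $Z_n\to Z_0$ be a cover, so that the pullback along $X_0\to Z_0$ exists by the third pretopology axiom. When $Z$ is a higher groupoid this follows from Lemma~\ref{chap3:lem:covers_in_groupoid}, and this already covers the situations in which the corollary is used -- for instance composing $f\colon X\to Y$ with $Y\to\terminal$, where $Z=\terminal$ and the pullback collapses to $X_0$. I would therefore record this base representability as the one genuine point to check, verify it in the relevant cases, and leave the remainder of the argument as the formal bookkeeping of the induction above together with the two cited results.
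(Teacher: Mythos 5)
Your proposal is correct, and it is in fact more careful than the paper itself: the paper offers no proof of this corollary at all, presenting it as immediate from Proposition~\ref{chap3:prop:comp}. As you observe, it is not quite immediate, because that proposition carries the hypothesis that \(\Hom(\Horn{n}{j}\to\Simp{n}, X\to Z)\) be representable, and representability is built into the definition of the Kan conditions (Definition~\ref{chap3:def:Kan_arrow}), so something must discharge it. Your mechanism for doing so — induct on \(n\), assume \(\Kan(m,i)\) for \(g\circ f\) in dimensions \(m<n\), feed these into Lemma~\ref{chap3:lem:representable} with the collapsible extensions \(\Simp{0}\to\Horn{n}{j}\subset\Simp{n}\) to obtain representability in dimension \(n\), then apply Proposition~\ref{chap3:prop:comp} to get the cover condition and close the induction — is exactly the right one, and each step checks out against the cited results.

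The obstacle you flag at the base of Lemma~\ref{chap3:lem:representable} is genuine and is a gap in the paper's statement rather than in your argument: for an arbitrary simplicial object \(Z\), the representability of \(X_0\times_{Z_0}Z_n\) is an honest extra hypothesis. This is precisely parallel to the hypothesis ``\(P_0\) is representable'' that the paper does state explicitly in Proposition~\ref{chap3:prop:pullback-Kan-fibration} for pullbacks of Kan fibrations, so the paper is aware of the issue elsewhere but silently omits it here. In every context where the paper actually composes Kan fibrations, the final target is a higher groupoid or the constant object \(\sk_0\terminal\); then \(Z_n\to Z_0\) is a cover by Lemma~\ref{chap3:lem:covers_in_groupoid} (applied to the collapsible extension \(\Simp{0}\to\Simp{n}\)), and the pullback \(X_0\times_{Z_0}Z_n\) exists by the third pretopology axiom, exactly as you say. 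So your proof establishes the corollary in all cases in which it is used, and read literally for arbitrary targets the corollary should either carry your base representability hypothesis or be restricted accordingly.
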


\subsection{The fibre of a Kan fibration}

Given a higher groupoid Kan fibration, its fibre is a well-defined higher groupoid.

\begin{definition}
  \label{chap3:def:fibre_Kan_fibration}
  Let \(f\colon X\to Y\) be a higher groupoid Kan fibration in \((\Cat,\covers)\). The \emph{fibre} of~\(f\) is the simplicial object~\(\Fib{f}\) with \(\Fib{f}_k=\Hom(\Simp{k}\to \Simp{0}, X\to Y)\) and with face and degeneracy
  maps induced by morphisms in~\(\bD\).
\end{definition}

\begin{remark}\label{chap3:rem:Kan-fibre-adjunction}
The functor
\[
 S\mapsto \Hom(S\to \Simp{0}, X\xrightarrow{f} Y), \quad S\in \SSet,
\]
sends colimits to limits. We claim that there is a natural isomorphism
\[
\Hom(S\to \Simp{0}, X\xrightarrow{f} Y)\cong \Hom(S, \Fib{f})
\]
for every \(S\in \SSet\). First, this isomorphism holds for \(S=\Simp{n}\) by definition. The claim then follows by taking limits on both sides.
\end{remark}

\begin{lemma} \label{chap3:lem:fibre_exists}
  The presheaf\/ \(\Hom(\Simp{k}\to \Simp{0}, X\to Y)\) is representable in~\(\Cat\), and\/~\(\Fib{f}\) is a simplicial object
  in~\(\Cat\). Moreover, \(\Fib{f}\) is a higher groupoid in \((\Cat, \covers)\).

  If \(f\) is an \(n\)\nbdash{}groupoid Kan fibration, then \(\Fib{f}\) is an \((n-1)\)-groupoid in \((\Cat, \covers)\).
\end{lemma}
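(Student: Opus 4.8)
The plan is to reduce every assertion to a single relative pullback square and then feed in the hypotheses on $f$. Throughout I use Remark \ref{chap3:rem:Kan-fibre-adjunction}, which identifies \(\Hom(S,\Fib{f})\cong\Hom(S\to\Simp{0}, X\to Y)\) for every finite simplicial set~\(S\), so that all structure of \(\Fib{f}\) can be read off from the presheaves \(\Hom(S\to\Simp{0},f)\). The device I would use is the second (dual) pullback square of Lemma \ref{chap2:lem:lift-composition-diagram}, which remains valid for \(\Hom\); applied to a composite \(T\hookrightarrow S\to\Simp{0}\) of finite simplicial sets and to \(f\colon X\to Y\), it reads
\[
\xymatrix{
\Hom(S\to\Simp{0}, f)\ar[r]\ar[d] & \Hom(S, X)\ar[d]\\
\Hom(T\to\Simp{0}, f)\ar[r] & \Hom(T\to S, X\to Y)\rlap{\ ,}
}
\]
and its whole point is that the left-hand vertical map inherits whatever property (being a cover, being an isomorphism) the right-hand vertical map enjoys.

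First I would settle representability and the simplicial structure. Taking \(T=\Simp{0}\) and \(S=\Simp{k}\), the bottom-left corner is \(\Hom(\Simp{0}\to\Simp{0},f)=X_0\) and the right-hand vertical becomes \(X_k\to\Hom(\Simp{0}\to\Simp{k},f)=X_0\times_{Y_0}Y_k\). The target is representable because \(Y_k\to Y_0\) is a cover (Lemma \ref{chap3:lem:covers_in_groupoid}, as \(Y\) is a higher groupoid), and the map itself is a cover by Lemma \ref{chap3:lem:representable}. Hence \(\Fib{f}_k=\Hom(\Simp{k}\to\Simp{0},f)\) is the pullback of a cover along \(X_0\to X_0\times_{Y_0}Y_k\), so it is representable and the canonical map \(\Fib{f}_k\to X_0\) is a cover. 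Representability in every degree, together with the functoriality in \(S\) from Remark \ref{chap3:rem:Kan-fibre-adjunction}, makes \(\Fib{f}\) a genuine simplicial object in \(\Cat\), its faces and degeneracies being induced by the morphisms of \(\bD\), which commute with the collapse to \(\Simp{0}\).

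Next I would verify that \(\Fib{f}\) is a higher groupoid by taking \(T=\Horn{m}{j}\) and \(S=\Simp{m}\), so that the right-hand vertical is exactly the Kan-fibration map \(X_m\to\Hom(\Horn{m}{j}\to\Simp{m}, X\to Y)\). Arguing by induction on~\(m\): the lower Kan conditions already established for \(\Fib{f}\) guarantee, via Remark \ref{chap3:rem:rep}, that \(\Hom(\Horn{m}{j},\Fib{f})=\Hom(\Horn{m}{j}\to\Simp{0},f)\) is representable, so the square lives in \(\Cat\); since \(f\) satisfies \(\Kan(m,j)\), the right vertical is a cover, hence so is the left vertical \(\Hom(\Simp{m},\Fib{f})\to\Hom(\Horn{m}{j},\Fib{f})\). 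This yields \(\Kan(m,j)\) for \(\Fib{f}\) for all \(m\ge 1\) and \(0\le j\le m\), i.e.\ \(\Fib{f}\) is a higher groupoid. For the last claim, when \(f\) is an \(n\)\nbdash{}groupoid Kan fibration it satisfies \(\Kan!(n,k)\) by definition and \(\Kan!(m,k)\) for \(m>n\) automatically (the remark after Definition \ref{chap3:def:kan-fibration-n-groupoid}, since \(X\) and \(Y\) are \(n\)\nbdash{}groupoids); thus the right vertical of the same square is an isomorphism for every \(m\ge n\), and the square forces \(\Fib{f}\) to satisfy \(\Kan!(m,j)\) for all \(m>n-1\). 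Combined with the Kan conditions of the previous step, \(\Fib{f}\) is an \((n-1)\)\nbdash{}groupoid.

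The routine content is the bookkeeping of representability; the one place that genuinely needs care—and the main obstacle—is justifying that the displayed square is a pullback of presheaves and that each corner is representable \emph{before} invoking the pretopology axiom that pullbacks of covers exist. Once the square is in place, transferring ``cover'' and ``isomorphism'' from \(f\) to \(\Fib{f}\) is immediate, and the only difference between the general higher-groupoid statement and the \(n\)\nbdash{}groupoid statement is whether the right-hand vertical map is a cover or an isomorphism.
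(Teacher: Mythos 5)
Your proposal is correct and follows essentially the same route as the paper: the dual pullback square of Lemma \ref{chap2:lem:lift-composition-diagram} applied to \(\Simp{0}\to\Simp{k}\to\Simp{0}\) (with Lemma \ref{chap3:lem:representable} giving the cover \(X_k\to X_0\times_{Y_0}Y_k\)) for representability, and the same square with \(\Horn{m}{j}\to\Simp{m}\to\Simp{0}\) to transfer \(\Kan(m,j)\) and \(\Kan!(m,j)\) from \(f\) to \(\Fib{f}\). Your explicit bookkeeping of representability of the horn spaces via Remark \ref{chap3:rem:rep} is a point the paper leaves implicit, but it is the same argument.
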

\begin{proof}
First, \(\Hom(\Simp{0}\xrightarrow{0} \Simp{k}, X\to Y) \) is representable. Since \(\Simp{0}\xrightarrow{0} \Simp{k}\) is a collapsible extension, Lemma~\ref{chap3:lem:representable} implies that the map
\[
X_k\to \Hom(\Simp{0}\xrightarrow{0} \Simp{k}, X\to Y)
\]
is a cover in~\(\Cat\). Lemma~\ref{chap2:lem:lift-composition-diagram} gives a pullback square
\[
\xymatrix{
\Hom(\Simp{k}\to \Simp{0}, X\to Y)\ar[r]\ar[d] & \Hom(\Simp{k}, X)\ar[d]\\
\Hom(\Simp{0}\to \Simp{0}, X\to Y)\ar[r] & \Hom(\Simp{0}\xrightarrow{0} \Simp{k}, X\to Y)\rlap{\ ,}
}\]
thus \(\Hom(\Simp{k}\to \Simp{0}, X\to Y)\) is representable and \(\Fib{f}\) is a simplicial object in \(\Cat\).

To prove Kan conditions for \(\Fib{f}\), we consider the following pullback square implied by Lemma~\ref{chap2:lem:lift-composition-diagram}:
\[\xymatrix{
\Hom(\Simp{k}\to \Simp{0}, X\to Y) \ar[r] \ar[d] & \Hom(\Simp{k}, X) \ar[d]\\
\Hom(\Horn{k}{i}\to \Simp{0}, X\to Y) \ar[r]     & \Hom(\Horn{k}{i}\to \Simp{k} , X\to Y )\rlap{\ .}
}\]
Remark~\ref{chap3:rem:Kan-fibre-adjunction} shows that \(\Kan(k,i)\) or \(\Kan!(k,i)\) of \(f\) implies the same for \(\Fib{f}\).
\end{proof}

\subsection{Pullbacks of Kan fibrations}

A pullback of a Kan fibration of simplicial sets is still a Kan fibration. We now prove a similar result for pullbacks of higher groupoid Kan fibrations in \((\Cat, \covers)\).

It is clear that \(\Fib{f}\) fits into a pullback diagram
  \[
  \xymatrix{
  \Fib{f}\ar[r]\ar[d] & \sk_0 Y_0\ar[d]\\
  X\ar[r]             & Y \rlap{\ ,}
  }
  \]
where \(\sk_0 Y_0\) is the constant simplicial object with value \(Y_0\).

\begin{proposition}\label{chap3:prop:pullback-Kan-fibration}
Let \(P\) be the pullback \textup(as a simplicial object in presheaves on \(\Cat\)\textup) of the following diagram
\[
 \xymatrix{
 P\ar[r]\ar[d] & X\ar[d]\\
 Z\ar[r] & Y\rlap{\ ,}
 }
\]
where \(X, Y, Z\) are higher groupoids in \((\Cat, \covers)\). Assume that \(X\to Y\) is a Kan fibration and that \(P_0\) is representable. Then \(P\) is a higher groupoid object in \((\Cat, \covers)\) and \(P\to Z\) is a Kan fibration.
\end{proposition}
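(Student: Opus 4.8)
The plan is to exploit the fact recorded just before Lemma~\ref{chap3:lem:covers_in_groupoid} that $\Hom(S,-)$ carries colimits in the first variable to limits and preserves limits in the second, so that every lemma of Section~\ref{chap2:ssec:technical-lemmas} remains valid for $\Hom$. Since $P=X\times_Y Z$ is a limit of simplicial objects, for each finite simplicial set $S$ one gets a pullback of presheaves $\Hom(S,P)=\Hom(S,X)\times_{\Hom(S,Y)}\Hom(S,Z)$, and, applying the $\Hom$-version of Lemma~\ref{chap2:lem:lift-pullback-pushout-diagram} to the square $P=X\times_Y Z$ and any inclusion $A\hookrightarrow B$, the square
\[
\xymatrix{
\Hom(B,P)\ar[r]\ar[d] & \Hom(B,X)\ar[d]\\
\Hom(A\to B,P\to Z)\ar[r] & \Hom(A\to B,X\to Y)
}
\]
is again a pullback of presheaves. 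This is the engine of the argument: it exhibits the relative horn data of $P\to Z$ as a base change of the relative horn data of the given Kan fibration $X\to Y$.

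First I would prove, by induction on $n$, that $P_n$ is representable and that $P\to Z$ satisfies $\Kan(n,j)$ for all $0\le j\le n$. The seed is $P_0$, representable by hypothesis, together with $\Hom(\Simp{0}\to\Simp{n},P\to Z)=P_0\times_{Z_0}Z_n$, which is representable because $Z_n\to Z_0$ is a cover (Lemma~\ref{chap3:lem:covers_in_groupoid} for the higher groupoid $Z$). At stage $n$ the inductive hypothesis supplies exactly the data needed to run Lemma~\ref{chap3:lem:representable} for the morphism $P\to Z$ with $k=n$ and $S=\Horn{n}{j}$ (only the lower Kan conditions and the degrees below $n$ are invoked in its proof), yielding representability of $\Hom(\Horn{n}{j}\to\Simp{n},P\to Z)$. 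Feeding this into the displayed pullback with $A=\Horn{n}{j}$ and $B=\Simp{n}$, the right-hand vertical map $X_n\to\Hom(\Horn{n}{j}\to\Simp{n},X\to Y)$ is a cover since $X\to Y$ is a Kan fibration, so by pretopology axiom~(iii) its base change
\[
P_n=\Hom(\Simp{n},P)\to\Hom(\Horn{n}{j}\to\Simp{n},P\to Z)
\]
is a cover and $P_n$ is representable. This closes the induction and shows both that $P$ is an honest simplicial object in $\Cat$ and that $P\to Z$ is a Kan fibration.

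It then remains to see that $P$ is a higher groupoid, i.e. that $P\to\terminal$ is a Kan fibration (the remark following Definition~\ref{chap3:def:Kan_arrow}). Since $Z\to\terminal$ is a Kan fibration, I would compose it with $P\to Z$ via Proposition~\ref{chap3:prop:comp}; the only extra input is representability of $\Hom(\Horn{n}{j}\to\Simp{n},P\to\terminal)=\Hom(\Horn{n}{j},P)$, obtained by the same inductive use of Lemma~\ref{chap3:lem:representable}, now applied to $P\to\terminal$ (its hypotheses are the lower Kan conditions just established, and the seed $\Hom(\Simp{0}\to\Simp{n},P\to\terminal)=P_0$ is representable). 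Concretely, the factorisation $P_n\to\Hom(\Horn{n}{j}\to\Simp{n},P\to Z)\to\Hom(\Horn{n}{j},P)$ exhibits the absolute horn map of $P$ as a composite of two covers: the first from the previous paragraph, the second the base change of the cover $Z_n\to\Hom(\Horn{n}{j},Z)$ along $\Hom(\Horn{n}{j},P)\to\Hom(\Horn{n}{j},Z)$, which is a cover by definition of the relative $\Hom$ and stability of covers under pullback.

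The main obstacle is purely the representability bookkeeping, not the covering statements: a priori $P$ lives only in presheaves on $\Cat$, and none of $P_n$, $\Hom(\Horn{n}{j}\to\Simp{n},P\to Z)$, or $\Hom(\Horn{n}{j},P)$ is known to be representable until the induction produces them in the right order. The care needed is to identify $P_0\times_{Z_0}Z_n$ (and its analogue $P_0$ for $P\to\terminal$) as the correct representable seed, and to ensure that each invocation of Lemma~\ref{chap3:lem:representable} only calls on lower-dimensional Kan conditions and representabilities already in hand — which is precisely why the induction must establish representability of $P_n$ and the relative Kan condition for $P\to Z$ simultaneously. Once representability is secured, every covering assertion reduces to a one-line application of stability of covers under composition and pullback.
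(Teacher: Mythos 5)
Your proposal is correct and runs on the same engine as the paper's proof: the $\Hom$-version of Lemma~\ref{chap2:lem:lift-pullback-pushout-diagram} applied to $P=X\times_Y Z$, with every covering assertion obtained as a base change of the covers $X_n\to\Hom(\Horn{n}{j}\to\Simp{n},X\to Y)$ supplied by the Kan fibration $X\to Y$. The organizational difference lies in where representability comes from. You run an induction on simplicial degree and apply Lemma~\ref{chap3:lem:representable} to $P\to Z$ itself, which forces you to observe (correctly) that its proof only invokes degrees and Kan conditions below $n$, since $P$ is not yet known to be a simplicial object in $\Cat$. The paper instead applies that lemma to the given fibration $X\to Y$, where its hypotheses hold outright, obtaining that $\Hom(B,X)\to\Hom(\Simp{0}\to B,X\to Y)$ is a cover for \emph{every} collapsible $B$; combined with representability of $\Hom(\Simp{0}\to B,P\to Z)=P_0\times_{Z_0}\Hom(B,Z)$, the pullback square then yields $\Hom(B,P)$ representable for all collapsible $B$ in one stroke, with no induction on $P$ at all. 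That uniform statement is exactly what your last step is missing cheaply: since horns are collapsible, it gives representability of $\Hom(\Horn{n}{j},P)$ for free, so $P\to\terminal$ is a Kan fibration immediately by Proposition~\ref{chap3:prop:comp}. In your version this requires a second induction, and your phrase ``the lower Kan conditions just established'' is slightly off: the hypotheses of Lemma~\ref{chap3:lem:representable} applied to $P\to\terminal$ are the conditions $\Kan(m,j)$ for $P\to\terminal$ with $m<n$, not the relative ones for $P\to Z$, so they must be produced jointly with the representability of $\Hom(\Horn{m}{j},P)$ in the same induction (closing each stage with Proposition~\ref{chap3:prop:comp} or with your composite-of-covers factorisation). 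This is a routine repair rather than a gap, but it is the price of not having the paper's uniform collapsible-extension statement.
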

\begin{proof}
For any morphism of simplicial sets \(A\to B\), Lemma~\ref{chap2:lem:lift-pullback-pushout-diagram} yields a pullback square
\begin{equation}\label{chap3:eq:diagram-pullback}
\begin{gathered}
  \xymatrix{
  \Hom(B, P)\ar[r]\ar[d] & \Hom(B, X)\ar[d] \\
  \Hom(A\to B, P\to Z)\ar[r] & \Hom(A\to B, X\to Y)\rlap{\ .}
  }
\end{gathered}
\end{equation}
Let \(A\to B\) be a collapsible extension \(\Simp{0}\to B\). Since \(\Hom(B, Z)\to Z_0\) is a cover by Lemma~\ref{chap3:lem:covers_in_groupoid} and \(P_0\) is representable, we deduce that \(\Hom(\Simp{0}\to B, P\to Z)=P_0\times_{Z_0} \Hom(B, Z)\) is representable. Lemma~\ref{chap3:lem:representable} implies that \(\Hom(B, X)\to \Hom(\Simp{0}\to B, X\to Y)\) is a cover, hence \(\Hom(B, P)\) is representable. In particular, \(P\) is a simplicial object in~\(\Cat\).

Next let \(\Simp{0}\to A\to B\) be collapsible extensions. We consider the pullback diagram
\[
 \xymatrix{
 \Hom(A\to B, P\to Z)\ar[r]\ar[d] & \Hom(A, P)\ar[d]\\
 \Hom(B, Z)\ar[r] & \Hom(A, Z) \rlap{\ .}
 }
\]
Lemma~\ref{chap3:lem:covers_in_groupoid} implies that \(\Hom(B, Z)\to \Hom(A, Z)\) is a cover, hence \(\Hom(A\to B, P\to Z)\) is representable. Considering the pullback square~\eqref{chap3:eq:diagram-pullback}, we deduce that \(\Hom(B, P)\to \Hom(A\to B, P\to Z)\) is a cover because \(X\to Y\) is a Kan fibration. This proves that \(P\to Z\) is a Kan fibration.
\end{proof}

For examples of Kan fibrations we refer to the next section after we clarify what Kan fibrations mean in the setting of higher groupoid actions.

\section{Higher groupoid actions}\label{chap3:sec:2gpd-action}

Let us first recall Lie groupoid actions. Let \((J, \action)\) be a \(G\)-action on a manifold~\(E\). The action groupoid~\(A\) is a Lie groupoid with \(A_0=E\), \(A_1= E\times_{G_0,\target} G_1\), \(\source(x, g)= xg\), and \(\target(x, g)=x\). The natural projection \(\pi\colon A \to G\), given by \(\pi_1 =\pr_G\), and \(\pi_0=J\colon E\to G_0\), is a Lie groupoid functor.

\begin{lemma}\label{chap3:lem:1-action-kan}
  The projection \(\pi\colon A\to G\) is a Lie 1-groupoid Kan fibration.\footnote{Such Lie 1-groupoid Kan fibrations are also called discrete fibrations in the literature.}
\end{lemma}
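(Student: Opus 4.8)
The strategy is to reduce every Kan condition for $\pi$ to Kan conditions already known for the two nerves involved. Both $A=E\rtimes G$ and $G$ are Lie groupoids, so their nerves $NA$ and $NG$ are Lie $1$-groupoids in $(\Mfd,\covers_\subm)$ (Example~\ref{chap3:exa:one-groupoid}); in particular, by Definition~\ref{chap3:def:n-groupoid-pretopology} they both satisfy $\Kan!(m,k)$ for all $m\ge 2$ and $0\le k\le m$. According to Definition~\ref{chap3:def:kan-fibration-n-groupoid}, I must show that $\pi$ is a Kan fibration and that it satisfies the unique conditions $\Kan!(1,0)$ and $\Kan!(1,1)$. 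The plan is to establish the slightly stronger claim that $\pi$ satisfies $\Kan!(m,j)$ for every $m\ge 1$ and $0\le j\le m$; since isomorphisms are covers, this also yields the Kan fibration property.

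For $m\ge 2$ this is purely formal. By the definition of the relative $\Hom$,
\[
\Hom(\Horn{m}{j}\to\Simp{m}, NA\to NG)=\Hom(\Horn{m}{j}, NA)\times_{\Hom(\Horn{m}{j}, NG)} (NG)_m,
\]
where $\Hom(\Horn{m}{j}, NA)$ is representable by Lemma~\ref{chap3:lem:covers_in_groupoid}. Since $NG$ satisfies $\Kan!(m,j)$, the map $(NG)_m\to\Hom(\Horn{m}{j}, NG)$ is an isomorphism, so the fibre product collapses and the right-hand side is canonically isomorphic to $\Hom(\Horn{m}{j}, NA)$. Under this identification the comparison map $(NA)_m\to\Hom(\Horn{m}{j}\to\Simp{m}, NA\to NG)$ becomes the horn-restriction $(NA)_m\to\Hom(\Horn{m}{j}, NA)$, which is an isomorphism because $NA$ satisfies $\Kan!(m,j)$. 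Hence $\pi$ satisfies $\Kan!(m,j)$ for all $m\ge 2$.

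The two remaining conditions must be checked directly, because $NG$ does not satisfy the unique condition in dimension $1$ and the reduction above no longer applies. Here $(NA)_1=E\times_{J,G_0,\target} G_1$ with $\target(x,g)=x$ and $\source(x,g)=xg$. For $j=1$ the horn $\Horn{1}{1}$ is the vertex $\{1\}$, the target presheaf is $E\times_{J,G_0,\target} G_1$, and the comparison map is the identity $(x,g)\mapsto (x,g)$, so $\Kan!(1,1)$ holds at once. For $j=0$ the horn $\Horn{1}{0}$ is the vertex $\{0\}$, the target presheaf is $E\times_{J,G_0,\source} G_1$, and the comparison map is the shear $(x,g)\mapsto (xg,g)$; I would verify that this is a diffeomorphism by exhibiting the smooth inverse $(y,g)\mapsto (y g^{-1},g)$, whose well-definedness follows from the action axioms $\source(g)=J(xg)$ and $(xg)g^{-1}=x$ together with the smoothness of the inverse map of $G$. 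This gives $\Kan!(1,0)$ and finishes the proof.

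The argument is essentially bookkeeping: the only genuine computation is the diffeomorphism check for $\Kan!(1,0)$, and the one point that demands care is tracking the simplicial conventions—which face map corresponds to $\source$ and which to $\target$—so that the shear map and its inverse land in the correct fibre products. Everything in dimensions $\ge 2$ is inherited formally from the unique Kan conditions of the two nerves, so the main (mild) obstacle is entirely in the dimension-$1$ computation.
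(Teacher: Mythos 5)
Your proof is correct and follows essentially the same route as the paper's: one of the two dimension\nobreakdash-1 unique Kan conditions holds tautologically by the very construction of \(A_1=E\times_{G_0,\target}G_1\), the other reduces to invertibility of a shear map using the inverse of \(G\), and all conditions in dimension \(\ge 2\) are inherited formally from the unique Kan conditions of the two nerves (the paper compresses this last step to ``higher Kan conditions hold automatically''). The only discrepancy is a face-map convention: the paper's proof treats \(\Kan!(1,0)\) as the tautological condition and gets \(\Kan!(1,1)\) ``by applying the inverse map,'' whereas you do the mirror image — both are fine, since the paper itself is not consistent about whether restriction to vertex \(0\) is \(\source\) or \(\target\), and your choice agrees with the convention stated in its Example on 1\nobreakdash-groupoids.
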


\begin{proof}
  The condition \(\Kan!(1,0)\) for~\(\pi\) says that \(A_1\to A_0 \times_{\pi_0,G_0,\target} G_1\) is an isomorphism. This follows
  from the construction of~\(A_1\). The condition \(\Kan!(1,1)\) follows by applying the inverse map. Higher Kan conditions hold automatically, and we are done.
\end{proof}

The converse is also true: given a Lie 1-groupoid Kan fibration \(\pi\colon A\to G\), there is a natural action of \(G\) on \(A_0\) with action groupoid~\(A\). Thus, the Kan conditions combine all the data for a groupoid action. This motivates the following definition.

\begin{definition}\label{chap3:def:higher-groupoid-action}
  An \emph{action} of an \(n\)-groupoid object \(G\) in \((\Cat, \covers)\) is an \(n\)-groupoid Kan
  fibration \(A \xrightarrow{\pi} G\).
\end{definition}

We shall justify our definition for 2\nbdash{}groupoids in Chapter~\ref{chap5}. Theorem~\ref{chap5:thm:2gpd-action-Kan} shows that such a 2\nbdash{}groupoid Kan fibration is equivalent to a categorified action of a 2-groupoid.

\begin{remark}\label{chap3:rem:compare}
  In the category of sets, similar subjects have been studied: Duskin~\cite{Duskin} defined a notion of \(n\)-torsors for \(K(\Pi, n)\), which is a simplicial model for the Eilenberg--MacLane space. Glenn~\cite{Glenn:Realization} studied similar higher actions for Duskin's \(n\)-hypergroupoids (as in Remark~\ref{chap3:rem:duskin-hypergroupoid}). The action only asks Kan conditions for \(m>n\) as well. Later Baković~\cite{Bakovic:Bigroupoid_torsors} applies such actions to 2\nbdash{}groupoids, which are called bigroupoids in~\cite{Bakovic:Bigroupoid_torsors}, (thus lower Kan conditions are required as well), but the actions still have no lower Kan conditions. The author shows that a categorified action gives an action of a 2\nbdash{}groupoid in the sense of Glenn. But the opposite  direction is missing. Indeed, the lower Kan conditions must be required for this direction, as we can see in the
  reconstruction procedure in Section~\ref{chap5:sec:simp-cat}.
\end{remark}

\begin{remark}
  The fibre of an \(n\)\nbdash{}groupoid Kan fibration \(\pi\colon A\to G\) is an \((n-1)\)-groupoid by Proposition~\ref{chap3:lem:fibre_exists}. Intuitively, we may thought of \(G\) acts on the fibre as from the right (left) if we use \(\Kan(1,0)\) (\(\Kan(1,1)\)) to compose a 0-simplex in \(\Fib{\pi}\) and a 1-simplex in~\(G\).
\end{remark}

\begin{example}\label{chap3:exa:higher_groupoid_acting_on_objects}
  For a higher groupoid~\(G\) in \((\Cat,\covers)\), the identity map \(G\xrightarrow{\id} G\) is a Kan fibration.  The fibre~\(\Fib{\id}=\sk_0 G_0\) is a 0-groupoid.  This Kan fibration describes the obvious action of~\(G\) on its own object space~\(G_0\). This action may be regarded as the universal action of \(G\).
\end{example}

\begin{example}\label{chap3:exa:decalage}
  Given an \(n\)-groupoid \(G\) in \((\Cat,\covers)\), the \emph{d\'ecalage} of Illusie~\cite{Illusie} \(\dec(G)\) is the simplicial object obtained by shifting the simplicial degree up by one. More precisely, the simplicial object \(\dec(G)\) is characterised by the adjunction
  \[
  \Hom(S\star \Delta^0, G)=\Hom(S, \dec(G)), \quad\text{for any \(S\in \SSet\) and \(S\neq\emptyset\)},
  \]
  where \(\star\) is the join operation (Definition~\ref{chap2:def:join-simplicial-sets}). There is a natural map \(\pi\colon\dec(G)\to G\) induced by the inclusions \(S\hookrightarrow S\star \Simp{0}\). Replacing \(S\star\Simp{0}\) by \(\Simp{0}\star S\) gives a symmetric construction \(\pi'\colon\dec'(G)\to G\). We have
  \begin{align*}
  \Hom (\Horn{k}{i}\to\Simp{k}, \dec(G) \to G)&\cong\Hom (\Horn{k}{i}\star\Simp{0},G)\times_{\Hom(\Horn{k}{i},G)}\Hom(\Simp{k},  G))\\
  &=\Hom(\Horn{k}{i}\star\Simp{0}\cup_{\Horn{k}{i}}\Simp{k},  G)\\
  &=\Hom(\Horn{k+1}{i},G),
  \end{align*}
  where the last identify follows from Equation~\eqref{chap2:eq:join}. The commutative diagram
  \[
  \xymatrix{
    \Hom (\Simp{k}, \dec(G))\ar[d]\ar[r]^{\cong}& \Hom(\Simp{k+1},G)\ar[d]\\
    \Hom (\Horn{k}{i}\to\Simp{k}, \dec(G) \to G) \ar[r]^-{\cong}& \Hom(\Horn{k+1}{i},G)
  }
  \]
  shows that \(\Kan(k+1, i)\) for~\(G\) implies \(\Kan(k,i)\) for \(\pi\colon \dec(G) \to G\).  Thus, \(\pi\) is a Kan fibration between \(n\)\nbdash{}groupoids and \(\dec(G)\) is also an
  \(n\)-groupoid. We have \(\Fib{\pi}_0= G_1\), the space of arrows in~\(G\). We interpret \(\Fib{\pi}\) and \(\Fib{\pi'}\)  as higher groupoids of arrows in~\(G\).  The Kan fibrations \(\pi\) and \(\pi'\) correspond to the canonical actions of~\(G\) on their higher groupoids of arrows by right and left translations. Later in Example~\ref{chap3:exa:decalage-principal}, we will see that these higher actions are also principal, as one may expect.
\end{example}

\begin{example}
Let \(K_1\), \(G_1\), and \(H_1\) be groups, and let \(K\), \(G\), and \(H\) be the nerves of the corresponding one-object groupoids. Given a group extension (not necessarily abelian),
\[
1\to K_1\to G_1 \to H_1\to 1,
\]
the induced map of nerves \(G\to H\) is a 2\nbdash{}groupoid Kan fibration. The condition \(\Kan(1, j)\) holds because \(G_1\to H_1\) is surjective. For \(m \ge 2\), the condition \(\Kan!(m, j)\) for \(G\to H\) follows from the same conditions for \(H\) and \(G\) separately because
\begin{align*}
\hom(\Horn{m}{j} \to \Simp{m} , G \to H)&= \hom(\Horn{m}{j}, G)\times_{\hom(\Horn{m}{j}, H) } \hom(\Simp{m}, H),\\
                                        &\cong \hom(\Simp{m}, G).
\end{align*}
We know that there is no group action of \(H_1\) on \(K_1\), in general; however, a group extension gives an action of the 2-groupoid \(H\) on the 1-groupoid \(K\) with action 2-groupoid~\(G\).
\end{example}

\section{Higher principal bundles}
Let \(G\) be a Lie groupoid. After the introduction of acyclic fibrations, we will see that a \(G\)-bundle \(P\to M\) is principal if and only if the nerve of the projection \(P\rtimes G\to  \trivial{M}\) is an acyclic fibration. Then we define higher principal bundles by acyclic fibrations.

\subsection{Acyclic fibrations}
\begin{definition}\label{chap3:def:acyclic-fibration}
   A morphism \(f\colon X\to Y\) of simplicial objects in \(\Cat\) is an \emph {acyclic fibration} if it satisfies \(\Acyc(m)\) for all \(m\ge 0\), where \(\Acyc(m)\) means that \(\Hom(\partial\Simp{m}\to \Simp{m},X\to Y)\) is representable and the canonical map
  \begin{equation}\label{chap3:eq:acyclic-fibration}
    \Hom(\Simp{m}, X) \to \Hom(\partial\Simp{m}\to \Simp{m},X\to Y)
  \end{equation}
  is a cover.
\end{definition}

\begin{remark}
  As shown in~\cite[Lemma 2.4]{Zhu:ngpd}, if the conditions \(\Acyc(l)\) for \(0\le l<m\) are satisfied, then the presheaf \(\Hom(\partial\Simp{m}\to \Simp{m},X\to Y)\) is representable. Moreover, an acyclic fibration is a Kan fibration because the horn inclusions \(\Horn{m}{j} \to \Simp{m}\) are boundary extensions.
\end{remark}

\begin{lemma}[{\cite[Lemma~2.5]{Zhu:ngpd}}]\label{chap3:lem:acyclic-finite}
Let \(f\colon X\to Y\) be a morphism of \(n\)\nbdash{}groupoids in \((\Cat, \covers)\). If \(f\) satisfies \(\Acyc(m)\) for \(0\le m<n\) and \(\Acyc!(n)\), where \(\Acyc!(n)\) means that~\eqref{chap3:eq:acyclic-fibration} is an isomorphism, then \(\Acyc!(m)\) for \(m\ge n+1\) automatically holds, thus \(f\) is an acyclic fibration \textup(or hypercover in \cite{Zhu:ngpd}\textup).
\end{lemma}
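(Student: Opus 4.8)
The plan is to prove $\Acyc!(m)$ for every $m \ge n+1$ by induction on $m$, with the given condition $\Acyc!(n)$ serving as the anchor for the base case $m=n+1$. One preliminary observation drives the whole argument: since $X$ and $Y$ are $n$\nbdash{}groupoids, they satisfy $\Kan!(m,k)$ for all $m>n$ and all $0\le k\le m$. Writing $\Hom(\Horn{m}{k}\to\Simp{m}, f)=\Hom(\Horn{m}{k},X)\times_{\Hom(\Horn{m}{k},Y)}\Hom(\Simp{m},Y)$ and feeding in the isomorphisms $\Hom(\Simp{m},X)\cong\Hom(\Horn{m}{k},X)$ and $\Hom(\Simp{m},Y)\cong\Hom(\Horn{m}{k},Y)$, I would deduce that for $m\ge n+1$ the canonical map $X_m\to\Hom(\Horn{m}{k}\to\Simp{m}, f)$ is already an isomorphism; that is, $f$ automatically satisfies $\Kan!(m,k)$ above dimension $n$. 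Representability of all the presheaves in sight is guaranteed by the remark preceding the lemma together with Lemma~\ref{chap3:lem:representable}.

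Fixing $m\ge n+1$ and any $k$ with $0\le k\le m$, I would then run the relative analogue of the argument in Proposition~\ref{chap2:prop:coskeletal-kan}(ii). Consider the restriction maps
\[
b\colon X_m \to \Hom(\partial\Simp{m}\to\Simp{m}, f), \quad F\colon \Hom(\partial\Simp{m}\to\Simp{m}, f)\to\Hom(\Horn{m}{k}\to\Simp{m}, f),
\]
so that $h\coloneqq F\circ b\colon X_m\to\Hom(\Horn{m}{k}\to\Simp{m}, f)$ is the horn restriction. By the preliminary observation $h$ is an isomorphism. Setting $\tilde F \coloneqq b\circ h^{-1}$ gives $F\circ\tilde F=\id$ at once, so it remains to prove $\tilde F\circ F=\id$; once $F$ is shown to be an isomorphism, $b=F^{-1}\circ h$ is an isomorphism and $\Acyc!(m)$ follows.

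The heart of the matter, and what I expect to be the main obstacle, is the identity $\tilde F\circ F=\id$. Given a boundary datum $\varphi=(\beta,\psi)$ consisting of $\beta\colon\partial\Simp{m}\to X$ and a compatible $\psi\colon\Simp{m}\to Y$, the composite $\tilde F\circ F$ forgets the $k$\nbdash{}th face of $\beta$, fills the resulting horn uniquely to a simplex $\sigma\colon\Simp{m}\to X$ with $f\sigma=\psi$, and returns its boundary. By construction $\sigma$ and $\beta$ agree on every face except possibly the $k$\nbdash{}th, so everything reduces to showing $\face_k\sigma=\face_k\beta$. These two $(m-1)$\nbdash{}simplices of $X$ have the same image $\face_k\psi$ in $Y$, and they agree on $\partial\Simp{m-1}$ because all codimension\nbdash{}two faces of the $k$\nbdash{}th face already lie in the horn $\Horn{m}{k}$. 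The inductive hypothesis $\Acyc!(m-1)$ — which for $m=n+1$ is exactly the given condition $\Acyc!(n)$ — states that an $(m-1)$\nbdash{}simplex of $X$ is determined uniquely by its $X$\nbdash{}boundary together with its image in $Y$, forcing $\face_k\sigma=\face_k\beta$ and hence $\tilde F\circ F=\id$.

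This closes the induction and shows $f$ is an acyclic fibration. As a sanity check, for $m\ge n+2$ one can bypass the face\nbdash{}matching step entirely: the remark following Lemma~\ref{chap3:lem:kan(n+1,j)-implies-all} makes $X$ and $Y$ both $(n+1)$\nbdash{}coskeletal, so that $X_m\cong\Hom(\partial\Simp{m},X)$ and $Y_m\cong\Hom(\partial\Simp{m},Y)$, whence the fibre product $\Hom(\partial\Simp{m},X)\times_{\Hom(\partial\Simp{m},Y)}Y_m$ collapses to $X_m$ and $\Acyc!(m)$ is automatic. The genuinely new input is therefore concentrated in the single dimension $m=n+1$, where the unique filling of $k$\nbdash{}th faces supplied by $\Acyc!(n)$ is indispensable.
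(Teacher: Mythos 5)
The paper itself gives no proof of this lemma --- it is quoted from Zhu's paper (\cite[Lemma~2.5]{Zhu:ngpd}) --- so there is no in-text argument to compare against line by line; judged on its own merits, your proof is correct. Your preliminary observation is right: since \(\Hom(\Horn{m}{k}\to\Simp{m},f)=\Hom(\Horn{m}{k},X)\times_{\Hom(\Horn{m}{k},Y)}\Hom(\Simp{m},Y)\) and both \(X\) and \(Y\) satisfy \(\Kan!(m,k)\) above dimension \(n\), the relative condition \(\Kan!(m,k)\) for \(f\) holds for all \(m\ge n+1\), and this also settles representability (the target is isomorphic to the representable \(X_m\)). The \(b\), \(F\), \(h=F\circ b\), \(\tilde F=b\circ h^{-1}\) bookkeeping is the relative analogue of the paper's Proposition~\ref{chap2:prop:coskeletal-kan}(ii), and the crucial step --- \(\face_k\sigma=\face_k\beta\) --- is sound: every codimension-two face of \(\Simp{m}\) omitting the vertex \(k\) lies in some face \(\face_j\) with \(j\ne k\), hence in \(\Horn{m}{k}\), so the two \((m-1)\)-simplices share their boundary in \(X\) and their image \(\face_k\psi\) in \(Y\), and the monomorphism part of the inductive hypothesis \(\Acyc!(m-1)\) forces them to coincide. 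This is where your argument genuinely differs from the absolute case: in Proposition~\ref{chap2:prop:coskeletal-kan}(ii) the paper pins down the \(k\)-th face using \(\Kan!(m-1,j)\), which is unavailable for \(f\) at the critical dimension \(m=n+1\) (the hypotheses give no Kan conditions on \(f\) at level \(n\)); your substitution of the inductive \(\Acyc!(m-1)\), anchored at the given \(\Acyc!(n)\), is exactly the right fix, and your coskeletality shortcut for \(m\ge n+2\) correctly isolates \(m=n+1\) as the only dimension where this input is needed.
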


Lemma~\ref{chap3:lem:acyclic-def} below shows the converse. Thus, unlike for Kan fibrations, an \(n\)-groupoid morphism \(f\colon X\to Y\) is an acyclic fibration of \(n\)\nbdash{}groupoids if and only if it is an acyclic fibration of simplicial objects.

\begin{lemma}\label{chap3:lem:acyclic-def}
  Let \(f\colon X\to Y\) be a morphism of \(n\)-groupoids in \((\Cat,\covers)\). If \(f\) satisfies \(\Acyc(k)\) for all \(k \ge n\), then it satisfies \(\Acyc!(k)\) for \(k \ge n\).
\end{lemma}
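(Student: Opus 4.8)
The plan is to prove that for every $k \ge n$ the $\Acyc(k)$-comparison map
\[
\phi_k\colon \Hom(\Simp{k},X)\to \Hom(\partial\Simp{k}\to\Simp{k}, X\to Y),
\]
which is a cover by hypothesis, is in fact an isomorphism, treating the range $k\ge n+1$ and the borderline dimension $k=n$ by different means. The key auxiliary observation is that, although $X$ and $Y$ are only $n$\nbdash{}groupoids, the morphism $f$ still satisfies $\Kan!(k,j)$ for all $k\ge n+1$ and $0\le j\le k$: in the defining pullback $\Hom(\Horn{k}{j}\to\Simp{k},X\to Y)=\Hom(\Horn{k}{j},X)\times_{\Hom(\Horn{k}{j},Y)}\Hom(\Simp{k},Y)$, the condition $\Kan!(k,j)$ for $Y$ (valid since $k>n$) collapses $\Hom(\Simp{k},Y)$ onto $\Hom(\Horn{k}{j},Y)$, so the comparison map of $f$ reduces to the comparison map $\Hom(\Simp{k},X)\to\Hom(\Horn{k}{j},X)$ of $X$, which is invertible by $\Kan!(k,j)$ for $X$. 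Throughout I write $q_k$ for the restriction $\Hom(\partial\Simp{k}\to\Simp{k},f)\to\Hom(\Horn{k}{j}\to\Simp{k},f)$ forgetting the $j$\nbdash{}th face, so that $q_k\circ\phi_k$ is precisely the $\Kan(k,j)$-comparison map of $f$.

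For $k\ge n+1$ this gives the conclusion at once. Since $q_k\circ\phi_k$ is an isomorphism, $\phi_k$ is a split monomorphism; since $\phi_k$ is a cover it is an epimorphism (covers are effective epimorphisms for a subcanonical pretopology, as in all examples, by Lemma~\ref{chap3:lem:subcanonical-effective-epi}). A split monomorphism that is an epimorphism is an isomorphism, so $\phi_k$ is invertible and $\Acyc!(k)$ holds for every $k\ge n+1$ (in particular for all $k\ge n+2$, so no separate treatment of the higher dimensions is needed).

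The remaining case $k=n$ is the heart of the matter, and I would exploit the case $k=n+1$ just established. Writing $\partial\Simp{n+1}=\Horn{n+1}{j}\cup_{\partial\Simp{n}\{j\}}\Simp{n}\{j\}$ as the pushout that glues the missing $j$\nbdash{}th face back onto the horn, Corollary~\ref{chap2:cor:lift-composition-pushout-combined} yields a pullback square
\[
\xymatrix{
\Hom(\partial\Simp{n+1}\to\Simp{n+1},f)\ar[r]\ar[d]_{q} & X_n\ar[d]^{\phi_n}\\
\Hom(\Horn{n+1}{j}\to\Simp{n+1},f)\ar[r]_-{r} & \Hom(\partial\Simp{n}\to\Simp{n},f)\rlap{\ ,}
}
\]
whose right vertical map is $\phi_n$ and whose bottom map $r$ restricts a horn-datum over $\Simp{n+1}$ to the pair $(\partial\Simp{n}\{j\}\to\Simp{n}\{j\})$. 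The left projection $q$ satisfies $q\circ\phi_{n+1}=\psi$, the $\Kan(n+1,j)$-comparison of $f$; as both $\phi_{n+1}$ (previous paragraph) and $\psi$ ($\Kan!(n+1,j)$ for $f$) are isomorphisms, $q$ is an isomorphism. Since $q$ is the base change of $\phi_n$ along $r$, the descent property of covers (the analogue of Lemma~\ref{chap1:lem:surj-sub-2-prop}(ii)) will force $\phi_n$ to be an isomorphism, \emph{provided} $r$ is a cover.

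Proving that $r$ is a cover is the main obstacle, and I would factor it as $r=r_2\circ r_1$. The map $r_1$ restricts only the $X$\nbdash{}part from $\Horn{n+1}{j}$ down to $\partial\Simp{n}\{j\}$, keeping the $Y$\nbdash{}part over $\Simp{n+1}$; it is a cover by Lemma~\ref{chap3:lem:representable} in its relative form for the Kan fibration $f$, once one verifies that $\partial\Simp{n}\{j\}\hookrightarrow\Horn{n+1}{j}\hookrightarrow\Simp{n+1}$ is a chain of collapsible extensions. The delicate point here is the collapsibility of $\partial\Simp{n}\{j\}\hookrightarrow\Horn{n+1}{j}$, which I would establish by attaching the faces $\Simp{n}\{i\}$ with $i\ne j$ one at a time, each as a horn filling. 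The map $r_2$ changes only the $Y$\nbdash{}part, namely $\Hom(\Simp{n+1},Y)\to\Hom(\Simp{n}\{j\},Y)$; since $\Simp{n}\{j\}\hookrightarrow\Simp{n+1}$ is a face inclusion and hence collapsible, this restriction is a cover by Lemma~\ref{chap3:lem:covers_in_groupoid}, and $r_2$ is its base change, hence again a cover. Thus $r$ is a composite of covers, the descent argument of the previous paragraph applies, $\phi_n$ is an isomorphism, and $\Acyc!(k)$ holds for all $k\ge n$, proving the lemma.
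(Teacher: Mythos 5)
Your handling of the range $k\ge n+1$ is correct: the observation that $f$ inherits $\Kan!(k,j)$ from $X$ and $Y$ in dimensions $k\ge n+1$, combined with ``split monomorphism plus cover implies isomorphism'', settles all of these dimensions at once, and is in fact a mild streamlining of the paper, which treats $k\ge n+2$ via coskeletality and $k=n+1$ via a separate pullback argument. Your skeleton for $k=n$ also coincides with the paper's proof: the same pullback square obtained from Corollary~\ref{chap2:cor:lift-composition-pushout-combined}, the same proof that the left vertical map $q$ is an isomorphism, and the same descent principle (Lemma~\ref{chap3:lemma:iso} together with Remark~\ref{chap3:rem:pullback-along-cover-iso}).

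The genuine gap is in your proof that the bottom map $r$ is a cover, namely the claim that $r_1$ is a cover ``by Lemma~\ref{chap3:lem:representable} in its relative form for the Kan fibration $f$''. But $f$ is not a Kan fibration here: the hypotheses give $\Acyc(k)$ only for $k\ge n$, and the only Kan-type conditions derivable for $f$ are the $\Kan!(k,j)$ with $k\ge n+1$ from your first step; passing from $\partial\Simp{n}\{j\}$ up to $\Horn{n+1}{j}$ requires horn fillings in dimensions $\le n$ (attaching a facet $\Simp{n}\{i\}$ happens along the single face $\Simp{n-1}\{i,j\}$, so it is a face extension, i.e.\ a chain of lower-dimensional horn fillings, not one horn), and the corresponding relative conditions for $f$ are exactly what is unavailable. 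Indeed $r_1$ can genuinely fail to be a cover: take $n=1$, let $X$ be the nerve of the one-object trivial groupoid, $Y$ the nerve of the pair groupoid on $\{a,b\}$, and $f$ the inclusion at $a$; then $f$ satisfies $\Acyc(k)$ for all $k\ge 1$, but $\Hom(\Horn{2}{1}\to\Simp{2},f)$ is a single point, while $\Hom(\partial\Simp{1}\{1\}\to\Simp{2},f)$ (here $\partial\Simp{1}\{1\}$ is the pair of vertices $0,2$) has two elements, because the $Y$-simplex retained over all of $\Simp{2}$ has its middle vertex free to be $a$ or $b$, outside the image of $X$. So $r_1$ is not surjective, and the factorization $r=r_2\circ r_1$ cannot be used. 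The repair is exactly the paper's computation and uses only what you have already established: by $\Kan!(n+1,j)$ for $f$, identify $\Hom(\Horn{n+1}{j}\to\Simp{n+1},f)\cong X_{n+1}$; under this identification $r$ becomes the composite $X_{n+1}\to X_n \xrightarrow{\phi_n} \Hom(\partial\Simp{n}\to\Simp{n},f)$, where the first map is a cover because $X$ is an $n$-groupoid and the face inclusion $\Simp{n}\to\Simp{n+1}$ is a collapsible extension (Lemma~\ref{chap3:lem:covers_in_groupoid}), and the second map is a cover by the hypothesis $\Acyc(n)$. With $r$ shown to be a cover in this way, the rest of your argument goes through verbatim.
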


\begin{proof}
  When \(k \ge n+2\), since \(n\)-groupoids are \((n+1)\)-coskeletal, we have
  \[
  \Hom(\partial\Simp{k} \to \Simp{k}, X \to Y)=\Hom(\Simp{k} \to \Simp{k}, X \to Y)=\Hom(\Simp{k}, X).
  \]
  Thus \(\Acyc!(k)\) for \(f\) holds automatically.

  For \(k =n+1\), there is a pullback diagram
  \[
  \xymatrix{
    \Hom(\partial\Simp{k+1} \to \Simp{k+1}, X\to Y)\ar[r]^{u'}\ar[d]^{v'} & \Hom(\Horn{k+1}{0} \to \Simp{k+1}, X\to Y)\ar[d]^{v}\\
    \Hom(\Simp{k} \to \Simp{k}, X\to Y)\ar[r]^{u} &\Hom(\partial\Simp{k} \to \Simp{k}, X\to Y) \rlap{\ ,}
  }
  \]
  where \(v\) is induced by \(\face^0\colon \Simp{k}\to \Simp{k+1}\). The condition \(\Kan!(k+1,0)\) for \(X\) and \(Y\) implies that
  \[
  \Hom(\Horn{k+1}{0} \to \Simp{k+1}, X\to Y)=\Hom(\Simp{k+1} \to \Simp{k+1}, X\to Y)\cong\Hom(\Simp{k+1}, X).
  \]
  Since \(X\) is an \(n\)\nbdash{}groupoid and \(\face^0\colon \Simp{k}\to \Simp{k+1}\) is a collapsible extension, the induced map \(\Hom(\Simp{k+1}, X )\to \Hom(\Simp{k}, X)\) is a cover. Since \(\Hom(\Simp{k}, X )\xrightarrow{u} \Hom(\partial\Simp{k} \to \Simp{k}, X\to Y)\) is a cover by \(\Acyc(k)\), the composite
  \(\Hom(\Simp{k+1}, X)\xrightarrow{v} \Hom(\partial\Simp{k}\to \Simp{k}, X \to Y)\) is a cover. By the condition \(\Acyc!(k+1)\) that we just proved, \(u'\) is an isomorphism. Lemma~\ref{chap3:lemma:iso} below shows that \(\Acyc!(k)\) for \(k=n+1\) holds. A similar argument proves \(\Acyc!(n)\), and we are done.
\end{proof}

\begin{lemma}\label{chap3:lemma:iso}
  Let \(X\xrightarrow{u} Z\) and \(Y\xrightarrow{v} Z\) be covers in \((\Cat, \covers)\).  If the natural map \(X \times_Z Y  \xrightarrow{u'} Y\) is an isomorphism, then \(X\xrightarrow{u} Z\) is an isomorphism.
\end{lemma}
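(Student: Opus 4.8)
The plan is to recognise this as the pretopology analogue of Lemma~\ref{chap1:lem:surj-sub-2-prop}(ii): it says that a cover whose pullback along another cover is an isomorphism is itself an isomorphism. I would prove it by establishing that $u$ is simultaneously a monomorphism and an effective epimorphism; since a regular (effective) epimorphism that is also monic is automatically an isomorphism, this finishes the argument. That $u$ is an effective epimorphism is immediate: $u$ is a cover, and covers are effective epimorphisms by Lemma~\ref{chap3:lem:subcanonical-effective-epi} (working, as throughout this chapter, with a subcanonical pretopology). So the real content is the monomorphicity of $u$.

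I would reformulate the hypothesis in terms of generalised ($T$-)elements, in the spirit of Section~\ref{chap3:sec:Hom}. Writing $P = X\times_Z Y$ with projections $p\colon P\to X$ and $u'=q\colon P\to Y$, the assertion that $q$ is an isomorphism says precisely: for every object $T$ and every $T$-element $y\colon T\to Y$ there is a \emph{unique} $T$-element $x\colon T\to X$ with $u\circ x = v\circ y$. Call this lifting property $(\star)$; it is the key input, obtained by computing $T$-elements of the pullback $P$ and noting that $q$ iso means $\hom(T,P)\to\hom(T,Y)$ is bijective for all $T$.

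To show $u$ is monic, take $x_1,x_2\colon T\to X$ with $u x_1 = u x_2 =: z$. Since $v$ is a cover, pretopology axiom (iii) makes the pullback $T' := T\times_{z,Z,v} Y$ representable, its projection $w\colon T'\to T$ a cover, and it carries a map $y'\colon T'\to Y$ with $v y' = z w$. Then $x_1 w$ and $x_2 w$ both satisfy $u(x_i w) = z w = v y'$, so $(\star)$, applied to the $T'$-element $y'$, forces $x_1 w = x_2 w$ by uniqueness. As $w$ is a cover, hence an epimorphism by Lemma~\ref{chap3:lem:subcanonical-effective-epi}, I may cancel it to conclude $x_1 = x_2$. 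Thus $u$ is a monomorphism, and combined with the effective-epimorphism property it is an isomorphism.

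The main obstacle — essentially the only nontrivial point — is this descent step: passing from ``$x_1,x_2$ agree after base change along the cover $w$'' to ``$x_1 = x_2$''. This is exactly where subcanonicity (covers are epimorphisms) enters, and it is the precise analogue of the fact, used tacitly in Lemma~\ref{chap1:lem:surj-sub-2-prop}(ii), that surjective submersions are epimorphisms. I would also check that every fibre product invoked is representable, which is guaranteed by the pullback-stability axiom for covers applied to the cover $v$ along $z$; no representability beyond the pretopology axioms is needed.
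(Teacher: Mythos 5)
Your proposal is correct and follows essentially the same route as the paper: both arguments show $u$ is a monomorphism by base-changing along the cover $v$ (your $T'=T\times_Z Y$ with its epic cover projection $w$ is exactly the paper's $A\times_Z Y$ with $v''$), using that $u'$ being an isomorphism forces the two pulled-back maps to agree, and then conclude via ``monomorphism $+$ effective epimorphism $\Rightarrow$ isomorphism''. The only cosmetic differences are that you phrase the uniqueness step in terms of $T$-elements rather than an explicit pullback diagram chase, and you cite the mono-plus-regular-epi fact as standard where the paper spells out the two-line coequaliser argument.
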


\begin{proof}
Recall from Lemma~\ref{chap3:lem:subcanonical-effective-epi} that in a subcanonical pretopology \((\Cat, \covers)\), covers \(u\) and~\(v\) are effective epimorphisms.

We now prove that~\(u\) is a monomorphism.  Let \(f, g \colon  A\to X\) be a pair of morphisms such that \(u\circ f=u  \circ g\).  By the universal property, there are morphisms \(f' , g'\colon A\times_Z Y \to X\times_Z Y\) such that \(u'\circ f'=u' \circ g'\) and the two left squares, involving \(f\) and \(f'\) or \(g\) and \(g'\), in the diagram below are pullback diagrams:
  \[
  \xymatrix{
    A\times_Z Y\ar@{.>}@<.5ex>[r]^{f'} \ar@{.>}@<-.5ex>[r]_{g'}\ar[d]^{v''}   &  X\times_Z Y\ar[d]^{v'} \ar[r]^-{u'}   &Y\ar[d]^{v} \\
    A\ar@<.5ex>[r]^{f} \ar@<-.5ex>[r]_{g}           &      X\ar[r]^{u}         &Z\rlap{\ .}
  }
  \]
As \(u'\) is an isomorphism, \(u'\circ f'=u' \circ g'\) implies \(f'=g'\). Therefore, \(f \circ v''=v' \circ f'=v' \circ g'=g \circ v''\).  Since \(v''\) is a cover, hence an effective epimorphism, we obtain \(f=g\), and this proves that \(u\) is a monomorphism.

We claim that if \(u\colon X \to Z\) is a monomorphism and an effective epimorphism, then it is an isomorphism. Consider the coequaliser diagram
  \[
  \xymatrix@1{
    X\times_Z  X \ar@<.5ex>[r]^-{p_1} \ar@<-.5ex>[r]_-{p_2} &  X\ar[r]^{u} &  Z
  }.
  \]
Since~\(u\) is a monomorphism, \(u \circ p_1= u \circ p_2\) implies \(p_1=p_2\).  Therefore, \(u\) is an isomorphism. This completes the proof.
\end{proof}

\begin{remark}\label{chap3:rem:pullback-along-cover-iso}
  The assumption that \(u\) is a cover in the above lemma is redundant. In this case, we can still prove that \(u\) is a momomorphism. Since \(u\circ v'=v\circ u'\) is an effective epimorphism, \(u\circ v'\) is an extremal epimorphism, and so is \(u\). Now \(u\) is a momomorphism and an extremal epimorphism, thus it is an isomorphism. For the definition of extremal epimorphisms and the reason of this remark see~\cite[Section 4.3]{Borceux}.
\end{remark}

\begin{proposition}\label{chap3:prop:acyclic-stable-comp-pullbak}
 Acyclic fibrations of \(n\)-groupoids in \((\Cat, \covers)\) are closed under composition and stable under pullback along any morphism.
\end{proposition}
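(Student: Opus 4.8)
The plan is to run, for the boundary inclusions $\partial\Simp{m}\to\Simp{m}$, exactly the two pullback-square arguments already carried out for Kan fibrations in Proposition~\ref{chap3:prop:comp} (composition) and Proposition~\ref{chap3:prop:pullback-Kan-fibration} (pullback), the only difference being that horn inclusions are replaced by boundary inclusions. The covers-compose-and-pull-back formalism of Lemma~\ref{chap2:lem:lift-composition-pullback}, which we already noted remains valid for $\Hom$, does all the work; the single point requiring genuine care is the inductive representability of the presheaves $\Hom(\partial\Simp{m}\to\Simp{m},-)$, supplied by the remark following Definition~\ref{chap3:def:acyclic-fibration} (that is, \cite[Lemma 2.4]{Zhu:ngpd}).

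For closure under composition, let $f\colon X\to Y$ and $g\colon Y\to Z$ be acyclic fibrations, and prove $\Acyc(m)$ for $g\circ f$ by induction on $m$, with representability of $\Hom(\partial\Simp{m}\to\Simp{m}, X\to Z)$ granted once $\Acyc(l)$ holds for all $l<m$. Instantiating Lemma~\ref{chap2:lem:lift-composition-diagram} with the boundary inclusion $\partial\Simp{m}\to\Simp{m}$ in place of $A\to B$ and with $X\to Y\to Z$, I obtain a pullback square whose right-hand vertical arrow is the cover $\Hom(\Simp{m},Y)\to\Hom(\partial\Simp{m}\to\Simp{m}, Y\to Z)$ coming from $\Acyc(m)$ for $g$; its pullback $\Hom(\partial\Simp{m}\to\Simp{m}, X\to Y)\to\Hom(\partial\Simp{m}\to\Simp{m}, X\to Z)$ is therefore again a cover. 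Composing it with the cover $\Hom(\Simp{m},X)\to\Hom(\partial\Simp{m}\to\Simp{m}, X\to Y)$ furnished by $\Acyc(m)$ for $f$ yields the required cover $\Hom(\Simp{m},X)\to\Hom(\partial\Simp{m}\to\Simp{m}, X\to Z)$. This is verbatim the reasoning of Proposition~\ref{chap3:prop:comp}.

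For stability under pullback, let $f\colon X\to Y$ be an acyclic fibration, let $h\colon Z\to Y$ be any morphism of $n$-groupoids, and form $P=X\times_Y Z$. Since $X_0\to Y_0$ is a cover (from $\Acyc(0)$), the corner $P_0=X_0\times_{Y_0}Z_0$ is representable, and because an acyclic fibration is a Kan fibration, Proposition~\ref{chap3:prop:pullback-Kan-fibration} makes $P$ a higher groupoid, levelwise representable, with $P\to Z$ a Kan fibration. It then remains to verify each $\Acyc(m)$ for $P\to Z$: feeding the boundary inclusion $\partial\Simp{m}\to\Simp{m}$ into Lemma~\ref{chap2:lem:lift-pullback-pushout-diagram} produces the pullback square
\[
\xymatrix{
\Hom(\Simp{m}, P)\ar[r]\ar[d] & \Hom(\Simp{m}, X)\ar[d] \\
\Hom(\partial\Simp{m}\to \Simp{m}, P\to Z)\ar[r] & \Hom(\partial\Simp{m}\to \Simp{m}, X\to Y)\rlap{\ ,}
}
\]
whose right-hand vertical arrow is a cover by $\Acyc(m)$ for $f$; its pullback, the left-hand vertical arrow, is the cover witnessing $\Acyc(m)$ for $P\to Z$. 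Running this as an induction on $m$ secures the representability of the bottom-left corner along the way. Finally $P$ is itself an $n$-groupoid: $Z$ is one and $f$ satisfies $\Acyc!(m)$ for $m\ge n$ by Lemma~\ref{chap3:lem:acyclic-def}, so the unique Kan conditions above dimension $n$ transfer to $P$.

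The genuinely delicate point—and the thing to watch throughout—is precisely this representability bookkeeping: before any pullback square can be invoked one must already know that the presheaves $\Hom(\partial\Simp{m}\to\Simp{m},-)$ appearing as its corners are representable in $\Cat$. Both arguments are therefore organised as inductions on $m$, using the representability statement of the remark after Definition~\ref{chap3:def:acyclic-fibration} at each stage, together with the levelwise representability of $P$; the \emph{covers-are-stable} content is then entirely formal.
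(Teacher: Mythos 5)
Your argument is correct, but it does not coincide with the paper's, because the paper offers no proof here at all: its ``proof'' is a one-line citation to Lemmas 2.6, 2.7, and 2.8 of \cite{Zhu:ngpd}. What you have done is re-derive those lemmas inside the thesis's own formalism, running the boundary-inclusion analogues of Proposition~\ref{chap3:prop:comp} (composition, via the pullback square of Lemma~\ref{chap2:lem:lift-composition-diagram}) and of Proposition~\ref{chap3:prop:pullback-Kan-fibration} (pullback stability, via Lemma~\ref{chap2:lem:lift-pullback-pushout-diagram}), with the representability of \(\Hom(\partial\Simp{m}\to\Simp{m},\blank)\) handled inductively by the remark following Definition~\ref{chap3:def:acyclic-fibration}. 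This is almost certainly the same mechanism as in Zhu's cited lemmas, so the mathematical content matches; what your version buys is self-containedness, and it makes visible that the acyclic case is formally identical to the Kan-fibration case already worked out in the paper, while the citation buys brevity. The only step you state too loosely is the last one: ``the unique Kan conditions above dimension \(n\) transfer to \(P\)'' does not follow immediately from \(\Acyc!(m)\) for \(m\ge n\) alone. Either argue it in two stages (a horn \(\Horn{m}{k}\to P\) lying over a filled simplex of \(Z\) is first completed to a boundary using \(\Acyc!(m-1)\) for \(P\to Z\) and then filled uniquely using \(\Acyc!(m)\)), or, more cleanly, note that \(f\) automatically satisfies \(\Kan!(m,k)\) for \(m>n\) because it is a Kan fibration between \(n\)-groupoids (remark after Definition~\ref{chap3:def:kan-fibration-n-groupoid}); these isomorphism conditions pull back to \(P\to Z\) through the same squares of Lemma~\ref{chap2:lem:lift-pullback-pushout-diagram}, and composing with \(\Kan!(m,k)\) for \(Z\) via Proposition~\ref{chap3:prop:comp} yields \(\Kan!(m,k)\) for \(P\).
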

\begin{proof}
 This is stated in Lemmas 2.6, 2.7, and 2.8 in~\cite{Zhu:ngpd}.
\end{proof}

\begin{corollary}
  Let \(f\colon X\to Z\) and \(g\colon Y\to Z\) be \(n\)-groupoid Kan fibrations in \((\Cat,\covers)\). Suppose that \(h\colon X\to Y\) is an acyclic fibration such that \(f=g\circ h\). Then the induced morphism \(\Fib{f}\to \Fib{g}\) is an acyclic fibration of \((n-1)\)\nbdash{}groupoids.
\end{corollary}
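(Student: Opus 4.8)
The plan is to recognise the induced map $\Fib{f}\to\Fib{g}$ as a pullback of the acyclic fibration $h$ and then read off acyclicity from the stability of acyclic fibrations under pullback.

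First I would unwind the fibres degreewise. By Definition~\ref{chap3:def:fibre_Kan_fibration} and the definition of $\Hom(R\to S,\blank)$,
\[
\Fib{f}_k=\Hom(\Simp{k}\to\Simp{0},X\to Z)=X_k\times_{Z_k}Z_0,
\]
where $Z_0\to Z_k$ is the iterated degeneracy induced by $\Simp{k}\to\Simp{0}$, and likewise $\Fib{g}_k=Y_k\times_{Z_k}Z_0$. Since $f=g\circ h$, the map $X_k\to Z_k$ factors through $Y_k$, so
\[
\Fib{f}_k=X_k\times_{Z_k}Z_0=X_k\times_{Y_k}\bigl(Y_k\times_{Z_k}Z_0\bigr)=X_k\times_{Y_k}\Fib{g}_k .
\]
These identifications are natural in $[k]$, so $\Fib{f}=X\times_Y\Fib{g}$ as simplicial objects and $\Fib{f}\to\Fib{g}$ is precisely the pullback of $h\colon X\to Y$ along the canonical map $\Fib{g}\to Y$. (Equivalently, $\Fib{f}=\sk_0 Z_0\times_Z X$ and $\Fib{g}=\sk_0 Z_0\times_Z Y$, and one invokes associativity of pullbacks together with $f=g\circ h$.)

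Given this identification, the quickest route is to apply Proposition~\ref{chap3:prop:acyclic-stable-comp-pullbak}: $h$ is an acyclic fibration, acyclic fibrations are stable under pullback along any morphism, hence $\Fib{f}\to\Fib{g}$ is an acyclic fibration; since $\Fib{f}$ and $\Fib{g}$ are $(n-1)$-groupoids by Lemma~\ref{chap3:lem:fibre_exists}, it is an acyclic fibration of $(n-1)$-groupoids. To keep the argument self-contained I would instead check $\Acyc(m)$ directly, by induction on $m$. Applying the $\Hom$-version of Lemma~\ref{chap2:lem:lift-pullback-pushout-diagram} to the pullback square $\Fib{f}=X\times_Y\Fib{g}$ and the boundary inclusion $\partial\Simp{m}\to\Simp{m}$ gives a pullback square
\[
\xymatrix{
\Hom(\Simp{m},\Fib{f})\ar[r]\ar[d] & \Hom(\Simp{m},X)\ar[d]\\
\Hom(\partial\Simp{m}\to\Simp{m},\Fib{f}\to\Fib{g})\ar[r] & \Hom(\partial\Simp{m}\to\Simp{m},X\to Y)\rlap{\ .}
}
\]
Because $h$ satisfies $\Acyc(m)$, the right vertical map is a cover, and a pullback of a cover is a cover; thus the left vertical map is a cover, which is exactly $\Acyc(m)$ for $\Fib{f}\to\Fib{g}$. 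The base case $m=0$ reduces to the observation that $\Fib{f}_0\to\Fib{g}_0$ is the pullback of the cover $h_0\colon X_0\to Y_0$.

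The one point needing care — and the main obstacle to a one-line proof — is the representability of the presheaves $\Hom(\partial\Simp{m}\to\Simp{m},\Fib{f}\to\Fib{g})$. This is why I would run the verification as an induction on $m$: the representability at stage $m$ follows from the conditions $\Acyc(l)$ for $l<m$ already established for $\Fib{f}\to\Fib{g}$, exactly as recorded after Definition~\ref{chap3:def:acyclic-fibration}. Everything else is formal, resting only on the pullback identification of the first step and the pretopology axiom that covers are stable under pullback. Having verified $\Acyc(m)$ for all $m\ge0$ with $\Fib{f}$ and $\Fib{g}$ both $(n-1)$-groupoids, we conclude that $\Fib{f}\to\Fib{g}$ is an acyclic fibration of $(n-1)$-groupoids.
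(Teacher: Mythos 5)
Your proof is correct and follows essentially the same route as the paper: identify \(\Fib{f}\to\Fib{g}\) as the pullback of the acyclic fibration \(h\) along \(\Fib{g}\to Y\) (the paper gets this by pasting the two pullback squares over \(Y\), \(Z\), and \(\sk_0 Z_0\)), then invoke Proposition~\ref{chap3:prop:acyclic-stable-comp-pullbak}. Your additional degreewise verification of \(\Acyc(m)\) via Lemma~\ref{chap2:lem:lift-pullback-pushout-diagram} and the induction for representability is sound but simply inlines the content of that proposition, which the paper cites instead.
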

\begin{proof}
  By definition, each square below is a pullback diagram
  \[
  \xymatrix{
  \Fib{f}\ar[r]\ar[d]&\Fib{g}\ar[r]\ar[d]&\sk_0 Z_0\ar[d]\\
  X\ar[r]^{h} & Y\ar[r]^{g}& Z\rlap{\ .}
  }
  \]
  Applying Proposition~\ref{chap3:prop:acyclic-stable-comp-pullbak} to the left square proves the claim.
\end{proof}

\subsection{Higher principal bundles}\label{chap3:sec:hihger-principal-bundles}

We first characterise acyclic fibrations of Lie groupoids.
\begin{lemma}
  Let \(f\colon G\to H\) be a Lie groupoid functor. Then the nerve of \(f\) is acyclic if and only if \(f\) is a weak equivalence such that \(f_0\) is a cover.
\end{lemma}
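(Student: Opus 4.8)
The plan is to reduce the acyclicity of $Nf$ to two low-dimensional conditions and then match them against the two halves of Definition~\ref{chap1:def:weak-equi-groupoid}. Since $NG$ and $NH$ are nerves of groupoid objects, they are $1$\nbdash{}groupoids in $(\Mfd,\covers_\subm)$, hence $2$\nbdash{}coskeletal. By Lemma~\ref{chap3:lem:acyclic-finite} and Lemma~\ref{chap3:lem:acyclic-def} applied with $n=1$, the morphism $Nf$ is an acyclic fibration if and only if it satisfies $\Acyc(0)$ together with $\Acyc!(1)$; all higher conditions $\Acyc!(m)$ for $m\ge 2$ are then automatic. So everything comes down to unravelling these two conditions.

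First I would compute $\Acyc(0)$. Because $\partial\Simp{0}=\emptyset$ is the initial simplicial set, $\Hom(\emptyset,NG)=\Hom(\emptyset,NH)=\terminal$, and therefore $\Hom(\partial\Simp{0}\to\Simp{0},Nf)\cong H_0$. Thus $\Acyc(0)$ says precisely that the canonical map $G_0=\Hom(\Simp{0},NG)\to H_0$ is a cover, i.e.\ that $f_0$ is a surjective submersion. Next I would compute $\Acyc!(1)$. Here $\partial\Simp{1}=\Simp{0}\sqcup\Simp{0}$, so $\Hom(\partial\Simp{1},NG)=G_0\times G_0$ and $\Hom(\partial\Simp{1},NH)=H_0\times H_0$, giving
\[
\Hom(\partial\Simp{1}\to\Simp{1},Nf)\cong (G_0\times G_0)\times_{H_0\times H_0} H_1 .
\]
The condition $\Acyc!(1)$ then asserts that the comparison map $G_1\to (G_0\times G_0)\times_{H_0\times H_0}H_1$ induced by $\bigl((\source,\target),f_1\bigr)$ is an isomorphism, which is exactly the statement that the square in Definition~\ref{chap1:def:weak-equi-groupoid}(ii) is a pullback. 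Hence $\Acyc!(1)\iff f$ is fully faithful.

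The one apparent gap is essential surjectivity, which is part of being a weak equivalence but does not show up among the simplicial conditions. I would close it by showing it is automatic once $f_0$ is a cover: the projection $\pr_2\colon G_0\times_{H_0,\source}H_1\to H_1$ is the pullback of the cover $f_0$ along $\source$, hence itself a cover, and composing with the cover $\target\colon H_1\to H_0$ shows that $\target\circ\pr_2\colon G_0\times_{H_0,\source}H_1\to H_0$ is a cover, which is precisely essential surjectivity (Definition~\ref{chap1:def:weak-equi-groupoid}(i)). Putting the three observations together yields the chain: $Nf$ acyclic $\iff$ ($f_0$ a cover and $f$ fully faithful) $\iff$ ($f_0$ a cover and $f$ a weak equivalence), the last step using that essential surjectivity is subsumed by $f_0$ being a cover.

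I expect the main obstacle to be purely bookkeeping rather than conceptual: one must verify that $\Acyc(0)$ and $\Acyc!(1)$ genuinely exhaust the acyclic-fibration conditions (this is where the finiteness Lemmas~\ref{chap3:lem:acyclic-finite} and~\ref{chap3:lem:acyclic-def} for $1$\nbdash{}groupoids are essential), and that the representability of the intermediate presheaves $\Hom(\partial\Simp{m}\to\Simp{m},Nf)$ holds so the conditions make sense. The equivalence then falls out of matching $\Acyc(0)$ with $f_0$ being a cover, $\Acyc!(1)$ with full faithfulness, and noting that essential surjectivity is free.
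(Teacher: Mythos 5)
Your proof is correct and follows essentially the same route as the paper's: identify $\Acyc(0)$ with $f_0$ being a cover, identify $\Acyc!(1)$ with full faithfulness, and invoke the finiteness lemma (Lemma~\ref{chap3:lem:acyclic-finite}) to dispose of the higher conditions. You are in fact somewhat more careful than the paper, which leaves implicit both the converse reduction via Lemma~\ref{chap3:lem:acyclic-def} and the observation that essential surjectivity is automatic once $f_0$ is a cover (via pullback stability and composition of covers), so no gap remains.
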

\begin{proof}
  The condition \(\Acyc(0)\) for \(Nf\) says that \(f_0\) is a cover. The condition \(\Acyc!(1)\) for \(Nf\) says that the map
  \[
  G_1\to \Hom(\partial\Simp{1}\to \Simp{1}, NG\to NH)=(G_0\times G_0)\times_{H_0\times H_0} H_1
  \]
  is an isomorphism. This proves the statement by Lemma~\ref{chap3:lem:acyclic-finite}.
\end{proof}

\begin{lemma}
  Let \(\pi\colon A\to G\) be a Lie groupoid Kan fibration with fiber~\(P\). Let \(N\) be a manifold with a Lie groupoid functor \(\kappa\colon A\to \trivial{N}\). Then \(\kappa_0\colon P\to N\) is a \(G\)-bundle.
\end{lemma}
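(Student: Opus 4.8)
The plan is to observe that, because $\pi$ is a Lie $1$\nbdash{}groupoid Kan fibration, all the data of a right $G$\nbdash{}action on $A_0$ is already contained in $\pi$, while the functor $\kappa$ into the trivial groupoid supplies exactly the invariance condition. Since the lemma asks only for a $G$\nbdash{}bundle and not a principal one, no shear map or submersion condition needs to be checked.

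First I would pin down the fibre. As $\pi$ is a $1$\nbdash{}groupoid Kan fibration, Lemma~\ref{chap3:lem:fibre_exists} shows that $\Fib{\pi}$ is a $0$\nbdash{}groupoid, hence a constant simplicial object $\sk_0 P_0$; and $P_0=\Fib{\pi}_0=\Hom(\Simp{0}\to\Simp{0},A\to G)=A_0\times_{G_0}G_0=A_0$. Thus $P$ is just the manifold $A_0$, and the map under consideration is $\kappa_0\colon A_0\to N$.

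Next I would recover the action. By Definition~\ref{chap3:def:higher-groupoid-action} together with the remark following Lemma~\ref{chap3:lem:1-action-kan}, the Kan fibration $\pi$ determines a right $G$\nbdash{}action on $A_0$ whose action groupoid is $A$: the moment map is $J=\pi_0\colon A_0\to G_0$, and $\Kan!(1,0)$ identifies $A_1\cong A_0\times_{J,G_0,\target}G_1$. Under this identification the source map $\source\colon A_1\to A_0$ is the action $\action\colon(p,g)\mapsto p\cdot g$ and the target map is $\pr_1$.

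Finally I would translate the functoriality of $\kappa$. A functor $\kappa\colon A\to\trivial{N}$ into a trivial groupoid is the same as a map $\kappa_0\colon A_0\to N$ satisfying $\kappa_0\circ\source=\kappa_0\circ\target$ on $A_1$. Under the identification above this says $\kappa_0(p\cdot g)=\kappa_0(p)$ for all $(p,g)\in A_0\times_{G_0}G_1$, i.e.\ $\kappa_0$ is $G$\nbdash{}invariant. Combining the action with this invariance exhibits $(A_0,J,\action,\kappa_0)$ as a $G$\nbdash{}bundle over $N$ in the sense of Definition~\ref{chap1:def:principal-bundle}. The argument is essentially bookkeeping; the only point that demands care is matching the face maps of the action groupoid $A$ with the action map and the projection, so that the functor equation becomes genuine invariance rather than an unrelated identity.
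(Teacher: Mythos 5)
Your proof is correct and follows essentially the same route as the paper's (one-line) argument: the Kan fibration already packages the $G$\nbdash{}action on $P=A_0$ via the converse of Lemma~\ref{chap3:lem:1-action-kan}, and the functor condition $\kappa_0\circ\source=\kappa_0\circ\target$ is precisely $G$\nbdash{}invariance of $\kappa_0$. Your version just makes explicit the bookkeeping (identifying $\Fib{\pi}_0=A_0$ and matching face maps with the action and projection) that the paper leaves implicit.
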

\begin{proof}
  Since \(A\to \trivial{N}\) is a Lie groupoid functor, the map \(P\to N\) is \(G\) invariant.
\end{proof}

These two lemmas combined characterise principal bundles of Lie groupoids.
\begin{proposition}
  Let \(G\) be Lie groupoid. Then a principal \(G\)-bundle over a manifold \(N\) is the same as a Lie groupoid Kan fibration \(\pi\colon A\to G\) with an acyclic fibration \(\kappa\colon A\to \trivial{N}\).
\end{proposition}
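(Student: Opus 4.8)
The plan is to assemble the two preceding lemmas of this subsection into a bijective correspondence, translating the two principality conditions of Definition~\ref{chap1:def:principal-bundle} into the lifting conditions. Recall first, from the converse of Lemma~\ref{chap3:lem:1-action-kan}, that a Lie groupoid Kan fibration $\pi\colon A\to G$ is the same datum as a $G$-action on $P\coloneqq A_0=\Fib{\pi}_0$ with action groupoid $A=P\rtimes G$; explicitly $A_1=P\times_{G_0,\target}G_1$ with $\source(p,g)=pg$ and $\target(p,g)=p$. A functor $\kappa\colon A\to\trivial{N}$ is the same as a $G$-invariant map $\kappa_0\colon P\to N$, so the second lemma of this subsection already exhibits $P\to N$ as a $G$-bundle. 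It remains to match the additional conditions on both sides.

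For the direction from the pair to a principal bundle, I would invoke the first lemma of this subsection: $\kappa$ is acyclic if and only if it is a weak equivalence whose map $\kappa_0$ is a cover. The requirement that $\kappa_0\colon P\to N$ be a cover is precisely condition (i) of Definition~\ref{chap1:def:principal-bundle}. For full faithfulness, note that $\trivial{N}_1=N$ with $(\source,\target)$ the diagonal, so the defining pullback square of a weak equivalence reads
\[
A_1=(A_0\times A_0)\times_{N\times N}N=P\times_N P,
\]
and under the identification $A_1=P\times_{G_0,\target}G_1$ the induced map $A_1\to A_0\times A_0$, namely $(p,g)\mapsto(pg,p)$, coincides with the shear map $(p,g)\mapsto(p,pg)$ up to the coordinate swap on $P\times_N P$. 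Hence full faithfulness of $\kappa$ is exactly condition (ii), that the shear map is a diffeomorphism, while essential surjectivity reduces once more to $\kappa_0$ being a cover. Thus $P\to N$ is principal.

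Conversely, starting from a principal $G$-bundle $\kappa\colon P\to N$, I would form the action groupoid $A=P\rtimes G$, obtaining the Kan fibration $\pi\colon A\to G$ by Lemma~\ref{chap3:lem:1-action-kan}, and read the invariant map as a functor $\kappa\colon A\to\trivial{N}$. Running the equalities above backwards, conditions (i) and (ii) show that $\kappa_0$ is a cover and that $\kappa$ is essentially surjective and fully faithful, whence $\kappa$ is acyclic by the first lemma. Finally, the two constructions are mutually inverse: from $(\pi,\kappa)$ to the bundle and back recovers $A=P\rtimes G$ through the converse of Lemma~\ref{chap3:lem:1-action-kan}, and from a bundle to the pair and back returns $\Fib{\pi}_0=A_0=P$ with its original action and invariant map.

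The only point demanding genuine care is the identification of the shear map with the full-faithfulness square: one must verify, after unwinding source, target, and the action convention, that the outer legs $A_1\to A_0\times A_0$ and $P\times_{G_0,\target}G_1\to P\times_N P$ agree up to the swap of factors, and that the left/right conventions are consistent with $\source(p,g)=pg$. Everything else is a direct bookkeeping application of the two lemmas.
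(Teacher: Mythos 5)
Your proof is correct, and it reaches the same two identifications as the paper by a slightly different route. The paper's own proof is a direct simplicial computation: it observes that \(\Acyc(0)\) for \(NA\to \sk_0 N\) is literally condition (i) of Definition~\ref{chap1:def:principal-bundle} (\(A_0=P\to N\) is a cover), and that \(\Acyc!(1)\) is literally condition (ii) (the map \(A_1\to A_0\times_N A_0\), \((p,g)\mapsto(p,pg)\), is a diffeomorphism), the higher conditions being automatic by Lemma~\ref{chap3:lem:acyclic-finite}. You instead factor through the subsection's first lemma (the nerve of \(\kappa\) is acyclic iff \(\kappa\) is a weak equivalence with \(\kappa_0\) a cover) and then match essential surjectivity and full faithfulness of \(\kappa\colon A\to\trivial{N}\) against the two principality conditions. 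Since that lemma is itself proved by the same computation of \(\Acyc(0)\) and \(\Acyc!(1)\), the mathematical content coincides; what your version buys is that it makes explicit two points the paper leaves tacit: first, that for a functor into a trivial groupoid essential surjectivity collapses to \(\kappa_0\) being a cover (so weak equivalence plus cover contributes nothing beyond conditions (i) and (ii)); second, that the full-faithfulness pullback square agrees with the shear map only after swapping the factors of \(P\times_N P\), since under the action-groupoid conventions (\(\source(p,g)=pg\), \(\target(p,g)=p\)) the simplicial restriction \(A_1\to A_0\times A_0\) is \((p,g)\mapsto(pg,p)\) while the shear map is \((p,g)\mapsto(p,pg)\) — a harmless discrepancy that the paper's displayed formula absorbs without comment. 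Your closing check that the two constructions are mutually inverse, via the converse of Lemma~\ref{chap3:lem:1-action-kan}, is also left implicit in the paper and is a worthwhile addition, since the proposition asserts an identification of data and not merely an implication.
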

\begin{proof}
  Let \(P\to N\) be a \(G\)-bundle with action groupoid \(A\). The condition \(\Acyc(0)\) for \(NA\to \sk_0 N\) says that the map \(A_0=P\to N\) is a cover. The condition \(\Acyc!(1)\) for \(NA\to\sk_0 N\) says that the map
  \[
  A_1\to A_0\times_N A_0,\quad (p, g)\mapsto (p, pg),
  \]
  is an isomorphism, and we are done.
\end{proof}

Consequently, a Lie groupoid principal bundle can be given by a Lie groupoid Kan fibration and an acyclic fibration. This motivates the following definition.
\begin{definition}\label{chap3:def:higher-principal-bundles}
  Let \(\pi\colon A\to G\) be an \(n\)-groupoid Kan fibration in \((\Cat, \covers)\) giving an action of \(G\) on \(\Fib{\pi}\). Let \(N\) be an object in \(\Cat\) with a morphism \(\kappa\colon A\to\sk_0 N\). We call \(\Fib{\pi}\) a \emph{\(G\)\nbdash{}bundle} over~\(N\). If, in addition, \(\kappa\colon A\to \sk_0 N\) is acyclic, then we call \(\Fib{\pi}\) a \emph{principal} \(G\)-\emph{bundle} over~\(N\).
\end{definition}

We will justify our definition for 2\nbdash{}groupoids in Section~\ref{chap5:sec:principal-2-bundles} by showing that a categorified principal bundle is equivalent to a principal 2\nbdash{}bundle as defined above.

\begin{remark}
 In the literature, principal bundles are also known as torsors. Duskin~\cite{Duskin} defined an \(n\)\nbdash{}torsor for \(K(\Pi, n)\) to be a Kan fibration \(A\to K(\Pi, n)\) such that \(A\) is acyclic and \((n-1)\)-coskeletal. Let \(G\) be a Duskin \(n\)-hypergroupoid. Glenn~\cite{Glenn:Realization} defined an \(n\)\nbdash{}torsor of \(G\) over \(N\) by simplicial maps \(\sk_0 N\leftarrow A\to G\) such that \(A\) gives an action as in Remark~\ref{chap3:rem:compare} and \(A\to \sk_0 N\) is acyclic and \((n-1)\)-coskeletal. Later, Baković~\cite{Bakovic:Bigroupoid_torsors} shown that a set-theoretical 2\nbdash{}torsor by categorification gives a 2\nbdash{}torsor in Glenn's sense. The converse, however, is also missing.
\end{remark}

\begin{remark}
  For a 1-groupoid acyclic fibration \(f\colon A\to \sk_0 N\) in \((\Cat, \covers)\), \(N\) is the quotient \(A_0/A_1\), that is, the coequaliser of two parallel morphisms \(\face_1,\face_2\colon A_1\to A_0\). For certain categories like \((\Mfd,\covers_\surj)\), it is possible to speak of principal bundles of groupoids without mentioning the quotient. For \((\Mfd,\covers_\surj)\), it is well-known that a free and proper action yields a principal bundle.
\end{remark}

\begin{remark}
  Nikolaus--Schreiber--Stevenson~\cite{NSS:general,NSS:presentation} studied principal \(\infty\)-bundles of \(\infty\)-groups in an \(\infty\)-topos. In the discrete case, their \(\infty\)-groups are simplicial groups, which are not the same as ours but a special type; their principal bundles are weakly principal Kan simplicial bundles and the base spaces are generally Kan simplicial sets other than sets (see~\cite[Section 4.1]{NSS:presentation}).
\end{remark}

\begin{example}\label{chap3:exa:decalage-principal}
  Recall the décalage \(\dec(G)\) for an \(n\)\nbdash{}groupoid \(G\) constructed in
  Example~\ref{chap3:exa:decalage}. The inclusion \(\Simp{0}\to S\star\Simp{0}\) induces a morphism of simplicial objects \(\kappa\colon \dec(G)\to \sk_0 G_0\). We have
  \[
  \Hom (\partial\Simp{k}\to\Simp{k}, \dec(G) \to\sk_0 G_0)\cong\Hom (\partial\Simp{k}\star\Simp{0}, G)=\Hom(\Horn{k+1}{k+1},G),
  \]
  where the last identity follows from Equation~\eqref{chap2:eq:join}. This gives a commutative diagram
  \[
  \xymatrix{
    \Hom (\Simp{k}, \dec(G))\ar[d]\ar[r]^{\cong}& \Hom(\Simp{k+1},G)\ar[d]\\
    \Hom (\partial\Simp{k}\to\Simp{k}, \dec(G) \to\sk_0 G_0) \ar[r]^-{\cong}& \Hom(\Horn{k+1}{k+1},G)\rlap{\ .}
  }
  \]
  Hence \(\Kan(k+1,k+1)\) for~\(G\) implies \(\Acyc(k)\) for \(\kappa\colon \dec(G) \to \sk_0 G_0\). Thus~\(\kappa\) is an acyclic fibration. These two statements combined imply that \(\dec(G)\) is the action \(n\)-groupoid of a principal \(G\)-bundle over \(G_0\).
\end{example}

\chapter{Higher Groupoid Bibundles}\label{chap4}
\thispagestyle{empty}

In this chapter, our goal is to define bibundles between higher groupoids.

We first review the theory of bimodules between categories. A bimodule can be equivalently given by its cogragh, which is a category with a natural functor to the interval category \(I\) such that the preimages of the two ends are the given two categories.

This machinery works also for Lie groupoids. Furthermore, HS bibundles correspond to simplicial objects in \(\Mfd\) over \(\Simp{1}\) satisfying appropriate left Kan conditions.

Next, we collect some facts about augmented simplicial objects and bisimplicial objects in an extensive category. We then define colored Kan conditions. Bibundles between higher groupoids are defined by simplicial objects over \(\Simp{1}\) that satisfy appropriate Kan conditions. For example, a morphism of higher groupoids gives such a bibundle.

\section{Bimodules between categories and cographs}

We start by reviewing the classical notion of bimodules between two categories. This section is based on \cite{Benabou:dist}; see also~\cite{Borceux,Street} and~\cite[Section 2.3.1]{Lurie}. Let \(\Cat[A]\) and \(\Cat[B]\) be small categories throughout this section.

\subsection{Bimodules and profunctors}

The notions of actions of categories on sets, equivariant maps, and action categories are defined as in Section~\ref{chap1:sec:action-bibundle}. Notice that the inverse is not used in the definitions. Similarly, \emph{bimodules between categories} are defined like bibundles between Lie groupoids.

\begin{remark}\label{chap4:rem:action=functor}
  A left action of \(\Cat[A]\) on \(P\) is equivalent to a functor \(\psi\colon \Cat[A]\to \Sets\) such that \(P=\bigsqcup_{A\in \Cat[A]_0} \psi(A)\). Given a functor \(\psi\), then \(f\cdot p=\psi(f)(p)\) for \(f\colon A\to A'\) and \(p\in \psi(A)\) defines an action. The category of elements of \(\psi\) is exactly the associated action category.
\end{remark}

\begin{definition}\label{chap4:def:cat-profunctor}
  A \emph{profunctor} from \(\Cat[A]\) to \(\Cat[B]\) is a functor \(\psi\colon \Cat[B]^{\op}\times \Cat[A]\to \Sets\).
\end{definition}

\begin{remark}\label{chap4:rem:bimodule=profunctor}
We construct a bimodule from a profunctor \(\psi\). Let \(P=\bigsqcup_{A\in \Cat[A]_0, B\in \Cat[B]_0}\psi(B, A)\) with natural maps \(P\to \Cat[A]_0\) and \(P\to \Cat[B]_0\). For \(f\colon A\to A'\) and \(p\in \psi(B, A)\), the action is given by \(f\cdot p=\psi(\id_B, f)p\). The right action is defined similarly. Conversely, given an \(\Cat[A]\)-\(\Cat[B]\) bimodule \(P\), we can construct a profunctor. We deduce that profunctors and bimodules are equivalent notions. With this equivalence understood, bimodules are also known as distributors~\cite{Benabou:dist} and correspondences~\cite{Lurie}.
\end{remark}

\begin{remark}
 The composition of profunctors is best written as a coend. It then follows from some basic properties of coends that there is a 2-category with categories as objects and profunctors as 1-morphisms. See~\cite{Cattani-Winskel} for more details. It is straightforward to translate these results into bimodules.
\end{remark}

\begin{figure}[htbp]
  \centering
  \begin{tikzpicture}%
  [>=latex', mydot/.style={draw,circle, inner sep=1.5pt},every label/.style={scale=0.8}]
  \node[mydot,fill=black]    at (6, 0) (h0) {};
  \node[mydot,fill=black]    at (4,0)  (h1) {};
  \node[mydot]  at (2,0)  (h2) {};
  \node[mydot]  at (0,0)  (h3) {};
  \path[->]
     (h0) edge[out=160, in=20] node[scale=0.8,below]{$b$} (h1)
	 edge[out=150, in=30] node[scale=0.8,above]{$pb$}(h2)
   edge[out=140, in=40] node[scale=0.8,above]{$apb$}(h3)
     (h1) edge[out=160, in=20] node[scale=0.8,below]{$p$} (h2)
   edge[out=150, in=30] node[scale=0.8,above]{$ap$}(h3)
     (h2) edge[out=160, in=20] node[scale=0.8,below]{$a$} (h3);
  \end{tikzpicture}
  \caption{A bimodule between categories}\label{chap4:fig:category-bimodule}
\end{figure}
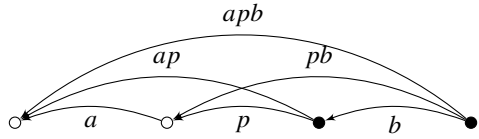
We depict a bimodule between categories as in Figure~\ref{chap4:fig:category-bimodule}, where white points and black points represent objects of \(\Cat[A]\) and \(\Cat[B]\), respectively. Arrows with ends of the same color represent arrows in the corresponding categories, and arrows from black points to white points represent elements of \(P\).

\subsection{Cographs of bimodules}

For a bimodule between categories, all the arrows in Figure~\ref{chap4:fig:category-bimodule} form a new category:

\begin{definition}
  Let \(P\) be a bimodule between \(\Cat[A]\) and \(\Cat[B]\). Its \emph{cograph}~\(\Gamma\), also called collage~\cite{Street81} or linking category~\cite{Blohmann}, is a category with
  \begin{gather*}
  \Gamma_0=\Cat[A]_0\sqcup \Cat[B]_0,         \quad \Gamma_1=\Cat[A]_1\sqcup P\sqcup \Cat[B]_1, \\
  \source=\source_{\Cat[A]}\sqcup J_r \sqcup \source_{\Cat[B]},\quad \target=\target_{\Cat[A]}\sqcup J_l \sqcup \target_{\Cat[B]},
  \end{gather*}
  and with composition induced by the compositions in \(\Cat[A]\) and \(\Cat[B]\) and the two actions.
\end{definition}

\begin{example}
  Let \(P=\Cat[A]_0\times \Cat[B]_0\) and let \(P\to \Cat[A]_0\) and \(P\to \Cat[B]_0\) be the projections. Then~\(P\) is a bimodule with trivial actions. Its cograph is a category obtained from \(\Cat[A]\sqcup \Cat[B]\) by adding a unique arrow from \(B\) to \(A\) for each pair \((A, B)\in \Cat[A]_0\times \Cat[B]_0\). This category is also called the  \emph{join of two categories}, denoted by \(\Cat[A]\star \Cat[B]\). In particular, \([m]\star [n]=[m+n+1]\) is the ordinal sum. The nerve preserves joins
  ~\cite[Corollary 3.3]{Joyal2008}:
  \[
   N(\Cat[A]\star\Cat[B])=N\Cat[A]\star N\Cat[B],
  \]
  where \(\star\) on the right hand side is the join of simplicial sets (Definition~\ref{chap2:def:join-simplicial-sets}). It is easy to see that \(P\) is the terminal object among \(\Cat[A]\)-\(\Cat[B]\) bimodules.
\end{example}

\begin{example}
  As for Lie groupoids, given a functor \(f\colon \Cat[A]\to \Cat[B]\), there is a bimodule structure on \(P=\Cat[A]_0\times_{\Cat[B]_0, \target} \Cat[B]_1\). Biequivariant maps between two bimodules of this type are in a bijection with natural transformations between the corresponding two functors. Therefore, we may regard bimodules as generalised functors.
\end{example}

\subsection{Bimodules and categories over the interval}

Denote by \(I\) the interval category \([1]=\{0 \leftarrow 1\}\). The cograph \(\Gamma\) admits a natural functor to~\(I\) that sends \(\Cat[A]\) to \(0\), \(\Cat[B]\) to \(1\), and \(P\) to the unique non-identity arrow. Conversely, let \(\Gamma\) be a category with a functor \(\Gamma\to I\) such that \(\Cat[A]\) is the preimage of \(0\), \(\Cat[B]\) is the preimage of~\(1\). Then the preimage of the unique non-trivial arrow \(P\) is a bimodule between \(\Cat[A]\) and~\(\Cat[B]\). These two constructions are mutually inverse, so we get:

\begin{proposition}\label{chap4:prop:cat-bimodule=cats-over-I}
  The category of \(\Cat[A]\)-\(\Cat[B]\) bimodules is isomorphic to the subcategory of \(\Cats/I\) where the two ends are \(\Cat[A]\) and \(\Cat[B]\).
\end{proposition}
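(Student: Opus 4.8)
The plan is to exhibit the two constructions described above as functors in opposite directions and to check that they are mutually inverse, both on objects and on morphisms. Write $\mathcal{G}$ for the \emph{cograph functor}, sending a bimodule $P$ to its cograph $\Gamma$ equipped with the structural functor $p\colon \Gamma\to I$, and write $\mathcal{H}$ for the \emph{fibre functor} in the reverse direction. The subcategory of $\Cats/I$ in question has as objects the functors $p\colon \Gamma\to I$ whose fibre over $0$ is exactly $\Cat[A]$ and whose fibre over $1$ is exactly $\Cat[B]$, and as morphisms the functors over $I$ that restrict to the identity on both ends; these correspond to biequivariant maps, which by definition fix $\Cat[A]$ and $\Cat[B]$.

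First I would make $\mathcal{H}$ explicit and verify that it lands in the category of bimodules. The essential observation is that $I=\{0\leftarrow 1\}$ has exactly three arrows: $\id_0$, $\id_1$, and one nonidentity arrow $1\to 0$. Hence any functor $p\colon\Gamma\to I$ partitions the arrows of $\Gamma$ into those lying over $\id_0$ (the arrows of $\Cat[A]$), those over $\id_1$ (the arrows of $\Cat[B]$), and those over the nonidentity arrow; the last form a set $P$ of arrows whose source is a $\Cat[B]$-object and whose target is an $\Cat[A]$-object, and there are \emph{no} arrows the other way. Composition in $\Gamma$ then restricts to a left $\Cat[A]$-action and a right $\Cat[B]$-action on $P$ (with moment maps $J_l=\target$ and $J_r=\source$), and the bimodule axioms --- unitality, the two associativity laws, and the commutativity $(ap)b=a(pb)$ --- are precisely the instances of the unit and associativity axioms of $\Gamma$ in which the composable strings mix the three types of arrows. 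This is the crux of the argument: the category structure on $\Gamma$ contains no more and no less information than the bimodule structure on $P$, because the missing direction of arrows rules out any ``$P$-after-$P$'' composite.

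Next I would check functoriality and the inverse relations. For $\mathcal{G}$, a biequivariant map $\phi\colon P\to P'$ extends to the functor that is the identity on $\Cat[A]_0\sqcup\Cat[B]_0$ and on $\Cat[A]_1\sqcup\Cat[B]_1$, and is $\phi$ on $P$; biequivariance of $\phi$ is exactly what is needed for this assignment to preserve composition and identities in the cographs, so $\mathcal{G}(\phi)$ is a functor over $I$ fixing the ends. For $\mathcal{H}$, a morphism over $I$ fixing the ends restricts on the middle fibre to a map $P\to P'$, and commuting with the actions makes it biequivariant. Finally, the equalities $\mathcal{H}\circ\mathcal{G}=\id$ and $\mathcal{G}\circ\mathcal{H}=\id$ hold on objects because extracting the middle fibre of a cograph returns the original bimodule, while reassembling the three fibres of a category over $I$ returns the original category --- the decomposition above shows that $\Gamma$ is determined by $\Cat[A]$, $\Cat[B]$, and $P$ with its two actions; on morphisms both composites are visibly the identity. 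I expect the only delicate point to be the careful bookkeeping of the second step, matching each bimodule axiom with the corresponding mixed associativity constraint in $\Gamma$ and confirming that the degenerate composites simply do not arise, but this becomes routine once the three-way partition of arrows is in hand.
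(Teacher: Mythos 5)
Your proposal is correct and follows essentially the same route as the paper: the paper's (brief) argument likewise sends a bimodule to its cograph with its natural functor to $I$, recovers the bimodule as the preimage of the unique non-identity arrow of $I$, and observes that the two constructions are mutually inverse. Your additional bookkeeping — the three-way partition of arrows forced by $I$ having only three arrows, the matching of bimodule axioms with mixed unit/associativity constraints in $\Gamma$, and the correspondence between biequivariant maps and functors over $I$ fixing the ends — just makes explicit what the paper leaves to the reader.
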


The nerve of the cograph decomposes as follows
\[
N_n\Gamma=\bigsqcup_{i+j+1=n} N_{i,j} \Gamma, \quad i, j\ge -1,
\]
where \(N_{i,j}\) has \(i\) copies of \(\Cat[A]_1\), \(1\) copy of \(P\), and \(j\) copies of \(\Cat[B]_1\) for \(i,j\ge 0\),
\begin{equation}\label{chap4:eq:Nij-Gamma-cograph}
N_{i,j} \Gamma=\underbrace{\Cat[A]_1\times_{\Cat[A]_0} \dots \times_{\Cat[A]_0} \Cat[A]_1}_{\text{\(i\) copies }}\times_{\Cat[A]_0} P\times_{\Cat[B]_0} \underbrace{\Cat[B]_1\times_{\Cat[B]_0} \cdots \times_{\Cat[B]_0} \Cat[B]_1}_{\text{\(j\) copies}},
\end{equation}
while \(N_{i,-1}\Gamma=N_i \Cat[A]\) and \(N_{-1,j}\Gamma=N_j\Cat[B]\). The natural map \(N\Gamma\to NI\) sends \(N_{i,j}\Gamma\) to the unique \((i+j+1)\)-simplex of \(\Simp{1}\) with \(i+1\) copies of \(0\) and \(j+1\) copies of \(1\).

\section{Bibundles of Lie groupoids via simplicial manifolds}

Let \(P\) be a bibundle between Lie groupoids \(G\) and \(H\) with cograph~\(\Gamma\). Since the source and target maps of Lie groupoids are covers in the surjective submersion pretopology, \(N_{i,j}\Gamma \) in~\eqref{chap4:eq:Nij-Gamma-cograph} is representable in~\(\Mfd\). Therefore, \(N \Gamma\) is a simplicial manifold and \(\Gamma\) is a category object in \(\Mfd\). Proposition~\ref{chap4:prop:cat-bimodule=cats-over-I} now reads:

\begin{proposition}\label{chap4:prop:groupoid-bibunble=cats-over-I}
  The category of bibundles between Lie groupoids \(G\) and \(H\) is isomorphic to the category of categories in \(\Mfd\) over \(I\) with the two ends given by \(G\) and~\(H\), where we equip \(I\) with discrete manifold structures on object space and arrow space.
\end{proposition}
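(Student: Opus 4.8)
The plan is to internalise Proposition~\ref{chap4:prop:cat-bimodule=cats-over-I} to $\Mfd$. That purely categorical statement is witnessed by two mutually inverse constructions: the cograph $P\mapsto\Gamma$, and the inverse which reads off a bimodule from the preimages of the two ends of a category over $I$. I would show that both constructions make sense, are smooth, and stay mutually inverse once $G$ and $H$ are Lie groupoids, the only genuinely new issue being \emph{representability}, that is, the existence in $\Mfd$ of the spaces involved. Since all the defining identities of a bibundle and of a category object are equational, I would verify them on $T$\nbdash{}points (generalised elements), which reduces each identity verbatim to the set\nbdash{}theoretic assertion already contained in Proposition~\ref{chap4:prop:cat-bimodule=cats-over-I}.

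For the forward direction, given a bibundle $P$ between $G$ and $H$ I form the cograph $\Gamma$ with $\Gamma_0=G_0\sqcup H_0$ and $\Gamma_1=G_1\sqcup P\sqcup H_1$, which are manifolds as finite coproducts. The one point to check is that the space of composable pairs $\Gamma_1\times_{\source,\Gamma_0,\target}\Gamma_1$ is representable; by the disjoint\nbdash{}union decomposition it splits into the composable pairs of $G$ and of $H$ together with the two action domains $G_1\times_{\source,G_0,J_l}P$ and $P\times_{J_r,H_0,\target}H_1$. These are the degree\nbdash{}two pieces of~\eqref{chap4:eq:Nij-Gamma-cograph}; more generally every composable\nbdash{}chain space $N_{i,j}\Gamma$ of~\eqref{chap4:eq:Nij-Gamma-cograph} is representable because $\source,\target$ of a Lie groupoid are covers in $\covers_\subm$ (as recorded just before the statement), so that composition as well as the associativity and unit identities make sense in $\Mfd$. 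Hence the two compositions and the two actions assemble into a smooth structure, the category axioms hold by the $T$\nbdash{}point argument, and $\Gamma$ is a category object in $\Mfd$. Equipping $I$ with the discrete manifold structure makes the projection $\Gamma\to I$ a smooth functor whose fibres over $0$ and $1$ are $G$ and $H$.

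For the converse, let $\Gamma$ be a category in $\Mfd$ over $I$ with the two ends $G$ and $H$. Because $I_0$ and $I_1$ are discrete, the preimages of $0,1\in I_0$ are open\nbdash{}and\nbdash{}closed submanifolds, so $\Gamma_0=G_0\sqcup H_0$; since $I$ has a single non\nbdash{}identity arrow $1\to 0$ and none in the other direction, $\Gamma_1=G_1\sqcup P\sqcup H_1$ with $P$ the preimage of that arrow, again a manifold. Restricting the smooth composition of $\Gamma$ yields the compositions of $G$ and $H$ and, on the mixed composable pairs, a left $G$\nbdash{}action and a right $H$\nbdash{}action on $P$ whose moment maps are the restrictions of $\source,\target$; no composite can land over the nonexistent arrow $0\to 1$, so these are the only new operations. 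Commutativity of the two actions and the bundle identities are once more read off from associativity and unitality of $\Gamma$ via $T$\nbdash{}points. The two constructions are inverse to one another and send biequivariant smooth maps to smooth functors over $I$ fixing the ends, and conversely, exactly as in Proposition~\ref{chap4:prop:cat-bimodule=cats-over-I}; this gives the claimed isomorphism of categories.

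The main obstacle, and really the only place where smoothness enters, is the representability of the composable\nbdash{}pair space in the forward direction: one must know the mixed fibre products $G_1\times_{G_0}P$ and $P\times_{H_0}H_1$ are manifolds. I expect to dispatch this by invoking the submersion property of the Lie groupoid structure maps through the identification with $N_{i,j}\Gamma$ in~\eqref{chap4:eq:Nij-Gamma-cograph}; note in particular that no submersion hypothesis on the moment maps $J_l,J_r$ of $P$ is needed, so the statement covers arbitrary (not necessarily right principal) bibundles. Everything else is a faithful transcription of the set\nbdash{}theoretic proof, made rigorous by working with generalised elements.
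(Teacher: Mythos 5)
Your proposal is correct and follows the paper's own route: the paper's entire proof is the paragraph preceding the statement, which forms the cograph, notes that each \(N_{i,j}\Gamma\) in~\eqref{chap4:eq:Nij-Gamma-cograph} is representable because the source and target maps of Lie groupoids are covers in \(\covers_\subm\), and then internalises Proposition~\ref{chap4:prop:cat-bimodule=cats-over-I}. You simply make explicit the details the paper leaves implicit (the \(T\)-point verification, the open-and-closed decomposition over the discrete \(I\) in the converse direction, and the observation that no submersion hypothesis on \(J_l,J_r\) is needed), all of which are accurate.
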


Proposition~\ref{chap2:prop:nerve-of-functor-is-inner-kan} implies that \(N\Gamma\to NI=\Simp{1}\) is an inner Kan fibration. We show that the right principality (Definition~\ref{chap1:def:principal-bundle}) can be encoded by appropriate left Kan conditions.

\begin{lemma}\label{chap4:lem:groupoid-Gamma-I-kan10}
  Let \(P\) be a \(G\)-\(H\) bibundle with cograph \(\Gamma\). The morphism \(N\Gamma \to \Simp{1}\) satisfies \(\Kan(1, 0)\) if and only if \(J_l\colon P\to G_0\) is a surjective submersion.
\end{lemma}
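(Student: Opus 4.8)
The plan is to unwind the condition $\Kan(1,0)$ for the map $f\colon N\Gamma\to\Simp{1}=NI$ into an explicit statement about the structure maps of the cograph, and then read off the role of $J_l$. First I would record that the left horn $\Horn{1}{0}$ is simply the $0$-th vertex $\Simp{0}$ of $\Simp{1}$, so by the pullback defining the relative mapping presheaf one has
\[
\Hom(\Horn{1}{0}\to\Simp{1}, N\Gamma\to\Simp{1})\cong \Gamma_0 \times_{I_0} N_1 I,
\]
where $\Gamma_0\to I_0$ is $f_0$ and $N_1 I\to I_0$ is the face map retaining the $0$-th vertex. Since $N_1I=\{\mathrm{id}_0,\mathrm{id}_1,a\}$ is a discrete (three-point) manifold, this fibre product is a disjoint union of components of $\Gamma_0$ and is therefore representable, so $\Kan(1,0)$ is meaningful. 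The canonical comparison map is then $\Gamma_1\to \Gamma_0\times_{I_0}N_1 I$, sending a $1$-simplex to its retained vertex together with its image edge in $\Simp{1}$.

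Next I would exploit the discreteness of $N_1I$ to split both sides of the comparison map as coproducts indexed by the three edges. By the construction of $f$ the edge-image of $G_1$ is $\mathrm{id}_0$, that of $H_1$ is $\mathrm{id}_1$, and that of $P$ is the unique non-degenerate edge $a$. Hence the comparison map is the disjoint union of three maps: $G_1\to G_0$ and $H_1\to H_0$, which are face maps of the Lie groupoids $G$ and $H$ and are therefore covers (surjective submersions) automatically; and, reading off the cograph structure maps $\source=\source_{G}\sqcup J_r\sqcup\source_{H}$ and $\target=\target_{G}\sqcup J_l\sqcup\target_{H}$, the $P$-component over $a$ is exactly the moment map $J_l\colon P\to G_0$ (the $0$-vertex of $a$ lies over $0$, and the corresponding structure map sends $p\in P$ to $J_l(p)\in G_0$).

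Finally I would conclude using that in $(\Mfd,\covers_\subm)$ a morphism respecting a coproduct decomposition is a surjective submersion if and only if each of its components is. Since the $G_1$- and $H_1$-components are always covers, the comparison map is a cover precisely when its $P$-component is, that is, precisely when $J_l\colon P\to G_0$ is a surjective submersion; this is the asserted equivalence. The one point that needs genuine care—everything else being routine—is the identification of the middle component with $J_l$ rather than $J_r$: one must track which vertex $\Horn{1}{0}$ retains and how the cograph's $\source$ and $\target$ maps send $P$ into $H_0$ and $G_0$, so that the retained vertex of $p\in P$ is indeed $J_l(p)\in G_0$, matching the thematic slogan that left Kan conditions encode the right principality of $P$.
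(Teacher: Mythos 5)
Your proof is correct and takes essentially the same route as the paper's: the paper's one-line proof consists exactly of your decomposition, identifying \(\Hom(\Simp{1}, N\Gamma)=G_1\sqcup P\sqcup H_1\) and \(\Hom(\Horn{1}{0}\to\Simp{1}, N\Gamma\to \Simp{1})=G_0\sqcup G_0\sqcup H_0\), with the comparison map \(\target\sqcup J_l\sqcup \target\), and then reading off the cover condition componentwise. In particular, your careful check that the middle component is \(J_l\) (the \(0\)-th vertex of \(p\in P\) being its target in \(G_0\)) is precisely the identification the paper's diagram asserts.
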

\begin{proof}
The commutative diagram
\[
\xymatrix{
\Hom(\Simp{1}, N\Gamma)\ar[r]^{=}\ar[d]& G_1\sqcup P\sqcup H_1\ar[d]^{\target\sqcup J_l\sqcup \target}\\
\Hom(\Horn{1}{0}\to\Simp{1}, N\Gamma\to \Simp{1}) \ar[r]^-{=}& G_0\sqcup G_0\sqcup H_0
}
\]
proves the claim.
\end{proof}

\begin{lemma}\label{chap4:lem:groupoid-Gamma-I-kan20}
  Suppose that \(P\) is as in Lemma~\emph{\ref{chap4:lem:groupoid-Gamma-I-kan10}} and that \(J_l\colon P\to G_0\) is a surjective submersion. The morphism \(N\Gamma \to \Simp{1}\) satisfies \(\Kan!(2, 0)\) if and only if the map
    \[
    P\times_{H_0,\target} H_1\to P\times_{G_0} P ,\quad (p, h)\mapsto (p, p\cdot h),
    \]
  is a diffeomorphism.
\end{lemma}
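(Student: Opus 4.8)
The plan is to unwind $\Kan!(2,0)$ for the map $N\Gamma \to \Simp{1}$ into a statement that splits component by component along the four $2$-simplices of $\Simp{1}$, and then to observe that only one of these components is the shear map while the other three are automatically diffeomorphisms. By definition, $N\Gamma \to \Simp{1}$ satisfies $\Kan!(2,0)$ iff the canonical map
\[
\Hom(\Simp{2}, N\Gamma) \to \Hom(\Horn{2}{0}\to\Simp{2}, N\Gamma\to\Simp{1})
\]
is a diffeomorphism. First I would use the decomposition $N_2\Gamma = \bigsqcup_{i+j+1=2} N_{i,j}\Gamma$ from~\eqref{chap4:eq:Nij-Gamma-cograph}, indexed by the four elements of $(\Simp{1})_2$, namely the monotone maps $[2]\to[1]$ with images $000$, $001$, $011$, $111$. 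Since both $N\Gamma\to\Simp{1}$ and the horn projection respect this decomposition, the canonical map above is a disjoint union of four maps, and $\Kan!(2,0)$ holds iff each of the four is a diffeomorphism. The horn $\Horn{2}{0}$ is the union of the edges $[0,1]$ and $[0,2]$ glued at the vertex $0$, so over each component the horn space records the pair of edges through vertex $0$ while the filler supplies the missing edge $[1,2]$.

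Next I would compute the four component maps explicitly, keeping track of the convention that for $p\in P$ one has $\source(p)=J_r(p)\in H_0$ and $\target(p)=J_l(p)\in G_0$. Over the string $000$ the component is $\Hom(\Simp{2},NG)\to\Hom(\Horn{2}{0},NG)$, a diffeomorphism because $G$ is a groupoid (Proposition~\ref{chap2:prop:category-is-inner-kan}); symmetrically the string $111$ yields $\Kan!(2,0)$ for $NH$, again automatic. Over the string $001$ the filler is a pair $(g,p)\in G_1\times_{\source,G_0,J_l} P$ and the component map is $(g,p)\mapsto (g,g\cdot p)$ into $G_1\times_{\target,G_0,J_l} P$; this is a diffeomorphism with inverse $(g,q)\mapsto (g,g^{-1}\cdot q)$, since the left $G$-action on $P$ is invertible. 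Over the remaining string $011$ the filler is a pair $(p,h)\in P\times_{H_0,\target} H_1$ (so that $N_{0,1}\Gamma = P\times_{H_0,\target}H_1$), the two edges through vertex $0$ are $p=[0,1]$ and $p\cdot h=[0,2]$, and the component map is exactly
\[
P\times_{H_0,\target} H_1 \to P\times_{G_0} P, \quad (p,h)\mapsto (p,p\cdot h).
\]

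Combining these, $\Kan!(2,0)$ for $N\Gamma\to\Simp{1}$ holds iff all four component maps are diffeomorphisms, and since three of them are automatic this is equivalent to the shear map above being a diffeomorphism, which settles both directions simultaneously. Representability of the relevant fibre products as manifolds uses the standing hypothesis that $J_l$ is a surjective submersion (for $P\times_{G_0} P$) together with the fact that $\target\colon H_1\to H_0$ is a cover (for $P\times_{H_0,\target} H_1$). I expect the main obstacle to be purely the bookkeeping: correctly matching each $N_{i,j}\Gamma$ to its $2$-simplex of $\Simp{1}$, identifying which edge is the omitted face $[1,2]$ under the source--target conventions of $\Gamma$, and verifying that the three ``automatic'' components genuinely reduce to groupoid invertibility rather than concealing a further principality hypothesis.
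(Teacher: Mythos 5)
Your proposal is correct and follows essentially the same route as the paper: the paper's proof likewise decomposes $\Hom(\Simp{2},N\Gamma)$ and $\Hom(\Horn{2}{0}\to\Simp{2},N\Gamma\to\Simp{1})$ into the four components indexed by the $2$-simplices of $\Simp{1}$, notes that each component map has the form $(a,b)\mapsto(a,ab)$, dismisses the $000$, $001$, and $111$ components as diffeomorphisms because $G$ and $H$ are Lie groupoids (invertibility of their arrows), and isolates the $011$ component as exactly the shear map $(p,h)\mapsto(p,p\cdot h)$. Your extra remarks on representability of the fibre products and on spelling out the inverse $(g,q)\mapsto(g,g^{-1}\cdot q)$ for the $001$ component are consistent with, and slightly more explicit than, what the paper records.
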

\begin{proof}
We calculate
\begin{equation}\label{chap4:eq:hom-simp2-Gamma}
\Hom(\Simp{2}, N\Gamma)\cong G_1\times_{G_0} G_1\sqcup G_1\times_{G_0} P\sqcup P\times_{H_0} H_1\sqcup H_1\times_{H_0} H_1,
\end{equation}
and
\begin{equation}\label{chap4:eq:hom-horn20-Gamma}
\Hom(\Horn{2}{0}\to\Simp{2}, N\Gamma\to \Simp{1})\cong
 G_1\times_{G_0} G_1\sqcup G_1\times_{G_0} P\sqcup P\times_{G_0} P \sqcup H_1\times_{H_0} H_1.
\end{equation}
The natural map from~\eqref{chap4:eq:hom-simp2-Gamma} to~\eqref{chap4:eq:hom-horn20-Gamma} sends the \(i\)-th component to the \(i\)-th component for \(i\in\{1, 2, 3, 4\}\) by an appropriate map \((a, b)\mapsto (a, ab)\). Since \(G\) and \(H\) are Lie groupoids, it remains to consider the map of the third components
\[
P\times_{H_0} H_1 \to P\times_{G_0} P ,\quad (p, h)\mapsto (p, ph),
\]
and this proves the lemma.
\end{proof}

Two lemmas above combined give an alternative formulation of HS bibundles:

\begin{proposition}
  Let \(P\) be a \(G\)-\(H\) bibundle with cograph \(\Gamma\). Then \(P\) is right principal if and only if \(N\Gamma \to \Simp{1}\) satisfies \(\Kan(1,0)\) and \(\Kan!(n,0)\) for \(n\ge 2\).
\end{proposition}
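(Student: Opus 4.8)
The plan is to reduce the equivalence to the two lemmas just established and then deal with the higher unique Kan conditions separately. By definition, $P$ is right principal exactly when its right $H$-bundle structure is principal, i.e.\ when $J_l\colon P\to G_0$ is a cover and the shear map $P\times_{H_0,\target}H_1\to P\times_{G_0}P$, $(p,h)\mapsto(p,p\cdot h)$, is a diffeomorphism. Lemma~\ref{chap4:lem:groupoid-Gamma-I-kan10} identifies the first condition with $\Kan(1,0)$ for $N\Gamma\to\Simp{1}$, and, granting it, Lemma~\ref{chap4:lem:groupoid-Gamma-I-kan20} identifies the second with $\Kan!(2,0)$. Thus I would first record that $P$ is right principal if and only if $N\Gamma\to\Simp{1}$ satisfies $\Kan(1,0)$ and $\Kan!(2,0)$. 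Since these two conditions are contained in the list ``$\Kan(1,0)$ and $\Kan!(n,0)$ for $n\ge2$'', the ``if'' direction of the proposition is then immediate; all the work is in the ``only if'' direction, namely showing that right principality also forces $\Kan!(n,0)$ for every $n\ge3$.

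For that I would exploit that $\Gamma$ and $I$ are categories internal to $\Mfd$, so that $N\Gamma$ and $\Simp{1}=NI$ are $2$-coskeletal and, by the internal analogue of Proposition~\ref{chap2:prop:nerve-of-functor-is-inner-kan} (together with Proposition~\ref{chap2:prop:coskeletal-kan}), satisfy all unique inner Kan conditions $\Kan!(m,k)$, $0<k<m$, $m\ge2$. Feeding these isomorphisms for both $N\Gamma$ and $\Simp{1}$ into the defining pullback
\[
\Hom(\Horn{m}{k}\to\Simp{m},N\Gamma\to\Simp{1})=\Hom(\Horn{m}{k},N\Gamma)\times_{\Hom(\Horn{m}{k},\Simp{1})}\Hom(\Simp{m},\Simp{1})
\]
shows that the \emph{relative} map $N\Gamma\to\Simp{1}$ also satisfies $\Kan!(m,k)$ for all $0<k<m$. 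I would then bootstrap the outer condition upward by induction on $n\ge2$, proving that $N\Gamma\to\Simp{1}$ satisfies $\Kan!(n,k)$ for all $0\le k<n$: the base case $n=2$ is exactly $\Kan!(2,0)$ (from right principality) together with the inner $\Kan!(2,1)$, and the inductive step applies the left-horn variant of Lemma~\ref{chap3:lem:morphism-kan(n+1,j)-implies-all} (the one with ranges $0\le k<n$ and $0\le j<n+1$), using the already available inner condition $\Kan!(n+1,1)$ as the single seed $j_0$. This yields $\Kan!(n+1,j)$ for all $0\le j<n+1$, in particular $\Kan!(n+1,0)$, closing the induction and giving $\Kan!(n,0)$ for every $n\ge2$.

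The main obstacle is precisely this upward propagation of the \emph{outer} condition $\Kan!(n,0)$: unlike the inner conditions, it does not follow from coskeletality alone at $n=3$, so the argument must route through the relative version of Lemma~\ref{chap3:lem:morphism-kan(n+1,j)-implies-all} and therefore depends on having both the correct left-horn range ($0\le k<n$) and an inner seed $\Kan!(n+1,1)$ available at each stage. I would double-check that the internal (i.e.\ $\Hom$-functor) formulation of the nerve and coskeleton facts genuinely supplies these inner isomorphisms in $\Mfd$, which is legitimate since all the relevant lemmas of Section~\ref{chap2:ssec:technical-lemmas} were shown to remain valid for $\Hom$. By contrast, the ``if'' direction needs none of this, since Lemmas~\ref{chap4:lem:groupoid-Gamma-I-kan10} and~\ref{chap4:lem:groupoid-Gamma-I-kan20} already recover both defining properties of a right principal bibundle from $\Kan(1,0)$ and $\Kan!(2,0)$ alone.
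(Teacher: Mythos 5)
Your proposal is correct and follows essentially the same route as the paper: reduce to the two lemmas identifying right principality with \(\Kan(1,0)\) and \(\Kan!(2,0)\), note the unique inner Kan conditions for \(N\Gamma\to\Simp{1}\) (the paper gets these from the internal analogue of Proposition~\ref{chap2:prop:nerve-of-functor-is-inner-kan}, you re-derive them via coskeletality and the defining pullback), and then propagate \(\Kan!(n,0)\) upward by induction using the left-horn variant of Lemma~\ref{chap3:lem:morphism-kan(n+1,j)-implies-all} with an inner condition as the seed. The only difference is expository: you spell out the justification of the inner unique conditions and the induction in more detail than the paper's terse proof.
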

\begin{proof}
  It remains to show that right principality implies \(\Kan!(n, 0)\) for \(n\ge 3\). We have \(\Kan!(2, 1)\),  \(\Kan!(2,0)\) and \(\Kan!(3,i)\) for \(0<i<3\). Lemma~\ref{chap3:lem:morphism-kan(n+1,j)-implies-all} yields \(\Kan!(3,0)\). An induction shows \(\Kan!(n, 0)\) for \(n\ge 4\).
\end{proof}

To summarise, we obtain an enhanced version of Proposition~\ref{chap4:prop:groupoid-bibunble=cats-over-I}.

\begin{proposition}\label{chap4:prop:groupoid-HSbibunble=left-kan-cats-over-I}
  Let \(G\) and \(H\) be Lie groupoids. The category of \(G\)-\(H\) HS bibundles is isomorphic to the category of categories \(\Gamma\) in \(\Mfd\) over \(I\) with the two ends given by \(G\) and~\(H\) such that \(N\Gamma\to \Simp{1}\) satisfies \(\Kan(1, 0)\) and \(\Kan!(n, 0)\) for \(n\ge 2\).
\end{proposition}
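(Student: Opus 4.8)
The plan is to obtain this statement by restricting the isomorphism of categories from Proposition~\ref{chap4:prop:groupoid-bibunble=cats-over-I} to suitable full subcategories on each side, using the preceding characterisation of right principality as the only new ingredient. Nothing fundamentally new is needed beyond the lemmas already established.

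First I would recall the isomorphism of Proposition~\ref{chap4:prop:groupoid-bibunble=cats-over-I}: the cograph construction $P\mapsto \Gamma$ gives a bijection between $G$-$H$ bibundles and category objects in $\Mfd$ over $I$ whose two ends are $G$ and $H$, with inverse taking the preimage of the unique non-identity arrow of $I$. This functor is an isomorphism of categories; in particular it is a bijection on objects and induces bijections on morphism sets, since a bibundle morphism $P\to P'$ corresponds to a functor $\Gamma\to\Gamma'$ over $I$ restricting to the identity on $G$ and $H$.

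Next I would identify the two full subcategories that are to be matched. On the bibundle side, the $G$-$H$ HS bibundles, namely those $P$ for which the $H$-action is principal (i.e.\ $P$ is right principal), form a full subcategory of all $G$-$H$ bibundles: a morphism of HS bibundles is by definition just a bi-equivariant smooth map, so being HS is a condition on objects only. On the other side, the category objects $\Gamma$ over $I$ for which $N\Gamma\to\Simp{1}$ satisfies $\Kan(1,0)$ and $\Kan!(n,0)$ for all $n\ge 2$ likewise form a full subcategory, since these Kan conditions constrain only the object $\Gamma$ and not the morphisms over $I$.

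Finally I would invoke the proposition preceding this one, which asserts that for a $G$-$H$ bibundle $P$ with cograph $\Gamma$, the bibundle $P$ is right principal if and only if $N\Gamma\to\Simp{1}$ satisfies $\Kan(1,0)$ and $\Kan!(n,0)$ for $n\ge 2$. Because the cograph is precisely the image of $P$ under the isomorphism of Proposition~\ref{chap4:prop:groupoid-bibunble=cats-over-I}, this equivalence says that the isomorphism carries the objects of the first full subcategory bijectively onto the objects of the second. An isomorphism of categories that restricts to a bijection between the object classes of two full subcategories restricts to an isomorphism between those full subcategories, which is exactly the claim. I do not expect a genuine obstacle at this stage: the substantive work, namely encoding right principality through $\Kan(1,0)$ and $\Kan!(2,0)$ and then propagating to $\Kan!(n,0)$ for $n\ge 3$ via Lemma~\ref{chap3:lem:morphism-kan(n+1,j)-implies-all} and induction, has already been carried out in Lemmas~\ref{chap4:lem:groupoid-Gamma-I-kan10} and~\ref{chap4:lem:groupoid-Gamma-I-kan20} and the preceding proposition, so the remaining step is the purely formal assembly of restricting an isomorphism to corresponding full subcategories.
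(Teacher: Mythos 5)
Your proposal is correct and is essentially the paper's own argument: the paper states this proposition with the phrase ``To summarise,'' meaning it is obtained exactly as you describe, by combining the isomorphism of Proposition~\ref{chap4:prop:groupoid-bibunble=cats-over-I} with the immediately preceding proposition (right principality of $P$ $\iff$ $N\Gamma\to\Simp{1}$ satisfies $\Kan(1,0)$ and $\Kan!(n,0)$ for $n\ge 2$), the restriction to full subcategories being purely formal since both conditions constrain objects only. Your write-up just makes explicit the bookkeeping that the paper leaves implicit.
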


\subsection{Action groupoids in the cogragh}

Let \(P\) be a \(G\)-\(H\) bibundle with cograph \(\Gamma\). There are three action groupoids: for the \(G\)\nbdash{}action on \(P\), for the \(H\)-action on \(P\), and for the biaction (Section~\ref{chap1:sec:comparison-of-2-cat-of-lie-groupoids}). We shall describe how these action groupoids sit inside the cograph. Recall that the nerve of \(\Gamma\) decomposes as \(N_n\Gamma=\bigsqcup_{i+j+1=n}N_{i,j} \Gamma\) with \(N_{i, -1}\Gamma=N_i G\) and \(N_{-1, i}\Gamma=N_i H\).

\begin{proposition}
  The action groupoid \(P\rtimes H\) is given by the collection of \(N_{0, i}\Gamma\) for \(i\ge 0\), and the functor \(P\rtimes H\to H\) is given by \(N_{0, i}\Gamma\to N_{-1, i}\Gamma\) for \(i\ge 0\); Similarly, the functor \(G\ltimes P\to H\) is given by \(N_{i, 0}\Gamma\to N_{i, -1}\Gamma\) for \(i\ge 0\).
\end{proposition}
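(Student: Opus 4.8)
The plan is to read off the identification directly from the explicit decomposition~\eqref{chap4:eq:Nij-Gamma-cograph} and then to match the simplicial structure maps under a degree shift. First I would specialise that decomposition to the two families of ``columns''. Setting \(i=0\) gives
\[
N_{0,j}\Gamma = P\times_{J_r,H_0,\target} H_1\times_{\source,H_0,\target}\cdots\times_{\source,H_0,\target}H_1\quad(j\text{ copies of }H_1),
\]
whereas the action groupoid \(P\rtimes H\) (with \((P\rtimes H)_0=P\), \((P\rtimes H)_1=P\times_{H_0,\target}H_1\), source \(\action\), target \(\pr_1\)) has for \(N_j(P\rtimes H)\) exactly the same iterated fibre product: a composable string of arrows of \(P\rtimes H\) is determined by its initial object \(p\in P\) together with the underlying \(H\)-arrows \(h_1,\dots,h_j\in H_1\), subject to \(\target(h_1)=J_r(p)\) and \(\target(h_{k+1})=\source(h_k)\). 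Hence \(N_j(P\rtimes H)\cong N_{0,j}\Gamma\) as objects of \(\Cat\), both representable by the same surjective-submersion fibre products. The mirror computation identifies \(N_{i,0}\Gamma=G_1\times_{G_0}\cdots\times_{G_0}G_1\times_{G_0}P\) with \(N_i(G\ltimes P)\) for the left action groupoid.

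Next I would pin down the simplicial structure for \(P\rtimes H\). A simplex of \(N_{0,j}\Gamma\) is a \((j+1)\)-simplex \(C_0\leftarrow C_1\leftarrow\cdots\leftarrow C_{j+1}\) of \(N\Gamma\) whose unique vertex over \(0\in I\) is \(C_0\in G_0\) and whose remaining vertices lie in \(H_0\); its initial edge \(C_0\leftarrow C_1\) is the element \(p\in P\) and the later edges \(C_k\leftarrow C_{k+1}\) are the \(H\)-arrows \(h_k\). Under the bijection above I expect the operators of the nerve \(N(P\rtimes H)\) to correspond to the \emph{shifted} operators of \(N\Gamma\), namely \(\face_i^{P\rtimes H}\leftrightarrow\face_{i+1}^{N\Gamma}\) and \(\de_i^{P\rtimes H}\leftrightarrow\de_{i+1}^{N\Gamma}\) for \(0\le i\le j\). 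This must be verified in its three regimes: the leading face \(\face_0^{P\rtimes H}\) should compose \(p\) with \(h_1\) and match \(\face_1^{N\Gamma}\); an interior face should compose \(h_k\) with \(h_{k+1}\) and match \(\face_{k+1}^{N\Gamma}\); the final face should forget \(h_j\) and match \(\face_{j+1}^{N\Gamma}\). This identifies the column \(\{N_{0,j}\Gamma\}_{j\ge0}\), with its induced structure, as \(N(P\rtimes H)\). The one remaining face \(\face_0^{N\Gamma}\), which drops \(C_0\) and lands in \(N_{-1,j}\Gamma=N_jH\), is then exactly the nerve of the projection \(P\rtimes H\to H\), \(p\mapsto J_r(p)\), \((p,h)\mapsto h\); this proves the statement about \(P\rtimes H\to H\).

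The claim for \(G\ltimes P\) follows by the symmetric argument on the columns \(\{N_{i,0}\Gamma\}\), where the \(P\)-edge now sits at the \emph{end} of each simplex: here the induced operators should be the \emph{un}-shifted \(\face_k^{N\Gamma},\de_k^{N\Gamma}\), and the omitted final face \(\face_{i+1}^{N\Gamma}\) (dropping the \(H\)-vertex \(C_{i+1}\)) realises the projection onto \(G\) -- so the target of this functor is \(G\), correcting the evident typographical slip ``\(G\ltimes P\to H\)''. I expect the only real obstacle to be bookkeeping rather than anything conceptual: keeping straight the poset-to-category convention (so that edges of \(\Gamma\) run from larger to smaller index and elements of \(P\) run from \(H\)-objects to \(G\)-objects), recording which vertex of each simplex lies over \(0\in I\), and above all the degree shift that converts \(\face_{i+1},\de_{i+1}\) of \(N\Gamma\) into \(\face_i,\de_i\) of \(P\rtimes H\). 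Because a single object of \(P\rtimes H\) is encoded as an \emph{edge} of \(\Gamma\), this reindexing is precisely where off-by-one errors are most likely, so I would check the leading, interior, and trailing faces (and the inserted degeneracies) separately rather than merely asserting the pattern. One may also note conceptually that \(N_{0,\bullet}\Gamma\) and \(N_{\bullet,0}\Gamma\) are décalage-type slices of \(N\Gamma\) along its two ends, which explains the shift; but the direct verification is self-contained given~\eqref{chap4:eq:Nij-Gamma-cograph}.
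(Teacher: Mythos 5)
Your proof is correct and follows essentially the same route as the paper's: identify \(N_{0,i}\Gamma\) with the nerve of \(P\rtimes H\) via the degree-shifted operators \(\face_j=\face_{j+1}^{N\Gamma}\), \(\de_j=\de_{j+1}^{N\Gamma}\), and read off the projection from the leftover face \(\face_0^{N\Gamma}\colon N_{0,i}\Gamma\to N_{-1,i}\Gamma\); the paper merely asserts the identification where you spell out the leading/interior/trailing cases. You are also right about the typographical slip in the second clause: since \(N_{i,-1}\Gamma=N_iG\), the functor given by \(N_{i,0}\Gamma\to N_{i,-1}\Gamma\) is \(G\ltimes P\to G\), not \(G\ltimes P\to H\).
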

\begin{proof}
By symmetry, it suffices to prove the first statement. By definition,
\[
N_{0, i}\Gamma=P\times_{H_0} H_1\times_{H_0} \dots \times_{H_0} H_1,\quad i \ge 0.
\]
The sets of \(N_{0, i}\Gamma\) for \(i\ge 0\) with face maps
\[
\face_j=\face_{j+1}^{N\Gamma}\colon N_{0, i}\Gamma\to N_{0,i-1}\Gamma,
\]
and degeneracy maps
\[
\de_j=\de_{j+1}^{N\Gamma}\colon N_{0, i}\Gamma\to N_{0, i+1}\Gamma
\]
form a simplicial manifold, which is easily identified with the nerve of the action groupoid \(P\rtimes H\). Moreover, the collection of maps \(\face_0^{N\Gamma}\colon N_{0, i}\Gamma\to N_{-1, i}\Gamma\) gives the nerve of the projection \(P\rtimes H\to H\), and this completes the proof.
\end{proof}

\begin{proposition}\label{chap4:prop:biaction-nerve=diagonal}
  The biaction groupoid \(G\ltimes P\rtimes H\) is given by the collection of \(N_{i,i}\) for \(i\ge 0\).
\end{proposition}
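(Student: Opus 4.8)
The strategy is to mirror the proof of the preceding proposition, where the action groupoid $P\rtimes H$ was recovered as the collection $\{N_{0,i}\Gamma\}_{i\ge 0}$, the new feature being that the diagonal pieces $N_{i,i}\Gamma$ sit \emph{two} simplicial degrees apart in $N\Gamma$ rather than one. First I would record the explicit description coming from~\eqref{chap4:eq:Nij-Gamma-cograph},
\[
N_{i,i}\Gamma=\underbrace{G_1\times_{G_0}\dots\times_{G_0}G_1}_{i}\times_{G_0}P\times_{H_0}\underbrace{H_1\times_{H_0}\dots\times_{H_0}H_1}_{i},
\]
and compare it with the nerve of $G\ltimes P\rtimes H$. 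Since the left $G$\nbdash{}action and the right $H$\nbdash{}action on $P$ commute, the biaction groupoid is the action groupoid of the combined $G\times H$\nbdash{}action on $P$, so its $i$\nbdash{}th nerve level is precisely the space of a base point in $P$ together with $i$ composable $G$\nbdash{}arrows and $i$ composable $H$\nbdash{}arrows, that is, the manifold displayed above. This yields a level-wise identification $N_i(G\ltimes P\rtimes H)\cong N_{i,i}\Gamma$; concretely, a chain $x_0\leftarrow\dots\leftarrow x_{2i+1}$ in $\Gamma$ whose first $i+1$ vertices lie in $G$ and whose last $i+1$ vertices lie in $H$ is sent to its $i$ bounding $G$\nbdash{}arrows, its unique crossing element $x_i\leftarrow x_{i+1}\in P$ as base point, and its $i$ bounding $H$\nbdash{}arrows.

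Next I would exhibit the simplicial structure maps on the family $\{N_{i,i}\Gamma\}_{i\ge 0}$. Because $N_{i,i}\Gamma$ lives in degree $2i+1$ of $N\Gamma$ while $N_{i-1,i-1}\Gamma$ lives in degree $2i-1$, each face map of $N(G\ltimes P\rtimes H)$ must be realised as a composite of \emph{two} face maps of $N\Gamma$: one deleting a white vertex and the other deleting the mirror black vertex, symmetrically placed on the two sides of the central crossing. Reading from the centre outward, the innermost double deletion (of the two vertices adjacent to the crossing) composes the neighbouring $G$\nbdash{}arrow with the crossing $P$\nbdash{}element via the left action and composes the crossing with the neighbouring $H$\nbdash{}arrow via the right action; the outermost double deletion simply forgets the two extreme arrows. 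Thus the innermost/outermost faces reproduce the source, target, and forgetful faces of the biaction groupoid, and the intermediate faces reproduce its composition. Since the white-block and black-block deletions operate on disjoint vertices separated by the crossing, they commute, and the simplicial identities for these composites follow from those of $N\Gamma$; degeneracies are treated as the corresponding symmetric composites of two degeneracies of $N\Gamma$.

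Finally I would identify this simplicial object with $N(G\ltimes P\rtimes H)$ by inspecting low degrees: in degree $0$ we have $N_{0,0}\Gamma=P=(G\ltimes P\rtimes H)_0$; in degree $1$ we have $N_{1,1}\Gamma=G_1\times_{G_0}P\times_{H_0}H_1=(G\ltimes P\rtimes H)_1$ with the two faces recovering $\source$ and $\target$; and the degree\nbdash{}$2$ piece encodes the biaction composition. As both simplicial objects are nerves of groupoids in $\Mfd$, hence $2$\nbdash{}coskeletal (Section~\ref{chap2:sec:sekeleton-and-coskeleton}), agreement through degree $2$ forces them to coincide, which proves the claim. Representability is not an issue, since each $N_{i,j}\Gamma$ was already seen to be representable in $\Mfd$. \textbf{The main obstacle} is the bookkeeping in the second step: one must pin down exactly which pair of faces $\face^{N\Gamma}_a\circ\face^{N\Gamma}_b$ (symmetric about the central gap) realises the $k$\nbdash{}th face of $N(G\ltimes P\rtimes H)$ — in particular getting the mirror indexing of the $H$\nbdash{}block right relative to the $G$\nbdash{}block — and to verify that composing the two commuting actions along the chain reproduces precisely these composite faces at the extreme and intermediate positions.
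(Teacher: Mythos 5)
Your proposal is correct and takes essentially the same route as the paper: identify \(N_i(G\ltimes P\rtimes H)\) with \(N_{i,i}\Gamma\) levelwise and realise each face and degeneracy of the biaction nerve as a mirror-symmetric composite of two face or degeneracy maps of \(N\Gamma\); the indexing you flag as the main obstacle is exactly what the paper pins down with the explicit formulas \(\face_j=\face^{N\Gamma}_{i+j}\circ\face^{N\Gamma}_{i-j}\) and \(\de_j=\de^{N\Gamma}_{i+j+2}\circ\de^{N\Gamma}_{i-j}\). One caveat: your closing appeal to \(2\)-coskeletality is redundant (your earlier steps already match all structure maps) and, as stated, slightly circular, since asserting that the diagonal family is the nerve of a groupoid in \(\Mfd\) presupposes what is being proved — dropping that step leaves precisely the paper's argument.
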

\begin{proof}
  The nerve of the biaction groupoid is given by
  \[
  N_i (G\ltimes P\rtimes H)
  =G_1\times_{G_0} \dots \times_{G_0} G_1 \times_{G_0} P\times_{H_0} H_1\times_{H_0} \dots \times_{H_0} H_1
  =N_{i,i}\Gamma.
  \]
  The face map \(\face_j\colon N_{i,i}\Gamma\to N_{i-1,i-1} \Gamma\) is given by \(\face^{N\Gamma}_{i+j}\circ \face^{N\Gamma}_{i-j}\). The degeneracy map \(\de_j\colon N_{i,i}\to N_{i+1, i+1}\) is given by \(\de^{N\Gamma}_{i+j+2}\circ \de^{N\Gamma}_{i-j}\).
\end{proof}

We will see that \(N_{i,j}\Gamma\) is an augmented bisimplicial object. This allows to simplify the description of these action groupoids (see Example~\ref{chap4:ex:Lie-groupoid-bibundle-aug-bisimp}).

\section{Augmented simplicial sets and bisimplicial sets}\label{chap4:sec:aug-simplicial-bisimplicial}

In this section, we collect some basic facts about augmented simplicial sets and (augmented) bisimplicial sets. This section is based on~\cite[Chapter 3]{Joyal2008}. We will see that augmented simplicial sets over the augmented simplicial interval and augmented bisimplicial sets are the same.

\subsection{Augmented simplicial sets}

Let \(\bD_{+}\) be the category of all finite ordered sets with order-preserving maps as morphisms; that is, \(\bD_{+}=\bD\cup [-1]\), where \([-1]=\emptyset\).

\begin{definition}\label{chap4:def:augmented-simplicial}
  The category of \emph{augmented simplicial sets} \(\SSet_+\) is the functor category \([\bD_{+}^{\op}, \Sets]\).
\end{definition}

The inclusion \(\iota\colon \bD\hookrightarrow \bD_{+}\) induces a forgetful functor
\[
\iota^*\colon \SSet_+\to \SSet
\]
sending an augmented simplicial set \(X_{i\ge -1}\) to the simplicial set \(X_{i\ge 0}\), called the \emph{reduced simplicial set} of~\(X\). Moreover, an augmented simplicial set \(X_{i\ge -1}\) is the same as a simplicial set \(X_{i\ge 0}\) with an \emph{augmentation} map
\[
X_{i\ge 0}\xrightarrow{d^{-1}} \sk_0 X_{-1},
\]
where \(\sk_0 X_{-1}\) is the constant simplicial set with value \(X_{-1}\).

The functor \(\iota^*\) admits a right adjoint,\footnote{In general, such adjoint functors can be computed by Kan extensions (equivalently, ends and coends), see~\cite[Chapter X]{MacLane}.} called the \emph{trivial augmentation} functor
\[
\iota_*\colon \SSet\to \SSet_+,\quad X\mapsto X_*,
\]
which sends a simplicial set \(X\) to the \emph{trivially augmented simplicial set} \(X_*=\iota_*(X)\) with \(\iota_*(X)_i=X_i\) for \(i\ge 0\) and \(X_{-1}=*\). The functor \(\iota_*\) embeds \(\SSet\) as a full subcategory into \(\SSet_+\). Denote by \(\Simp*[+]{n}\) the augmented simplicial set represented by \([n]\), then \(\Simp*[+]{n}=\Simp*{n}\). We will often identify a simplicial set with the corresponding trivially augmented simplicial set.

The functor \(\iota^*\) admits a left adjoint \(\iota_!\colon \SSet\to \SSet_+\), which sends a simplicial set \(X\) to \(\iota_!(X)\) with \(\iota_!(X)_i=X_i\) for \(i\ge 0\) and \(X_{-1}\) being the coequaliser of \(\face_1,\face_0\colon X_1\to X_0\).

\begin{definition}
A simplicial set \(X\) is \emph{connected} if \(\iota_!(X)=\iota_*(X)\).
\end{definition}

For a connected simplicial set \(X\) and an augmented simplicial set \(Y\), we have
\[
\hom(\iota^* Y, X)=\hom(Y, X_*), \quad \hom(X, \iota^* Y)=\hom(X_*, Y).
\]

It is easy to see that \(\Simp{n}\) for \(n\ge 0\), \(\Horn{n}{k}\) for \(n\ge 1\) and \(0\le k\le n\), and \(\partial\Simp{n}\) for \(n\ge 2\) are connected.

\begin{lemma}\label{chap4:lem:aug-acyclic}
  Let \(X\) be an augmented simplicial set. There is a commutative diagram
  \[
  \xymatrix{
  \hom(\Simp{n}, \iota^* X)\ar[d]\ar[r]^{=} & \hom(\iota_*\Simp{n}, X)\ar[d]\\
  \hom(\partial\Simp{n}\to\Simp{n},\iota^*X\to \sk_0 X_{-1})\ar[r]^-{=}&\hom(\iota_*\partial\Simp{n}, X)\rlap{\ .}
  }
  \]
  Thus we call \(X\) acyclic if \(\iota^*X\to \sk_0 X_{-1}\) is acyclic.
\end{lemma}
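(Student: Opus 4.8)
The plan is to show that each horizontal arrow of the square is a canonical bijection and that the square then commutes by naturality, so the two vertical restriction maps are identified. I would treat the top and bottom edges separately, in each case peeling off the extra $(-1)$\nbdash{}dimensional datum carried by an augmented simplicial set.

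For the top edge I would invoke the adjunction identity recorded just above the lemma: for a connected simplicial set $W$ one has $\hom(W,\iota^*X)=\hom(W_*,X)$, where $W_*=\iota_* W$. Since $\Simp{n}$ is connected, taking $W=\Simp{n}$ yields the natural bijection $\hom(\Simp{n},\iota^*X)\cong\hom(\iota_*\Simp{n},X)$ immediately.

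The bottom edge is the substantive point. First I would unwind the relative hom by its definition as a pullback,
\[
\hom(\partial\Simp{n}\to\Simp{n},\iota^*X\to\sk_0 X_{-1})=\hom(\partial\Simp{n},\iota^*X)\times_{\hom(\partial\Simp{n},\sk_0 X_{-1})}\hom(\Simp{n},\sk_0 X_{-1}).
\]
Then I would use that a simplicial map from a \emph{connected} simplicial set into the constant simplicial set $\sk_0 X_{-1}$ is the same as a single element of $X_{-1}$; applied to the connected set $\Simp{n}$, this identifies the factor $\hom(\Simp{n},\sk_0 X_{-1})$ with $X_{-1}$ and identifies the map $\iota^*X\to\sk_0 X_{-1}$ occurring here with the augmentation of $X$. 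An element of the pullback is therefore a pair of a simplicial map $f\colon\partial\Simp{n}\to\iota^*X$ together with an element $x_{-1}\in X_{-1}$ such that the augmentation of $f$ is the constant map at $x_{-1}$. On the other side, a map $\iota_*\partial\Simp{n}\to X$ of augmented simplicial sets is exactly the same data: its restriction to degrees $\ge 0$ is a simplicial map $f\colon\partial\Simp{n}\to\iota^*X$, its value on the unique $(-1)$\nbdash{}simplex of $\iota_*\partial\Simp{n}$ is an element $x_{-1}\in X_{-1}$, and compatibility with the augmentation maps says precisely that $f$ augments to the constant $x_{-1}$. Matching these two descriptions gives the bottom bijection. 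I emphasise that this uses only that $\Simp{n}$ is connected, not $\partial\Simp{n}$, since the single $(-1)$\nbdash{}simplex is supplied by $\iota_*$; hence the identification is valid for every $n\ge 0$.

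Finally, commutativity of the square is naturality of these identifications along the boundary inclusion $\partial\Simp{n}\to\Simp{n}$: the left vertical arrow sends $\sigma\colon\Simp{n}\to\iota^*X$ to the commuting square with top $\sigma|_{\partial\Simp{n}}$ and bottom the augmentation of $\sigma$, while the right vertical arrow is restriction along $\iota_*\partial\Simp{n}\hookrightarrow\iota_*\Simp{n}$; under the adjoint transpose both amount to restricting to the boundary, so they agree. The main obstacle is purely bookkeeping in the bottom edge — keeping the augmentation datum straight and verifying that the compatibility condition built into the pullback coincides with the augmentation compatibility of a map out of $\iota_*\partial\Simp{n}$; once the $(-1)$\nbdash{}level is correctly tracked, everything else is formal.
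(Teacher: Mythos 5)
Your proof is correct and is essentially the paper's own argument: the paper likewise gets the top edge from connectedness of \(\Simp{n}\) and the bottom edge by identifying both sides with pairs of a simplicial map \(\partial\Simp{n}\to\iota^*X\) and an element of \(X_{-1}\) compatible with the augmentation, which it phrases as the chain \(\hom(\iota_*\partial\Simp{n},X)=\hom(\partial\Simp{n}\to\Simp{0},\iota^*X\to\sk_0 X_{-1})=\hom(\partial\Simp{n}\to\Simp{n},\iota^*X\to\sk_0 X_{-1})\). Your more explicit bookkeeping of the \((-1)\)-level, and the observation that only \(\Simp{n}\) (not \(\partial\Simp{n}\)) needs to be connected, just spells out what the paper leaves implicit.
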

\begin{proof}
  Since \(\Simp{n}\) is connected, \(\hom(\Simp{n}, \iota^* X)=\hom(\iota_*\Simp{n}, X)\). We observe also that
  \[
  \hom(\iota_*\partial\Simp{n}, X)=\hom(\partial\Simp{n}\to \Simp{0},\iota^* X\to \sk_0 X_{-1})
                           =\hom(\partial\Simp{n}\to \Simp{n},\iota^* X\to \sk_0 X_{-1}),
  \]
  and this proves the statement.
\end{proof}

\subsection{Augmented bisimplicial sets}
\begin{definition}\label{chap4:def:bisimplicial}
  The category of \emph{augmented bisimplicial sets} \(\biSSet_{+}\) is the functor category \([\bD^\op_+\times \bD^\op_+, \Sets]\).
\end{definition}

\begin{definition}\label{chap4:def:bisimplicial-row-n}
  The functor \(r_n\colon\bD_+\to \bD_+\times \bD_+\) given by \([i]\mapsto ([n],[i])\) induces the \emph{row} functor
  \[
  R_n\coloneqq r_n^*\colon \biSSet_+ \to \SSet_+,\quad
  X\mapsto \big\{[i]\mapsto X_{n,i}\big\},
  \]
  We call \(R_n'=\iota^*\circ R_n\) the \emph{reduced row} functor. We also have \emph{column} functors \(C_n\) and \emph{reduced column} functors \(C_n'\).
\end{definition}

\begin{definition}\label{chap4:def:box-product}
  The product bifunctor \(\Sets\times \Sets\to \Sets\) induces a functor
  \begin{gather*}
  \boxtimes\colon \SSet_+\times\SSet_+\to \biSSet_+,\\
  (X, Y)\mapsto \big\{([i],[j]) \mapsto A_i\times B_j\big\},
  \end{gather*}
  called the \emph{box product}.
\end{definition}

\begin{example}
  Denote by \(\Simp*[+]{i,j}\) the augmented bisimplicial set represented by \(([i], [j])\). Then \(\Simp*[+]{i,j}=\Simp*{i}\boxtimes \Simp*{j}\).
\end{example}

The following result is an easy coend calculation.
\begin{proposition}\label{chap4:prop:box-row-adjoint}
The box product with \(\Simp*{n}\) on the left \textup(right\textup)
\[
\Simp*{n}\boxtimes \blank\colon \SSet_+\to \biSSet_+
\]
is left adjoint to the row functor \(R_n\) \textup(column functor \(C_n\)\textup).
\end{proposition}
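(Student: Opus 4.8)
The plan is to produce, for every augmented simplicial set $Y$ and every augmented bisimplicial set $Z$, a natural bijection
\[
\hom_{\biSSet_+}(\Simp*{n}\boxtimes Y,\, Z)\;\cong\;\hom_{\SSet_+}(Y,\, R_n Z)
\]
and to read off the adjunction directly from it. The decisive input is the fact recorded just above the statement, namely that $\Simp*{n}=\Simp*[+]{n}$ is the augmented simplicial set \emph{represented} by $[n]$, so that $(\Simp*{n})_i=\hom_{\bD_+}([i],[n])$; this is exactly what will let me invoke the Yoneda Lemma. First I would rewrite the left-hand hom-set as an end over $\bD_+\times\bD_+$ of pointwise hom-sets in $\Sets$,
\[
\hom_{\biSSet_+}(\Simp*{n}\boxtimes Y,\, Z)=\int_{([i],[j])}\hom_{\Sets}\bigl(\hom_{\bD_+}([i],[n])\times Y_j,\; Z_{i,j}\bigr),
\]
using both that a morphism of presheaves is an element of the end of the pointwise hom-sets and that $(\Simp*{n}\boxtimes Y)_{i,j}=\hom_{\bD_+}([i],[n])\times Y_j$ by the definition of $\boxtimes$.

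Next I would run the standard chain of natural isomorphisms. Cartesian closedness of $\Sets$ recasts the integrand as $\hom_{\Sets}\bigl(\hom_{\bD_+}([i],[n]),\,\hom_{\Sets}(Y_j,Z_{i,j})\bigr)$; Fubini for ends separates the double end into $\int_{[j]}\int_{[i]}$; and, for each fixed $[j]$, the inner end is evaluated by the end form of the Yoneda Lemma, $\int_{[i]}\hom_{\Sets}\bigl(\hom_{\bD_+}([i],[n]),F([i])\bigr)\cong F([n])$, applied to the presheaf $F=\hom_{\Sets}(Y_j,Z_{\blank,j})\colon\bD_+^{\op}\to\Sets$. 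Here one checks that $Z_{i,j}$ is contravariant in $[i]$, so $F$ is genuinely a presheaf in the first variable and the variances match. This collapses the inner end to $\hom_{\Sets}(Y_j,Z_{n,j})$, and the surviving end $\int_{[j]}\hom_{\Sets}(Y_j,Z_{n,j})$ is precisely $\hom_{\SSet_+}(Y,R_nZ)$, since $(R_nZ)_j=Z_{n,j}$.

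Finally, every isomorphism in this chain is natural in both $Y$ and $Z$, so the composite is a natural bijection, which is exactly the assertion that $\Simp*{n}\boxtimes\blank$ is left adjoint to $R_n$. Naturality can be made completely transparent by noting that the bijection sends $f\colon\Simp*{n}\boxtimes Y\to Z$ to the map $Y\to R_nZ$ whose $j$-th component is $y\mapsto f_{n,j}(\id_{[n]},y)$, with inverse reconstructing $f_{i,j}(\varphi,y)=\varphi^{*}\bigl(g_j(y)\bigr)$ from $\varphi\colon[i]\to[n]$. The column version follows verbatim by interchanging the two copies of $\bD_+$, with $R_n$ replaced by $C_n$. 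I do not expect a serious obstacle: once the ends are set up, the computation is mechanical, and the only points demanding care are bookkeeping the variances (so that the Yoneda reduction is applied to an honest presheaf in the $[i]$-variable) and observing that all ends exist, which is automatic because $\bD_+$ is small and the values lie in $\Sets$.
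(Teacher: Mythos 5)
Your proof is correct and is essentially the approach the paper has in mind: the paper offers no details beyond remarking that the result is ``an easy coend calculation,'' and your computation --- writing the bisimplicial hom-set as an end, applying cartesian closedness of \(\Sets\), Fubini, and the Yoneda lemma to collapse the \([i]\)-variable at \([n]\) --- is exactly that calculation carried out in its dual, end-based form. Your explicit description of the bijection, \(g_j(y)=f_{n,j}(\id_{[n]},y)\) with inverse \(f_{i,j}(\varphi,y)=\varphi^{*}\bigl(g_j(y)\bigr)\), correctly handles the variance and naturality, and the column case follows by symmetry as you say.
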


The projection to the first factor \(\pr_1\colon \bD_+\times \bD_+\to \bD_+\) induces a functor
\begin{gather*}
\pr_1^*=\blank\boxtimes \Simp*{0}\colon\SSet_+\to \biSSet_+,\\
 X\mapsto \big \{([i,j])\mapsto X_i \big\},
\end{gather*}
which sends an augmented simplicial set \(X\) to an augmented bisimplicial set with every column given by \(X\). By Proposition~\ref{chap4:prop:box-row-adjoint}, \(\pr_1^*\) is left adjoint to the column functor \(C_0\). It is not hard to see that \(\pr_1^*\) is right adjoint to the column functor \(C_{-1}\).

\begin{definition}\label{chap4:def:diagonal-bisimp-simp}
  The diagonal inclusion \(\delta\colon \bD_+\to \bD_+\times \bD_+\) induces the \emph{diagonal} functor
  \[
  \delta^*\colon \biSSet_+\to \SSet_+,\quad X\mapsto \big\{ [i]\mapsto X_{i,i}\big\},
  \]
  sending an augmented bisimplicial to its diagonal.
\end{definition}

\begin{definition}
The ordinal sum,
\[
\sigma\colon \bD_{+} \times \bD_{+}\to \bD_{+}, \quad ([i], [j])\mapsto [i+j+1],
\]
extends cocontinuously to the \emph{join of augmented simplicial sets}
\begin{gather*}
\star \colon \SSet_+\times \SSet_+\to \SSet_+,\\
(X, Y)\mapsto \Big\{[n]\mapsto \bigsqcup_{i+j+1=n} X_i\times Y_j\Big\}.
\end{gather*}
\end{definition}

\begin{remark}
  The unit of the join operation is \(\Simp*[+]{-1}\), the augmented simplicial set represented by \([-1]=\emptyset\). The join of augmented simplicial sets restricts to the join of simplicial sets (Definition~\ref{chap2:def:join-simplicial-sets}); that is, for simplicial sets \(X\) and \(Y\), we have \(\iota_* X\star\iota_* Y=\iota_*(X\star Y)\).
\end{remark}

\begin{remark}\label{chap4:rem:horn-0-0}
  Recall that \(\Horn{0}{0}\) is left undefined. It is meaningful to set \(\Horn{0}{0}=\emptyset\), the constant presheaf on \(\bD_+\) with value \(\emptyset\), as an augmented simplicial set. Then~\eqref{chap2:eq:join} is also valid for~\(\Horn{0}{0}\). Notice that \(\Horn{0}{0}\subsetneq \iota_*\partial\Simp{0}=\Simp*[+]{-1}\).
\end{remark}

\begin{definition}
The ordinal sum on \(\bD_+\) induces the \emph{total décalage} of Illusie~\cite{Illusie}
\begin{gather*}
\Dec\coloneqq \sigma^*\colon \SSet_+\to \biSSet_+,\\
X\mapsto \big\{([i],[j])\mapsto X_{i+j+1}\big\}.
\end{gather*}
The functor \(\Dec\) admits a left adjoint,
\begin{gather*}
T\coloneqq\sigma_!\colon \biSSet_+\to \SSet_+,\\
 X\mapsto \Big\{[n]\mapsto \bigsqcup_{i+j+1=n} X_{i,j}\Big\},
\end{gather*}
called the \emph{total augmented simplicial set} functor.\footnote{The term ``total augmented simplicial set'' sometimes refers to the right adjoint of \(\Dec\), which we will not use in this thesis.}
\end{definition}

It is not hard to prove
\begin{equation}\label{chap4:eq:Tboxtims=star}
T\circ \boxtimes=\star.
\end{equation}
Therefore, the reduced column \(0\) and reduced row \(0\) of the total décalage give the décalages defined in Example~\ref{chap3:exa:decalage}; in formulas,
\[
C'_0 \circ \Dec =\dec,\quad R'_0 \circ \Dec=\dec'.
\]

The total augmented simplicial set functor \(T\) sends the terminal augmented bisimplicial set \(\Simp*[+]{0,0}\) to \(\Simp*{1}\), the \emph{augmented simplicial interval}. Thus \(T\) descends to a functor
\[
T_/\colon \biSSet_+ \to \SSet_+/\Simp*{1},
\]
where \(\SSet_+/\Simp*{1}\) denotes the over category.

\begin{proposition}\label{chap4:prop:augbisimp=aug-simp-over-I}
The functor \( T_/\colon \biSSet_+ \to \SSet_+/\Simp*{1}\) is an isomorphism of categories.
\end{proposition}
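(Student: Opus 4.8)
The plan is to construct an explicit strict inverse to $T_/$, built out of fibres. The starting point is to identify the simplices of the augmented simplicial interval. For $n\ge -1$ an $n$-simplex of $\Simp*{1}$ is an order-preserving map $[n]\to[1]$ in $\bD_+$, and writing $[n]=[i]\star[j]$ as an ordinal sum with $i+j+1=n$ and $i,j\ge -1$, each such map is of the form $\sigma(\tau_i,\tau_j)$ for a unique pair $(i,j)$, where $\tau_i\colon[i]\to[0]$ and $\tau_j\colon[j]\to[0]$ are the terminal maps. Thus $(\Simp*{1})_n\cong\{(i,j):i,j\ge -1,\ i+j+1=n\}$, and I shall call the element indexed by $(i,j)$ the \emph{$(i,j)$-simplex}. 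Under this bijection the defining formula for $T$ shows that, for any augmented bisimplicial set $X$, the component $X_{i,j}$ of $(TX)_n=\bigsqcup_{i+j+1=n}X_{i,j}$ is precisely the fibre of $(TX)_n\to(\Simp*{1})_n$ over the $(i,j)$-simplex, the structure map being induced by the unique map $X\to\Simp*[+]{0,0}$.

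This fibre description tells us how to invert $T_/$. Given an object $p\colon Y\to\Simp*{1}$ of $\SSet_+/\Simp*{1}$, I would define an augmented bisimplicial set $S(Y)$ by letting $S(Y)_{i,j}$ be the fibre of $p_{i+j+1}\colon Y_{i+j+1}\to(\Simp*{1})_{i+j+1}$ over the $(i,j)$-simplex. To equip $S(Y)$ with the structure of a functor $\bD_+^{\op}\times\bD_+^{\op}\to\Sets$, a morphism $(\phi,\psi)\colon([i'],[j'])\to([i],[j])$ is sent to the restriction of $Y(\sigma(\phi,\psi))\colon Y_{i+j+1}\to Y_{i'+j'+1}$. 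The essential point to verify is that this restriction is well defined: since $p$ is a simplicial map and $\Simp*{1}(\sigma(\phi,\psi))$ carries the $(i,j)$-simplex to $\sigma(\tau_i\phi,\tau_j\psi)=\sigma(\tau_{i'},\tau_{j'})$, i.e.\ the $(i',j')$-simplex, the map $Y(\sigma(\phi,\psi))$ sends the fibre over the $(i,j)$-simplex into the fibre over the $(i',j')$-simplex. Functoriality of $S(Y)$ then follows from functoriality of $Y$ and $\sigma$, and $S$ is evidently functorial in $Y$, since a morphism over $\Simp*{1}$ preserves fibres.

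It remains to check that $S$ and $T_/$ are mutually inverse, which I expect to be routine. On objects, $(T_/SY)_n=\bigsqcup_{i+j+1=n}S(Y)_{i,j}$ reassembles $Y_n$ as the disjoint union of the fibres of $p_n$, compatibly with the map to $\Simp*{1}$, so $T_/S=\id$; conversely $(ST_/X)_{i,j}$ is the fibre of $(TX)_{i+j+1}\to(\Simp*{1})_{i+j+1}$ over the $(i,j)$-simplex, which is $X_{i,j}$ by the fibre description above, so $ST_/=\id$. Both identities hold on the nose, yielding an isomorphism of categories rather than a mere equivalence.

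The main obstacle is bookkeeping rather than conceptual: one must confirm that the horizontal and vertical face and degeneracy maps of $S(Y)$ recovered from $\sigma$ reproduce exactly the bisimplicial identities, and in particular that the two \emph{reduced} ends $i=-1$ and $j=-1$ (where $S(Y)_{-1,j}$ and $S(Y)_{i,-1}$ record the two factors sitting over the endpoints of $\Simp*{1}$) are handled correctly. This is precisely where the augmentation is indispensable: it supplies the $n=-1$ level together with the $(-1,j)$- and $(i,-1)$-simplices of $\Simp*{1}$, without which the indexing of fibres by ordinal sums would break down at the boundary. Once the first step pins down this indexing, the remaining verifications reduce to the compatibility of $\sigma$ with terminal maps already used above.
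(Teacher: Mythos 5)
Your proof is correct; it establishes the result by a different, more self-contained route than the paper. The paper's proof is two lines: it observes that \(\bD_+\times\bD_+\) is isomorphic to the category of elements of the presheaf \(\Simp*{1}\) and then cites the general fact that for any presheaf \(P\) on a small category \(\Cat\) the presheaf category on \(\int_{\Cat}P\) is isomorphic to \(\Psh(\Cat)/P\) (\cite[Exercise III.8(a)]{MacLane-Moerdijk}). Your argument is exactly this fact, unpacked and specialised: your indexing of the \(n\)-simplices of \(\Simp*{1}\) by pairs \((i,j)\) with \(i+j+1=n\) is the object part of the category-of-elements identification, and your verification that \(Y(\sigma(\phi,\psi))\) carries the fibre over the \((i,j)\)-simplex into the fibre over the \((i',j')\)-simplex (via \(\tau_i\phi=\tau_{i'}\), \(\tau_j\psi=\tau_{j'}\)) is the morphism part; your fibre functor \(S\) is the inverse equivalence of that general theorem made explicit. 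What your route buys: it exhibits the inverse concretely, needs no outside citation, and it is the argument that actually survives generalisation — Proposition~\ref{chap4:prop:abisimpC=asimpC-I} proves the analogous statement over an extensive category \(\Cat\), where the presheaf-theoretic citation is unavailable and one decomposes objects over \(j(\Simp*{1})\) fibrewise exactly as you do, with extensivity (Lemma~\ref{chap4:lem:iso:I-cat-j(f)}) standing in for the fact that in \(\Sets\) a set mapping to a disjoint union decomposes. What the paper's route buys is brevity and conceptual context. One small caveat on your last claim: the equality \(T_/S=\id\) ``on the nose'' tacitly identifies the disjoint union of the pairwise disjoint fibres of \(p_n\) with \(Y_n\) itself; taken literally this is a canonical natural bijection, so strict isomorphism of categories holds only under the usual set-theoretic convention for coproducts of disjoint subsets — the same convention the paper's own statement of ``isomorphism'' (rather than equivalence) implicitly relies on.
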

\begin{proof}
  We observe that \(\bD_+\times\bD_+\) is isomorphic to the category of elements of \(\Simp*{1}\). Then the claim follows from~\cite[Exercise III.8(a)]{MacLane-Moerdijk}.
\end{proof}

\begin{example}\label{chap4:ex:Lie-groupoid-bibundle-aug-bisimp}
  Let \(\Gamma\) be the cograph of a bibundle between Lie groupoids \(G\) and \(H\). Since there is a natural functor \(\Gamma\to I\), the nerve \(N\Gamma\) gives a natural augmented bisimplicial manifolds \(N_{i,j}\Gamma\) by setting \(N_{-1,-1}\Gamma=\pt\). Its reduced row -1 is the nerve of~\(H\), and its reduced row 0 is the nerve of the \(H\)-action groupoid. The nerve of the biaction groupoid in Proposition~\ref{chap4:prop:biaction-nerve=diagonal} is the reduced diagonal.
\end{example}

\section{Augmented bisimplicial objects}
Replace \(\Sets\) in Section~\ref{chap4:sec:aug-simplicial-bisimplicial} by a category \(\Cat\). Almost all results carry over  easily to this context, except for Proposition~\ref{chap4:prop:augbisimp=aug-simp-over-I}. The aim of this section is to establish a similar result when \(\Cat\) is an extensive category. We then define colored Kan conditions.

\subsection{Extensive categories}
We collect some basic facts about extensive categories. All the material is taken from \cite{Adameka-Milius-Velebil,Carboni-Lack-Walters}. Roughly, an extensive category is a category with finite coproducts that interact well with pullbacks. Denote the initial object in a category by \(\initial\).

\begin{definition}
An \emph{extensive category} is a category \(\Cat\) with finite coproducts such that for each pair \(A, B\in \Cat\) the coproduct induces an isomorphism of categories
\[
\Cat/A \times \Cat/B\to \Cat/(A\sqcup B).
\]
\end{definition}

\begin{proposition}
  A category with finite coproducts is extensive if and only if it has pullbacks along injections of coproducts and every commutative diagram
\[
\xymatrix{
A_1\ar[d]_{f_1}\ar[r] & A\ar[d]^{f} & A_2\ar[d]^{f_2}\ar[l]\\
B_1\ar[r]             & A_2\sqcup B_2  &  B_2\ar[l]
}
\]
consists of a pair of pullback squares if and only if the top row is a coproduct diagram with \(f=f_2\sqcup f_2\).
\end{proposition}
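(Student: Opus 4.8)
The plan is to recognise the biconditional as the statement that the coproduct functor and a pullback functor are mutually inverse. For fixed objects $A,B\in\Cat$, write $\Phi\colon \Cat/A\times\Cat/B\to\Cat/(A\sqcup B)$ for the functor $(u\colon X\to A,\ v\colon Y\to B)\mapsto(u\sqcup v\colon X\sqcup Y\to A\sqcup B)$; extensivity is by definition the assertion that every such $\Phi$ is an isomorphism of categories. Whenever pullbacks along the injections $i_A\colon A\to A\sqcup B$ and $i_B\colon B\to A\sqcup B$ exist, they assemble into a functor $\Psi\colon\Cat/(A\sqcup B)\to\Cat/A\times\Cat/B$, $(z\colon Z\to A\sqcup B)\mapsto(Z\times_{A\sqcup B}A\to A,\ Z\times_{A\sqcup B}B\to B)$. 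The whole proposition then says: $\Phi$ is invertible for all $A,B$ if and only if $\Psi$ is defined and inverse to $\Phi$, the displayed detection property being exactly the triangle-identity content of $\Psi=\Phi^{-1}$.

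For the direction assuming the two conditions, I would first use the pullback hypothesis to make $\Psi$ well defined, and then verify the two composites. To see $\Psi\circ\Phi\cong\id$, I feed into the detection property the diagram whose top row is the pair of injections $X\to X\sqcup Y\leftarrow Y$, whose middle vertical is $u\sqcup v$, and whose bottom row is the injections of $A\sqcup B$: the top row is a coproduct and the middle map is $f_1\sqcup f_2$, so the half ``coproduct $\Rightarrow$ pullback squares'' gives $(X\sqcup Y)\times_{A\sqcup B}A\cong X$ and $(X\sqcup Y)\times_{A\sqcup B}B\cong Y$. Conversely, to see $\Phi\circ\Psi\cong\id$, I start from $z\colon Z\to A\sqcup B$, form $Z_A,Z_B$ by pullback, and feed the diagram $Z_A\to Z\leftarrow Z_B$ (over the injections) into the half ``pullback squares $\Rightarrow$ coproduct''; this yields $Z\cong Z_A\sqcup Z_B$ over $A\sqcup B$, i.e.\ $\Phi\Psi(z)\cong z$. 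Naturality of both isomorphisms is routine, so $\Phi$ is an isomorphism and $\Cat$ is extensive.

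The reverse direction is the delicate one, and is where I expect the real work to lie. Assuming each $\Phi$ is an isomorphism, I must manufacture pullbacks along injections and recover the detection property, and the key subtlety is that $\Phi^{-1}$ a priori only produces coproduct decompositions. Concretely, the bijection on objects gives, for each $z\colon Z\to A\sqcup B$, a decomposition $Z\cong Z_A\sqcup Z_B$ with $z\cong(Z_A\to A)\sqcup(Z_B\to B)$; the computation $\Phi(\id_A,\ \initial\to B)=i_A$ (using $A\sqcup\initial\cong A$) identifies $\Phi^{-1}(i_A)=(\id_A,\ \initial\to B)$, so that transporting the slice product $z\times i_A$ in $\Cat/(A\sqcup B)$ (which is the pullback $Z\times_{A\sqcup B}A$) through $\Phi$ presents the candidate pullback as $Z_A$ paired with a product-with-initial-object in $\Cat/B$. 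The main obstacle is to show this genuinely satisfies the pullback universal property; this forces, and is essentially equivalent to, strictness of the initial object (every morphism into $\initial$ is invertible) together with disjointness of the injections. I would therefore isolate strictness as a short preliminary lemma drawn from the isomorphisms $\Phi_{A,\initial}$ and $\Phi_{\initial,\initial}$, after which pullbacks along injections exist and are computed by $\Phi^{-1}$. The detection property then follows formally: $\Phi$ being an isomorphism forces $\Psi=\Phi^{-1}$, and the two triangle identities say precisely that a diagram over the injections consists of pullback squares exactly when its top row is a coproduct with $f=f_1\sqcup f_2$.
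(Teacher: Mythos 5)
The paper itself gives no proof of this proposition: the section announces that all of its material on extensive categories is taken from Carboni--Lack--Walters and Ad\'amek--Milius--Velebil, so the only thing to compare against is the standard argument in those references --- and that is essentially what you have reconstructed; your outline is correct. Your forward direction is exactly the standard mutual-inverse argument: the detection property, applied once to the coproduct diagram $X\to X\sqcup Y\leftarrow Y$ over $u\sqcup v$ and once to the pair of pullbacks of a given $z$, says precisely that $\Phi$ and the pullback functor $\Psi$ are inverse up to natural isomorphism. In the reverse direction you correctly isolate the only real content: extensivity hands you decompositions, not pullbacks, and realizing the pullback of $z$ along $i_A$ as the slice product $z\times i_A$, transported through $\Phi$, leaves you needing the product of $z_B$ with $\initial\to B$ in $\Cat/B$, i.e.\ strictness of $\initial$. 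That lemma is indeed derivable from extensivity alone, as you propose: the two injections $\initial\rightrightarrows\initial\sqcup\initial$ coincide, so $\Phi(A\to\initial,\id_\initial)$ and $\Phi(\id_\initial,A\to\initial)$ are isomorphic objects of $\Cat/(\initial\sqcup\initial)$, and since $\Phi$ is fully faithful it reflects isomorphisms, forcing $A\cong\initial$. Two caveats. First, disjointness of the injections is not needed as an input at this step --- strictness alone suffices, and disjointness then falls out as a consequence (pull back one injection along the other and compute componentwise) --- so saying the obstacle is ``essentially equivalent to strictness together with disjointness'' overstates what must be proved. Second, because pullbacks are defined only up to isomorphism, your $\Psi$ is a pseudo-inverse, so the forward direction yields an equivalence rather than the isomorphism of categories that the paper's definition of extensivity literally demands; this mismatch is an artifact of the paper's nonstandard phrasing (Carboni--Lack--Walters use equivalence), not a gap in your argument, and you have likewise correctly read through the two typos in the displayed statement ($B_1\sqcup B_2$ in the bottom row and $f=f_1\sqcup f_2$).
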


\begin{definition}
A coproduct \(A\sqcup B\) in a category \(\Cat\) is \emph{disjoint} if coproduct injections are monomorphisms and their pullback is an initial object. Equivalently, this means that the following three diagrams are pullback squares:
\[
\xymatrix{
A \ar[r]\ar[d]& A\ar[d]\\
A\ar[r] & A\sqcup B
}\quad
\xymatrix{
B \ar[r]\ar[d]& B\ar[d]\\
B\ar[r] & A\sqcup B
}\quad
\xymatrix{
\emptyset \ar[r]\ar[d]& B\ar[d]\\
A\ar[r] & A\sqcup B\rlap{\ .}
}
\]
\end{definition}

\begin{proposition}
  A category with finite coproducts is extensive if and only if coproducts are disjoint, pullbacks of finite coproduct injections along arbitrary morphisms are representable, and the pullbacks of coproduct diagrams are again coproduct diagrams.
\end{proposition}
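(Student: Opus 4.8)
The plan is to derive this characterization from the pullback\nobreakdash-pasting criterion in the immediately preceding proposition, which already equates extensivity with the conjunction of (i) the existence of pullbacks along coproduct injections and (ii) the condition that, for a commutative diagram whose bottom row $B_1\to B_1\sqcup B_2\leftarrow B_2$ is a coproduct, both squares are pullbacks if and only if the top row $A_1\to A\leftarrow A_2$ is a coproduct diagram with $f=f_1\sqcup f_2$. Condition (b) of the present statement is literally (i), so it suffices to prove that, in the presence of (i)${}={}$(b), condition (ii) is equivalent to the conjunction of disjointness (a) and universality (c), where (c) is the assertion that pullbacks of coproduct diagrams are coproduct diagrams. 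Thus the two things to establish are (ii)$\Rightarrow$(a)$\wedge$(c) and (a)$\wedge$(c)$\Rightarrow$(ii).

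For (ii)$\Rightarrow$(a)$\wedge$(c) I would use the two halves of the biconditional separately. The ``only if'' half (both squares pullbacks $\Rightarrow$ top a coproduct) gives (c) directly: given $f\colon Z\to U\sqcup V$, form $Z_U=Z\times_{U\sqcup V}U$ and $Z_V=Z\times_{U\sqcup V}V$ using (b); these exhibit two pullback squares with apex $Z$, so $Z\cong Z_U\sqcup Z_V$, which is precisely universality of the canonical coproduct diagram along $f$. The ``if'' half (top a coproduct $\Rightarrow$ both squares pullbacks) gives (a): apply it to the degenerate coproduct $U\sqcup\initial\cong U$ with $f_1=\mathrm{id}_U$, $f_2\colon\initial\to V$, so that $f=f_1\sqcup f_2=i_U$. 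The left square then forces $U\cong U\times_{U\sqcup V}U$, i.e. $i_U$ is monic, and the right square forces $\initial\cong U\times_{U\sqcup V}V$; running the symmetric argument for $i_V$ produces all three defining pullback squares of disjointness.

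For the converse (a)$\wedge$(c)$\Rightarrow$(ii) the ``only if'' half is again exactly (c), so the content is the ``if'' half: from $A=A_1\sqcup A_2$ and $f=f_1\sqcup f_2$ I must show that $A_1\cong(A_1\sqcup A_2)\times_{B_1\sqcup B_2}B_1$ and dually. I would compute this pullback $P$ by pasting. Pulling the coproduct diagram $A_1\to A_1\sqcup A_2\leftarrow A_2$ back along $P\to A_1\sqcup A_2$ and invoking (c) gives $P\cong(P\times_{A_1\sqcup A_2}A_1)\sqcup(P\times_{A_1\sqcup A_2}A_2)$, and pasting identifies the two summands with $A_1\times_{B_1\sqcup B_2}B_1$ and $A_2\times_{B_1\sqcup B_2}B_1$. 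Since $A_i\to B_1\sqcup B_2$ factors through $i_{B_i}$, pasting with the disjointness squares $B_1\times_{B_1\sqcup B_2}B_1\cong B_1$ and $B_2\times_{B_1\sqcup B_2}B_1\cong\initial$ collapses these to $A_1$ and $\initial$ respectively, so $P\cong A_1\sqcup\initial\cong A_1$, as required.

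The main obstacle I anticipate is strictness of the initial object, which enters in concluding $A_2\times_{B_1\sqcup B_2}B_1\cong\initial$: this object admits a morphism to $B_2\times_{B_1\sqcup B_2}B_1\cong\initial$, and to finish one needs that any object receiving a morphism to $\initial$ is itself initial. I would secure this by reading (c) for the empty (nullary) coproduct — universality of $\initial$ as the empty coproduct says exactly that every slice over $\initial$ is trivial — this being the standard fact that disjoint universal coproducts force a strict initial object. The only remaining bookkeeping is to check that the isomorphisms above are natural in the slice morphisms, so that the comparison functor and its pullback\nobreakdash-inverse agree on arrows and not merely on objects; this is routine from the universal property of pullbacks, so I would only indicate it rather than carry it out in full.
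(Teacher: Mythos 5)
The paper never proves this proposition --- the whole subsection on extensive categories is imported without proof from Carboni--Lack--Walters and Ad\'amek--Milius--Velebil --- so there is no internal argument to compare against. Judged on its own, your proof is correct, and your reduction to the preceding pasting criterion is exactly the standard route. Both halves check out: applying the ``pullbacks \(\Rightarrow\) coproduct'' direction of the pasting criterion to the two pullbacks \(Z\times_{U\sqcup V}U\) and \(Z\times_{U\sqcup V}V\) (which exist by (b)) gives universality; applying the converse direction to the degenerate coproduct \(U\sqcup\initial\cong U\) with \(f=\id_U\sqcup(\initial\to V)=i_U\) yields precisely the monicity square and the empty-intersection square, hence disjointness. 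In the converse implication your pasting computation \(P\cong(A_1\times_{B_1\sqcup B_2}B_1)\sqcup(A_2\times_{B_1\sqcup B_2}B_1)\cong A_1\sqcup\initial\) does identify the \emph{canonical} comparison map \(A_1\to P\) as an isomorphism, because each identification is induced by the relevant universal property; the bookkeeping you defer is indeed routine.

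The one genuinely delicate point is the one you flagged: strictness of \(\initial\), needed to collapse \(A_2\times_{B_1\sqcup B_2}B_1\). Your proposed fix --- reading condition (c) for the empty coproduct --- is valid, but it requires interpreting ``coproduct diagrams'' as including the nullary one, which the surrounding statements (all phrased for binary \(A\sqcup B\)) do not force. If you want to avoid that interpretive step, strictness already follows from the binary hypotheses together with (b): given \(f\colon Z\to\initial\), the two injections \(\initial\to\initial\sqcup\initial\) coincide (both are maps out of the initial object) and are isomorphisms (the codiagonal is a two-sided inverse, using again that the injections coincide); hence their pullbacks along \(Z\to\initial\cong\initial\sqcup\initial\) exist by (b) and are both isomorphic to \(Z\). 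Universality (c) then exhibits \(Z\) as a coproduct of two copies of itself in which both injections are isomorphisms, and disjointness (a) applied to \emph{this} coproduct says the pullback of the two injections is \(\initial\); but the pullback of two isomorphisms with common codomain is \(Z\), so \(Z\cong\initial\). With either repair of the strictness step, your argument is complete.
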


We will use the following facts about extensive categories.

\begin{proposition}
In an extensive category \(\Cat\) the following statements hold:
\begin{enumerate}
  \item Coproducts of pullbacks squares are pullback squares;
  \item Initial objects are \emph{strict}; that is, every morphism \(f\colon A\to \initial\) is an isomorphism;
  \item If the first two diagrams below are pullback squares, then so is the third:
\[
  \xymatrix{
  A_1 \ar[r]\ar[d]& B_1\ar[d]\\
  C\ar[r] & D
  }\qquad
  \xymatrix{
  A_2 \ar[r]\ar[d]& B_2\ar[d]\\
  C\ar[r] & D
  }\qquad
  \xymatrix{
  A_1\sqcup A_2 \ar[r]\ar[d]& B_1\sqcup B_2\ar[d]\\
  C\ar[r] & D\rlap{\ .}
  }
\]
\end{enumerate}
\end{proposition}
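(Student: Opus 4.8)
The plan is to derive all three statements from the two characterisations of extensivity recalled just above: on one side the equivalence of slices $\Cat/A \times \Cat/B \simeq \Cat/(A\sqcup B)$, and on the other the reformulation in terms of disjointness of coproducts together with stability of coproduct diagrams under pullback. For (i), I would first read each of the two given pullback squares as a product diagram in a slice category: a square with corner $P_i = X_i\times_{Z_i} Y_i$ exhibits $P_i$ as the product of $(X_i\to Z_i)$ and $(Y_i\to Z_i)$ in $\Cat/Z_i$. The coproduct functor $\Phi\colon \Cat/Z_1\times\Cat/Z_2\to\Cat/(Z_1\sqcup Z_2)$ is an equivalence, hence preserves whatever products exist; applying it to the pair $(P_1,P_2)$, which is the product of $(X_1,X_2)$ and $(Y_1,Y_2)$ in the left-hand category, yields that $P_1\sqcup P_2\to Z_1\sqcup Z_2$ is the product of $X_1\sqcup X_2\to Z_1\sqcup Z_2$ and $Y_1\sqcup Y_2\to Z_1\sqcup Z_2$ in $\Cat/(Z_1\sqcup Z_2)$. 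Since a product in a slice is exactly a fibre product over the base, this says precisely that the coproduct square is a pullback. The only point needing a word is that no existence hypothesis is required: the left-hand product exists by assumption and is transported to the right-hand one.

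For (ii), I would exploit that a coproduct of two initial objects is again initial, so the injections $\initial\to\initial\sqcup\initial$ are isomorphisms. Given $f\colon A\to\initial$, I compose with an isomorphism $\initial\cong\initial\sqcup\initial$ so as to view $f$ as a map into $\initial\sqcup\initial$, then pull the coproduct diagram $\initial\to\initial\sqcup\initial\leftarrow\initial$ back along it. Extensivity turns this into a coproduct diagram $A_1\to A\leftarrow A_2$ with $A_i=A\times_{\initial\sqcup\initial}\initial$; because each injection is an isomorphism, the projections $A_i\to A$ are isomorphisms as well, so $A\cong A_1\sqcup A_2$ with both structure maps invertible. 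Disjointness of coproducts then gives $A_1\times_A A_2\cong\initial$, while the two maps into $A$ being isomorphisms force this pullback to be $\cong A$; hence $A\cong\initial$, and $f$, as a morphism between initial objects, is an isomorphism.

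For (iii) — which I expect to be the main obstacle, as it is the genuine distributivity of pullback over coproduct and carries the most bookkeeping — the cleanest route is to verify directly that $A_1\sqcup A_2$, equipped with the evident maps to $C$ and to $B_1\sqcup B_2$, satisfies the universal property of $(B_1\sqcup B_2)\times_D C$; one cannot simply form this pullback first, since an extensive category need not have all pullbacks. Given a competing cone $(t_B\colon T\to B_1\sqcup B_2,\ t_C\colon T\to C)$ with $g\,t_B=c\,t_C$, I would decompose $T$ by pulling the coproduct $B_1\to B_1\sqcup B_2\leftarrow B_2$ back along $t_B$, so that extensivity yields $T\cong T_1\sqcup T_2$ with maps $T_i\to B_i$. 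On each piece the restricted cone lands over $B_i$ and $C$, hence factors uniquely through $A_i=B_i\times_D C$ by the hypothesis that the first two squares are pullbacks; the coproduct of these factorisations is the required map $T\to A_1\sqcup A_2$. The delicate part is \emph{uniqueness}: here I would invoke disjointness, equivalently the pullback--coproduct bijection of the first characterisation, to argue that any map $T\to A_1\sqcup A_2$ compatible with $t_B$ must respect the decomposition $T\cong T_1\sqcup T_2$ and send $T_i$ into $A_i$, thereby reducing uniqueness to the uniqueness of each factorisation $T_i\to A_i$.
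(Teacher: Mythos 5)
Your proposal is correct. Note that the paper itself gives no proof of this proposition: the whole subsection on extensive categories is explicitly imported from the cited references (Carboni--Lack--Walters and Ad\'amek--Milius--Velebil), so there is no internal argument to compare against; your write-up is in effect supplying the proof the paper omits, and it matches the standard arguments in that literature. For (i), transporting the componentwise product along the defining isomorphism \(\Cat/Z_1\times\Cat/Z_2\cong\Cat/(Z_1\sqcup Z_2)\) and identifying slice products with pullbacks is exactly the slick route, and you correctly observe that no existence hypothesis beyond the two given squares is needed. For (ii), your argument (pull back the coproduct diagram \(\initial\to\initial\sqcup\initial\leftarrow\initial\) along \(f\), use that pullbacks of isomorphisms are isomorphisms, then disjointness) is the standard strictness proof. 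For (iii), verifying the universal property directly is the right move, since extensive categories need not have all pullbacks; your existence step is complete, while the uniqueness step is the thinnest part of the sketch. It does work as you indicate, but it can be closed more cleanly by the same slice isomorphism you used in (i): since \(A_1\sqcup A_2\to B_1\sqcup B_2\) is a coproduct of maps, the isomorphism \(\Cat/B_1\times\Cat/B_2\cong\Cat/(B_1\sqcup B_2)\) gives a bijection between morphisms \(T\to A_1\sqcup A_2\) over \(B_1\sqcup B_2\) and pairs of morphisms \(T_i\to A_i\) over \(B_i\); compatibility with the maps to \(C\) corresponds componentwise, and each component admits exactly one solution by the pullback property of \(A_i\). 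This yields existence and uniqueness simultaneously and avoids the separate bookkeeping about maps respecting the decomposition.
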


\subsection{Augmented bisimplicial objects}

Let \(\Cat\) be an extensive category with a terminal object \(\terminal\).

Let \(S\) be a finite set and \(C\in \Cat\). Denote the coproduct of \(S\) copies of \(C\) by \(S\otimes C\) (copower). Let \(j\colon \FinSet \to \Cat\) be the functor induced by \(S\mapsto S\otimes\terminal\). There is an isomorphism of categories \(\Cat^S\cong \Cat/j(S)\).

Let \(f\colon R\to S\) be a map of finite sets. Denote by \(F\) the associated cograph, which is a category whose object set is \(R \sqcup S\) and whose non-trivial arrows are given by \(r\to s\) if \(f(r)=s\) for \(r\in R, s\in S\). The projection \(\varphi\colon F\to I\) induces a functor \(\varphi^*\colon [I, \Cat]\to [F, \Cat]\). This functor admits a left adjoint \(\varphi_!\colon [F,\Cat]\to [I, \Cat]\), which sends the terminal object in \([F,\Cat]\) to \(j(f)\).

\begin{lemma}\label{chap4:lem:iso:I-cat-j(f)}
The functor \(\varphi_!\) induces an isomorphism of categories
 \[
 [F, \Cat]\cong [I, \Cat]/j(f).
 \]
\end{lemma}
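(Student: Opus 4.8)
The plan is to make the left adjoint $\varphi_!$ explicit and then invert it using extensivity, in the spirit of Propositions~\ref{chap4:prop:cat-bimodule=cats-over-I} and~\ref{chap4:prop:augbisimp=aug-simp-over-I}. First I would unwind the left Kan extension $\varphi_!$ on objects. The projection $\varphi$ sends the objects of $R$ to $1\in I$ and those of $S$ to $0\in I$, and the only non-identity arrows of $F$ are $r\to f(r)$. Hence the comma category $(\varphi\downarrow 1)$ is the discrete category on $R$, while $(\varphi\downarrow 0)$ is isomorphic to $F$ itself. For $X\colon F\to\Cat$ this gives
\[
\varphi_!(X)=\Bigl[\,\coprod_{r\in R}X(r)\xrightarrow{\ \coprod_r X(r\to f(r))\ }\coprod_{s\in S}X(s)\,\Bigr]\in[I,\Cat],
\]
the bottom being $\colim_F X=\coprod_{s}X(s)$ because every cocone component out of $X(r)$ factors through $X(f(r))$. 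Applying $\varphi_!$ to the unique map $X\to\terminal$ yields the structure morphism $\varphi_!(X)\to\varphi_!(\terminal)=j(f)$, and this defines the comparison functor $\Phi\colon[F,\Cat]\to[I,\Cat]/j(f)$ whose underlying object assignment is $\varphi_!$.

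Next I would build the inverse $\Psi$ by fibrewise decomposition. An object of $[I,\Cat]/j(f)$ is a commutative square whose two ends are morphisms $a\colon P\to j(R)$ and $b\colon Q\to j(S)$, joined by a map $p\colon P\to Q$ lying over $j(f)\colon j(R)\to j(S)$. The given isomorphism $\Cat^S\cong\Cat/j(S)$ (a consequence of iterated extensivity, since $j(S)=\coprod_{s}\terminal$) decomposes $b$ into its fibres $(Q_s)_{s\in S}$ with $Q\cong\coprod_{s}Q_s$, and likewise $a$ into $(P_r)_{r\in R}$ with $P\cong\coprod_{r}P_r$.

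The crux is decomposing the connecting morphism $p$. Since the square over $j(f)$ commutes, the restriction of $b\circ p$ to the summand $P_r$ lands in the $f(r)$-th copy of $\terminal$ inside $j(S)$; extensivity — disjointness of coproduct injections together with the fact that pullbacks of coproduct diagrams are again coproduct diagrams — then forces $p|_{P_r}\colon P_r\to\coprod_s Q_s$ to factor uniquely through $Q_{f(r)}$, producing maps $p_r\colon P_r\to Q_{f(r)}$. Setting $X(r)=P_r$, $X(s)=Q_s$ and $X(r\to f(r))=p_r$ defines a functor $F\to\Cat$, and applying the same decomposition to a morphism of slice objects (a pair of maps over $j(f)$) produces a natural transformation of such functors; this is the functor $\Psi\colon[I,\Cat]/j(f)\to[F,\Cat]$.

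Finally I would check that $\Phi$ and $\Psi$ are mutually inverse on the nose. For $\Psi\Phi=\id$ one uses that the fibre of $\coprod_s X(s)$ over the $s$-th injection is $X(s)$ (and symmetrically over $R$), so reassembling the fibres recovers $X$ together with its structure maps; for $\Phi\Psi=\id$ one uses $P\cong\coprod_r P_r$, $Q\cong\coprod_s Q_s$ and the explicit colimit formula for $\varphi_!$ above to rebuild the original square. I expect the only genuine difficulty to be the extensivity bookkeeping when decomposing the connecting morphism $p$ over $j(f)$ — the faithful and natural splitting into the components $p_r$ — since everything else is formal once $\varphi_!$ has been computed and the end-decompositions $\Cat/j(S)\cong\Cat^S$ are invoked.
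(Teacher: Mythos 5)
Your proposal is correct and takes essentially the same approach as the paper: both decompose an object of \([I,\Cat]/j(f)\) into fibres via extensivity (the identification \(\Cat^S\cong\Cat/j(S)\)) and then recognize the decomposed connecting morphism, with components \(P_r\to Q_{f(r)}\), as precisely the image of an object of \([F,\Cat]\) under \(\varphi_!\). Your write-up only adds detail the paper compresses --- the explicit Kan-extension computation of \(\varphi_!\) and the mutual-inverse check, which the paper dispatches with ``following these lines, we can verify that the two categories have the same arrows.''
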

\begin{proof}
  An object in \([I, \Cat]/j(F)\) is a commutative square in \(\Cat\)
  \[
  \xymatrix{
  C_R\ar[r]^{\psi}\ar[d] & C_S\ar[d]\\
  j(R)\ar[r]^{j(f)} & j(S) \rlap{\ ,}
  }
  \]
  where \(C_R=\bigsqcup_{r\in R}C_r\) and \(C_S=\bigsqcup_{s\in S} C_s\). Since \(\Cat\) is extensive, the map \(\psi\colon C_R\to C_S\) decomposes as
  \[
  \bigsqcup_{s\in S} \Big(\bigsqcup_{f(r)=s} C_r\xrightarrow{\psi_{r,s}} C_s\Big),
  \]
  which is the image of an object in \([F, \Cat]\) under the functor \(\varphi_!\). Following these lines, we can verify that the two categories have the same arrows.
\end{proof}

Let us denote by \(\SCat, \SCat_+\) and \(\biSCat_+\) the categories of simplicial objects, augmented simplicial objects, and augmented bisimplicial objects in \(\Cat\), respectively.

\begin{proposition}\label{chap4:prop:abisimpC=asimpC-I}
  Let \(T\) be the functor defined by
  \[
  \biSCat_+\to \SCat_+,\quad X\mapsto \Big\{[n]\mapsto \bigsqcup_{i+j+1=n} X_{i,j}\Big\}.
  \]
 It induces an isomorphism of categories
  \[
  \biSCat_+=\SCat_+/j(\Simp*{1}).
  \]
  In particular, \(\SCat/j(\Simp{1})\) corresponds to the category of augmented bisimplicial objects~\(Y\) with \(Y_{-1,-1}=\terminal\).
\end{proposition}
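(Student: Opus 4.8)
The plan is to follow the proof of Proposition~\ref{chap4:prop:augbisimp=aug-simp-over-I}, replacing its set-theoretic input \cite[Exercise III.8(a)]{MacLane-Moerdijk} by the extensive input of Lemma~\ref{chap4:lem:iso:I-cat-j(f)}. Recall from that proof that the ordinal sum $\sigma$ identifies $\bD_+\times\bD_+$ with the category of elements of the augmented simplicial set $\Simp*{1}$: a bidegree $([i],[j])$ corresponds to the element of $\Simp*{1}_{i+j+1}=\hom_{\bD_+}([i+j+1],[1])$ whose fibre over $0$ is the initial segment of length $i+1$, and a pair $(a,b)$ of $\bD_+$\nbdash{}operators corresponds to $\sigma(a,b)$ compatibly with the chosen elements. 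The formula defining $T$ is the pointwise left Kan extension along $\sigma$ (the fibre of $\sigma$ over $[n]$ being the discrete set $\Simp*{1}_n$), and applying $T$ to the terminal object of $\biSCat_+$, namely the functor constant at $\terminal$, returns $j(\Simp*{1})$; hence applying $T$ to the unique map $X\to\terminal$ equips $T(X)$ with a canonical morphism to $j(\Simp*{1})$, so that $T$ descends to $T_/\colon\biSCat_+\to\SCat_+/j(\Simp*{1})$. It remains to exhibit an inverse.

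To invert $T_/$, I would start from an object $p\colon Y\to j(\Simp*{1})$ of $\SCat_+/j(\Simp*{1})$. For each $n$ the component $p_n\colon Y_n\to j(\Simp*{1})_n=\Simp*{1}_n\otimes\terminal$ is a morphism into a finite copower of $\terminal$, so the iterated extensivity isomorphism $\Cat/(A\sqcup B)\cong\Cat/A\times\Cat/B$ produces a canonical coproduct decomposition $Y_n=\bigsqcup_{x\in\Simp*{1}_n}Y_n^{(x)}$, where $Y_n^{(x)}$ is the pullback of $p_n$ along the $x$\nbdash{}th coproduct injection $\terminal\to j(\Simp*{1})_n$. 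Reindexing $\Simp*{1}_n$ by bidegrees, I set $X_{i,j}\coloneqq Y_{i+j+1}^{(i,j)}$. The bisimplicial structure maps come from Lemma~\ref{chap4:lem:iso:I-cat-j(f)}: for a $\bD_+$\nbdash{}operator $\theta\colon[m]\to[n]$, the naturality square of $p$ presents the structure map $Y(\theta)\colon Y_n\to Y_m$ as an object of $[I,\Cat]/j(\theta^*)$, where $\theta^*\colon\Simp*{1}_n\to\Simp*{1}_m$ is the induced map of finite sets; the lemma identifies this with a functor on the cograph of $\theta^*$, that is, with the family of maps $Y_n^{(x)}\to Y_m^{(\theta^*x)}$, which are precisely the maps $X_{i,j}\to X_{i',j'}$ attached to the morphisms $([i'],[j'])\to([i],[j])$ of $\bD_+\times\bD_+$. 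Functoriality of $Y$ together with the naturality of the isomorphism of Lemma~\ref{chap4:lem:iso:I-cat-j(f)} then assembles these into a single functor $X\colon(\bD_+\times\bD_+)^\op\to\Cat$, that is, an augmented bisimplicial object.

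Checking that the two constructions are mutually inverse is then bookkeeping. On objects, $T_/X$ recovers $Y$ because $\bigsqcup_{i+j+1=n}Y_n^{(i,j)}=Y_n$ is exactly the extensive decomposition of $Y_n$ over $j(\Simp*{1})_n$, and the reverse composite recovers $X$ because pulling $\bigsqcup_{i+j+1=n}X_{i,j}$ back along the coproduct injections returns the summands $X_{i,j}$ (disjointness of coproducts and strictness of $\initial$). The identical decomposition applied to a slice morphism, namely a map of augmented simplicial objects over $j(\Simp*{1})$, shows that it corresponds uniquely to a morphism of augmented bisimplicial objects, so $T_/$ is an isomorphism of categories. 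For the final clause, a map $Z\to j(\Simp{1})$ in $\SCat$ extends uniquely to an augmented map $Y\to j(\Simp*{1})$ with $Y_{-1}=\terminal$; since $-1=i+j+1$ forces $i=j=-1$, the transported condition is exactly $X_{-1,-1}=\terminal$, which gives the stated correspondence.

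The step I expect to be the main obstacle is the coherence in the second paragraph: verifying that the decompositions produced by Lemma~\ref{chap4:lem:iso:I-cat-j(f)} for individual operators assemble into a genuine functor on $(\bD_+\times\bD_+)^\op$ and that the assignment is natural in morphisms of the slice. Concretely, one must confirm that for composable operators the lemma's isomorphisms respect composition, which is where the sharper extensivity facts—coproducts of pullback squares are pullback squares, and pullbacks of coproduct injections are again coproduct injections—do the real work; the remaining verifications are the routine ones already performed over $\Sets$ in Proposition~\ref{chap4:prop:augbisimp=aug-simp-over-I}.
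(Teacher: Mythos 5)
Your proof is correct and takes essentially the same route as the paper's: decompose each level of an object $Y\to j(\Simp*{1})$ into summands indexed by $\Simp*{1}_n$ using extensivity, apply Lemma~\ref{chap4:lem:iso:I-cat-j(f)} to decompose the structure maps, and verify that the pieces assemble into an augmented bisimplicial object inverse to $T_/$. The paper dismisses these verifications as routine; your write-up simply makes them explicit, including the functoriality check that the paper leaves implicit.
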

\begin{proof}
Let \(C\in \SCat_+\) with a morphism \(g\colon C\to j(\Simp{1})\). Then every object \(C_n\) can be decomposed as \(\bigsqcup_{i+j+1=n} C_{i,j}\), where \(n, i, j\ge -1\). Applying Lemma~\ref{chap4:lem:iso:I-cat-j(f)}, we can decompose face and degeneracy maps. It is routine to verify that the decomposed objects with the decomposed face and degeneracy maps form an augmented bisimplicial object in \(\Cat\).
\end{proof}

Similar to Section~\ref{chap4:sec:aug-simplicial-bisimplicial}, we can define row functors and column functors \(\biSCat_+\to \SCat_+\), and a box product \(\boxtimes\colon \SSet_+\times \SCat_+\to \biSCat_+\). The box product, row functors, and column functors satisfy adjointness relations similar to those in Proposition~\ref{chap4:prop:box-row-adjoint}.

\subsection{Colored Kan conditions}

Let \(\pi\colon X\to j(\Simp{1})\) be an simplicial object in \(\Cat\) over \(j(\Simp{1})\). It may be regarded as a \emph{colored simplicial object}: vertices of \(X\) are colored by two colors \(0\) or \(1\), and vertices of an \(n\)-simplex for \(n\ge 0\) in \(X\) are of the same color or colored by \((0,\dots,0,1,\dots,1)\).

\begin{definition}
  Let \(R, S \in \SSet_+\) and \(Y\in \biSCat_+\). Let \(\Hom_{\Simp*{1}}(R\star S, TY)\) be the presheaf on \(\Cat\) given by \(\Hom(R\boxtimes S, Y)\); By Proposition~\ref{chap4:prop:abisimpC=asimpC-I}, its value at \(C\in \Cat\) is the set of commutative triangles
  \[
  \xymatrix{
  (R\star S) \otimes C\ar@{..>}[r]\ar[d] & TY\ar[ld]\\
  j(\Simp*{1}) \rlap{\ ,}
  }
  \]
  where the left vertical arrow is induced by \(R\star S\to \Simp*{1}\) and \(C\to\terminal\).
\end{definition}

Fix \(i,j\ge 0\) and let \(m= i+j-1\). Let \(\Simp{m}\xrightarrow{\chi_{i,j}} \Simp{1}\) be the canonical map \(\Simp{i-1}\star\Simp{j-1}\to \Simp{1}\).

\begin{definition}
Let \(\Hom(\Horn{m}{k}[i,j], X)\coloneqq \Hom_{/\Simp{1}}(\Horn{m}{k}, X)\) be the presheaf on \(\Cat\) whose value at \(C\in \Cat\) is the set of commutative squares of the following type
\[
  \xymatrix{
  \Horn{m}{k}\otimes C\ar[d]\ar@{..>}[r] & X\ar[d]\\
  \Simp{m}\otimes C\ar[r] & j(\Simp{1})\rlap{\ ,}
}
\]
where the bottom arrow is induced by \(\Simp{m}\xrightarrow{\chi_{i,j}}\Simp{1}\) and \(C\to \terminal\). Similarly, we define \(\Hom(\Simp{m}[i,j], X)\coloneqq \Hom_{j(\Simp{1})}(\Simp{m}, X)\) to be the presheaf given by sending \(C\) to the set of arrows in \(\hom(\Simp{m}\otimes C, X)\) such that the lower right triangle in the above square commutes.
\end{definition}

\begin{definition}\label{chap4:def:colored-Kan-condition}
  A colored simplicial object \(\pi\colon X\to j(\Simp{1})\) satisfies the \emph{colored Kan condition} \(\Kan(m, k)[i, j]\) if \(\Hom(\Simp{m}[i,j], X)\) and \(\Hom(\Horn{m}{k}[i,j], X)\) are presentable and the map
  \[
  \Hom(\Simp{m}[i,j], X)\to \Hom(\Horn{m}{k}[i,j], X)
  \]
  is a cover. If this map is an isomorphism, then we say that \(X\to j(\Simp{1})\) satisfies the \emph{colored unique Kan condition}  \(\Kan!(m, k)[i, j]\).
\end{definition}

Let \((\Cat, \covers)\) be an extensive category with a singleton Grothendieck pretopology. We assume from now on that the following additional assumption holds. This is fulfilled by each instance in Example~\ref{chap3:exa:singleton-pretopoloty}.

\begin{assumption}\label{chap4:asmp:extensive-pretopology}
Let \(f_i\colon Y_i\to X_i\), \(i=1, 2\) be two morphisms in \(\Cat\). The coproduct morphism  \(f_1\sqcup f_2\) is a cover in \(\covers\) if and only if \(f_1\) and \(f_2\) are so.
\end{assumption}

\begin{proposition}
  Let \(X\to j(\Simp{1})\) be as above. Then it satisfies \(\Kan(m, k)\) for \(m\ge 1\) and \(0\le k\le m\) if and only if it satisfies \(\Kan(m, k)[i, j]\) for all \(i, j\ge 0\) with \(i+j=m+1\).
\end{proposition}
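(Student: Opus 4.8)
The plan is to decompose both the source and the target of the canonical Kan map along the colouring of the base $j(\Simp{1})$ and then reduce the cover condition, summand by summand, via Assumption~\ref{chap4:asmp:extensive-pretopology}. The starting observation is that at simplicial degree $m$ we have $j(\Simp{1})_m = \hom([m],[1])\otimes\terminal$, a coproduct of $m+2$ copies of $\terminal$ indexed by the weakly monotone maps $[m]\to[1]$. These monotone maps are precisely the colourings $\chi_{i,j}\colon\Simp{m}\to\Simp{1}$ with $i,j\ge 0$ and $i+j=m+1$, so that $j(\Simp{1})_m=\bigsqcup_{i+j=m+1}\terminal$. Thus the colourings of the base simplex at degree $m$ are exactly the index set appearing in the statement.

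First I would split the source. The structure map $\pi_m\colon X_m\to j(\Simp{1})_m$, combined with the defining isomorphism $\Cat/(A\sqcup B)\cong \Cat/A\times\Cat/B$ of an extensive category, yields a decomposition $X_m\cong\bigsqcup_{i+j=m+1}X_m^{(i,j)}$, where $X_m^{(i,j)}$ is the pullback of $\pi_m$ along the coproduct injection indexed by $\chi_{i,j}$. By Definition~\ref{chap4:def:colored-Kan-condition} this summand represents exactly $\Hom(\Simp{m}[i,j],X)$, since that presheaf is by construction the part of $X_m$ whose colour is $\chi_{i,j}$.

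Next I would decompose the relative lifting presheaf $P\coloneqq\Hom(\Horn{m}{k}\to\Simp{m}, X\to j(\Simp{1}))$. A $C$-element of $P$ is a commutative square whose bottom edge is a map $\Simp{m}\otimes C\to j(\Simp{1})$; at top degree this is a map $C\to j(\Simp{1})_m$, which by extensivity splits $C=\bigsqcup_{i+j=m+1}C_{i,j}$ according to colour. Restricting the square to each piece $C_{i,j}$ gives a $C_{i,j}$-element of $\Hom(\Horn{m}{k}[i,j],X)$, where $\Horn{m}{k}$ carries the colouring restricted from $\chi_{i,j}$, and conversely such a matching family glues back uniquely. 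This is exactly the extensive description of $\hom\bigl(C,\bigsqcup_{i,j}A_{i,j}\bigr)$; consequently $P$ is representable if and only if each $\Hom(\Horn{m}{k}[i,j],X)$ is, in which case $P$ is represented by the coproduct of their representing objects, sitting over $j(\Simp{1})_m$. The same pullback-along-injections argument handles the converse direction of representability.

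Finally, the canonical map $X_m\to P$ lies over $j(\Simp{1})_m$ and hence respects both decompositions, so it is the coproduct of the colour-wise maps $\Hom(\Simp{m}[i,j],X)\to\Hom(\Horn{m}{k}[i,j],X)$. Assumption~\ref{chap4:asmp:extensive-pretopology} states that a coproduct morphism lies in $\covers$ if and only if each of its components does, which immediately yields the equivalence: $X\to j(\Simp{1})$ satisfies $\Kan(m,k)$ if and only if it satisfies $\Kan(m,k)[i,j]$ for every $(i,j)$ with $i+j=m+1$. I expect the main obstacle to be the honest bookkeeping of the decomposition of the relative presheaf $P$ (separating the colouring of the filled simplex from that of the horn, and tracking representability through the extensive gluing); once that is in place, the cover statement follows directly from the Assumption and the splitting of $X_m$.
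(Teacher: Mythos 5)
Your proposal is correct and follows essentially the same route as the paper: decompose $X_m$, the relative lifting presheaf, and the canonical map between them along the colourings $\chi_{i,j}$ of $j(\Simp{1})_m$ using extensivity, then conclude componentwise from Assumption~\ref{chap4:asmp:extensive-pretopology}. Your element-level bookkeeping of $\Hom(\Horn{m}{k}\to\Simp{m}, X\to j(\Simp{1}))$ is just an unwound form of the paper's fibre-product decomposition, and it correctly keeps the splitting at the level of objects of $\Cat$ rather than of presheaves, which is exactly the subtlety the paper's footnote warns about.
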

\begin{proof}
Suppose that
\[
\Hom(\Horn{m}{k}\to \Simp{m}, X\to j(\Simp{1}))=\Hom(\Horn{m}{k}, X)\times_{\Hom(\Horn{m}{j}, j(\Simp{1}))} \Hom(\Simp{m}, j(\Simp{1}))
\]
is representable. Observe that
\[
\Hom(\Simp{m}, j(\Simp{1}))=j(\Simp{1}_m)=\bigsqcup_{\chi_{i,j}\in \Simp{1}_m} j(\chi_{i,j}).
\]
Since \(\Cat\) is extensive, we get isomorphisms in \(\Cat\)~\footnote{Warning: these isomorphisms are not as presheaves, since coproducts are not compatible with the Yoneda functor.}
\begin{align*}
&\Hom(\Horn{m}{k}, X)\times_{\Hom(\Horn{m}{j}, j(\Simp{1}))} \Hom(\Simp{m},j(\Simp{1}))\\
\cong&\bigsqcup_{\chi_{i,j}\in \Simp{1}_m}  \Hom(\Horn{m}{k}, X)\times_{\Hom(\Horn{m}{k}, j(\Simp{1}))} j(\chi_{i,j})\\
\cong &\bigsqcup_{i+j=m+1} \Hom(\Horn{m}{k}[i,j], X).
\end{align*}
Similarly, we have
\[
\Hom(\Simp{m}, X)\cong \bigsqcup_{i+j=m+1}\Hom(\Simp{m}[i,j], X).
\]
Then the claim follows from Assumption~\ref{chap4:asmp:extensive-pretopology}.
\end{proof}

\section{Cographs of higher groupoid morphisms}\label{chap4:sec:cograph-morphism-higher-gpd}

In this section, we define a cograph of a higher groupoid morphism. We establish a property that models bibundles between higher groupoids.

There is a natural transformation
\[
\pr_1\Rightarrow\sigma \colon \bD_+\times\bD_+\to \bD_+,\quad ([i], [j])\mapsto \{[i]\to [i+j+1]\},
\]
where \([i]\to [i+j+1]\) is given by the inclusion \([i]\to [i]\star[j]=[i+j+1]\).

\begin{definition}
Let \(f\colon X\to Y\) be a morphism of trivially augmented simplicial objects in \(\Cat\). Its \emph{cograph} \(\Gamma\) is the augmented bisimplicial object in \(\Psh(\Cat)\) given by the pullback square
\[
  \xymatrix{
  \Gamma\ar[r]\ar[d] & \Dec Y\ar[d]\\
  \pr_1^* X \ar[r]      &\pr_1^* Y \rlap{\ ,}
  }
\]
where the vertical right arrow is induced by the natural transformation \(\pr_1\Rightarrow\sigma\). Explicitly, \(\Gamma_{i,j}=X_i\times_{Y_i} Y_{i+j+1}\) for \(i, j\ge -1\); in particular, \(R_{-1}\Gamma=Y\) and \(C_{-1}\Gamma=X\).
\end{definition}

If \(X\) and \(Y\) are higher groupoids in \((\Cat, \covers)\), then \(\Gamma\) is an augmented bisimplicial object in \(\Cat\). We are interested in Kan conditions for the induced morphism \(T\Gamma\to j(\Simp{1})\).

\begin{proposition}\label{chap4:prop:hihger-groupoid-cogragh-Kan-conditions}
  Let \(\Gamma\) be the cograph of a higher groupoid morphism \(X\to Y\) in \((\Cat, \covers)\). The morphism \(T\Gamma\to j(\Simp{1})\) satisfies all colored Kan conditions \(\Kan(m, k)[i,j]\) except \(\Kan(m, m)[m,1]\) for \(m\ge 1\).
\end{proposition}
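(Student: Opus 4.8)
The plan is to reduce each colored condition $\Kan(m,k)[i,j]$ to an ordinary Kan condition of $Y$ (or of $X$), the sole exception being the case in which this reduction would instead demand an acyclicity condition that a higher groupoid need not satisfy. Throughout I write $p=i-1$, $q=j-1$, so that $m=p+q+1$ and $\Simp{m}=\Simp{p}\star\Simp{q}$, with the $0$\nbdash{}colored face spanned by $\Simp{p}$ and the $1$\nbdash{}colored face by $\Simp{q}$.

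First I would record the basic computation of the cograph on box products. Since $\Gamma$ is the pullback of $\Dec Y\to\pr_1^*Y\leftarrow\pr_1^*X$, for a simplicial set $R$ and a \emph{connected} $S$ one obtains
\begin{equation*}
\Hom(R\boxtimes S,\Gamma)\cong\Hom(R\star S,Y)\times_{\Hom(R,Y)}\Hom(R,X),
\end{equation*}
using $\Dec=\sigma^*$ together with $T\circ\boxtimes=\star$ (so that $\Hom(R\boxtimes S,\Dec Y)=\Hom(R\star S,Y)$ by~\eqref{chap4:eq:Tboxtims=star}), and the fact that for connected $S$ a map $R\boxtimes S\to\pr_1^*Z$ is the same as a map $R\to Z$. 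By Proposition~\ref{chap4:prop:abisimpC=asimpC-I} and the identity $\Hom_{\Simp*{1}}(R\star S,T\Gamma)=\Hom(R\boxtimes S,\Gamma)$, this rewrites every colored $\Hom$ of Definition~\ref{chap4:def:colored-Kan-condition} in terms of $X$, $Y$, and joins. In particular the full simplex gives $\Hom(\Simp{m}[i,j],T\Gamma)\cong\Gamma_{p,q}\cong Y_m\times_{Y_p}X_p$.

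Next I would treat the horn. Decomposing $\Horn{m}{k}$ by the join identities~\eqref{chap2:eq:join} exhibits it as a pushout of join pieces; applying $\Hom_{\Simp*{1}}(\blank,T\Gamma)$ turns this into a fibre product, to which the displayed formula applies piecewise. The essential bookkeeping point is that whenever the decomposition contains a piece carrying the \emph{full} $0$\nbdash{}colored simplex $\Simp{p}$ (equivalently, whenever the omitted face $\partial_k$ is not the all\nbdash{}$X$ facet), the $X$\nbdash{}direction of the resulting limit collapses to $\Hom(\Simp{p},X)=X_p$, while the $Y$\nbdash{}direction reassembles, again by~\eqref{chap2:eq:join}, to $\Hom(\Horn{m}{k},Y)$, yielding
\begin{equation*}
\Hom(\Horn{m}{k}[i,j],T\Gamma)\cong\Hom(\Horn{m}{k},Y)\times_{Y_p}X_p.
\end{equation*}
This covers the ranges $0\le k\le p$ with $i\ge2$ (second identity), $p+1\le k\le m$ with $j\ge2$ (third identity), together with the endpoint colorings $[m+1,0]$, $[0,m+1]$ and all $p=0$ horns, which reduce to Kan conditions of $X$ and of $Y$ respectively. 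In each such case the comparison map $\Hom(\Simp{m}[i,j],T\Gamma)\to\Hom(\Horn{m}{k}[i,j],T\Gamma)$ is the pullback along $X_p\to Y_p$ of the cover $Y_m=\Hom(\Simp{m},Y)\to\Hom(\Horn{m}{k},Y)$ provided by $\Kan(m,k)$ for the higher groupoid $Y$; since covers are stable under pullback and all presheaves involved are representable by Lemma~\ref{chap3:lem:covers_in_groupoid}, the colored condition holds.

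Finally I would isolate the exception. The all\nbdash{}$X$ face $\{0,\dots,p\}$ is a facet of $\Simp{m}$ exactly when $q=0$, i.e.\ $j=1$ and $p=m-1$, and the only horn omitting it is $\Horn{m}{m}$ with coloring $[m,1]$. Here $S=\Simp{0}$, $\partial\Simp{0}=\emptyset$, and the decomposition degenerates to the single piece $\Horn{m}{m}[m,1]\cong\partial\Simp{p}\star\Simp{0}=\partial\Simp{m}$, so the comparison map becomes the pullback of $Y_m\to\Hom(\partial\Simp{m},Y)$, which is $Y$'s acyclicity $\Acyc(m)$, coupled with $X$'s $\Acyc(m-1)$; a higher groupoid satisfies no such boundary\nbdash{}filling condition in general, which is precisely why $\Kan(m,m)[m,1]$ must be excluded. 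The main obstacle I anticipate is the careful reorganisation of the nested fibre products in the horn computation and the verification that the $0$\nbdash{}colored simplex remains saturated in every case but this one; the boundary instances $p=0$ and $q=0$ of the join identities~\eqref{chap2:eq:join} need separate attention, since there $\Horn{m}{k}[i,j]$ is a single join piece rather than a genuine pushout.
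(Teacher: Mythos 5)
Your proposal is correct and follows essentially the same route as the paper's own proof: your box-product formula is precisely the case \(A=B=R\), \(C=D=S\) of the paper's key lemma (the paper proves it at once for the pushouts \(A\star D\cup_{A\star C}B\star C\), so the piecewise reassembly you describe is packaged into a single pullback square), and the join-identity decomposition, the pullback-of-a-cover conclusion, and the isolation of \(\Kan(m,m)[m,1]\) all match. Two corrections, neither fatal. First, the hypothesis on \(S\) should be ``trivially augmented'' rather than ``connected'': you must apply the formula to \(S=\partial\Simp{1}\) (disconnected, whenever \(j=2\)) and to \(S=\iota_*\partial\Simp{0}\), and your own derivation uses only \(S_{-1}=\pt\), i.e.\ \(C_{-1}(R\boxtimes S)=R\). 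Second, in the exceptional case \(\partial\Simp{p}\star\Simp{0}\) is \(\Horn{m}{m}\), not \(\partial\Simp{m}\), and what the comparison map would require is the relative acyclicity \(\Acyc(m-1)\) of the \emph{morphism} \(X\to Y\) (together with \(Y\)'s \(\Kan(m,m)\), which holds) --- exactly the hypothesis exploited in Proposition~\ref{chap4:prop:cograph-acyclic-is-kan} --- rather than \(\Acyc(m)\) for \(Y\) plus \(\Acyc(m-1)\) for \(X\); since the proposition merely excludes this case rather than proving it fails, this slip does not affect the result.
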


We first establish a useful lemma.

\begin{lemma}
Let \(A\to B\) and \(C\to D\) be inclusions of augmented simplicial sets. Suppose that \(C\) and \(D\) are trivially augmented. Then the diagram below is a pullback square:
\[
\xymatrix{
\Hom_{/\Simp{1}}(A\star D\cup_{A\star C} B\star C, T\Gamma)\ar[r]\ar[d]&\Hom(A\star D\cup_{A\star C} B\star C, Y)\ar[d] \\
\Hom(B, X)\ar[r] &\Hom(B, Y)\rlap{\ .}
}
\]
\end{lemma}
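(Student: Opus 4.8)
The plan is to apply the functor $\Hom(Z,-)$ to the pullback square that \emph{defines} the cograph $\Gamma$, where $Z \coloneqq A\boxtimes D\cup_{A\boxtimes C} B\boxtimes C$ is the corresponding pushout of box products in $\biSCat_+$, and then to identify each of the four resulting corners. First I would translate the upper-left corner of the target square. Under the isomorphism $\biSSet_+\cong \SSet_+/\Simp*{1}$ (the set-level version of Proposition~\ref{chap4:prop:abisimpC=asimpC-I}), which carries $\boxtimes$ to $\star$ by Equation~\eqref{chap4:eq:Tboxtims=star}, the object $A\star D\cup_{A\star C} B\star C$ over $\Simp*{1}$ corresponds exactly to $Z$; since $T=\sigma_!$ preserves the pushout, $TZ = A\star D\cup_{A\star C} B\star C$. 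By the very definition of $\Hom_{/\Simp{1}}(-,T\Gamma)$ this gives
\[
\Hom_{/\Simp{1}}(A\star D\cup_{A\star C} B\star C,\, T\Gamma) = \Hom(Z,\Gamma).
\]

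Next, because $\Gamma = \Dec Y\times_{\pr_1^*Y}\pr_1^* X$ and $\Hom(Z,-)$ preserves limits, I obtain a pullback square
\[
\Hom(Z,\Gamma) = \Hom(Z,\Dec Y)\times_{\Hom(Z,\pr_1^* Y)}\Hom(Z,\pr_1^* X),
\]
and it remains to compute the three other corners by adjunction. For the top-right corner I use that $\Dec$ is right adjoint to $T$; this adjunction survives the passage to $\Hom$-presheaves because $T$ and $\Dec$ commute with the levelwise copowers $\blank\otimes W$, so that $\Hom(Z,\Dec Y)=\Hom(TZ,Y)=\Hom(A\star D\cup_{A\star C}B\star C,\,Y)$. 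For the two bottom corners I use that $\pr_1^*$ is right adjoint to the column functor $C_{-1}$, giving $\Hom(Z,\pr_1^* X)=\Hom(C_{-1}Z,X)$ and $\Hom(Z,\pr_1^*Y)=\Hom(C_{-1}Z,Y)$. The key computation is $C_{-1}Z = B$: as a left adjoint $C_{-1}$ preserves the pushout, and since $C$ and $D$ are trivially augmented their $(-1)$-level is a point, whence $C_{-1}(A\boxtimes D)_i = A_i\times D_{-1}=A_i$, so $C_{-1}(A\boxtimes D)=A$, and likewise $C_{-1}(A\boxtimes C)=A$, $C_{-1}(B\boxtimes C)=B$. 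Therefore $C_{-1}Z = A\cup_A B = B$, and the bottom corners become $\Hom(B,X)$ and $\Hom(B,Y)$.

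Assembling these four identifications turns the pullback square for $\Hom(Z,\Gamma)$ into precisely the square in the statement, provided one checks that the identifications are natural in $Z$ and intertwine the structure maps $\Gamma\to\Dec Y$, $\Gamma\to\pr_1^* X$ with the restriction maps in the target square. The main obstacle is exactly this bookkeeping: verifying that the two adjunctions $T\dashv\Dec$ and $C_{-1}\dashv\pr_1^*$ are compatible with the copowers defining the $\Hom$-presheaves, and that the map $\Dec Y\to\pr_1^* Y$ induced by $\pr_1\Rightarrow\sigma$ corresponds under the identifications to the restriction $\Hom(A\star D\cup_{A\star C}B\star C,Y)\to\Hom(B,Y)$ along $B\hookrightarrow A\star D\cup_{A\star C}B\star C$. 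The genuinely substantive input, where the hypothesis on $C$ and $D$ is used, is the collapse $C_{-1}Z=B$; everything else is formal manipulation of adjoints and limits.
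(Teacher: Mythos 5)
Your proposal is correct and takes essentially the same route as the paper's own proof: both reduce the statement to applying \(\Hom(Z,-)\), for \(Z=A\boxtimes D\cup_{A\boxtimes C}B\boxtimes C\), to the pullback square defining \(\Gamma\), identify the four corners via the adjunctions \(T\dashv\Dec\) and \(C_{-1}\dashv\pr_1^*\) together with \(\Hom(B\boxtimes D,\Gamma)=\Hom_{/\Simp{1}}(B\star D,T\Gamma)\), and use the trivially-augmented hypothesis on \(C\) and \(D\) exactly through the collapse \(C_{-1}Z=B\). The only difference is cosmetic: you make explicit the naturality and copower-compatibility bookkeeping that the paper leaves implicit.
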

\begin{proof}
First, since \(C_{-1}\) is the left adjoint of \(\pr_1^*\), we have
  \[
  \Hom(B\boxtimes D, \pr_1^* X)=\Hom(C_{-1}(B\boxtimes D), X)=\Hom(B, X),
  \]
where \(C_{-1}(B\boxtimes D)=B\) because \(D\) is trivially augmented. We deduce that
\[
\Hom(A\boxtimes D\cup_{A\boxtimes C} B\boxtimes C, \pr_1^* X)=\Hom(B, X).
\]
Similarly, \(\Hom(A\boxtimes D\cup_{A\boxtimes C} B\boxtimes C, \pr_1^* Y)=\Hom(B, Y)\).

Second, since \(\Hom(B\boxtimes D, \Dec Y)=\Hom(B\star D, Y)\), we have
\[
  \Hom(A\boxtimes D\cup_{A\boxtimes C} B\boxtimes C, \Dec Y)=\Hom(A\star D\cup_{A\star C} B\star C, Y).
\]

Third, since \(\Hom(B\boxtimes D, \Gamma)=\Hom_{/\Simp{1}}(B\star D, T\Gamma)\), we have
\[
  \Hom(A\boxtimes D\cup_{A\boxtimes C} B\boxtimes C, \Gamma)=\Hom_{/\Simp{1}}(A\star D\cup_{A\star C} B\star C, T\Gamma).
\]

All results above combined prove the claim.
\end{proof}

\begin{proof}[Proof of Proposition~\ref{chap4:prop:hihger-groupoid-cogragh-Kan-conditions}]
First, let \(A\) be a trivially augmented simplicial set. Propositions~\ref{chap4:prop:augbisimp=aug-simp-over-I} and~\ref{chap4:prop:box-row-adjoint} imply that
\[
\Hom_{/\Simp{1}}(A\star\Simp*[+]{-1}, T\Gamma)=\Hom(A\boxtimes \Simp*[+]{-1}, \Gamma)=\Hom(A, C_{-1}\Gamma).
\]
Since $T\Gamma$ and $C_{-1}\Gamma$ are both trivially augmented, condition \(\Kan(l,k)[l+1,0]\) for \(T\Gamma\to \Simp{1}\) is  implied by \(\Kan(l, k)\) for \(X\). Similarly, \(\Kan(l, k)[0, l+1]\) for \(T\Gamma\to \Simp{1}\) follows from \(\Kan(l, k)\) for~\(Y\).

Replacing \(A\to B\) in the previous lemma by \(\Horn{l}{k}\to \Simp{l}\) for \(l\ge 0\), and \(C\to D\) by \(\partial \Simp{m}\to \Simp{m}\) for \(m\ge 0\), we get a commutative diagram (recall Remark~\ref{chap4:rem:horn-0-0} and~\eqref{chap2:eq:join})
\[
\xymatrix{
\Hom_{/\Simp{1}}(\Simp{l+m+1}, T\Gamma)\ar[r]\ar[d] & \Hom(\Simp{l+m+1}, Y)\ar[d]\\
\Hom_{/\Simp{1}}(\Horn{l+m+1}{k}, T\Gamma)\ar[r]\ar[d] & \Hom(\Horn{l+m+1}{k}, Y)\ar[d]\\
\Hom(\Simp{l}, X)\ar[r] & \Hom(\Simp{l}, Y)\rlap{\ ,}
}
\]
where each square is a pullback diagram. Since \(\Simp{l}\to \Horn{l+m+1}{k}\to \Simp{l+m+1}\) are collapsible extensions, the right vertical arrows are covers. Thus
\[
\Hom_{/\Simp{1}}(\Simp{l+m+1}, T\Gamma)\to \Hom_{/\Simp{1}}(\Horn{l+m+1}{k}, T\Gamma)
\]
is a cover between two representable objects. This proves \(\Kan(l+m+1,k)[l+1, m+1]\).

Similarly, let \(A\to B\) in the previous lemma be \(\partial\Simp{l}\to \Simp{l}\) for \(l\ge 0\) and \(C\to D\) be \(\Horn{m}{k}\to\Simp{m}\) for \(m\ge 1\). We deduce that
\[
\Hom_{/\Simp{1}}(\Simp{l+m+1}, T\Gamma)\to \Hom_{/\Simp{1}}(\Horn{l+m+1}{l+1+k}, T\Gamma)
\]
is a cover. This proves \(\Kan(l+m+1, l+1+k)[l+1,m+1]\).

The only missing case is \(m=k=0\), that is, \(\Kan(l+1, l+1)[l+1, 1]\) for \(l\ge 0\).
\end{proof}

\begin{proposition}\label{chap4:prop:cograph-acyclic-is-kan}
  Let \(\Gamma\) be the cograph of an acyclic fibration of higher groupoids \(X\to Y\) in \((\Cat, \covers)\). Then \(T\Gamma\to j(\Simp{1})\) is a Kan fibration.
\end{proposition}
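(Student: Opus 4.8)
The plan is to reduce the statement to the single family of colored Kan conditions left open by Proposition~\ref{chap4:prop:hihger-groupoid-cogragh-Kan-conditions}, and to close that family using the acyclicity hypothesis. By the equivalence between colored and ordinary Kan conditions established above (a colored object $X\to j(\Simp{1})$ is a Kan fibration iff it satisfies $\Kan(m,k)[i,j]$ for all $i+j=m+1$), it suffices to verify every colored condition $\Kan(m,k)[i,j]$ for $T\Gamma\to j(\Simp{1})$. Proposition~\ref{chap4:prop:hihger-groupoid-cogragh-Kan-conditions} already supplies all of these except the top outer horns $\Kan(l+1,l+1)[l+1,1]$ for $l\ge 0$, which were exactly the cases the join decomposition could not reach (the $1$\nbdash{}colored factor $\Simp{0}$ admits no nondegenerate horn). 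So the whole proof comes down to establishing $\Kan(l+1,l+1)[l+1,1]$, now that $f\colon X\to Y$ is an \emph{acyclic} fibration.

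First I would make the two presheaves in this condition explicit. As in Example~\ref{chap3:exa:decalage-principal}, the join identities~\eqref{chap2:eq:join} give $\Simp{l}\star\Simp{0}=\Simp{l+1}$ and $\partial\Simp{l}\star\Simp{0}=\Horn{l+1}{l+1}$, with the $0$\nbdash{}colored factor on the left. Applying the lemma used in the proof of Proposition~\ref{chap4:prop:hihger-groupoid-cogragh-Kan-conditions} with $C\to D$ taken to be $\partial\Simp{0}\to\Simp{0}$ and $A\to B$ taken to be the identity of $\Simp{l}$, respectively of $\partial\Simp{l}$, yields
\[
\Hom_{/\Simp{1}}(\Simp{l+1}, T\Gamma)=X_l\times_{Y_l}Y_{l+1},
\]
\[
\Hom_{/\Simp{1}}(\Horn{l+1}{l+1}, T\Gamma)=\Hom(\partial\Simp{l}, X)\times_{\Hom(\partial\Simp{l},Y)}\Hom(\Horn{l+1}{l+1}, Y),
\]
where $Y_{l+1}\to Y_l$ is the décalage face $\face_{l+1}$. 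Writing $P$ and $Q$ for the left- and right-hand objects, the content of $\Kan(l+1,l+1)[l+1,1]$ is precisely that the restriction map $P\to Q$ is a cover.

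The heart of the argument is to factor $P\to Q$ through the intermediate object $X'_l\times_{Y_l}Y_{l+1}$, where $X'_l\coloneqq\Hom(\partial\Simp{l}\to\Simp{l}, X\to Y)=\Hom(\partial\Simp{l},X)\times_{\Hom(\partial\Simp{l},Y)}Y_l$. Since $f$ is an acyclic fibration (Definition~\ref{chap3:def:acyclic-fibration}), the canonical map $X_l\to X'_l$ is a cover by $\Acyc(l)$; pulling it back along $X'_l\times_{Y_l}Y_{l+1}\to X'_l$ shows that $P=X_l\times_{Y_l}Y_{l+1}\to X'_l\times_{Y_l}Y_{l+1}$ is a cover. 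Next, $X'_l\times_{Y_l}Y_{l+1}=\Hom(\partial\Simp{l},X)\times_{\Hom(\partial\Simp{l},Y)}Y_{l+1}$, and the map to $Q$ is the base change, along $Q\to\Hom(\Horn{l+1}{l+1},Y)$, of the cover $Y_{l+1}=\Hom(\Simp{l+1},Y)\to\Hom(\Horn{l+1}{l+1},Y)$; this last map is a cover because $Y$ is a higher groupoid and hence satisfies the outer condition $\Kan(l+1,l+1)$. Thus $X'_l\times_{Y_l}Y_{l+1}\to Q$ is a cover, and the composite $P\to Q$ is a cover, which is the desired $\Kan(l+1,l+1)[l+1,1]$. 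Representability of every object occurring is automatic from the higher-groupoid structure of $X$ and $Y$ (Lemma~\ref{chap3:lem:covers_in_groupoid}) together with the acyclic fibration hypothesis.

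The step I expect to require the most care is verifying that the two factorizations genuinely glue over $\Hom(\partial\Simp{l},Y)$: one must check that restricting an $(l+1)$\nbdash{}simplex of $Y$ to its $0$\nbdash{}colored boundary $\partial\Simp{l}$ via $Y_{l+1}\xrightarrow{\face_{l+1}}Y_l$ coincides with restricting its image in $\Hom(\Horn{l+1}{l+1},Y)$ to the subcomplex $\partial\Simp{l}\subset\Horn{l+1}{l+1}$, so that the fibre products are compatible and the base-change claim is legitimate. This is really just bookkeeping of the coloring, but it is where an error would hide. Conceptually, the key point — and the reason acyclicity is exactly what is needed — is that the missing outer horn lives entirely over the $0$\nbdash{}colored end, where the décalage collapses the $\Simp{0}$ factor: the outer Kan condition for $Y$ fills the $Y$\nbdash{}direction while the acyclicity of $f$ fills the fibre direction, and no purely combinatorial join decomposition (of the kind used for all the other colored horns) can supply the latter.
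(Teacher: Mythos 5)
Your proof is correct and takes essentially the same route as the paper: after reducing to the missing colored condition \(\Kan(l+1,l+1)[l+1,1]\) via Proposition~\ref{chap4:prop:hihger-groupoid-cogragh-Kan-conditions}, your intermediate object \(X'_l\times_{Y_l}Y_{l+1}\) is precisely the object \(P\) through which the paper factors the restriction map, with the same two covering steps (a pullback of the acyclicity cover \(X_l\to\Hom(\partial\Simp{l}\to\Simp{l},X\to Y)\), then a pullback of the outer Kan cover \(Y_{l+1}\to\Hom(\Horn{l+1}{l+1},Y)\)). The only difference is presentational: you write the paper's two pasted pullback diagrams out as explicit fibre products.
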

\begin{proof}
It remains to consider \(\Kan(l+1, l+1)[l+1, 1]\) for \(l\ge 0\). By assumption, the morphism
\[
\Hom(\Simp{l}, X)\to  \Hom(\partial\Simp{l}\to \Simp{l}, X\to Y)
\]
is a cover. Consider the commutative diagram
\[
\xymatrix{
\Hom_{/\Simp{1}}(\Simp{l}\star\Simp{0}, T\Gamma)\ar[r]\ar[d]  & P\ar[r]\ar[d] & \Hom(\Simp{l+1}, Y)\ar[d]\\
\Hom(\Simp{l}, X)\ar[r]& \Hom(\partial\Simp{l}\to \Simp{l}, X\to Y)\ar[r]\ar[d] & \Hom(\Simp{l}, Y)\ar[d]\\
& \Hom(\partial\Simp{l}, X)\ar[r]& \Hom(\partial\Simp{l}, Y) \rlap{\ ,}
}
\]
in which every square is a pullback diagram, where \(P\) is the pullback of the right whole square. We deduce that
\(\Hom_{/\Simp{1}}(\Simp{l}\star\Simp{0}, T\Gamma)\to P\) is a cover. Consider the commutative diagram
\[
\xymatrix{
P\ar[r]\ar[d] & \Hom(\Simp{l+1}, Y)\ar[d]\\
\Hom_{/\Simp{1}}(\partial\Simp{l}\star\Simp{0}, T\Gamma)\ar[r]\ar[d] & \Hom(\Horn{l+1}{l+1}, Y)\ar[d]\\
\Hom(\partial\Simp{l}, X)\ar[r]& \Hom(\partial\Simp{l}, Y) \rlap{\ ,}
}
\]
in which each square is a pullback diagram. It follows that \(P\to \Hom_{/\Simp{1}}(\partial\Simp{l}\star\Simp{0}, T\Gamma)\) is a cover. The claim then follows because
\[
\Hom_{/\Simp{1}}(\Simp{l}\star\Simp{0}, T\Gamma)\to \Hom_{/\Simp{1}}(\partial\Simp{l}\star\Simp{0}, T\Gamma)
\]
is the composite of two covers.
\end{proof}

\section{Bibundles of higher groupoids}

Inspired by the previous sections, we propose the following definition:

\begin{definition}\label{chap4:def:bibundles=inner-over-I}
  Let \(X\) and \(Y\) be \(n\)-groupoids in \((\Cat, \covers)\). A \emph{bibundle} between \(X\) and \(Y\) is an augmented bisimplicial object \(\Gamma\) with \(C_{-1}\Gamma=X_*\) and \(R_{-1}\Gamma=Y_*\), such that \(T\Gamma\to j(\Simp{1})\) satisfies the inner Kan conditions \(\Kan(m, k)\) for \(m\ge 2\) and \(0<k<m\), and the outer Kan conditions \(\Kan(m, 0)[i, j]\) for \(i\ge 2\), and \(\Kan(m, m)[i, j]\) for \(j\ge 2\); moreover, when \(m> n\) the unique version of all prescribed Kan conditions hold.

  If, in addition, \(\Kan(m, 0)\) are satisfied for \(m\ge 1\), and when \(m> n\) the unique version of all desired Kan conditions hold, then we call such a bibundle \emph{right principal}. Two-sided principal bibundles are also called Morita bibundles.
\end{definition}

\begin{remark}\label{chap4:rem:low-inner-kan-imply-repr}
  If the lower inner Kan conditions \(\Kan(l, k)\) hold for \(0<k<l\) and \(l=2,\dots, m-1\), then \(\Hom(\Horn{m}{k}\to \Simp{m}, T\Gamma\to j(\Simp{1}))\) for \(0<k<m\) are representable. This follows because \(\Sp(m)\to \Horn{m}{k}\) is an inner collapsible extension by Proposition~\ref{chap2:lem:spine-horn-inner} and \(\Hom(\Sp(m)\to \Simp{m}, T\Gamma\to j(\Simp{1}))\) is representable.
\end{remark}

\begin{remark}\label{chap4:rem:low-outer-kan-imply-repr}
If the lower colored outer Kan conditions \(\Kan(l,0)[s,t]\) hold for \(s \ge 2\) and \(l<m\), then \(\Hom(\Horn{m}{0}[i,j],T\Gamma)\) is representable for \(i\ge 2\). This follows because \(\Horn{m}{0}\) can be built from \(\Simp{1}\{0,1\}\cup \Simp{m-1}\{0,2,\cdots,m\}\) by filling 0-th horns containing \(\{0, 1\}\), and \(\Hom(\Simp{1}\{0,1\}\cup \Simp{m-1}\{0,2,\cdots,m\} [i,j], T\Gamma)\) is representable by assumption.
\end{remark}

\begin{remark}
  Joyal~\cite{Joyal2002} studied first inner Kan fibrations under the name of mid fibrations. Lurie~\cite{Lurie} used inner Kan fibrations over \(\Simp{1}\) to model correspondences between \(\infty\)-categories. Our definition of bibundle is slightly modified by adding some special outer Kan conditions since we deal with higher groupoids. Following \cite{Joyal2002}, we show in Appendix~\ref{app:sec:colored-out-kan} that, under some additional assumption on the category and pretopology, the inner Kan conditions imply the outer Kan conditions \(\Kan(m, 0)[i, j]\) for \(i\ge 2\), and \(\Kan(m, m)[i, j]\) for \(j\ge 2\). For instance, this is true for \((\Sets,\covers_\surj)\) and \((\Mfd,\covers_\subm)\).
\end{remark}

\begin{remark}\label{chap4:rem:opposite-bibundle}
  Let \(\Gamma\to j(\Simp{1})\) be a bibundle between \(n\)\nbdash{}groupoids \(X\) and \(Y\). Regarding \(\Gamma\) as a colored simplicial object and formally inverting all arrows from \(1\) to \(0\), we obtain a bibundle between \(Y\) and \(X\). We denoted this bibundle by \(\overline{\Gamma}\) and call it the \emph{opposite bibundle} associated with \(\Gamma\). Notice that \(\overline{\Gamma}\) is not the opposite simplicial object, since we do not invert arrows with target and source of the same color.
\end{remark}

\begin{corollary}
  Let \(f\colon X\to Y\) be a morphism of \(n\)-groupoid objects in \((\Cat, \covers)\). Then its cograph is a bibundle between \(X\) and \(Y\). Moveover, if \(f\) is an acyclic fibration of \(n\)-groupoids, then the cograph is right principal.
\end{corollary}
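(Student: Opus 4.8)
The plan is to derive this corollary directly from the two preceding results on cographs, Propositions~\ref{chap4:prop:hihger-groupoid-cogragh-Kan-conditions} and~\ref{chap4:prop:cograph-acyclic-is-kan}; the substance is purely the bookkeeping that matches the conditions those propositions supply against the ones demanded by Definition~\ref{chap4:def:bibundles=inner-over-I}. First I would record the boundary data for free: the cograph \(\Gamma\) of \(f\) has \(C_{-1}\Gamma = X\) and \(R_{-1}\Gamma = Y\) by construction, and since both are trivially augmented these are exactly \(X_*\) and \(Y_*\). So the only thing to check is that \(T\Gamma\to j(\Simp{1})\) satisfies the prescribed list of Kan conditions.

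For the first assertion I would invoke Proposition~\ref{chap4:prop:hihger-groupoid-cogragh-Kan-conditions}, which provides every colored condition \(\Kan(m,k)[i,j]\) save the single exception \(\Kan(m,m)[m,1]\), and then verify that none of the conditions required of a bibundle is that exception. The inner conditions \(\Kan(m,k)\) for \(0<k<m\) translate, via the proposition comparing the uncolored and colored versions, into \(\Kan(m,k)[i,j]\) for all \(i+j=m+1\); each of these has \(k<m\) and so is never the excluded \(k=m\) case. The colored outer conditions \(\Kan(m,0)[i,j]\) with \(i\ge 2\) have \(k=0\ne m\), and the colored outer conditions \(\Kan(m,m)[i,j]\) with \(j\ge 2\) have \(k=m\) but \(j\ge 2\ne 1\), so neither meets the excluded index \([m,1]\). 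Hence all demanded conditions lie among those furnished by the proposition. The unique versions in degrees \(m>n\) follow from the hypothesis that \(X\) and \(Y\) are \(n\)-groupoids: in that range they satisfy \(\Kan!\), so the maps built from collapsible extensions of \(X\) and \(Y\) in the proof of Proposition~\ref{chap4:prop:hihger-groupoid-cogragh-Kan-conditions} become isomorphisms rather than mere covers (equivalently, the cograph inherits coskeletality from \(X\) and \(Y\)).

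For the second assertion I would use Proposition~\ref{chap4:prop:cograph-acyclic-is-kan}: when \(f\) is an acyclic fibration, \(T\Gamma\to j(\Simp{1})\) is a full Kan fibration, so it satisfies \(\Kan(m,0)\) for every \(m\ge 1\), which is precisely the additional requirement for right principality; the unique versions for \(m>n\) again follow from the \(n\)-groupoid hypothesis as above. The main obstacle is genuinely the combinatorial observation that the one colored condition missing from Proposition~\ref{chap4:prop:hihger-groupoid-cogragh-Kan-conditions}, namely \(\Kan(m,m)[m,1]\), is never needed to be a bibundle but is exactly the condition supplied by acyclicity to upgrade to right principality; keeping the colored indices \([i,j]\) aligned with \(k\) throughout is where care is required, but no further computation is involved.
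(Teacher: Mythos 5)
Your proposal is correct and takes essentially the same route as the paper, whose entire proof is to cite Propositions~\ref{chap4:prop:hihger-groupoid-cogragh-Kan-conditions} and~\ref{chap4:prop:cograph-acyclic-is-kan} and remark that ``by considering the dimensions'' the unique Kan conditions for \(m>n\) are routine --- your index bookkeeping simply makes that routine part explicit. One caveat on your closing remark: the exceptional condition \(\Kan(m,m)[m,1]\) is indeed exactly what acyclicity supplies (it is the only case treated in the proof of Proposition~\ref{chap4:prop:cograph-acyclic-is-kan}), but it is a colored component of the uncolored conditions \(\Kan(m,m)\) (the other, left-principal side), not of \(\Kan(m,0)\); right principality asks only for \(\Kan(m,0)\), whose colored components \(\Kan(m,0)[i,j]\) your own bookkeeping shows are already furnished by Proposition~\ref{chap4:prop:hihger-groupoid-cogragh-Kan-conditions}, so what the acyclicity hypothesis genuinely buys is the full Kan fibration property (hence two-sided principality), with right principality as the particular consequence the corollary records.
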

\begin{proof}
  This follows from Propositions~\ref{chap4:prop:hihger-groupoid-cogragh-Kan-conditions} and~\ref{chap4:prop:cograph-acyclic-is-kan}.
  By considering the dimensions, it is routine to verify that all desired unique Kan conditions hold for \(m> n\).
\end{proof}

Given a bibundle between \(X\) and \(Y\), we should be able to extract actions of \(X\) and \(Y\), respectively. If \(\Gamma\) is right principal, then the \(Y\)-action should be principal.

\begin{proposition}\label{chap4:prop:TX-I-lifting-properties}
Let \(\Gamma\) be a bibundle between \(n\)-groupoids \(X\) and \(Y\) in \((\Cat, \covers)\). Then \(R'_l \Gamma\to R'_{-1}\Gamma=Y\) and \(C'_l \Gamma \to C'_{-1}\Gamma=X\) are \(n\)-groupoid Kan fibrations for each \(l\ge 0\).
\end{proposition}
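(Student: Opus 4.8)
The plan is to translate the fibration condition for $R'_l\Gamma\to Y$ into colored Kan conditions for $T\Gamma\to j(\Simp{1})$ and then settle the latter by a horn-filling argument. First I would use the row adjunction of Proposition~\ref{chap4:prop:box-row-adjoint} together with the identity $T\circ\boxtimes=\star$ from~\eqref{chap4:eq:Tboxtims=star}: for a connected simplicial set $S$ one has $\Hom(S,R'_l\Gamma)=\Hom(\Simp{l}\boxtimes S,\Gamma)=\Hom_{/\Simp{1}}(\Simp{l}\star S,T\Gamma)$, where the left factor $\Simp{l}$ is $0$-colored, $S$ is $1$-colored, and the structure map to $Y=R'_{-1}\Gamma$ corresponds to the inclusion $S=\Simp*[+]{-1}\star S\hookrightarrow\Simp{l}\star S$. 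Since $\Hom_{/\Simp{1}}(\blank,T\Gamma)$ turns pushouts in its first argument into pullbacks, the relative lifting presheaf for $\Kan(m,j)$ of $R'_l\Gamma\to Y$ becomes $\Hom_{/\Simp{1}}(W_l,T\Gamma)$ with
\[
W_l:=(\Simp{l}\star\Horn{m}{j})\cup_{\Horn{m}{j}}\Simp{m}\subset\Simp{l}\star\Simp{m}=\Simp{l+1+m},
\]
while the source is $\Hom_{/\Simp{1}}(\Simp{l+1+m},T\Gamma)$. Thus it suffices to prove that $\Hom_{/\Simp{1}}(\Simp{l+1+m},T\Gamma)\to\Hom_{/\Simp{1}}(W_l,T\Gamma)$ is a cover for all $m\ge 1$ and $0\le j\le m$.

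The core step is to exhibit $W_l\hookrightarrow\Simp{l+1+m}$ as a colored collapsible extension assembled only from colored horns that $T\Gamma\to j(\Simp{1})$ is assumed to fill. Writing a simplex of $\Simp{l+1+m}$ as a join $\sigma\star\rho$ with $\sigma\subseteq\{0,\dots,l\}$ ($0$-colored) and $\rho\subseteq\{l+1,\dots,l+m+1\}$ ($1$-colored), the simplices missing from $W_l$ are exactly those with $\sigma\neq\emptyset$ and $\rho\in\{\hat\rho,\hat\rho\cup\{l+1+j\}\}$, where $\hat\rho=\{l+1,\dots,l+m+1\}\setminus\{l+1+j\}$. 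I would fill these in order of increasing $|\sigma|$, pairing the $(|\sigma|+m-1)$-cell $\sigma\star\hat\rho$ with the $(|\sigma|+m)$-cell $\sigma\star(\hat\rho\cup\{l+1+j\})$ by filling the horn that omits the vertex $l+1+j$. A direct face count shows that every other face of $\sigma\star(\hat\rho\cup\{l+1+j\})$ already lies in $W_l$ or was added at an earlier stage, so each step is a genuine horn-filling. The omitted vertex sits in position $q=|\sigma|+j$ of a $p=(|\sigma|+m)$-simplex carrying $|\sigma|\ge 1$ zero-colored and $m+1$ one-colored vertices; hence $0<q\le p$, with $q<p$ when $j<m$ (an inner colored horn, available for any bibundle) and $q=p$ when $j=m$ (a right-outer colored horn $\Kan(p,p)[\,|\sigma|,m+1\,]$ with $m+1\ge 2$, again among the prescribed conditions). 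Therefore every filling is allowed, the map in question is a composite of pullbacks of covers, hence a cover, and representability propagates along the filtration exactly as in Lemmas~\ref{chap3:lem:covers_in_groupoid} and~\ref{chap3:lem:representable}.

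This proves $\Kan(m,j)$ for $R'_l\Gamma\to Y$ for all $m\ge 1$ and $0\le j\le m$, so the map is a Kan fibration. Moreover, for $m\ge n$ every horn in the filtration has dimension $p\ge m+1>n$ and is therefore filled uniquely by the unique colored Kan conditions of the bibundle; the composite is then an isomorphism, giving $\Kan!(m,j)$ for $m\ge n$ and in particular $\Kan!(n,k)$ for $0\le k\le n$. Composing $R'_l\Gamma\to Y$ with $Y\to\terminal$ and invoking Proposition~\ref{chap3:prop:comp} shows $R'_l\Gamma$ is an $n$-groupoid, whence $R'_l\Gamma\to Y$ is an $n$-groupoid Kan fibration. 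The column statement $C'_l\Gamma\to X$ follows by the same argument with the two colors exchanged (equivalently, applied to the opposite bibundle $\overline{\Gamma}$ of Remark~\ref{chap4:rem:opposite-bibundle}): now the auxiliary simplex $\Simp{l}$ is joined on the right and $1$-colored, the critical outer case is $j=0$, and it is handled by the left-outer conditions $\Kan(p,0)[i,j]$ with $i\ge 2$. I expect the main obstacle to be the combinatorial bookkeeping of the filtration of $W_l$: verifying that the pairing of missing cells is exhaustive and that, at each stage, precisely the horn's $q$-th face is the one still absent, so that only inner and the permitted outer colored horns ever occur.
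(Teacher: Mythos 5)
Your proposal is correct, and its skeleton coincides with the paper's: both use the row/box-product adjunction together with \(T\circ\boxtimes=\star\) to identify the relative lifting presheaf for \(\Kan(m,j)\) of \(R'_l\Gamma\to Y\) with \(\Hom_{/\Simp{1}}(W_l,T\Gamma)\), where \(W_l=(\Simp{l}\star\Horn{m}{j})\cup_{\Horn{m}{j}}\Simp{m}\), and then show that \(W_l\hookrightarrow\Simp{l}\star\Simp{m}\) is filled by horns the bibundle is assumed to fill. The difference lies in how that last step is carried out. The paper splits into two cases: for \(0\le k<m\) it invokes Proposition~\ref{chap2:prop:join-pushout-collapsible} (the inclusion is inner collapsible), and for \(k=m\) — where that proposition does not apply, since \(\Horn{m}{m}\) is a right horn — it transports the filtration for \(\Horn{m}{m-1}\) along the transposition of the vertices \(m-1\) and \(m\), observing that each step becomes an inner horn or a right outer horn containing both \(m-1\) and \(m\), hence is covered by \(\Kan(s,s)[i,j]\) with \(j\ge 2\). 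Your single explicit filtration — pairing \(\sigma\star\hat\rho\) with \(\sigma\star(\hat\rho\cup\{l+1+j\})\) in order of increasing \(\lvert\sigma\rvert\) and filling the horn \(\Horn{\lvert\sigma\rvert+m}{\lvert\sigma\rvert+j}\) — handles all \(j\) uniformly and identifies each horn directly as inner (when \(j<m\)) or right-outer colored with \(m+1\ge 2\) one-colored vertices (when \(j=m\)); it is in effect a hands-on proof of the pushout-product lemma that subsumes the paper's special case, and your face count is correct. Your uniqueness argument (every filled horn has dimension \(\lvert\sigma\rvert+m\ge m+1>n\) once \(m\ge n\), so all fillings are unique) makes precise what the paper dismisses as routine, and your color-swap for \(C'_l\Gamma\to X\) matches the paper's appeal to symmetry. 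The only point you state loosely is the representability of the base \(\Hom_{/\Simp{1}}(W_l,T\Gamma)\): this is not delivered by Lemma~\ref{chap3:lem:representable} alone but needs the spine-type arguments of Remarks~\ref{chap4:rem:low-inner-kan-imply-repr} and~\ref{chap4:rem:low-outer-kan-imply-repr}, since a general (non-principal) bibundle need not make \(\Gamma_{0,0}\to Y_0\) a cover; the paper is equally terse here, so this is a matter of matching its standard of rigor rather than a gap.
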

\begin{proof}
It suffices to consider \(R'_l \Gamma\to R'_{-1}\Gamma\). Let \(A\to B\) be an inclusion of trivially augmented connected simplicial sets. Since \(B\) is connected, we have
\[
\Hom(\iota^* B, R'_l \Gamma)\cong\Hom(B, R_l \Gamma).
\]
Proposition~\ref{chap4:prop:box-row-adjoint} and~\eqref{chap4:eq:Tboxtims=star} imply that
\[
\Hom(B, R_l \Gamma)\cong\Hom(\Simp{l} \boxtimes B, \Gamma)\cong\Hom_{/\Simp{1}}(\Simp{l}\star B, T\Gamma),
\]
natural in each variable. Therefore, by naturality we get
\[
\Hom(A\to B, R_l \Gamma\to R_{-1} \Gamma)\cong \Hom_{/\Simp{1}}(\Simp{l}\star A\cup_A B, T\Gamma),
\]
and a commutative diagram
\[
\xymatrix{
\Hom(B, R_l \Gamma)\ar[r]^{\cong}\ar[d] & \Hom_{/\Simp{1}}(\Simp{l}\star B, T\Gamma)\ar[d]\\
\Hom(A\to B, R_l \Gamma\to R_{-1} \Gamma)\ar[r]^{\cong} & \Hom_{/\Simp{1}}(\Simp{l}\star A\cup_A B, T\Gamma)\rlap{\ .}
}
\]

Replace \(A\to B\) above by \(\Horn{m}{k}\to \Simp{m}\). Proposition~\ref{chap2:prop:join-pushout-collapsible} shows that
\[
\Simp{l}\star \Horn{m}{k}\cup_\Horn{m}{k}  \Simp{m}\to \Simp{l}\star\Simp{m}
\]
is an inner collapsible extension for \(0\le k<m\). Thus \(R'_l \Gamma\to R'_{-1}\Gamma=Y\) satisfies \(\Kan(m, k)\) for \(0\le k<m\).

It remains to consider \(k=m\). We know that \(\Simp{l}\star \Horn{m}{m-1}\cup_\Horn{m}{m-1}  \Simp{m}\to \Simp{l}\star\Simp{m}\) is an inner collapsible extension. Switching the role of the vertices \(m\) and \(m-1\) gives a filtration
\[
\Simp{l}\star \Horn{m}{m}\cup_\Horn{m}{m}  \Simp{m}\hookrightarrow \cdots\hookrightarrow \Simp{l}\star\Simp{m}
\]
such that each step is attaching either an inner horn or a right outer horn containing \(m-1\) and \(m\). By assumption, \(\Kan(s,s)[i,j]\) holds for \(j\ge 2\), and we deduce that \(\Kan(m, m)\) holds for \(R'_l \Gamma\to R'_{-1}\Gamma=Y\).

Finally, it is routine to verify all desired unique Kan conditions for \(m>n\).
\end{proof}

The following proposition justifies the definition of right principality.
\begin{proposition}
  Let \(\Gamma\) be a right principal bibundle between \(n\)-groupoids \(X\) and \(Y\) in \((\Cat, \covers)\). Then
  \(R'_l \Gamma\to\sk_0 \Gamma_{l, -1}\) is acyclic for \(l\ge 0\).
\end{proposition}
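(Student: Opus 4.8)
The plan is to reduce the statement to a lifting property of $T\Gamma\to j(\Simp{1})$ along a join inclusion, exactly as in the proof of Proposition~\ref{chap4:prop:TX-I-lifting-properties}, and then to exhibit that inclusion as a left collapsible extension built entirely out of $0$\nbdash{}horns, so that the outer Kan conditions supplied by right principality apply. First I would translate the acyclicity maps into maps over $\Simp{1}$. Regarding $R_l\Gamma$ as an augmented simplicial object with $(R_l\Gamma)_{-1}=\Gamma_{l,-1}$, Lemma~\ref{chap4:lem:aug-acyclic} gives
\[
\Hom(\partial\Simp{m}\to\Simp{m},R'_l\Gamma\to\sk_0\Gamma_{l,-1})\cong\Hom(\iota_*\partial\Simp{m},R_l\Gamma),
\]
and then Proposition~\ref{chap4:prop:box-row-adjoint}, the identity \eqref{chap4:eq:Tboxtims=star}, and Proposition~\ref{chap4:prop:augbisimp=aug-simp-over-I} turn this into $\Hom_{/\Simp{1}}(\Simp{l}\star\partial\Simp{m},T\Gamma)$; likewise $\Hom(\Simp{m},R'_l\Gamma)\cong\Hom_{/\Simp{1}}(\Simp{l}\star\Simp{m},T\Gamma)$. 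By naturality, the map defining $\Acyc(m)$ becomes the restriction map $\Hom_{/\Simp{1}}(\Simp{l}\star\Simp{m},T\Gamma)\to\Hom_{/\Simp{1}}(\Simp{l}\star\partial\Simp{m},T\Gamma)$ induced by the inclusion $\Simp{l}\star\partial\Simp{m}\hookrightarrow\Simp{l}\star\Simp{m}=\Simp{l+m+1}$.

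Next I would analyse this inclusion combinatorially. Writing $\tau=\{l+1,\dots,l+m+1\}$ for the $\Simp{m}$\nbdash{}part and $\sigma$ for a subset of the $\Simp{l}$\nbdash{}part $\{0,\dots,l\}$, the non\nbdash{}degenerate simplices of $\Simp{l+m+1}$ missing from $\Simp{l}\star\partial\Simp{m}$ are exactly those of the form $\sigma\cup\tau$. I would fill these by filling, in order of increasing dimension, the $0$\nbdash{}horn of each $\rho=\sigma\cup\tau$ with $0\in\sigma$: one checks that the face of $\rho$ opposite its minimal vertex $0$ is $(\sigma\setminus\{0\})\cup\tau$ and is the only face not yet present (faces omitting another vertex of $\sigma$ are lower missing simplices already attached, faces omitting a vertex of $\tau$ lie in $\Simp{l}\star\partial\Simp{m}$), so each step has the form $S_i=S_{i-1}\cup_{\Horn{n}{0}}\Simp{n}$. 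This exhibits $\Simp{l}\star\partial\Simp{m}\hookrightarrow\Simp{l+m+1}$ as a left collapsible extension all of whose horns are $0$\nbdash{}horns; it is the relative, many\nbdash{}step version of the identity $\partial\Simp{k}\star\Simp{0}=\Horn{k+1}{k+1}$ used for the décalage in Example~\ref{chap3:exa:decalage-principal}, with the cone point now ranging over the $\Simp{l}$\nbdash{}direction. Under $\chi$ the top simplex $\rho$ carries $|\sigma|$ vertices of colour $0$ and $m+1$ of colour $1$, and its omitted minimal vertex has colour $0$; hence the filling invokes precisely the colored condition $\Kan(|\sigma|+m,0)[\,|\sigma|,m+1\,]$ with $|\sigma|\ge 1$ and $m+1\ge 1$.

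Finally I would feed this into the lifting machinery. Each filling step, via the colored analogue of Corollary~\ref{chap2:cor:lift-composition-pushout-combined} used in Lemma~\ref{chap3:lem:representable}, realises $\Hom_{/\Simp{1}}(S_i,T\Gamma)\to\Hom_{/\Simp{1}}(S_{i-1},T\Gamma)$ as a pullback of the map $\Hom(\Simp{n}[i,j],T\Gamma)\to\Hom(\Horn{n}{0}[i,j],T\Gamma)$, which is a cover: right principality gives $\Kan(s,0)$ for all $s\ge 1$, equivalently all $\Kan(s,0)[i,j]$ by the proposition preceding Section~\ref{chap4:sec:cograph-morphism-higher-gpd}, and these include all colourings $[\,|\sigma|,m+1\,]$ with $|\sigma|\ge 1$ that occur. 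Composites of covers being covers, the restriction map is a cover, representability being carried along inductively; thus $\Acyc(m)$ holds for every $m\ge 0$, and for $m>n$ the corresponding unique Kan conditions of the bibundle upgrade these to $\Acyc!(m)$, so by Lemma~\ref{chap3:lem:acyclic-finite} the morphism $R'_l\Gamma\to\sk_0\Gamma_{l,-1}$ is an acyclic fibration. The main obstacle is the second paragraph: verifying that the filtration uses only $0$\nbdash{}horns whose cone point is a colour\nbdash{}$0$ vertex, i.e.\ that the genuinely new conditions are exactly the mixed $\Kan(\cdot,0)[i,j]$ with $i\ge 1$ furnished by right principality (the cases $i\ge 2$ already holding for any bibundle, and only $i=1$, $\sigma=\{0\}$, requiring principality). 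Once this combinatorics is pinned down, the representability and cover bookkeeping is routine.
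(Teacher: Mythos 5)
Your proposal is correct and takes essentially the same route as the paper: the identical reduction via Lemma~\ref{chap4:lem:aug-acyclic} and the adjunction identifications from the proof of Proposition~\ref{chap4:prop:TX-I-lifting-properties} to the restriction map along \(\Simp{l}\star\partial\Simp{m}\to\Simp{l}\star\Simp{m}\), followed by showing this inclusion is a left collapsible extension so that the left Kan conditions supplied by right principality make the map a cover. The only divergence is in how left-collapsibility is established: the paper factors through \(\Simp{0}\{0\}\star\Simp{m}\cup\Simp{l}\star\partial\Simp{m}\) and cites Proposition~\ref{chap2:prop:join-pushout-collapsible}, whereas you unwind that lemma into an explicit filtration by \(0\)\nbdash{}horns indexed by the subsets \(\sigma\ni 0\) — a self-contained substitute with the added bookkeeping (not in the paper, but accurate) that only the colourings \([1,m+1]\) genuinely require principality, the cases \(i\ge 2\) holding for any bibundle.
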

\begin{proof}
  By Lemma~\ref{chap4:lem:aug-acyclic}, it suffices to show that \(R_l\Gamma\) is acyclic. The proof of Proposition~\ref{chap4:prop:TX-I-lifting-properties} gives a commutative diagram
  \[
  \xymatrix{
  \Hom(\iota_*\Simp{m}, R_l\Gamma)\ar[d]\ar[r]^-{\cong}& \Hom_{/\Simp{1}}(\Simp{l}\star\Simp{m}, T\Gamma)\ar[d]\\
  \Hom(\iota_*\partial\Simp{m}, R_l\Gamma)\ar[r]^-{\cong}&\Hom_{/\Simp{1}}(\Simp{l}\star\partial\Simp{m},T\Gamma)\rlap{\ .}
  }
  \]
  Proposition~\ref{chap2:prop:join-pushout-collapsible} implies that \(\Simp{0}\{0\}\star\Simp{m}\cup \Simp{l}\star\partial\Simp{m}\to\Simp{l}\star\Simp{m}\) is a left collapsible extension. It is clear that \(\Simp{l}\star\partial\Simp{m}\to \Simp{0}\{0\}\star\Simp{m}\cup \Simp{l}\star\partial\Simp{m}\) is a left collapsible extension. Thus \(\Simp{l}\star\partial\Simp{m}\to \Simp{l}\star\Simp{m}\) is a left collapsible extension. We deduce that
  \[
  \Hom_{/\Simp{1}}(\Simp{l}\star\Simp{m}, T\Gamma)\to \Hom_{/\Simp{1}}(\Simp{l}\star\partial\Simp{m},T\Gamma)
  \]
  is a cover, and this proves the statement.
\end{proof}

\chapter{Actions of 2-Groupoids}\label{chap5}
\thispagestyle{empty}

In this chapter, we recall another approach to 2\nbdash{}groupoids, namely, in terms of categorification. We then define actions of  2\nbdash{}groupoids on groupoids and 2\nbdash{}bundles in this framework. The main part of this chapter is devoted to establishing the equivalence between the simplicial approach and the categorification approach to 2\nbdash{}groupoid actions and 2\nbdash{}bundles.

\section{2-groupoids}\label{chap5:sec:2-groupoids}

We recall 2\nbdash{}groupoids defined by categorification in this section. We then explain the relation between categorified groupoids and 2\nbdash{}groupoids given by simplicial objects.

\subsection{Categorified groupoids}
Recall from Section~\ref{chap3:subs:internal-groupoids} that for a nice enough \((\Cat,\covers)\) there is a 2\nbdash{}category \(\BUN\) that has groupoids in \((\Cat,\covers)\) as objects, HS bibundles as 1\nbdash{}morphisms, and isomorphisms of HS bibundles as 2\nbdash{}morphisms. In Chapters~\ref{chap5}, \ref{chap6}, and \ref{chap6:composition}, we let \((\Cat,\covers)\) be such a nice category with a pretopology. The readers are free to take \((\Cat, \covers)\) to be \((\Mfd,\covers_\subm)\).

\begin{definition}\label{chap5:def:cat-groupoid}
A \emph{categorified groupoid} \(G \rightrightarrows \trivial{M}\) in \((\Cat, \covers)\) is a groupoid
object in the 2\nbdash{}category \(\BUN\) such that the space of objects is an object in~\(\Cat\).

More precisely, such a 2\nbdash{}groupoid consists of the following data:
\begin{itemize}
  \item a space of objects \(M\), which is an object in \(\Cat\), and a groupoid of arrows \(G= (\bar\source,\bar\target\colon G_1\rightrightarrows G_0)\), which is a groupoid in \((\Cat, \covers)\);
  \item the source and target morphisms \(\source, \target\colon G\to \trivial{M}\), which are given by covers \(\source_0,\target_0\colon G_0\to M\) with \(\source_0\circ\bar\source= \source_0\circ\bar\target\) and \(\target_0\circ\bar\source= \target_0\circ\bar\target\);
  \item the multiplication\footnote{The multiplication and the inversion are also called horizontal multiplication and horizontal inversion, respectively. Not to be confused with the vertical multiplication and vertical inversion of the groupoid \(G\).} \(\mult\), which is given by an HS bibundle~\(E_\mult\) from \(G\times_{\source,M,\target} G\) to~\(G\).
  \item the unit morphism \(\unit \colon \trivial{M}\to G\), which arises from a groupoid functor; the inversion morphism \(\inv\colon G\to G\), which is given by an HS bibundle \(E_\inv\).
\end{itemize}
These morphisms satisfy the same target and source relations as for a groupoid, while the identities of unit, associativity, and inverse are replaced by isomorphisms of HS bibundles:
\begin{itemize}
 \item The multiplication is associative up to a 2\nbdash{}morphism, called \emph{associator}: there is a 2\nbdash{}morphism between HS morphisms \(G\times_{\source,\trivial{M},\target} G\times_{\source,\trivial{M},\target} G\to G\),
\begin{equation}\label{chap5:eq:2-groupoid-associator}
  \alpha\colon \mult\circ (\mult\times \id)\Rightarrow \mult\circ(\id\times\mult).
\end{equation}
 \item The unit identities hold up to 2\nbdash{}morphisms: there are 2\nbdash{}morphisms between HS morphisms \(G\to G\)
\begin{equation}\label{chap5:eq:unitors}
\begin{aligned}
  \rho\colon \mult\circ (\id, \unit\circ \source)&\Rightarrow  \id,\\
  \lambda\colon \mult\circ (\unit\circ \target, \id)&\Rightarrow \id,
\end{aligned}
\end{equation}
called right and left \emph{unitors}.
 \item The inverse identities hold up to 2\nbdash{}morphisms: there are 2\nbdash{}morphisms between HS morphisms \(G\to G\)
\begin{equation}\label{chap5:eq:universor}
\begin{aligned}
 \eta \colon \mult\circ(\id, \inv) &\Rightarrow \unit \circ \target, \\
 \varepsilon \colon \mult\circ (\inv, \id) &\Rightarrow \unit \circ \source,
\end{aligned}
\end{equation}
called right and left \emph{inversors}.
\end{itemize}
The associator satisfies the usual \emph{pentagon condition}, and the unitors satisfy the \emph{triangle condition}, which are both expressed by 2\nbdash{}commutative diagrams (see \cite[Definition 3.4]{Zhu:ngpd}).
\end{definition}

\begin{remark}
  We can also use other 2\nbdash{}categories of groupoids to define 2\nbdash{}groupoids. For instance, if we use the 2\nbdash{}category of differential stacks, the resulting categorified groupoids are also called stacky groupoids. Stacky groupoids were introduced by Tseng and Zhu~\cite{Tseng-Zhu} as the global objects integrating Lie algebroids; later in~\cite{Zhu:ngpd}, the definition was modified by adding some higher coherence conditions.
\end{remark}

\begin{remark}
  A categorified group is a categorified groupoid such that the object space is a single point. Categorified groups in \((\Sets,\covers_\surj)\) were studied in~\cite{Ulbrich,Laplaza,Baez-Lauda}. Stacky groups were studied by Blohmann~\cite{Blohmann}.
\end{remark}

\begin{remark}
  The groupoid \(G\times_{\source,\trivial{M},\target}G\) is the 1-categorical strong pullback of groupoids in \((\Cat, \covers)\), which is also a 2\nbdash{}pullback in the 2\nbdash{}category \(\BUN\) by Corollary~\ref{chap1:cor:pullback-bibundle-to-M=pullback}. The 2\nbdash{}categorical universal property guarantees that all morphisms and 2\nbdash{}morphisms in the formulas above exist and that the coherence conditions for 2-morphisms are well-defined. The groupoids \((G\times_{\source,\trivial{M},\target} G)\times_{\source,\trivial{M},\target} G\) and  \(G\times_{\source,\trivial{M},\target} (G\times_{\source,\trivial{M},\target} G)\) are identified by the canonical isomorphism. Another convention was made in~\cite{Zhu:ngpd}: the coherence 2\nbdash{}morphisms from the 2\nbdash{}category \(\BUN\) itself are omitted to simplify diagrams. We will follow this convention.
\end{remark}

\begin{remark}
  The unit morphism \(\unit\colon \trivial{M}\to G\) is assumed to arise from the bundlisation of a groupoid functor \(\trivial{M}\to G\). It is shown that, up to equivalence, we can always replace a general HS bibundle by one arising from some groupoid functor \(\trivial{M}\to G\). For details see~\cite[Section 3.1]{Zhu:ngpd}.
\end{remark}

\begin{remark}
  If we ignore the inversion and replace \(\BUN\) by the strict 2\nbdash{}category \(\Cats\), then we get the definition of a 2\nbdash{}category. As for 2\nbdash{}categories, the coherence conditions in Definition~\ref{chap5:def:cat-groupoid} ensure that ``all diagrams'' (without involving the inversion) commute.
\end{remark}

\begin{remark}
  The higher coherence conditions for inversors are two \emph{zig-zag conditions} similar to those for 2-groups given in~\cite{Baez-Lauda,Ulbrich, Laplaza}. It is shown there that we can always achieve them by an appropriate modification; the argument can be carried over to 2-groupoids. Moveover, the condition on the inversion can be replaced by the requirement that \(E_\mult\times_{J_r, G_0,\unit} \trivial{M}\) is a Morita equivalence from \(G\) to \(G^\op\); see~\cite[Section 3.2]{Zhu:ngpd}. This is, in turn, equivalent to the following two shear morphisms
  \begin{gather*}
  G\times_{\source,\trivial{M},\target}G\to G\times_{\source, \trivial{M},\source} G,\quad  (g_1, g_2)\mapsto (g_1\cdot g_2, g_2)\\
  G\times_{\source,\trivial{M},\target}G \to G\times_{\target, \trivial{M},\target} G,\quad (g_1, g_2)\mapsto (g_1, g_1\cdot g_2)
  \end{gather*}
  being Morita equivalences. In fact, the first morphism is \(f_1\) in~\cite[Lemma~3.12]{Zhu:ngpd}. With our assumption, the rest of the proof of Lemma~3.12 carries on, hence Lemma~3.13 in~\cite{Zhu:ngpd} also follows.
\end{remark}

\subsection{A one-to-one correspondence}

Zhu~\cite{Zhu:ngpd} proved that Definition~\ref{chap5:def:cat-groupoid} agrees with Definition~\ref{chap3:def:n-groupoid-pretopology} for~\(n=2\).

\begin{theorem}\label{chap5:thm:2-groupoid-cat=simp}
Categorified groupoids in \((\Cat, \covers)\) are in a one-to-one correspondence with \(2\)\nbdash{}groupoid objects in \((\Cat, \covers)\).
\end{theorem}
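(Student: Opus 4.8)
The plan is to exhibit two mutually inverse constructions and to check that they transport all of the structure faithfully. The organising principle is coskeletality: by the remark following Lemma~\ref{chap3:lem:kan(n+1,j)-implies-all}, a $2$\nbdash{}groupoid object $X$ in $(\Cat,\covers)$ is $3$\nbdash{}coskeletal, so $X$ is recovered from $X_0,X_1,X_2,X_3$ together with the prescribed Kan conditions; dually, a categorified groupoid is determined by its object space $M$, its arrow groupoid $G$, the multiplication bibundle $E_\mult$, and the coherence data of Definition~\ref{chap5:def:cat-groupoid}. This reduces the asserted bijection to a correspondence between finitely many objects and structure maps in $\Cat$, which I would set up degree by degree.

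First I would pass from a $2$\nbdash{}groupoid object $X$ to a categorified groupoid. Put $M := X_0$ and build the arrow groupoid $G$ with $G_0 := X_1$ and $G_1 := \bigon{X}$, the space of \emph{bigons}, modelled by those $2$\nbdash{}simplices one of whose edges is degenerate, so that the two remaining edges are parallel $1$\nbdash{}simplices serving as the source and target in $G$; the vertical composition, units, and inverses of $G$ come from filling $3$\nbdash{} and higher horns, made unique by \(\Kan!(3,k)\) and \(\Kan!(4,k)\). The source and target \(\source,\target\colon G\to\trivial{M}\) are induced by \(\face_1,\face_0\colon X_1\to X_0\), which are covers by \(\Kan(1,k)\) (Remark~\ref{chap3:rem:Kan_one}). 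The decisive point is horizontal multiplication: because one cannot choose a composite of $1$\nbdash{}simplices as a morphism, I would take $E_\mult := X_2$ as an HS bibundle from $G\times_{\source,\trivial{M},\target}G$ to $G$, with left moment map \((\face_2,\face_0)\) and right moment map \(\face_1\). That the left moment map is a cover is exactly \(\Kan(2,1)\), and right principality is the statement that two fillers of a \((2,1)\)\nbdash{}horn differ by a unique bigon, which is \(\Kan!(3,k)\). The inversion bibundle $E_\inv$ is produced from \(\Kan(2,0)\) and \(\Kan(2,2)\), the associator \(\alpha\) from $X_3$ via \(\Kan!(3,k)\), and the pentagon and triangle coherence from $X_4$ via \(\Kan!(4,k)\).

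Conversely I would build a simplicial object $X$ from a categorified groupoid by a nerve\nbdash{}type construction, setting $X_0 := M$, $X_1 := G_0$, $X_2 := E_\mult$ (the total space of the multiplication bibundle), taking $X_3$ to be the space carrying the associator, and extending $3$\nbdash{}coskeletally. Verifying the Kan conditions is the technical core. Here \(\Kan(1,k)\) records that \(\source_0,\target_0\) are covers and \(\Kan(2,1)\) records that the left moment map of $E_\mult$ is a cover; the inner and higher unique conditions \(\Kan!(m,k)\) for $m\ge 3$ follow from $3$\nbdash{}coskeletality together with Lemma~\ref{chap3:lem:kan(n+1,j)-implies-all}, once a single one of them is checked from the associator and its pentagon. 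The outer conditions \(\Kan(2,0)\) and \(\Kan(2,2)\) must instead be read off from the invertibility data: by the characterisation of the inversion recalled in the final remark of the preceding subsection, the inversion in a categorified groupoid is equivalent to the two shear morphisms of $G$ being Morita equivalences, and this is precisely what forces the outer horn\nbdash{}filling maps to be covers.

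The main obstacle is proving that the two passages are mutually inverse, which is inseparable from matching the coherence data to the unique Kan conditions. The crux is a dictionary of equivalences: the left moment map of $E_\mult$ being a cover \(\Leftrightarrow\) \(\Kan(2,1)\); right principality of $E_\mult$ \(\Leftrightarrow\) \(\Kan!(3,k)\); the Morita property of the shear maps \(\Leftrightarrow\) \(\Kan(2,0)\) and \(\Kan(2,2)\); the associator with its pentagon \(\Leftrightarrow\) \(\Kan!(3,k)\), refined by \(\Kan!(4,k)\). Since the multiplication is only an HS bibundle, none of these identifications holds on the nose: at each stage one must translate between HS bibundles and their associated simplicial objects and check that \emph{isomorphisms} of HS bibundles correspond to the equalities forced by uniqueness of fillers. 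I would carry out these translations with the bibundle\nbdash{}to\nbdash{}simplicial correspondence of Chapter~\ref{chap4} and rely on~\cite[Section~3]{Zhu:ngpd} for the coherence bookkeeping.
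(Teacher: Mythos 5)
Your proposal is correct and follows essentially the same route as the paper (which is Zhu's proof, recalled in Section 5.1): from the simplicial side you take the groupoid of bigons as the arrow groupoid and $X_2$ with its three face maps as the multiplication bibundle, and conversely you build $X_{\le 3}$ from $(M, G, E_\mult)$ plus the associator and extend $3$\nbdash{}coskeletally, matching the coherence data against the unique Kan conditions exactly as in \cite{Zhu:ngpd}. The only cosmetic difference is that the paper packages the bigon groupoid as the fibre of the d\'ecalage $\dec'(X)\to X$ and defines $X_3$ as $\Hom(\Horn{3}{0}, X_{\le 2})$, but these coincide with your constructions.
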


We briefly recall the proof given in~\cite{Zhu:ngpd}.

\subsubsection{From categorification to the simplicial picture}

Given a categorified groupoid \(G \rightrightarrows \trivial{M}\), we may construct a simplicial object~\(X\) with
\[
X_0=M, \quad X_1=G_0, \quad X_2=E_\mult, \quad X_3=\Hom(\Horn{3}{0}, X_{\le 2}).
\]
The degeneracy map \(X_0\to X_1\) is given by the unit, the degeneracy maps \(X_1\to X_2\) are given by the unitors. The face maps \(X_1\to X_0\) are given by~\(\source\) and~\(\target\), the face maps \(X_2\to X_1\) are given by the two moment maps of the multiplication HS bibundle. One can use the existence of an associator to show that each horn in \(X_{\le 2}\) determines the missing face uniquely, hence\(X_3=\Hom(\Horn{3}{0}, X_{\le 2})\) is well-defined.

Let \(X=\cosk_3(X_{\le 3})\). Explicitly, for \(m>3\) we have
\[
X_m=\{ f\in \Hom(\sk_2(\Delta_m), X_{\le 2})\mid f\circ (d_0\times d_1 \times
d_2 \times d_3) (\sk_3(\Delta_m)) \subset X_3 \}.
\]
We can show that \(\{X_m\}_{m\ge 0}\) forms a simplicial object and satisfies the appropriate Kan conditions. So we obtain a 2\nbdash{}groupoid object in \((\Cat, \covers)\).

\subsubsection{From the simplicial picture to categorification}

Conversely, for a 2\nbdash{}groupoid object in \((\Cat, \covers)\) given by a simplicial object \(X\), we construct a categorified groupoid.

Let \(\arrow{X}\) be the fibre of the décalage \(\dec'(X)\to X\) defined in Example~\ref{chap3:exa:decalage}. We know that \(\arrow{X}\) is a 1-groupoid. This groupoid is described as a \emph{groupoid of bigons} in~\cite{Zhu:ngpd}. The space of bigons \(\bigon{X_2}=\face_0^{-1}(\de_0 X_0)\subset X_2\) is a groupoid over \(X_1\). Applying Kan conditions, one can show that another symmetric choice gives the same result.

The face maps \(\face_0, \face_1\colon X_1\to X_0\) give two morphisms \(\source,\target\colon \arrow{X}\to \trivial{X_0}\). We then show that \(\arrow{X}\) acts on \(X_2\) from three sides. This gives the multiplication HS morphism \(\arrow{X}\times_{\source,\trivial{X_0},\target} \arrow{X}\to \arrow{X}\). The space of inverse bigons \(\face_1^{-1}(\de_0 X_0)\subset X_2\) gives the inversion bibundle~\(E_\inv\). Then \(\arrow{X} \rightrightarrows \trivial{X_0}\) is a categorified groupoid. All the data to build a categorified groupoid are encoded by simplicial identities and Kan conditions.

\subsection{Geometric nerve of a 2-category}\label{chap5:ssec:geometric-nerve}

The relation between a categorified groupoid and its corresponding simplicial object is analogous to that between a 2\nbdash{}category and its geometric nerve; this becomes precise when every HS bibundle in Definition~\ref{chap5:def:2gpd-action} is a groupoid functor.

The (unitary) \emph{geometric nerve} \(N\Cat[C]\) of a 2\nbdash{}category \(\Cat[C]\) is a simplicial set that completely encodes all the 2-categorical structure. The 0-simplices of \(N\Cat[C]\) are objects in \(\Cat[C]\), 1-simplices are 1-morphisms, 2-simplices are oriented 2-commutative triangles, and 3-simplices are oriented 2-commutative tetrahedra. The following theorem of Duskin~\cite{Duskin02} characterises the geometric nerve of a 2-category.

\begin{theorem}\label{chap5:thm:nerve=2-category}
The geometric nerve of a \((2,1)\)\nbdash{}category \textup(2\nbdash{}groupoid\textup) is an inner Kan complex \textup(Kan complex\textup) satisfying unique horn filling conditions above dimension \(2\); conversely, every inner Kan complex \textup(Kan complex\textup) satisfying unique horn filling conditions above dimension \(2\) arises from the geometric nerve of a \((2,1)\)-category \textup(2\nbdash{}groupoid\textup).
\end{theorem}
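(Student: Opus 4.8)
The plan is to prove the two directions separately, following Duskin~\cite{Duskin02}, and to reuse the correspondence machinery already developed for categorified groupoids in Theorem~\ref{chap5:thm:2-groupoid-cat=simp}.

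For the forward direction I would work from the explicit description of the geometric nerve $N\Cat$: its $0$-, $1$-, $2$-, $3$-simplices are objects, $1$-morphisms, $2$-commutative triangles, and $2$-commutative tetrahedra, and $N\Cat$ is $3$-coskeletal. First I check $\Kan(2,1)$: an inner horn $\Horn{2}{1}\to N\Cat$ is a pair of composable $1$-morphisms $f,g$, and choosing the horizontal composite $g\circ f$ with the identity $2$-cell produces a filler; since any other $1$-morphism equipped with a $2$-isomorphism to $g\circ f$ also fills the horn, the lift exists but is not unique, which is exactly the behaviour required at dimension $2$. Next, for $\Kan!(3,1)$ and $\Kan!(3,2)$, a $3$-horn supplies three of the four triangular faces, and the associativity $2$-cell of $\Cat$ together with the invertibility of $2$-morphisms determines the fourth face and its $2$-cell uniquely; here I would write out the single pasting identity and solve for the missing $2$-cell. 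For the $2$-groupoid (Kan complex) case, the outer conditions $\Kan(2,0)$ and $\Kan(2,2)$ use that every $1$-morphism is an equivalence, and the outer conditions at dimension $3$ follow from the inner ones by Lemma~\ref{chap2:lem:spine-horn-inner}-type reductions once invertibility is in hand.

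The conditions above dimension $3$ I would dispatch uniformly: since $N\Cat$ is $3$-coskeletal, Proposition~\ref{chap2:prop:coskeletal-kan} yields $\Kan!(m,k)$ for all $m\ge 5$ at once. The remaining case $m=4$ is genuine content: because $\partial\Simp{4}=\sk_3\Simp{4}$, a $4$-simplex is the same as its boundary, so $\Kan!(4,k)$ reduces to solving for the unique missing $3$-face, and this solvability is precisely the pentagon (and, with units, the triangle) axiom. For the converse, given an inner Kan complex (resp.\ Kan complex) $X$ with unique filling above dimension $2$, I would reconstruct a $(2,1)$-category (resp.\ $2$-groupoid) exactly as in the simplicial-to-categorification passage of Theorem~\ref{chap5:thm:2-groupoid-cat=simp}: objects are $X_0$, $1$-morphisms are $X_1$, and $2$-morphisms are the bigons in $\bigon{X_2}=\face_0^{-1}(\de_0 X_0)\subset X_2$, equivalently the fibre $\arrow{X}$ of the décalage $\dec'(X)\to X$ (Example~\ref{chap3:exa:decalage}). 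Horizontal composition of $1$-morphisms is read off from a $\Horn{2}{1}$-filler; $\Kan!(3,\cdot)$ guarantees that two fillers differ by a unique invertible $2$-cell, so composition is well defined up to coherent isomorphism and furnishes the associator, while vertical/horizontal composition of $2$-cells and interchange come from filling inner $2$- and $3$-horns.

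The main obstacle is the coherence verification on both sides. In the reconstruction I must check that the associator satisfies the pentagon and the unitors the triangle; these are forced by $\Kan!(4,k)$ through the coskeletality of $X$ (which $\Kan!(m,k)$ for $m\ge 3$ provides via Proposition~\ref{chap2:prop:coskeletal-kan} in the groupoid case, and by Duskin's argument in the inner case), but translating the simplicial identity for the missing $3$-face of $\partial\Simp{4}$ into the standard pentagon diagram requires careful tracking of face orientations and of which $2$-cells are pasted. Finally I would confirm that the two constructions are mutually inverse: the nerve of the reconstructed bicategory agrees with $X$ in dimensions $\le 3$ by construction, and in higher dimensions because both objects are $3$-coskeletal, so each equals $\cosk_3$ of its common $3$-truncation.
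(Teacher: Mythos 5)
Before anything else, note that the paper does not prove this statement at all: Theorem~\ref{chap5:thm:nerve=2-category} is quoted as a result of Duskin~\cite{Duskin02} and used as a black box, so your proposal can only be measured against Duskin's argument, not against a proof in the paper. Your outline does follow that standard route --- explicit checks in dimensions \(2\), \(3\), \(4\) plus \(3\)\nbdash{}coskeletality for the nerve; reconstruction with chosen \(\Horn{2}{1}\)\nbdash{}fillers, coherence extracted from dimension\nbdash{}\(4\) filling, and mutual inverseness via \(\cosk_3\) for the converse --- and this skeleton is sound and consistent with the paper's infrastructure (Proposition~\ref{chap2:prop:coskeletal-kan} does dispatch all \(m\ge 5\) at once, and the \(m=4\) case is indeed where the pentagon lives).

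There are two concrete weak points. First, your treatment of the outer dimension\nbdash{}\(3\) conditions in the \(2\)\nbdash{}groupoid case is wrongly justified: Lemma~\ref{chap2:lem:spine-horn-inner} says the spine-to-inner-horn inclusion is an \emph{inner} collapsible extension, and no chain of inner fillings can produce an outer filler; likewise Lemma~\ref{chap3:lem:morphism-kan(n+1,j)-implies-all} (or Lemma~\ref{chap3:lem:kan(n+1,j)-implies-all}) cannot transport \(\Kan!(3,1)\) to \(\Kan!(3,0)\), because its hypotheses would demand unique fillers in dimension \(2\), which you do not have. The correct mechanism is the one hiding in your parenthetical ``once invertibility is in hand'': in the tetrahedron equation for a horn \(\Horn{3}{0}\to N\Cat\) the unknown \(2\)\nbdash{}cell of the face \((1,2,3)\) occurs whiskered by the edge \(01\), so unique solvability is equivalent to whiskering by that \(1\)\nbdash{}morphism being a bijection on \(2\)\nbdash{}cells; this is exactly where invertibility of \(1\)\nbdash{}morphisms (the \(2\)\nbdash{}groupoid hypothesis) enters, and it fails in a general \((2,1)\)\nbdash{}category. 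Mere existence of such fillers is Joyal's special-outer-horn theorem, which the paper records as Theorem~\ref{chap4:thm:joyal}; uniqueness needs the same invertibility argument, and the outer cases of \(\Kan!(4,k)\) need it too. Second, what you yourself call the ``main obstacle'' --- the translation between \(\Kan!(4,k)\) and the pentagon/triangle axioms, in both directions, and the verification that your dimension-\(\le 3\) identification extends to an isomorphism of simplicial sets --- is not a final bookkeeping step but the bulk of Duskin's proof; as written, your text identifies it correctly but does not carry it out. So the proposal is a correct plan with one fixable misattributed lemma, rather than a complete proof.
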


The reconstruction procedure of a 2-category from a simplicial set is non-canonical: the composition of 1-simplices is only defined up to unique 2-simplices. Theorem~\ref{chap5:thm:2-groupoid-cat=simp} implies that we obtain a well-defined composition if we use HS bibundles as morphisms between groupoids.

\section{Actions of Lie 2-groupoids via categorification}

We categorify the notion of action in this section. Our definition is modeled on the notion of action of a monoidal category on a category; see, for instance, \cite{Benabou67}.

\begin{definition}
  \label{chap5:def:2gpd-action}
  A (right) \emph{categorified action} of a categorified groupoid \(G \rightrightarrows \trivial{M}\) on a groupoid \(E\) consists of the following data:
  \begin{enumerate}
  \item\label{chap5:item:action_1}
  a \emph{moment morphism}, which is an HS morphism~\(J\) from~\(E\) to~\(\trivial{M}\) or, equivalently, a map \(J_0\colon E_0\to \trivial{M}\) in~\(\Cat\) with \(J_0\circ \source_E = J_0\circ \target_E\);
  \item\label{chap5:item:action_2}
  an \emph{action morphism}, which is an HS morphism~\(\action\) from~\(E\times_{J,\trivial{M},\target}G\) to~\(E\),
  such that \(J\circ \action = \source \circ  \pr_2\colon E \times_{J, \trivial{M},\target} G \to  \trivial{M}\);
  \item\label{chap5:item:action_3} a 2\nbdash{}morphism between HS morphisms \(E\times_{J,\trivial{M},\target} G\times_{\source,\trivial{M},\target}G\to E \),
    \begin{equation}
      \label{chap5:eq:associator}
      \alpha\colon \action\circ(\action\times \id) \Rightarrow \action\circ(\id\times \mult),
    \end{equation}
    called \emph{associator}, where~\(\mult\) is the multiplication in the 2\nbdash{}groupoid \(G\rightrightarrows \trivial{M}\);
  \item\label{chap5:item:action_4}
  a 2\nbdash{}morphism \(\rho\colon \action\circ (\id,\unit\circ J) \Rightarrow \id_E\), called \emph{unitor}.
  \end{enumerate}
  The associator satisfies the \emph{pentagon condition}, described as follows. Let the 2\nbdash{}morphisms on the each face of the cube be \(\alpha_i\) (all the \(\alpha_i\)'s are generated by \(\alpha\) or by \(\alpha_G\), the associator of \(G\rightrightarrows \trivial{M}\), except that \(\alpha_4\) is \(\id\)) arranged in the following way: front face~\(\alpha_1\),  back~\(\alpha_5\); up~\(\alpha_4\), down~\(\alpha_2\); left~\(\alpha_6\), right~\(\alpha_3\); the following cube
  \begin{equation} \label{chap5:eq:pentagon}
  \begin{gathered}
  \xymatrix@=5pt{
     & & E\times_\trivial{M}G\times_\trivial{M}G \ar[dr]^-{\action\times \id} \ar[ddd]^{\rotatebox{-90}{\(\scriptstyle \id\times \mult\)}}& \\
  E \times_\trivial{M}G\times_\trivial{M}G\times_{\trivial{M}}G \ar[urr]^{\id\times \id\times \mult} \ar[dr]_{\action\times \id\times \id} \ar[ddd]_{\rotatebox{-90}{\(\scriptstyle \id\times \mult\times \id\)}} & & & E \times_{\trivial{M}}G \ar[ddd]^{\action} \\
     & E\times_\trivial{M}G\times_\trivial{M}G\ar[urr]^(0.4){\id\times \mult} \ar[ddd]^{\rotatebox{-90}{\(\scriptstyle \action\times \id\)}} & & & \\
     & & E \times_\trivial{M}G \ar[dr]^{\action} & \\
  E\times_\trivial{M}G\times_\trivial{M}G \ar[urr]^(0.4){\id\times \mult} \ar[dr]_{\action\times \id} & & & E \\
     & E\times_\trivial{M}G \ar[urr]^{\action} & &
  }
   \end{gathered}
  \end{equation}
   is 2\nbdash{}commutative, that is,
  \[
 (\alpha_6\times \id )\circ(\id\times \alpha_2)\circ(\alpha_1\times \id)=
 (\id\times \alpha_5) \circ (\alpha_4 \times \id) \circ (\id \times \alpha_3).
  \]

  The unitor satisfies the \emph{triangle condition}: the diagram
  \begin{equation}\label{chap5:eq:x1g}
    \begin{gathered}
    \xymatrix{
       && E\times_\trivial{M} G \ar[rrd]^{\action}&&\\
     E\times_\trivial{M} G\times_\trivial{M} G \ar[rru]^{\action\times \id}\ar[rrd]_{\id\times \mult}&&
     E\times_\trivial{M} G \ar[ll]^-{(\id, \unit\circ J)\times \id)}\ar[u]^{\id}\ar[d]_{\id}  &&E \\
       && E\times_\trivial{M} G\ar[rru]_{\action}&&
     }
    \end{gathered}
  \end{equation}
  is 2\nbdash{}commutative, where the 2\nbdash{}morphism on the whole square is induced by \(\alpha\), on the upper triangle by \(\rho\), on the lower triangle by \(\lambda_G\), the left unitor 2\nbdash{}morphism of~\(G\rightrightarrows \trivial{M}\), and on the right square by the identity.
\end{definition}

\begin{remark}\label{chap5:rem:the-other-axiom}
  It is natural to ask another diagram similar to~\eqref{chap5:eq:x1g} to 2\nbdash{}commute:
  \begin{equation}\label{chap5:eq:xg1}
    \begin{gathered}
    \xymatrix{
       && E\times_\trivial{M} G \ar[rrd]^{\action}&&\\
     E\times_\trivial{M} G\times_\trivial{M} G \ar[rru]^{\action\times \id}\ar[rrd]_{\id\times \mult}&&
     E\times_\trivial{M} G \ar[ll]^-{\id\times (\id, \unit\circ \source)}\ar[rr]^{\action}\ar[d]_{\id}  &&E \\
       && E\times_\trivial{M} G\ar[rru]_{\action}&&
     }
    \end{gathered}
  \end{equation}
  where the 2\nbdash{}morphism on the whole square is induced by \(\alpha\), on the upper square by \(\rho\), on the lower left triangle by \(\rho_G\), the right unitor of~\(G\rightrightarrows \trivial{M}\), and on the lower right triangle by the identity.  This is assumed in~\cite[Definition 6.1]{Breen:Bitorseurs} and~\cite[Definition 13.1]{Bakovic:Bigroupoid_torsors}. This may, however, be deduced from the other axioms with an argument similar to~\cite{Kelly:MacLane_conditions}.
\end{remark}

The next two sections are devoted to proving the following main result of this chapter.

\begin{theorem}\label{chap5:thm:2gpd-action-Kan}
  Given a 2\nbdash{}groupoid object \(X\) in \((\Cat, \covers)\), there is a one-to-one correspondence between categorified actions of the 2\nbdash{}groupoid \(X\) and 2\nbdash{}groupoid Kan fibrations \(A\to X\). The 2\nbdash{}groupoid \(A\) is called the \emph{action 2\nbdash{}groupoid} of the corresponding action.
\end{theorem}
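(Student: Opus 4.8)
The plan is to establish the bijection through two mutually inverse constructions, following closely the pattern of Theorem~\ref{chap5:thm:2-groupoid-cat=simp}; the whole statement is an equivariant refinement of that theorem, so the action $2$\nbdash{}groupoid $A$ should be a ``semidirect product'' of the $2$\nbdash{}groupoid $X$ with the action data, and conversely the action data should be read off from the fibre of $\pi$. The motivating special case is Lemma~\ref{chap3:lem:1-action-kan}, where a groupoid action is exactly a Lie $1$\nbdash{}groupoid Kan fibration; here I would replace the $1$\nbdash{}groupoid machinery by its $2$\nbdash{}groupoid counterpart and the strict action equations by coherence $2$\nbdash{}morphisms.

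\emph{From a categorified action to a Kan fibration.} Starting from a categorified action of $X\cong(G\rightrightarrows\trivial{M})$ on a groupoid $E$, with moment morphism $J$, action morphism $\action$, associator $\alpha$ and unitor $\rho$, I would first write down the truncation $A_{\le 3}$ together with its projection to $X_{\le 3}$. Concretely, $A_0=E_0$ with $\pi_0=J_0\colon E_0\to M=X_0$; the object $A_1$ assembles the arrows of $E$ with the $X_1=G_0$\nbdash{}direction along $J$; the object $A_2$ is built from the action bibundle $E_\action$ so that $A_2\to X_2=E_\mult$ records ``acting and then multiplying''; and $A_3\coloneqq\Hom(\Horn{3}{0}\to\Simp{3}, A_{\le 2}\to X_{\le 2})$, where the existence of the associator $\alpha$ guarantees that each such horn has a unique filler. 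One then sets $A=\cosk_3(A_{\le 3})$, exactly as $X$ was produced in Theorem~\ref{chap5:thm:2-groupoid-cat=simp}. Representability of every presheaf $\Hom(S\to T, A\to X)$ that occurs follows from Lemmas~\ref{chap3:lem:covers_in_groupoid} and~\ref{chap3:lem:representable}, the relevant maps being covers because $\action$ is principal and $X$ is a $2$\nbdash{}groupoid.

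I would then verify that $\pi\colon A\to X$ is a $2$\nbdash{}groupoid Kan fibration. By the remark following Definition~\ref{chap3:def:kan-fibration-n-groupoid}, together with Proposition~\ref{chap3:prop:comp}, it suffices to exhibit $\pi$ as a morphism of simplicial objects that is a Kan fibration satisfying $\Kan!(2,k)$ for $0\le k\le 2$: the domain $A$ is then automatically a $2$\nbdash{}groupoid, whence $\pi$ is a genuine action in the sense of Definition~\ref{chap3:def:higher-groupoid-action}. Unwinding the conditions, $\Kan(1,0)$ and $\Kan(1,1)$ record that $J$ is a well-defined moment morphism and that $\action$ yields translates; $\Kan!(2,k)$ recovers $\action$ as the canonical HS bibundle filling (the equivariant analogue of recovering composition), and $\Kan!(3,k)$ — which by Lemma~\ref{chap3:lem:morphism-kan(n+1,j)-implies-all} need be checked for only one $k$ — packages the associator $\alpha$. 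The coherence carried by $\cosk_3$ at the next level, i.e.\ $\Kan!(4,k)$, is exactly the pentagon~\eqref{chap5:eq:pentagon} together with the triangle~\eqref{chap5:eq:x1g}.

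\emph{From a Kan fibration to a categorified action, and mutual inverseness.} Given a $2$\nbdash{}groupoid Kan fibration $\pi\colon A\to X$, I would set $E\coloneqq\Fib{\pi}$, a $1$\nbdash{}groupoid by Lemma~\ref{chap3:lem:fibre_exists}, with $J_0\coloneqq\pi_0\colon E_0=A_0\to M$. The action bibundle $\action$ is extracted from the unique fillers $\Kan!(2,k)$ of $\pi$ just as $E_\mult=X_2$ was recovered in Theorem~\ref{chap5:thm:2-groupoid-cat=simp}, giving an HS morphism $E\times_{\trivial{M}}G\to E$ with $G=\arrow{X}$ the groupoid of arrows of $X$; the associator $\alpha$ comes from $\Kan!(3,k)$ and the unitor $\rho$ from the degeneracies, while the pentagon and triangle follow from relative $3$\nbdash{}coskeletality. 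Matching $A_{\le 3}$ with the coskeletal data generated by $(E,J,\action,\alpha,\rho)$ shows the two constructions are mutually inverse. The main obstacle will be the precise dictionary identifying each coherence $2$\nbdash{}morphism with a simplicial filler — in particular proving that the pentagon--triangle pair is equivalent to the consistency of $\cosk_3$; this is where the HS\nbdash{}bibundle (rather than functorial) nature of $\action$ and $\mult$ enters, and it is the equivariant counterpart of the delicate step in Zhu's proof of Theorem~\ref{chap5:thm:2-groupoid-cat=simp}. A secondary, purely technical, point is the systematic representability of the presheaves $\Hom(S\to T,A\to X)$, which I would handle by induction on dimension so that it reduces uniformly to Lemma~\ref{chap3:lem:representable}.
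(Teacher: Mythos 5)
Your overall architecture does match the paper's: both directions are run exactly as in Theorem~\ref{chap5:thm:2-groupoid-cat=simp}, with \(A\) assembled from a truncation \(A_{\le 3}\) (where \(A_0=E_0\) and \(A_1=E_\action\)) and completed coskeletally, and with the categorified data read back from \(\Fib{\pi}\) and the unique fillers; the paper packages the truncated description as Proposition~\ref{chap5:prop:kan-fibration-finite-data} and feeds both constructions through it. The genuine gap is that your dictionary between coherence data and Kan conditions is shifted up by one dimension, and the plan as written breaks at dimension \(3\). Because a \(2\)-groupoid Kan fibration satisfies the \emph{relative} conditions \(\Kan!(2,k)\), a relative \(2\)-simplex of \(A\) is nothing more than a relative \(2\)-horn together with its filler in \(X\). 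Consequently the associator \(\alpha\) is needed already at dimension \(2\): only \(\Kan!(2,1)\) is tautological, and identifying the three spaces \(\Hom(\horn{2}{k},\pi)\), \(k=0,1,2\), with one another is exactly what \(\alpha\) provides (Lemmas~\ref{chap5:lem:kan2}, \ref{chap5:lem:iso-of-bibundles}, and~\ref{chap5:lem:pull-back}). Likewise the pentagon~\eqref{chap5:eq:pentagon} is needed already at dimension \(3\), not at \(\Kan!(4,k)\): in a relative horn \(\Horn{3}{0}\) every edge and the \(X\)-part of the missing face are forced, so the horn closes up — equivalently, the face maps on your \(A_3\) satisfy the simplicial identity \(\face_1\face_0=\face_0\face_2\), and \(\Hom(\horn{3}{j},\pi)\) is independent of \(j\) — precisely when the two ways of composing three edges agree. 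That is coherence condition~\ref{chap5:item:coherence-3-mult-ass} of Proposition~\ref{chap5:prop:kan-fibration-finite-data}, which the paper deduces from the pentagon, while the triangle~\eqref{chap5:eq:x1g} is what makes your degeneracies well defined (coherence condition~\ref{chap5:item:coherence-3-mult-de}). Once dimensions \(\le 3\) are in place, everything above is automatic by \(3\)-coskeletality, so there is nothing left for \(\Kan!(4,k)\) to encode. Your claim that ``the existence of the associator guarantees that each \(\Horn{3}{0}\)-horn has a unique filler'' is the Theorem~\ref{chap5:thm:2-groupoid-cat=simp} pattern, valid there only because \(\Kan!(2,k)\) \emph{fails} for a \(2\)-groupoid; in the relative setting it is false, and building \(A_3\) from the associator alone while deferring the pentagon and triangle to dimension \(4\) would leave you unable to verify the simplicial identities.

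A second concrete gap: \(\Kan(1,1)\) for the constructed \(\pi\) is not bookkeeping. \(\Kan(1,0)\) is indeed just the statement that the left moment map of \(E_\action\) is a cover, but \(\Kan(1,1)\) asserts that \((J_r,\pr_2\circ J_l)\colon E_\action\to E_0\times_{X_0,\source_0}G_0\) is a cover, and the paper has to prove (Lemma~\ref{chap5:lem:left-E-is-principal}) that the left \(E\)-action on \(E_\action\) is principal over this base, using the horizontal inversion of the \(2\)-groupoid, i.e.\ that the shear morphism is a Morita equivalence. Relatedly, in your converse direction, seeing that the \(\alpha\) extracted from \(3\)-simplices is a genuine isomorphism of HS bibundles requires the descent lemmas (Lemmas~\ref{chap1:lem:invariant-cover-descent} and~\ref{chap1:lem:action-over-principal-is-principal}); you correctly flagged this neighbourhood as delicate, but the inversion input at dimension \(1\) is a separate ingredient and easy to miss.
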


\begin{remark}
  A categorified action of a 2\nbdash{}groupoid \(G\rightrightarrows\trivial{M}\) can be viewed as a categorified bibundle between the trivial 2-groupoid and \(G\rightrightarrows\trivial{M}\), which will be studied in the next chapter. For a categorified bibundle we will construct an augmented bisimplicial object, which contains our action 2\nbdash{}groupoid \(A\) given below as a row.
\end{remark}

\begin{remark}
  For a 2-category \(\Cat[C]\), we can define actions on categories as in Definition~\ref{chap5:def:2gpd-action}. Similar to Remark~\ref{chap4:rem:action=functor}, such an action can be equivalently given by a 2-functor \(\Cat[C]\to \Cats\); see~\cite{Garner-Shulman}. The Grothendieck construction~\cite{Street} produces a 2\nbdash{}category \(\Cat[A]\) with a 2-functor into~\(\Cat[C]\). Our construction of action 2\nbdash{}groupoids can be regarded as a geometrical Grothendieck construction; a similar construction appeared in~\cite{Bakovic:Bigroupoid_torsors}. Theorem~\ref{chap5:thm:2gpd-action-Kan} can be viewed as a higher analogue of the well-known equivalence between fibrations over \(\Cat[C]\) and 2\nbdash{}functors \(\Cat[C]\to \Cats\) for a category \(\Cat[C]\). For an \(\infty\)-categorical generalisation along these lines, see~\cite[Chapter 2]{Lurie}.
\end{remark}

\section{From the simplicial picture to categorification} \label{chap5:sec:simp-cat}

Let \(\pi\colon A\to X\) be a Kan fibration between 2\nbdash{}groupoids in \((\Cat, \covers)\) that satisfies \(\Kan(1, 0)\), \(\Kan(1,1)\) and \(\Kan!(m, j)\) for \(m\ge 2\) and \(0\le j \le m\). Let~\(\arrow{X}\) denote the groupoid of bigons \(\bigon{X_2} \rightrightarrows X_1\). Let~\(E=\Fib{\pi}\), which is a 1-groupoid object in \((\Cat,\covers)\) by Lemma~\ref{chap3:lem:fibre_exists}. The map \(\pi_0\colon E_0=A_0\to X_0\) gives a groupoid functor \(J\colon E\to \trivial{X_0}\). We shall show that the 2\nbdash{}groupoid \(\arrow{X}\rightrightarrows \trivial{X_0}\) acts on~\(E\) along \(J\) as in Definition~\ref{chap5:def:2gpd-action}.

\subsection{The action morphism}

\begin{lemma}\label{chap5:lem:a1-bibundle}
  The space \(A_1\) is an HS bibundle \(E\times_{J,\trivial{X_0},\target} \arrow{X} \to E\), denoted by \(\action\).
\end{lemma}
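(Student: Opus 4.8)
The plan is to read off the entire HS-bibundle structure on $A_1$ from the simplicial structure of $\pi$ together with its relative unique Kan conditions, arguing with generalised ($T$-)elements as licensed by the framework of Section~\ref{chap3:sec:Hom} and reinterpreting each construction as a morphism in $\Cat$. First I would fix the two moment maps: the right moment map $J_r=\face_1\colon A_1\to A_0=E_0$ to $E=\Fib{\pi}$, and the left moment map $J_l=(\face_0,\pi_1)\colon A_1\to A_0\times_{X_0}X_1$ to the object space of the strong pullback $E\times_{J,\trivial{X_0},\target}\arrow{X}$; the codomain is correct because $\pi_0\circ\face_0=\face_0\circ\pi_1$ by naturality of $\pi$ and $\arrow{X}_0=X_1$.

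Next I would build the right $E$-action. An arrow of $E$ is an element $a'\in E_1=A_1\times_{X_1}X_0$, i.e.\ a $1$-simplex of $A$ lying over a degenerate edge of $X$. For composable $a\in A_1$ and $a'\in E_1$ I fill the inner horn with faces $(\face_0,\face_2)=(a,a')$ by the relative condition $\Kan!(2,1)$ of $\pi$, whose required filler in $X$ is the canonical degeneracy $\de_0(\pi_1 a)$, and I set $a\cdot a'\coloneqq\face_1$ of the resulting $2$-simplex. The simplicial identities, together with the fact that a $2$-simplex of the $2$-groupoid $X$ with two coinciding edges has its third edge degenerate, give $\pi_1(a\cdot a')=\pi_1 a$ and $\face_0(a\cdot a')=\face_0 a$, so the action respects $J_l$. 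Unitality and associativity then follow from relative $\Kan!(2,\cdot)$ and $\Kan!(3,\cdot)$ of $\pi$. The crucial \emph{principality} of this right action --- which is what makes $A_1$ an \emph{HS} bibundle --- reduces cleanly to the hypotheses: $J_l$ is a cover because $\Hom(\Horn{1}{0}\to\Simp{1},A\to X)=A_0\times_{X_0}X_1$, so this is exactly $\Kan(1,0)$ of $\pi$; and the shear map
\[
A_1\times_{\face_1,E_0,\target}E_1\to A_1\times_{A_0\times_{X_0}X_1}A_1,\qquad (a,a')\mapsto(a,a\cdot a'),
\]
is an isomorphism, its inverse sending $(a,\tilde a)$ with $\face_0 a=\face_0\tilde a$ and $\pi_1 a=\pi_1\tilde a$ to $(a,\face_2\sigma)$, where $\sigma$ is the unique filler of the horn $(\face_0,\face_1)=(a,\tilde a)$ given by $\Kan!(2,2)$ of $\pi$ over the degeneracy $\de_0(\pi_1 a)$; its last face lies over a degenerate edge, hence in $E_1$.

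Then I would construct the left action of $E\times_{J,\trivial{X_0},\target}\arrow{X}$. On the $E$-factor it is the analogue of the right translation above; on the bigon factor $\arrow{X}=(\bigon{X_2}\rightrightarrows X_1)$ a bigon $b$ acts on $a\in A_1$ by lifting $b$ through the Kan fibration $\pi$ to a $2$-simplex of $A$ with $a$ as a prescribed face and reading off the opposite face, exactly the three-sided action of $\arrow{X}$ on $X_2$ recalled in Section~\ref{chap5:sec:2-groupoids} transported along $\pi$. Finally I would verify that the left and right actions commute and satisfy the remaining bibundle compatibilities by filling $3$-simplices and invoking the unique relative conditions $\Kan!(3,\cdot)$ of $\pi$ together with the $2$-groupoid identities of $X$.

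The inner two-dimensional fillings for the right $E$-action are transparent, and principality --- the genuinely \emph{HS} part --- is the easy direction, reducing to $\Kan(1,0)$ and $\Kan!(2,\cdot)$. The main obstacle I expect is the left $\arrow{X}$-action and its coherence: its arrows are bigons, i.e.\ $2$-simplices of $X$, so acting by them requires lifting $2$-simplices through $\pi$ and composing coherently with a $1$-simplex $a$, and proving well-definedness, associativity, and commutation with the right $E$-action forces the careful three-dimensional bookkeeping of which horn is filled, where the unique relative conditions in dimension $3$ do the real work.
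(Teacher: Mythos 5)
Your proposal is correct and follows essentially the same route as the paper's proof: all three actions (right \(E\), left \(E\), left \(\arrow{X}\)) are produced by filling horns through the relative unique Kan conditions \(\Kan!(2,\cdot)\), unitality, associativity and commutation of the actions come from \(\Kan!(3,\cdot)\), the left moment map is a cover by \(\Kan(1,\cdot)\), and principality of the right \(E\)-action is an outer two-dimensional unique Kan condition — your explicit \(\Kan!(2,2)\) inversion of the shear map is the paper's appeal to \(\Kan!(2,0)\), the index difference being only a face-labelling convention. One harmless blemish: your parenthetical claim that a \(2\)-simplex of \(X\) with two coinciding edges has degenerate third edge is false in general (nontrivial bigons give counterexamples), but it is also not needed, since the filler in \(X\) is prescribed to be \(\de_0(\pi_1 a)\), so the identities \(\pi_1(a\cdot a')=\pi_1 a\) and \(\face_0(a\cdot a')=\face_0 a\) follow from the simplicial identities alone.
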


\begin{proof}
  There is a left action of the groupoid \(E\) on \(A_1\) along the map \(\face_1\colon A_1\to E_0=A_0\) given by the composed map
  \[
  E_1\times_{\source_E, A_0, \face_1} A_1 \to \Hom(\Horn{2}{1}\to\Simp{2},A\to X)
   \cong A_2 \xrightarrow{\face_1} A_1,
  \]
  where the map \(E_1\times_{\source_E,A_0,\face_1} A_1 \to \Hom(\Horn{2}{1}\to\Simp{2},A\to X)\) is given by \((e,a)\mapsto (e,a,\de_1(\pi_1(a)))\). The Kan conditions for \(\pi\colon A\to X\) imply that this defines an action: \(\Kan!(2,1)\) implies the unit identity and \(\Kan!(3,1)\) implies the associativity. Similarly, there are a left action of \(\arrow{X}\) on \(A_1\) along the map \(\pi\colon A_1\to \arrow{X}_0=X_1\) and a right action of \(E\) on \(A_1\) along the map \(\face_0\colon A_1 \to A_0\).
  The Kan conditions imply that these three actions mutually commute.  Since \(\face_0\circ\pi_1=\pi_0\circ\face_0\colon A_1\to X_0\), the groupoid \(E\times_{\trivial{X_0}} \arrow{X}\) acts on~\(A_1\). Thus, \(A_1\) is an \(E\times_{\trivial{X_0}} \arrow{X}\)-\(E\) bibundle, illustrated by Figure~\ref{chap5:fig:bibundle_A1}, where white dots are imaginary as in Remark~\ref{chap1:rem:gpd-action-imaginary-picture}.
  \begin{figure}[htbp]
    \centering
    \begin{tikzpicture}
      [>=latex', mydot/.style={draw,circle,inner sep=1.5pt}, every label/.style={scale=0.6},scale=0.75]
      \node[scale=0.6] at (0,0) (a) {\(A_1\)};
      \node[mydot,label=210:\(0\)]              at (-0.866,-0.5)  (a0) {};
      \node[mydot,fill=black,label=90:\(1\)]    at (0,1)          (a1) {};
      \node[mydot,fill=black,label=-30:\(2\)]   at (0.866,-0.5)   (a2) {};
      \path[<-]
      (a0) edge (a1)
           edge (a2)
      (a1) edge (a2);

      \begin{scope}[xshift=-3cm,<-]
        \node[mydot,label=210:\(0\)]              at (-0.866,-0.5)  (b0) {};
        \node[mydot,fill=black,label=90:\(1\)]    at (0,1)          (b1) {};
        \node[mydot,fill=black,label=-30:\(2\)]   at (0.866,-0.5)   (b2) {};
        \path
        (b0) edge[bend right=15] (b1)
             edge[bend left=15] node[scale=0.6,above left]{\(E\)}  (b1)
        (b1) edge[bend right=15] (b2)
        edge[bend left=15] node[scale=0.6,above right]{\(\Theta(X)\)}(b2);
      \end{scope}
      \begin{scope}[xshift=+3cm, <-]
        \node[mydot,label=210:\(0\)]              at (-0.866,-0.5)  (c0) {};
        \node[mydot,fill=black,label=-30:\(2\)]   at (0.866,-0.5)   (c2) {};
        \path
        (c0) edge[bend right=15] node[scale=0.6,below]{\(E\)} (c2)
             edge[bend left=15] (c2);
      \end{scope}
    \end{tikzpicture}
    \caption{The bibundle structure on~\(A_1\)}
    \label{chap5:fig:bibundle_A1}
  \end{figure}
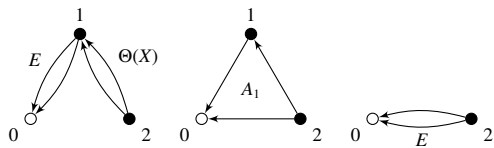

  Moreover, \(\Kan(1,0)\) for \(\pi\) implies that the left moment map \(A_1 \to E_0 \times_{X_0} \arrow{X}_0\) is a cover in \((\Cat,\covers)\), and \(\Kan!(2,0)\) for \(\pi\) implies that the right action of \(E\) is principal. Therefore, \(A_1\) gives an HS morphism \(\action\colon E\times_{J,\trivial{X_0},\target} \arrow{X} \to E\).
\end{proof}

Since \(\face_1\pi_1=\pi_0\face_1\), Lemma~\ref{chap5:lem:bundles-triangle-to-M} below implies that \(J\circ \action=\source\circ\pr_2\).

\begin{lemma}\label{chap5:lem:bundles-triangle-to-M}
Let \(P\) be an HS bibundle between groupoids \(K\) and \(H\). Suppose that \(M\) is an object in \(\Cat\). Let \(\Bund(\varphi)\) and \(\Bund(\psi)\) be the bundlisation of groupoid functors \(\varphi\colon K\to\trivial{M}\) and \(\psi\colon H\to \trivial{M}\). The composite HS bibundle \(P\otimes\Bund(\psi)\) is isomorphic to \(\Bund(\varphi)\) if and only if the composites \(P\to K_0\to M\) and \(P\to H_0\to M\) are equal.
\end{lemma}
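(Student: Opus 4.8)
The plan is to unwind both sides into their underlying objects-with-moment-maps and then invoke the classification of HS bibundles into a trivial groupoid. I first simplify the right-hand factor: since $\psi\colon H\to\trivial{M}$ lands in a trivial groupoid, its bundlisation $\Bund(\psi)=H_0\times_{\psi_0,M,\target}\trivial{M}_1$ collapses, because $\trivial{M}_1=M$ with identity source and target, so $\Bund(\psi)\cong H_0$. Under this identification it carries the standard left $H$\nbdash{}action, the trivial right $\trivial{M}$\nbdash{}action, left moment map $\pr_1\cong\id_{H_0}$, and moment map $\psi_0$ to $M$.

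Next I would compute the composite. Feeding this description into the definition of $\otimes_H$, the fibre product $P\times_{H_0}\Bund(\psi)$ is isomorphic to $P$ (the $\Bund(\psi)$\nbdash{}coordinate is pinned down by $J_r$), and under this identification the diagonal $H$\nbdash{}action $(p,q)\cdot h=(p\cdot h,h^{-1}\cdot q)$ becomes the right principal $H$\nbdash{}action on $P$. Hence $P\otimes_H\Bund(\psi)\cong P/H$, which is an object of $\Cat$ since the right $H$\nbdash{}action on $P$ is principal over $K_0$; in fact the descended left moment map $\bar J_l\colon P/H\to K_0$ is an isomorphism. Its left $K$\nbdash{}moment map is $\bar J_l$, and its moment map to $M$ is the descent $\overline{\psi_0\circ J_r}$ of $\psi_0\circ J_r$, both maps being $H$\nbdash{}invariant and hence factoring through the cover $P\to P/H$.

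Then comes the classification step. By the Corollary that every HS bibundle into a trivial groupoid is a bundlisation, $P\otimes_H\Bund(\psi)\cong\Bund(f)$ for a unique functor $f\colon K\to\trivial{M}$, and comparing moment maps through this isomorphism gives $\overline{\psi_0\circ J_r}=f_0\circ\bar J_l$. By Proposition~\ref{chap1:prop:2-morphism-of-bundles} together with the fact that morphisms of right principal bibundles are isomorphisms, $\Bund(f)\cong\Bund(\varphi)$ holds if and only if there is a natural transformation $f\Rightarrow\varphi$; but for two functors into $\trivial{M}$ such a transformation exists precisely when their object maps agree (its components are forced to be identity arrows), so $\Bund(f)\cong\Bund(\varphi)$ iff $f_0=\varphi_0$. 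Finally, using that $\bar J_l$ is an isomorphism and that $P\to P/H$ is a cover, hence an epimorphism, the equality $f_0=\varphi_0$ is equivalent to $\overline{\psi_0\circ J_r}=\varphi_0\circ\bar J_l=\overline{\varphi_0\circ J_l}$ on $P/H$, which is in turn equivalent to $\psi_0\circ J_r=\varphi_0\circ J_l$ on $P$. This is exactly the asserted condition, and both implications are obtained at once.

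The delicate point, and the step I expect to require the most care, is the bookkeeping in the composition: identifying $P\otimes_H\Bund(\psi)$ with $P/H$ and correctly tracking which of the moment maps on $P$ descends to the $M$\nbdash{}valued moment map of the composite, as opposed to the $K_0$\nbdash{}valued one. Once the composite is pinned down as a bundlisation with moment map $\overline{\psi_0\circ J_r}$ and with $K_0$\nbdash{}moment map the canonical isomorphism $\bar J_l$, the stated equivalence is a formal consequence of the bundlisation dictionary in Chapter~\ref{chap1}.
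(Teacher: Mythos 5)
Your proof is correct, but it takes a genuinely different route from the paper's. The paper argues through the equivalence \(\Bun\cong\Gen\): it forms the biaction groupoid \(K\ltimes P\rtimes H\), translates \(P\otimes\Bund(\psi)\cong\Bund(\varphi)\) into the equation \(\psi\circ\pr_H\circ\pr_K^{-1}=\varphi\) of generalised morphisms, and then uses that equivalences of generalised morphisms into the trivial groupoid \(\trivial{M}\) collapse to strict equality \(\psi\circ\pr_H=\varphi\circ\pr_K\) in \(\Gpd\), which on objects is exactly \(\psi_0\circ J_r=\varphi_0\circ J_l\). You instead stay entirely inside \(\Bun\) and compute: \(\Bund(\psi)\cong H_0\), so \(P\times_{H_0}\Bund(\psi)\cong P\) with the diagonal action identified with the principal right \(H\)\nbdash{}action, whence \(P\otimes_H\Bund(\psi)\cong P/H\cong K_0\) via \(\bar J_l\); then the Corollary on bibundles into \(\trivial{M}\) and Proposition~\ref{chap1:prop:2-morphism-of-bundles} (plus the fact that morphisms of right principal bibundles are isomorphisms) reduce everything to the equality \(f_0=\varphi_0\) of object maps, which pulls back along the cover \(P\to P/H\) to the stated condition. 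Your bookkeeping of which moment map descends to the \(M\)\nbdash{}valued one versus the \(K_0\)\nbdash{}valued one is right, and the invariance of \(\psi_0\circ J_r\) does hold because \(\psi\) is a functor into a trivial groupoid. What your approach buys is explicitness and independence from the \(\Bun\cong\Gen\) machinery (which the paper imports from the literature and whose final reduction it dismisses as ``obvious''); what the paper's approach buys is brevity, since the composition of bibundles never has to be unwound.
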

\begin{proof}
  Consider the diagram of groupoids
  \[
  \xymatrix{
  K\ltimes P\rtimes H\ar[r]^-{\pr_H}\ar[d]_{\pr_K}& H\ar[d]^{\psi}\\
  K\ar[r]^{\varphi}& \trivial{M}\rlap{\ .}
  }
  \]
  Since the categories \(\Bun\) and \(\Gen\) are isomorphic and \(\trivial{M}\) is a trivial groupoid, it suffices to show that \(\psi\circ \pr_H\circ \pr_K^{-1}=\varphi\) in \(\Gen\) if and only if the composites \(P\to K_0\to M\) and \(P\to H_0\to M\) are equal. We know that \(\psi\circ \pr_H\circ \pr_K^{-1}=\varphi\) in \(\Gen\) if and only if \(\psi\circ \pr_H=\varphi\circ \pr_K\) in \(\Gpd\), and the rest is obvious.
\end{proof}

To show that \(\action\colon E\times_{J,\trivial{X_0},\target} \arrow{X} \to E\) gives a categorified action of \(X\) on \(E\) along \(J\), we must construct the data for~\ref{chap5:item:action_3} and~\ref{chap5:item:action_4} in Definition~\ref{chap5:def:2gpd-action}.

\subsection{The associator}

We now construct the associator for the action morphism \(\action\).

The HS morphism \(\action\circ (\action \times \id)\colon E\times_{\trivial{X_0}} \arrow{X} \times_{\trivial{X_0}} \arrow{X}\to E\) is given by the HS bibundle
  \[
  ( (A_1 \times_{X_0} \arrow{X}_1)\times_{E_0\times_{X_0} \arrow{X}_0} A_1 )/( E \times_{\trivial{X_0}} \arrow{X}).
  \]
Since \(\arrow{X}\) acts on \(\arrow{X}_1\) by multiplication, we have
  \[
  ( (A_1 \times_{X_0} \arrow{X}_1)\times_{E_0\times_{X_0} \arrow{X}_0} A_1
  )/( E \times_{\trivial{X_0}} \arrow{X} )\cong ( A_1 \times_{E_0} A_1 )/ E.
  \]
Similarly, the HS morphism \(\action\circ(\id \times \mult)\) is given by the HS bibundle
  \[
  ( A_1 \times_{\arrow{X}_0} X_2 ) / \arrow{X},
  \]
where \(\mult\) is the multiplication of the 2\nbdash{}groupoid \(\arrow{X} \rightrightarrows \trivial{X_0}\).

In particular, the \(E\)-action on \(A_1 \times_{E_0} A_1\) and the \(\arrow{X}\) action on \(A_1 \times_{\arrow{X}_0} X_2 \) are principal. We now construct a one-to-one correspondence, illustrated by Figure~\ref{chap5:fig:associator},
  \begin{gather*}
  \alpha\colon (A_1 \times_{E_0} A_1 )/ E \to ( A_1 \times_{\arrow{X}_0}  X_2 )/ \arrow{X}, \\
   [(a,b)] \mapsto [(c,d)]\quad\text{if } (d,b,c,a) \in  A_2.
  \end{gather*}
  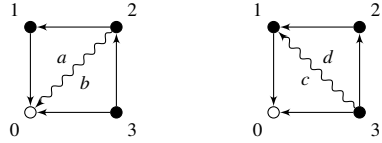
\begin{figure}[htbp]
    \centering
    \begin{tikzpicture}
      [>=latex',
      scale=0.8,
      every label/.style={scale=0.6}
      ]
      \node[mydot,label=225:\(0\)] at (225:1cm)(a0){};
      \foreach \i in {1,2,3}{
        \node[mydot,fill=black,label=225-\i*90:\(\i\)]  at (225-\i*90:1cm) (a\i){};
      }
      \path[<-]
      (a0)  edge        (a1)
      edge        (a3)
      edge[snake back]
      node[scale=0.6,above left]  {\(a\)}
      node[scale=0.6,below right] {\(b\)} (a2)
      (a1)  edge        (a2)
      (a2)  edge        (a3);
      \begin{scope}[xshift=+4cm]
        \node[mydot,label=225:\(0\)] at (225:1cm)(b0){};
        \foreach \i in {1,2,3}{
          \node[mydot,fill=black,label=225-\i*90:\(\i\)]  at (225-\i*90:1cm) (b\i){};
        }
        \path[<-]
        (b0)  edge        (b1)
        edge        (b3)
        (b1)  edge        (b2)
        edge[snake back]
        node[scale=0.6,above right] {\(d\)}
        node[scale=0.6,below left]  {\(c\)} (b3)
        (b2)  edge        (b3);
      \end{scope}
    \end{tikzpicture}
    \caption{The construction of the associator}\label{chap5:fig:associator}
  \end{figure}

First, we show that this map is well-defined. Since~\(\pi\) satisfies \(\Kan!(3,1)\), our construction is independent of the choice of \((a,b)\) in the equivalence class \([(a, b)]\). Similarly, it is independent of the choice of \((c, d)\) because~\(\pi\) satisfies \(\Kan!(3,2)\).

Furthermore, the map \(\alpha\) is equivariant with respect to the right action of \(E\) by using \(\Kan!(3,2)\) for \(\pi\), and it is equivariant with respect to the left action of \(E\) and both actions of \(\arrow{X}\) by the conditions \(\Kan!(3,1)\), \(\Kan!(3,2)\), and \(\Kan!(3,3)\) for \(\pi\), respectively.

Finally, since \(A_1\to \Hom(\Horn{1}{0}, A\to X)\) is a cover, the map
\[
  \Hom(\Horn{2}{1}\to \Simp{2}, A\to X)\cong\Hom(\Horn{2}{0}\to \Simp{2}, A\to X) \to A_1\times_{X_1} X_2
\]
is a cover. This map is invariant under the principal action of \(E\) and equivariant under the principal action of \( \arrow{X}\). Therefore, by Lemma~\ref{chap1:lem:invariant-cover-descent} and Lemma~\ref{chap1:lem:action-over-principal-is-principal}, it descends to a cover, which is \(\alpha\) by definition.

In conclusion, \(\alpha\) defines an isomorphism of HS bibundles. The pentagon condition for~\(\alpha\) follows by a similar argument as in \cite[Section 4.1]{Zhu:ngpd}, we will treat it in detail later in Chapter~\ref{chap6} for bibundles between 2\nbdash{}groupoids.

\subsection{The unitor}

We now construct the unitor for the action and then verify the triangle condition.

The unit morphism \(\unit\colon \trivial{X_0}\to \arrow{X}\) is given by \(\de_0\colon X_0\to \arrow{X}_0\).
Thus the morphism \(\action\circ(\id, \unit\circ J)\) is given by the bibundle \(\pi_1^{-1}\circ \de_0(X_0) \subset A_1\), which is isomorphic to \(E_1\) by construction. We take this isomorphism to be the unitor \(\rho\colon \action\circ(\id, \unit\circ J) \Rightarrow \id_E\).

The HS morphism corresponding to \((a\cdot 1)\cdot  g\) is given by the HS bibundle \((E_1 \times_{\face_0,A_0,\face_2} A_1 )/E\), while the one corresponding to \(a\cdot(1\cdot g)\) is given by the HS bibundle \(( A_1\times_{\pi_1,\arrow{X}_0,\face_1}  \arrow{X}_1 )/\arrow{X}\). They are both isomorphic to \(A_1\) by the isomorphisms \(\rho\) and \(\lambda_X\). Moreover, applying \(\Kan!(2, 1 )\) to a triple \((\de_0\pi_1\eta, \eta, \de_0\face_0\eta)\), where \(\eta\in A_1\), proves that these isomorphisms are compatible with the associator~\(\alpha\). Thus the triangle condition~\eqref{chap5:eq:x1g} holds.

This completes the proof of Theorem~\ref{chap5:thm:2gpd-action-Kan} in one direction.

\section{Finite data of Kan fibrations}

We recall from~\cite[Section 2.3]{Zhu:ngpd} that a 2\nbdash{}groupoid object \(X\) in \((\Cat, \covers)\) is equivalently given by \(X_{\le 4}\), or by \(X_{\le 2}\) with extra data (the so called \(3\)-multiplications and coherence conditions). Given a 2\nbdash{}groupoid object \(X\) in \((\Cat, \covers)\), consider 2\nbdash{}groupoid Kan fibrations \(\pi\colon A\to X\) in \((\Cat, \covers)\). We show that such a Kan fibration is fully determined by \(\pi_{\le 3}\colon A_{\le 3}\to X_{\le 3}\), or by \(\pi_{\le 1}\colon A_{\le 1}\to X_{\le 1}\) with extra data.

Let \(\horn{m}{j}\) denote the horn inclusion \(\Horn{m}{j}\to \Simp{m}\) for \(0\le j\le m\).

\begin{proposition}\label{chap5:prop:kan-fibration-finite-data}
Let \(X\) be a 2\nbdash{}groupoid in \((\Cat, \covers)\). A 2\nbdash{}groupoid Kan fibration \(A\to X\) in \((\Cat, \covers)\) can also be described by \(\pi_{\le 1}\colon A_{\le 1}\to X_{\le 1}\) and the following data:
\begin{itemize}
  \item \(A_{\le 1}\) satisfies appropriate simplicial identities, and \(\pi_{\le 1}\) commutes with face and degeneracy maps.  The conditions \(\Kan(1,0)\) and \(\Kan(1,1)\) hold, that is,
  \[
  A_1\to A_0\times_{\pi_0,X_0,\face^1_0} X_1 \quad\text{and}\quad A_1\to A_0\times_{\pi_0,X_0,\face^1_0} X_1
  \]
  are covers.
  \item morphisms \textup(\(3\)\nbdash{}multiplications\textup):
  \[
  m_j\colon \Hom(\horn{2}{j}, \pi)\to A_1,\quad j=0,1,2.
  \]
  For instance, given compatible \(\eta_{01}, \eta_{02} \in A_1\) and \(\gamma\in X_2\) that form an element in \(\Hom(\horn{2}{0}, \pi)\), \(m_0\) determines a unique \(\eta_{12}\in A_1\). The \(3\)-multiplications satisfy the following coherence conditions:
  \begin{enumerate}
    \item \label{chap5:item:coherence-3-mult-iso}
     the induced morphisms \(\Hom(\horn{2}{i},\pi)\to \Hom(\horn{2}{j}, \pi)\) are all isomorphisms for \(i, j=0,1,2\).
    \item \label{chap5:item:coherence-3-mult-de}
    compatibility of \(3\)-multiplications, face, and degeneracy maps: given \(\eta\in A_1\), then
    \begin{gather*}
    m_1(\de_0\pi_1\eta, \eta, \de_0\face_0\eta)=\eta,\\
    (\text{or equivalently, }
    m_0(\de_0\pi_1\eta, \eta, \de_0\face_0\eta)=\eta,\quad m_2(\de_0\pi_1\eta, \eta, \eta)=\de_0\face_0\eta);\\
    m_1(\de_1\pi_1\eta, \de_0\face_1\eta, \eta)=\eta,\\
    (\text{or equivalently, }
    m_2(\de_1\pi_1\eta, \de_0\face_1\eta, \eta)=\eta, \quad m_1(\de_1\pi_1\eta, \eta, \eta)=\de_0\face_1\eta).
    \end{gather*}
    \item \label{chap5:item:coherence-3-mult-ass}
     associativity for 3-multiplications: given \(\eta_{01}, \eta_{12}, \eta_{23}\in A_1\) and \(\gamma\in \Hom(\Simp{3}, X)\) that are compatible in the obvious way, the following two methods to determine~\(\eta_{03}\) give the same result. We can either obtain \(\eta_{02}\) first:
   \[
   \eta_{02}=m_1(\gamma_{012}, \eta_{12},\eta_{01}),\quad \eta_{03}=m_1(\gamma_{023}, \eta_{23}, \eta_{02}),
   \]
   or we can obtain \(\eta_{13}\) first:
   \[
    \eta_{13}=m_1(\gamma_{123}, \eta_{23}, \eta_{12}),\quad  \eta_{03}=m_1(\gamma_{013}, \eta_{13}, \eta_{01}).
   \]
   Equivalently, given the same data, the two methods to determine \(\eta_{02}\) or \(\eta_{13}\) give the same results.
  \end{enumerate}
\end{itemize}
\end{proposition}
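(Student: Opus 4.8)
The plan is to prove the two directions of the correspondence separately. Extracting the finite data from a given Kan fibration is routine; the reconstruction of the fibration from the data carries the real content, and its hard part is the verification of the top-dimensional Kan conditions. Throughout, the representability in $(\Cat,\covers)$ of the presheaves $\Hom(-,-)$ that occur will be supplied, without further comment, by the hypotheses via Lemmas~\ref{chap3:lem:covers_in_groupoid} and~\ref{chap3:lem:representable}.

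For the forward direction, suppose $\pi\colon A\to X$ is a $2$\nbdash{}groupoid Kan fibration. The restriction $\pi_{\le 1}$ and the conditions $\Kan(1,0)$, $\Kan(1,1)$ are part of the definition. The condition $\Kan!(2,j)$ furnishes isomorphisms $\phi_j\colon A_2\xrightarrow{\,\cong\,}\Hom(\horn{2}{j},\pi)$, and I set $m_j\coloneqq\face_j\circ\phi_j^{-1}$, the missing $j$\nbdash{}th face of the unique filler of a $j$\nbdash{}horn. Coherence~\ref{chap5:item:coherence-3-mult-iso} is then the composite $\Hom(\horn{2}{i},\pi)\xrightarrow{\phi_i^{-1}}A_2\xrightarrow{\phi_j}\Hom(\horn{2}{j},\pi)$; coherence~\ref{chap5:item:coherence-3-mult-de} follows from the simplicial identities relating the degeneracies $\de_0,\de_1$ of $A$ to its faces; and coherence~\ref{chap5:item:coherence-3-mult-ass} is a direct transcription of $\Kan!(3,1)$ and $\Kan!(3,2)$ for $\pi$ applied to the element of $A_3$ determined by $\gamma$ and the edges $\eta_{01},\eta_{12},\eta_{23}$.

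For the reconstruction, starting from $\pi_{\le 1}$ together with the $m_j$, I first rebuild $A_2\coloneqq\Hom(\horn{2}{1},\pi)$, taking its two horn legs as $\face_2,\face_0$ and the value of $m_1$ as $\face_1$; coherence~\ref{chap5:item:coherence-3-mult-iso} makes $A_2$ and its faces independent of the horn used, so that $\Kan!(2,j)$ holds for every $j$, while coherence~\ref{chap5:item:coherence-3-mult-de} supplies the degeneracies with the correct simplicial identities. I then rebuild $A_3\coloneqq\Hom(\horn{3}{1},\pi)$, whose $2$\nbdash{}faces are obtained by iterating the $m_j$; the associativity coherence~\ref{chap5:item:coherence-3-mult-ass} is exactly the statement that these iterated fillings agree, which makes $A_3$ well-defined and yields $\Kan!(3,j)$ for all $j$. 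Finally I put $A\coloneqq\cosk_3 A_{\le 3}$, so that $A$ and $X$ are both $3$\nbdash{}coskeletal.

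The remaining Kan conditions for $\pi$ — which also show, after composing $\pi$ with $X\to\terminal$ and applying Proposition~\ref{chap3:prop:comp}, that $A$ is a genuine $2$\nbdash{}groupoid — are handled last. Since $A$ and $X$ are $3$\nbdash{}coskeletal, the relative analogue of Proposition~\ref{chap2:prop:coskeletal-kan}(i) gives $\Kan!(m,j)$ for all $m\ge 5$ at once. The conditions $\Kan!(4,j)$ are \emph{not} automatic from coskeletality and constitute the main obstacle: by Lemma~\ref{chap3:lem:morphism-kan(n+1,j)-implies-all} it suffices to establish a single inner instance $\Kan!(4,j_0)$, which I will deduce from the associativity coherence~\ref{chap5:item:coherence-3-mult-ass} by the same combinatorial, pentagon-type computation used in the absolute case in~\cite[Sections~2.3 and~4.1]{Zhu:ngpd}; the outer cases $\Kan!(4,0)$ and $\Kan!(4,4)$ follow from the variants of that lemma. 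Once all $\Kan!(m,j)$ with $m\ge 3$ together with $\Kan!(2,j)$ and $\Kan(1,j)$ are in place, the two passages are visibly mutually inverse, because a $2$\nbdash{}groupoid Kan fibration is determined by its truncation $\pi_{\le 3}$, and $\pi_{\le 3}$ is in turn recovered from $\pi_{\le 1}$ and the maps $m_j$.
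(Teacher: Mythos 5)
Your proposal tracks the paper closely through dimension~3 (extraction of the data, reconstruction of \(A_2\) and \(A_3\) from the \(m_j\) and coherence condition~\ref{chap5:item:coherence-3-mult-iso}--\ref{chap5:item:coherence-3-mult-ass}, then a coskeletal definition of~\(A\)), your reduction of all \(\Kan!(4,j)\) to a single inner instance via Lemma~\ref{chap3:lem:morphism-kan(n+1,j)-implies-all} and its variants is correct, and your coskeletality argument for \(\Kan!(m,j)\), \(m\ge 5\), does work since both \(A\) and \(X\) are \(3\)\nbdash{}coskeletal. The gap is exactly at the step you yourself call the main obstacle: the single instance \(\Kan!(4,j_0)\) is never proved. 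You defer it to ``the same combinatorial, pentagon-type computation used in the absolute case'' in \cite{Zhu:ngpd}, but no such separate dimension\nbdash{}4 pentagon computation exists there to transcribe: in Zhu's setting, as in this paper, the corresponding step is handled not by a fresh coherence computation but by propagating the \emph{unique acyclicity} conditions (the argument of \cite[Lemma~2.5]{Zhu:ngpd}, quoted here as Lemma~\ref{chap3:lem:acyclic-finite}). A blind reader cannot reconstruct your missing step from the reference you give.

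What closes the gap — and what the paper actually does — is to record at dimension~3 not only \(\Kan!(3,j)\) but also \(\Acyc!(3)\): the statement that an element of \(A_3\) is the same thing as an element of \(\Hom(\partial\Simp{3}\to\Simp{3},\pi)\), i.e.\ four compatible \(2\)\nbdash{}faces together with a compatible \(X_3\)\nbdash{}datum, with no further constraint. This is precisely your ``well-definedness of \(A_3\)'' rephrased, so you already have the ingredient; coherence~\ref{chap5:item:coherence-3-mult-ass} is what makes the face computed from a horn consistent with the remaining prescribed face. Granting \(\Acyc!(3)\), the condition \(\Kan!(4,j)\) is formal: every \(2\)\nbdash{}face of \(\Simp{4}\) already lies in \(\Horn{4}{j}\), and the \(X_3\)\nbdash{}component of the missing \(3\)\nbdash{}face is determined by the given \(X_4\)\nbdash{}element, so \(\Acyc!(3)\) produces a unique element of \(A_3\) filling that face, and it is automatically compatible with the horn. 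The paper runs this as an induction with \(A_{m}\coloneqq\Hom(\sk_2\Simp{m}\to\Simp{m},\pi)\), obtaining \(\Kan!(m,j)\) \emph{and} \(\Acyc!(m)\) simultaneously for all \(m\ge 4\); the associativity coherence is thus used exactly once, at dimension~3, and no dimension\nbdash{}4 pentagon argument (nor, in the paper's version, any appeal to coskeletality) is ever needed.
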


\begin{remark}
  We may think of the 3-multiplication \(m_1\) as a composition and \(m_0, m_2\) as inverses. Then the coherence condition~\ref{chap5:item:coherence-3-mult-iso} says that the inverses are true inverses with respect to the composition; condition~\ref{chap5:item:coherence-3-mult-de} is the unit condition; and condition~\ref{chap5:item:coherence-3-mult-ass} is the associativity condition. We will see in the proof below that 3-multiplications and the coherence conditions all are encoded in \(A_{\le 3}\to X_{\le 3}\).
\end{remark}

\subsection{Reconstruction of higher dimensions}
To prove Proposition~\ref{chap5:prop:kan-fibration-finite-data}, we shall reconstruct the simplicial object \(A\) in \((\Cat, \covers)\) with a morphism \(\pi\colon A\to X\) satisfying \(\Kan(1, 0), \Kan(1, 1)\) and \(\Kan!(m, j)\) for \(0\le j \le m\) and \(m\ge 2\). Then \(A\) is automatically a 2\nbdash{}groupoid in \((\Cat, \covers)\) and \(\pi\colon A\to X\) is a 2\nbdash{}groupoid Kan fibration.

\subsubsection{Dimension 2}
Let
\[
A_2=\Hom(\horn{2}{j}, \pi),\qquad \forall\, j=0, 1, 2,
\]
which is well-defined by assumption. The conditions \(\Kan!(2,j)\) for \(j=0,1,2\) for~\(\pi\) hold automatically. The face maps \(A_2\to A_1\) and the projection \(\pi_2\colon A_2 \to X_2\) are natural to define. The degeneracy maps \(\de_0, \de_1\colon A_1 \to A_2\) are given by
\begin{align*}
\de_0\colon \eta&\mapsto (\de_0\pi_1\eta, \eta, \de_0\face_0\eta) \in \Hom(\horn{2}{1},\pi),\\
 \de_1\colon \eta&\mapsto (\de_1\pi_1\eta, \de_0\face_1\eta, \eta)\in \Hom(\horn{2}{1}, \pi).
\end{align*}
The coherence condition~\ref{chap5:item:coherence-3-mult-de} implies that \(\de_0, \de_1\) are well-defined and that the simplicial identities \(\face_0\circ \de_0=\id= \face_1 \circ \de_1\) hold. The other simplicial identities and commutativity with~\(\pi\) are left to the reader.

\subsubsection{Dimension 3}
Let
\begin{equation} \label{chap5:eq:an}
  A_m\coloneqq \Hom(\sk_2\Simp{m}\to\Simp{m},\pi),\qquad \forall\, m \ge 3.
\end{equation}
Since the above expression depends only on \(\pi_j\) for \(j=0, 1, 2\), the space~\(A_m\) is already well-defined.

\begin{lemma}
  The space~\(A_3\) in~\eqref{chap5:eq:an} is representable, and
  \[
  A_3=\Hom(\horn{3}{j},\pi),\qquad \forall\, 0\le j\le 3.
  \]
\end{lemma}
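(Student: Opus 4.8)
The plan is to first identify \(\sk_2\Simp{3}\) with the boundary \(\partial\Simp{3}\): the only nondegenerate simplex of \(\Simp{3}\) above dimension~\(2\) is the top \(3\)-simplex, so discarding it yields exactly \(\partial\Simp{3}\). Thus by definition \(A_3=\Hom(\partial\Simp{3}\to\Simp{3},\pi)\), and the whole content of the lemma is to compare this presheaf with the horn spaces \(\Hom(\horn{3}{j},\pi)=\Hom(\Horn{3}{j}\to\Simp{3},\pi)\) along the restriction induced by the inclusion \(\Horn{3}{j}\hookrightarrow\partial\Simp{3}\).

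I would set up the comparison by a pushout. Adding the missing \(j\)-th face to \(\Horn{3}{j}\) recovers the boundary, and that face is glued along its own boundary \(\partial\Simp{2}\), which already lies in \(\Horn{3}{j}\); hence \(\partial\Simp{3}=\Horn{3}{j}\cup_{\partial\Simp{2}}\Simp{2}\). Feeding this pushout square, together with \(\partial\Simp{3}\to\Simp{3}\), into Corollary~\ref{chap2:cor:lift-composition-pushout-combined} produces a pullback square exhibiting
\[
A_3\;\cong\;\Hom(\Horn{3}{j}\to\Simp{3},\pi)\times_{\Hom(\partial\Simp{2}\to\Simp{2},\pi)}A_2,
\]
where the right-hand vertical map \(r\colon A_2\to\Hom(\partial\Simp{2}\to\Simp{2},\pi)\) sends a \(2\)-simplex to its three edges together with its projection to~\(X\), and the bottom map reads off the three edges of the absent face and the face \(\face_j\gamma\) of the chosen \(3\)-simplex \(\gamma\in X_3\). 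The condition \(\Kan!(2,1)\) shows that \(r\) is a split monomorphism, since restricting the boundary triangle to the horn \(\Horn{2}{1}\) recovers the \(\Kan!(2,1)\) isomorphism; consequently the restriction \(A_3\to\Hom(\Horn{3}{j}\to\Simp{3},\pi)\) is at least a monomorphism.

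The heart of the argument is to show this restriction is also an epimorphism, that is, that the bottom map factors through \(r\). Given a horn in \(A\) together with a compatible \(\gamma\in X_3\), I would apply \(\Kan!(2,1)\) to two edges of the missing face and to \(\face_j\gamma\) to obtain a canonical \(2\)-simplex \(\phi\in A_2\), which is always defined; what must be verified is that the third edge of \(\phi\) agrees with the prescribed third edge coming from the horn, so that \(r(\phi)\) equals the output of the bottom map. This is exactly the associativity coherence condition~\ref{chap5:item:coherence-3-mult-ass} for the \(3\)-multiplications, with the compatibility condition~\ref{chap5:item:coherence-3-mult-de} handling degenerate inputs; the hypothesis \(\gamma\in X_3\) is precisely what forces the two routes through the \(3\)-multiplications to coincide. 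Because \(r\) is a monomorphism, this factorisation identifies the pullback with \(\Hom(\Horn{3}{j}\to\Simp{3},\pi)\) via \(h\mapsto(h,\phi(h))\), giving the isomorphism for every \(j\); the outer cases \(j=0,3\) run identically using \(\Kan!(2,0)\) and \(\Kan!(2,2)\) in place of \(\Kan!(2,1)\). I expect this verification — that condition~\ref{chap5:item:coherence-3-mult-ass} is exactly the existence of the fourth face — to be the main obstacle, since it is the step that genuinely consumes the coherence data rather than being formal.

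Finally, for representability I would exhibit \(A_3\) through the inner horn space \(\Hom(\Horn{3}{1}\to\Simp{3},\pi)\). The spine inclusion \(\Sp(3)\to\Horn{3}{1}\) is an inner collapsible extension (Lemma~\ref{chap2:lem:spine-horn-inner}) assembled by filling \(2\)-dimensional inner horns, so only \(\Kan!(2,\cdot)\) is needed to propagate representability and covers along it, by the inductive mechanism of Lemma~\ref{chap3:lem:representable}. The base case \(\Hom(\Sp(3)\to\Simp{3},\pi)=\bigl(A_1\times_{A_0}A_1\times_{A_0}A_1\bigr)\times_{\Hom(\Sp(3),X)}X_3\) is representable because the edge maps \(A_1\to A_0\) are covers by \(\Kan(1,0)\) and \(\Kan(1,1)\), while \(X_3\to\Hom(\Sp(3),X)\) is a cover by Lemma~\ref{chap3:lem:covers_in_groupoid} as \(X\) is a \(2\)\nbdash{}groupoid. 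Transporting representability back along the isomorphism established above then yields the representability of \(A_3\).
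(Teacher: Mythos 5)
Your proposal is correct and takes essentially the same route as the paper: both reduce the lemma to showing that a horn \(\Horn{3}{j}\)-element lying over a full \(3\)-simplex of \(X\) extends \emph{uniquely} across the missing face, with the associativity coherence condition~\ref{chap5:item:coherence-3-mult-ass} (in its equivalent formulations, together with the isomorphisms of condition~\ref{chap5:item:coherence-3-mult-iso} for the outer cases) supplying exactly that extension, and with representability of the horn spaces coming from the lower Kan conditions. Your pushout/pullback and split-monomorphism packaging, and the spine-based representability computation, are formal renderings of the paper's element-wise ``extends uniquely'' argument and of its appeal to the representability lemmas of Chapter~\ref{chap1}\,ff.\ (Lemmas~\ref{chap3:lem:covers_in_groupoid} and~\ref{chap3:lem:representable}), not a different method.
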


\begin{proof}
  Lemma~\ref{chap3:lem:covers_in_groupoid} shows that \(\Hom(\horn{3}{j},\pi)\) is representable for \(0\le j \le 3\). It suffices to show that each element of
  \[
  \Hom(\sk_2\Simp{3}\to \Simp{3}, \pi)=\Hom(\partial\Simp{3}\to \Simp{3}, \pi)
  \]
  can be extended uniquely from an element of \(\Hom(\horn{3}{j},\pi)\) for \(0\le j \le 3\).
  For instance, given an element \(\eta\in \Hom(\horn{3}{0},\pi)\), we have \(\eta_{01}, \eta_{12}\) and \(\eta_{23}\) and \(\gamma\in \Hom(\Simp{3}, X)\) that are compatible. The coherence condition~\ref{chap5:item:coherence-3-mult-ass} implies that the two ways to determine \(\eta_{03}\) give the same results. That is, we obtain a unique element of \(\Hom(\sk_2\Simp{3}\to \Simp{3}, \pi)\). We can prove the other cases by using different formulations of the coherence condition, or by Lemma~\ref{chap3:lem:morphism-kan(n+1,j)-implies-all}.
\end{proof}

The face maps, degeneracy maps, and the projection \(\pi_3\colon A_3\to X_3\) are natural to define. The Kan conditions follow from the lemma above; moreover, \(\Acyc!(3)\) hold by definition.

\subsubsection{Higher dimensions}
We have shown that \(\Kan!(3,j)\) and \(\Acyc!(3)\) hold. Suppose that \(A_{k}\) for \(3 \le k \le m\) given by~\eqref{chap5:eq:an} are representable. Suppose that \(\Kan!(k,j)\) holds for \(0\le j\le k\) and \(2 \le k \le m\), and that \(\Acyc!(k)\) holds for \(3\le k\le m\). We define \(A_{m+1}\coloneqq \Hom(\sk_2\Simp{m+1}\to\Simp{m+1}, \pi)\), and show that \(\Kan!(m+1,j)\) and \(\Acyc!(m+1)\) hold.

Since \(\sk_2\Simp{m+1}=\sk_2\Horn{m+1}{j}\), we have
\[
\Hom(\sk_2\Simp{m+1}\to\Simp{m+1},
\pi)=\Hom(\sk_2\Horn{m+1}{j}\to\Simp{m+1}, \pi).
\]
By \(\Acyc!(k)\) for \(3\le k\le m\), the same argument as in the proof of~\cite[Lemma~2.5]{Zhu:ngpd} tells us that the natural maps
\[
\Hom(\partial\Simp{m+1}\to \Simp{m+1}, \pi)\to \Hom(\horn{m+1}{j},\pi)
\to \Hom(\sk_2\Horn{m+1}{j}\to\Simp{m+1}, \pi)
\]
are both isomorphisms. Since we have the exact condition \(\Acyc!(k)\) instead of \(\Acyc(k)\), the conclusion is strengthened to being isomorphisms instead of being covers as in the original lemma. Hence
\begin{multline}
  \label{chap5:eq:pld}
  \Hom(\partial\Simp{m+1}\to \Simp{m+1},\pi)
  = \Hom(\horn{m+1}{j},\pi)\\
  = \Hom(\sk_2\Simp{m+1}\to\Simp{m+1},\pi)
  = A_{m+1}.
\end{multline}
The presheaf \(\Hom(\horn{m+1}{j},\pi)\) is representable by Lemma~\ref{chap3:lem:representable}, so \(A_{m+1}\) is representable.
There are natural face maps, degeneracy maps, and a natural projection \(\pi_{m+1}\colon A_{m+1}\to X_{m+1}\). The conditions \(\Kan(m+1,j)\) for \(0\le j\le m+1\) and \(\Acyc!(m+1)\) hold by~\eqref{chap5:eq:pld}. This complete the proof of Proposition~\ref{chap5:prop:kan-fibration-finite-data}.

\section{From categorification to the simplicial picture}\label{chap5:sec:cat-simp}

Given a categorified action of a 2\nbdash{}groupoid \(G\rightrightarrows \trivial{M}\) on a groupoid \(E\), we now construct the finite data for the expected Kan fibration \(A\to X\).

\subsection{Dimensions 0 and 1}

First, we let
\begin{alignat*}{2}
  A_0&\coloneqq E_0,&\qquad
  A_1&\coloneqq E_\action,\\
  \face_0&\coloneqq J_r\colon E_\action\to E_0,&\qquad
  \face_1&\coloneqq \pr_1 \circ J_l\colon E_\action\to E_0,\\
  \pi_0&\coloneqq J_0\colon E_0\to X_0,&\qquad
  \pi_1&\coloneqq \pr_2\circ J_l\colon E_\action \to X_1,
\end{alignat*}
where~\(E_\action\) is the bibundle giving the action morphism \(E\times_{J,\trivial{X_0},\target} G\to E\).  The 2\nbdash{}morphism
\(\rho\colon \action\circ  (\id, \unit\circ J) \Rightarrow \id_E\) induces an isomorphism of bibundles
\[
\rho\colon  J_l^{-1}(E_0\times_{X_0} X_0 )\cong E_1.  
\]
We take \(\de_0\) to be the composite
\[
\de_0=\rho^{-1} \circ \unit_E\colon A_0=E_0\to E_1 \to E_\action=A_1.
\]
Then \(\face_0 \circ \de_0= J_r\circ \rho^{-1} \circ \unit_E=\target_E \circ \unit_E=\id\).  Similarly, \(\face_1 \circ \de_0=\id\).  Therefore, the simplicial identities involving \(A_i\), \(i=0,1\), hold.

Furthermore, \(\pi_i\) for \(i=0,1\) commute with face and degeneracy maps. In fact, we have
\begin{gather*}
  \de_0\circ\pi_0=\pi_1 \circ \de_0\colon A_0 \to X_1,\\
  \face_0\circ\pi_1=\pi_0 \circ \face_0\colon A_1 \to X_0
\end{gather*}
by construction. Since \(J\circ \action = \source \circ  \pr_2\), Lemma~\ref{chap5:lem:bundles-triangle-to-M} implies that
\[
  \face_1\circ\pi_1=\pi_0 \circ \face_1\colon A_1 \to X_0.
\]

Next, we verify the conditions \(\Kan(1,0)\) and \(\Kan(1,1)\) for \(\pi\colon A \to X\).  The map
\[
A_1 \to \Hom(\Horn{1}{0}\to \Simp{1},\pi) = E_0\times_{J_0,X_0,\target} X_1
\]
is a cover because it is the left moment map \(E_\action \to E_0\times_{J_0,X_0,\target} X_1\).  This proves \(\Kan(1,0)\) for~\(\pi\). The condition \(\Kan(1, 1)\) follows from the following lemma.

\begin{lemma}\label{chap5:lem:left-E-is-principal}
  The left \(E\)-action on \(E_\action\) gives a left principal \(E\)-bundle \((J_r, \pr_2\circ J_l)\colon E_\action \to  E_0 \times_{J_0,X_0,\source_0} G_0\).
\end{lemma}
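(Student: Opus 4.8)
The plan is to verify the two defining conditions of a left principal bundle for the map $q \coloneqq (J_r,\pr_2\circ J_l)\colon E_\action\to E_0\times_{J_0,X_0,\source_0}G_0$: that $q$ is a cover, and that the left shear map
\[
E_1\times_{\source_E,E_0,\pr_1\circ J_l}E_\action\to E_\action\times_q E_\action,\qquad (e,p)\mapsto(e\cdot p,p),
\]
is an isomorphism. By the reading of the Kan condition recorded just before the lemma, the cover property of $q$ is exactly $\Kan(1,1)$ for $\pi$. The starting point is that $E_\action$, by the HS bibundle structure of the action morphism in Definition~\ref{chap5:def:2gpd-action}, is right $E$-principal over the base $E_0\times_{J_0,X_0,\target_0}G_0$ via its left moment map $J_l=(\pr_1 J_l,\pr_2 J_l)$; in particular the right shear map is already an isomorphism and $J_l$ is a cover. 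The task is thus to transfer principality from the right $E$-action to the left $E$-action (the restriction to the $E$-factor of the left $E\times_{J,\trivial{M},\target}G$-action), at the cost of replacing $\target_0$ by $\source_0$ in the base.

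First I would record the two compatibilities $J_0\circ\pr_1 J_l=\target_0\circ\pr_2 J_l$ and $J_0\circ J_r=\source_0\circ\pr_2 J_l$ — the latter being $J\circ\action=\source\circ\pr_2$ read through Lemma~\ref{chap5:lem:bundles-triangle-to-M} — so that $q$ genuinely lands in $E_0\times_{J_0,X_0,\source_0}G_0$. The heart of the argument is invertibility of the $G$-action. Mirroring the construction of the associator, I would use the inversion data of $G\rightrightarrows\trivial{M}$ — the inversion bibundle $E_\inv$, the inversors $\eta,\varepsilon$, and especially the fact recorded after Definition~\ref{chap5:def:cat-groupoid} that the shear morphisms of $G$ are Morita equivalences — to produce an ``inverse action'' bibundle $\overline{E_\action}$ (acting by $g^{-1}$), and to exhibit, via the associator $\alpha$ and unitor $\rho$ of the action, a biequivariant identification under which the left $E$-action on $E_\action$ becomes the right $E$-action on $\overline{E_\action}$. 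Since $\overline{E_\action}$ is again right $E$-principal by the same HS bibundle structure, its right shear map is an isomorphism; transporting it back along the identification yields the inverse of the left shear map above. Under the same identification $q$ corresponds to the moment-map cover of $\overline{E_\action}$, whence $q$ is a cover (one may also see this directly: $\pr_2 J_l$ is the cover $J_l$ followed by the projection $E_0\times_{\target_0}G_0\to G_0$, which is a cover since $J_0$ is), and this gives $\Kan(1,1)$.

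The main obstacle is rigour in the categorified setting: $G$ does not act on $E_\action$ by an honest map, so ``act by $g^{-1}$'' must be made precise through composition of HS bibundles and the coherence $2$-morphisms rather than on points. The delicate steps are (i) constructing $\overline{E_\action}$ and the biequivariant isomorphism so that all moment maps match — here one must track carefully which fibre product ($\target_0$ versus $\source_0$) each leg maps into and invoke the pentagon and triangle conditions \eqref{chap5:eq:pentagon} and \eqref{chap5:eq:x1g} to check associativity and unitality of the transported action; and (ii) checking that the resulting map is genuinely the left shear of the left $E$-action and not merely an abstract isomorphism. For the latter I would finish with Lemma~\ref{chap1:lem:principal-equivariant-map-is-iso} (every equivariant map of principal bundles is an isomorphism) together with Lemma~\ref{chap1:lem:invariant-cover-descent} for the descent of the cover.
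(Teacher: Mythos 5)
Your strategy rests on the same pivotal input as the paper's proof --- the horizontal inversion of \(G\rightrightarrows\trivial{M}\) --- but it is packaged genuinely differently. The paper never constructs an inverse-action bibundle: following \cite[Lemma 3.12]{Zhu:ngpd}, it asserts that the action shear morphism \(E\times_{\trivial{M},\target}G\to E\times_{\trivial{M},\source}G\), \((x,g)\mapsto(x\cdot g,g)\), is a Morita equivalence; the HS bibundle realizing this morphism is \(E_\action\times_{\pr_2\circ J_l,G_0,\target}G_1\), so Morita-ness makes it a \emph{left} principal \(E\times_{\trivial{M},\target}G\)-bundle over \(E_0\times_{J_0,X_0,\source_0}G_0\); and since the \(G\)-factor of the left action is just multiplication on the \(G_1\)-leg, quotienting by \(G\) returns \(E_\action\) itself, with principality (and the cover property of \(q=(J_r,\pr_2\circ J_l)\)) descending via Lemmas \ref{chap1:lem:invariant-cover-descent} and \ref{chap1:lem:action-over-principal-is-principal}. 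Your route instead transfers the automatic right principality of the inverse-action bibundle \(\overline{E_\action}\) across a biequivariant identification \(E_\action\cong\overline{E_\action}\). This is viable --- it is exactly the categorified content of Example \ref{chap6:exa:opposite-action} --- but be aware that the paper establishes that example only later and only via the simplicial picture, which is off-limits here: this lemma is precisely what produces \(\Kan(1,1)\) for the simplicial object being built, so using simplicial horn-filling would be circular. Hence all the coherence verifications you defer (pentagon, triangle, inversor conditions, and the check that the transported map really is the left shear) must be carried out purely categorically. The paper's quotient argument buys exactly the avoidance of that bookkeeping, at the price of deferring the Morita property of the shear to Zhu's argument; your argument buys a more transparent ``act by \(g^{-1}\)'' mechanism, which simultaneously yields the cover property of \(q\) and the invertibility of the shear.

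One concrete error to remove: the parenthetical ``direct'' argument that \(q\) is a cover is wrong on two counts. First, \(J_0\colon E_0\to X_0\) is merely the moment map of a categorified action and is \emph{not} assumed to be a cover, so the projection \(E_0\times_{J_0,X_0,\target_0}G_0\to G_0\) need not be a cover. Second, even if both components \(J_r\) and \(\pr_2\circ J_l\) were covers, this would not make the map into the fibre product \(E_0\times_{J_0,X_0,\source_0}G_0\) a cover (the diagonal \(M\to M\times M\) has covers as components but is not a cover). The cover property has to come, as in your main line of argument, from identifying \(q\) with the left moment map of \(\overline{E_\action}\) --- or, in the paper's version, from descending the cover \(E_\action\times_{\pr_2\circ J_l,G_0,\target}G_1\to E_0\times_{J_0,X_0,\source_0}G_0\) along the principal \(G\)-action.
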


\begin{proof}
  The proof is similar to that of \cite[Lemma 3.12]{Zhu:ngpd}. Since~\(X\) has a horizontal inversion, the morphism
  \[
  E\times_{\trivial{M},\target} G\to E\times_{\trivial{M},\source} G, \quad (x, g)\mapsto (x\cdot g, g),
  \]
  is a Morita equivalence. Therefore, the left \(E\times_{\trivial{M},\target} G\)-bundle
  \[
  E_\action\times_{\pr_2\circ J_l,G_0,\target} G_1 \to E_0 \times_{J_0,X_0,\source_0} G_0
  \]
  is principal. Since the left \(G\)-action is given by multiplication, we have
  \[
  (E_\action\times_{\pr_2\circ J_l,G_0,\target} G_1)/G=E_\action
  \]
  as a left \(E\)-bundle, and this proves the lemma.
\end{proof}

\subsection{Dimension 2}

The presheaf \(\Hom(\horn{2}{j}, \pi)\) is already defined because it only involves \(\pi_0\) and~\(\pi_1\); it is representable by Lemma~\ref{chap3:lem:representable}.

Our next goal is to prove the following lemma.
\begin{lemma}\label{chap5:lem:kan2}
  There are natural isomorphisms
  \[
  \Hom(\horn{2}{0}, \pi) \cong \Hom(\horn{2}{1}, \pi)\cong
  \Hom(\horn{2}{2}, \pi).
  \]
\end{lemma}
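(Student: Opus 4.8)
The plan is to exhibit all three presheaves as the projections of a single ``space of filled $2$-simplices'' $W$, and to identify each projection with the associator $\alpha$ of the action (or its inverse), upgrading the resulting equivariant maps to isomorphisms by means of the principality of the action bibundle.

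First I would unravel the three presheaves concretely. Since $A_0=E_0$ and $A_1=E_\action$, a $T$-point of $\Hom(\horn{2}{j},\pi)$ is a horn $\Horn{2}{j}\to A$ (two $T$-edges of $A$ sharing the appropriate vertex) together with a $T$-point $\gamma$ of $X_2=E_\mult$ whose relevant faces match the $\pi$-images of the two edges. Writing out the vertex- and $\pi$-matching conditions gives
\begin{align*}
\Hom(\horn{2}{1},\pi)&=\{(\eta_{01},\eta_{12},\gamma)\mid \face_0\eta_{01}=\face_1\eta_{12},\ \pi_1\eta_{01}=\face_2\gamma,\ \pi_1\eta_{12}=\face_0\gamma\},\\
\Hom(\horn{2}{0},\pi)&=\{(\eta_{01},\eta_{02},\gamma)\mid \face_1\eta_{01}=\face_1\eta_{02},\ \pi_1\eta_{01}=\face_2\gamma,\ \pi_1\eta_{02}=\face_1\gamma\},\\
\Hom(\horn{2}{2},\pi)&=\{(\eta_{02},\eta_{12},\gamma)\mid \face_0\eta_{02}=\face_0\eta_{12},\ \pi_1\eta_{02}=\face_1\gamma,\ \pi_1\eta_{12}=\face_0\gamma\}.
\end{align*}
Each records two of the three potential edges $\eta_{01},\eta_{12},\eta_{02}\in E_\action$ together with the base multiplication $\gamma$. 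I would then introduce $W$, the presheaf of quadruples $(\eta_{01},\eta_{12},\eta_{02},\gamma)$ satisfying all the face/$\pi$ compatibilities and the \emph{associator relation}: that $(\eta_{01},\eta_{12})$ composed over $\gamma$ equals $\eta_{02}$, in the sense parametrised by $\alpha\colon\action\circ(\action\times\id)\Rightarrow\action\circ(\id\times\mult)$. Forgetting one edge gives three projections $W\to\Hom(\horn{2}{j},\pi)$.

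The projection forgetting $\eta_{02}$ (onto $\horn{2}{1}$) and the one forgetting $\eta_{12}$ (onto $\horn{2}{0}$) are precisely the associator and its inverse: $\alpha$ says that $\eta_{02}$ is uniquely determined by, and reconstructible from, $(\eta_{01},\eta_{12},\gamma)$, and $\alpha^{-1}$ does the same for $\eta_{12}$. Here $\eta_{01}$ (hence its source) is kept fixed, so no quotient is involved; to turn the fibrewise bijection into an isomorphism of \emph{representable} presheaves I would reuse the descent argument from the construction of $\alpha$ in Section~\ref{chap5:sec:simp-cat}: the comparison maps are $E$- and $\arrow{X}$-equivariant covers invariant under the principal actions, so they descend by Lemma~\ref{chap1:lem:invariant-cover-descent} and Lemma~\ref{chap1:lem:action-over-principal-is-principal}, and an equivariant map of principal $E$-bundles over a common base is an isomorphism by Lemma~\ref{chap1:lem:principal-equivariant-map-is-iso}. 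Representability of all the spaces involved is guaranteed by Lemma~\ref{chap3:lem:representable}.

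The hard part will be the third projection, forgetting $\eta_{01}$ (onto the outer horn $\horn{2}{2}$): given $(\eta_{02},\eta_{12},\gamma)$ one must \emph{solve} for $\eta_{01}$, and this is not a direct instance of $\alpha$. The associator only yields $(e_0\cdot g_1)\cdot g_2\cong e_0\cdot(g_1g_2)=\face_0\eta_{02}=e_1\cdot g_2$, so recovering $\eta_{01}$ requires cancelling the action by the middle edge $g_2=\pi_1\eta_{12}$. This is exactly where I expect to need the horizontal invertibility of the $2$-groupoid $G$ (the shear morphisms of $G$ being Morita equivalences) together with the left-principality of the action bibundle from Lemma~\ref{chap5:lem:left-E-is-principal}: these make the edges $\eta_{01}$ and $\eta_{02}$ range over left-principal $E$-bundles over the source $e_0=\face_1\eta_{02}$ with labels $g_1$ and $g_1g_2$ respectively, and the associator is an $E$-equivariant map between them covering the identity on sources, hence an isomorphism again by Lemma~\ref{chap1:lem:principal-equivariant-map-is-iso}. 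Its inverse produces the missing $\eta_{01}$. Finally, since every map above is assembled from the structure HS morphisms of the action, all comparisons are natural in the test object~$T$, yielding the asserted natural isomorphisms $\Hom(\horn{2}{0},\pi)\cong\Hom(\horn{2}{1},\pi)\cong\Hom(\horn{2}{2},\pi)$.
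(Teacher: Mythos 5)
Your proposal is correct, and it rests on the same two pillars as the paper's own proof: the associator-induced isomorphism of composite HS bibundles \((E_\action\times_{E_0}E_\action)/E\cong(E_\action\times_{G_0}E_\mult)/G\) (Lemma~\ref{chap5:lem:iso-of-bibundles}) and principality-based recovery of actual elements from their equivalence classes (Lemma~\ref{chap5:lem:pull-back}, ultimately Lemma~\ref{chap1:lem:principal-equivariant-map-is-iso}). The packaging differs: the paper writes each \(\Hom(\horn{2}{j},\pi)\) as a fibre product of one of the two quotient bibundles with a common un-quotiented factor \(E_\action\times_{X_1}E_\mult\) and transports the associator isomorphism across, whereas you form the graph \(W\) of the associator correspondence and show the three forgetful projections are isomorphisms; note that your \(W\) is exactly the space \(A_2\) of the action 2\nbdash{}groupoid being reconstructed (cf.\ the reconstruction after Proposition~\ref{chap5:prop:kan-fibration-finite-data}, where \(A_2=\Hom(\horn{2}{j},\pi)\) for any \(j\)). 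Where your write-up genuinely goes beyond the paper is the outer-horn projection: the paper dismisses \(\Hom(\horn{2}{2},\pi)\cong\Hom(\horn{2}{1},\pi)\) with ``follows similarly,'' but, as you correctly observe, solving for \(\eta_{01}\) from \((\eta_{02},\eta_{12},\gamma)\) is a left division that the right-principality of the HS bibundle structure alone cannot perform; it requires the left-principality of the \(E\)\nbdash{}action on \(E_\action\) (Lemma~\ref{chap5:lem:left-E-is-principal}), hence the horizontal inversion of the 2\nbdash{}groupoid. That is precisely the input hidden in the paper's ``similarly,'' and making it explicit is a real improvement. One small correction to your phrasing of that step: the left-principal structure of Lemma~\ref{chap5:lem:left-E-is-principal} is over \((\face_0,\pi_1)\colon E_\action\to E_0\times_{X_0}G_0\), so the comparison should be between left principal \(E\)\nbdash{}bundles over the common base of pairs \((\eta_{12},\gamma)\), not ``over the source \(e_0\)''; with that adjustment the equivariant map covers the identity on the correct base and Lemma~\ref{chap1:lem:principal-equivariant-map-is-iso} applies exactly as you intend.
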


We first establish two auxiliary lemmas.

\begin{lemma}\label{chap5:lem:iso-of-bibundles}
  There is a natural isomorphism of HS bibundles
  \[
  ( E_\action \times_{E_0} E_\action ) /E \cong ( E_\action \times_{G_0} E_\mult)/G,
  \]
  where~\(E_\mult\) is the HS bibundle giving the horizontal multiplication \(\mult\) of the 2\nbdash{}groupoid~\(X\).
\end{lemma}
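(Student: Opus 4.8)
The plan is to recognise the claimed isomorphism as nothing more than the associator $\alpha$ from~\eqref{chap5:eq:associator} of the given categorified action, once both sides have been identified as the HS bibundles representing the two composite HS morphisms appearing there. Thus the substance of the proof lies in computing these two composites through the quotient description of bibundle composition, $P\otimes_H Q=(P\times_{H_0}Q)/H$, from Chapter~\ref{chap1}.

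First I would compute the bibundle representing $\action\circ(\action\times\id)$, an HS morphism from $E\times_{J,\trivial{M},\target}G\times_{\source,\trivial{M},\target}G$ to $E$. The morphism $\action\times\id$ is represented by $E_\action\times_{G_0}U(G)$, where $U(G)=G_1$ is the unit bibundle on the untouched copy of~$G$; composing with $\action=E_\action$ over the groupoid $E\times_{\trivial{M}}G$ yields, via the quotient formula,
\[
\bigl((E_\action\times_{G_0}G_1)\times_{E_0\times_M G_0}E_\action\bigr)\big/(E\times_{\trivial{M}}G).
\]
Since the $G$-bundle $G_1\to G_0$ is principal, the factor $G_1$ can be absorbed into the quotient by $G$, collapsing this to $(E_\action\times_{E_0}E_\action)/E$. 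This is exactly the simplification performed in the construction of the associator earlier in this chapter, now read off directly from the categorified data.

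Next I would compute $\action\circ(\id\times\mult)$. Here $\id\times\mult$ is represented by $U(E)\times_{\trivial{M}}E_\mult$, and composing with $\action$ over~$G$ gives $(E_\action\times_{G_0}E_\mult)/G$ immediately from the quotient formula, with no further cancellation; the principality of the $G$-action on~$E_\mult$, which holds because $E_\mult$ is an HS bibundle, together with Corollary~\ref{chap1:cor:G-bundles-P-times-Q-pullback}, guarantees that this quotient is representable. With both representatives in hand, the associator $\alpha$, being by definition a $2$-isomorphism of HS morphisms and hence an isomorphism of the representing bibundles, supplies the desired isomorphism $(E_\action\times_{E_0}E_\action)/E\cong(E_\action\times_{G_0}E_\mult)/G$, and its naturality is inherited from the $2$-categorical universal property of $E\times_{\trivial{M}}G\times_{\trivial{M}}G$. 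The main obstacle I anticipate is purely bookkeeping: tracking the several mutually commuting left and right actions carried by $E_\action$ through the iterated fibre products and quotients, and checking that the cancellation of the $G_1$-factor in the first computation respects all moment maps, so that $\alpha$ is genuinely an isomorphism of bibundles over the correct base rather than merely of underlying objects.
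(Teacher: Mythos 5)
Your strategy is the paper's: represent the two composite HS morphisms $\action\circ(\action\times\id)$ and $\action\circ(\id\times\mult)$ by explicit quotient bibundles, absorb the unit factors, and let the associator $\alpha$ of Definition~\ref{chap5:def:2gpd-action} supply the isomorphism. The execution, however, has a genuine error in the first computation. The source groupoid $E\times_{J,\trivial{M},\target}G\times_{\source,\trivial{M},\target}G$ has objects $(e,g_1,g_2)$ constrained only by $\source_0(g_1)=\target_0(g_2)$ \emph{in $M$}, so the bibundle representing $\action\times\id$ is the fibre product over $M$ (the paper's $E_\action\times_{X_0}G_1$, with $X_0=M$), taken along $\source_0\circ\pr_2\circ J_l$ and $\target_0\circ\bar\target$; it is not your $E_\action\times_{G_0}U(G)$, which, through the only available maps $\pr_2\circ J_l$ and $\bar\target$, imposes the much stronger condition $\pr_2 J_l(p)=\bar\target(g)$ in $G_0$. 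This difference survives the quotient: normalising $g$ to a unit via the principal right $G$-action on $G_1$ and transporting the last factor by the left $G$-action, a class in your space is represented by a pair $(p,p'')$ satisfying \emph{both} $J_r(p)=\pr_1 J_l(p'')$ \emph{and} $\pr_2 J_l(p)=\pr_2 J_l(p'')$, whereas $(E_\action\times_{E_0}E_\action)/E$ sees only the first condition (the corresponding $M$-level condition is automatic from $J\circ\action=\source\circ\pr_2$ and from $J_l$ landing in $E_0\times_{J_0,M,\target_0}G_0$). So your collapse step fails. A sanity check: let $G\rightrightarrows\trivial{M}$ be an ordinary group $K$ (so $M=\pt$, $G=\trivial{K}$, only identity $2$-morphisms) acting on $E=\trivial{K}$ by right translation; then your intermediate quotient is $K^{2}$, while $(E_\action\times_{E_0}E_\action)/E\cong K^{3}$.

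There is a symmetric slip in the second computation: the composition of $\id\times\mult$ with $\action$ is over the middle groupoid $E\times_{\trivial{M}}G$, not ``over $G$'', so the quotient formula gives
\[
\bigl((E_1\times_{M}E_\mult)\times_{E_0\times_{M}G_0}E_\action\bigr)\big/\bigl(E\times_{\trivial{M}}G\bigr),
\]
and a cancellation \emph{is} needed --- absorbing the unit factor $E_1$ into the $E$-part of the quotient, exactly as $G_1$ was absorbed in the first computation --- to arrive at $(E_\action\times_{G_0}E_\mult)/G$; here the fibre product over $G_0$ is correct, because the right moment map of $E_\mult$ genuinely lands in $G_0$. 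Once both points are repaired --- inner fibre product over $M$ in the first composite, and composition over $E\times_{\trivial{M}}G$ with the $E_1$-absorption in the second --- your argument becomes precisely the paper's proof.
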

\begin{proof}
  The 2\nbdash{}morphism~\eqref{chap5:eq:associator} implies an isomorphism of HS bibundles
  \begin{multline*}
    ((E_\action\times_{X_0} G_1)\times_{E_0\times_{X_0}
      G_0} E_\action) \bigm/ ( E \times_{\trivial{X_0}} G)\cong ((E_1\times_{X_0} E_\mult) \times_{E_0\times_{X_0}
      G_0} E_\action) \bigm/ ( E \times_{\trivial{X_0}} G).
  \end{multline*}
  Since \(G\) acts by multiplication, we have
  \[
  ((E_\action\times_{X_0} G_1)\times_{E_0\times_{X_0} G_0} E_\action) /( E\times_{\trivial{X_0}}G)
  \cong (E_\action \times_{E_0} E_\action) / E .
  \]
  Similarly, we get
  \[
  ((E_1\times_{X_0} E_\mult)\times_{E_0\times_{X_0} G_0} E_\action) /( E \times_{\trivial{X_0}}G)
  \cong (E_\action \times_{G_0} E_\mult) / G ,
  \]
  proving the lemma.
\end{proof}

In particular, the \(E\)-action on \(E_\action \times_{E_0} E_\action\) and the \(G\)-action on \(E_\action \times_{G_0} E_\mult\) are principal. Lemma~\ref{chap1:lem:principal-equivariant-map-is-iso} implies the following lemma.
\begin{lemma}\label{chap5:lem:pull-back}
The following two diagrams are pullback squares in~\(\Cat\):
\[
  \xymatrix{
    E_\action\times_{E_0} E_\action \ar[d]\ar[r] & (E_\action\times_{E_0} E_\action) / E \ar[d]\\
    E_\action                       \ar[r]       & E_0\times_{G_0}G_1 \rlap{\ ,}
  }\quad
  \xymatrix{
  E_\action\times_{E_0} E_\mult\ar[d]\ar[r] & (E_\action \times_{G_0} E_\mult)/ G \ar[d] \\
  E_\mult  \ar[r] & G_0\times_{X_0}G_0\rlap{\ .}
  }
\]
\end{lemma}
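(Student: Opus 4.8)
The plan is to read each of the two squares as a morphism of principal bundles for a single structure groupoid and then invoke Lemma~\ref{chap1:lem:principal-equivariant-map-is-iso}, which converts any equivariant map between principal bundles (over possibly different bases) into a pullback square. The two principality statements recorded immediately above provide exactly the bundles I need: the $E$\nbdash{}action on $E_\action\times_{E_0} E_\action$ and the $G$\nbdash{}action on $E_\action\times_{G_0} E_\mult$ are principal, so in each square the top horizontal arrow is a principal quotient map.

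For the first square I would take the structure groupoid to be $E$. The top arrow $E_\action\times_{E_0} E_\action\to (E_\action\times_{E_0} E_\action)/E$ is the projection of the principal $E$\nbdash{}bundle given by the tensor-style action $(a,b)\cdot e=(a\cdot e,\ e^{-1}\cdot b)$ (the description of this action is the one underlying Lemma~\ref{chap5:lem:iso-of-bibundles}). The bottom arrow is a principal $E$\nbdash{}bundle by Lemma~\ref{chap5:lem:left-E-is-principal}, since the left $E$\nbdash{}action on $E_\action$ is principal. I would then check that the evident projection $E_\action\times_{E_0} E_\action\to E_\action$ onto the relevant factor is $E$\nbdash{}equivariant — using Remark~\ref{chap1:def:inverse-action} to pass between the left action $e^{-1}\cdot b$ and the corresponding right principal action — and that it descends to the displayed map of bases. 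Lemma~\ref{chap1:lem:principal-equivariant-map-is-iso} then yields the pullback.

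The second square is entirely parallel, with $E$ replaced by $G=\arrow{X}$. Here the bottom arrow $E_\mult\to G_0\times_{X_0} G_0$ is the right principal $G$\nbdash{}bundle underlying the multiplication HS bibundle $E_\mult$, which is right principal over the object space $G_0\times_{X_0} G_0$ of its domain $G\times_{\trivial{X_0}} G$; the top arrow is the quotient of $E_\action\times_{G_0} E_\mult$ by its principal $G$\nbdash{}action; and the left vertical projection $E_\action\times_{G_0} E_\mult\to E_\mult$ is manifestly $G$\nbdash{}equivariant for the right $G$\nbdash{}action on the second factor. Applying Lemma~\ref{chap1:lem:principal-equivariant-map-is-iso} once more gives the second pullback.

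The only genuine work — and the step I expect to be the main obstacle — is verifying the equivariance of the two projection maps and the compatibility of the induced maps on the quotients, that is, confirming that each projection truly intertwines the quotient action on the fibre product with the principal action on the single factor. This is where the explicit moment-map bookkeeping from Lemma~\ref{chap5:lem:iso-of-bibundles} must be matched up correctly; once that is in place, the hypotheses of Lemma~\ref{chap1:lem:principal-equivariant-map-is-iso} are satisfied and both conclusions follow simultaneously.
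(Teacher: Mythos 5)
Your proposal is correct and is essentially the paper's own argument: the paper likewise deduces both squares in one stroke from the principality of the \(E\)-action on \(E_\action\times_{E_0}E_\action\) and of the \(G\)-action on \(E_\action\times_{G_0}E_\mult\) (established via Lemma~\ref{chap5:lem:iso-of-bibundles}) combined with Lemma~\ref{chap1:lem:principal-equivariant-map-is-iso} applied to the equivariant projection maps. The only difference is presentational---the paper states this in a single line, leaving implicit exactly the bookkeeping you spell out, namely which factor one projects onto and which principal structure (the left-action principality of Lemma~\ref{chap5:lem:left-E-is-principal}, or the right principality built into the HS bibundles \(E_\action\) and \(E_\mult\)) plays the role of the bottom bundle.
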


\begin{proof}[Proof of Lemma~\ref{chap5:lem:kan2}]
  Lemma~\ref{chap5:lem:pull-back} implies natural isomorphisms
  \[
  \Hom(\horn{2}{0}, \pi)\cong
  (E_\action \times_{G_0} E_\mult)/ G\times_{E_0\times_{X_0}G_0\times_{X_0}G_0 }( E_\action
  \times_{X_1} E_\mult)
  \]
  and
  \[
  \Hom(\horn{2}{1}, \pi)\cong
  (E_\action \times_{E_0} E_\action) / E\times_{E_0\times_{X_0}G_0\times_{X_0}G_0 } (E_\action
  \times_{X_1} E_\mult).
  \]
  Applying Lemma~\ref{chap5:lem:iso-of-bibundles}, we get a natural isomorphism
  \[
  \Hom(\horn{2}{0}, \pi)\cong \Hom(\horn{2}{1}, \pi),
  \]
  and \(\Hom(\horn{2}{2}, \pi)\cong \Hom(\horn{2}{1}, \pi)\) follows similarly.
\end{proof}

It follows from Lemma~\ref{chap5:lem:kan2} that there are 3-multiplications
\[
  m_j\colon \Hom(\horn{2}{j}, \pi)\to A_1,\quad j=0,1,2,
\]
and the coherence condition~\ref{chap5:item:coherence-3-mult-iso} follows immediately.

\begin{lemma}\label{chap5:lem:triangle-condition-imply-3mult-face-de}
The triangle conditions~\eqref{chap5:eq:x1g} and~\eqref{chap5:eq:xg1} imply the coherence conditions~\ref{chap5:item:coherence-3-mult-de}.
\end{lemma}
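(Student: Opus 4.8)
The plan is to unwind each identity in~\ref{chap5:item:coherence-3-mult-de} into an equality of $T$\nbdash{}elements of HS bibundles and then read it off as the $2$\nbdash{}commutativity of one of the two triangle diagrams. Recall that the $3$\nbdash{}multiplication $m_1$ was produced in Lemma~\ref{chap5:lem:kan2} out of the associator $\alpha$, via the bibundle isomorphism $(E_\action\times_{E_0}E_\action)/E\cong(E_\action\times_{G_0}E_\mult)/G$ of Lemma~\ref{chap5:lem:iso-of-bibundles}; that the degeneracy $\de_0$ was built from the unitor $\rho$ and the unit $\unit_E$; and that a degenerate base $2$\nbdash{}simplex $\de_0\pi_1\eta$ in $X$ encodes a horizontal multiplication by a unit, which is governed by the unitors $\lambda_G,\rho_G$ of $G\rightrightarrows\trivial{M}$.

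First I would treat $m_1(\de_0\pi_1\eta, \eta, \de_0\face_0\eta)=\eta$. The input edge $\de_0\face_0\eta$ is the unit edge $\rho^{-1}\circ\unit_E(\face_0\eta)$, so in the configuration $(E_\action\times_{E_0}E_\action)/E$ computing $\action\circ(\action\times\id)$ one of the two $E_\action$\nbdash{}factors lies in the image of $(\id,\unit\circ J)$; dually, the prescribed base simplex $\de_0\pi_1\eta$ forces the $G$\nbdash{}multiplication in $(E_\action\times_{G_0}E_\mult)/G$ computing $\action\circ(\id\times\mult)$ to be a multiplication by a unit, reducible through $\lambda_G$. These are exactly the two composites around the outer square of diagram~\eqref{chap5:eq:x1g}: $\alpha$ labels that square, $\rho$ the upper triangle, and $\lambda_G$ the lower one, so the $2$\nbdash{}commutativity of~\eqref{chap5:eq:x1g} asserts precisely that the edge produced by $m_1$ from this degenerate horn equals $\eta$. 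The symmetric identity $m_1(\de_1\pi_1\eta, \de_0\face_1\eta, \eta)=\eta$ is obtained in the same manner from diagram~\eqref{chap5:eq:xg1}, with the unit now inserted on the other side via $(\id,\unit\circ\source)$ and the right unitor $\rho_G$ of $G$ taking the place of $\lambda_G$.

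It then remains to deduce the parenthesised equivalent formulations involving $m_0$ and $m_2$. For this I would invoke the coherence condition~\ref{chap5:item:coherence-3-mult-iso}, already established after Lemma~\ref{chap5:lem:kan2}, which says that the natural maps $\Hom(\horn{2}{i},\pi)\to\Hom(\horn{2}{j},\pi)$ are all isomorphisms; under these isomorphisms $m_0$ and $m_2$ are completely determined by $m_1$, so the two identities just proved for $m_1$ translate directly into the listed identities for $m_0$ and $m_2$ once one records which face each $m_j$ fills. No further appeal to the triangle conditions is needed at this stage, and the well\nbdash{}definedness of $\de_0,\de_1$ and the identities $\face_0\de_0=\id=\face_1\de_1$ then follow as in the Dimension~$2$ discussion.

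The main obstacle I anticipate is the bookkeeping in the first step: translating the simplicial degeneracies $\de_0\pi_1\eta$ and $\de_0\face_0\eta$ into the exact unit insertions $(\id,\unit\circ J)$ and $\unit\circ\source$ that appear in~\eqref{chap5:eq:x1g} and~\eqref{chap5:eq:xg1}, and verifying that the bibundle isomorphisms of Lemmas~\ref{chap5:lem:iso-of-bibundles} and~\ref{chap5:lem:pull-back} carry the labelling $2$\nbdash{}morphisms $\alpha,\rho,\lambda_G$ (resp.\ $\rho_G$) to the ones named in the triangle conditions. In particular one must pin down correctly which of the two identities matches~\eqref{chap5:eq:x1g} and which matches~\eqref{chap5:eq:xg1}, according to whether the unit sits on the $[01]$\nbdash{} or the $[12]$\nbdash{}edge. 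Once this dictionary is fixed at the level of $T$\nbdash{}elements, each identity becomes a direct reading of the corresponding $2$\nbdash{}commutative triangle.
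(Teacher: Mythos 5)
Your proposal is correct and takes essentially the same route as the paper: the paper likewise converts the triangle conditions into commutative triangles of HS-bibundle isomorphisms (it writes out the case of~\eqref{chap5:eq:xg1}, namely \((E_\action\times_{E_0}E_1)/E\cong(E_\action\times_{G_0}G_1)/G\cong E_\action\), and notes the other cases are similar), and reads the \(m_1\)-identities off the degenerate horn configurations exactly as you do. Your final step deducing the \(m_0\)/\(m_2\) formulations from condition~\ref{chap5:item:coherence-3-mult-iso} is just the justification of the ``or equivalently'' clauses, which the paper leaves implicit.
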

\begin{proof}
The proof is similar to argument in~\cite[the coherence conditions, p. 4127]{Zhu:ngpd}. The triangle condition~\eqref{chap5:eq:xg1} gives a commutative triangle
\[
\xymatrix{
(E_\action \times_{E_0} E_1)/E\ar[rr]^-{\cong}\ar[rd]_{\cong} &&
(E_\action \times_{G_0} G_1)/G\ar[ld]^{\cong}\\
  & E_\action\rlap{\ ,} &
}
\]
in which every morphism is an isomorphism. It follows that for \(\eta \in A_1=E_\action\) we have
\[
m_1(\face_{1}\pi_1\eta, \de_{0}\face_{1}, \eta)=\eta.
\]
Other cases can be proved similarly.
\end{proof}

The pentagon condition~\eqref{chap5:eq:pentagon} implies the coherence condition~\ref{chap5:item:coherence-3-mult-ass}. Details of this argument can be found in the later Section~\ref{chap6:sec:cat-2-simp}.

We have shown that the spaces \(\{A_i\}_{i\ge 0}\) with the face and degeneracy maps constructed above give a simplicial object in \(\Cat\), denoted by \(A\).  Moreover, the maps \(\pi_i\colon A_i\to X_i\) give a simplicial morphism \(\pi\), which is a Kan fibration.  This completes the proof of Theorem~\ref{chap5:thm:2gpd-action-Kan}.

\section{Principal 2-bundles}\label{chap5:sec:principal-2-bundles}

In this section, we define principal 2\nbdash{}bundles by categorification and show that they agree with the principal 2\nbdash{}bundles defined by the simplicial method in Section~\ref{chap3:sec:hihger-principal-bundles}.

\begin{definition}\label{chap5:def:cat-2-bundle}
  Let \(G\rightrightarrows \trivial{M}\) be a categorified groupoid in \((\Cat,\covers)\), and let~\(N\) be an object in \(\Cat\). A \emph{categorified bundle of \(G\rightrightarrows \trivial{M}\)} over \(N\) consists of
  \begin{itemize}
    \item a groupoid \(E\) with an HS morphism \(\kappa\colon E\to \trivial{N}\) arising from a map \(\kappa_0\colon E_0\to N\),
    \item a right categorified action of \(G\rightrightarrows \trivial{M}\) as in Definition~\ref{chap5:def:2gpd-action} \((\action, J)\) on \(E\),
  \end{itemize}
  such that the action respects \(\kappa\); that is, we have
  \begin{equation}\label{chap5:eq:cat-bundle-respect}
  \kappa\circ \action=\kappa\circ \pr_1\colon E\times_{J,\trivial{M},\target} G\to \trivial{N}.
  \end{equation}
\end{definition}

The following two lemmas verify that this definition of 2\nbdash{}bundles agrees with the one given by the simplicial method.

\begin{lemma}\label{chap5:lem:cat-2-bun-to-simp}
  Let \(\kappa\colon E\to \trivial{N}\) be a categorified \(G\rightrightarrows \trivial{M}\) bundle. Denote the action 2\nbdash{}groupoid by \(A\). Then \(\kappa_0\) gives a canonical morphism \(A\to \sk_0 N\).
\end{lemma}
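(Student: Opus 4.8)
\emph{Proof idea.} Because $\sk_0 N$ is the constant simplicial object whose every face and degeneracy map is the identity, a morphism of simplicial objects $f\colon A\to\sk_0 N$ is the same as a family of maps $f_k\colon A_k\to N$ satisfying $f_k=f_{k-1}\circ\face_i$ for all $i$ and $f_k=f_{k+1}\circ\de_i$. Each $f_k$ is then forced to equal $\kappa_0$ precomposed with any iterated face map $A_k\to A_0$ (a ``vertex map''), and such a compatible family exists and is unique exactly when $\kappa_0\colon A_0=E_0\to N$ coequalises the two vertex maps of $A_1$; by the simplicial identities, any two vertices of a $k$\nbdash{}simplex are joined by a chain of edges, so the single relation on $A_1$ propagates to all dimensions. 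Thus the whole statement reduces to verifying the one equation
\[
\kappa_0\circ\face_0=\kappa_0\circ\face_1\colon A_1\to N.
\]

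Next I would recall from the construction of the action $2$\nbdash{}groupoid in Section~\ref{chap5:sec:cat-simp} that $A_0=E_0$, $A_1=E_\action$, with $\face_0=J_r$ and $\face_1=\pr_1\circ J_l$, where $E_\action$ is the HS bibundle giving the action morphism $\action\colon E\times_{J,\trivial{M},\target}G\to E$ and $J_l,J_r$ are its left and right moment maps. Under this identification the required equation becomes $\kappa_0\circ J_r=\kappa_0\circ\pr_1\circ J_l$, an equality of maps $E_\action\to N$.

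The plan is to extract this equality from the bundle-respecting condition~\eqref{chap5:eq:cat-bundle-respect} by means of Lemma~\ref{chap5:lem:bundles-triangle-to-M}. Since $\kappa=\Bund(\kappa_0)$ arises from a functor $E\to\trivial{N}$ and $\pr_1\colon E\times_{\trivial{M}}G\to E$ is a genuine functor, functoriality of the bundlisation $2$\nbdash{}functor gives $\kappa\circ\pr_1\cong\Bund(\kappa_0\circ\pr_1)$, while $\kappa\circ\action=E_\action\otimes\Bund(\kappa_0)$. I would then apply Lemma~\ref{chap5:lem:bundles-triangle-to-M} with $P=E_\action$, $K=E\times_{J,\trivial{M},\target}G$, $H=E$, $\psi=\kappa_0\colon E\to\trivial{N}$ and $\varphi=\kappa_0\circ\pr_1\colon K\to\trivial{N}$, the trivial groupoid $\trivial{N}$ playing the role of $\trivial{M}$ in that lemma. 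Its two legs $P\to K_0\to N$ and $P\to H_0\to N$ are precisely $\kappa_0\circ\pr_1\circ J_l=\kappa_0\circ\face_1$ and $\kappa_0\circ J_r=\kappa_0\circ\face_0$. Hence Lemma~\ref{chap5:lem:bundles-triangle-to-M} shows that the isomorphism $\kappa\circ\action\cong\kappa\circ\pr_1$ asserted by~\eqref{chap5:eq:cat-bundle-respect} holds if and only if $\kappa_0\circ\face_0=\kappa_0\circ\face_1$, which is exactly what is needed.

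Having established the invariance on $A_1$, I would finish by setting $f_k$ to be $\kappa_0$ composed with the $0$\nbdash{}th vertex map $A_k\to A_0$, checking that the value is independent of the chosen vertex and that $f$ commutes with all face and degeneracy maps; these verifications are immediate from the simplicial identities. The only genuinely substantive point is the bookkeeping of the previous paragraph, namely the correct matching of the moment maps $J_l,J_r$ of $E_\action$ with the face maps $\face_1,\face_0$ and the correct assignment of $\varphi,\psi,K,H$ in Lemma~\ref{chap5:lem:bundles-triangle-to-M}; once that is pinned down, the rest is formal.
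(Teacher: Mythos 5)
Your proposal is correct and follows exactly the paper's own route: the paper's proof likewise reduces the statement to the single equation $\kappa_0\circ\face_0=\kappa_0\circ\face_1$ on $A_1=E_\action$ and obtains it by combining the bundle-respecting condition~\eqref{chap5:eq:cat-bundle-respect} with Lemma~\ref{chap5:lem:bundles-triangle-to-M}, with the moment maps $J_r$ and $\pr_1\circ J_l$ matched to the two face maps just as you do. Your additional spelling-out of why a morphism into the constant simplicial object $\sk_0 N$ is determined by this one relation, and of the bundlisation bookkeeping $\kappa\circ\pr_1\cong\Bund(\kappa_0\circ\pr_1)$, $\kappa\circ\action=E_\action\otimes\Bund(\kappa_0)$, merely makes explicit what the paper leaves implicit.
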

\begin{proof}
 We need to show that \(\kappa_0\face_1=\kappa_0\face_0\colon A_1\to N\). We know that \(A_1=E_\action\) and that \(\face_1\) is given by \(J_r\), \(\face_0\) is given by \(\pr_1\circ J_l\). Now \eqref{chap5:eq:cat-bundle-respect} and Lemma~\ref{chap5:lem:bundles-triangle-to-M} give \(\kappa_0\face_1=\kappa_0\face_0\).
\end{proof}

\begin{lemma}
  Suppose that we have a categorified action of a categorified groupoid \(G\rightrightarrows \trivial{M}\) on a groupoid \(E\) with action 2\nbdash{}groupoid \(A\). Let \(N\) be an object in \((\Cat,\covers)\). Given a morphism \( A\to \sk_0 N\), then \(E\) is a categorified bundle over~\(N\).
\end{lemma}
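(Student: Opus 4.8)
The plan is to unwind the data of a morphism $A\to \sk_0 N$ into the two conditions required by Definition~\ref{chap5:def:cat-2-bundle}, reversing the reasoning of Lemma~\ref{chap5:lem:cat-2-bun-to-simp}. A simplicial morphism into the constant simplicial object $\sk_0 N$ is the same as a single map $\kappa_0\colon A_0=E_0\to N$ satisfying $\kappa_0\circ \face_0=\kappa_0\circ \face_1$ on $A_1$; the higher compatibilities are automatic because every face and degeneracy map of $\sk_0 N$ is the identity. Thus the only input is such a $\kappa_0$ with this single equalising property, and the task splits into producing (a) an HS morphism $\kappa\colon E\to \trivial{N}$ arising from $\kappa_0$ and (b) the compatibility~\eqref{chap5:eq:cat-bundle-respect}. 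The action $(\action, J)$ is already given, so no further data is needed there.

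First I would construct $\kappa$. By the characterisation of functors into a trivial groupoid, $\kappa_0$ underlies a groupoid functor $E\to \trivial{N}$ precisely when $\kappa_0\circ \source_E=\kappa_0\circ \target_E$, and its bundlisation is then the desired HS morphism $\kappa$. To obtain this identity, recall from Section~\ref{chap5:sec:cat-simp} that $A_1=E_\action$ and that the unitor induces an isomorphism $\rho\colon J_l^{-1}(E_0\times_{X_0} X_0)\cong E_1$ identifying the fibre piece $\Fib{\pi}_1$ (the $1$-simplices lying over units of $G$) with the arrow space $E_1$. Under this identification the two face maps $\face_0=J_r$ and $\face_1=\pr_1\circ J_l$ restrict to the source and target of the groupoid $E$; this is just the fact that the fibre of $\pi$ over the unit is the nerve of $E$. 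Restricting the hypothesis $\kappa_0\circ \face_0=\kappa_0\circ \face_1$ along $\rho^{-1}\colon E_1\hookrightarrow A_1$ therefore yields $\kappa_0\circ \source_E=\kappa_0\circ \target_E$, as required.

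Next I would verify~\eqref{chap5:eq:cat-bundle-respect}, namely $\kappa\circ \action=\kappa\circ \pr_1$ as HS morphisms $E\times_{J,\trivial{M},\target} G\to \trivial{N}$. Here $\kappa\circ \pr_1$ is the bundlisation of $\kappa_0\circ \pr_1$, while $\kappa\circ \action$ is the composite bibundle $E_\action\otimes_E \Bund(\kappa)$. This is exactly the situation of Lemma~\ref{chap5:lem:bundles-triangle-to-M}, applied with $P=E_\action$ regarded as a bibundle from $K=E\times_{J,\trivial{M},\target} G$ to $E$: the lemma reduces the desired isomorphism to the equality of the two composites $P\to K_0\to N$ and $P\to E_0\to N$. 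Tracing the moment maps, the first composite is $\kappa_0\circ \pr_1\circ J_l=\kappa_0\circ \face_1$ and the second is $\kappa_0\circ J_r=\kappa_0\circ \face_0$, so their equality is once again our hypothesis. Hence~\eqref{chap5:eq:cat-bundle-respect} holds and $E$ is a categorified bundle over $N$.

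The routine parts are the two applications of the single condition $\kappa_0\circ \face_0=\kappa_0\circ \face_1$; the step needing care is the bookkeeping in the second paragraph, namely confirming that under the unitor isomorphism $\rho$ the face maps $\face_0,\face_1$ of $A$ genuinely recover $\source_E,\target_E$, so that one equalising condition on $A_1$ specialises correctly both to the functoriality of $\kappa_0$ on $E_1$ and, via Lemma~\ref{chap5:lem:bundles-triangle-to-M}, to the bundle compatibility on all of $E_\action$. I expect no genuine obstacle here, since this is precisely the converse bookkeeping to Lemma~\ref{chap5:lem:cat-2-bun-to-simp}.
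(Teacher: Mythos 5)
Your proposal is correct and follows essentially the same route as the paper's proof: unwind the morphism $A\to \sk_0 N$ into a single map $\kappa_0$ with $\kappa_0\circ\face_0=\kappa_0\circ\face_1$ on $A_1$, restrict this equality to $E_1\subset A_1=E_\action$ to obtain the HS morphism $\kappa\colon E\to\trivial{N}$, and then apply Lemma~\ref{chap5:lem:bundles-triangle-to-M} to the full equality on $E_\action$ to deduce~\eqref{chap5:eq:cat-bundle-respect}. The extra bookkeeping you supply (the unitor isomorphism $\rho$ identifying the fibre piece with $E_1$ and matching $J_r$, $\pr_1\circ J_l$ with the source and target of $E$) is exactly what the paper leaves implicit.
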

\begin{proof}
  The morphism \(A\to \sk_0 N\) is given by a map \(\kappa_0\colon E_0=A_0\to N\) such that \(\kappa_0\face_1=\kappa_0\face_0\colon A_1\to N\). It follows that \(\kappa_0\face_1|_{E_1}=\kappa_0\face_0|_{E_1}\), hence \(\kappa_0\) gives a morphism \(\kappa\colon E\to \trivial{N}\). Since \(\kappa_0\face_1=\kappa_0\face_0\), Lemma~\ref{chap5:lem:bundles-triangle-to-M} proves~\eqref{chap5:eq:cat-bundle-respect}.
\end{proof}

\begin{definition}\label{chap5:def:2-bundle-principal}
  Let \(\kappa\colon E\to \trivial{N}\) be a categorified \(G\rightrightarrows \trivial{M}\) bundle. We say that \(E\) is \emph{principal} if \(\kappa_0\) is a cover and the shear morphism
  \[
  (\pr_1,\action)\colon E\times_{J, \trivial{M},\target} X\to E\times_{N} E,\quad  (e ,g)\mapsto (e, eg),
  \]
  is a Morita equivalence.
\end{definition}

\begin{remark}
  For strict 2\nbdash{}groups or crossed modules of groups, there are many equivalent methods to define such principal 2\nbdash{}bundles, which generalise various equivalent methods of defining usual principal (1)-bundles; see~\cite{Wockel:Principal,
  Behrend-Xu, Laurent-Gengoux-Stienon-Xu:Non-abelian_gerbes, Bartels}. We refer to~\cite{Nikolaus-Waldorf:Four_gerbes} for such a review of various versions.
\end{remark}

The following theorem shows that categorified principal bundles agree with 2\nbdash{}bundles given by the simplicial method in Definition~\ref{chap3:def:higher-principal-bundles}.

\begin{theorem}\label{chap5:thm:princiapl-2-bundle-cat=simp}
  Let \(\kappa\colon E\to \trivial{N}\) be a categorified \(G\rightrightarrows \trivial{M}\) bundle with action 2\nbdash{}groupoid~\(A\). It is principal if and only if the induced morphism \(A\to \sk_0 N\) given by Lemma~\ref{chap5:lem:cat-2-bun-to-simp} is acyclic.
\end{theorem}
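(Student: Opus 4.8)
The plan is to translate the categorification datum of principality—namely that $\kappa_0$ is a cover and the shear morphism is a Morita equivalence—into the simplicial acyclicity conditions $\Acyc(m)$ for the morphism $\kappa\colon A\to\sk_0 N$, and conversely. By Lemma~\ref{chap3:lem:acyclic-finite}, since $A$ and $\sk_0 N$ are built from the action $2$\nbdash{}groupoid (with $\sk_0 N$ a $0$\nbdash{}groupoid), it suffices to check $\Acyc(0)$, $\Acyc(1)$, and $\Acyc!(2)$; the higher conditions then follow automatically. So the real content is to match three finite pieces of data against the two conditions in Definition~\ref{chap5:def:2-bundle-principal}.

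First I would handle $\Acyc(0)$, which asks that $A_0\to N$ be a cover. Since $A_0=E_0$ and the map is exactly $\kappa_0$, this is literally the first half of the principality condition. Next, for the identification of the shear morphism I would compute $\Hom(\partial\Simp{1}\to\Simp{1},A\to\sk_0 N)$. Using $A_0=E_0$, $A_1=E_\action$, and the face maps $\face_0=J_r$, $\face_1=\pr_1\circ J_l$ established in Section~\ref{chap5:sec:cat-simp}, this presheaf is $(A_0\times A_0)\times_{N\times N} N\cong E_0\times_N E_0$. The canonical map $A_1\to E_0\times_N E_0$ sends $\eta$ to $(\face_1\eta,\face_0\eta)$, which under the bibundle description of $\action$ is precisely the shear morphism $(e,g)\mapsto(e,eg)$ of Definition~\ref{chap5:def:2-bundle-principal}. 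Thus $\Acyc(1)$ for $A\to\sk_0 N$ at the level of the $1$\nbdash{}simplex is governed by this shear map.

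The main obstacle—and the step I would spend the most care on—is reconciling the $2$\nbdash{}categorical notion ``the shear morphism is a Morita equivalence'' with the simplicial requirement that the shear morphism be an acyclic fibration of groupoids (i.e.\ the combination $\Acyc(1)$ together with $\Acyc!(2)$). Here I would invoke the characterisation of acyclic fibrations of Lie groupoids proved just before Definition~\ref{chap3:def:higher-principal-bundles}: the nerve of a groupoid functor $f$ is acyclic if and only if $f$ is a weak equivalence with $f_0$ a cover. A Morita equivalence between groupoids is exactly a two\nbdash{}sided principal bibundle, which by Theorem~\ref{chap1:thm:GPD=BUN} corresponds to an invertible $1$\nbdash{}morphism in $\BUN$, hence to a weak equivalence at the level of generalised morphisms. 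The delicate point is that the shear \emph{morphism} in Definition~\ref{chap5:def:2-bundle-principal} is an HS morphism between the groupoids $E\times_{J,\trivial{M},\target}X$ and $E\times_N E$, whereas the simplicial shear is an honest map of spaces; I would reconcile these by noting that the left $E$\nbdash{}action on $E_\action$ is principal (Lemma~\ref{chap5:lem:left-E-is-principal}) and applying Lemma~\ref{chap1:lem:principal-equivariant-map-is-iso} together with Lemma~\ref{chap5:lem:bundles-triangle-to-M}, so that being a Morita equivalence is equivalent to the induced map on orbit spaces (the space\nbdash{}level shear) being a diffeomorphism, which is exactly $\Acyc!(2)$.

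Finally I would assemble the pieces: $\Acyc(0)$ is the cover condition on $\kappa_0$; the conjunction of $\Acyc(1)$ and $\Acyc!(2)$ is the acyclicity of the groupoid-level shear, equivalently (via the characterisation of acyclic Lie groupoid fibrations and Theorem~\ref{chap1:thm:GPD=BUN}) the statement that the shear is a Morita equivalence. Running the equivalences in both directions gives the ``if and only if.'' The converse direction uses the same dictionary read backwards, plus Lemma~\ref{chap3:lem:acyclic-finite} to propagate $\Acyc!(m)$ for $m\ge 3$ from the $2$\nbdash{}groupoid hypothesis, so no separate argument is needed in high dimensions.
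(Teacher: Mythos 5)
Your outer scaffolding agrees with the paper: the reduction to $\Acyc(0)$, $\Acyc(1)$, $\Acyc!(2)$ via Lemma~\ref{chap3:lem:acyclic-finite}, the computation $\Hom(\partial\Simp{1}\to\Simp{1},A\to\sk_0 N)\cong E_0\times_N E_0$, and the identification of $\Acyc(0)$ with the cover condition on $\kappa_0$ are all correct. The genuine gap is in your central dictionary, where you equate ``$\Acyc(1)$ together with $\Acyc!(2)$'' with ``acyclicity of the groupoid-level shear'' and then equate acyclicity with Morita equivalence via the characterisation of acyclic fibrations of Lie groupoid functors and Theorem~\ref{chap1:thm:GPD=BUN}. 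This fails twice. First, the shear of Definition~\ref{chap5:def:2-bundle-principal} is an HS \emph{morphism}, not a functor: ``acyclicity'' is not defined for it, the characterisation lemma (nerve of $f$ acyclic if and only if $f$ is a weak equivalence with $f_0$ a cover) does not apply to it, and there is no reason for the shear bibundle $E_1\times_{\target,E_0,\pr_1\circ J_l}E_\action$ to be the bundlisation of any functor (Proposition~\ref{chap1:prop:is-bundlisation} would require a section of its left moment map). Second, and decisively, even for functors Morita equivalence is \emph{strictly weaker} than acyclicity: an isomorphism in $\BUN$ corresponds to a weak equivalence, while acyclicity additionally demands that the map on objects be a cover. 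For instance, the functor from the one-object trivial groupoid $*$ to the pair groupoid $P(M)$ of a positive-dimensional connected manifold, picking any point of $M$, is a weak equivalence---so $\Bund$ of it is a Morita equivalence---but its nerve is not acyclic, since $\pt\to M$ is not a cover. Hence your chain of equivalences breaks exactly where the theorem needs it: in the ``only if'' direction, invertibility of the shear in $\BUN$ cannot by itself yield the strict conditions $\Acyc(1)$ and $\Acyc!(2)$, because these are not invariant under Morita equivalence.

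What is true---and is how the paper argues---is Lemma~\ref{chap5:lem:shear-morphism-ismorita=left-principal}: the HS bibundle realising the shear is $E_1\times_{\target,E_0,\pr_1\circ J_l}E_\action$, the left $E$-action on it is principal with quotient $E_\action$, and therefore the shear is a Morita equivalence if and only if $(\pr_1\circ J_l,J_r)\colon E_\action\to E_0\times_N E_0$ is a \emph{left principal $G$-bundle}. Note this is not ``the induced map on orbit spaces is a diffeomorphism'': it is the conjunction of a cover condition (which is exactly $\Acyc(1)$) and an isomorphism condition on the $G$-shear $E_\action\times_{G_0}G_1\to E_\action\times_{E_0\times_N E_0}E_\action$. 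Matching the latter with $\Acyc!(2)$ is a separate argument that your proposal omits: one direction uses the pullback squares provided by Lemmas~\ref{chap1:lem:action-over-principal-is-principal}, \ref{chap5:lem:iso-of-bibundles} and~\ref{chap5:lem:pull-back} (as in the proof of Lemma~\ref{chap5:lem:kan2}) to identify $\Hom(\partial\Simp{2}\to\Simp{2},A\to\sk_0 N)$ with $\Hom(\horn{2}{0},\pi)\cong A_2$; the converse uses the pullback square whose vertical arrows adjoin degenerate $1$-simplices, recovering the $G$-shear isomorphism from $\Acyc!(2)$. You do name some of the right ingredients (Lemmas~\ref{chap5:lem:left-E-is-principal}, \ref{chap1:lem:principal-equivariant-map-is-iso}, \ref{chap5:lem:bundles-triangle-to-M}), but the statement you extract from them is false as written, and without the left-principality reformulation and these pullback arguments the proof does not close.
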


First, condition \(\Acyc(0)\) means that \(\kappa_0\colon E_0\to N\) is a cover. So let us assume that \(\kappa_0\colon E_0\to N\) is a cover. It remains to show that, if \((\pr_1,\action)\) is a Morita equivalence, then \(\Acyc(1)\) and \(\Acyc!(2)\) hold, and vise versa. We divide the proof into three lemmas.

\begin{lemma}\label{chap5:lem:shear-morphism-ismorita=left-principal}
  The morphism \((\pr_1,\action)\colon E\times_{J, \trivial{M},\target} G\to E\times_{N} E\) is a Morita equivalence if and only if \((\pr_1\circ J_l, J_r)\colon E_\action \to E_0\times_{N} E_0\) is a left principal \(G\)-bundle.
\end{lemma}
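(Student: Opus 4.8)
The plan is to unwind both sides of the claimed equivalence in terms of the underlying manifolds and actions, and show they describe the same principality condition. Recall that the shear morphism $(\pr_1,\action)\colon E\times_{J,\trivial{M},\target} G\to E\times_N E$ is a morphism of HS bibundles, while the right-hand condition concerns the map $(\pr_1\circ J_l, J_r)\colon E_\action\to E_0\times_N E_0$. First I would recall from Proposition~\ref{chap1:prop:is-bundlisation} and the correspondence between HS bibundles and biaction groupoids that a Morita equivalence is precisely a two-sided principal bibundle. Thus the statement ``$(\pr_1,\action)$ is a Morita equivalence'' should be translated into a concrete statement about the bibundle representing the shear morphism. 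Since $\action$ is represented by $E_\action$ and $\pr_1$ is essentially the identity on the first factor, the composite bibundle $(\pr_1,\action)$ from $E\times_{\trivial{M},\target}G$ to $E\times_N E$ can be computed explicitly; I expect it to be given (up to canonical isomorphism) by $E_\action$ itself, with appropriate moment maps.

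The key computational step is to identify the bibundle underlying the shear morphism. I would use Lemma~\ref{chap5:lem:iso-of-bibundles}-style manipulations, exploiting that $G$ acts by multiplication, to show that the HS bibundle representing $(\pr_1,\action)\colon E\times_{\trivial{M},\target}G\to E\times_N E$ is isomorphic to $E_\action$ equipped with the moment maps $(\pr_1\circ J_l, J_r)$ to the source groupoid $E\times_{\trivial{M},\target}G$ and the moment map to the target groupoid $E\times_N E$. The crucial translation is then: this bibundle is a Morita equivalence (two-sided principal) if and only if the relevant shear map into $E_0\times_N E_0$ exhibits $E_\action$ as a left principal $G$-bundle over $E_0\times_{N}E_0$. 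Here I would invoke Proposition~\ref{chap1:prop:2-morphism-of-bundles} and the characterisation of Morita equivalences via principality of both actions, noting that the right $(E\times_N E)$-principality is automatic or encodes the left $G$-bundle structure.

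The main obstacle, I expect, will be carefully bookkeeping the two sides of the bibundle and their moment maps, since the source object $E\times_{J,\trivial{M},\target}G$ is itself a product groupoid and the target $E\times_N E$ is a strong pullback; one must verify (using Corollary~\ref{chap1:cor:pullback-bibundle-to-M=pullback} or the extensivity/pullback lemmas) that the relevant fibre products are representable and that the canonical comparison maps are the expected ones. Concretely, I would show that two-sided principality of the shear bibundle reduces, after quotienting out the $G$-action that acts by multiplication, exactly to the condition that
\[
(\pr_1\circ J_l, J_r)\colon E_\action\to E_0\times_N E_0
\]
is a principal $G$-bundle, using Lemma~\ref{chap1:lem:action-over-principal-is-principal} and Lemma~\ref{chap1:lem:principal-equivariant-map-is-iso} to pass between the quotient and the total space. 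The converse direction runs the same identifications backwards: given that $E_\action\to E_0\times_N E_0$ is left principal, one reconstructs the two-sided principality of $(\pr_1,\action)$ and hence that it is a Morita equivalence. Since every morphism of right principal bibundles is an isomorphism (Lemma~\ref{chap1:lem:principal-equivariant-map-is-iso}), the comparison map being well-defined suffices, and no delicate smoothness argument beyond the standing pretopology assumptions is needed.
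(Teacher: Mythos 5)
Your high-level architecture is the right one and in fact matches the paper's: translate ``Morita equivalence'' into two-sided principality of the HS bibundle representing the shear morphism, note that right principality comes for free, and reduce left principality of the product action to principality of a single factor by quotienting out an action that is given by multiplication. But two of your concrete steps are wrong, and as written the argument would fail. First, the bibundle representing \((\pr_1,\action)\) is \emph{not} \(E_\action\); it is
\[
E_1\times_{\target,E_0,\pr_1\circ J_l} E_\action,
\]
because the first component \(\pr_1\), being a functor, contributes its bundlisation (an \(E_1\)-factor), not an identity. Indeed \(E_\action\) with left moment \(J_l\) and right moment \((\pr_1\circ J_l,J_r)\) cannot serve as this bibundle: the right \((E\times_N E)\)-action on it (first factor acting through the left \(E\)-action, second through the right \(E\)-action) fails to be principal over \(J_l\) whenever \(E\) has non-trivial isotropy, since every loop \(f_1\) of \(E\) at \(\pr_1 J_l(p)\) produces, via principality of the right \(E\)-action, a distinct solution \((f_1,f_2)\) of \(f_1^{-1}p f_2=p\). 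What is true is only that the quotient \((E_1\times_{E_0}E_\action)/E\) is canonically isomorphic to \(E_\action\).

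Second, and more seriously, you propose to quotient out ``the \(G\)-action that acts by multiplication''. In this lemma the roles are exactly the opposite: it is the left \(E\)-action, acting by multiplication on the \(E_1\)-factor, that is automatically principal and gets quotiented away; what survives on the quotient \((E_1\times_{E_0}E_\action)/E\cong E_\action\) is precisely the left \(G\)-action, whose principality over \(E_0\times_N E_0\) is the condition in the statement. The \(G\)-action on the shear bibundle is \emph{not} by multiplication --- it acts through the categorified action on \(E_\action\) --- and its principality is the very thing being characterised, so it cannot be disposed of as automatic; quotienting by it would make the reduction circular. (You have likely pattern-matched against Lemma~\ref{chap5:lem:left-E-is-principal}, where it is indeed a \(G_1\)-factor that is quotiented by multiplication, leaving a left \(E\)-bundle; here the factors are swapped.) With these two corrections --- bibundle \(=E_1\times_{E_0}E_\action\), quotient by the left \(E\)-action, conclude that left \((E\times_{\trivial{M}}G)\)-principality is equivalent to left \(G\)-principality of \((\pr_1\circ J_l,J_r)\colon E_\action\to E_0\times_N E_0\) --- your plan becomes the paper's proof. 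A minor point: Proposition~\ref{chap1:prop:is-bundlisation} and Proposition~\ref{chap1:prop:2-morphism-of-bundles} play no role here; the inputs actually needed are Lemma~\ref{chap1:lem:action-over-principal-is-principal} and Corollary~\ref{chap1:cor:G-bundles-P-times-Q-pullback}.
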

\begin{proof}
  The morphism \((\pr_1,\action)\) is given by the HS bibundle
  \[
  E_1\times_{\target,E_0,\pr_1\circ J_l} E_\action.
  \]
  It is a Morita equivalence if and only \(E_1\times_{\target,E_0,\pr_1\circ J_l} E_\action\to E_0\times_{N} E_0\) is a left \(E\times_{\trivial{M},\target} G\) principal bundle. Since the left \(E\)-action is given by multiplication, this \(E\)-action is principal and
  \[
  (E_1\times_{\target,E_0,\pr_1\circ J_l} E_\action)/E\cong E_\action
  \]
  as a left \(G\) bibundle. The rest follows immediately.
\end{proof}

\begin{lemma}
 If \((\pr_1,\action)\colon E\times_{J, \trivial{M},\target} G\to E\times_{N} E\) is a Morita equivalence, then \(\Acyc(1)\) and \(\Acyc!(2)\) for \(A\to \sk_0 N\) hold.
\end{lemma}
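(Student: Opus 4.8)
The plan is to verify the two required conditions separately, using Lemma~\ref{chap5:lem:shear-morphism-ismorita=left-principal} to convert the Morita hypothesis into a statement about an ordinary principal bundle. By that lemma, assuming that \((\pr_1,\action)\) is a Morita equivalence is the same as assuming that
\[
(\pr_1\circ J_l, J_r)\colon A_1=E_\action\to E_0\times_N E_0
\]
is a left principal \(G\)-bundle, and this is the only input I will use. Throughout I unravel the relative presheaves \(\Hom(\partial\Simp{m}\to\Simp{m}, A\to\sk_0 N)\) using that \(\sk_0 N\) is constant, so that a map into it out of a connected simplicial set imposes no extra condition while a disconnected source produces a fibre product over \(N\). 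For \(\Acyc(1)\), since \(\partial\Simp{1}=\Simp{0}\sqcup\Simp{0}\) is disconnected I compute \(\Hom(\partial\Simp{1}\to\Simp{1}, A\to\sk_0 N)=(A_0\times A_0)\times_{N\times N} N = E_0\times_N E_0\), and the canonical map \(A_1\to E_0\times_N E_0\) is exactly \((\face_1,\face_0)=(\pr_1\circ J_l, J_r)\). Since a principal bundle projection is a cover, \(\Acyc(1)\) follows at once.

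For \(\Acyc!(2)\) I first observe that \(\partial\Simp{2}\) is connected, so the constraint imposed by \(\sk_0 N\) is vacuous and \(\Hom(\partial\Simp{2}\to\Simp{2},A\to\sk_0 N)\cong\Hom(\partial\Simp{2}, A)\); the task is therefore to prove that the boundary map \(A_2\to\Hom(\partial\Simp{2}, A)\) is an isomorphism. The strategy is to exhibit both sides as principal \(G\)-bundles over the space of composable pairs \(A_1\times_{A_0} A_1\) and then invoke Lemma~\ref{chap1:lem:principal-equivariant-map-is-iso}. Using \(\Kan!(2,1)\) for \(\pi\) together with the pushout decomposition of \(\Simp{2}\) into \(\Horn{2}{1}\) and the edge \(\Simp{1}\{02\}\) and Corollary~\ref{chap2:cor:lift-composition-pushout-combined}, I present
\[
A_2\cong (A_1\times_{A_0}A_1)\times_{X_1\times_{X_0}X_1} X_2,
\qquad
\Hom(\partial\Simp{2}, A)\cong (A_1\times_{A_0}A_1)\times_{E_0\times_N E_0} E_\action,
\]
the first being the pullback of \(X_2\to X_1\times_{X_0}X_1\) along \(\pi_1\), and the second the pullback of the principal bundle \(E_\action\to E_0\times_N E_0\) along the endpoint map \((a_{01},a_{12})\mapsto(\face_1 a_{01},\face_0 a_{12})\). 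Since \(X\) is a \(2\)-groupoid, its multiplication shear is a Morita equivalence, so \(X_2\to X_1\times_{X_0}X_1\) is itself a principal \(G\)-bundle (with \(G\cong\arrow{X}\)); hence both displayed pullbacks are principal \(G\)-bundles over \(A_1\times_{A_0}A_1\). The boundary map is a map of these two bundles covering the identity of the base, so once its equivariance is established it is an isomorphism by Lemma~\ref{chap1:lem:principal-equivariant-map-is-iso}, which yields \(\Acyc!(2)\).

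The main obstacle is this last equivariance: I must check that sending an \(X\)-level filler \(\gamma\in X_2\) to the resulting third edge \(a_{02}=\face_1\) intertwines the \(G\)-action on \(X_2\) (bigons acting on \(2\)-cells) with the principal \(G\)-action on \(E_\action\), so that the two principal structures have the same \(G\)-torsor fibres. This is exactly where the coherence carried by \(\pi\) enters: the compatibility is an instance of the unique conditions \(\Kan!(3,1)\), \(\Kan!(3,2)\), and \(\Kan!(3,3)\) for \(\pi\), repackaged through the bibundle isomorphism of Lemma~\ref{chap5:lem:iso-of-bibundles} and the pullback squares of Lemma~\ref{chap5:lem:pull-back}. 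Carrying out this verification, together with the routine bookkeeping of representability and compatibility with the face maps, completes the argument.
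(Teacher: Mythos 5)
Your proof is correct, but it reaches \(\Acyc!(2)\) by a genuinely different route than the paper. The paper never compares \(A_2\) and \(\Hom(\partial\Simp{2},A)\) over the space of composable pairs; instead it identifies \(\Hom(\partial\Simp{2}\to\Simp{2},A\to\sk_0 N)\) with the \emph{outer}-horn space \(\Hom(\horn{2}{0},\pi)\), by rerunning the fibre-product argument of Lemma~\ref{chap5:lem:kan2}: the hypothesis enters through the pullback square that Lemma~\ref{chap1:lem:action-over-principal-is-principal} provides for the principal bundle \(E_\action\to E_0\times_N E_0\), which is then combined with Lemmas~\ref{chap5:lem:iso-of-bibundles} and~\ref{chap5:lem:pull-back}; the conclusion follows because \(\Kan!(2,0)\) for \(\pi\) gives \(A_2\cong\Hom(\horn{2}{0},\pi)\). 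You anchor instead on the \emph{inner} horn: \(A_2\) is presented via \(\Kan!(2,1)\) as the pullback of \(X_2\to X_1\times_{X_0}X_1\), the boundary space as the pullback of \(E_\action\to E_0\times_N E_0\), and the restriction map \(A_2\to\Hom(\partial\Simp{2},A)\) is an equivariant map of principal \(G\)-bundles over \(A_1\times_{A_0}A_1\) covering the identity, hence an isomorphism by Lemma~\ref{chap1:lem:principal-equivariant-map-is-iso}. The paper's route buys economy: nothing new has to be verified beyond quoting earlier lemmas. Your route buys transparency: the isomorphism realizing \(\Acyc!(2)\) is visibly the boundary restriction map itself, at the cost of the equivariance check you flag. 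That check is genuine but routine, and your instinct about where it lives is right: it is a direct consequence of \(\Kan!(3,1)\)--\(\Kan!(3,3)\) for \(\pi\), by the same three-dimensional horn-filling pattern used in the paper's construction of the associator (Lemmas~\ref{chap5:lem:iso-of-bibundles} and~\ref{chap5:lem:pull-back} are not really the relevant tools there). One attribution to correct: the principality of \(X_2\to X_1\times_{X_0}X_1\) has nothing to do with a shear map of \(X\) being a Morita equivalence; it is simply the statement that \(X_2=E_\mult\) is an HS bibundle from \(G\times_{\trivial{M}}G\) to \(G\), i.e.\ principal over its left moment map \((\face_2,\face_0)\), which is part of the definition of a categorified groupoid (equivalently, encoded in the 2-groupoid Kan conditions on \(X\)).
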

\begin{proof}
Lemma~\ref{chap5:lem:shear-morphism-ismorita=left-principal} implies \(\Acyc(1)\). It remains to prove \(\Acyc!(2)\). Since \(E_\action \to E_0\times_{N} E_0\) is a left principal \(G\)-bundle, Lemma~\ref{chap1:lem:action-over-principal-is-principal} shows that the diagram
\[
\xymatrix{
  E_\action\times_{E_0} E_\mult\ar[d]\ar[r] & (E_\action\times_{E_0} E_\mult)/G\ar[d]\\
  E_\action \ar[r]& E_0\times_{N} E_0
}
\]
is a pullback square. Combining this with Lemmas~\ref{chap5:lem:iso-of-bibundles} and~\ref{chap5:lem:pull-back}, we get
\[
  \Hom(\partial\Simp{2}\to\Simp{2}, A\to \sk_0 N)\cong \Hom(\horn{2}{0},\pi)
\]
by an argument similar to the proof of Lemma~\ref{chap5:lem:kan2}. This completes the proof.
\end{proof}

\begin{lemma}
 If \(A\to \sk_0 N\) is acyclic, then \((\pr_1,\action)\colon E\times_{J, \trivial{M},\target} G\to E\times_{N} E\) is a Morita equivalence.
\end{lemma}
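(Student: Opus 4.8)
The plan is to reverse the chain of natural isomorphisms used in the preceding lemma. By Lemma~\ref{chap5:lem:shear-morphism-ismorita=left-principal} it suffices to prove that \((\pr_1\circ J_l, J_r)\colon E_\action\to E_0\times_N E_0\) is a left principal \(G\)-bundle. Since \(A\) is a \(2\)-groupoid and \(\sk_0 N\) is a \(0\)-groupoid, Lemma~\ref{chap3:lem:acyclic-finite} shows that acyclicity of \(A\to\sk_0 N\) is equivalent to the three conditions \(\Acyc(0)\), \(\Acyc(1)\) and \(\Acyc!(2)\). The condition \(\Acyc(0)\) is the standing assumption that \(\kappa_0\) is a cover, while \(\Acyc(1)\) says precisely that \((\face_1,\face_0)=(\pr_1\circ J_l, J_r)\colon E_\action\to E_0\times_N E_0\) is a cover; this already supplies the first half of principality, so the whole task reduces to extracting the shear isomorphism from \(\Acyc!(2)\).

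First I would record what \(\Acyc!(2)\) amounts to. By \(\Kan!(2,0)\) we have \(A_2\cong\Hom(\horn{2}{0},\pi)\), so \(\Acyc!(2)\) is exactly the natural isomorphism
\[
\Hom(\partial\Simp{2}\to\Simp{2}, A\to\sk_0 N)\cong\Hom(\horn{2}{0},\pi),
\]
over the base \(E_0\times_{X_0}G_0\times_{X_0}G_0\). This is the very identity that the forward direction produced as a \emph{consequence} of principality; here it is instead the hypothesis, and I want to run the argument in the opposite direction.

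Next I would unwind this isomorphism using the descriptions from the proofs of Lemmas~\ref{chap5:lem:kan2}, \ref{chap5:lem:iso-of-bibundles} and~\ref{chap5:lem:pull-back}. The identification \(\Hom(\horn{2}{0},\pi)\cong (E_\action\times_{G_0}E_\mult)/G\times_{\cdots}(E_\action\times_{X_1}E_\mult)\) together with the analogous description of the relative boundary space reduces the displayed isomorphism to the assertion that the square
\[
\xymatrix{
E_\action\times_{G_0} E_\mult\ar[d]\ar[r] & (E_\action\times_{G_0} E_\mult)/G\ar[d]\\
E_\action \ar[r]& E_0\times_{N} E_0
}
\]
is a pullback. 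By Lemma~\ref{chap1:lem:action-over-principal-is-principal} (and the characterisation of principal bundles via the shear map), a cartesian such square is exactly the statement that \(E_\action\to E_0\times_N E_0\) is a left principal \(G\)-bundle, which is what we need.

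The main obstacle is that \(\Acyc!(2)\) is an equality between spaces obtained only \emph{after} quotienting by the principal \(E\)-action, so the bare shear isomorphism for the \(G\)-action cannot be read off at once. To descend it, I would exploit that \(E_\action\) is a left principal \(E\)-bundle over \(E_0\times_{X_0}G_0\) (Lemma~\ref{chap5:lem:left-E-is-principal}) and that all comparison maps in sight are \(E\)-equivariant: then Lemma~\ref{chap1:lem:action-over-principal-is-principal} makes the relevant squares pullbacks upstairs, and the cover-cancellation of Lemma~\ref{chap3:lemma:iso} (covers are effective epimorphisms, so an isomorphism after pulling back along a cover is an isomorphism) promotes the quotient-level isomorphism to an isomorphism of the un-quotiented shear map. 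Equivalently, I could present both \(E_\action\times_{G_0}E_\mult\) and its comparison space as principal \(E\)-bundles and invoke Lemma~\ref{chap1:lem:principal-equivariant-map-is-iso}, that an equivariant map between principal bundles is automatically an isomorphism. Either route yields the shear isomorphism, so the \(G\)-action is principal, and by Lemma~\ref{chap5:lem:shear-morphism-ismorita=left-principal} the morphism \((\pr_1,\action)\colon E\times_{J, \trivial{M},\target} G\to E\times_{N} E\) is a Morita equivalence.
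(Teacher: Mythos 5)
Your opening moves coincide with the paper's: the reduction via Lemma~\ref{chap5:lem:shear-morphism-ismorita=left-principal} to left principality of \(E_\action\to E_0\times_N E_0\), the use of \(\Acyc(1)\) for the cover condition, and the identification of \(\Kan!(2,0)\) plus \(\Acyc!(2)\) with the isomorphism \(\Hom(\horn{2}{0},\pi)\cong\Hom(\partial\Simp{2}\to\Simp{2},A\to\sk_0 N)\) are all exactly right (minor point: the direction you need, acyclic \(\Rightarrow\Acyc!(2)\), is Lemma~\ref{chap3:lem:acyclic-def}, not Lemma~\ref{chap3:lem:acyclic-finite}). The gap is in how you pass from this isomorphism to the shear isomorphism, and it sits in your two pivotal claims, both of which are asserted with citations that only support their \emph{converses}. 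First, the claim that the displayed isomorphism ``reduces to'' cartesianness of the square involving \((E_\action\times_{G_0}E_\mult)/G\): the preceding lemma in the paper proves precisely the opposite implication (cartesian square \(\Rightarrow\) isomorphism), and reversing it is not formal, since both sides of the isomorphism are large fibre products and extracting a statement about one corner requires an argument you do not supply. Second, the claim that such a cartesian square ``is exactly'' left principality: Lemmas~\ref{chap1:lem:action-over-principal-is-principal} and~\ref{chap1:lem:principal-equivariant-map-is-iso} give principal \(\Rightarrow\) cartesian, not the converse; the converse is true but needs the freeness of the \(G\)-action on \(E_\mult\) and a separate cancellation argument. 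Worse, your fallback via Lemma~\ref{chap1:lem:principal-equivariant-map-is-iso} is circular: to apply it to the shear map you would need to know beforehand that its target \(E_\action\times_{E_0\times_N E_0}E_\action\), with the \(G\)-action on the first factor, is a principal bundle --- which is the statement being proven. The descent paragraph gestures at the right tools but never actually produces the shear isomorphism.

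The paper's proof avoids quotients entirely and is worth contrasting with your plan. It pulls the isomorphism \(\Hom(\horn{2}{0},\pi)\xrightarrow{\cong}\Hom(\partial\Simp{2}\to\Simp{2},A\to\sk_0 N)\) back along the \emph{monomorphisms} given by adding a degenerate \(1\)-simplex in \(A_1\), producing a pullback square
\[
  \xymatrix{
  E_\action\times_{G_0} G_1\ar[r] \ar[d] & E_\action \times_{E_0\times_{N} E_0} E_\action\ar[d] \\
  \Hom(\horn{2}{0},\pi)\ar[r]^-{\cong}& \Hom(\partial\Simp{2}\to\Simp{2}, A\to \sk_0 N)
}
\]
whose top row is literally the shear map of the left \(G\)-action; a pullback of an isomorphism is an isomorphism, and principality follows. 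If you try to fill in your two missing implications honestly (e.g.\ element-wise in \(\Sets\): uniqueness of fillers for boundaries with a degenerate edge gives freeness and transitivity), you will find yourself rediscovering exactly this pull-back-along-degeneracies argument, so the detour through \((E_\action\times_{G_0}E_\mult)/G\) adds descent difficulties without buying anything.
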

\begin{proof}
By Lemma~\ref{chap5:lem:shear-morphism-ismorita=left-principal}, it suffices to show that \(E_\action \to E_0\times_{N} E_0\) is a left principal \(G\)-bundle. The condition \(\Acyc(1)\) implies that \(E_\action \to E_0\times_{N} E_0\) is a cover. The diagram
\[
  \xymatrix{
  E_\action\times_{G_0} G_1\ar[r] \ar[d] & E_\action \times_{E_0\times_{N} E_0} E_\action\ar[d] \\
  \Hom(\horn{2}{0},\pi)\ar[r]^-{\cong}& \Hom(\partial\Simp{2}\to\Simp{2}, A\to \sk_0 N)
}
\]
is a pullback square, where the vertical arrows are induced by adding a degenerate 1-simplex in \(A_1\). It follows that \(E_\action \to E_0\times_{N} E_0\) is a left principal \(G\)-bundle.
\end{proof}

\chapter{Bibundles of 2-Groupoids}\label{chap6}
\thispagestyle{empty}

In this chapter, we first define categorified bibundles between two categorified groupoids. Roughly, such a bibundle is a groupoid equipped with two categorified actions of 2\nbdash{}groupoids, and these two actions commute up to a higher isomorphism satisfying certain coherence conditions. Categorified principal bibundles between 2\nbdash{}groupoids are also defined in this fashion.

Then we establish the equivalence of the two approaches to bibundles between 2\nbdash{}groupoids by categorification and by the simplicial method. This extends the equivalence of these two approaches to 2\nbdash{}groupoids and actions of 2\nbdash{}groupoids.

We then study weak equivalence of 2-groupoids. We show that their cographs are two-sided principal bibundles. At the end of this chapter, some examples of bibundles between groupoids are given.

The following notation will be used throughout the sequel: let \(G\rightrightarrows \trivial{M}\) and \(H\rightrightarrows \trivial{N}\) be categorified groupoids in \((\Cat,\covers)\), and let \(X\) and \(Y\) be the corresponding simplicial objects in~\(\Cat\).

\section{Categorified bibundles between 2-groupoids}\label{chap6:sec:bibdl}
In this section, we categorify the notion of bibundles. Recall the notion of categorified bundles and principal bundles from Section~\ref{chap5:sec:principal-2-bundles}. Our definition is modeled after the notion of biaction of a monoidal category on a category, see~\cite{Benabou67}.

\begin{definition}\label{chap6:def:cat-bibundle}
A \emph{categorified bibundle} between \(G\rightrightarrows \trivial{M}\) and \(H\rightrightarrows \trivial{N}\) is a groupoid \(E\) in \((\Cat,\covers)\) equipped with the following data:
\[
  \xymatrix{
  G\ar@<-0.5ex>[d]_{\target}\ar@<0.5ex>[d]^{\source}\ar@{}[r]^\curvearrowright_q &E\ar[dl]^{J_l}\ar[dr]_{J_r}\ar@{}[r]^\curvearrowleft_p&H\ar@<-0.5ex>[d]_{\target}\ar@<0.5ex>[d]^{\source}\\
  \trivial{M} &  & \trivial{N}
  }
\]
\begin{itemize}
  \item a categorified left \(G\rightrightarrows \trivial{M}\) bundle \(J_r\colon E\to \trivial{N}\), which has moment morphism \(J_l\colon E\to \trivial{M}\), action morphism \(q\colon G\times_{\source,\trivial{M},J_l}E\to E\) given by an HS bibundle \(E_q\), associator \(\alpha_l\), and unitor \(\rho\);
  \item a categorified right \(H\rightrightarrows \trivial{N}\) bundle \(J_l\colon E\to \trivial{M}\), which has moment morphism \(J_r\colon E\to \trivial{N}\), action morphism \(p\colon E\times_{J_r,\trivial{N},\target}H\to E\) given by an HS bibundle \(E_p\), associator \(\alpha_r\), and unitor \(\lambda\);
\end{itemize}
These two actions commute up to a 2\nbdash{}morphism \(\alpha_{lr}\), called \emph{associator},
   \begin{equation}\label{chap6:eq:associator}
     \alpha_{lr} \colon q\circ(p\times \id) \Rightarrow p\circ(\id\times q),
   \end{equation}
   between HS morphisms \(G\times_\trivial{M} E \times_\trivial{N} H\to E\). The 2-morphisms \(\alpha_{lr}\), \(\alpha_{l}\), and \(\alpha_{r}\) satisfy two extra pentagon conditions. One of them identifies two 2-morphisms between two HS morphisms \(G\times_\trivial{M} G \times_\trivial{M} E \times_\trivial{N} H\to E\), that is, the following diagram is 2-commutative (and the other is given by symmetry):
 \begin{equation} \label{chap6:eq:pentagon}
  \begin{gathered}
  \xymatrix@=5pt{
  &&G\times_\trivial{M}G\times_\trivial{M}E \ar[dr]^-{\mult\times \id}\ar[ddd]^{\rotatebox{-90}{\(\scriptstyle \id\times q\)}}& \\
  G\times_\trivial{M} G\times_\trivial{M} E\times_\trivial{M} H\ar[urr]^{\id\times \id\times p} \ar[dr]_{\mult\times \id\times \id} \ar[ddd]_{\rotatebox{-90}{\(\scriptstyle \id\times q\times \id\)}} & & & G\times_{\trivial{M}}E \ar[ddd]^{q} \\
     & G\times_\trivial{M}E\times_\trivial{N}H \ar[urr]^(0.4){\id\times p} \ar[ddd]^{\rotatebox{-90}{\(\scriptstyle q\times \id\)}} & & & \\
     & & G\times_\trivial{M}E \ar[dr]^{q} & \\
  G\times_\trivial{M}E\times_\trivial{N} H \ar[urr]^(0.4){\id\times p} \ar[dr]_{q\times \id} & & & E\rlap{\ .} \\
     & E\times_\trivial{N}H \ar[urr]^{p} & &
  }
   \end{gathered}
  \end{equation}
\end{definition}

\begin{remark}
  As in Remark~\ref{chap5:rem:the-other-axiom}, the pentagon conditions and triangle conditions (Equation~\ref{chap5:eq:x1g} and a symmetric one) imply two other triangle conditions; one of them is the following 2\nbdash{}commutative diagram (and the other is given by symmetry):
  \begin{equation}\label{chap6:eq:1eh}
    \begin{gathered}
    \xymatrix{
                       &&E\times_{\trivial{N}} H\ar[rrd]^{p} &&\\
    G\times_{\trivial{M}} E\times_{\trivial{N}} H\ar[rru]^{q\times\id}\ar[rrd]_{\id\times p}&&
    E\times_{\trivial{N}}H\ar[ll]_-{(\unit\circ J_l, \id)\times\id}\ar[rr]^{p}\ar[u]^{=} &&E\rlap{\ ,}\\
                       &&G\times_{\trivial{M}} E\ar[rru]_{q} &&
    }
    \end{gathered}
  \end{equation}
where the 2\nbdash{}morphism on the whole square is induced by \(\alpha_{lr}\), the upper left triangle is induced by \(\lambda\), the upper right triangle is \(\id\), and the lower square is induced by \(\lambda\).
\end{remark}

\begin{remark}
   If it is clear from the context, we denote all associators by \(\alpha\), regardless whether they come from 2\nbdash{}groupoids, or actions, or \(\alpha_{lr}\). If there is no danger of confusion, we denote actions and horizontal multiplications of 2\nbdash{}groupoids by \(\mult\), or by \(\cdot\).
\end{remark}

\begin{remark}\label{chap6:rem:bibundle-2-cat}
  Replacing the 2-category \(\BUN\) by \(\GPD\), we get another sort of categorified bibundles. Recall that the cograph construction for a groupoid bibundle gives a category over~\(I\), the interval category. A similar construction for a bibundle of 2\nbdash{}groupoids gives a 2-category with a 2\nbdash{}functor to \(I\). This point of view appeared in~\cite{Benabou67} for biactions of monoidal categories.
\end{remark}

\begin{remark}
  For 2-categories \(\Cat[A]\) and \(\Cat[B]\), we can similarly define bimodules to be categories with two actions commuting up to coherent 2-morphisms. As in Remark~\ref{chap4:rem:bimodule=profunctor}, 2-category bimodules are closely related to 2\nbdash{}profunctors \(\Cat[B]^{\op}\times\Cat[A]\to \Cats\); see~\cite{Garner-Shulman}. This point of view will give us a hint how to compose 2-groupoid bibundles.
\end{remark}

Right principal bibundles are also categorified in a similar fashion.

\begin{definition}\label{chap6:def:cat-prinbibd}
Let \(E\) be a categorified bibundle between \(G \rightrightarrows \trivial{M}\) and \(H\rightrightarrows \trivial{N}\). We say that \(E\) is \emph{right principal} if the right \(H\rightrightarrows \trivial{N}\) bundle is principal. Left principal bibundles are defined similarly. Two-sided principal bibundles are also called \emph{Morita bibundles}.
\end{definition}

By Definition~\ref{chap5:def:2-bundle-principal}, being right principal means that the left moment morphism \(J_l \colon E\to \trivial{M}\) is given by a cover \(E_0\to M\) and that the shear morphism
\begin{equation}\label{chap6:eq:r-principal-shear-is-morita}
  (\pr_1, p)\colon E\times_{J_r,\trivial{N},\target}H \to E\times_{\trivial{M}}E, \quad
  (e, h)\mapsto (e, e\cdot h),
\end{equation}
is a Morita equivalence.

The following theorem establishes that two approaches to bibundles of 2\nbdash{}groupoids are equivalent. The proof given in the next two sections will largely follow the same lines as in the previous chapter.

\begin{theorem}\label{chap6:thm:bibundle-cat=simplicial}
 Categorified bibundles between \(G\rightrightarrows \trivial{M}\) and \(H\rightrightarrows \trivial{N}\) are in a one-to-one correspondence with bibundles between \(X\) and \(Y\) given by simplicial objects. Moreover, right principal \textup(Morita\textup) categorified bibundles correspond to right principal \textup(Morita\textup) bibundles given by simplicial objects.
\end{theorem}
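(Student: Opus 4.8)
The plan is to reduce the theorem, in both directions, to the action correspondence of Theorem~\ref{chap5:thm:2gpd-action-Kan}, by regarding a bibundle as a pair of commuting $2$-groupoid actions on a single groupoid $E$ together with the coherence datum $\alpha_{lr}$ recording their commutation. Given a simplicial bibundle $\Gamma$ as in Definition~\ref{chap4:def:bibundles=inner-over-I}, I would first exploit Proposition~\ref{chap4:prop:TX-I-lifting-properties}: both the reduced column fibration \(C'_0\Gamma\to C'_{-1}\Gamma=X\) and the reduced row fibration \(R'_0\Gamma\to R'_{-1}\Gamma=Y\) are \(2\)\nbdash{}groupoid Kan fibrations. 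Theorem~\ref{chap5:thm:2gpd-action-Kan} then turns the former into a categorified left action of \(G\rightrightarrows\trivial{M}\) on \(E:=\Fib{C'_0\Gamma\to X}\), and the latter into a categorified right action of \(H\rightrightarrows\trivial{N}\) on \(\Fib{R'_0\Gamma\to Y}\).

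Next I would show these two fibres are canonically the same groupoid \(E\). Both have object space \(\Gamma_{0,0}\), and their arrow spaces \(\Gamma_{1,0}\times_{X_1}X_0\) and \(\Gamma_{0,1}\times_{Y_1}Y_0\) each consist of vertical bigons between elements of \(\Gamma_{0,0}\), realised respectively by \((0,0,1)\)\nbdash{} and \((0,1,1)\)\nbdash{}coloured \(2\)\nbdash{}simplices of \(T\Gamma\) whose monochromatic edge is degenerate; the colored Kan conditions supply a unique filling identifying the two, so \(C'_0\Gamma\) and \(R'_0\Gamma\) are action \(2\)\nbdash{}groupoids for one and the same \(E\). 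The commuting associator \(\alpha_{lr}\) of~\eqref{chap6:eq:associator} is then extracted from the mixed cell \(\Gamma_{1,1}\): exactly as the associator in Section~\ref{chap5:sec:simp-cat} was built from \(\Kan!(3,\cdot)\), the HS bibundles underlying \(q\circ(p\times\id)\) and \(p\circ(\id\times q)\) are identified through the unique fillings governed by the colored condition \(\Kan!(3,k)[2,2]\), and the two pentagon conditions together with~\eqref{chap6:eq:1eh} follow from the remaining unique Kan conditions \(\Kan!(m,k)\) in dimensions \(m\ge 4\), i.e.\ from \(T\Gamma\) being suitably coskeletal.

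Conversely, starting from a categorified bibundle I would build \(\Gamma\) by the finite-data method of Proposition~\ref{chap5:prop:kan-fibration-finite-data}, applied once in the row direction and once in the column direction. The low cells are prescribed directly: \(\Gamma_{-1,-1}=\terminal\), \(\Gamma_{i,-1}=X_i\), \(\Gamma_{-1,j}=Y_j\), \(\Gamma_{0,0}=E_0\), while \(\Gamma_{1,0}\) and \(\Gamma_{0,1}\) are the bibundles \(E_q\) and \(E_p\), and \(\Gamma_{1,1}\) is synthesised from \(\alpha_{lr}\). The associators \(\alpha_l,\alpha_r,\alpha_{lr}\) and the unitors furnish the coloured \(3\)\nbdash{}multiplications, and the pentagon and triangle conditions of Definition~\ref{chap6:def:cat-bibundle} are precisely the coherence identities needed to make the coskeletal extension well-defined and to verify the colored conditions \(\Kan(m,k)[i,j]\) of Definition~\ref{chap4:def:bibundles=inner-over-I} for \(T\Gamma\to j(\Simp{1})\); this step runs parallel to Section~\ref{chap5:sec:cat-simp}. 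That the two passages are mutually inverse is checked cell by cell in low degrees and propagated by coskeletality.

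For the principal refinement I would invoke Theorem~\ref{chap5:thm:princiapl-2-bundle-cat=simp} applied to the right \(H\)\nbdash{}action. By Definition~\ref{chap6:def:cat-prinbibd} the categorified bibundle is right principal iff that action is a principal \(H\)\nbdash{}bundle, which by Theorem~\ref{chap5:thm:princiapl-2-bundle-cat=simp} holds iff \(R'_0\Gamma\to\sk_0\Gamma_{0,-1}\) is acyclic; the argument of Proposition~\ref{chap4:prop:TX-I-lifting-properties} and its principal refinement shows this is equivalent to the extra conditions \(\Kan(m,0)\) that single out a right principal simplicial bibundle in Definition~\ref{chap4:def:bibundles=inner-over-I}. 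The Morita case is the conjunction of the two one-sided statements, using the opposite bibundle of Remark~\ref{chap4:rem:opposite-bibundle} for symmetry. The main obstacle I anticipate is the construction and coherence of \(\alpha_{lr}\): keeping the two separately constructed action \(2\)\nbdash{}groupoid structures compatible on one and the same \(E\), and matching both pentagon conditions of Definition~\ref{chap6:def:cat-bibundle} against the Kan-filling data of \(T\Gamma\), is the delicate bookkeeping that the bulk of the proof must carry out.
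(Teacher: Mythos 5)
Your proposal is correct and takes essentially the same route as the paper's proof: both directions are the colored analogue of the Chapter~5 action correspondence — the fibres of the row/column Kan fibrations of \(\Gamma\) give the two actions on one groupoid \(E\) (with the nontrivial identification \(E\cong E'\)), the mixed cells and \(\Kan!(3,k)[2,2]\) give \(\alpha_{lr}\), the pentagon conditions are exactly the dimension-3/4 coherence needed for the coskeletal extension, and right principality reduces to the shear-morphism criterion of Theorem~\ref{chap5:thm:princiapl-2-bundle-cat=simp} via the identity between row acyclicity and the colored conditions \(\Kan(m,0)[1,m]\). The only real difference is presentational: you invoke the Chapter~5 theorems as black boxes where the paper re-runs those constructions explicitly in the colored setting.
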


Recall from Definition~\ref{chap4:def:bibundles=inner-over-I} that a bibundle between \(X\) and \(Y\) is a simplicial object \(\pi\colon \Gamma\to \Simp{1}\) with the two ends given by \(X\) and~\(Y\) such that the morphism \(\pi\) satisfies \(\Kan(m, k)\) for \(1<k<m\), \(\Kan(m,0)[i,j]\) for \(i\ge 2\), and \(\Kan(m, m)[i,j]\) for \(j\ge 2\). Such a bibundle is right principal if, in addition, \(\pi\) satisfies left \(\Kan(m, 0)\) for \(m\ge 1\). Two-sided principal bibundles satisfy \(\Kan(m,k)\) for \(m\ge 1\) and \(0\le k\le m\). Finally, when \(m>n\) the unique version of all these Kan conditions hold.

\begin{remark}
Theorem~\ref{chap6:thm:bibundle-cat=simplicial} is not surprising in view of Theorems~\ref{chap5:thm:2-groupoid-cat=simp} and~\ref{chap5:thm:nerve=2-category}. Consider the categorified bibundle as in Remark~\ref{chap6:rem:bibundle-2-cat}. The associated simplicial object given below is the geometric nerve of the 2\nbdash{}category corresponding to this bibundle.
\end{remark}

\section{From categorification to simplicial picture}\label{chap6:sec:cat-2-simp}

In this section, we show one direction of Theorem~\ref{chap6:thm:bibundle-cat=simplicial}. Given a categorified bibundle~\(E\) between 2\nbdash{}groupoids \(G\rightrightarrows \trivial{M}\) and \(H\rightrightarrows \trivial{N}\) as in Definition~\ref{chap6:def:cat-bibundle}, we construct a simplicial object \(\Gamma\) in \(\Cat\) with a morphism \(\Gamma\to \Simp{1}\) satisfying the desired Kan conditions. If \(E\) is right principal, we show that the corresponding \(\Gamma\to \Simp{1}\) satisfies the desired left Kan conditions.

Recall from Section~\ref{chap4:sec:aug-simplicial-bisimplicial} that a simplicial object \(\Gamma\) over \(\Simp{1}\) can be viewed as a simplicial object colored by 0 (white) and 1 (black). Denote by \(\Gamma_{i,j}\) the subspace of \(\Gamma_{i+j+1}\) with \(i+1\) white and \(j+1\) black vertexes, and let \(\Gamma_{-1,-1}\) be the terminal object. Then \(\Gamma_{i,j}\) is the corresponding augmented bisimplicial object.

Since \(X\) is the simplicial object corresponding to the categorified groupoid \(G\rightrightarrows \trivial{M}\), \(X_0=M\), \(X_1=G_0\), and \(X_2=E_\mult^G\), the HS bibundle giving the multiplication.

\subsection{The dimensions 0, 1, 2}\label{chap6:ssec:levels012}
We construct \(\Gamma\) as an augmented bisimplicial object, so the morphism \(\Gamma\to \Simp{1}\) is built into the construction by Proposition~\ref{chap4:prop:abisimpC=asimpC-I}.

\subsubsection{The construction of the spaces}

The desired simplicial object \(\Gamma\) in dimensions \(m=0, 1, 2\) is displayed as follows:
\[
\xymatrix{
  E_\mult^G\ar@3{->}[dr] & &  E_q\ar@2{->}[dr]^{\face_0}_{\face_1}\ar[dl]^{\face_2} &&  E_p\ar[dr]_{\face_0}\ar@2{->}[dl]_{\face_2}^{\face_1} && E_\mult^H\ar@3{->}[dl] \\
   &  G_0\ar@2{->}[dr]^{\face_0}_{\face_1}&&  E_0\ar[dr]_{J_r}\ar[dl]^{J_l}   &&  H_0\ar@2{->}[dl]^{\face_0}_{\face_1}    \\
   & &  M &&   N\rlap{\ .}
  }
\]
First, let \(\Gamma_{i,-1}=X_i\) and \(\Gamma_{-1,i}=Y_i\) for \(i=0,1,2\). Let
\[
  \Gamma_{0,0}=E_0,\quad \Gamma_{1,0}=E_q,\quad \Gamma_{0,1}= E_p,
\]
where \(E_q\) and \(E_p\) are the HS bibundles giving the action morphisms \(q\) and \(p\), respectively.

The face maps on \(\Gamma_{i,-1}\) and \(\Gamma_{-1, i}\) for \(i=0,1,2\) are those of \(X\) and \(Y\), respectively.
The face maps with domain \(E_0\) are defined by
\[
  \face_0=J_r|_{E_0}\colon E_0\to N,\qquad \face_1=J_l|_{E_0}\colon E_0\to M.
\]
The face maps with domain \(E_p\) are defined by
\begin{gather*}
  \face_0=\pr_2\circ J_l^p\colon E_p\to H_0\\
  \face_1=J_r^p,\quad \face_2=\pr_1\circ J_l^p \colon E_p\to E_0,
\end{gather*}
where \(J_r^p\) and \(J_l^p\) are the moment maps of the bibundle \(E_p\). Similarly, the face maps with domain \(E_q\) are defined by
\begin{gather*}
  \face_2=\pr_1\circ J_r^q\colon E_q\to G_0,\\
  \face_1=J_l^q,\quad \face_0=\pr_2\circ J_r^q \colon E_q\to E_0.
\end{gather*}

The degeneracy maps on \(\Gamma_{i,-1}\) and \(\Gamma_{-1, i}\) for \(i=0,1,2\) are those of \(X\) and \(Y\), respectively. The degeneracy map \(\de_0\colon E_0\to E_q\) is induced by the unitor \(\lambda\). In fact, the 2-morphism \(\lambda\colon q|_{\trivial{M}\times_{\trivial{M}}E } \Rightarrow\id_{E}\) gives an isomorphism of HS bibundles
\[
  \lambda \colon E_q\supset (J_l^q)^{-1}(\trivial{M}\times_{\trivial{M}}E)\to E_1.
\]
Let \(\de_0=\lambda^{-1}\circ \unit_{E}\). Similarly, we define \(\de_1\colon E_0\to E_p\) using the unitor~\(\rho\).

\subsubsection{Simplicial identities}

We need to verify that \(\Gamma_{\le 2}\) constructed above satisfies the simplicial identities.

Consider the simplicial identities with domain \(E_p\); those with domain \(E_q\) can be shown similarly. The identity \(\face_0\circ \face_2=\face_1\circ \face_0\) holds by construction. The identity
\(\face_1\circ \face_2=\face_1\circ \face_1\) follows from Lemma~\ref{chap5:lem:bundles-triangle-to-M} and
\[
J_r\circ p=J_r\circ \pr_1 \colon E\times_{\trivial{N},\target} H\to \trivial{M}.
\]
Similarly, \(J_l\circ p=\source\circ \pr_2\) implies \(\face_0\circ \face_1=\face_0\circ \face_0\).

Consider the simplicial identities with domain \(E_0\). The identity \(\face_2\circ \de_0=\de_0\circ \face_1\) holds by construction. By definition, we have
\begin{gather*}
  \face_0\circ \de_0=\face_0\circ (J_l^q)^{-1}\circ \unit_E=\source\circ \unit_E=\id, \\
  \face_1\circ \de_0=J_r^q\circ (J_l^q)^{-1}\circ \unit_E=\target\circ \unit_E=\id,
\end{gather*}
where we used that HS bibundle isomorphisms respect moment maps. The other simplicial identities hold by symmetry.

It is clear that there is a natural map from \(\Gamma_{\le 2}\) to \(\tr_2\Simp{1}\) (see Section~\ref{chap2:sec:sekeleton-and-coskeleton}).

\subsubsection{Kan conditions}

Although \(\Gamma_m\) is yet not defined for \(m>2\), all presheaves \(\Hom(\Horn{m}{k}[i, j],\Gamma)\) are well-defined for \(0\le k\le m\le 3\) and \(i, j\ge 0\) with \(i+j=m+1\). Thus it is meaningful to verify some Kan conditions.

Recall the colored Kan conditions from Chapter~\ref{chap4}. There are four cases for \(\Kan(2,1)\), two of which are given by \(\Kan(2,1)\) for \(X\) and \(Y\). The other two cases are \(\Kan(2,1)[2,1]\) and \(\Kan(2,1)[1,2]\), which say that two maps below are covers:
\[
  E_q\to G_0\times_{M}E_0,\quad E_p\to E_0\times_{N} H_0.
\]
This holds because they are the left moment maps of the HS bibundles \(E_q\) and \(E_p\); see Figure~\ref{chap6:fig:Kan(2,1)-21-12}.
\begin{figure}[htbp]
\centering
\begin{tikzpicture}
  [scale=0.7,
  >=latex',
  mydot/.style={draw,circle,inner sep=1.5pt},
  every label/.style={scale=0.6}
  ]
  \node[mydot,label=120+90:$0$] (P0) at (120+90:1cm){};
  \node[mydot,label=90:$1$]     (P1) at (90:1cm){};
  \node[mydot,fill=black,label=90-120:$2$] (P2) at (90-120:1cm){};
  \draw[<-]
    (P0) edge (P1)
    (P1) edge (P2)
    (P0) edge[dashed] (P2);
  \begin{scope}[xshift=4cm]
    \node[mydot,label=120+90:$0$] (Q0) at (120+90:1cm){};
    \node[mydot,fill=black,label=90:$1$]    (Q1) at (90:1cm){};
    \node[mydot,fill=black,label=90-120:$2$] (Q2) at (90-120:1cm){};
    \draw[<-]
     (Q0) edge (Q1)
     (Q1) edge (Q2)
     (Q0) edge[dashed] (Q2);
  \end{scope}
\end{tikzpicture}
\caption{\(\Kan(2,1)[2,1]\) and \(\Kan(2,1)[1,2]\)}\label{chap6:fig:Kan(2,1)-21-12}
\end{figure}
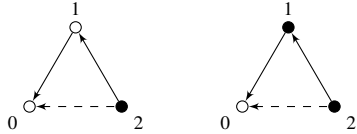

Additionally, if the bibundle \(E\) is right principal, then \(\Kan(1, 0)\) holds. It suffices to consider \(\Kan(1,0)[1,1]\), which says that \(E_0\to M\) is a cover. This follows from Definition~\ref{chap6:def:cat-prinbibd}.

\begin{lemma}
  The left \(E\)-action on \(E_q\) gives a principal \(E\)-bundle
  \[
  (\pr_1\circ J^q_l, J^q_r)\colon E_q\to G_0\times_{\target,N} E_0;
  \]
  Similarly, the left \(E\)-action on \(E_p\) gives a principal \(E\)-bundle
  \[
  (J^p_r,\pr_2\circ J^p_l)\colon E_p\to E_0\times_{M,\source} H_0;
  \]
  If the bibundle \(E\) is right principal, then the left \(H\)-action on \(E_p\) gives a principal \(H\)-bundle
  \[
  (\pr_1\circ J^p_l, J^p_r)\colon E_p\to E_0\times_{M} E_0.
  \]
\end{lemma}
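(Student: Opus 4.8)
The plan is to prove the three principality statements in turn, exploiting the symmetry of the setup so that only the first two genuinely distinct cases need detailed argument; the third is essentially the content of Lemma~\ref{chap5:lem:left-E-is-principal} transported to the present bibundle situation. I would begin by observing that each statement is a claim that a certain left action (of $E$ in the first two, of $H$ in the third) on a fibre-product space is principal, and that in every case the left action in question is given by \emph{multiplication} in the relevant groupoid. This is the key structural feature that makes the argument go through, exactly as in the proof of Lemma~\ref{chap5:lem:left-E-is-principal} and Lemma~\ref{chap5:lem:shear-morphism-ismorita=left-principal}.

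For the first statement, concerning the $E$-action on $E_q$, I would argue as follows. The bibundle $E_q$ carries a left $E$-action along $J^q_l$ and a right $G$-action; the relevant claim is that $(\pr_1\circ J^q_l, J^q_r)\colon E_q \to G_0 \times_{\target, N} E_0$ is a left principal $E$-bundle. Since $E_q$ gives the action morphism $q$ (an HS bibundle, hence right principal over $G_0$ by definition of HS bibundle), the left $E$-action here comes from the vertical multiplication of the groupoid $E$ acting on itself. I would invoke that the left $E$-action on an HS bibundle, being multiplication, is automatically free and that the quotient map realises the stated base. Concretely, I expect to reduce to checking that the shear map for the $E$-action is an isomorphism, which follows because $E_q$ is left principal over its left moment base by the general theory of HS bibundles recalled in Chapter~\ref{chap1} (Lemma~\ref{chap1:lem:principal-equivariant-map-is-iso} and the characterisation of principal bundles). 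The second statement, for the $E$-action on $E_p$, is entirely symmetric, interchanging the roles of left and right, of $G$ and $H$, and of the two moment maps; I would simply say ``by symmetry'' after fixing the first case.

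The third statement is the one that requires the right-principality hypothesis, and here I would follow the pattern of Lemma~\ref{chap5:lem:left-E-is-principal} closely. By Definition~\ref{chap6:def:cat-prinbibd}, right principality means the shear morphism $(\pr_1, p)\colon E\times_{J_r,\trivial{N},\target}H \to E\times_{\trivial{M}}E$ of~\eqref{chap6:eq:r-principal-shear-is-morita} is a Morita equivalence. I would then run the same passage as in Lemma~\ref{chap5:lem:shear-morphism-ismorita=left-principal}: the shear morphism is given by the HS bibundle obtained from $E_p$ by the biaction construction, and it is a Morita equivalence precisely when $(\pr_1\circ J^p_l, J^p_r)\colon E_p\to E_0\times_M E_0$ is a left principal $H$-bundle. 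Since the left $H$-action on $E_p$ is again multiplication, the quotient of $E_p\times_{\dots} H_1$ by the diagonal $E\times_{\trivial{M}}H$-action recovers $E_p$ as a left $H$-bundle, and the claim follows.

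The main obstacle I anticipate is purely bookkeeping rather than conceptual: keeping straight which moment maps and which of the two commuting structures ($E$ versus $G$, or $E$ versus $H$) play the role of ``the principal direction'' in each of the three cases, so that the appeal to the HS-bibundle machinery of Chapter~\ref{chap1} is applied to the correct action. In particular, I must be careful that in the first two statements the \emph{left} action being shown principal is the one coming from the groupoid $E$ (via the HS-bibundle structure of $E_q$, $E_p$ as action morphisms), whereas in the third it is the $H$-action, and only the third consumes the global right-principality assumption on $E$. Once the roles are pinned down, each verification is a direct transcription of Lemmas~\ref{chap5:lem:left-E-is-principal} and~\ref{chap5:lem:shear-morphism-ismorita=left-principal}, so I would present the first case in full and dispatch the remaining two by symmetry and by that analogy.
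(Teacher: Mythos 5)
Your global plan---transcribe Lemmas~\ref{chap5:lem:left-E-is-principal} and~\ref{chap5:lem:shear-morphism-ismorita=left-principal} into the bibundle setting---is exactly what the paper does; its printed proof is a single sentence citing those two lemmas. Your treatment of the third statement is a faithful such transcription: right principality of \(E\) (Definition~\ref{chap6:def:cat-prinbibd}) says the shear morphism~\eqref{chap6:eq:r-principal-shear-is-morita} is a Morita equivalence, and Lemma~\ref{chap5:lem:shear-morphism-ismorita=left-principal} converts this into left principality of the \(H\)-action on \(E_p\) over \(E_0\times_{M}E_0\).

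However, your argument for the first statement---the one you propose to present in full, with the second dispatched by symmetry---has a genuine gap. You justify the shear isomorphism for the left \(E\)-action on \(E_q\) by claiming that ``\(E_q\) is left principal over its left moment base by the general theory of HS bibundles recalled in Chapter~\ref{chap1}''. This is false: an HS bibundle is, by definition, only \emph{right} principal; left principality is never automatic (it is equivalent to the bibundle being a Morita equivalence), and establishing a left principality statement is precisely the nontrivial content of this lemma, not something Lemma~\ref{chap1:lem:principal-equivariant-map-is-iso} or the Chapter~\ref{chap1} machinery can supply. The ingredient your sketch omits, and which drives the proof of Lemma~\ref{chap5:lem:left-E-is-principal}, is the \emph{horizontal inversion} of the 2\nbdash{}groupoid \(G\rightrightarrows\trivial{M}\): invertibility of 1\nbdash{}morphisms makes the shear morphism between the two fibred products of \(E\) with \(G\) (the map \((x,g)\mapsto(x\cdot g,g)\) in that proof, with roles adjusted to the left \(G\)\nbdash{}action giving \(E_q\)) a Morita equivalence; one then forms the auxiliary bundle \(E_q\times_{G_0}G_1\), observes that the \(G\)\nbdash{}factor of the left action on it is multiplication on \(G_1\), and descends principality along the quotient by \(G\) to land on \(E_q\) itself. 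Note also that ``action by multiplication'' in the paper's arguments refers to this auxiliary \(G_1\)\nbdash{}factor (and to the \(E_1\)\nbdash{}factor in Lemma~\ref{chap5:lem:shear-morphism-ismorita=left-principal}), not to the abstract left \(E\)-action on \(E_q\); and freeness alone would not yield principality in \((\Cat,\covers)\) in any case. The omission is not cosmetic: for an action of a 2\nbdash{}category or monoidal category that is not a 2\nbdash{}groupoid, the analogous left action on the action bibundle need not be principal, so no argument for the first two statements that avoids the inversion can be correct.
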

\begin{proof}
This is the same as Lemmas~\ref{chap5:lem:left-E-is-principal} and~\ref{chap5:lem:shear-morphism-ismorita=left-principal}.
\end{proof}

Consequently, the colored outer Kan conditions \(\Kan(2,0)[2,1]\) and \(\Kan(2,2)[1,2]\) hold. If \(E\) is right principal, then \(\Kan(2,0)[1,2]\) holds; see Figure~\ref{chap6:fig:special-Kan(2,0)[2,1]-etc}.
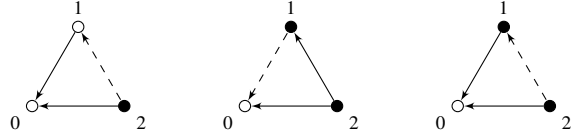
\begin{figure}[htbp]
\centering
\begin{tikzpicture}
  [scale=0.7,
  >=latex',
  mydot/.style={draw,circle,inner sep=1.5pt},
  every label/.style={scale=0.6}
  ]
  \node[mydot,label=90+120:$0$]            (P0) at (90+120:1cm){};
  \node[mydot,label=90:$1$]                (P1) at (90:1cm){};
  \node[mydot,fill=black,label=90-120:$2$] (P2) at (90-120:1cm){};
  \draw[<-]
     (P0) edge (P1)
          edge (P2)
     (P1) edge[dashed] (P2);
  \begin{scope}[xshift=4cm]
    \node[mydot,label=90+120:$0$]            (Q0) at (90+120:1cm){};
    \node[mydot,fill=black,label=90:$1$]     (Q1) at (90:1cm){};
    \node[mydot,fill=black,label=90-120:$2$] (Q2) at (90-120:1cm){};
    \draw[<-]
    (Q0) edge[dashed] (Q1)
         edge (Q2)
    (Q1) edge (Q2);
  \end{scope}
  \begin{scope}[xshift=8cm]
    \node[mydot,label=90+120:$0$]            (Q0) at (90+120:1cm){};
    \node[mydot,fill=black,label=90:$1$]     (Q1) at (90:1cm){};
    \node[mydot,fill=black,label=90-120:$2$] (Q2) at (90-120:1cm){};
    \draw[<-]
    (Q0) edge (Q1)
         edge (Q2)
    (Q1) edge[dashed] (Q2);
  \end{scope}
\end{tikzpicture}
\caption{\(\Kan(2,0)[2,1], \Kan(2,2)[1,2]\), and \(\Kan(2,0)[1,2]\)}\label{chap6:fig:special-Kan(2,0)[2,1]-etc}
\end{figure}

\subsection{Dimension 3}\label{chap6:ssec:level3}

First, let \(\Gamma_{-1,3}=X_3\) and \(\Gamma_{3,-1}=Y_3\). For the remaining cases, we use the following lemma.

\begin{lemma}\label{chap6:lem:level3:isos}
There are natural isomorphisms of representable presheaves
\begin{gather}
  \Hom(\Horn{3}{0}[3,1],\Gamma)\cong\Hom(\Horn{3}{1}[3,1],\Gamma)\cong \Hom(\Horn{3}{2}[3,1],\Gamma)\label{chap6:eq:Gamma-20},\\
  \Hom(\Horn{3}{1}[1,3],\Gamma)\cong\Hom(\Horn{3}{2}[1,3],\Gamma)\cong \Hom(\Horn{3}{3}[1,3],\Gamma)\label{chap6:eq:Gamma-02},\\
  \Hom(\Horn{3}{0}[2,2],\Gamma)\cong\Hom(\Horn{3}{1}[2,2],\Gamma)
  \cong\Hom(\Horn{3}{2}[2,2],\Gamma)\cong \Hom(\Horn{3}{3}[2,2],\Gamma).\label{chap6:eq:Gamma-11}
\end{gather}
If the bibundle \(E\) is right principal, then
\begin{equation}\label{chap6:eq:Gamma-02-principal}
  \Hom(\Horn{3}{0}[1,3],\Gamma)\cong \Hom(\Horn{3}{1}[1,3],\Gamma)
  \cong\Hom(\Horn{3}{2}[1,3],\Gamma)\cong \Hom(\Horn{3}{3}[1,3],\Gamma).
\end{equation}
\end{lemma}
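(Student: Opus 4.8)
The plan is to treat this lemma as the three-dimensional, two-coloured analogue of Lemma~\ref{chap5:lem:kan2}, reusing the same two mechanisms: each associator $2$-morphism converts into an isomorphism of composite HS bibundles (as in Lemma~\ref{chap5:lem:iso-of-bibundles}), and the principality of the actions converts into pullback squares (as in Lemma~\ref{chap5:lem:pull-back}). First I would record representability. For the inner horns $0<k<3$ this follows from Remark~\ref{chap4:rem:low-inner-kan-imply-repr} together with the lower colored Kan conditions already checked in Section~\ref{chap6:ssec:levels012}; for the outer horns $\Horn{3}{0}[i,j]$ with $i\ge 2$ and $\Horn{3}{3}[i,j]$ with $j\ge 2$ it follows from Remark~\ref{chap4:rem:low-outer-kan-imply-repr}. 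The sole case not covered this way is $\Horn{3}{0}[1,3]$ (only $i=1$ white vertex), whose representability I would instead deduce in the last step from right-principality.

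Then I would argue colour by colour. For the coloring $[3,1]$ the four $2$-faces of the coloured simplex consist of three copies of $E_q$ (the faces omitting $0,1,2$) and one copy of $X_2=E_\mult^G$ (the face omitting $3$); since the restriction of the data to the white vertices is exactly a categorified $G$-action on $E$ with associator $\alpha_l$, the isomorphisms \eqref{chap6:eq:Gamma-20} for $k=0,1,2$ are literally those produced in the proof of Lemma~\ref{chap5:lem:kan2}, and I would simply invoke that proof. The coloring $[1,3]$ is symmetric, governed by the right $H$-action and its associator $\alpha_r$, yielding \eqref{chap6:eq:Gamma-02} for $k=1,2,3$.

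The genuinely new case is $[2,2]$, where the two faces omitting $2,3$ are $E_q$-faces and the two faces omitting $0,1$ are $E_p$-faces. Here the central isomorphism linking the ``$q$ then $p$'' description to the ``$p$ then $q$'' description is supplied by the bibundle associator $\alpha_{lr}$ of \eqref{chap6:eq:associator}, which identifies the composite HS morphisms $q\circ(p\times\id)$ and $p\circ(\id\times q)$ just as the action associator did in Lemma~\ref{chap5:lem:iso-of-bibundles}; combining this with the principality pullbacks for the two actions (the unlabelled lemma immediately preceding the present one) connects all four horn spaces, proving \eqref{chap6:eq:Gamma-11} for $k=0,1,2,3$. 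Naturality in each case is automatic, since every isomorphism is built from fixed HS morphism isomorphisms and from universal properties of pullbacks. Finally, under right-principality the left $H$-action on $E_p$ is principal, so the $Y_2$-face omitted by $\Horn{3}{0}[1,3]$ is recovered by one further pullback square; this both establishes representability of $\Hom(\Horn{3}{0}[1,3],\Gamma)$ and extends \eqref{chap6:eq:Gamma-02} to \eqref{chap6:eq:Gamma-02-principal}.

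The main obstacle I anticipate is organisational rather than conceptual: threading each isomorphism through the correct associator and the correct principality pullback while keeping the colour bookkeeping straight, and checking in the $[2,2]$ case that the resulting composite isomorphism is well-defined and biequivariant. As in Lemma~\ref{chap5:lem:kan2}, well-definedness will rest on the lower unique Kan conditions of $X$ and $Y$ and on the equivariance of $\alpha_{lr}$. I do not expect to need the pentagon coherence \eqref{chap6:eq:pentagon} for this lemma: just as the pentagon was used only later to verify the associativity of $3$-multiplications rather than the isomorphisms of Lemma~\ref{chap5:lem:kan2}, a single associator $2$-morphism suffices to identify the horn spaces here.
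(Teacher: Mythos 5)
Your proposal is correct and follows essentially the same route as the paper: it reduces \eqref{chap6:eq:Gamma-20} and \eqref{chap6:eq:Gamma-02} to Lemma~\ref{chap5:lem:kan2}, proves \eqref{chap6:eq:Gamma-11} by combining the bibundle associator \(\alpha_{lr}\) (the paper's Lemma~\ref{chap6:lem:HS-bibundle-iso-alpha}) with the principality pullback squares (Lemma~\ref{chap6:lem:pullback1} and Remark~\ref{chap6:rem:pullback3}), and obtains \eqref{chap6:eq:Gamma-02-principal} from the additional pullback supplied by right principality, without ever invoking the pentagon. Your separate treatment of the representability of \(\Hom(\Horn{3}{0}[1,3],\Gamma)\), which falls outside Remark~\ref{chap4:rem:low-outer-kan-imply-repr}, is a minor refinement of the paper's blanket representability claim rather than a different method.
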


Let \(\Gamma_{0,2}, \Gamma_{1,1}\), and \(\Gamma_{2,0}\) be given by \eqref{chap6:eq:Gamma-02}, \eqref{chap6:eq:Gamma-11}, and~\eqref{chap6:eq:Gamma-20}, respectively. We shall verify all the required conditions for a simplicial object up to dimension~\(3\).

First, Remarks~\ref{chap4:rem:low-inner-kan-imply-repr} and~\ref{chap4:rem:low-outer-kan-imply-repr} imply that all the presheaves in Lemma~\ref{chap6:lem:level3:isos} are representable. Since~\eqref{chap6:eq:Gamma-02} and \eqref{chap6:eq:Gamma-20} are essentially the same as Lemma~\ref{chap5:lem:kan2}, we only prove \eqref{chap6:eq:Gamma-11} and~\eqref{chap6:eq:Gamma-02-principal}.

\begin{lemma}\label{chap6:lem:HS-bibundle-iso-alpha}
There is a natural isomorphism of HS bibundles from \( G\times_{\trivial{M}} E\times_{\trivial{N}}H\) to \(E\),
\[
  (E_q\times_{E_0} E_p)/E\overset{\alpha} \cong (E_p\times_{E_0}E_q)/E,
\]
where the omitted maps can be read from Figure~\ref{chap6:fig:associator-implies-iso}, in which snake lines stand for quotients.
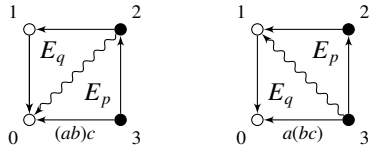
\begin{figure}[htbp]
  \centering
\begin{tikzpicture}
    [scale=1.2,
    <-,
    >=latex',
    mydot/.style={draw,circle,inner sep=1.5pt},
    every label/.style={scale=0.6}]
  \node[mydot,label=-135:$0$]           (P0) at (0,0){};
  \node[mydot,label=135:$1$]            (P1) at (0,1){};
  \node[mydot,fill=black,label=45:$2$]  (P2) at (1,1){};
  \node[mydot,fill=black,label=-45:$3$] (P3) at (1,0){};
  \draw
  (P0) edge (P1)
  (P1) edge (P2)
  (P2) edge node(ML){} (P3)
  (P0) edge (P1)
       edge node[below,scale=0.6]{$(ab)c$}(P3)
  (P2) edge[snake arrow]
  node[scale=0.8,above left]{\(E_q\)}
  node[scale=0.8,below right]{\(E_p\)} (P0);
  \begin{scope}[xshift=2.5cm]
    \node[mydot,label=-135:$0$] (Q0) at (0,0){};
    \node[mydot,label=135:$1$]  (Q1) at (0,1){};
    \node[mydot,fill=black,label=45:$2$]   (Q2) at (1,1){};
    \node[mydot,fill=black,label=-45:$3$]  (Q3) at (1,0){};
  \draw
  (Q0) edge node(MR){}(Q1)
  (Q1) edge (Q2)
  (Q2) edge (Q3)
  (Q0) edge node[below,scale=0.6]{$a(bc)$}(Q3)
  (Q3) edge[snake arrow]
  node[scale=0.8,above right]{\(E_p\)}
  node[scale=0.8,below left]{\(E_q\)} (Q1);
  \end{scope}
\end{tikzpicture}
\caption{Associator implies an isomorphism of bibundles}\label{chap6:fig:associator-implies-iso}
\end{figure}
\end{lemma}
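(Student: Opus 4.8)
The plan is to extract the required isomorphism directly from the associator $\alpha_{lr}$ of~\eqref{chap6:eq:associator}, mirroring the proof of Lemma~\ref{chap5:lem:iso-of-bibundles}. Since $\BUN$ is a $(2,1)$\nbdash{}category, every 2\nbdash{}morphism is invertible, so $\alpha_{lr}\colon q\circ(p\times \id)\Rightarrow p\circ(\id\times q)$ is already an isomorphism of HS bibundles between the two composite HS morphisms $G\times_\trivial{M} E\times_\trivial{N}H\to E$. All that then remains is to identify these two composites with the two fibre\nbdash{}product quotients appearing in the statement.

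First I would compute the HS bibundle representing $p\circ(\id\times q)$. The 1\nbdash{}morphism $\id\times q$ is the product in $\BUN$ of the HS bibundle $E_q$ (for the left $G$\nbdash{}action $q$) and the unit bibundle $U(H)=H_1$ (for $\id_H$); composing with $p$, which is given by $E_p$, and unravelling the definition of the composite HS bibundle as a tensor product (Section~\ref{chap1:sec:action-bibundle}), yields a quotient of the form $\bigl((E_q\times_{E_0}H_1)\times_{E_0\times_N H_0} E_p\bigr)/(E\times_\trivial{N}H)$. Because $H$ acts on $H_1$ by multiplication, the $H_1$\nbdash{}factor can be absorbed exactly as the factor $\arrow{X}_1$ was absorbed in Lemma~\ref{chap5:lem:iso-of-bibundles}, collapsing this to $(E_q\times_{E_0}E_p)/E$. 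The symmetric computation, now absorbing the $G_1$\nbdash{}factor coming from $\id_G$ in $p\times\id$, identifies $q\circ(p\times\id)$ with $(E_p\times_{E_0}E_q)/E$.

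Combining the two identifications with the (inverse of the) isomorphism $\alpha_{lr}$ then gives the claimed natural isomorphism $(E_q\times_{E_0}E_p)/E\cong (E_p\times_{E_0}E_q)/E$, and one checks that the induced maps to $G_0\times_M E_0\times_N H_0$ and to $E_0$ are precisely those drawn in Figure~\ref{chap6:fig:associator-implies-iso}. I expect the main obstacle to be the bookkeeping in the composite\nbdash{}bibundle computation: one must verify that the residual left $G$\nbdash{}, right $H$\nbdash{}, and two\nbdash{}sided $E$\nbdash{}actions match up on both sides, so that $\alpha_{lr}$ descends to a biequivariant isomorphism of HS bibundles from $G\times_\trivial{M}E\times_\trivial{N}H$ to $E$, and that the quotients by the multiplication actions are well\nbdash{}behaved (principality of the relevant $E$\nbdash{}actions, via Corollary~\ref{chap1:cor:G-bundles-P-times-Q-pullback}). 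Since each of these steps is a direct transcription of the corresponding step in Lemma~\ref{chap5:lem:iso-of-bibundles}, no genuinely new difficulty should arise beyond the careful tracking of the additional colour ($H$ versus $\arrow{X}$) on the right.
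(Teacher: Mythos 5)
Your proposal is correct and follows essentially the same route as the paper: the paper's proof likewise takes the associator 2\nbdash{}morphism (invertible since all 2\nbdash{}morphisms in \(\BUN\) are), reads it as an isomorphism between the composite HS bibundles \(\bigl((E_q\times_{N} H_1)\times_{E_0\times_{N} H_0} E_p\bigr)/(E\times_{\trivial{N}}H)\) and \(\bigl((G_1\times_{M} E_p)\times_{G_0\times_{M} E_0} E_q\bigr)/(G\times_{\trivial{M}}E)\), and then absorbs the \(H_1\)\nbdash{} and \(G_1\)\nbdash{}factors using the multiplication actions, exactly as in Lemma~\ref{chap5:lem:iso-of-bibundles}. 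The only discrepancy is a bookkeeping slip in your fibre product \(E_q\times_{E_0}H_1\), which should be taken over \(N\) (via the moment maps to \(\trivial{N}\)), not over \(E_0\).
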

\begin{proof}
The associator 2-morphism
\[
  \alpha\colon p\circ(q\times\id)\Rightarrow q\circ(\id\times p)\colon  G\times_{\trivial{M}} E\times_{\trivial{N}}H\to E
\]
implies an isomorphism of HS bibundles
\[
( (E_q\times_{N} H_1)\times_{E_0\times_{N} G_0} E_p)/(E\times_{\trivial{N}}H)\xrightarrow{\alpha}
((G_1\times_{M} E_p)\times_{G_0\times_{M} E_0} E_q)/(G\times_{\trivial{M}}E).
\]
Since \(H\) acts by multiplication, we have
\[
  ( (E_q\times_{N} H_1)\times_{E_0\times_{N} G_0} E_p)/(E\times_{\trivial{N}}H)\cong
  (E_q\times_{E_0} E_p)/E.
\]
Similarly,
\[
  ((G_1\times_{M} E_p)\times_{G_0\times_{M} E_0} E_q)/(G\times_{\trivial{M}}E)\cong
  (E_p\times_{E_0}E_q)/E,
\]
and this proves the lemma.
\end{proof}

The proof shows that the \(E\)-actions on \(E_q\times_{E_0} E_p\) and \(E_p\times_{E_0}E_q\) are principal. The following lemma is a consequence of Lemma~\ref{chap1:lem:principal-equivariant-map-is-iso}.
\begin{lemma}\label{chap6:lem:pullback1}
The diagram
\[
  \xymatrix{
  E_q\times_{E_0} E_p\ar[d]\ar[r] & (E_q\times_{E_0} E_p)/E\ar[d] \\
  E_q \ar[r]&                   G_0\times_{M}E_0
  }
\]
is a pullback square in \(\Cat\). We illustrate this pullback square by Figure~\ref{chap6:fig:pullback1}, where the space given by the left picture is the pullback of the two spaces given by the middle and the right pictures over an obvious space.
\begin{figure}[htbp]
  \centering
  \begin{tikzpicture}
  [<-,
    >=latex',
    mydot/.style={draw,circle,inner sep=1.5pt},
    every label/.style={scale=0.6},
    scale=0.8
  ]
  \begin{scope}
  \node[mydot,label=-135:$0$]           (P0) at (0,0){};
  \node[mydot,label=135:$1$]            (P1) at (0,1){};
  \node[mydot,fill=black,label=45:$2$]  (P2) at (1,1){};
  \node[mydot,fill=black,label=-45:$3$] (P3) at (1,0){};
  \draw
  (P0) edge (P1)
  (P1) edge (P2)
  (P2) edge (P3)
  (P0) edge (P1)
       edge (P3)
  (P2) edge[->] (P0);
  \end{scope}
  \begin{scope}[xshift=2.5cm]
  \node[mydot,label=-135:$0$]           (Q0) at (0,0){};
  \node[mydot,label=135:$1$]            (Q1) at (0,1){};
  \node[mydot,fill=black,label=45:$2$]  (Q2) at (1,1){};
  \node[mydot,fill=black,label=-45:$3$] (Q3) at (1,0){};
  \draw
  (Q0) edge (Q1)
  (Q1) edge (Q2)
  (Q2) edge (Q3)
  (Q0) edge (Q1)
       edge (Q3)
  (Q2) edge[snake arrow] (Q0);
  \end{scope}
  \begin{scope}[xshift=5cm]
  \node[mydot,label=-135:$0$]           (R0) at (0,0){};
  \node[mydot,label=135:$1$]            (R1) at (0,1){};
  \node[mydot,fill=black,label=45:$2$]  (R2) at (1,1){};
  \draw
  (R0) edge (R1)
  (R1) edge (R2)
  (R0) edge (R1)
  (R2) edge[->] (R0);
  \end{scope}
  \end{tikzpicture}
  \caption{An illustration of a pullback diagram}\label{chap6:fig:pullback1}
\end{figure}
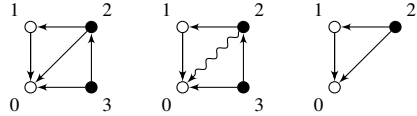
\end{lemma}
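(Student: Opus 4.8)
The plan is to deduce Lemma~\ref{chap6:lem:pullback1} as a direct application of Lemma~\ref{chap1:lem:principal-equivariant-map-is-iso}, exactly as the preceding sentence in the text suggests. The key observation, established in the proof of Lemma~\ref{chap6:lem:HS-bibundle-iso-alpha}, is that the diagonal \(E\)-action on \(E_q\times_{E_0} E_p\) is principal. Thus \(E_q\times_{E_0} E_p\) is the total space of a principal \(E\)-bundle whose base is the quotient \((E_q\times_{E_0} E_p)/E\), which gives the top edge of the square. On the other hand, the left \(E\)-action on \(E_q\) alone is principal by the first assertion of the lemma preceding Section~\ref{chap6:ssec:level3} (the analogue of Lemma~\ref{chap5:lem:left-E-is-principal}), so the map \(E_q\to G_0\times_{M} E_0\) exhibits \(E_q\) as a principal \(E\)-bundle over \(G_0\times_{M} E_0\), giving the bottom edge.

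First I would identify the projection \(E_q\times_{E_0} E_p\to E_q\) (forgetting the \(E_p\)-factor) as an \(E\)-equivariant map between these two principal \(E\)-bundles, where \(E\) acts diagonally on the source and on \(E_q\) on the target; one checks that it commutes with the two quotient maps, producing the commuting square of the lemma. Then I would invoke the second statement of Lemma~\ref{chap1:lem:principal-equivariant-map-is-iso}: an equivariant map between principal bundles realises the source as the pullback of the target along the induced map of base spaces, so the square
\[
  \xymatrix{
  E_q\times_{E_0} E_p\ar[d]\ar[r] & (E_q\times_{E_0} E_p)/E\ar[d] \\
  E_q \ar[r]&                   G_0\times_{M}E_0
  }
\]
is a pullback square in \(\Cat\). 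This is precisely the claim.

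The one point requiring care is that Lemma~\ref{chap1:lem:principal-equivariant-map-is-iso} is stated for principal \(G\)-bundles over possibly different bases with a compatible pair \((f_1, f_0)\); I would verify that the two \(E\)-actions here are genuinely principal in the sense of Definition~\ref{chap1:def:principal-bundle} (the relevant shear maps being isomorphisms, which follows from the quotient descriptions already obtained), and that the forgetful map \(E_q\times_{E_0} E_p\to E_q\) is equivariant for the correct action, namely the diagonal action upstairs descending to the action on \(E_q\). I expect this equivariance bookkeeping — tracking which moment maps and which of the several commuting actions are in play — to be the main obstacle, though it is routine once the principal \(E\)-bundle structure from Lemma~\ref{chap6:lem:HS-bibundle-iso-alpha} is in hand. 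The illustration in Figure~\ref{chap6:fig:pullback1} then records the geometric content: the tetrahedron on the left is the fibre product of the two degenerate-face configurations in the middle and right pictures.
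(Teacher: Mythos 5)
Your proposal is correct and takes essentially the same route as the paper: the paper's entire justification is that \(E_q\times_{E_0} E_p\to E_q\) is an equivariant map of principal \(E\)-bundles (principality of the diagonal action coming from the proof of Lemma~\ref{chap6:lem:HS-bibundle-iso-alpha}, and of the action on \(E_q\) from the preceding lemma), followed by an appeal to Lemma~\ref{chap1:lem:principal-equivariant-map-is-iso}. Your write-up simply makes this two-step application explicit, including the equivariance bookkeeping the paper leaves implicit.
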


\begin{remark}\label{chap6:rem:pullback3}
  This lemma follows because \(E_q\times_{E_0} E_p\to E_q\) is an equivariant map of principal \(E\) bundles. Thus, there are three other pullback diagrams, illustrated by Figure~\ref{chap6:fig:pullback3}, where every row gives a pullback diagram.
  \begin{figure}[htbp]
  \centering
  \begin{tikzpicture}
  [<-,
    >=latex',
    mydot/.style={draw,circle,inner sep=1.5pt},
    every label/.style={scale=0.6},
    scale=0.8
  ]
  \begin{scope}
  \node[mydot,label=-135:$0$]           (P0) at (0,0){};
  \node[mydot,label=135:$1$]            (P1) at (0,1){};
  \node[mydot,fill=black,label=45:$2$]  (P2) at (1,1){};
  \node[mydot,fill=black,label=-45:$3$] (P3) at (1,0){};
  \draw
  (P0) edge (P1)
  (P1) edge (P2)
  (P2) edge (P3)
  (P0) edge (P1)
       edge (P3)
  (P2) edge[->] (P0);
  \end{scope}
  \begin{scope}[xshift=2.5cm]
  \node[mydot,label=-135:$0$]           (Q0) at (0,0){};
  \node[mydot,label=135:$1$]            (Q1) at (0,1){};
  \node[mydot,fill=black,label=45:$2$]  (Q2) at (1,1){};
  \node[mydot,fill=black,label=-45:$3$] (Q3) at (1,0){};
  \draw
  (Q0) edge (Q1)
  (Q1) edge (Q2)
  (Q2) edge (Q3)
  (Q0) edge (Q1)
       edge (Q3)
  (Q2) edge[snake arrow] (Q0);
  \end{scope}
  \begin{scope}[xshift=5cm]
  \node[mydot,label=-135:$0$]           (R0) at (0,0){};
  \node[mydot,fill=black,label=45:$2$]  (R2) at (1,1){};
  \node[mydot,fill=black,label=-45:$3$] (R3) at (1,0){};
  \draw[->]
  (R3) edge (R0)
       edge (R2)
  (R2) edge (R0);
  \end{scope}
  \begin{scope}[yshift=-2cm]
  \node[mydot,label=-135:$0$]           (P0) at (0,0){};
  \node[mydot,label=135:$1$]            (P1) at (0,1){};
  \node[mydot,fill=black,label=45:$2$]  (P2) at (1,1){};
  \node[mydot,fill=black,label=-45:$3$] (P3) at (1,0){};
  \draw
  (P0) edge (P1)
  (P1) edge (P2)
       edge (P3)
  (P2) edge (P3)
  (P0) edge (P1)
       edge (P3);
  \end{scope}
  \begin{scope}[xshift=2.5cm,yshift=-2cm]
  \node[mydot,label=-135:$0$]           (Q0) at (0,0){};
  \node[mydot,label=135:$1$]            (Q1) at (0,1){};
  \node[mydot,fill=black,label=45:$2$]  (Q2) at (1,1){};
  \node[mydot,fill=black,label=-45:$3$] (Q3) at (1,0){};
  \draw
  (Q0) edge (Q1)
  (Q1) edge (Q2)
  (Q2) edge (Q3)
  (Q0) edge (Q1)
       edge (Q3)
  (Q3) edge[snake arrow] (Q1);
  \end{scope}
  \begin{scope}[xshift=5cm,yshift=-2cm]
  \node[mydot,label=-135:$0$]           (R0) at (0,0){};
  \node[mydot,label=135:$1$]            (R1) at (0,1){};
  \node[mydot,fill=black,label=-45:$3$] (R3) at (1,0){};
  \draw
  (R0) edge (R1)
  (R1) edge (R3)
  (R0) edge (R1)
  (R3) edge[->] (R0);
  \end{scope}
  \begin{scope}[yshift=-4cm]
  \node[mydot,label=-135:$0$]           (P0) at (0,0){};
  \node[mydot,label=135:$1$]            (P1) at (0,1){};
  \node[mydot,fill=black,label=45:$2$]  (P2) at (1,1){};
  \node[mydot,fill=black,label=-45:$3$] (P3) at (1,0){};
  \draw
  (P0) edge (P1)
  (P1) edge (P2)
       edge (P3)
  (P2) edge (P3)
  (P0) edge (P1)
       edge (P3);
  \end{scope}
  \begin{scope}[xshift=2.5cm,yshift=-4cm]
  \node[mydot,label=-135:$0$]           (Q0) at (0,0){};
  \node[mydot,label=135:$1$]            (Q1) at (0,1){};
  \node[mydot,fill=black,label=45:$2$]  (Q2) at (1,1){};
  \node[mydot,fill=black,label=-45:$3$] (Q3) at (1,0){};
  \draw
  (Q0) edge (Q1)
  (Q1) edge (Q2)
  (Q2) edge (Q3)
  (Q0) edge (Q1)
       edge (Q3)
  (Q3) edge[snake arrow] (Q1);
  \end{scope}
  \begin{scope}[xshift=5cm,yshift=-4cm]
  \node[mydot,label=135:$1$]            (R1) at (0,1){};
  \node[mydot,fill=black,label=45:$2$]  (R2) at (1,1){};
  \node[mydot,fill=black,label=-45:$3$] (R3) at (1,0){};
  \draw
  (R1) edge (R2)
       edge (R3)
  (R2) edge (R3);
  \end{scope}
  \end{tikzpicture}
  \caption{An illustration of three pullback diagrams}\label{chap6:fig:pullback3}
\end{figure}
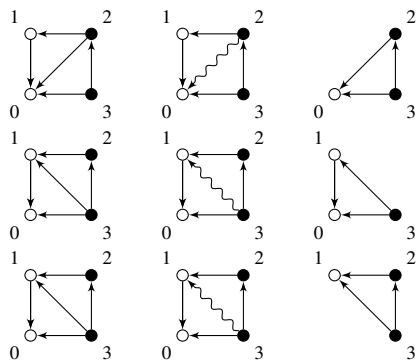
\end{remark}

\begin{proof}[Proof of Lemma~\ref{chap6:lem:level3:isos}]
Lemma~\ref{chap6:lem:pullback1} and Remark~\ref{chap6:rem:pullback3} imply that \(\Hom(\Horn{3}{0}[2,2],\Gamma)\) is the limit of the three spaces (over obvious spaces) in the first row of Figure~\ref{chap6:fig:hom-horn-30-and-31-2-2-to-Gamma}.
\begin{figure}[htbp]
  \centering
  \begin{tikzpicture}
  [<-,
    >=latex',
    mydot/.style={draw,circle,inner sep=1.5pt},
    every label/.style={scale=0.6},
    scale=0.8
  ]
  \begin{scope}
  \node[mydot,label=-135:$0$]           (P0) at (0,0){};
  \node[mydot,label=135:$1$]            (P1) at (0,1){};
  \node[mydot,fill=black,label=-45:$3$] (P3) at (1,0){};
  \draw
  (P0) edge (P1)
       edge (P3)
  (P1) edge (P3);
  \end{scope}
  \begin{scope}[xshift=2.5cm]
  \node[mydot,label=-135:$0$]           (Q0) at (0,0){};
  \node[mydot,label=135:$1$]            (Q1) at (0,1){};
  \node[mydot,fill=black,label=45:$2$]  (Q2) at (1,1){};
  \node[mydot,fill=black,label=-45:$3$] (Q3) at (1,0){};
  \draw
  (Q0) edge (Q1)
  (Q1) edge (Q2)
  (Q2) edge (Q3)
  (Q0) edge (Q1)
       edge (Q3)
  (Q2) edge[snake arrow] (Q0);
  \end{scope}
  \begin{scope}[xshift=5cm]
  \node[mydot,label=-135:$0$]           (R0) at (0,0){};
  \node[mydot,label=135:$1$]            (R1) at (0,1){};
  \node[mydot,fill=black,label=45:$2$]  (R2) at (1,1){};
  \draw[->]
  (R1) edge (R0)
  (R2) edge (R0)
       edge (R1);
  \end{scope}
  \begin{scope}[yshift=-2cm]
  \node[mydot,label=-135:$0$]           (P0) at (0,0){};
  \node[mydot,label=135:$1$]            (P1) at (0,1){};
  \node[mydot,fill=black,label=-45:$3$] (P3) at (1,0){};
  \draw
  (P0) edge (P1)
       edge (P3)
  (P1) edge (P3);
  \end{scope}
  \begin{scope}[xshift=2.5cm,yshift=-2cm]
  \node[mydot,label=-135:$0$]           (Q0) at (0,0){};
  \node[mydot,label=135:$1$]            (Q1) at (0,1){};
  \node[mydot,fill=black,label=45:$2$]  (Q2) at (1,1){};
  \node[mydot,fill=black,label=-45:$3$] (Q3) at (1,0){};
  \draw
  (Q0) edge (Q1)
  (Q1) edge (Q2)
  (Q2) edge (Q3)
  (Q0) edge (Q1)
       edge (Q3)
  (Q3) edge[snake arrow] (Q1);
  \end{scope}
  \begin{scope}[xshift=5cm,yshift=-2cm]
  \node[mydot,label=-135:$0$]           (R0) at (0,0){};
  \node[mydot,label=135:$1$]            (R1) at (0,1){};
  \node[mydot,fill=black,label=45:$2$]  (R2) at (1,1){};
  \draw[->]
  (R1) edge (R0)
  (R2) edge (R0)
       edge (R1);
  \end{scope}
  \end{tikzpicture}
  \caption{\(\Hom(\Horn{3}{0}[2,2],\Gamma)\) and \(\Hom(\Horn{3}{1}[2,2],\Gamma)\)}\label{chap6:fig:hom-horn-30-and-31-2-2-to-Gamma}
\end{figure}

Similarly, the second row of Figure~\ref{chap6:fig:hom-horn-30-and-31-2-2-to-Gamma} gives the space \(\Hom(\Horn{3}{1}[2,2],\Gamma)\). Lemma~\ref{chap6:lem:HS-bibundle-iso-alpha} gives \(\Hom(\Horn{3}{0}[2,2],\Gamma)\cong\Hom(\Horn{3}{1}[2,2],\Gamma)\). The remaining isomorphisms in~\eqref{chap6:eq:Gamma-11} follow similarly.

If the bibundle \(E\) is right principal, then we have similar pullback diagrams as in Lemma~\ref{chap6:fig:pullback1}, and~\eqref{chap6:eq:Gamma-02-principal} follows similarly.
\end{proof}

Lemma~\ref{chap6:lem:level3:isos} implies that \(\Hom(\Horn{3}{1},\Gamma)\cong \Hom(\Horn{3}{2},\Gamma)\). This isomorphism gives two  3\nbdash{}multiplication maps
\begin{equation}\label{chap6:eq:3-multiplication}
m_i\colon \Hom(\Horn{3}{i},\Gamma)\to\Hom(\Simp{2},\Gamma),\quad \text{for } i=1, 2,
\end{equation}
which determine the remaining face of a horn of shape \(\Horn{3}{1}\) or \(\Horn{3}{2}\) in \(\Gamma\). In fact, considering the coloring carefully, we have more 3\nbdash{}multiplication maps; but these two maps are sufficient for our purposes.

\begin{lemma}\label{chap6:lem:3mult}
The \(3\)-multiplications are compatible with face and degeneracy maps in the following sense: for \(\eta\in \Gamma_2\), we have
\begin{gather*}
  \eta=m_1(\eta,\source_0\circ \face_1(\eta),\de_0\circ \face_2(\eta)), \\
  (\text{which is equivalent to } \de_0\circ\face_1 (\eta)=m_2(\eta,\eta, \source_0\circ \face_2(\eta)));\\
  \eta=m_1(\de_0\circ \face_0(\eta),\eta,\de_1\circ \face_2(\eta)), \\
  (\text{which is equivalent to } \eta=m_2(\de_0\circ \face_0(\eta),\eta,\de_1\circ \face_2(\eta)));\\
  \de_1\circ \face_1(\eta)=m_1(\de_1\circ \face_0(\eta),\eta,\eta),\\
  (\text{which is equivalent to } \eta=m_2(\de_1\circ \face_0(\eta),\de_1\circ \face_1(\eta),\eta)).
\end{gather*}
\end{lemma}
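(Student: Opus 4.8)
The plan is to regard these relations as the bibundle analogue of the unit\nbdash{}compatibility coherence condition \ref{chap5:item:coherence-3-mult-de} of Proposition~\ref{chap5:prop:kan-fibration-finite-data}, and to prove them by the mechanism of Lemma~\ref{chap5:lem:triangle-condition-imply-3mult-face-de}: each relation will be read off $T$\nbdash{}element by $T$\nbdash{}element from a commutative triangle of HS bibundle isomorphisms furnished by a unitor and a triangle condition. First I would recall from Lemma~\ref{chap6:lem:level3:isos} that the $3$\nbdash{}multiplications $m_1,m_2$ of~\eqref{chap6:eq:3-multiplication} are defined by filling a horn of shape $\Horn{3}{1}$ or $\Horn{3}{2}$ in $\Gamma$ and returning the missing $i$\nbdash{}th face; the isomorphism $\Hom(\Horn{3}{1},\Gamma)\cong\Hom(\Horn{3}{2},\Gamma)$ is exactly the bridge relating each stated identity to its parenthetical reformulation, so that it suffices to prove one form of each of the three relations. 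A useful preliminary observation is that each relation involves a single genuine $2$\nbdash{}simplex $\eta$ flanked by degenerate ones; these are therefore pure unit laws and do \emph{not} involve the cross associator $\alpha_{lr}$, only the unitors $\rho,\lambda$ and the triangle conditions of the two one\nbdash{}sided actions.

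Next I would reduce the verification to the four colour pieces. Using the decomposition $\Gamma_2=X_2\sqcup E_q\sqcup E_p\sqcup Y_2$ (with $\Gamma_{2,-1}=X_2$, $\Gamma_{1,0}=E_q$, $\Gamma_{0,1}=E_p$, $\Gamma_{-1,2}=Y_2$) and the fact that the degeneracies $\de_0,\de_1\colon\Gamma_1\to\Gamma_2$ were built in Section~\ref{chap6:ssec:levels012} from the unitors and the unit of $E$, each relation restricts to a statement on each piece. On the pure pieces $X_2$ and $Y_2$ the relations collapse to the unit\nbdash{}compatibility coherence conditions for the $2$\nbdash{}groupoids $X$ and $Y$; these hold because $X$ and $Y$ are $2$\nbdash{}groupoids, by the argument of \cite[Section 4.1]{Zhu:ngpd}. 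On the mixed pieces $E_q$ and $E_p$ the relations become the unit laws of the left $G$\nbdash{} and right $H$\nbdash{}actions, which are precisely what Lemma~\ref{chap5:lem:triangle-condition-imply-3mult-face-de} treats in the one\nbdash{}sided case.

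The core step is then to exhibit, on each mixed piece, the relevant triangle of isomorphisms and to read off its $T$\nbdash{}elements through the pullback descriptions of Lemma~\ref{chap6:lem:pullback1} and Remark~\ref{chap6:rem:pullback3}. For instance, for the right $H$\nbdash{}action on $E_p$, the appropriate triangle condition (Equation~\eqref{chap6:eq:1eh} together with the corresponding unitor) yields a commutative triangle
\[
(E_p\times_{E_0} E_1)/E \xrightarrow{\ \cong\ } (E_p\times_{H_0} H_1)/H
\]
whose two legs to $E_p$ are isomorphisms of HS bibundles; reading off $T$\nbdash{}elements as in Lemma~\ref{chap5:lem:triangle-condition-imply-3mult-face-de}, and using that the degeneracies $\de_0,\de_1$ are the unitor isomorphisms while $m_1$ is realised by horn filling, gives the relation $\eta=m_1(\de_0\circ\face_0(\eta),\eta,\de_1\circ\face_2(\eta))$. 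The symmetric triangle for the left action on $E_q$ gives the first relation $\eta=m_1(\eta,\source_0\circ\face_1(\eta),\de_0\circ\face_2(\eta))$, and the third relation $\de_1\circ\face_1(\eta)=m_1(\de_1\circ\face_0(\eta),\eta,\eta)$ follows in the same manner from the remaining triangle. Throughout, the descent of an isomorphism of HS bibundles to an equality of $T$\nbdash{}elements of the quotients uses that the relevant $E$\nbdash{}, $G$\nbdash{}, and $H$\nbdash{}actions are principal, via Lemmas~\ref{chap1:lem:principal-equivariant-map-is-iso}, \ref{chap1:lem:invariant-cover-descent}, and~\ref{chap1:lem:action-over-principal-is-principal}.

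I expect the main obstacle to be bookkeeping rather than conceptual difficulty: for each of the three relations and each colour piece one must match the correct horn shape ($\Horn{3}{1}$ versus $\Horn{3}{2}$), the correct vertex colouring $[i,j]$, and the correct unitor, and then confirm that the flanking degenerate faces produced by $\de_0$, $\de_1$, and $\source_0$ are exactly those entering the chosen triangle. The delicate points are (i) checking that each parenthetical ``$m_2$'' reformulation is genuinely equivalent to its ``$m_1$'' counterpart by tracing it through the isomorphism $\Hom(\Horn{3}{1},\Gamma)\cong\Hom(\Horn{3}{2},\Gamma)$ of Lemma~\ref{chap6:lem:level3:isos} rather than re\nbdash{}deriving it, and (ii) confirming that the degenerate horns involved really lie in the representable domain where the fillings of Lemma~\ref{chap6:lem:level3:isos} are available, so that $m_1$ and $m_2$ may legitimately be applied at this stage of the construction.
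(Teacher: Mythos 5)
Your overall strategy --- reading each relation off a commutative triangle of HS bibundle isomorphisms supplied by unitors and triangle conditions, exactly as in Lemma~\ref{chap5:lem:triangle-condition-imply-3mult-face-de} --- is the paper's proof, which consists of the single remark that the lemma ``follows from the triangle conditions.'' However, your preliminary observation that the relations ``do not involve the cross associator \(\alpha_{lr}\), only the unitors \(\rho,\lambda\) and the triangle conditions of the two one-sided actions'' is false, and executing the plan as written would leave two of the mixed-colour cases unprovable. Track the colours: for \(\eta\in E_p=\Gamma_{0,1}\) (one white, two black vertices), the first relation corresponds to the degenerate \(3\)-simplex \(\de_0\eta\), whose vertices are coloured \((0,0,1,1)\), so it lies in \(\Gamma_{1,1}\); its horn has faces \(\face_0=\eta\in E_p\), \(\face_2=\de_0\face_1\eta\in E_q\), \(\face_3=\de_0\face_2\eta\in E_q\). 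The colour-\([2,2]\) \(3\)-multiplication that fills this horn is defined, in Lemma~\ref{chap6:lem:level3:isos}, through Lemma~\ref{chap6:lem:HS-bibundle-iso-alpha}, i.e.\ through \(\alpha_{lr}\) itself; and the identity \(\eta=m_1(\eta,\de_0\face_1\eta,\de_0\face_2\eta)\) is precisely the assertion that \(\alpha_{lr}\) is compatible with the unitor \(\lambda\) on unit elements --- which is the mixed triangle condition~\eqref{chap6:eq:1eh}, whose whole-square \(2\)-morphism is induced by \(\alpha_{lr}\). The same happens for the third relation with \(\eta\in E_q\) (there \(\de_2\eta\in\Gamma_{1,1}\)), via the symmetric counterpart of~\eqref{chap6:eq:1eh}. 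So \(\alpha_{lr}\) cannot be avoided; your one-sided triangles simply do not address these two cases.

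Relatedly, your dictionary between conditions and relations is scrambled: you invoke~\eqref{chap6:eq:1eh} to prove \(\eta=m_1(\de_0\face_0\eta,\eta,\de_1\face_2\eta)\) on \(E_p\), but \(\de_1\eta\) lies in \(\Gamma_{0,2}\) (colour \([1,3]\)), and that case is governed by the triangle condition of the one-sided right \(H\)-action (the analogue of~\eqref{chap5:eq:x1g}, relating \(\alpha_r\), \(\rho\), and \(\lambda_H\)); this is visible in the paper's reverse direction, where \(\Kan!(3,1)[1,3]\) applied to exactly the triple \((\de_0\face_0\eta,\eta,\de_1\face_2\eta)\) recovers that triangle condition. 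Your commutative triangle \((E_p\times_{E_0}E_1)/E\cong(E_p\times_{H_0}H_1)/H\) with both legs landing in \(E_p\) is indeed the right diagram for the second relation --- it just does not come from~\eqref{chap6:eq:1eh}. Once the matching is corrected --- relations landing in \(\Gamma_{0,2}\) or \(\Gamma_{2,0}\) use the one-sided triangle conditions, relations landing in \(\Gamma_{1,1}\) use~\eqref{chap6:eq:1eh} and its symmetric version --- your argument coincides with the paper's. (Your remaining points are sound: the pure pieces \(X_2\), \(Y_2\) are automatic since degenerate simplices fill their own horns under the unique Kan conditions, and the \(m_1\)/\(m_2\) equivalences are immediate from the defining isomorphism \(\Hom(\Horn{3}{1},\Gamma)\cong\Hom(\Horn{3}{2},\Gamma)\).)
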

\begin{proof}
This follows from the triangle conditions. The proof is similar to \cite[the coherence conditions, p. 4127]{Zhu:ngpd}; see also Lemma~\ref{chap5:lem:triangle-condition-imply-3mult-face-de}.
\end{proof}

Now, we set
\[
  \Gamma_3=\Hom(\Horn{3}{1},\Gamma)\cong \Hom(\Horn{3}{2},\Gamma);
\]
that is, for \(i+j=2\) and \(i, j\ge -1\), we have
\[
 \Gamma_{i, j}=\Hom(\Horn{3}{1}[i+1, j+1],\Gamma)\cong \Hom(\Horn{3}{2}[i+1, j+1],\Gamma).
\]
The face maps \(\face_i\colon \Gamma_3\to \Gamma_2\) for \(0\le i\le 3\) are natural to define. Using Lemma~\ref{chap6:lem:3mult}, we can define degeneracy maps \(\de_i\colon \Gamma_2\to \Gamma_3\) for \(0\le i\le 2\) by
\begin{gather*}
  \Gamma_2\to \Hom(\Horn{3}{1},\Gamma),\\
  \de_0\colon \eta\mapsto(\eta,\source_0\circ \face_1(\eta),\de_0\circ \face_2(\eta)), \\
  \de_1\colon \eta\mapsto (\de_0\circ \face_0(\eta),\eta,\de_1\circ \face_2(\eta)), \\
  \de_2\colon \eta\mapsto (\de_1\circ \face_0(\eta),\eta,\eta).
\end{gather*}
The simplicial identities
\[
  \face^2_i \face^3_j=\face^2_{j-1}\face^3_i, \quad i<j
\]
hold by definition. All the simplicial identities involving degeneracy maps \(\de^2_i\) follow from the construction and  Lemma~\ref{chap6:lem:3mult}.

All desired colored Kan conditions for \(\Gamma\) hold by the definition and Lemma~\ref{chap6:lem:level3:isos}. This completes dimension \(3\) of \(\Gamma\).

There is a natural map \(\Gamma_{\le 3}\to \tr_3 \Simp{1}\). We know from the lemma below that \(\Kan!(3, k)\) for \(\Gamma\to \Simp{1}\) is equivalent to the same condition for \(\Gamma\). This property also holds for higher dimensions, so we can safely omit the morphism \(\Gamma\to \Simp{1}\) in higher dimensions.

\begin{lemma}
Let \(\Gamma\) be a simplicial object in \(\Cat\) with a morphism \(\Gamma\to\Simp{1}\). For \((m,k)=(2,1)\) or \(m>2\), the conditions \(\Kan(m, k)\) for\/ \(\Gamma\to \Simp{1}\) and for \(\Gamma\) are equivalent.
\end{lemma}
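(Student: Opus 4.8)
The plan is to reduce the equivalence to a single statement about the interval $\Simp{1}$, namely that attaching the datum of a map to $j(\Simp{1})$ changes nothing in the relevant dimensions. Unwinding Definition~\ref{chap3:def:Kan_arrow}, the condition $\Kan(m,k)$ for $\Gamma\to j(\Simp{1})$ asks that the presheaf
\[
\Hom(\Horn{m}{k}\to\Simp{m},\Gamma\to j(\Simp{1}))=\Hom(\Horn{m}{k},\Gamma)\times_{\Hom(\Horn{m}{k},j(\Simp{1}))}\Hom(\Simp{m},j(\Simp{1}))
\]
be representable and that the canonical map from $\Hom(\Simp{m},\Gamma)$ to it be a cover, whereas $\Kan(m,k)$ for $\Gamma$ asks the same of $\Hom(\Simp{m},\Gamma)\to\Hom(\Horn{m}{k},\Gamma)$. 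Thus the whole lemma follows once I show that the restriction map
\[
\Hom(\Simp{m},j(\Simp{1}))\to\Hom(\Horn{m}{k},j(\Simp{1}))
\]
is an isomorphism in the given range: pulling the map $\Hom(\Horn{m}{k},\Gamma)\to\Hom(\Horn{m}{k},j(\Simp{1}))$ back along an isomorphism identifies the fibre product above with $\Hom(\Horn{m}{k},\Gamma)$ and identifies the two canonical maps, so one is a cover (respectively an isomorphism) if and only if the other is, and representability transfers as well.

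To prove the key isomorphism I would transport a statement about the simplicial set $\Simp{1}$ into $\Cat$. Since $\Simp{1}=NI$ is the nerve of the interval category $I=[1]$, it is an inner Kan complex satisfying $\Kan!(m,k)$ for $m>1$ and $0<k<m$ by Proposition~\ref{chap2:prop:category-is-inner-kan} and Remark~\ref{chap2:rem:cat-Kan!(n,0)-n>=3}; being $2$-coskeletal, it also satisfies $\Kan!(m,k)$ for all $m\ge 4$ and $0\le k\le m$ by Proposition~\ref{chap2:prop:coskeletal-kan}; finally, because $I$ is a poset, a direct inspection of the five $3$-simplices of $\Simp{1}$ settles the remaining outer cases $\Kan!(3,0)$ and $\Kan!(3,3)$. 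Hence, for $(m,k)=(2,1)$ or $m>2$, the set map $\hom(\Simp{m},\Simp{1})\to\hom(\Horn{m}{k},\Simp{1})$ is a bijection of finite sets. Now I use that $\Hom(-,X)$ turns colimits in the first variable into limits, so $\Hom(\Horn{m}{k},j(\Simp{1}))$ is the finite limit $\lim_{\Simp{n}\to\Horn{m}{k}}h_{j(\Simp{1}_n)}$ over the (finite) category of simplices of $\Horn{m}{k}$, while $\Hom(\Simp{m},j(\Simp{1}))=h_{j(\Simp{1}_m)}$. Because $\Cat$ has finite products (pullbacks along the covers $Z\to\terminal$) and is extensive, the copower functor $j\colon\FinSet\to\Cat$, $S\mapsto S\otimes\terminal$, preserves finite limits; combined with the fact that the Yoneda functor $h$ preserves limits, this yields $\Hom(\Horn{m}{k},j(\Simp{1}))\cong h_{j(\hom(\Horn{m}{k},\Simp{1}))}$ and likewise for $\Simp{m}$. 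The restriction map is then $h\circ j$ applied to the bijection above, hence an isomorphism, as required.

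The main obstacle is the transfer through $j$ in the second paragraph: I must make precise that $\Hom(\Horn{m}{k},j(\Simp{1}))$ is computed by a finite limit of representables and that this limit is preserved by the discrete (copower-with-terminal) functor, which is where extensivity of $\Cat$ is genuinely used via the pullback-stability facts for coproducts — the analogous statement would fail over a non-extensive base. A secondary but unavoidable point is the outer level-three verification $\Kan!(3,0)$ and $\Kan!(3,3)$ for $\Simp{1}$, which is covered neither by $2$-coskeletality (valid only for $m\ge 4$) nor by the inner nerve conditions, and must be checked by hand using that every horn $\Horn{3}{0}\to\Simp{1}$ already prescribes a monotone chain on all four vertices and hence a unique filling simplex. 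Once these two points are in place, the remaining bookkeeping — that pulling back along an isomorphism preserves covers, isomorphisms, and representability, and that the same argument yields the unique version $\Kan!(m,k)$ — is routine.
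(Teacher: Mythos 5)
Your proposal is correct and takes essentially the same route as the paper: both reduce the lemma, via the pullback square computing \(\Hom(\Horn{m}{k}\to\Simp{m},\Gamma\to j(\Simp{1}))\), to the fact that the restriction map \(\Hom(\Simp{m},j(\Simp{1}))\to\Hom(\Horn{m}{k},j(\Simp{1}))\) is an isomorphism for \((m,k)=(2,1)\) or \(m>2\). The only difference is that the paper simply asserts this isomorphism, whereas you verify it in detail (correctly) from the unique Kan conditions of the simplicial set \(\Simp{1}\) together with the transfer through \(j\) using extensivity.
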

\begin{proof}
If \((m,i)=(2,1)\) or \(m>2\), then \(\Hom(\Simp{m},\Simp{1})=\Hom(\Horn{m}{i},\Simp{1})\). Consider the pullback diagram
\[
  \xymatrix{
  \Hom(\Horn{m}{i}\to\Simp{1},\Gamma\to\Simp{1})\ar[r]\ar[d] & \Hom(\Horn{m}{i}, \Gamma)\ar[d]\\
  \Hom(\Simp{m},\Simp{1})\ar[r]^=&\Hom(\Horn{m}{i}, \Simp{1}) \rlap{\ .}
  }
\]
The bottom row is an isomorphism, thus so is the top row. This proves the claim.
\end{proof}

\subsection{Dimension 4}\label{chap6:ssec:level4}

Since the desired simplicial object \(\Gamma\) should be 3-coskeletal, we let \(\Gamma=\cosk_3 \Gamma_{\le 3}\);
that is, for \(m\ge 4\), we have
\begin{align*}
  \Gamma_m &=\Hom(\sk_3\Simp{m},\Gamma), \\
      &=\{f\in\Hom(\sk_2\Simp{m},\Gamma)|f\circ(\face_0\times \face_1\times \face_2\times \face_3)(\sk_3\Simp{m}) \subset \Gamma_3\},
\end{align*}
We shall prove that this completes the simplicial object \(\Gamma\).

Let us consider dimension \(m=4\). An element \(f\in\Hom(\sk_3\Simp{m},\Gamma)\) is given by a collection of triangles in \(\Gamma_2\) such that they are compatible in the following sense: four faces of every 3\nbdash{}simplex form an element in~\(\Gamma_3\). Using \(m_1\) or \(m_2\) in~\eqref{chap6:eq:3-multiplication}, we can uniquely determine the 1st or 2nd face of a 3\nbdash{}simplex. Therefore, we may remove some triangles of \(f\) such that \(f\) can be recovered from the remaining triangles. If there are two different ways of recovering the same face, then the results should be equal.

\begin{lemma}\label{chap6:lem:f024}
Given triangles \(f_{012}\), \(f_{013}\), \(f_{014}\), \(f_{123}\), \(f_{124}\), and \(f_{234} \in \Gamma_2\), then the following two ways to determine \(f_{024}\in \Gamma_2\) using \(m_1\) and \(m_2\) yield the same results:
\begin{equation}\label{chap6:eq:f024-1step}
    f_{024}=m_1(f_{124},f_{014},f_{012}),
\end{equation}
and
\begin{equation}\label{chap6:eq:f024-4step}
\begin{aligned}
    f_{134}&=m_1(f_{234},f_{124},f_{123}),\\
    f_{034}&=m_1(f_{134},f_{014},f_{013}),\\
    f_{023}&=m_1(f_{123},f_{013},f_{012}),\\
    f_{024}&=m_2(f_{234},f_{034},f_{023}).
\end{aligned}
\end{equation}
\end{lemma}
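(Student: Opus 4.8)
This lemma is the "pentagon/coherence" step for the 4-simplices of $\Gamma$: it asserts that the two prescriptions for $f_{024}$ agree, and the proof should mirror the classical fact that the pentagon axiom for the associators is exactly what makes the nerve well-defined in dimension $4$. The key observation is that both~\eqref{chap6:eq:f024-1step} and~\eqref{chap6:eq:f024-4step} are statements about isomorphisms of HS bibundles built out of the action bibundles $E_q$, $E_p$ and the multiplication bibundles $E_\mult^G$, $E_\mult^H$, and the $3$-multiplications $m_1,m_2$ are themselves the isomorphisms of Lemma~\ref{chap6:lem:level3:isos} coming from the associators $\alpha_{lr}$, $\alpha_l$, $\alpha_r$ (together with $\alpha_G$, $\alpha_H$). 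So the equality of the two sides is an equality of two composite isomorphisms between the same pair of HS bibundles, and it will be forced by the pentagon conditions~\eqref{chap6:eq:pentagon} (and its symmetric partner) together with the pentagon conditions built into the $2$-groupoid structures of $X$ and $Y$.

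First I would fix the coloring of the relevant vertices: in the $4$-simplex the color pattern that is genuinely new (not already covered by the $2$-groupoid coherence of $X$ or $Y$ alone) is the mixed one, where vertices $0,1$ are white and $2,3,4$ are black, or $0$ white and the rest black, etc. I would reduce to the single worst case $[2,3]$ (two white, three black vertices, or its mirror), since the purely white or purely black cases are exactly the pentagon identity for $X$ or $Y$ and hold by Theorem~\ref{chap5:thm:2-groupoid-cat=simp}. For the mixed case I would translate each application of $m_1,m_2$ into the corresponding bibundle isomorphism: by Lemma~\ref{chap6:lem:HS-bibundle-iso-alpha} each $3$-multiplication involving one white-to-black transition is precisely the associator isomorphism $\alpha$ between $(E_q\times_{E_0}E_p)/E$ and $(E_p\times_{E_0}E_q)/E$, while the $3$-multiplications internal to the black part are given by $\alpha_r$ (the right $H$-action associator) and those internal to the white part by $\alpha_l$.

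Second, I would assemble the composite corresponding to the four-step recipe~\eqref{chap6:eq:f024-4step} as a pasting of these elementary associator isomorphisms along the edges of a cube, and recognize that the cube is exactly the pentagon diagram~\eqref{chap6:eq:pentagon} (for the configuration $G\times_{\trivial M}G\times_{\trivial M}E\times_{\trivial N}H$ or the symmetric one $G\times_{\trivial M}E\times_{\trivial N}H\times_{\trivial N}H$, depending on the coloring). The one-step recipe~\eqref{chap6:eq:f024-1step} is the opposite face of the same cube. Hence the $2$-commutativity of~\eqref{chap6:eq:pentagon} gives the desired equality of the two composite isomorphisms, and therefore of $f_{024}$. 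Throughout I would use that the relevant $E$-, $G$-, $H$-actions are principal (established in Section~\ref{chap6:ssec:levels012} and Lemma~\ref{chap6:lem:pullback1}), so that Lemma~\ref{chap1:lem:principal-equivariant-map-is-iso} lets me pass freely between the fibered-product spaces and their quotients, exactly as in the proof of Lemma~\ref{chap6:lem:HS-bibundle-iso-alpha}.

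The main obstacle I anticipate is purely bookkeeping: keeping track of which associator ($\alpha_{lr}$, $\alpha_l$, $\alpha_r$, $\alpha_G$, or $\alpha_H$) governs each of the five faces of the cube for each coloring, and verifying that the boundary data of the cube match up with the six given triangles $f_{012},f_{013},f_{014},f_{123},f_{124},f_{234}$ so that the pentagon axiom applies verbatim. In other words, the conceptual content is entirely contained in~\eqref{chap6:eq:pentagon}; the work is to organize the $T$-point calculation so that the two sides of Lemma~\ref{chap6:lem:f024} are literally the two ways around the pentagon. I would handle this by drawing the cube with its vertices labeled by the appropriate triple fibre products and its edges by the $3$-multiplications, exactly parallel to the argument sketched in Section~\ref{chap5:sec:simp-cat} and in \cite[Section 4.1]{Zhu:ngpd}, and then invoke the coherence hypothesis rather than re-deriving it. A final remark I would add is that the equivalent reformulations (the parenthetical "which is equivalent to" clauses) follow from Lemma~\ref{chap3:lem:morphism-kan(n+1,j)-implies-all} applied to $\Gamma\to\Simp{1}$, so it suffices to verify a single coloring and a single pair $(m_1,m_2)$.
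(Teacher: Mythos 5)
Your proposal is correct and follows essentially the same route as the paper: both translate the $3$\nbdash{}multiplications $m_1,m_2$ into the associator isomorphisms of HS bibundles and then invoke the $2$\nbdash{}commutativity of the pentagon conditions~\eqref{chap6:eq:pentagon} (in its various colorings, including those inherited from $X$ and $Y$), the paper doing this by chasing the Mac~Lane pentagon of Figure~\ref{chap6:fig:pentagons}, where the one-step and four-step recipes are exactly the two paths from $a(b(cd))$ to $(ab)(cd)$. Your only stray remark is the final one about the parenthetical ``equivalent'' clauses, which belong to Lemma~\ref{chap6:lem:3mult} rather than to this statement, but that does not affect the argument.
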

\begin{proof}
Translate the pentagon conditions into commutative diagrams of HS bibundles. The pentagon condition~\eqref{chap6:eq:pentagon} is illustrated by Figure~\ref{chap6:fig:pentagons}, where each pentagon represents an HS bibundle and every arrow between two pentagons is induced by a suitable instance of~\(\alpha\); there are five other cases depending on the coloring.
\begin{figure}[htbp]
\centering
\begin{tikzpicture}
  [>=latex',
  every label/.style={scale=0.5},
  mylb/.style={below,scale=0.6},
  scale=0.8]
  \foreach \j in {0,...,4}{
    \foreach \i in {0,...,2}{
      \node[mydot,label=-\i*72-126:$\i$] at ($(\j*72-54:3cm)+(-\i*72-126:1cm)$) (P\j\i){};
    }
    \foreach \i in {3,4}{
      \node[mydot,fill,label=-\i*72-126:$\i$] at ($(\j*72-54:3cm)+(-\i*72-126:1cm)$) (P\j\i){};
    }
    \draw[<-]
    (P\j0) edge node(M\j01){} (P\j1)
    (P\j1) edge node(M\j12){} (P\j2)
    (P\j2) edge node(M\j23){} (P\j3)
    (P\j3) edge node(M\j34){} (P\j4);
   }
   \draw[<-]
    (P00) edge node[mylb](M040){$a((bc)d)$} (P04)
    (P10) edge node[mylb](M140){$a(b(cd))$} (P14)
    (P20) edge node[mylb](M240){$(ab)(cd)$} (P24)
    (P30) edge node[mylb](M340){$((ab)c)d$} (P34)
    (P40) edge node[mylb](M440){$(a(bc))d$} (P44);
\begin{scope}
  \path[every edge/.append style=snake back]
   (P01) edge (P04)
         edge (P03)
   (P11) edge (P14)
   (P12) edge (P14)
   (P20) edge (P22)
   (P22) edge (P24)
   (P30) edge (P32)
         edge (P33)
   (P41) edge (P43)
   (P40) edge (P43);
\end{scope}
\begin{scope}[>=stealth,thick,bend right=15]
   \draw[->] (M023) to (M140);
   \draw[<-] (M112) to (M234);
   \draw[<-] (M201) to (M323);
   \draw[->] (M340) to (M412);
   \draw[->] (M434) to (M001);
\end{scope}
\end{tikzpicture}
\caption{Five Pentagons}\label{chap6:fig:pentagons}
\end{figure}
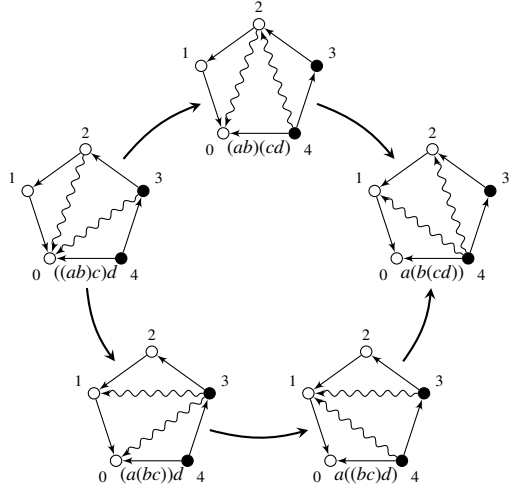

Since the triangles \(f_{012}\), \(f_{013}\), \(f_{014}\), \(f_{123}\),\(f_{124}\), and \(f_{234}\) are given, we can get all the other triangles using \(m_1\) and \(m_2\). Chasing Figure~\ref{chap6:fig:pentagons} from the rightmost pentagon labeled by \(a(b(cd))\) to the topmost pentagon labeled by \((ab)(cd)\), we have two ways to determine~\(f_{024}\): with one step as in~\eqref{chap6:eq:f024-1step} or with four steps as in~\eqref{chap6:eq:f024-4step}. They are exactly the same, as we claimed.
\end{proof}

\begin{lemma}\label{chap6:lem:level4}
There are natural isomorphisms of representable presheaves
\[
\Hom(\sk_3\Simp{4},\Gamma)\cong\Hom(\Horn{4}{i},\Gamma),\quad 0<i<4.
\]
\end{lemma}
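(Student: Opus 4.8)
The plan is to identify both sides as subpresheaves of $\Hom(\sk_2\Simp4,\Gamma)$ and to show that the natural restriction map between them is the inclusion of an equality. First I would observe that for an inner index $0<i<4$ every $2$-simplex of $\Simp4$ already lies in $\Horn4i$ (each triangle is a face of a tetrahedron that contains the vertex $i$, hence is distinct from the omitted face $\hat i=\{0,\dots,4\}\setminus\{i\}$), so $\sk_2\Horn4i=\sk_2\Simp4$. Consequently a $C$-element of either $\Hom(\Horn4i,\Gamma)$ or $\Hom(\sk_3\Simp4,\Gamma)$ is a compatible family of ten triangles in $\Gamma_2$, the difference being only which of the five tetrahedra $\hat0,\dots,\hat4$ are required to assemble, via Lemma~\ref{chap6:lem:level3:isos}, into elements of $\Gamma_3$: the horn imposes the condition on the four faces $\hat j$ with $j\neq i$, whereas $\sk_3$ imposes it on all five. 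The restriction map is therefore the inclusion $\Hom(\sk_3\Simp4,\Gamma)\hookrightarrow\Hom(\Horn4i,\Gamma)$, and the whole content of the lemma is that the $\hat i$-tetrahedron condition is redundant.

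Next I would pin this redundancy on the pentagon coherence already packaged in Lemma~\ref{chap6:lem:f024}. Take $i=1$, so the omitted tetrahedron is $\hat1=\{0,2,3,4\}$. Given an element of $\Hom(\Horn41,\Gamma)$, the four present tetrahedra determine the four interior triangles through the $3$-multiplications~\eqref{chap6:eq:3-multiplication}: $f_{134}$ from $\hat0=\{1,2,3,4\}$, $f_{034}$ from $\hat2=\{0,1,3,4\}$, $f_{023}$ from $\hat4=\{0,1,2,3\}$, and $f_{024}$ from $\hat3=\{0,1,2,4\}$, the last being exactly formula~\eqref{chap6:eq:f024-1step}. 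Compatibility of the remaining tetrahedron $\hat1$ is precisely the relation $f_{024}=m_2(f_{234},f_{034},f_{023})$, that is, the four-step computation~\eqref{chap6:eq:f024-4step}. Thus Lemma~\ref{chap6:lem:f024}, which asserts that~\eqref{chap6:eq:f024-1step} and~\eqref{chap6:eq:f024-4step} agree, says exactly that $\hat1$ closes up once $\hat0,\hat2,\hat3,\hat4$ do; this yields surjectivity, equivalently equality of the two subpresheaves, injectivity being automatic from the inclusion.

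Finally I would dispatch the cases $i=2,3$ and the representability. The other inner indices are handled the same way, invoking the symmetric instances of the pentagon condition~\eqref{chap6:eq:pentagon}, the ``five other cases depending on the coloring'' of Figure~\ref{chap6:fig:pentagons}, together with the alternative formulations of Lemma~\ref{chap6:lem:3mult}; each singles out a different tetrahedron as redundant and supplies the corresponding identity. Representability of $\Hom(\Horn4i,\Gamma)$ follows from Remarks~\ref{chap4:rem:low-inner-kan-imply-repr} and~\ref{chap4:rem:low-outer-kan-imply-repr} using the lower colored Kan conditions established in Sections~\ref{chap6:ssec:levels012} and~\ref{chap6:ssec:level3} (via the inner collapsible extension $\Sp(4)\to\Horn4i$ of Proposition~\ref{chap2:lem:spine-horn-inner}), and the isomorphism then transports representability to $\Hom(\sk_3\Simp4,\Gamma)=\Gamma_4$; naturality in $C$ is clear since the whole construction is by restriction of triangle data. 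The main obstacle is the indexing bookkeeping: verifying that for each inner $i$ the correct instances of $m_1,m_2$ and the correctly-colored pentagon reproduce precisely the omitted-tetrahedron relation, rather than a neighbouring identity, and checking that the colorings never force an outer (non-available) $3$-multiplication.
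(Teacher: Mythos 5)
Your core argument is essentially the paper's proof, repackaged: where you realize both presheaves as subpresheaves of \(\Hom(\sk_2\Simp{4},\Gamma)\) (ten triangles, with four resp.\ five tetrahedron conditions) and show the omitted condition is redundant, the paper factors both through the intermediate presheaf \(\Hom(S,\Gamma)\), where \(S\) is the union of the six triangles \(f_{012},f_{013},f_{014},f_{123},f_{124},f_{234}\), and observes that the only non-formal step — surjectivity of the restriction from \(\Hom(\sk_3\Simp{4},\Gamma)\) — is exactly Lemma~\ref{chap6:lem:f024}. Your treatment of \(i=1\) is precisely this, it works verbatim for \(i=3\) (there the one-step formula becomes the omitted \(\hat{3}\)-condition and the four-step chain is supplied by the present tetrahedra, again only inner multiplications are used), and your representability/naturality argument via the spine is the paper's.

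The genuine gap is \(i=2\). The mechanisms you invoke do not deliver it: the ``five other cases depending on the coloring'' of Figure~\ref{chap6:fig:pentagons} refer to white/black vertex patterns (needed for every \(i\), according to the coloring \([i',j']\) of the \(4\)-simplex), not to which tetrahedron is omitted; Lemma~\ref{chap6:lem:3mult} is the unit/degeneracy compatibility and plays no role here; and the only genuine symmetry available, order reversal \(k\mapsto 4-k\), fixes \(\hat{2}\) while moving the set of free triangles, so the reversed instance of Lemma~\ref{chap6:lem:f024} has the \(\hat{2}\)-condition occurring as a \emph{step in its chain}, not as its conclusion (one can still extract \(\hat{2}\) from it, but only after a cancellation argument using the horn-filler isomorphisms of Lemma~\ref{chap6:lem:level3:isos}, which you do not mention). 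Two clean fixes: (a) the paper's route — having established \(\Kan!(4,1)\), apply Lemma~\ref{chap3:lem:kan(n+1,j)-implies-all}, whose dimension-\(3\) hypotheses hold by Lemma~\ref{chap6:lem:level3:isos}, to obtain \(\Kan!(4,2)\); or (b) a direct determinacy argument — for a \(\Horn{4}{2}\)-element, the conditions \(\hat{0},\hat{3},\hat{4}\) compute \(f_{134},f_{024},f_{023}\) from the six \(S\)-triangles and then \(\hat{1}\) computes \(f_{034}\); the unique fully compatible configuration extending the \(S\)-triangles (which exists by Lemma~\ref{chap6:lem:f024}) is computed by the same formulas, hence coincides with the given ten triangles, and since it satisfies \(\hat{2}\), so does the given element.
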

\begin{proof}
Let \(S\) be the union of the following six 2-simplices
\begin{gather*}
 \Simp{2}\{0,1,2\},\quad \Simp{2}\{0,1,3\},\quad \Simp{2}\{0,1,4\},\\
 \Simp{2}\{1,2,3\},\quad \Simp{2}\{1,2,4\},\quad \Simp{2}\{2,3,4\}.
\end{gather*}
It follows from Lemma~\ref{chap6:lem:f024} that
\[
  \Hom(\sk_3\Simp{4},\Gamma)\cong\Hom(S,\Gamma)\cong\Hom(\Horn{4}{1},\Gamma)\cong\Hom(\Horn{4}{3}, \Gamma).
\]
Recall that \(\Sp(4)\to \Horn{4}{1}\) is an inner collapsible extension. Since \(\Gamma\) satisfies lower inner Kan conditions and \(\Hom(\Sp(4),\Gamma)\) is representable, the presheaf \(\Hom(\Horn{4}{1},\Gamma)\) is representable. The remaining isomorphisms follow from Lemma~\ref{chap3:lem:kan(n+1,j)-implies-all}.
\end{proof}

Let \(\Gamma_4=\Hom(\sk_3\Simp{4},\Gamma)\). Then \(\Gamma_4\) is representable, and the conditions \(\Kan!(4,i)\) for \(0<i<4\) follow by the lemma above. Considering a colored version of Lemma~\ref{chap3:lem:kan(n+1,j)-implies-all}, we see that all other desired colored Kan condition \(\Kan!(4,0)[i,j]\) for \(i\ge 2\) and \(\Kan!(4,4)[i,j]\) for \(j\ge 2\) hold. If the bibundle \(E\) is right principal, then \(\Kan!(4, 0)\) holds.

\subsection{Higher dimensions}\label{chap6:ssec:higher}

When \(m>4\), suppose that \(\Gamma_{< m}\) are given and all desired Kan conditions hold. Since \(\sk_3\Simp{m}=\sk_3\Horn{m}{k}\) for \(0\le k\le m\), we have isomorphism
\[
\Hom(\sk_3\Simp{m},\Gamma)\cong \Hom(\sk_3\Horn{m}{k},\Gamma) \quad \text{for }0\le k\le m,
\]
which are defined to be \(\Gamma_m\).

Since \(\Sp(m)\to \Horn{m}{1}\) is an inner collapsible extension and \(\Hom(\Sp(m),\Gamma)\) is representable, we deduce that \(\Hom(\Horn{m}{1},\Gamma)=\Hom(\sk_3\Horn{m}{1},\Gamma)\) is representable. Therefore, \(\Gamma_m\) is representable. The conditions \(\Kan!(m, i)\) for \(0\le k\le m\) hold by construction.

This completes the construction of \(\Gamma\).

\section{From simplicial to categorification picture}

We now consider the other direction of Theorem~\ref{chap6:thm:bibundle-cat=simplicial}. Let the simplicial object \(\Gamma\) be a bibundle between \(X\) and \(Y\). We construct a categorified bibundle between the 2\nbdash{}groupoids \(G\rightrightarrows \trivial{M}\) and \(H\rightrightarrows \trivial{N}\). If, in addition, \(\Gamma\to \Simp{1}\) is a left Kan fibration satisfying appropriate Kan conditions, then the corresponding categorified bibundle is right principal.

Regarding \(\Gamma\) as an augmented  bisimplicial object with \(\Gamma_{-1, -1}\) being the terminal object, we have \(R'_{-1}\Gamma = Y\) and \(C'_{-1}\Gamma=X\).

\subsection{The groupoid of bigons}

We first construct the groupoid \(E\). Recall from Proposition~\ref{chap4:prop:TX-I-lifting-properties} that \(R'_{0} \Gamma \to Y\) is a 2\nbdash{}groupoid Kan fibration in \((\Cat,\covers)\). Thus its fiber is a 1-groupoid in \((\Cat, \covers)\), denoted~\(E\). Symmetrically, denote the fibre of \(C'_{0} \Gamma \to X\) by \(E'\). We shall show that \(E\) and \(E'\) are naturally isomorphic.

More concretely, \(E\) is the groupoid of bigons with one white vertex and two black vertices; that is, we have
\begin{equation}\label{chap6:eq:groupoid-E}
\begin{gathered}
E_0=\Gamma_{0,0},\quad E_1=\face_0^{-1}(\de_0 \Gamma_{-1,0})\subset \Gamma_{0,1},\\
\source=\face_1,\quad \target=\face_2, \quad \unit=\de_1,
\end{gathered}
\end{equation}
illustrated by Figure~\ref{chap6:fig:groupoid-E}. The composition is depicted by the right picture in Figure~\ref{chap6:fig:groupoid-E}, where the edges \((1,2)\), \((2,3)\) and the face \((1,2,3)\) are degenerate. The face \((0,1,3)\), given by \(\Kan!(3,2)[1,3]\), is the composite of \((0,1,2)\) and \((0,2,3)\). Similarly, we can use \(\Kan!(3,1)[1,3]\) and \(\Kan!(3,3)[1,3]\) to define the inverses. It is clear that \(\unit\) satisfies the unit identities. The associativity follows from \(\Kan!(4,2)[1,3]\) or \(\Kan!(4,3)[1,3]\). Therefore, with these structures, \(E\) is a groupoid.
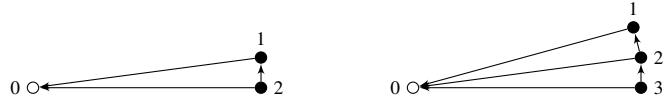
\begin{figure}[htbp]
  \centering
  \begin{tikzpicture}
    [>=latex',
    mydot/.style={draw,circle,inner sep=1.5pt},
    every label/.style={scale=0.6}]
    \node[mydot,label=-180:$0$] (p0) at (0,0) {};
    \node[mydot, fill, label=90:$1$] (p1) at (3, 0+0.4){};
    \node[mydot, fill, label=0:$2$] (p2) at (3, 0){};
    \draw[<-]
    (p0) edge (p1)
         edge (p2)
    (p1) edge (p2);
  \begin{scope}[xshift=5cm]
  \node[mydot,label=-180:$0$]      (q0) at (0,0) {};
  \node[mydot, fill, label=90:$1$] (q1) at (2.9, 0+0.8){};
  \node[mydot, fill, label=0:$2$]  (q2) at (3, 0.4){};
  \node[mydot, fill, label=0:$3$](q3) at (3, 0){};
  \draw[<-]
  (q0) edge (q1)
       edge (q2)
       edge (q3)
  (q1) edge (q2)
  (q2) edge (q3);
  \end{scope}
  \end{tikzpicture}
  \caption{The groupoid \(E\)}\label{chap6:fig:groupoid-E}
\end{figure}

The groupoid \(E'\) is the groupoid of bigons with two white vertices and one black vertex; that is, we have
\begin{gather*}
E'_0=\Gamma_{0,0},\quad E'_1=\face_2^{-1}(\de_0 \Gamma_{0,-1})\subset \Gamma_{1,0},\\
\source=\face_1,\quad \target=\face_0, \quad \unit=\de_0.
\end{gather*}
This groupoid is illustrated by Figure~\ref{chap6:fig:groupoid-E'}.
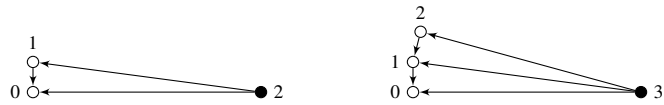
\begin{figure}[htbp]
  \centering
  \begin{tikzpicture}
    [>=latex',
    mydot/.style={draw,circle,inner sep=1.5pt},
    every label/.style={scale=0.6}]
    \node[mydot,label=-180:$0$] (p0) at (0,0) {};
    \node[mydot, label=90:$1$] (p1) at (0, 0+0.4){};
    \node[mydot, fill, label=0:$2$] (p2) at (3, 0){};
    \draw[<-]
    (p0) edge (p1)
         edge (p2)
    (p1) edge (p2);
  \begin{scope}[xshift=5cm]
  \node[mydot,label=-180:$0$]  (q0) at (0, 0) {};
  \node[mydot, label=-180:$1$] (q1) at (0, 0.4){};
  \node[mydot, label=90:$2$]   (q2) at (0.1, 0.8){};
  \node[mydot, fill, label=0:$3$](q3) at (3, 0){};
  \draw[<-]
  (q0) edge (q1)
       edge (q3)
  (q1) edge (q2)
       edge (q3)
  (q2) edge (q3);
  \end{scope}
  \end{tikzpicture}
  \caption{The groupoid \(E'\)}\label{chap6:fig:groupoid-E'}
\end{figure}

We construct a map \(\varphi\colon E_1\to E'_1\), illustrated by the top left picture in Figure~\ref{chap6:fig:groupoid-E=E'}. Given \((1,2,3) \in E_1\), let the faces \((0,1,3)\) and \((0,2,3)\) be degenerate. The condition \(\Kan!(3,3)[2,2]\) gives the face \((0,1,2)\in E'_1\). We can also use the top right picture to define a map \(E_1\to E'_1\). Considering a 4-simplex, we see that both maps are the same. Alternatively, we can use the bottom left or bottom right pictures to define a map \(\psi:E_1\to E'_1\). It is easy to see that \(\psi=\inv\circ \varphi\), where \(\inv\) is the inversion.
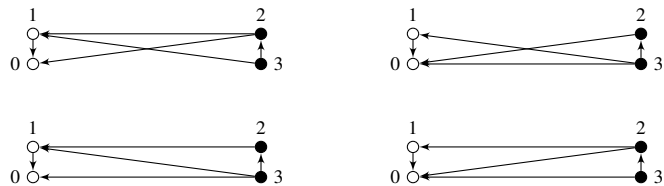
\begin{figure}[htbp]
  \centering
  \begin{tikzpicture}
    [>=latex',
    every label/.style={scale=0.6}]
    \begin{scope}[<-]
    \node[mydot,label=-180:$0$] (q0) at (0,0) {};
    \node[mydot,label=90:$1$] (q1) at (0, 0+0.4){};
    \node[mydot,fill,label=90:$2$] (q2) at (3, 0.4){};
    \node[mydot,fill,label=0:$3$] (q3) at (3, 0){};
    \draw
    (q0) edge (q1)
         edge (q2)
    (q1) edge (q2)
         edge (q3)
    (q2) edge (q3);
    \end{scope}
    \begin{scope}[xshift=5cm,<-]
    \node[mydot,label=-180:$0$] (q0) at (0,0) {};
    \node[mydot,label=90:$1$] (q1) at (0, 0+0.4){};
    \node[mydot,fill,label=90:$2$] (q2) at (3, 0.4){};
    \node[mydot,fill,label=0:$3$] (q3) at (3, 0){};
    \draw
    (q0) edge (q1)
         edge (q2)
         edge (q3)
    (q1) edge (q3)
    (q2) edge (q3);
    \end{scope}
    \begin{scope}[yshift=-1.5cm,<-]
    \node[mydot,label=-180:$0$] (p0) at (0,0) {};
    \node[mydot,label=90:$1$] (p1) at (0, 0+0.4){};
    \node[mydot,fill,label=90:$2$] (p2) at (3, 0.4){};
    \node[mydot,fill,label=0:$3$] (p3) at (3, 0){};
    \draw[<-]
    (p0) edge (p1)
         edge (p3)
    (p1) edge (p3)
         edge (p2)
    (p2) edge (p3);
    \end{scope}
    \begin{scope}[yshift=-1.5cm,xshift=5cm,<-]
    \node[mydot,label=-180:$0$] (q0) at (0,0) {};
    \node[mydot,label=90:$1$] (q1) at (0, 0+0.4){};
    \node[mydot,fill,label=90:$2$] (q2) at (3, 0.4){};
    \node[mydot,fill,label=0:$3$] (q3) at (3, 0){};
    \draw
    (q0) edge (q1)
         edge (q2)
         edge (q3)
    (q1) edge (q2)
    (q2) edge (q3);
    \end{scope}
\end{tikzpicture}
  \caption{The construction of maps \(E_1\to E'_1\)}\label{chap6:fig:groupoid-E=E'}
\end{figure}

To show that \(\varphi\) preserves compositions, we consider Figure~\ref{chap6:fig:groupoid-composition-E=E'}. Suppose that \((2,3,4)\) and \((2,4,5)\in E_1\) are given. Let the faces \((1,2,4)\), \((1,4,5)\) be degenerate. Set \((1,2,5)=\psi(2,4,5)\). Let \((1,3,4)\) be degenerate, and let \((1,2,3)\) be given by \(\Kan!(3,3)\{1,2,3,4\}\), that is, \((1,2,3)=\varphi(2,3,4)\). Let \((0,2,3)\) be degenerate. Let \((0,1,3)\) be given by applying \(\Kan!(3,1)[3,1]\) to \((0,1,2,3)\); that is, we have \((0,1,3)=\inv(1,2,3)\). Thus, \((0,1,3)=\psi(2,3,4)\). Let \((0,3,5)\) be degenerate and \((0,2,5)=\psi(2,3,5)\). Consider the 4\nbdash{}simplex \((1,2,3,4,5)\), where \((1,3,4,5)\) is degenerate. The condition \(\Kan!(4,3)\) implies that \((1,2,3,5)\) is compatible. Let \((0,1,3,5)=\de_2(0,1,3)\). Hence \((0,1,5)=(0,1,3)=\psi(2,3,4)\). Applying \(\Kan!(4, 3)\) to the 4-simplex \((0,1,2,3,5)\) proves that \((0,1,2,5)\) is compatible; that is,
\[
\psi(2,4,5)\circ\psi(2,3,4)=\psi(2,3,5),
\]
and this proves that \(\varphi\) preserves compositions.

\begin{figure}[htbp]
  \centering
  \begin{tikzpicture}
    [>=latex',
    mydot/.style={draw,circle,inner sep=1.5pt},
    every label/.style={scale=0.6}]
    \node[mydot,label=-180:$0$] (q0) at (0, 0) {};
    \node[mydot,label=-180:$1$] (q1) at (0, 0.4){};
    \node[mydot,label=90:$2$]   (q2) at (0.1, 0.8){};
    \node[mydot,fill,label=90:$3$] (q3) at (2.9,0.8){};
    \node[mydot,fill,label=0:$4$] (q4)  at (3, 0.4){};
    \node[mydot,fill,label=0:$5$] (q5)  at (3, 0){};
    \foreach \i/\j in {0/1,1/2,2/3,3/4,4/5,0/5,1/5,2/5,2/4}{
    \draw[<-] (q\i) --(q\j);
    }
  \end{tikzpicture}
    \caption{The groupoid isomorphism \(E\cong E'\)}\label{chap6:fig:groupoid-composition-E=E'}
\end{figure}
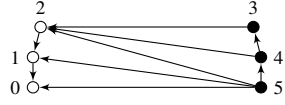

\subsection{The action morphisms}

We construct an HS morphism \(p\colon E\times_{\trivial{N}} H\to E\) from the HS bibundle \(\Gamma_{0,1}\). By symmetry, there is an HS morphism \(q\colon G\times_{\trivial{M}}E\to E\) from the HS bibundle \(\Gamma_{1,0}\).

By the definition of \(E\), the map \(\face_0\colon E_0\to N\) gives a groupoid functor \(E\to \trivial{N}\), which we take to be the right moment morphism \(J_r\). Similarly, the moment morphism \(J_l\colon E \to \trivial{M}\) is given by \(\face_1\colon E_0\to M\).

\begin{lemma}\label{chap6:lem:a1-bimodule}
  The space \(\Gamma_{0,1}\) is an HS bibundle \(E\times_{J_r,\trivial{N},\target} H\to E\), giving an HS morphism~\(p\). The space \(\Gamma_{1,0}\) is an HS bibundle \(G\times_{\source,\trivial{M},J_l} E\to E\), giving an HS morphism~\(q\).
\end{lemma}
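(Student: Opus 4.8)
The plan is to extract the bibundle structure on $\Gamma_{0,1}$ exactly as in the proof of Lemma~\ref{chap5:lem:a1-bibundle}, treating $\Gamma_{0,1}=E_p$ as the colored analogue of $A_1$. By symmetry it suffices to treat the right $H$-action on $\Gamma_{0,1}$; the statement for $\Gamma_{1,0}$ follows by switching the roles of the two colors and of $G$ and~$H$. First I would exhibit three commuting actions on $\Gamma_{0,1}$, displayed in the picture with white vertex $0$ and black vertices $1,2$: a left action of the groupoid $E$ along $\face_2\colon \Gamma_{0,1}\to E_0=\Gamma_{0,0}$, a left action of $H$ along the projection $\Gamma_{0,1}\to \arrow{Y}_0=Y_1$, and a right action of $E$ along $\face_1\colon \Gamma_{0,1}\to E_0$. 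Each action is built by filling an inner horn: for instance the left $E$-action is the composite
\[
E_1\times_{\source,\Gamma_{0,0},\face_2}\Gamma_{0,1}\to \Hom(\Horn{3}{1}[1,2],\Gamma)\cong \Gamma_3\xrightarrow{\ \face\ }\Gamma_{0,1},
\]
where the isomorphism is the colored inner Kan condition $\Kan!(3,1)[1,2]$ and the first arrow inserts a degenerate simplex. The colored unique Kan conditions $\Kan!(2,k)[i,j]$ give unitality and $\Kan!(3,k)[i,j]$ give associativity and the mutual commutativity of the three actions, exactly as the uncolored $\Kan!(2,1),\Kan!(3,1)$ do in Lemma~\ref{chap5:lem:a1-bibundle}.

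Next I would assemble these into a genuine biaction. Since $\face_1$ and the projection to $Y_1$ have matching targets in $N$ (because $\face_0\circ\pi=\pi_0\circ\face_0$ at the level of colored faces), the product groupoid $E\times_{\trivial{N}}H$ acts on $\Gamma_{0,1}$ on the left while $E$ acts on the right. To upgrade this to an HS bibundle one must verify right principality of the $E$-action together with the covering condition on the left moment map. The covering condition is precisely $\Kan(2,1)[1,2]$, established in Section~\ref{chap6:ssec:levels012} as the statement that $E_p\to E_0\times_N H_0$ is a cover; right principality of the right $E$-action is encoded by the unique inner condition $\Kan!(2,0)[\cdot,\cdot]$ on the relevant colored simplices. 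Together with Lemma~\ref{chap5:lem:bundles-triangle-to-M}, which guarantees the compatibility of moment maps with the target $N$, these show that $\Gamma_{0,1}$ defines an HS morphism $p\colon E\times_{J_r,\trivial{N},\target}H\to E$.

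The main obstacle I expect is purely bookkeeping: keeping track of which colored horn-filling condition licenses each of the three actions and each compatibility, since the coloring of vertices $(0,1,2)$ by one white and two black means that the uncolored conditions $\Kan!(m,k)$ split into the several cases $\Kan!(m,k)[i,j]$ established in Lemma~\ref{chap6:lem:level3:isos}. The substance is identical to Lemma~\ref{chap5:lem:a1-bibundle}, so once the correct colored conditions are cited the verification of the action axioms, of mutual commutativity, and of principality is routine. The symmetric statement for $\Gamma_{1,0}$, giving the HS morphism $q\colon G\times_{\source,\trivial{M},J_l}E\to E$, follows verbatim after interchanging the two colors, so I would simply remark on this symmetry rather than repeat the argument.
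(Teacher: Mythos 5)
Your outline does match the paper's strategy---extract a left $E$-action along $\face_2$, a left $H$-action along $\face_0$, a right $E$-action along $\face_1$, check commutativity and moment-map compatibility from simplicial identities, then get the cover condition from $\Kan(2,1)[1,2]$ and principality from a unique Kan condition---but the colored conditions you cite are systematically one dimension too low, and identifying the correct ones is exactly where the content of this lemma lies. In Chapter~5 the acting groupoid is the fibre of a Kan fibration $\pi\colon A\to X$, with $E_1\subset A_1$, so the action is built from \emph{relative} $2$-horns, unitality from $\Kan!(2,1)$, and associativity from $\Kan!(3,1)$ for~$\pi$. Here, by contrast, $E$ is the groupoid of bigons, $E_1=\face_0^{-1}(\de_0\Gamma_{-1,0})\subset\Gamma_{0,1}$, which sits in simplicial dimension~$2$ of~$\Gamma$, so all horn-filling happens one dimension higher, in absolute colored horns of $\Gamma\to j(\Simp{1})$. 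Concretely, your displayed formula is malformed: the coloring convention forces $i+j=m+1$, so there is no condition $\Kan!(3,1)[1,2]$, and the filled horn lands in the component $\Gamma_{0,2}$, not in~$\Gamma_3$. The paper's left $E$-action is the composite $E_1\times_{\source,E_0,\face_2}\Gamma_{0,1}\to\Hom(\Horn{3}{2}[1,3],\Gamma)\cong\Gamma_{0,2}\xrightarrow{\face_2}\Gamma_{0,1}$, $(\theta,a)\mapsto(\de_0\face_0(a),a,\theta)$; unitality uses $\Kan!(3,2)[1,3]$ (not any dimension-$2$ condition), and associativity uses $\Kan!(4,2)[1,4]$ applied to a $4$-simplex $(0,1'',1',1,2)$ with two degenerate edges, not $\Kan!(3,k)[i,j]$.

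The most serious slip is the principality claim, where copying Chapter~5 verbatim actually breaks the proof rather than merely misindexing it. Principality of the right $E$-action on $\Gamma_{0,1}$ follows from the \emph{inner} condition $\Kan!(3,1)[1,3]$, which every bibundle satisfies. Your citation of a condition of type $\Kan!(2,0)[\cdot,\cdot]$ fails on both counts: $k=0$ is an outer condition, and with the only relevant coloring $[1,2]$ it is precisely the condition that Definition~\ref{chap4:def:bibundles=inner-over-I} imposes \emph{only} on right principal bibundles---so invoking it here is circular and excludes the general bibundles for which the lemma must also hold. The dimension shift is what saves the paper's argument: the relative outer condition $\Kan!(2,0)$ of Chapter~5 becomes, after the shift, the absolute inner condition $\Kan!(3,1)[1,3]$, which is available without any principality hypothesis. (Relatedly, the cover condition on the left moment map holds because $\Gamma$ is a bibundle by hypothesis, i.e.\ by the inner condition $\Kan(2,1)[1,2]$ of Definition~\ref{chap4:def:bibundles=inner-over-I}---not by appeal to Section~\ref{chap6:ssec:levels012}, which constructs $\Gamma$ out of a categorified bibundle and so runs in the opposite direction of Theorem~\ref{chap6:thm:bibundle-cat=simplicial}.) With these dimensions and conditions corrected, your plan coincides with the paper's proof.
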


\begin{proof}
By symmetry, it suffices to show the first statement. There is a left action of the groupoid \(E\) on \(\Gamma_{0,1}\) along the map \(\face_2\colon \Gamma_{0,1}\to \Gamma_{0,0}=E_0\) given by the composite map
  \[
  E_1\times_{\source, E_0, \face_2} \Gamma_{0,1} \to \Hom(\Horn{3}{2}[1,3],\Gamma)
   \cong \Gamma_{0,2} \xrightarrow{\face_2} \Gamma_{0,1},
  \]
where the map \(E_1\times_{\source, E_0, \face_2}\Gamma_{0,1} \to \Hom(\Horn{3}{2}[1,3],\Gamma)\) is given by \((\theta,a)\mapsto (\de_0 \face_0 (a) ,a, \theta)\). First, \(\Kan!(3,2)[1,3]\) implies the unit identity. Second, consider Figure~\ref{chap6:fig:E-action-A1}, where \(0<1''<1'<1\) and \((1'',1')\) and \((1',1)\) are degenerate. Applying \(\Kan!(4,2)[1,4]\) to \((0,1'',1',1,2)\), we obtain the associativity of the action. Similarly, we can also use \(E'\) to define the action, and the result is the same.

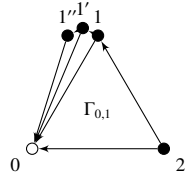
\begin{figure}[htbp]
  \centering
  \begin{tikzpicture}
    [>=latex', every label/.style={scale=0.6}]
    \node[scale=0.6] at (0,0) (a) {\(\Gamma_{0,1}\)};
    \node[mydot,label=210:\(0\)]              at (-0.866,-0.5)  (a0) {};
    \node[mydot,fill=black,label=90:\(1\)]    at (0,1)          (a1) {};
    \node[mydot,fill=black,label=90:\(1'\)]    at (0-0.2,1+0.1)      (a11) {};
    \node[mydot,fill=black,label=90:\(1''\)]    at (0-0.4,1)      (a12) {};
    \node[mydot,fill=black,label=-30:\(2\)]   at (0.866,-0.5)   (a2) {};
    \path[<-]
    (a0) edge (a1)
         edge (a11)
         edge (a12)
         edge (a2)
    (a12) edge (a11)
    (a11) edge (a1)
    (a1) edge (a2);
  \end{tikzpicture}
  \caption{Action of \(E\) on~\(\Gamma_{0,1}\)}
  \label{chap6:fig:E-action-A1}
\end{figure}

Similarly, there are a left action of \(H\) on \(\Gamma_{0,1}\) along the map \(\face_0\colon \Gamma_{0,1}\to \Gamma_{-1,1}=H_0\), and a right action of \(E\) on \(\Gamma_{0,1}\) along the map \(\face_1\colon \Gamma_{0,1} \to \Gamma_{0,0}=E_0\).

An argument similar to the above proof for associativity shows that these three actions mutually commute. Since \(\face_0\circ\face_2=\face_1\circ\face_0\colon \Gamma_{0,1}\to Y_0\), the groupoid \(E\times_{J_r,\trivial{N},\target} H\) acts on~\(\Gamma_{0,1}\). Thus, \(\Gamma_{0,1}\) is an \(E\times_{J_r,\trivial{N},\target} H\)-\(E\) bibundle; see Figure~\ref{chap6:fig:bimodule_A1}.
  \begin{figure}[htbp]
    \centering
    \begin{tikzpicture}
      [>=latex', every label/.style={scale=0.6},scale=0.75]
      \node[scale=0.6] at (0,0) (a) {\(\Gamma_{0,1}\)};
      \node[mydot,label=210:\(0\)]              at (-0.866,-0.5)  (a0) {};
      \node[mydot,fill=black,label=90:\(1\)]    at (0,1)          (a1) {};
      \node[mydot,fill=black,label=-30:\(2\)]   at (0.866,-0.5)   (a2) {};
      \path[<-]
      (a0) edge (a1)
           edge (a2)
      (a1) edge (a2);
      \begin{scope}[xshift=-3cm,<-]
        \node[mydot,label=210:\(0\)]              at (-0.866,-0.5)  (b0) {};
        \node[mydot,fill=black,label=90:\(1\)]    at (0,1)          (b1) {};
        \node[mydot,fill=black,label=-30:\(2\)]   at (0.866,-0.5)   (b2) {};
        \path
        (b0) edge[bend right=15] (b1)
             edge[bend left=15] node[scale=0.6,above left]{\(E\)}  (b1)
        (b1) edge[bend right=15] (b2)
        edge[bend left=15] node[scale=0.6,above right]{\(H\)}(b2);
      \end{scope}
      \begin{scope}[xshift=+3cm, <-]
        \node[mydot,label=210:\(0\)]              at (-0.866,-0.5)  (c0) {};
        \node[mydot,fill=black,label=-30:\(2\)]   at (0.866,-0.5)   (c2) {};
        \path
        (c0) edge[bend right=15] node[scale=0.6,below]{\(E\)} (c2)
             edge[bend left=15] (c2);
      \end{scope}
    \end{tikzpicture}
    \caption{The bibundle structure on~\(\Gamma_{0,1}\)}
    \label{chap6:fig:bimodule_A1}
  \end{figure}

Moreover, condition \(\Kan(2,1)[1,2]\) implies that the left moment map \(\Gamma_{0,1} \to E_0 \times_{M} H_0\) is a cover in \((\Cat,\covers)\), and \(\Kan!(3,1)[1,3]\) implies that the right action of \(E\) is principal. Therefore, \(\Gamma_{0,1}\) is an HS bibundle, giving an HS morphism \(p\colon E\times_{J_r,\trivial{N},\target} H \to E\).
\end{proof}

Since \(\face_1\circ\face_2=\face_1\circ \face_1\colon \Gamma_{0,1}\to M\), Lemma~\ref{chap5:lem:bundles-triangle-to-M} implies that  \(J_l\circ p=J_l\circ\pr_1\). Similarly, \(\face_0\circ \face_1=\face_1\circ\face_0:\Gamma_{0,1}\to N\) implies that \(J_r\circ p=\source\circ\pr_2\). A similar argument verifies the compatibility of \(q\) with moment morphisms.

To show that the HS morphisms \(p\) and \(q\) give a categorified bibundle structure on \(E\), we must construct associators and unitors.

\subsection{The associators}

We now construct the associators for the action morphisms \(p\) and \(q\).

The HS morphism \(p\circ (p \times \id)\colon E\times_{\trivial{N}} G \times_{\trivial{M}} G\to E\) is given by the HS bibundle
  \[
  ( (\Gamma_{0,1} \times_{N} H_1)\times_{E_0\times_{N} H_0} \Gamma_{0,1} )/( E \times_{\trivial{N}} H).
  \]
Since the actions of \(E\) and \(H\) commute and \(H\) acts on \(H_1\) by multiplication, we have
  \[
  ( (\Gamma_{0,1} \times_{N} H_1)\times_{E_0\times_{N} H_0} \Gamma_{0,1} )/( E \times_{\trivial{N}} H)
  \cong (\Gamma_{0,1} \times_{E_0} \Gamma_{0,1} )/ E.
  \]
Similarly, the HS morphism \(p\circ(\id \times \mult)\) is represented by the HS bibundle
  \[
  ( \Gamma_{0,1} \times_{H_0} \Gamma_{-1,2} ) / H,
  \]
where \(\mult\) is the multiplication of the 2\nbdash{}groupoid \(H \rightrightarrows \trivial{N}\).

In particular, the action of~\(E\) on \(\Gamma_{0,1} \times_{E_0} \Gamma_{0,1}\) and the action of \(H\) on \( \Gamma_{0,1} \times_{H_0} \Gamma_{-1,2}\) are principal. We now construct a one-to-one correspondence as follows
\begin{equation}\label{chap6:eq:associator-alpha}
  \begin{gathered}
  \alpha\colon (\Gamma_{0,1} \times_{E_0} \Gamma_{0,1} )/ E \to ( \Gamma_{0,1} \times_{H_0} \Gamma_{-1,2} ) / H, \\
   [(a,b)] \mapsto [(c,d)]\quad\text{if } (d,b,c,a) \in  \Gamma_{0,2},
  \end{gathered}
\end{equation}
which is illustrated by Figure~\ref{chap6:fig:associator}.
  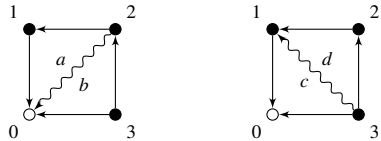
\begin{figure}[htbp]
    \centering
    \begin{tikzpicture}
      [>=latex',
      scale=0.8,
      mydot/.style={draw,circle,inner sep=1.5pt},
      every label/.style={scale=0.6}
      ]
      \node[mydot,label=225:\(0\)] at (225:1cm)(a0){};
      \foreach \i in {1,2,3}{
        \node[mydot,fill=black,label=225-\i*90:\(\i\)]  at (225-\i*90:1cm) (a\i){};
      }
      \path[<-]
      (a0)  edge        (a1)
      edge        (a3)
      edge[snake back]
      node[scale=0.6,above left]  {\(a\)}
      node[scale=0.6,below right] {\(b\)} (a2)
      (a1)  edge        (a2)
      (a2)  edge        (a3);
      \begin{scope}[xshift=+4cm]
        \node[mydot,label=225:\(0\)] at (225:1cm)(b0){};
        \foreach \i in {1,2,3}{
          \node[mydot,fill=black,label=225-\i*90:\(\i\)]  at (225-\i*90:1cm) (b\i){};
        }
        \path[<-]
        (b0)  edge        (b1)
        edge        (b3)
        (b1)  edge        (b2)
        edge[snake back]
        node[scale=0.6,above right] {\(d\)}
        node[scale=0.6,below left]  {\(c\)} (b3)
        (b2)  edge        (b3);
      \end{scope}
    \end{tikzpicture}
    \caption{The associator}
    \label{chap6:fig:associator}
  \end{figure}

We saw that we can compose a bigon and a triangle that share one side in~\(\Gamma\). We shall show that this composition lifts to tetrahedra. More precisely, we have the following lemma as a special case. It is not hard to formulate and prove the corresponding statements for other cases.

\begin{lemma}\label{chap6:lem:bigon-triangle}
  Let \(\gamma\in \Gamma_{0,2}\) be a \(3\)-simplex and let \(\theta\in \Gamma_{0,1}\) be a bigon along \(\gamma_{01}\). In Figure~\ref{chap6:fug:bigon-triangle}, assume that \(0<1'<1\) and that \((1',1)\) is degenerate. Suppose that \(\gamma\) is given by \((0,1,2,3)\), and \(\theta\) is given by \((0,1',1)\). Denote by \(\gamma'_{013}\) the composite of \(\gamma_{013}\) and~\(\theta\), and by \(\gamma'_{012}\) the composite of \(\gamma_{012}\) and~\(\theta\). Then the triangles \(\gamma_{123}, \gamma_{023}, \gamma'_{013}\), and \(\gamma'_{012}\) form a \(3\)-simplex in~\(\Gamma_{0,2}\).
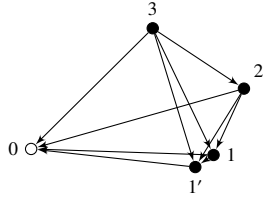
\begin{figure}[htbp]
\centering
  \begin{tikzpicture}%
  [>=latex', every label/.style={scale=0.6},scale=0.8]
  \node[mydot,label=180:\(0\)] at (0,0) (g0){};
  \node[mydot,fill,label=0:\(1\)] at (3,-0.1)   (g1){};
  \node[mydot,fill,label=60:\(2\)] at (3.5,1)(g2){};
  \node[mydot,fill,label=90:\(3\)] at (2,2)  (g3){};
  \node[mydot,fill,label=-90:\(1'\)] at (3-0.3,-0.3) (a){};
  \foreach \i/\j in
  {g1/g0,g2/g0,g3/g0,g2/g1,g3/g1,g3/g2,
  a/g0,g1/a,g2/a,g3/a}{
  \path[->] (\i) edge (\j);
  }
  \end{tikzpicture}
  \caption{Composition of bigons and triangles lifts to tetrahedra}\label{chap6:fug:bigon-triangle}
\end{figure}
\end{lemma}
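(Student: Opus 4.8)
The plan is to realize the asserted $3$-simplex as a single face of a $4$-simplex in $\Gamma_{0,3}$ that we obtain by filling an inner horn. Write the vertices of this prospective $4$-simplex $\Xi$ as $0<1'<1<2<3$, with $0$ white and $1',1,2,3$ black, so that $\Xi$ is a colored simplex of type $[1,4]$. I would prescribe four of its five faces and then fill the fifth. Explicitly, set $\face_0\Xi=\de_0\gamma_{123}$, a degenerate element of $Y_3=\Gamma_{-1,3}$ that is legitimate precisely because $(1',1)$ is degenerate; set $\face_1\Xi=\gamma$, the given $3$-simplex recovered by omitting $1'$; let $\face_3\Xi$ be the $3$-simplex of $\Gamma_{0,2}$ realizing the composite $\gamma'_{013}$ of $\gamma_{013}$ and $\theta$; and let $\face_4\Xi$ be the one realizing $\gamma'_{012}$.

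The faces $\face_3\Xi$ and $\face_4\Xi$ are exactly the fillings produced by the left $E$\nobreakdash-action on $\Gamma_{0,1}$ of Lemma~\ref{chap6:lem:a1-bimodule}: the composite $\gamma'_{013}$ is the face $\face_2$ of the $3$-simplex $(\de_0\face_0\gamma_{013},\,\gamma_{013},\,\theta)\in\Hom(\Horn{3}{2}[1,3],\Gamma)\cong\Gamma_{0,2}$, and likewise for $\gamma'_{012}$. These composites are defined because $\source\theta=\face_1\theta$ equals $\face_2\gamma_{013}=\face_2\gamma_{012}$, namely the edge $(0,1)$. By construction the four $2$\nobreakdash-faces of $\face_3\Xi$ are $\de_0\gamma_{13}$, $\gamma_{013}$, $\gamma'_{013}$, $\theta$, and those of $\face_4\Xi$ are $\de_0\gamma_{12}$, $\gamma_{012}$, $\gamma'_{012}$, $\theta$, which is what we need below.

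Next I would verify that these four faces assemble into a horn $\Horn{4}{2}[1,4]\to\Gamma$, i.e.\ that they agree on each shared $2$\nobreakdash-face. This is a routine check from the simplicial identities $\face_i\face_j=\face_{j-1}\face_i$ together with the degeneracy of $(1',1)$: the overlaps reproduce $\gamma_{123}$, $\gamma_{012}$, $\gamma_{013}$, the bigon $\theta$, and the degenerate triangles $\de_0\gamma_{12}$, $\de_0\gamma_{13}$, each of which is common to the two faces sharing it. Since $4>n=2$ and $k=2$ is an inner index, the unique colored Kan condition $\Kan!(4,2)[1,4]$ applies and yields a unique $\Xi\in\Gamma_{0,3}$ with the prescribed faces.

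Finally I would read off the remaining face $\face_2\Xi=(0,1',2,3)$, which lies in $\Gamma_{0,2}$. Its four $2$\nobreakdash-faces are $(1',2,3)$, $(0,2,3)$, $(0,1',3)$, $(0,1',2)$; using the degeneracy of $(1',1)$ to identify $(1',2,3)$ with $\gamma_{123}$, these are exactly $\gamma_{123}$, $\gamma_{023}$, $\gamma'_{013}$, $\gamma'_{012}$. Hence these four triangles do form a $3$\nobreakdash-simplex in $\Gamma_{0,2}$, as claimed, and the other cases (different colorings and positions of the bigon) follow by the same construction with the indices and the invoked inner Kan condition adjusted accordingly. I expect the main obstacle to be the index bookkeeping—correctly tracking which faces are degenerate and confirming the horn compatibility—rather than any conceptual difficulty, since the only non-formal input is the single inner filling $\Kan!(4,2)[1,4]$.
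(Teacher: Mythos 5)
Your proposal is correct and is essentially the paper's own proof: the paper likewise takes the degenerate face \((1',1,2,3)=\de_0(1,2,3)\), realizes \(\gamma'_{012}\) and \(\gamma'_{013}\) as fillings of the inner horns on \((0,1',1,2)\) and \((0,1',1,3)\), and then applies \(\Kan!(4,2)[1,4]\) to the resulting horn on \((0,1',1,2,3)\) to read off \((0,1',2,3)\). Your write-up merely makes explicit the face bookkeeping and horn-compatibility check that the paper leaves implicit.
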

\begin{proof}
  Suppose that \((1',1,2,3)=\de_0(1,2,3)\) is degenerate (see Figure~\ref{chap6:fug:bigon-triangle}). Applying \(\Kan!(3,2)[1,2]\) to \((0,1',1,2)\) yields a triangle \((0,1',2)\), which is \(\gamma'_{012}\). Similarly, the triangle \((0,1',3)\) gives \(\gamma'_{013}\). Applying \(\Kan!(4,2)[1,4]\) to \((0,1',1,2,3)\), we deduce that \((0,1',2,3)\) is a 3-simplex in~\(\Gamma_{0,2}\), and this proves the lemma.
\end{proof}
\begin{remark}\label{chap6:lem:inverse-bigon-triangle}
  We can also compose an inverse bigon and a triangle that share one side. Similarly, this composition can also be lifted to tetrahedra.
\end{remark}

Applying a suitable variant of the above lemma proves that~\eqref{chap6:eq:associator-alpha} does not depend on the choice of \((a,b)\) and \((c, d)\). Furthermore, the map \(\alpha\) is equivariant with respect to the right action of \(E\), and equivariant with respect to the left action of \(E\) and both actions of~\(H\).

Finally, since \(\Gamma_{0,1}\to \Hom(\Horn{2}{1}[1,2], \Gamma)\) is a cover, the map
  \[
  \Hom(\Horn{3}{2}[1,3], \Gamma)=\Hom(\Horn{3}{1}[1,3], \Gamma) \to \Gamma_{0,1}\times_{G_0} \Gamma_{-1,2}
  \]
is a cover. This map is invariant under the principal action of \(E\) and equivariant under the principal action of \( H\). Therefore, by Lemmas~\ref{chap1:lem:invariant-cover-descent} and~\ref{chap1:lem:action-over-principal-is-principal}, it descends to a cover, which is~\(\alpha\) by definition.

In conclusion, we have an isomorphism of HS morphisms \(E\times_{\trivial{N}}H\times_{\trivial{N}}H\to E\),
\[
\alpha\colon p\circ (p \times \id)\Rightarrow p\circ (\id\times \mult).
\]
Similarly, considering a different coloring for Figure~\ref{chap6:fig:associator}, we can construct isomorphisms of HS morphisms \(G\times_{\trivial{M}}G\times_{\trivial{M}} E \to E\),
\[
\alpha\colon p\circ (p \times \id)\Rightarrow p\circ (\id\times \mult),
\]
and of HS morphisms \( G\times_{\trivial{M}} E\times_{\trivial{N}} H\to E \),
\[
\alpha\colon q\circ (p \times \id)\Rightarrow p\circ (\id \times q).
\]

The pentagon condition for \(\alpha\) follows by a similar argument as in~\cite[Section 4.1]{Zhu:ngpd}; we can simply revert the argument of Section~\ref{chap6:ssec:level4}.

\subsection{The unitors}

We construct unitors for these two actions and then verify the triangle conditions. By symmetry, it suffices to consider the action of \(H\rightrightarrows \trivial{N}\) on \(E\).

The unit morphism \(\unit \colon \trivial{N} \to H\) is given by \(\de_0\colon N\to H_0\). Therefore, \(p\circ(\id\times  \unit)\) is given by the HS bibundle \(\face_0^{-1}\circ \de_0(N) \subset \Gamma_{0,1}\), which is \(E_1\) by construction. Thus we take the unitor \(\rho\colon p\circ(\id, \unit\circ J_r) \Rightarrow \id_E\) to be this isomorphism.

The HS morphism corresponding to \((e\cdot 1)\cdot  h\) is given by the HS bibundle \((E_1 \times_{\source,E_0,\face_2} \Gamma_{0,1} )/E\), and the one corresponding to \(e\cdot(1\cdot h)\) is given by the HS bibundle
\((\Gamma_{0,1}\times_{\face_0,H_0,\target}  H_1 )/H\). They are both isomorphic to \(\Gamma_{0,1}\) via the isomorphisms \(\rho\) and \(\lambda_H\). Moreover, applying \(\Kan!(3,1)[1,3]\) to a triple \((\de_0\face_0\eta,\eta,\de_1\face_2\eta)\), where \(\eta\in \Gamma_{0,1}\), we deduce that these isomorphisms are compatible with the associator~\(\alpha\). This proves the triangle condition for the unitor \(\rho\).

\subsection{Principality}

If \(\Gamma\to \Simp{1}\) also satisfies the left Kan conditions \(\Kan(m,0)\) for \(m>0\) and \(\Kan!(m,0)\) for \(m>2\), we shall show that the bibundle \(E\) is right principal as in Definition~\ref{chap6:def:cat-prinbibd}. This is essentially proved in Section~\ref{chap5:sec:principal-2-bundles}.

First, \(\Kan(1,0)[1,1]\) implies that the left moment morphism is given by a cover \(E_0\to M\). The lemma below verifies the other condition.

\begin{lemma}
The shear morphism
\[
(\pr_1,p)\colon E\times_{\trivial{N}} H \to E\times_{\trivial{M}} E, \quad (e,g)\mapsto (e, e\cdot g),
\]
is a Morita equivalence.
\end{lemma}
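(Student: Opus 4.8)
The plan is to recognise the shear morphism as the shear morphism of a categorified principal bundle and then to invoke Theorem~\ref{chap5:thm:princiapl-2-bundle-cat=simp}. Concretely, the right \(H\rightrightarrows\trivial{N}\) action on \(E\) that was built from the bibundle \(\Gamma_{0,1}\) is exactly the one encoded by the reduced row Kan fibration \(R'_0\Gamma\to R'_{-1}\Gamma=Y\): by Proposition~\ref{chap4:prop:TX-I-lifting-properties} this is a \(2\)-groupoid Kan fibration whose fibre is \(E\), and by the correspondence of Theorem~\ref{chap5:thm:2gpd-action-Kan} its action \(2\)-groupoid is \(R'_0\Gamma\). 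The moment morphism \(J_r\colon E\to\trivial{N}\) is \(\face_0\), while the \(H\)-invariant projection onto the base is the augmentation \(R'_0\Gamma\to\sk_0\Gamma_{0,-1}=\sk_0 M\). Thus \(E\), together with this action and the morphism \(J_l\colon E\to\trivial{M}\), is precisely a categorified \(H\rightrightarrows\trivial{N}\) bundle over \(M\) in the sense of Definition~\ref{chap5:def:cat-2-bundle}, and its associated simplicial morphism is \(R'_0\Gamma\to\sk_0 M\).

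Next I would check acyclicity of this morphism. The standing hypothesis that \(\Gamma\to\Simp{1}\) satisfies \(\Kan(m,0)\) for \(m>0\) and \(\Kan!(m,0)\) for \(m>2\) is exactly the condition that \(\Gamma\) is a right principal bibundle (Definition~\ref{chap4:def:bibundles=inner-over-I}). Hence the final proposition of Chapter~\ref{chap4} applies and gives that \(R'_l\Gamma\to\sk_0\Gamma_{l,-1}\) is acyclic for all \(l\ge 0\); taking \(l=0\) and using \(\Gamma_{0,-1}=X_0=M\) shows that \(R'_0\Gamma\to\sk_0 M\) is acyclic.

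Finally I would conclude. Theorem~\ref{chap5:thm:princiapl-2-bundle-cat=simp} states that a categorified \(H\)-bundle is principal if and only if its associated simplicial morphism to \(\sk_0\) of the base is acyclic. Applying it to \(E\) and the acyclic morphism \(R'_0\Gamma\to\sk_0 M\) shows that \(E\) is a categorified principal bundle, which by Definition~\ref{chap5:def:2-bundle-principal} means exactly that the shear morphism \((\pr_1,p)\colon E\times_{\trivial{N}}H\to E\times_{\trivial{M}}E\) is a Morita equivalence, as desired.

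I expect the only point needing genuine care — rather than a citation — to be the bookkeeping of the identification in the first paragraph: one must confirm that the right \(H\)-action extracted from \(\Gamma_{0,1}\) coincides with the action recovered from the Kan fibration \(R'_0\Gamma\to Y\), so that \(R'_0\Gamma\) really is its action \(2\)-groupoid, and that the base and moment data line up as claimed. Once this identification is in place the argument is a direct transcription of the one in Section~\ref{chap5:sec:principal-2-bundles}; indeed, a hands-on alternative that avoids Theorem~\ref{chap5:thm:princiapl-2-bundle-cat=simp} is to mimic Lemma~\ref{chap5:lem:shear-morphism-ismorita=left-principal} together with the last lemma of that section, reducing the Morita property of the shear to left \(H\)-principality of \(\Gamma_{0,1}\to E_0\times_M E_0\), whose covering map comes from \(\Kan(2,0)[1,2]\) and whose shear isomorphism comes from \(\Kan!(3,0)[1,3]\) via the evident pullback square.
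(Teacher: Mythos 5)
Your proof is correct, but your main route is not the paper's: the ``hands-on alternative'' in your closing paragraph \emph{is}, almost verbatim, the paper's proof. The paper invokes Lemma~\ref{chap5:lem:shear-morphism-ismorita=left-principal} to reduce the Morita property of \((\pr_1,p)\) to left \(H\)-principality of \(\Gamma_{0,1}\to\Gamma_{0,0}\times_M\Gamma_{0,0}\), then reads off the cover from \(\Kan(2,0)[1,2]\) and the principality (with quotient \(\Gamma_{0,0}\times_M\Gamma_{0,0}\)) from \(\Kan!(3,0)[1,3]\); three lines, using exactly two colored Kan conditions. Your main argument instead goes through the global machinery: identify \((E,p,J_r,J_l)\) as the categorified bundle whose action \(2\)-groupoid is \(R'_0\Gamma\) (via Theorem~\ref{chap5:thm:2gpd-action-Kan}), obtain acyclicity of \(R'_0\Gamma\to\sk_0 M\) from the last proposition of Chapter~\ref{chap4} using the standing right-principality hypothesis on \(\Gamma\), and conclude with Theorem~\ref{chap5:thm:princiapl-2-bundle-cat=simp}. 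All three citations do apply here, so this is a valid and more conceptual derivation --- the lemma becomes an instance of the principal \(2\)-bundle correspondence rather than a fresh computation --- though it proves more than is needed (full acyclicity of \(R'_0\Gamma\to\sk_0 M\), not just the two conditions above) and it hinges on the identification you rightly single out. That identification holds essentially by construction: \(E\) is defined in the paper as the fibre of \(R'_0\Gamma\to Y\), and the formulas of Lemma~\ref{chap6:lem:a1-bimodule} are the reindexed formulas of Lemma~\ref{chap5:lem:a1-bibundle}, so the augmentation \(R'_0\Gamma\to\sk_0 M\) is indeed the morphism of Lemma~\ref{chap5:lem:cat-2-bun-to-simp}. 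One wrinkle deserves explicit mention, however: \eqref{chap6:eq:groupoid-E} takes \(\source=\face_1\), \(\target=\face_2\) on \(E\), which is opposite to the fibre-groupoid convention of Example~\ref{chap3:exa:one-groupoid} applied to \(R'_0\Gamma\) (there \(\target=\face_0\), \(\source=\face_1\) in the row indexing), so your dictionary implicitly passes through the canonical isomorphism \(E\cong E^{\op}\); since Morita equivalence of the shear is insensitive to this, the argument survives, but it should be spelled out rather than absorbed into ``bookkeeping.''
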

\begin{proof}
In view of Lemma~\ref{chap5:lem:shear-morphism-ismorita=left-principal}, it suffices to show that
\[
\Gamma_{0,1}\to \Gamma_{0,0}\times_{\face_0,M,\face_0} \Gamma_{0,0}
\]
is a left principal \(H\)-bundle. The condition \(\Kan(2,0)[1,2]\) implies that the map \(\Gamma_{0,1}\to \Gamma_{0,0}\times_{\face_0,M,\face_0}  \Gamma_{0,0}\) is a cover. The condition \(\Kan!(3,0)[1,3]\) implies that the left action of \(H\) on \(\Gamma_{0,1}\) is principal and that
\[
\Gamma_{0,1}/H\cong \Gamma_{0,0}\times_{\face_0,M,\face_0} \Gamma_{0,0}.
\]
This proves the lemma.
\end{proof}

This completes the proof of Theorem~\ref{chap6:thm:bibundle-cat=simplicial}.

\section{Weak equivalences of 2-groupoids}

We define weak equivalences of 2-groupoids in this section. We show that they can be modeled by weak acyclic fibrations whose cographs are two-sided principal bibundles.

\begin{definition}
  A morphism \(f\colon X\to Y\) of 2-groupoids in \((\Cat,\covers)\) is a \emph{weak equivalence} if
\begin{itemize}
 \item \(f\) is essentially surjective; that is, \(\face_1\circ\pr_2\colon X_0\times_{f_0,Y_0,\face_0} Y_1\to Y_0 \) is a cover;
 \item the induced functor \(f\colon \arrow{X}\to \arrow{Y}\) on groupoids of bigons is a weak equivalence of groupoids in \((\Cat,\covers)\).\footnote{Weak equivalences of groupoids in \((\Cat,\covers)\) are defined like those of Lie groupoids (Definition~\ref{chap1:def:weak-equi-groupoid}); we simply replace surjective submersions by covers.}
\end{itemize}
\end{definition}

\begin{remark}
The first condition is equivalent to
\[
 \face_0\circ\pr_2\colon X_0\times_{f_0,Y_0,\face_1} Y_1\to Y_0
\]
being a cover. The second condition means that the induced morphism \(\arrow{X}\to \arrow{Y}\) is essentially surjective and fully faithful.
\end{remark}

\begin{lemma}\label{chap6:lem:acyclic->weak-equivalence}
 An acyclic fibration \(f\colon X\to Y\) of 2-groupoids in \((\Cat,\covers)\) is a weak equivalence.
\end{lemma}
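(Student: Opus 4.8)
The plan is to unwind the two defining clauses of a weak equivalence and discharge each from the acyclicity data, which I first record explicitly. For an acyclic fibration $f\colon X\to Y$ of $2$-groupoids, $\Acyc(0)$ says that $f_0\colon X_0\to Y_0$ is a cover, $\Acyc(1)$ says that $X_1\to (X_0\times X_0)\times_{Y_0\times Y_0}Y_1$ is a cover, and Lemma~\ref{chap3:lem:acyclic-def} upgrades the higher conditions to the unique ones $\Acyc!(m)$ for $m\ge 2$; in particular $\Acyc!(2)$ is an isomorphism. The workhorse I would extract first is that $f_1\colon X_1\to Y_1$ is itself a cover: since $f_0$ is a cover so is $f_0\times f_0$, hence so is its pullback $(X_0\times X_0)\times_{Y_0\times Y_0}Y_1\to Y_1$ along $(\source,\target)\colon Y_1\to Y_0\times Y_0$, and $f_1$ factors as the $\Acyc(1)$-cover followed by this projection, a composite of covers.

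With this in hand the first clause (essential surjectivity of $f$) is immediate. The projection $\pr_2\colon X_0\times_{f_0,Y_0,\face_0}Y_1\to Y_1$ is a pullback of the cover $f_0$ along $\face_0$ and hence a cover, while $\face_1\colon Y_1\to Y_0$ is a cover because $Y$ is a $2$-groupoid (Remark~\ref{chap3:rem:Kan_one}); therefore $\face_1\circ\pr_2$ is a cover, as required.

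For the second clause I would pass to the induced functor $\arrow{f}\colon \arrow{X}\to\arrow{Y}$ on the groupoids of bigons, where $\arrow{X}_0=X_1$, $\arrow{X}_1=\bigon{X_2}=\face_0^{-1}(\de_0 X_0)\subset X_2$, with source and target the two non-degenerate faces (Section~\ref{chap5:sec:simp-cat}, Example~\ref{chap3:exa:decalage}). Essential surjectivity of $\arrow{f}$ then runs exactly as above: $\arrow{Y}$ is a groupoid in $(\Cat,\covers)$ by Lemma~\ref{chap3:lem:fibre_exists}, so its target $\bigon{Y_2}\to Y_1$ is a cover (Definition~\ref{chap3:def:groupoid-pretopology}); pulling the cover $f_1$ back along the source map of $\arrow{Y}$ and composing with this target exhibits $\target\circ\pr_2\colon X_1\times_{f_1,Y_1,\source}\bigon{Y_2}\to Y_1$ as a composite of covers.

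The remaining point, full faithfulness of $\arrow{f}$, is where $\Acyc!(2)$ enters and is the step I expect to be the main obstacle. The idea is that $\Acyc!(2)$ identifies $X_2$ with $Y_2\times_{\Hom(\partial\Simp{2},Y)}\Hom(\partial\Simp{2},X)$, i.e. a $2$-simplex of $X$ is a $2$-simplex of $Y$ together with a lift of its three boundary edges to $X$. I would restrict this isomorphism to the subobjects carved out by the bigon condition ($\face_0$ degenerate) and read off the two non-degenerate faces, so as to exhibit $\arrow{X}_1$ as the appropriate cartesian pullback of $\arrow{Y}_1$ along $\arrow{f}_0=f_1$, which is precisely full faithfulness in the sense of Definition~\ref{chap1:def:weak-equi-groupoid}. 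The delicate bookkeeping, and the crux of the whole argument, is that the degenerate $0$th face must be tracked coherently on both sides: for a bigon the degenerate edge is forced to be $\de_0 v$ with $v$ the common endpoint of its source and target, so one must check that this constraint is preserved by $\Acyc!(2)$ and by the compatibility of $f$ with degeneracies, and that it correctly matches the endpoint data of $\arrow{X}$ as a groupoid over $\trivial{X_0}\times\trivial{X_0}$. Once this cartesian square and the essential surjectivity of $\arrow{f}$ are established, $f$ satisfies both clauses of the definition and is a weak equivalence.
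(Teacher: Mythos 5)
Your proposal is correct and is essentially the paper's own proof: \(\Acyc(0)\) yields essential surjectivity of \(f\); \(\Acyc(0)\) together with \(\Acyc(1)\) makes \(f_1\colon X_1\to Y_1\) a cover (your ``workhorse'' composite-of-covers argument is exactly what the paper invokes when it says ``by \(\Acyc(1)\), the induced map \(\arrow{X}_0\to\arrow{Y}_0\) is a cover''), whence the induced functor \(\arrow{X}\to\arrow{Y}\) is essentially surjective; and full faithfulness is read off by restricting the \(\Acyc!(2)\) isomorphism \(X_2\cong\Hom(\partial\Simp{2}\to\Simp{2},X\to Y)\) to the bigon locus, exactly as in the paper. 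The ``delicate bookkeeping'' you flag is compressed in the paper into one sentence, and the only point to watch is that this restriction literally produces the pullback of \(\bigon{Y_2}\) over the space of \emph{parallel} pairs \(X_1\times_{X_0\times X_0}X_1\) rather than over \(X_1\times X_1\) (the degenerate face forces the two remaining faces to share endpoints in \(X\)); this fibred form of full faithfulness is the one the chapter actually uses for bigon groupoids, so your argument matches the paper's both in substance and in this implicit reading.
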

\begin{proof}
 The condition \(\Acyc(0)\) implies that \(f_0\colon X_0\to Y_0\) is a cover, hence \(f\) is essentially surjective. By \(\Acyc(1)\), the induced map \(\arrow{X}_0\to \arrow{Y}_0\) is a cover. Applying \(\Acyc!(2)\) proves that the map
\[
X_2\to \Hom(\partial\Simp{2}\to \Simp{2}, X\to Y)
\]
is an isomorphism. Restricting to the subspace of bigons, we get an isomorphism that shows exactly that the induced groupoid functor \(\arrow{X}\to \arrow{Y}\) is fully faithful.
\end{proof}

It is clear from the proof that, as for groupoids, the notion of 2-groupoid acyclic fibrations is stronger than that of weak equivalences.

\subsection{Weak acyclic fibrations}

We now give a weaker version of acyclic fibrations of 2-groupoids and then show that it is equivalent to the notion of weak equivalences.

\begin{definition}\label{chap6:def:weak-acyclic}
A morphism \(f\colon X\to Y\) of 2-groupoids in \((\Cat,\covers)\) is a \emph{weak acyclic fibration} if
\[
 \Hom(\Simp{k}\to \Simp{k}\star\Simp{0}, X\to Y)\to  \Hom(\partial\Simp{k}\to \partial\Simp{k}\star\Simp{0}, X\to Y)
\]
is a cover for \(k\ge 0\) and an isomorphism for \(k\ge 2\). We denote these conditions by \(\Acyc'(k)\) and \(\Acyc'!(k)\), respectively.
\end{definition}

\begin{remark}
  After completing the thesis, we learn that weak acyclic fibrations defined above were studied by Behrend and Getzler under the name of equivalences; see \cite[p.~71]{Getzler}.
\end{remark}

\begin{remark}
Proposition~\ref{chap4:prop:cograph-acyclic-is-kan} shows that \(\Acyc(k)\) implies \(\Acyc'(k)\). We also learn from the proof that a morphism is a weak acyclic fibration if and only if its cograph gives a two-sided principal bibundle.
\end{remark}

It is clear that \(\Hom(\Simp{k}\to \Simp{k}\star\Simp{0}, X\to Y)\) is representable, but we do not know whether \(\Hom(\partial\Simp{k}\to \partial\Simp{k}\star\Simp{0}, X\to Y)\) is representable.
\begin{lemma}
 Let \(f\colon X\to Y\) be a morphism of 2-groupoids. If \(f\) satisfies \(\Acyc'(i)\) for \(0\le i\le k-1\), then \(\Hom(\partial\Simp{k}\to \partial\Simp{k}\star\Simp{0}, X\to Y)\) is representable.
\end{lemma}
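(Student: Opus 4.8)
The plan is to reduce the representability of the presheaf $\Hom(\partial\Simp{k}\to \partial\Simp{k}\star\Simp{0}, X\to Y)$ to the representability of finitely many presheaves already known to be representable, using a collapsible-extension filtration of the simplicial set $\partial\Simp{k}\star\Simp{0}$ relative to $\partial\Simp{k}$. The key observation is that $\partial\Simp{k}\star\Simp{0}$ is built from $\partial\Simp{k}$ (together with the apex vertex) by attaching cones over the faces of $\partial\Simp{k}$, and each such attachment can be analysed via the join identities~\eqref{chap2:eq:join} together with Proposition~\ref{chap2:prop:join-pushout-collapsible}. Concretely, I would first record that $\partial\Simp{k}\star\Simp{0}$ admits a filtration whose successive quotients correspond to filling horns of the shape $\Horn{m}{m}$ (right outer horns, coming from the term $(\partial\Simp{m}\star\Simp{n})\cup(\Simp{m}\star\Horn{n}{k})$ of~\eqref{chap2:eq:join} with the $\Simp{0}$ apex), so that the inclusion $\partial\Simp{k}\to \partial\Simp{k}\star\Simp{0}$ is built from cells whose attaching data involve only faces of dimension $<k$.

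Next I would exploit the standing hypothesis $\Acyc'(i)$ for $0\le i\le k-1$. The point is that each $\Acyc'(i)$ condition is exactly the statement that the map
\[
\Hom(\Simp{i}\to\Simp{i}\star\Simp{0}, X\to Y)\to \Hom(\partial\Simp{i}\to\partial\Simp{i}\star\Simp{0},X\to Y)
\]
is a cover (an isomorphism for $i\ge 2$), and by downward induction on $i$ these give representability of all the intermediate $\Hom$-presheaves built on boundaries of dimension $<k$. First I would set up the induction so that at stage $i$ the presheaf $\Hom(\partial\Simp{i}\to\partial\Simp{i}\star\Simp{0},X\to Y)$ is representable; then $\Acyc'(i)$ exhibits $\Hom(\Simp{i}\to\Simp{i}\star\Simp{0},X\to Y)$ (which is manifestly representable, being a pullback $X_i\times_{Y_i}Y_{i+1}$ built from the $\Hom$ over a single simplex) as related to it by a cover, feeding the next stage.

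The technical engine is the pullback-square machinery of Lemma~\ref{chap2:lem:lift-composition-diagram} and Corollary~\ref{chap2:cor:lift-composition-pushout-combined}, transported to the $\Hom$ functor as in Section~\ref{chap3:sec:Hom}: attaching one cell of the filtration produces, for the relative $\Hom$, a pullback square one of whose legs is $\Hom(\Simp{m}\to\Simp{m}\star\Simp{0},\,X\to Y)\to\Hom(\partial\Simp{m}\to\partial\Simp{m}\star\Simp{0},\,X\to Y)$ for some $m<k$. Since pullbacks of representables along maps between representables are representable in $(\Cat,\covers)$ whenever one leg is a cover (covers are stable under pullback by the pretopology axioms), representability propagates cell by cell, and the finiteness of the filtration lets me conclude after finitely many steps that $\Hom(\partial\Simp{k}\to\partial\Simp{k}\star\Simp{0},X\to Y)$ is representable.

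The main obstacle I anticipate is bookkeeping the filtration of $\partial\Simp{k}\star\Simp{0}$ correctly and verifying that every attaching map really is an (outer) horn inclusion of dimension $<k$, so that the relevant leg of each pullback square is controlled by a \emph{lower} $\Acyc'(i)$ hypothesis rather than by $\Acyc'(k)$ itself — otherwise the induction would be circular. Getting the join identities~\eqref{chap2:eq:join} to line up with the boundary-cone decomposition, and confirming that the degenerate apex does not spoil representability of the base case $\Hom(\partial\Simp{0}\to\partial\Simp{0}\star\Simp{0},X\to Y)$ (where $\partial\Simp{0}=\emptyset$), is the delicate part; everything after that is a routine pullback chase.
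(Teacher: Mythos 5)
Your proposal conflates two different filtrations, and the one you actually construct does not prove the lemma. In your first paragraph you filter the \emph{target} \(\partial\Simp{k}\star\Simp{0}\) relative to the \emph{fixed source} \(\partial\Simp{k}\) by right outer horn fillings \(\Horn{m+1}{m+1}=(\partial\Simp{m}\star\Simp{0})\cup\Simp{m}\). That is a correct statement about simplicial sets, but it cannot drive the pullback chase you describe afterwards, for two reasons. First, a cell attached only to the target produces a pullback square whose variable leg is the Kan cover \(\Hom(\Simp{m+1},Y)\to\Hom(\Horn{m+1}{m+1},Y)\) of the 2\nbdash{}groupoid \(Y\), \emph{not} the map \(\Hom(\Simp{m}\to\Simp{m}\star\Simp{0},X\to Y)\to\Hom(\partial\Simp{m}\to\partial\Simp{m}\star\Simp{0},X\to Y)\); so, contrary to your third paragraph, the hypotheses \(\Acyc'(i)\) never enter through this filtration. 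Second, and fatally, the starting term of this filtration is
\[
\Hom\bigl(\partial\Simp{k}\to\partial\Simp{k}\sqcup\Simp{0},\,X\to Y\bigr)\cong \Hom(\partial\Simp{k},X)\times Y_0,
\]
and \(\Hom(\partial\Simp{k},X)\) is \emph{not} representable for a general 2\nbdash{}groupoid \(X\): already for \(k=2\) it involves a fibre product along \((\face_1,\face_0)\colon X_1\to X_0\times X_0\), which is not a cover; for the nerve of a Lie groupoid this is essentially the space of pairs of parallel arrows, which fails to be a manifold, e.g.\ for the action groupoid of the rotation action of \(\bR\) on \(\bR^2\). The apex-vertex issue you flag at the end is harmless; this base case is the real obstruction.

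What your second and third paragraphs implicitly presuppose, but never construct, is a filtration of the \emph{pair} \((\partial\Simp{k},\partial\Simp{k}\star\Simp{0})\) in which each cell is a simplex pair \((\Simp{m},\Simp{m}\star\Simp{0})\) attached along the boundary pair \((\partial\Simp{m},\partial\Simp{m}\star\Simp{0})\) — only for such cells is the leg of the resulting pullback square the \(\Acyc'(m)\) map, so that the hypotheses make representability propagate. The paper's proof does this with a single cell: it writes \(\partial\Simp{k}=\Horn{k}{k}\cup_{\partial\Simp{k-1}}\Simp{k-1}\), first shows \(\Hom(\Horn{k}{k}\to\Horn{k}{k}\star\Simp{0},X\to Y)\) is representable outright because \(\Horn{k}{k}\) is collapsible and \(\Horn{k}{k}\to\Horn{k}{k}\star\Simp{0}\) is a collapsible extension by Proposition~\ref{chap2:prop:join-pushout-collapsible} (so only the Kan conditions of \(X\) and \(Y\) are used there), and then performs one boundary-pair attachment of \(\Simp{k-1}\), whose leg is a cover by \(\Acyc'(k-1)\). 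Alternatively, your sketch can be repaired by filtering the pair skeletally: build \(\partial\Simp{k}=\sk_{k-1}\Simp{k}\) from the apex by attaching every \(m\)-simplex (\(m\le k-1\)) along its boundary, using \(\Acyc'(m)\) and the representability of \(X_m\times_{Y_m}Y_{m+1}\) at each step; that is a genuine, if longer, alternative to the paper's argument. Either way, the decomposition you wrote down must be replaced before the rest of your argument can run.
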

\begin{proof}
The statement is true for \(k=0\). Let \(k>0\). Since \(\Horn{k}{k}\) and \(\Horn{k}{k}\star\Simp{0}\) are collapsible and \(\Horn{k}{k}\to \Horn{k}{k}\star\Simp{0}\) is a collapsible extension by Proposition~\ref{chap2:prop:join-pushout-collapsible}, we deduce that
\[
\Hom(\Horn{k}{k}\to \Horn{k}{k}\star\Simp{0},X\to Y)
\]
is representable.

Let \(A\) and \(B\) be two simplicial sets such that \(A=B\cup_{\partial\Simp{i}}\Simp{i}\) for \(0\le i\le k-1\). We have a pullback diagram
\[
\xymatrix{
\Hom(A\to A\star\Simp{0}, X\to Y) \ar[r]\ar[d]& \Hom(\Simp{i}\to\Simp{i}\star\Simp{0}, X\to Y)\ar[d]\\
\Hom(B\to B\star\Simp{0}, X\to Y) \ar[r]& \Hom(\partial\Simp{i}\to\partial\Simp{i}\star\Simp{0}, X\to Y) \rlap{\ .}
}
\]
Thus if \(\Hom(B\to B\star\Simp{0}, X\to Y) \) is representable, then so is \(\Hom(A\to A\star\Simp{0}, X\to Y)\). Take \(B=\Horn{k}{k}\) and \(A=\partial\Simp{k}=\Horn{k}{k}\cup_{\partial\Simp{k-1}}\Simp{k-1}\). This proves the claim.
\end{proof}

\begin{lemma}\label{chap6:lem:weak-acyc2->acyc2}
 Let \(f\colon X\to Y\) be a weak acyclic fibration of 2-groupoids. Then \(f\) satisfies \(\Acyc!(2)\).
\end{lemma}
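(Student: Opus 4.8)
The plan is to prove $\Acyc!(2)$ by showing directly that the canonical map
\[
w\colon X_2=\Hom(\Simp{2},X)\to Q\coloneqq\Hom(\partial\Simp{2}\to\Simp{2},X\to Y)
\]
is an isomorphism, and the key observation is that the hypothesis $\Acyc'!(2)$ is, after unwinding, exactly the assertion that $w$ becomes an isomorphism after base change along the cover $\face_3\colon Y_3\to Y_2$. First I would compute both sides of $\Acyc'!(2)$. Using $\Simp{2}\star\Simp{0}=\Simp{3}$ and the join identity $\partial\Simp{2}\star\Simp{0}=\Horn{3}{3}$ (Equation~\eqref{chap2:eq:join}, as in Example~\ref{chap3:exa:decalage-principal}), the domain is $\Hom(\Simp{2}\to\Simp{3},X\to Y)=X_2\times_{Y_2}Y_3$, the fibre product being taken along $f$ and along $\face_3$. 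For the codomain, since $Y$ is a $2$\nbdash{}groupoid it satisfies $\Kan!(3,3)$, so $\Hom(\Horn{3}{3},Y)=Y_3$ and hence
\[
\Hom(\partial\Simp{2}\to\Horn{3}{3},X\to Y)=\Hom(\partial\Simp{2},X)\times_{\Hom(\partial\Simp{2},Y)}Y_3.
\]
Writing $Q=\Hom(\partial\Simp{2},X)\times_{\Hom(\partial\Simp{2},Y)}Y_2$ with its projection $Q\to Y_2$, one checks that this codomain is precisely $Q\times_{Y_2}Y_3$, and that under these identifications the restriction map appearing in $\Acyc'!(2)$ is exactly $w\times_{Y_2}\id_{Y_3}$. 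Representability of all presheaves involved for $k=2$ is provided by the previous lemma, which only requires $\Acyc'(0)$ and $\Acyc'(1)$.

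Next I would exhibit the pullback square
\[
\xymatrix{
X_2\times_{Y_2}Y_3\ar[r]\ar[d]_{w\times_{Y_2}\id}&X_2\ar[d]^{w}\\
Q\times_{Y_2}Y_3\ar[r]^-{c}&Q\rlap{\ ,}
}
\]
which identifies $w\times_{Y_2}\id$ as the pullback of $w$ along $c$. Here $c$ is a cover, being the base change along $Q\to Y_2$ of the cover $\face_3\colon Y_3\to Y_2$; the latter is a cover because $\Simp{2}\to\Simp{3}$ is a collapsible extension, so $Y_3=\Hom(\Simp{3},Y)\to\Hom(\Simp{2},Y)=Y_2$ is a cover by Lemma~\ref{chap3:lem:covers_in_groupoid}. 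The hypothesis $\Acyc'!(2)$ asserts that $w\times_{Y_2}\id$ is an isomorphism. I would then invoke Lemma~\ref{chap3:lemma:iso} together with Remark~\ref{chap3:rem:pullback-along-cover-iso}: the latter shows that the map one wishes to invert need not itself be assumed a cover, so it suffices that $c$ be a cover and that the pullback $w\times_{Y_2}\id$ be an isomorphism. This yields that $w$ is an isomorphism, i.e.\ $\Acyc!(2)$.

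The main obstacle I expect is not conceptual but the bookkeeping in the identification step: one must keep the face\nbdash{}inclusion conventions straight so that the fibre product over $Y_2$ on the codomain side is genuinely $Q\times_{Y_2}Y_3$ and the comparison map is genuinely the base change of $w$ (and not a reindexed or twisted variant), and one must be careful that the use of $\Kan!(3,3)$ for $Y$ is the only Kan condition needed. Once that identification is pinned down, the descent along the cover $c$ is immediate from the already\nbdash{}established Lemma~\ref{chap3:lemma:iso} and Remark~\ref{chap3:rem:pullback-along-cover-iso}.
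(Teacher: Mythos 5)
Your computation of what \(\Acyc'!(2)\) says is correct, and it is exactly the computation underlying the paper's proof: with \(\Simp{2}\star\Simp{0}=\Simp{3}\), \(\partial\Simp{2}\star\Simp{0}=\Horn{3}{3}\), and \(\Kan!(3,3)\) for \(Y\), the hypothesis asserts that \(w\times_{Y_2}\id_{Y_3}\colon X_2\times_{Y_2}Y_3\to Q\times_{Y_2}Y_3\) is an isomorphism, both fibre products taken along \(\face_3\). The gap is in your final descent step. Lemma~\ref{chap3:lemma:iso} and Remark~\ref{chap3:rem:pullback-along-cover-iso} are statements about morphisms between \emph{objects of} \(\Cat\): to apply them with base \(Q\) you need \(Q=\Hom(\partial\Simp{2}\to\Simp{2},X\to Y)\) to be representable, and you need \(c\colon Q\times_{Y_2}Y_3\to Q\) to be a cover, a notion that is only available (via axiom (iii) of a pretopology) for pullbacks along morphisms in \(\Cat\). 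The previous lemma you cite does not supply this: it gives representability of \(\Hom(\partial\Simp{2}\to\partial\Simp{2}\star\Simp{0},X\to Y)\cong Q\times_{Y_2}Y_3\), not of \(Q\) itself. Moreover, representability of \(Q\) is part of what \(\Acyc(2)\) and \(\Acyc!(2)\) assert (Definition~\ref{chap3:def:acyclic-fibration}), so presupposing it is circular; and since a weak acyclic fibration gives no control on \(f_0\) (unlike \(\Acyc(0)\)), there is no a priori reason for \(Q\) to be representable in, say, \((\Mfd,\covers_\subm)\). Repairing your argument as written would require a separate sheaf-theoretic descent statement for isomorphisms of presheaves along ``covers of presheaves'', which the paper never develops.

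The paper avoids descent entirely by exploiting that the cover \(\face_3\) has a \emph{section}, namely \(\de_2\colon Y_2\to Y_3\). In your notation: the map \((\id_Q,\de_2\circ\pr_{Y_2})\colon Q\to Q\times_{Y_2}Y_3\) is well defined because \(\face_3\de_2=\id\), and the pullback of \(w\times_{Y_2}\id_{Y_3}\) along it is exactly \(w\colon X_2\to Q\); this is precisely the paper's pullback square, whose vertical arrows are induced by \(\de_2\). Since pullbacks of presheaves always exist and a pullback of an isomorphism is an isomorphism, \(w\) is an isomorphism of presheaves --- no cover and no prior representability of \(Q\) is needed, and representability of \(Q\cong X_2\) comes out as a by-product. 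So your identification work can be kept verbatim; only the last step should be replaced by pulling back along the section \(\de_2\) rather than descending along the cover \(\face_3\).
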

\begin{proof}
The statement follows from the pullback diagram
\[
\xymatrix{
 \Hom(\Simp{2}\to \Simp{2}, X\to Y)\ar[r]\ar[d]               & \Hom(\partial\Simp{2}\to \Simp{2}, X\to Y)\ar[d] \\
 \Hom(\Simp{2}\to \Simp{2}\star\Simp{0}, X\to Y)\ar[r]^-{\cong} &\Hom(\partial\Simp{2}\to \partial\Simp{2}\star\Simp{0}, X\to Y)\rlap{\ .}
}
\]
The left vertical arrow is induced by \(\de_2\colon Y_2\to Y_3\), and the right vertical arrow is induced by the composite
\[
Y_2\xrightarrow{\de_2} \Hom(\Simp{2}\star\Simp{0}, Y)\to \Hom(\partial\Simp{2}\star\Simp{0}, Y). \qedhere
\]
\end{proof}

Recall from~\cite{Zhu:ngpd} that if a morphism of 2-groupoids \(f\colon X\to Y\) satisfies \(\Acyc!(2)\), then it satisfies \(\Acyc!(k)\) for \(k\ge 2\).

\begin{lemma}
  A weak acyclic fibration of 2-groupoids \(f\colon X\to Y\) is a weak equivalence.
\end{lemma}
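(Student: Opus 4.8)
The plan is to verify directly the two conditions in Definition~\ref{chap1:def:weak-equi-groupoid} (as transported to $2$\nbdash{}groupoids): that $f$ is essentially surjective, and that the induced functor $f\colon\arrow{X}\to\arrow{Y}$ on groupoids of bigons is essentially surjective and fully faithful. The three ingredients will come from three different hypotheses, so I would organise the proof around the template of Lemma~\ref{chap6:lem:acyclic->weak-equivalence}, replacing each use of an $\Acyc$\nbdash{}condition there by the corresponding $\Acyc'$\nbdash{}condition here. First I would invoke Lemma~\ref{chap6:lem:weak-acyc2->acyc2} to obtain $\Acyc!(2)$ for $f$, and hence $\Acyc!(k)$ for all $k\ge 2$ by the recalled result of~\cite{Zhu:ngpd}. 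Essential surjectivity of $f$ will be read off from $\Acyc'(0)$; full faithfulness of the bigon functor from $\Acyc!(2)$; and essential surjectivity of the bigon functor from $\Acyc'(1)$ (together with $\Acyc'(0)$).

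The two easy ingredients I would treat first, using the join identities from Example~\ref{chap3:exa:decalage-principal} and~\eqref{chap2:eq:join}, namely $\Simp{k}\star\Simp{0}=\Simp{k+1}$ and $\partial\Simp{k}\star\Simp{0}=\Horn{k+1}{k+1}$. Unravelling $\Acyc'(0)$ with these identities, its source is $\Hom(\Simp{0}\to\Simp{1},X\to Y)\cong X_0\times_{f_0,Y_0,\face_1}Y_1$ and its target is $\Hom(\emptyset\to\Simp{0},X\to Y)\cong Y_0$, so $\Acyc'(0)$ says exactly that $\face_0\circ\pr_2\colon X_0\times_{f_0,Y_0,\face_1}Y_1\to Y_0$ is a cover; by the Remark after the weak-equivalence definition this is the equivalent form of essential surjectivity of $f$. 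For full faithfulness I would copy the argument of Lemma~\ref{chap6:lem:acyclic->weak-equivalence}: $\Acyc!(2)$ gives an isomorphism $X_2\cong\Hom(\partial\Simp{2}\to\Simp{2},X\to Y)$, and restricting this isomorphism to the subspace of bigons $\bigon{X_2}=\face_0^{-1}(\de_0 X_0)$ (a pullback, hence the restriction is again an isomorphism) yields precisely the pullback square asserting that $\arrow{X}\to\arrow{Y}$ is fully faithful.

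The main obstacle is essential surjectivity of the bigon functor, which does not follow from a single $\Acyc'$\nbdash{}condition and must be assembled. Here I would start from $\Acyc'(1)$, whose source $\Hom(\Simp{1}\to\Simp{2},X\to Y)\cong X_1\times_{f,Y_1,\face_2}Y_2$ covers its target $\Hom(\partial\Simp{1}\to\Horn{2}{2},X\to Y)$. The key geometric observation is that a filler $\gamma\in Y_2$ of such a horn is a bigon precisely when its face $\face_0(\gamma)$ is degenerate; since covers are stable under pullback, restricting $\Acyc'(1)$ along the closed locus where the edge $\{1,2\}$ of the horn is degenerate produces a cover whose source is exactly $\arrow{X}_0\times_{\arrow{Y}_0,\source}\arrow{Y}_1$, the domain of the essential-surjectivity map $\target\circ\pr_2$ of Definition~\ref{chap1:def:weak-equi-groupoid}. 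The delicate point, which I expect to be the crux, is that a bigon linking $f(x)$ to $y$ forces the two endpoints of $y$ to agree with $f_0$\nbdash{}images of the endpoints of $x$; so to see that the resulting map actually covers $\arrow{Y}_0=Y_1$ one must feed in $\Acyc'(0)$ to supply these endpoint lifts over a cover, and carefully track the colours/degeneracies so that the bigon locus on the target side is hit surjectively. Once this matching is verified, the composite of covers gives the required essential surjectivity, completing the identification of $f$ as a weak equivalence.
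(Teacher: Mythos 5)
Your first two steps are correct and coincide with the paper's own proof: \(\Acyc'(0)\) unravels exactly to essential surjectivity of \(f\), and Lemma~\ref{chap6:lem:weak-acyc2->acyc2} together with the restriction-to-bigons argument of Lemma~\ref{chap6:lem:acyclic->weak-equivalence} gives full faithfulness of \(\arrow{X}\to\arrow{Y}\). The first half of your third step is also precisely the paper's argument: pulling back the \(\Acyc'(1)\) cover along the locus where the edge \(\{1,2\}\) of \(\Horn{2}{2}\) is degenerate (in the paper, the map induced by \(\de_1\colon Y_1\to Y_2\)) yields a cover
\[
X_1\times_{Y_1}\arrow{Y}_1\;\longrightarrow\;\Hom(\partial\Simp{1}\to\Simp{1},X\to Y)=(X_0\times X_0)\times_{Y_0\times Y_0}Y_1,
\]
and the paper stops exactly there, taking this to be essential surjectivity of the bigon functor.

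The gap is in the step you yourself flag as the crux, and it is not repairable: you cannot upgrade this to a cover of \(\arrow{Y}_0=Y_1\) by feeding in \(\Acyc'(0)\). A bigon has the same endpoints on both of its faces, so an edge \(\gamma\in Y_1\) can receive a bigon from \(f_1(X_1)\) only if its endpoints lie in the image of \(f_0\) on the nose; \(\Acyc'(0)\) merely connects each point of \(Y_0\) to the image of \(f_0\) by an arrow and does not make \(f_0\) surjective. Concretely, let \(X=\Simp{0}\) and let \(Y\) be the nerve of the pair groupoid on a two-point set, with \(f\) the inclusion at one point: every \(\Acyc'(k)\) map is then a bijection of finite sets, so \(f\) is a weak acyclic fibration, yet \(\arrow{Y}\) is the trivial groupoid on the four edges of \(Y\) and \(X_1\times_{Y_1}\arrow{Y}_1\to Y_1\) hits only one of them. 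Hence the literal essential surjectivity you aim at (covering \(Y_1\), as in Definition~\ref{chap1:def:weak-equi-groupoid} read verbatim) is false for weak acyclic fibrations; the lemma is correct only when essential surjectivity of the bigon functor is read in the endpoint-relative (fibered) sense, i.e.\ as covering \((X_0\times X_0)\times_{Y_0\times Y_0}Y_1\) --- which is the form the paper actually uses both here and in the converse direction of Proposition~\ref{chap6:prop:weak-equi=weak-acyc}. With that reading your pullback argument already completes the proof, and the extra matching step should be deleted rather than verified.
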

\begin{proof}
 The condition \(\Acyc'(0)\) implies that \(f\) is essentially surjective. Consider the pullback square
\[
 \xymatrix{
 X_1\times_{Y_1}\arrow{Y}_1\ar[d] \ar[r]& \Hom(\partial\Simp{1}\to \Simp{1}, X\to Y)\ar[d]\\
 \Hom(\Simp{1}\to \Simp{1}\star\Simp{0}, X\to Y)\ar[r]& \Hom(\partial\Simp{1}\to \partial\Simp{1}\star\Simp{0}, X\to Y)\rlap{\ ,}
 }
\]
where the right vertical arrow is induced by the composite
\[
Y_1\xrightarrow{\de_1} \Hom(\Simp{1}\star\Simp{0}, Y)\to \Hom(\partial\Simp{1}\star\Simp{0}, Y).
\]
The condition \(\Acyc'(1)\) implies that the bottom arrow is a cover. Thus so is the top arrow. This shows that the induced groupoid functor \(\arrow{X}\to\arrow{Y}\) is essentially surjective.

Lemma~\ref{chap6:lem:weak-acyc2->acyc2} shows that \(\Acyc'!(2)\) implies \(\Acyc!(2)\). Therefore, the induced groupoid functor \(\arrow{X}\to\arrow{Y}\) is fully faithful by Lemma~\ref{chap6:lem:acyclic->weak-equivalence}, and we are done.
\end{proof}

\begin{lemma}
 A weak equivalence of 2-groupoids \(f\colon X\to Y\) is a weak acyclic fibration.
\end{lemma}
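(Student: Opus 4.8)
The plan is to verify directly the two families of conditions in Definition~\ref{chap6:def:weak-acyclic}, namely $\Acyc'(k)$ for all $k\ge 0$ and $\Acyc'!(k)$ for $k\ge 2$, by running the pullback-square arguments of Lemmas~\ref{chap6:lem:acyclic->weak-equivalence} and~\ref{chap6:lem:weak-acyc2->acyc2} in reverse. Throughout I use the identity $\partial\Simp{k}\star\Simp{0}=\Horn{k+1}{k+1}$ (a special case of~\eqref{chap2:eq:join}), so that $\Acyc'(k)$ tests the map $\Hom(\Simp{k}\to\Simp{k+1}, X\to Y)\to\Hom(\partial\Simp{k}\to\Horn{k+1}{k+1}, X\to Y)$. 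The condition $\Acyc'(0)$ is then immediate: the map in question is exactly $\face_0\circ\pr_2\colon X_0\times_{f_0,Y_0,\face_1}Y_1\to Y_0$, which is a cover by the essential surjectivity of $f$ (in the reformulation recorded in the Remark following the definition of weak equivalence).

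For $\Acyc'(1)$ I would factor the tested map $X_1\times_{f_1,Y_1,\face_2}Y_2\to\Hom(\partial\Simp1\to\Horn22,X\to Y)$ through the intermediate space $(X_0\times X_0)\times_{Y_0\times Y_0}Y_2$. The second factor is a pullback of $Y_2\to\Hom(\Horn22,Y)$, a cover by $\Kan(2,2)$ for the $2$-groupoid $Y$; hence $\Acyc'(1)$ reduces to the claim $(\star)$ that $X_1\to(X_0\times X_0)\times_{Y_0\times Y_0}Y_1=\Hom(\partial\Simp1\to\Simp1,X\to Y)$ is a cover. I would establish $(\star)$ by producing local sections: essential surjectivity of $\arrow{X}\to\arrow{Y}$ lifts an edge of $Y$ with prescribed endpoints to an edge of $X$ up to a bigon, while full faithfulness of $\arrow{X}\to\arrow{Y}$ together with the groupoid structure of $X$ lets one realize this bigon by an honest edge and transport the lift onto the prescribed vertices $x_0,x_1$. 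Matching the $0$- and $1$-dimensional data of $X$ on the nose, rather than merely up to a bigon, is the main obstacle; this is precisely where both groupoid-equivalence hypotheses and the Kan conditions for $X$ must be combined, in the spirit of the induced-groupoid description of a weak equivalence (cf.\ the Remark after Definition~\ref{chap1:def:weak-equi-groupoid} and Lemma~\ref{chap1:lem:principal-equivariant-map-is-iso}).

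To obtain $\Acyc'!(2)$ I would first upgrade full faithfulness of $\arrow{X}\to\arrow{Y}$ to $\Acyc!(2)$, i.e.\ that $X_2\to\Hom(\partial\Simp2\to\Simp2,X\to Y)$ is an isomorphism: full faithfulness supplies this on the subspace of bigons, and one extends it to all of $X_2$ using essential surjectivity of $f$ together with the action of $\arrow{X}$ on $X_2$ that relates a general $2$-simplex to a bigon. Feeding $\Acyc!(2)$ and the cover $\Acyc'(2)$ (proved exactly as $\Acyc'(1)$) into the pullback square from the proof of Lemma~\ref{chap6:lem:weak-acyc2->acyc2}, and observing that the other leg into its lower-right corner is a cover by the Kan conditions for $Y$, Lemma~\ref{chap3:lemma:iso} then promotes the cover testing $\Acyc'!(2)$ to an isomorphism.

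Finally, the remaining conditions $\Acyc'(k)$ for $k\ge 2$ follow from $\Acyc'!(k)$, and $\Acyc'!(k)$ for $k\ge 3$ reduces to $\Acyc'!(2)$ by the coskeletality of $2$-groupoids (the recalled fact that $\Acyc!(2)$ forces $\Acyc!(k)$ for all $k\ge 2$, applied to the present weak variants). This exhausts the conditions of Definition~\ref{chap6:def:weak-acyclic} and, combined with the preceding lemma, shows that weak equivalences and weak acyclic fibrations of $2$-groupoids coincide.
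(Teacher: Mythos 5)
Your reduction of $\Acyc'(1)$ to the claim $(\star)$ is where the proof breaks down, and it cannot be repaired: $(\star)$ asserts that $X_1\to (X_0\times X_0)\times_{Y_0\times Y_0}Y_1$ is a cover, which is exactly the \emph{strong} acyclicity condition $\Acyc(1)$ for $f$, and weak equivalences of $2$\nbdash{}groupoids do not satisfy it in general. Concretely, let $Y$ be the $2$\nbdash{}groupoid of the crossed module $\id\colon\bR\to\bR$ (one object, $1$\nbdash{}morphisms $\bR$, a unique bigon between any two $1$\nbdash{}morphisms) and let $X=\Simp{0}$ be the trivial $2$\nbdash{}groupoid. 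The inclusion $f\colon X\to Y$ is a weak equivalence: it is essentially surjective, and $\arrow{X}\to\arrow{Y}$ is a weak equivalence of groupoids because $\arrow{Y}$ is the pair groupoid of $\bR$. Yet $X_1\to (X_0\times X_0)\times_{Y_0\times Y_0}Y_1\cong\bR$ is the inclusion of a point, not a cover. This is consistent with the remark following Lemma~\ref{chap6:lem:acyclic->weak-equivalence} that acyclic fibrations are \emph{strictly} stronger than weak equivalences; your $(\star)$, combined with your $\Acyc'(0)$ and $\Acyc!(2)$ steps, would make every weak equivalence an acyclic fibration, contradicting that remark. The ``main obstacle'' you flag --- matching the $0$- and $1$-dimensional data of $X$ on the nose rather than up to a bigon --- is therefore not an obstacle to be overcome but a genuine impossibility: the extra cone vertex in $\Simp{k}\star\Simp{0}$ appears in Definition~\ref{chap6:def:weak-acyclic} precisely so that lifts need only exist up to a bigon of $Y$, and any argument must keep that slack rather than eliminate it.

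The paper's proof takes the route you would have to take instead: it never produces an edge of $X$ over a prescribed edge of $Y$. For $\Acyc'(1)$ it uses essential surjectivity of $\arrow{X}\to\arrow{Y}$ to obtain a cover between two auxiliary spaces (roughly: an edge of $X$ together with a $Y$\nbdash{}edge related to its image by a bigon, mapping to a pair of points of $X$ with a $Y$\nbdash{}edge between their images), observes that $\arrow{Y}$ acts on both, composes with the principal $\arrow{Y}$\nbdash{}action on $Y_2$ along $\face_2\colon Y_2\to Y_1$, and then descends the resulting invariant cover to the quotients (the mechanism of Lemmas~\ref{chap1:lem:invariant-cover-descent} and~\ref{chap1:lem:action-over-principal-is-principal}); the quotient target is exactly the codomain of the $\Acyc'(1)$ map. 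Your level\nbdash{}$2$ step --- proving the honest $\Acyc!(2)$ from full faithfulness of $\arrow{X}\to\arrow{Y}$ via the bigon actions --- is in the right spirit, and indeed the paper proves $\Acyc!(2)$ rather than $\Acyc'!(2)$ directly; but as written your argument also leans on ``$\Acyc'(2)$ proved exactly as $\Acyc'(1)$,'' so it inherits the same gap.
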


\begin{proof}
First, the essential surjectivity of \(f\) implies \(\Acyc'(0)\).

\begin{figure}[htbp]
 \centering
 \begin{tikzpicture}%
  [>=latex', hdot/.style={mydot,fill=lightgray},every label/.style={scale=0.6}]
  \node[mydot,label=90:\(0\)] at (0,1) (g0){};
  \node[mydot,label=90:\(1\)] at (2,1) (g1){};
  \node[hdot,label=-90:\(0\)] at (0,0) (h0){};
  \node[hdot,label=-90:\(1\)] at (2,0) (h1){};
  \path[->]
   (g1) edge[out=150,in=30] (g0)
   (h1) edge[out=150,in=30] (h0)
        edge[out=-150,in=-30] (h0);
  \begin{scope}[xshift=5cm]
   \node[mydot,label=90:\(0\)] at (0,1) (g0){};
  \node[mydot,label=90:\(1\)] at (2,1) (g1){};
  \node[hdot,label=-90:\(0\)] at (0,0) (h0){};
  \node[hdot,label=-90:\(1\)] at (2,0) (h1){};
  \path[->]
   (h1) edge[out=-150,in=-30] (h0);
  \end{scope}
 \end{tikzpicture}
 \caption{\(\arrow{X}\to\arrow{Y}\) is essentially surjective}\label{chap6:fig:local-ess-surj}
\end{figure}
Second, the induced groupoid functor \(\arrow{X}\to\arrow{Y}\) is essentially surjective, thus the map from the space given by left picture in Figure~\ref{chap6:fig:local-ess-surj} to the space given by the right picture is a cover. In this figure, white dots are objects in \(X\) and gray dots are objects in~\(Y\), and \(f\) sends a white dot to the gray dot below. The condition \(\Acyc'(0)\) implies that the space on the right is representable. The groupoid of bigons \(\arrow{Y}\) acts on these two spaces. Composing these two actions with the principal action of \(\arrow{Y}\) on \(Y_2\) along \(\face_2\colon Y_2\to Y_1\), respectively, we obtain a \(\arrow{Y}\) invariant cover. Taking the quotients, we obtain \(\Acyc'(1)\), illustrated by Figure~\ref{chap6:fig:weak-acyc-1}.
\begin{figure}[htbp]
 \centering
 \begin{tikzpicture}%
  [>=latex', hdot/.style={mydot,fill=lightgray},every label/.style={scale=0.6}]
  \node[mydot,label=90:\(0\)] at (0,1) (g0){};
  \node[mydot,label=90:\(1\)] at (2,1) (g1){};
  \node[hdot,label=-90:\(0\)] at (0,0) (h0){};
  \node[hdot,label=-90:\(1\)] at (2,0) (h1){};
  \node[hdot,label=-90:\(2\)] at (1,-1)(h2){};
  \path[->]
   (g1) edge[out=150,in=30] (g0)
   (h1) edge[out=150,in=30] (h0)
        edge[snake arrow, out=-150,in=-30] (h0)
   (h2) edge (h1)
        edge (h0);
  \begin{scope}[xshift=5cm]
  \node[mydot,label=90:\(0\)] at (0,1) (g0){};
  \node[mydot,label=90:\(1\)] at (2,1) (g1){};
  \node[hdot,label=-90:\(0\)] at (0,0) (h0){};
  \node[hdot,label=-90:\(1\)] at (2,0) (h1){};
  \node[hdot,label=-90:\(2\)] at (1,-1)(h2){};
  \path[->]
   (h1) edge[snake arrow, out=-150,in=-30] (h0)
   (h2) edge (h1)
        edge (h0);
  \end{scope}
 \end{tikzpicture}
 \caption{\(\Acyc'(1)\) for \(X\to Y\)}\label{chap6:fig:weak-acyc-1}
\end{figure}

It remains to consider \(\Acyc'!(2)\); we show instead \(\Acyc!(2)\). Since the induced groupoid functor \(\arrow{X}\to\arrow{Y}\) is fully faithful, the spaces given by the two pictures in Figure~\ref{chap6:fig:local-fully-faithful} are isomorphic, where \(\#\) indicates that the 2-cell on that bigon is removed.
\begin{figure}[htbp]
 \centering
 \begin{tikzpicture}%
  [>=latex', hdot/.style={mydot,fill=lightgray},every label/.style={scale=0.6}]
  \node[mydot,label=90:\(0\)] at (0,1) (g0){};
  \node[mydot,label=90:\(1\)] at (2,1) (g1){};
  \node at (1,1)(g00) {\(\#\)};
  \node[hdot,label=-90:\(0\)] at (0,0) (h0){};
  \node[hdot,label=-90:\(1\)] at (2,0) (h1){};
  \path[->]
   (g1) edge[out=150,in=30] (g0)
        edge[out=-150,in=-30] (g0)
   (h1) edge[out=150,in=30] (h0)
        edge[out=-150,in=-30] (h0);
  \begin{scope}[xshift=5cm]
  \node[mydot,label=90:\(0\)] at (0,1) (g0){};
  \node[mydot,label=90:\(1\)] at (2,1) (g1){};
  \path[->]
   (g1) edge[out=150,in=30] (g0)
        edge[out=-150,in=-30] (g0);
  \end{scope}
 \end{tikzpicture}
 \caption{\(\arrow{X}\to\arrow{Y}\) is fully faithful}\label{chap6:fig:local-fully-faithful}
\end{figure}
There are \(\arrow{X}\)-actions on these two spaces. Composing these two actions with the principal action of \(\arrow{Y}\) on \(Y_2\) along \(\face_2\colon Y_2\to Y_1\), respectively, we obtain a \(\arrow{X}\) invariant isomorphism. Taking quotients, we obtain \(\Acyc!(2)\), as illustrated by Figure~\ref{chap6:fig:acyclic-2}.
\begin{figure}[htbp]
 \centering
 \begin{tikzpicture}%
  [>=latex', hdot/.style={mydot,fill=lightgray},every label/.style={scale=0.6}]
  \node[mydot,label=90:\(0\)] at (0,2) (g0){};
  \node[mydot,label=90:\(1\)] at (2,2) (g1){};
  \node[mydot,label=90:\(2\)] at (1,1) (g2){};
  \node at (1,2)(g00) {\(\#\)};
  \node[hdot,label=-90:\(0\)] at (0,0) (h0){};
  \node[hdot,label=-90:\(1\)] at (2,0) (h1){};
  \node[hdot,label=90:\(2\)] at (1,-1) (h2){};
  \path[->]
   (g1) edge[out=150,in=30] (g0)
        edge[snake arrow, out=-150,in=-30] (g0)
   (g2) edge (g0)
        edge (g1)
   (h1) edge[out=150,in=30] (h0)
        edge[snake arrow, out=-150,in=-30] (h0)
   (h2) edge (h1)
        edge (h0);
  \begin{scope}[xshift=5cm]
   \node[mydot,label=90:\(0\)] at (0,2) (g0){};
  \node[mydot,label=90:\(1\)] at (2,2) (g1){};
  \node[mydot,label=90:\(2\)] at (1,1) (g2){};
  \path[->]
   (g1) edge[out=150,in=30] (g0)
        edge[snake arrow, out=-150,in=-30] (g0)
   (g2) edge (g0)
        edge (g1);
  \end{scope}
 \end{tikzpicture}
 \caption{\(\Acyc!(2)\) for \(X\to Y\)}\label{chap6:fig:acyclic-2}
\end{figure}
\end{proof}

The following proposition concludes this section.
\begin{proposition}\label{chap6:prop:weak-equi=weak-acyc}
  A 2-groupoid morphism is a weak equivalence if and only if it is a weak acyclic fibration.
\end{proposition}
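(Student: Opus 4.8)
The plan is to obtain the proposition by simply combining the two directional lemmas just established: one of them shows that every weak acyclic fibration of $2$-groupoids is a weak equivalence, and its companion shows the reverse implication. Since these two statements together are exactly the claimed ``if and only if,'' the proof of the proposition reduces to citing both, with no new computation required. What makes this more than bookkeeping is that the two notions are phrased in genuinely different languages, so the real content lies in the low-dimensional dictionary set up inside those lemmas, which I would briefly recall.

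First I would recall the dictionary being matched. Being a weak equivalence is stated in terms of the groupoid of bigons: essential surjectivity of $f$ on objects, together with the induced functor $\arrow{X}\to\arrow{Y}$ being essentially surjective and fully faithful as a $1$-groupoid morphism in $(\Cat,\covers)$. Being a weak acyclic fibration (Definition~\ref{chap6:def:weak-acyclic}) is instead the simplicial family of lifting conditions $\Acyc'(k)$ and $\Acyc'!(k)$ against the joined boundary inclusions $\partial\Simp{k}\to\partial\Simp{k}\star\Simp{0}$. The translation then runs dimension by dimension: $\Acyc'(0)$ matches essential surjectivity of $f$; $\Acyc'(1)$ matches essential surjectivity of $\arrow{X}\to\arrow{Y}$; and $\Acyc'!(2)$ matches full faithfulness. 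For the top case one passes from $\Acyc'!(2)$ to the honest $\Acyc!(2)$ via Lemma~\ref{chap6:lem:weak-acyc2->acyc2}, and then restricts to the subspace of bigons, recovering full faithfulness exactly as in Lemma~\ref{chap6:lem:acyclic->weak-equivalence}.

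The hard part will be the dimension-one comparison in both directions, namely producing $\Acyc'(1)$ from essential surjectivity of $\arrow{X}\to\arrow{Y}$ and conversely. This is where the bigon picture must be pushed through the principal $\arrow{Y}$-action on $Y_2$ along $\face_2\colon Y_2\to Y_1$: one builds a $\arrow{Y}$-invariant cover between the relevant spaces of partial bigon-data and then descends along the quotient, invoking Lemma~\ref{chap1:lem:invariant-cover-descent} together with Lemma~\ref{chap1:lem:action-over-principal-is-principal}. The representability of $\Hom(\partial\Simp{k}\to\partial\Simp{k}\star\Simp{0},X\to Y)$, guaranteed inductively by the lower $\Acyc'$ conditions, is precisely what legitimizes these quotient constructions. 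Once both translations are in hand, the two directional lemmas apply verbatim, and the equivalence of the two notions—hence the proposition—follows at once.
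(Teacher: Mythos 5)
Your proposal is correct and follows the paper's own route: the proposition is indeed proved there by simply combining the two directional lemmas, whose content is exactly the dimension-by-dimension dictionary you describe ($\Acyc'(0)$ with essential surjectivity of $f$, $\Acyc'(1)$ with essential surjectivity of the bigon functor via descent along the principal action of the groupoid of bigons, and $\Acyc'!(2)$ with full faithfulness through $\Acyc!(2)$). No discrepancy with the paper's argument.
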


\section{Examples}\label{chap6:sec:example}

We present some examples of 2\nbdash{}groupoid bundles and bibundles.

\begin{example}
Suppose that both \(G\rightrightarrows \trivial{M}\) and \(H\rightrightarrows \trivial{N}\) are 1\nbdash{}groupoids; that is, \(G\) and \(H\) are 0\nbdash{}groupoids. If we assume that \(E\) is also a 0\nbdash{}groupoid, then a bibundle between \(G\rightrightarrows \trivial{M}\) and \(H\rightrightarrows \trivial{N}\) is simply a bibundle of 1\nbdash{}groupoids, and a right principal bibundle is a HS bibundle of 1-groupoids. Hence the notion of bibundles of 2\nbdash{}groupoids subsumes that of bibundles of 1-groupoids.

We assume further that these two 2\nbdash{}groupoids arise from 0\nbdash{}groupoids; that is \(G=\trivial{M}\) and \(H=\trivial{N}\). Every groupoid \(E\) with trivial actions is a bibundle between these two 2\nbdash{}groupoids. Hence the notion of bibundles between 1-groupoids viewed as 2\nbdash{}groupoids may be broader than that of bibundles of 1\nbdash{}groupoids.
\end{example}

\begin{example}
Let \(G\rightrightarrows \trivial{M}\) be a categorified groupoid and let \(X\) be the corresponding simplicial object. The groupoid \(G\) is a two-sided principal bibundle between \(G\rightrightarrows \trivial{M}\) and \(G\rightrightarrows \trivial{M}\). The actions are both given by the horizontal multiplication, while unitors and associators are given by those of the 2\nbdash{}groupoid. The simplicial object of this bibundle is the total décalage \(\Dec X\to \Simp{1}\).
\end{example}

\begin{example}
Let \(G\rightrightarrows \trivial{M}\) and \(H\rightrightarrows \trivial{N}\) be categorified groupoids, and let \(X\) and \(Y\) be the corresponding simplicial objects. Given a simplicial map \(f\colon X\to Y\), it induces a map \(f_0\colon M\to N\) and a groupoid functor \(f_1\colon G\to H\) that preserve all structures. We construct a bibundle structure on \(M\times_{N, \target} H\) as follows. The right action \(p\) is given by the right action of \(H\rightrightarrows \trivial{N}\) on \(H\). The left action \(q\) is given by the morphism \(f_1\) composed with the left action of \(H \rightrightarrows \trivial{N}\) on \(H\). In formulas, the moment morphisms are given by
\begin{gather*}
J_r=\source \circ\pr_2\colon  \trivial{M}\times_{\trivial{N}, \target} H \to \trivial{N},\\
J_l=\pr_1\colon               \trivial{M}\times_{\trivial{N}, \target} H \to \trivial{M},
\end{gather*}
the right action morphism is given by
\[
p=\id_\trivial{M}\times \mult \colon \trivial{M}\times_{\trivial{N}, \target} H \times_{\trivial{N}} H\to \trivial{M}\times_{\trivial{N}, \target} H,
\]
and the left action morphism is given by
\begin{gather*}
G\times_{f_0\circ \source, \trivial{N},\target} H=G\times_{\source, \trivial{M}}  \trivial{M}\times_{\trivial{N}, \target} H\to \trivial{M}\times_{\trivial{N}, \target} H,\\
q=(\target\circ\pr_1, \mult\circ((f_1\circ \pr_1)\times \pr_2)).
\end{gather*}
It is easy to see that this defines a bibundle. This bibundle is right principal because~\(H\) is a right principal bibundle of \(H\rightrightarrows \trivial{N}\). By construction, the corresponding simplicial map \(\Gamma\to \Simp{1}\) is the one given by the cograph of \(f\colon X\to Y\). We call this bibundle the \emph{bundlisation} of a morphism.
\end{example}

\begin{example}
  Let \(*\) be the trivial 2\nbdash{}groupoid corresponding to the simplicial object \(\Simp{0}\). An action of \(*\) on a groupoid \(E\) consist of a HS morphism \(\mu\colon E\to E\), two 2\nbdash{}morphisms \(\alpha\colon \mu\circ \mu\Rightarrow \mu\) and \(\rho\colon \mu\Rightarrow\id\), which come from the action morphism, the associator, and the unitor, respectively. The coherence conditions imply that the quadruple \((E,\mu,\alpha,\rho^{-1})\) is a monad in the 2\nbdash{}category \(\GPD\). Since all the morphisms are invertible, this monad is isomorphic to the trivial monad. Similarly, a bibundle between \(*\) and \(*\) is given by a pair of monads on a groupoid satisfying a distributive law. For monads in a 2\nbdash{}category and distributive laws between monads, see~\cite{Lack}.
\end{example}

\begin{example}\label{chap6:exa:action-as-bibundle}
  Given a right categorified action of \(G\rightrightarrows \trivial{M}\) on \(E\), then \(E\) is naturally a bibundle between \(*\) and \(G\rightrightarrows \trivial{M}\), where \(*\) is the trivial 2\nbdash{}groupoid and the left action is the obvious trivial action. Similarly, a right categorified bundle \(E\to \trivial{N}\) of \(G\rightrightarrows \trivial{M}\) gives a bibundle between \(\trivial{N}\) and \(G\rightrightarrows \trivial{M}\).
\end{example}

\begin{example}\label{chap6:exa:opposite-action}
  Let \((\action, J)\) be a categorified right action of \(G\rightrightarrows \trivial{M}\) on \(E\) corresponding to a 2\nbdash{}groupoid Kan fibration \(A\to X\). The Kan fibration \(A\to X\) also gives a categorified left action \((\action', J)\) of \(G\rightrightarrows \trivial{M}\) on \(E\). Similar to Figure~\ref{chap5:fig:associator}, the spaces given by the two pictures in Figure~\ref{chap6:fig:opposite-action} are isomorphic. Let \((1,3)\) be degenerate, we obtain an HS bibundle isomorphism \(A_1\cong (A_1\times_{G_1} E_\inv)/G\), where \(E_\inv\) is the space of inverse bigons. It follows that the categorified left action is given by the categorified right action via the horizontal inverse of the 2\nbdash{}groupoid; that is, \(\action'\) is isomorphic to the composite
  \[
  G\times_{\source, \trivial{M}}E\xrightarrow{\cong} E\times_{\trivial{M},\source}G\xrightarrow{\id\times\inv}E\times_{\trivial{M},\target}G \xrightarrow{\action} E.
  \]
  (Using the coherent theory for 2\nbdash{}groupoids~\cite{Ulbrich,Laplaza,Baez-Lauda}, we can directly construct a left categorified action out of a right categorified action.) Consider the opposite bibundle in Example~\ref{chap4:rem:opposite-bibundle} associated with a bibundle given by simplicial map \(\Gamma\to \Simp{1}\). We deduce that, given a categorified bibundle between \(G\rightrightarrows \trivial{M}\) and \(H\rightrightarrows \trivial{N}\), the above construction yields a categorified bibundle between \(H\rightrightarrows \trivial{N}\) and \(G\rightrightarrows \trivial{M}\).
\begin{figure}[htbp]
    \centering
    \begin{tikzpicture}
      [>=latex',
      scale=0.8,
      every label/.style={scale=0.6}
      ]
      \node[mydot,label=225:\(0\)] at (225:1cm)(a0){};
      \node[mydot,fill,label=135:\(1\)] at (135:1cm)(a1){};
      \node[mydot,fill,label=45:\(3\)] at (45:1cm)(a3){};
      \node[mydot,fill,label=-45:\(2\)] at (-45:1cm)(a2){};
      \path[<-]
      (a0)  edge        (a1)
      edge        (a2)
      edge[snake back]
      node[scale=0.6,above left]  {\(A_1\)}
      node[scale=0.6,below right] {\(A_1\)} (a3)
      (a1)  edge        (a3)
      (a2)  edge        (a3);
      \begin{scope}[xshift=+4cm]
      \node[mydot,label=225:\(0\)] at (225:1cm)(b0){};
      \node[mydot,fill,label=135:\(1\)] at (135:1cm)(b1){};
      \node[mydot,fill,label=45:\(3\)] at (45:1cm)(b3){};
      \node[mydot,fill,label=-45:\(2\)] at (-45:1cm)(b2){};
        \path[<-]
        (b0)  edge        (b1)
        edge        (b2)
        (b1)  edge        (b3)
        edge[snake back]
        node[scale=0.6,above right] {\(G_2\)}
        node[scale=0.6,below left]  {\(A_1\)} (b2)
        (b2)  edge        (b3);
      \end{scope}
    \end{tikzpicture}
    \caption{An isomorphism of HS bibundles}\label{chap6:fig:opposite-action}
  \end{figure}
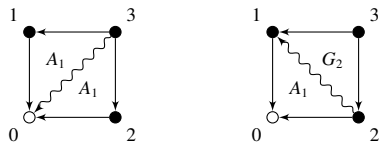
\end{example}

\begin{example}
  Let \(K(S^1, 2)\) be the 2-group associated with the crossed module \(S^1\to *\), where \(S^1\) is the circle group. Then a principal \(K(S^1, 2)\) bundle over a manifold \(M\) is the same as a bundle gerbe. To see the equivalence, let \(Y\to K(S^1, 2)\) be a 2-groupoid Kan fibration and let \(Y\to \sk_0 M\) be acyclic. Then \(Y_0\to M\) is a cover and \(Y_1\) is a principal \(S^1\)-bundle over \(Y_0\times_M Y_0\). The condition \(\Kan!(2,1)\) gives an isomorphism of \(S^1\)-bundles on \(Y_0\times_M Y_0\times_M Y_0 \), and \(\Kan!(3,1)\) gives the coherence condition. The equivalence between bundle gerbes and categorified bundles was explained in~\cite{Nikolaus-Waldorf:Four_gerbes}.
\end{example}

\chapter{Composition of 2-Groupoid Bibundles}\label{chap6:composition}
\thispagestyle{empty}

Let \(X, Y\) and \(Z\) be 2-groupoids in \((\Cat,\covers)\). Given an \(X\)-\(Y\) bibundle \(\Gamma\) and a \(Y\)-\(Z\) bibundle \(\Xi\) such that both are right principal, we shall construct their composite, which should be a right principal \(X\)-\(Z\) bibundle.

This is an analogue of the composition of HS bibundles of groupoids. The construction, however, is more complicated.

Our construction is motivated by 2-coends, which we will first explain briefly. We then introduce the pasting theorem of 2-categories and the notion of a fundamental groupoid for a 2-groupoid. The pasting theorem will simplify verification of higher Kan conditions.

We then consider the fundamental groupoid of the action 2\nbdash{}groupoid \(R'_0\Gamma\times_Y C'_0\Xi\). This is the analogue of the quotient construction for the composition of HS bibundles. The desired composite \(X\)-\(Z\) bibundle will be a bibundle on this fundamental groupoid.

We shall show three properties of the composition: up to a certain weak equivalence, the bundlisation preserves compositions; the total décalage is a unit for the composition; and the composition is associative.

Our construction is purely combinatorial. Since the construction gets complicated, the argument will rely heavily on pictures.

\section{Preparations on 2-categories}

We first explain the idea behind the construction. Then we introduce the pasting theorem of 2-categories and fundamental groupoids of 2-groupoids; they are useful tools for our construction.

\subsection{Remarks on the construction and 2-coends}

Before proceeding to the complicated construction, we shall explain where the ideas come from. Our construction is motivated by the notion of a 2-coend, which is also known as a cartesian coend or a lax coend. Along with 2-ends, 2-coends are studied by Bozapalides in his thesis~\cite{Bozapalides:thesis}. Unfortunately, the author is unable to find a copy of this thesis; see instead~\cite{Bozapalides77:fin-cart-gen,Bozapalides80:lax-presheaf}.

The 2-coend of a bifunctor \(F\colon \Cat[A]^\op\times \Cat[A]\to\Cats\) is a category \(\Cat[E]\) with a 2-categorical universal extranatural transformations from \(F\) to \(\Cat[E]\); see~\cite{Bozapalides77:fin-cart-gen} for 2-ends, the dual of 2-coends. Intuitively, we may think of the 2-coend as a 2-categorical quotient with respect to the diagonal action of~\(\Cat[A]\).

The composite of 2-profunctors \(\varphi\colon \Cat[B]^\op\times\Cat[A]\to \Cats\) and \(\psi\colon \Cat[C]^\op\times\Cat[B]\to\Cats\) is given by a 2-coend~\cite{Bozapalides77:2-dist}. The formula is almost the same as that of the composition of profunctors. Again, by the properties of 2-coends, the composition is unital and associative up to equivalences.

We cannot apply the results above directly to 2-groupoid bibundles. We will first construct a groupoid which can be viewed as a 2-coend. We then construct two actions on this groupoid. To this purpose we need to verify a lot of diagrams; for 2-profunctors this follows from the functoriality of limits. The unitality and associativity are also proved by verifying that some diagrams commute.

A similar construction in the context of stacky groupoids is given in~\cite{Bursztyn-Noseda-Zhu}. Modules of bicategories enriched in a monoidal bicategory are studied by Gerner and Shulman~\cite{Garner-Shulman}, where the tensor product (composition) is essentially a codescent object~\cite{Street87}.

It is expected that there should be a 3\nbdash{}category of 2-categories with bimodules as 1-morphisms. Some efforts toward this direction have been made in~\cite{Greenough10,Garner-Shulman}.

\subsection{The pasting theorem of 2-categories}

We recall the pasting theorem of 2-categories. Then we carry it over to the simplicial setting.

We saw in the previous chapters that we can use Kan conditions to compose triangles (and bigons) in a 2-groupoid or 2-groupoid bibundle. Given a collection of triangles, we often need to show that two different ways to obtain a certain triangle give the same result. This is done by applying Kan conditions, and the proof is usually tedious and long.

Recall the relation between a 2-category and its geometric nerve described in Section~\ref{chap5:ssec:geometric-nerve}. Using Kan conditions to compose triangles in the nerve corresponds to the pasting operation in the 2-category, which is the most general composition of 2-morphisms.

A \emph{plane graph with source and sink} is a directed plane graph with distinct vertices \(s\) and \(t\) in the exterior face such that for each other vertex \(v\) there are directed paths from \(s\) to~\(v\) and from \(v\) to~\(t\).

\begin{definition}
  A \emph{pasting scheme} is a plane graph with source and sink that contains no directed cycle.
\end{definition}

A \emph{labelling of a pasting scheme} in a strict 2-category is a labelling of each vertex by an object, each edge by a 1-morphism, and each face by a 2-morphism, all with appropriate sources and targets. For a weak 2-category, the labelling is more complicated because we need to remember identities and bracketings of edges; see~\cite{Verity} for details.

\begin{theorem}
  Every labelling of a pasting scheme has a unique composite.
\end{theorem}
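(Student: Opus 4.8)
The plan is to separate the statement into an \emph{existence} claim (some composite exists) and a \emph{uniqueness} claim (any two admissible ways of composing agree), and to treat the strict case first, deducing the weak case from coherence. Throughout, a pasting scheme carries two distinguished directed paths from its source $s$ to its sink $t$: a \emph{domain} path $\partial_-$ and a \emph{codomain} path $\partial_+$, read off from the two arcs of the exterior boundary. The composite to be produced is a $2$-morphism whose source is the $1$-morphism obtained by composing the edge labels along $\partial_-$ and whose target is the composite along $\partial_+$. Acyclicity guarantees that these boundary composites are well-defined $1$-morphisms and that every vertex lies on a directed path from $s$ to $t$, so that no label is extraneous.

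For existence I would argue by induction on the number of internal faces. With a single face the composite is its label. For the inductive step I would use planarity together with the absence of directed cycles to locate an \emph{elementary reduction}: a face whose absorption yields a smaller pasting scheme with the same source, sink, and boundary composites. Concretely, among all $s$-to-$t$ paths lying weakly above every not-yet-absorbed face, choose the uppermost one, $P$; the face immediately below an edge of $P$ can be composed into $P$ — by a vertical or horizontal composition according to how its boundary meets $P$ — pushing $P$ one face downward. Iterating until $P=\partial_+$ absorbs every face and produces a composite. The bookkeeping that each step is a legitimate $2$-categorical composition (matching sources and targets) is routine once the boundary conventions are fixed.

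Uniqueness is the main obstacle. I would record each admissible reduction order as a sequence of intermediate boundary paths $\partial_-=P_0,P_1,\dots,P_k=\partial_+$, with $P_{i+1}$ obtained from $P_i$ by absorbing one face across an edge of $P_i$, and then prove that any two such sequences yield equal composites by a confluence (diamond) argument. The key local fact is that if two \emph{distinct} faces are both absorbable from the same path $P_i$, their boundaries meet $P_i$ along edge-disjoint sub-paths (meeting in at most a vertex), so the two absorptions act on $1$-morphisms sitting in different horizontal factors of the composite along $P_i$; hence they commute by the \emph{interchange law}, and the result is independent of which is performed first. Since the face set is finite and every reduction eventually absorbs all of it, this local commutation propagates to global confluence by a standard Newman-style argument, giving a single well-defined composite; associativity and the unit laws are invoked to identify the composites along each $P_i$ irrespective of the bracketing produced by the reductions.

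Finally, for a weak $2$-category I would reduce to the strict case by coherence. In the weak setting a labelling additionally carries bracketings of edge-composites and insertions of identity edges, and the two sides of each diamond demand \emph{a priori} different bracketings before the interchange law applies. The $2$-categorical form of Mac Lane's coherence theorem — that every formal diagram built from associators and unitors commutes — supplies precisely the canonical comparison isomorphisms identifying these bracketings, so the strict confluence argument transports verbatim. Thus the only genuinely weak ingredient is the appeal to coherence, and no verification beyond invoking it is required.
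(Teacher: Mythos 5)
First, a point of reference: the paper does not prove this theorem at all --- it quotes it and refers to Power~\cite{Power} for strict 2\nbdash{}categories and Verity~\cite{Verity} for weak ones --- so your proposal has to be judged against those proofs rather than against anything in the text. Your overall architecture (sweep the domain boundary down to the codomain boundary absorbing one face at a time; settle uniqueness by interchange plus a Newman-style confluence argument; strictify to handle the weak case) is indeed the shape of the known arguments. But the crucial step of your existence half is false as stated, and what must replace it is precisely the hard part of the theorem. You assert that ``the face immediately below an edge of \(P\) can be composed into \(P\) --- by a vertical or horizontal composition according to how its boundary meets \(P\)''. Absorbing a face \(F\) into a 2\nbdash{}cell with codomain \(P\) is a legitimate 2\nbdash{}categorical operation only when the \emph{entire} domain path \(u_F\) of \(F\) is a subpath of \(P\); if \(u_F\) meets \(P\) in an edge but runs partly below \(P\), no combination of whiskering and vertical composition incorporates the label of \(F\). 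And this situation genuinely occurs: take \(P = s \to a \to b \to t\), a bottom path \(s \to c \to d \to t\), and diagonals \(a \to d\), \(b \to d\). The face below \(s\to a\) has domain \(s \to a \to d\) and the face below \(a \to b\) has domain \(a \to b \to d\), so neither can be absorbed into \(P\); only the face below \(b \to t\) can. The correct statement is that \emph{some} face below \(P\) has its whole domain on \(P\), and extracting this from planarity and acyclicity is the combinatorial heart of Power's proof --- not the ``routine bookkeeping'' you defer.

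Second, your uniqueness argument closes a narrower gap than the theorem requires. The confluence argument compares only composites arising from sweep orderings, i.e.\ evaluation strategies whose intermediate results are 2\nbdash{}cells from \(\partial_-\) to some intermediate path \(P_i\). A composite of a labelled pasting scheme, however, is any evaluation by nested horizontal and vertical compositions, and these need not be sweeps: in a \(2\times 2\) grid one may compose the two left-hand faces vertically, the two right-hand faces vertically, and then paste horizontally, so that the intermediate cells are not of the form \(\partial_- \Rightarrow P_i\) at all. Since you never pin down the definition of ``composite'', Newman's lemma proves mutual agreement of the sweeps but not uniqueness over all admissible evaluations --- and the latter is what the paper actually uses, since it identifies arbitrary pasting composites with iterated unique inner-horn fillers when verifying higher Kan conditions. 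Your final reduction of the weak case to the strict case via coherence/strictification is the standard move and is fine in outline, but it inherits both gaps above.
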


This theorem is proved by Power~\cite{Power} for strict 2-categories and by Verity~\cite{Verity} for weak 2-categories. Figure~\ref{chap6:fig:pasting-kan-3-1} is a pasting scheme corresponding to the horn~\(\Horn{3}{1}\). The pasting composition corresponds to using \(\Kan!(3,1)\) to obtain the exterior face \((0,2,3)\).
\begin{figure}[htbp]
  \centering
  \begin{tikzpicture}%
  [>=latex',every label/.style={scale=0.6},scale=1.3]
  \node[mydot,label=-45:\(1\)] at (0,0) (a1){};
  \node[mydot,label=-90:\(2\)] at (0,-1) (a2){};
  \node[mydot,label=0:\(3\)] at (0.87,0.5)(a3){};
  \node[mydot,label=180:\(0\)] at (-0.87,0.5)(a0){};
  \path[->]
  (a1) edge (a0)
  (a2) edge (a0)
       edge (a1)
  (a3) edge (a0)
       edge (a1)
       edge (a2);
  \end{tikzpicture}
  \caption{A pasting scheme corresponding to \(\Horn{3}{1}\)}\label{chap6:fig:pasting-kan-3-1}
\end{figure}

Using the equivalence between 2-categories and their geometric nerves, we obtain the following corollary.

\begin{corollary}
  Let \(X\) be an inner Kan complex satisfying unique horn filling conditions above dimension \(2\). Given a collection of 2-simplices in \(X\) such that it is a labelling of a pasting scheme, then using inner Kan conditions we can uniquely obtain a composite.
\end{corollary}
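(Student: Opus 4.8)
The plan is to reduce the statement to the pasting theorem of 2-categories via Duskin's characterisation of geometric nerves. By Theorem~\ref{chap5:thm:nerve=2-category}, an inner Kan complex $X$ satisfying unique horn filling above dimension~$2$ is the geometric nerve $N\Cat$ of a $(2,1)$-category $\Cat$. Under this identification, the $0$-, $1$-, and $2$-simplices of $X$ correspond to objects, $1$-morphisms, and $2$-commutative triangles (hence $2$-morphisms) of $\Cat$, while $3$-simplices encode the coherence tetrahedra. First I would translate the combinatorial data of the corollary into this language: a labelling of a pasting scheme by $2$-simplices of $X$ is precisely a labelling of the same plane graph in $\Cat$, assigning to each vertex an object, to each edge a $1$-morphism, and to each face a $2$-morphism with matching sources and targets.

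Having set up this dictionary, I would invoke the pasting theorem (Power~\cite{Power} for the strict case, Verity~\cite{Verity} for the weak case), which asserts that such a labelling has a unique composite $2$-morphism relating the two boundary paths from source to sink. Translating back through the nerve equivalence, this composite is exactly a single $2$-simplex of $X$ spanning the exterior face, and its uniqueness in $\Cat$ yields uniqueness of the resulting $2$-simplex in $X$. For the existence half I would argue by induction on the number of interior faces: each elementary step fuses two adjacent triangles along a shared edge, carried out by an inner Kan filling — the prototype being the use of $\Kan!(3,1)$ to produce the face $(0,2,3)$ from the inner horn of Figure~\ref{chap6:fig:pasting-kan-3-1}. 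Since the pasting scheme contains no directed cycle and has a distinguished source and sink, such a sequence of fusions can always be chosen so that only inner horns (together with the unique fillings above dimension~$2$) are invoked.

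The main obstacle is establishing that the inner-Kan composition is genuinely well-defined, that is, independent of the order in which adjacent triangles are fused and of the auxiliary simplices chosen along the way. This is precisely what the uniqueness clause of the pasting theorem provides: any two filling strategies compute the same composite $2$-morphism in $\Cat$, hence the same $2$-simplex in $X$. The remaining care is bookkeeping in the weak setting — tracking bracketings of composite edges and insertions of identities so that the simplicial horn data matches Verity's notion of a labelling. I expect this translation to be routine but notationally heavy; once it is in place, the corollary follows immediately from the cited pasting theorems together with Theorem~\ref{chap5:thm:nerve=2-category}.
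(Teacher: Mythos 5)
Your proposal is correct and follows the same route as the paper: the paper's entire justification is the one-line reduction ``using the equivalence between 2\nbdash{}categories and their geometric nerves'' (Theorem~\ref{chap5:thm:nerve=2-category}) to the pasting theorem of Power and Verity, exactly as you set up. Your additional detail — the dictionary between labellings, the inductive fusion of adjacent triangles by inner horn fillings with \(\Kan!(3,1)\) as prototype, and well-definedness via the uniqueness clause of the pasting theorem — is a faithful elaboration of what the paper leaves implicit.
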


This corollary implies \(\Kan!(4, k)\) for \(0<k<4\). Since we only have a pasting theorem for 2-categories, we will carefully avoid using outer Kan conditions.

\subsection{Fundamental groupoids of 2-groupoids}

Given a Kan complex, there is an associated fundamental groupoid (also called homotopy groupoid or Poincaré groupoid). We apply this construction to 2\nbdash{}groupoids.

Let \(X\) be a 2-groupoid in \((\Cat,\covers)\). The groupoid of bigons \(\arrow{X}\) induces an equivalence relation \(\sim\) on \(\arrow{X}_0=X_1\). More precisely, we have \(\gamma_1\sim \gamma_2 \in X_1\) if there is \(\theta\in X_2\) such that \(\face_1\theta=\gamma_1\), \(\face_2\theta=\gamma_2\), and \(\face_0\theta=\de_0\face_0\gamma_1\).

\begin{proposition}\label{chap6:prop:fundamental-groupoid}
  Let \(\theta\in X_2\) with \(\face_i\theta=\gamma_i\) for \(0\le i\le 2\). Define a composite
  \[
  [\gamma_2]\circ [\gamma_0]=[\gamma_1].
  \]
  With this composition, there is a \textup(set-theoretical\textup) groupoid \(\tau(X)\) with
  \begin{gather*}
  \tau(X)_0=X_0,\quad \tau(X)_1=X_1/{\sim}, \\
  \source([\gamma])=\face_0\gamma,\quad \target([\gamma])=\face_1\gamma,\quad \unit (x)=[s_0 x].
  \end{gather*}
  We call \(\tau(X)\) the \emph{fundamental groupoid} of~\(X\).
\end{proposition}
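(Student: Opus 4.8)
The plan is to realise $\tau(X)$ as the fundamental (Poincaré) groupoid of the underlying Kan complex of $X$, running the classical argument with generalised elements so that everything is natural in $T\in\Cat$ and the outcome is a set-theoretical groupoid. I would begin by recording that $\sim$ is an equivalence relation. Two $1$\nbdash{}simplices are $\sim$\nbdash{}related exactly when they are joined by a bigon, that is, by an arrow of the groupoid of bigons $\arrow{X}=\bigon{X_2}\rightrightarrows X_1$, which is a $1$\nbdash{}groupoid since it is the fibre of the décalage Kan fibration $\dec'(X)\to X$ (Example~\ref{chap3:exa:decalage} and Lemma~\ref{chap3:lem:fibre_exists}). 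Reflexivity comes from the degenerate bigons $\de_1\gamma$, while symmetry and transitivity are the inverse and composition in $\arrow{X}$. Applying the simplicial identities to a bigon $\theta$ with $\face_0\theta$ degenerate yields $\face_0\face_1\theta=\face_0\face_2\theta$ and $\face_1\face_1\theta=\face_1\face_2\theta$, so $\source$ and $\target$ are constant on $\sim$\nbdash{}classes and descend to $\tau(X)$; this makes the formulas for $\source$, $\target$ and the unit $\unit(x)=[\de_0 x]$ well posed.

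Next I would set up the composition. Given classes with $\source[\gamma_2]=\target[\gamma_0]$, the condition $\Kan(2,1)$ produces a filler $\theta\in X_2$ of the inner horn with $\face_0\theta=\gamma_0$ and $\face_2\theta=\gamma_2$, and I set $[\gamma_2]\circ[\gamma_0]:=[\face_1\theta]$. The substance of this step is well-definedness, which splits into independence of the filler and independence of the chosen representatives. For the former, from two fillers $\theta,\theta'$ of the same horn $\Horn{2}{1}$ I would assemble a $3$\nbdash{}horn three of whose faces are $\theta$, $\theta'$ and a suitable degenerate $2$\nbdash{}simplex, and then fill it; reading off the fourth face gives a bigon exhibiting $\face_1\theta\sim\face_1\theta'$. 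For the latter I would run the same style of $3$\nbdash{}simplex argument, now feeding in the bigons witnessing $\gamma_0\sim\gamma_0'$ and $\gamma_2\sim\gamma_2'$, to conclude that the two composites agree modulo $\sim$.

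Finally I would check the groupoid axioms. The unit laws follow by taking the degenerate fillers $\de_0\gamma$ and $\de_1\gamma$ in the composition. Associativity is the cleanest step: for three composable representatives I choose fillers of the faces $(0,1,2)$ and $(1,2,3)$ of a tetrahedron, and because $X$ is a $2$\nbdash{}groupoid the unique filling condition $\Kan!(3,1)$ rigidifies the $3$\nbdash{}simplex and forces the two bracketings of the triple composite to have $\sim$\nbdash{}equal results; this is exactly the simplicial incarnation of associativity already seen for nerves in Proposition~\ref{chap2:prop:category-is-inner-kan}. Inverses come from the outer conditions $\Kan(2,0)$ and $\Kan(2,2)$, which supply left and right inverses of $[\gamma]$, and a further $3$\nbdash{}simplex shows that these coincide in $\tau(X)$.

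The main obstacle will be the well-definedness of the composition, namely the two independence statements above, since each demands exhibiting an auxiliary $3$\nbdash{}simplex with a prescribed configuration of faces and degeneracies and then identifying the correct bigon, with careful tracking of the simplicial identities. Once well-definedness is secured the remaining axioms are comparatively routine: associativity is rigid thanks to the unique fillings $\Kan!(m,k)$ for $m>2$ enjoyed by any $2$\nbdash{}groupoid, and the inverses are read off directly from the outer two-dimensional Kan conditions.
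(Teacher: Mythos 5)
Your proposal is correct and coincides with the paper's approach: the paper's "proof" is simply a citation of the classical fundamental-groupoid construction for Kan complexes (\cite[Chapter IV]{Gabriel-Zisman}, \cite{Joyal2002}), and your horn-filling and \(3\)-simplex arguments (filler-independence via a \(\Horn{3}{1}\) built from \(\theta\), \(\theta'\), and a degenerate face; units from \(\de_0\gamma,\de_1\gamma\); inverses from \(\Kan(2,0)\) and \(\Kan(2,2)\)) are exactly that classical argument written out for the underlying set-level simplicial set of \(X\). One caution on your opening framing: do not try to run the argument naturally in \(T\)-elements, since covers in \((\Cat,\covers)\) need not be surjective on \(T\)-points for general \(T\); instead work with actual points (where covers are surjective in all of the paper's examples), which is all the set-theoretical statement requires and is what your element-wise steps in fact use.
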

\begin{proof}
  This is a classical result; see~\cite[Chapter IV]{Gabriel-Zisman} or~\cite{Joyal2002}.
\end{proof}

The fundamental groupoid is a higher analogue of the quotient space of a groupoid. The fundamental groupoid \(\tau(X)\) of a 2-groupoid in \((\Cat,\covers)\) need not be a groupoid in \((\Cat,\covers)\).

\begin{lemma}\label{chap6:lem:fundamental-is-good}
  Let \(X\) be 2-groupoid in \((\Cat,\covers)\). Suppose that \(\arrow{X}\) acts on \(X_1\) principally \textup(hence \(\tau(X)_1\) is an object in \(\Cat\)\textup). Then \(\tau(X)\) is a groupoid in \((\Cat,\covers)\).
\end{lemma}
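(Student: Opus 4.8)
The plan is to equip $\tau(X)$ with structure morphisms lying in $\Cat$ and then to promote the set-theoretic groupoid identities of Proposition~\ref{chap6:prop:fundamental-groupoid} to equalities of morphisms. Throughout I read the hypothesis as saying that the canonical action of $\arrow{X}$ on its own object space $X_1$ is principal, so that the orbit map $q\colon X_1\to X_1/\arrow{X}=\tau(X)_1$ is a principal $\arrow{X}$-bundle; in particular $\tau(X)_1$ is an object of $\Cat$, and $q$ is a cover, hence an effective epimorphism by Lemma~\ref{chap3:lem:subcanonical-effective-epi}. First I would dispose of the easy structure maps. The faces $\face_0,\face_1\colon X_1\to X_0$ are covers by $\Kan(1,0)$ and $\Kan(1,1)$ for $X$, and they are $\arrow{X}$-invariant because a bigon preserves the source and the target of an edge. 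By Lemma~\ref{chap1:lem:invariant-cover-descent} they therefore descend along $q$ to covers $\source,\target\colon\tau(X)_1\to X_0$, and the unit is simply the composite $\unit=q\circ\de_0\colon X_0\to\tau(X)_1$.

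The substantial step is the composition. I would set $P=\Hom(\Horn{2}{1},X)=X_1\times_{\face_0,X_0,\face_1}X_1$. The product of the two principal $\arrow{X}$-bundles on the two factors is again principal by Corollary~\ref{chap1:cor:G-bundles-P-times-Q-pullback}, so $P\to\tau(X)_1\times_{X_0}\tau(X)_1$ is a cover exhibiting the target as $P/(\arrow{X}\times\arrow{X})$. Composing with the Kan cover $c=(\face_0,\face_2)\colon X_2\to P$ produces a cover $\pi\colon X_2\to\tau(X)_1\times_{X_0}\tau(X)_1$. I would then define the composition morphism as the descent of $q\circ\face_1\colon X_2\to\tau(X)_1$ along $\pi$. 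For this to be legitimate, $q\circ\face_1$ must be constant on the fibres of $\pi$.

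This invariance is where the real work lies, and I expect it to be the main obstacle. The fibres of $\pi$ are controlled by the three edge-actions of $\arrow{X}$ on $X_2$ — the same actions used in Chapter~\ref{chap5} to build the multiplication HS bibundle — namely gluing a bigon onto either input edge or onto the output edge. Gluing on the output edge changes $\face_1\theta$ by exactly that bigon, so $q\circ\face_1$ is invariant there; gluing on an input edge changes the output edge only up to a bigon, and two fillers of a fixed inner horn differ by a bigon. These last two ``differs by a bigon'' assertions I would justify by the pasting corollary for the $2$-groupoid $X$ together with the uniqueness $\Kan!(3,k)$ (available since $3>2$). Consequently $q\circ\face_1$ is invariant under the whole structure groupoid of $\pi$, and, $\pi$ being an effective epimorphism, it descends to a morphism $\tau(X)_1\times_{X_0}\tau(X)_1\to\tau(X)_1$ in $\Cat$. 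The genuine difficulty is that one cannot lift $T$-elements through covers, so every ``differs by a bigon'' move must be realised as an actual morphism and every comparison carried out by descent rather than by picking witnesses.

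Finally I would construct the inverse and check the axioms. The horizontal inverse on $X_1$, obtained from an outer Kan filler ($\Kan(2,0)$ or $\Kan(2,2)$), is $\arrow{X}$-invariant up to a bigon and so descends to $\inv\colon\tau(X)_1\to\tau(X)_1$. It then remains to verify the source--target relations, the unit laws, associativity, and the inverse laws as equalities of morphisms. Each such identity can be pulled back along the appropriate Kan cover to a space of genuine simplices of $X$ — associativity, in particular, lifts to $X_3$, where it reduces to a simplicial identity through the unique fillers $\Kan!(3,k)$ — and then pushed down again using that covers are effective epimorphisms. Proposition~\ref{chap6:prop:fundamental-groupoid} guarantees that all these identities hold, so the descent arguments merely confirm them at the level of $\Cat$, which completes the proof that $\tau(X)$ is a groupoid in $(\Cat,\covers)$.
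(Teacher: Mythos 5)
Your proposal is correct and takes essentially the same route as the paper: source, target, and unit descend via Lemma~\ref{chap1:lem:invariant-cover-descent}, and the composition is obtained by descending \(\face_1\colon X_2\to X_1\) through the quotients of \(X_2\) by the three bigon actions, using the principality hypothesis together with the principal-bundle descent lemmas of Chapter~1. The only difference is organizational: the paper performs the descent in two explicit stages — first quotienting \(X_2\) by the output-edge action via Lemma~\ref{chap1:lem:action-over-principal-is-principal}, identifying \(X_2/\arrow{X}\cong\Hom(\Horn{2}{1},X)\) with its induced cover to \(\tau(X)_1\), and then descending along the two principal input actions — whereas you form the composite cover \(X_2\to\tau(X)_1\times_{X_0}\tau(X)_1\) and verify the invariance of \(q\circ\face_1\) in one step, which when made rigorous decomposes into exactly those two stages.
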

\begin{proof}
  Since \(\arrow{X}\) acts on \(X_1\) principally and the cover \(\face_0\colon X_1\to X_0\) is \(\arrow{X}\) invariant, it descends to a cover \(\target\colon \tau(X)_1=X_1/\arrow{X}\to X_0\) by Lemma~\ref{chap1:lem:invariant-cover-descent}. Similarly, the source map \(\source\) is also a cover. Consider the cover
  \[
  \face_1\colon X_2\to X_1.
  \]
  There is a principal \(\arrow{X}\)-actions on \(X_2\) with moment map given by \(\face_1\), and a principal \(\arrow{X}\)-action on \(X_1\). The map \(\face_1\) is equivariant with respect to these two actions.  Lemma~\ref{chap1:lem:action-over-principal-is-principal} implies that the induced map
  \begin{equation}\label{chap6:eq:compostion-induced-map}
   \Hom(\Horn{2}{1}, X)=X_2/\arrow{X}\to X_1/\arrow{X}=\tau(X)_1
  \end{equation}
   is a cover. The assumption implies that the two \(\arrow{X}\)-actions on \(\Hom(\Horn{2}{1}, X)\) are principal. Since the map~\eqref{chap6:eq:compostion-induced-map} is an invariant cover, it descends to a cover. This is the composition of the groupoid \(\tau(X)\). Thus \(\tau(X)\) is a groupoid in \((\Cat,\covers)\).
\end{proof}

We now apply the fundamental groupoid construction to 2\nbdash{}groupoid Kan fibrations or actions of 2\nbdash{}groupoids.
\begin{lemma}\label{chap6:lem:A-X:eqi-A=X-on-A1}
  Let \(\pi\colon A\to X\) be a 2-groupoid Kan fibration in \((\Cat,\covers)\). The equivalence relation \(\sim\) on \(A_1\) is the same as the equivalence relation given by the action of \(\arrow{X}\) on \(A_1\).
\end{lemma}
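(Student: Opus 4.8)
The plan is to reduce the equality of the two equivalence relations to the equality of their ``one-step'' generators. This suffices because both are orbit relations of groupoid actions: by Proposition~\ref{chap6:prop:fundamental-groupoid} the relation $\sim$ is the orbit relation of the bigon groupoid $\arrow{A}$ acting on $\arrow{A}_0=A_1$, and the second relation is the orbit relation of $\arrow{X}$ acting on $A_1$ as in Lemma~\ref{chap5:lem:a1-bibundle}; for a groupoid the orbit relation is already transitive and symmetric, so it coincides with its one-step version. Thus I would prove the single statement: for $a, a'\in A_1$ there is a bigon $\theta\in A_2$ witnessing $a\sim a'$ (that is, $\face_1\theta=a$, $\face_2\theta=a'$, and $\face_0\theta=\de_0\face_0 a$) if and only if $a$ and $a'$ lie in one $\arrow{X}$-orbit.

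For the direction from bigons to the $\arrow{X}$-action, I would start with such a bigon $\theta$ and set $\xi:=\pi_2\theta$. Since $\pi$ is a simplicial morphism commuting with all face and degeneracy maps, \(\face_0\xi=\pi_1\face_0\theta=\pi_1\de_0\face_0 a=\de_0\pi_0\face_0 a\) is degenerate, so $\xi$ is a bigon in $X$, i.e.\ $\xi\in\arrow{X}_1$. The crux is uniqueness: because $\pi$ is a $2$\nbdash{}groupoid Kan fibration it satisfies $\Kan!(2,k)$ (Definition~\ref{chap3:def:kan-fibration-n-groupoid}), and the $\arrow{X}$-action is by construction the \emph{unique} lift of the horn whose $X$-datum is $\xi$ and one of whose $A$-edges is a degeneracy. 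The given $\theta$ is exactly such a lift, since $\face_0\theta=\de_0\face_0 a$ is a degeneracy and $\pi_2\theta=\xi$; hence $\theta$ coincides with the $2$\nbdash{}simplex defining $\xi\cdot(\blank)$, so $\xi$ relates $a$ and $a'$.

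Conversely, I would unwind the construction of the $\arrow{X}$-action, which Lemma~\ref{chap5:lem:a1-bibundle} records ``by analogy'' with the left $E$-action spelled out there: to form $\xi\cdot a$ one builds a horn $\Horn{2}{k}$ in $A$ one of whose edges is a degeneracy $\de_0(\blank)$ (matching the degenerate outer face of the bigon $\xi$), with the remaining edge $a$ and $X$-datum $\xi$, and fills it uniquely by $\Kan!(2,k)$. The resulting $2$\nbdash{}simplex $\theta$ has $\face_0\theta$ genuinely degenerate, hence is an honest bigon in $A$ connecting $a$ and $\xi\cdot a$; this exhibits the $\arrow{X}$-move as a $\sim$-relation.

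The main obstacle I anticipate is purely bookkeeping: the full recipe for the $\arrow{X}$-action is only given by analogy in Lemma~\ref{chap5:lem:a1-bibundle}, so I must pin down which horn $\Horn{2}{k}$ is used, which face is prescribed to be the degeneracy, and then match the source/target data of $\arrow{X}$ against $\pi_1$ so that a \emph{single} $2$\nbdash{}simplex serves simultaneously as the witness of $\sim$ and as the filling defining the action. A reassuring feature is that I only need equality of orbit relations: since $\arrow{X}$ is a groupoid, replacing $\xi$ by its inverse does not change the orbit of $a$, so any ambiguity in the orientation conventions for bigons is harmless. Finally, to rule out interference I would remark that the left $E$-action produces $2$\nbdash{}simplices whose outer face lies in $E_1$ but is \emph{not} a genuine degeneracy; such $E$-moves are therefore not bigons and do not enlarge $\sim$ beyond the $\arrow{X}$-orbit relation, confirming that it is precisely the $\arrow{X}$-action, and not the fibre action, that is matched.
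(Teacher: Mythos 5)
Your proof is correct and takes essentially the same route as the paper: both arguments rest on the unique Kan condition \(\Kan!(2,k)\) for \(\pi\), which identifies a bigon \(\theta\) in \(A\) (a filler of a \(2\)\nbdash{}horn with one degenerate \(A\)-edge) with the pair consisting of one of its edges and the bigon \(\pi_2\theta\in\arrow{X}_1\), i.e.\ with an arrow of the action groupoid. The paper packages this as a single pullback-square argument showing \(\arrow{A}_1\cong A_1\times_{\pi,X_1,\source}\arrow{X}_1\) as objects of \(\Cat\); your two-direction element chase is the same isomorphism read in \(T\)\nbdash{}points, which is how it must be interpreted so that the identification of the two relation objects (not merely of their global points) is available for the later quotient arguments.
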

\begin{proof}
Consider the commutative diagram
\[
\xymatrix{
\arrow{A}_1 \ar[r]\ar[d]& A_1\times_{\pi,X_1,\source} \arrow{X}_1\ar[d]\ar[r] & A_0\ar[d]^{\de_0}\\
A_2\ar[r]^-{\cong}& \Hom(\horn{2}{1},A\to X)\ar[r]& A_1\rlap{\ ,}
}
\]
where \(\Hom(\horn{2}{1},A\to X)\to A_1\) is induced by \(\Simp{1}\{1,2\}\to \Horn{2}{1}\). The right square and the whole square are pullback diagrams, hence so is the left square. The condition \(\Kan!(2,1)\) implies that \( A_2\to \Hom(\horn{2}{1},A\to X)\) is an isomorphism. Thus \(\arrow{A}_1\to A_1\times_{\pi,X_1,\source} \arrow{X}_1\) is an isomorphism, and this proves the lemma.
\end{proof}

\begin{corollary}
Let \(\pi\colon A\to X\) be a 2-groupoid Kan fibration in \((\Cat,\covers)\) and let \(A\to \sk_0 N\) be acyclic. Then \(\tau(A)\) is a groupoid in \((\Cat,\covers)\).\footnote{It is straightforward to show the following statement: if \(A\to \sk_0 N\) is acyclic, then \(\tau(A)\) is isomorphic to the \v{C}ech groupoid of \(A_0\to N\).}
\end{corollary}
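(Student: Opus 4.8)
The plan is to deduce the corollary from Lemma~\ref{chap6:lem:fundamental-is-good}, which guarantees that \(\tau(A)\) is a groupoid in \((\Cat,\covers)\) as soon as the groupoid of bigons \(\arrow{A}\) acts principally on \(A_1\). Here \(\arrow{A}\) is itself a groupoid in \((\Cat,\covers)\), being the fibre of the décalage Kan fibration \(\dec'(A)\to A\), so that Lemma~\ref{chap3:lem:fibre_exists} applies. The whole problem thus reduces to exhibiting an invariant cover \(A_1\to \tau(A)_1\) with representable base whose shear map is an isomorphism. I would in fact show that the map \(A_1\to A_0\times_N A_0\) is such a principal \(\arrow{A}\)-bundle; this simultaneously identifies \(\tau(A)_1\) with \(A_0\times_N A_0\), and hence \(\tau(A)\) with the \v{C}ech groupoid \(C(\kappa_0)\), as claimed in the footnote.

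First I would collect the consequences of acyclicity of \(\kappa\colon A\to\sk_0 N\). The condition \(\Acyc(0)\) makes \(\kappa_0\colon A_0\to N\) a cover, so \(A_0\times_N A_0\) is representable; \(\Acyc(1)\) makes \((\target,\source)=(\face_1,\face_0)\colon A_1\to A_0\times_N A_0\) a cover, where the target lands in \(A_0\times_N A_0\) because \(\kappa_0\face_0=\kappa_0\face_1\); and since \(A\) and \(\sk_0 N\) are \(2\)-groupoids, Lemma~\ref{chap3:lem:acyclic-def} upgrades \(\Acyc(k)\) to \(\Acyc!(k)\) for \(k\ge 2\), giving in particular \(\Acyc!(2)\): every \(\sk_0 N\)-compatible triangle boundary in \(A\) has a unique filling in \(A_2\). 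The map \((\face_1,\face_0)\) is \(\sim\)-invariant: if a bigon \(\theta\in\bigon{A_2}\) witnesses \(\face_1\theta\sim\face_2\theta\), then its \(0\)-face is degenerate, and the simplicial identities together with \(\face_0\de_0=\face_1\de_0=\id\) force \(\face_1\theta\) and \(\face_2\theta\) to have the same source and target vertices.

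The heart of the argument is that the shear map of the \(\arrow{A}\)-action, namely \(\bigon{A_2}\to A_1\times_{A_0\times_N A_0}A_1\) sending \(\theta\) to \((\face_1\theta,\face_2\theta)\), is an isomorphism. I would construct its inverse using \(\Acyc!(2)\): given \(1\)-simplices \(\gamma_1,\gamma_2\) with the same endpoints, the triple \((\de_0\face_0\gamma_1,\gamma_1,\gamma_2)\) forms a triangle boundary with degenerate \(0\)-face whose vertices all have the same image in \(N\), hence defines an element of \(\Hom(\partial\Simp{2}\to\Simp{2},A\to\sk_0 N)\); its unique filling is a bigon mapping back to \((\gamma_1,\gamma_2)\). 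Once this shear map is an isomorphism, \(A_1\to A_0\times_N A_0\) is a principal \(\arrow{A}\)-bundle by Lemmas~\ref{chap1:lem:invariant-cover-descent} and~\ref{chap1:lem:action-over-principal-is-principal}, so \(\tau(A)_1=A_1/\arrow{A}\cong A_0\times_N A_0\) is representable and Lemma~\ref{chap6:lem:fundamental-is-good} yields that \(\tau(A)\) is a groupoid in \((\Cat,\covers)\). I expect the main obstacle to lie in this last paragraph: carefully checking that the boundary \((\de_0\face_0\gamma_1,\gamma_1,\gamma_2)\) really satisfies the matching and \(\sk_0 N\)-compatibility conditions required by \(\Acyc!(2)\), and verifying through the simplicial identities that the \(\Acyc!(2)\)-filling and the assignment \(\theta\mapsto(\face_1\theta,\face_2\theta)\) are mutually inverse, rather than merely inverse up to a further bigon.
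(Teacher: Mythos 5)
Your proof is correct, but it takes a genuinely different route from the paper's. The paper deduces principality of the $\arrow{X}$\nbdash{}action on $A_1$ from the categorified bundle theory of Chapter~\ref{chap5}: acyclicity of $A\to\sk_0 N$ makes the shear morphism a Morita equivalence (Theorem~\ref{chap5:thm:princiapl-2-bundle-cat=simp}), Lemma~\ref{chap5:lem:shear-morphism-ismorita=left-principal} converts this into principality of the $\arrow{X}$\nbdash{}action on $A_1$ over $A_0\times_N A_0$, and Lemma~\ref{chap6:lem:A-X:eqi-A=X-on-A1} (which uses $\Kan!(2,1)$ for $\pi$) transfers this to the $\arrow{A}$\nbdash{}equivalence relation before invoking Lemma~\ref{chap6:lem:fundamental-is-good}. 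You instead work entirely inside $A$: you never use the Kan fibration $\pi\colon A\to X$ except to know that $A$ is a 2\nbdash{}groupoid, and you verify principality of the tautological $\arrow{A}$\nbdash{}action directly from $\Acyc(0)$, $\Acyc(1)$, and $\Acyc!(2)$ (correctly obtained from Lemma~\ref{chap3:lem:acyclic-def}), by exhibiting an explicit inverse to the shear map $\bigon{A_2}\to A_1\times_{A_0\times_N A_0}A_1$ via unique horn--boundary filling. Your well-definedness checks (the matching conditions for the boundary $(\de_0\face_0\gamma_1,\gamma_1,\gamma_2)$, the automatic compatibility over $\sk_0 N$ since $\sk_0 N$ is constant, and the mutual inverseness via uniqueness of fillers) are exactly the points that need care, and they go through. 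What your route buys: it is more elementary and self-contained, bypassing the categorification machinery, and it proves the footnote's identification $\tau(A)\cong C(\kappa_0)$ along the way rather than leaving it as an exercise; what the paper's route buys is brevity given its already-established lemmas and a conceptual link to the principal-bundle picture. One small correction: your final appeal to Lemmas~\ref{chap1:lem:invariant-cover-descent} and~\ref{chap1:lem:action-over-principal-is-principal} is unnecessary and slightly misplaced --- once $(\face_1,\face_0)\colon A_1\to A_0\times_N A_0$ is an invariant cover with representable target and the shear map is an isomorphism, the action is principal immediately by Definition~\ref{chap1:def:principal-bundle}, so nothing needs to be ``descended.''
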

\begin{proof}
  Lemma~\ref{chap5:lem:shear-morphism-ismorita=left-principal} shows that the \(\arrow{X}\)-action on \(A_1\) is principal. The statement then follows from Lemmas~\ref{chap6:lem:A-X:eqi-A=X-on-A1} and~\ref{chap6:lem:fundamental-is-good}.
\end{proof}

\begin{corollary}\label{chap6:cor:princial-1-implies-pullback}
Let \(f\colon A\to X\) and \(g\colon B\to X\) be 2\nbdash{}groupoid Kan fibrations in \((\Cat,\covers)\). Suppose that \(g_0\) is a cover \textup(then Proposition~\ref{chap3:prop:pullback-Kan-fibration} implies that \(A\times_{X} B \to X\) is a 2\nbdash{}groupoid Kan fibration\textup) and that there is an acyclic fibration \(A\to \sk_0 N\). Then \(\tau(A\times_{X} B)\) is a groupoid in \((\Cat,\covers)\).
\end{corollary}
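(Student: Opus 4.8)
The plan is to follow the exact pattern of the preceding corollary: reduce everything to showing that the groupoid of bigons $\arrow{X}$ acts principally on $(A\times_X B)_1$, and then invoke the same two lemmas. Write $P = A\times_X B$. Because $g_0\colon B_0\to X_0$ is a cover, the pullback $P_0 = A_0\times_{X_0} B_0$ is representable, so Proposition~\ref{chap3:prop:pullback-Kan-fibration} applies and, as recorded in the statement, $P$ is a $2$\nbdash{}groupoid in $(\Cat,\covers)$ with $P\to X$ a $2$\nbdash{}groupoid Kan fibration. By Lemma~\ref{chap6:lem:A-X:eqi-A=X-on-A1} applied to $P\to X$, the equivalence relation $\sim$ on $P_1$ that defines $\tau(P)$ coincides with the orbit relation of the $\arrow{X}$\nbdash{}action on $P_1$, and by Lemma~\ref{chap6:lem:fundamental-is-good} it then suffices to prove that this $\arrow{X}$\nbdash{}action is principal.

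First I would identify $P_1 = A_1\times_{X_1} B_1$, the fibre product taken over $\arrow{X}_0 = X_1$ along the structure maps $f_1$ and $g_1$, so that the $\arrow{X}$\nbdash{}action on $P_1$ is the diagonal action assembled from the $\arrow{X}$\nbdash{}actions on $A_1$ and on $B_1$. The $\arrow{X}$\nbdash{}action on $A_1$ is principal: this is precisely the fact used in the previous corollary, where Lemma~\ref{chap5:lem:shear-morphism-ismorita=left-principal} converts the acyclic fibration $A\to\sk_0 N$ into principality of the $\arrow{X}$\nbdash{}bundle $A_1$. Next I would verify that $g_1\colon B_1\to X_1 = \arrow{X}_0$ is a cover: the condition $\Kan(1,0)$ for $g$ makes $B_1\to B_0\times_{X_0} X_1$ a cover, and since $g_0$ is a cover so is the projection $B_0\times_{X_0} X_1\to X_1$; hence the composite $g_1$ is a cover.

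With these two facts the diagonal action is principal by the $(\Cat,\covers)$\nbdash{}version of Corollary~\ref{chap1:cor:G-bundles-P-times-Q-pullback} (the results of Chapter~\ref{chap1} carry over to groupoids in $(\Cat,\covers)$ by the remark following Definition~\ref{chap3:def:groupoid-pretopology}): with acting groupoid $\arrow{X}$, principal factor $A_1$, and the factor $B_1$ whose structure map to $\arrow{X}_0$ is a cover, the induced diagonal action on $A_1\times_{X_1} B_1 = P_1$ is principal. Combining this with Lemmas~\ref{chap6:lem:A-X:eqi-A=X-on-A1} and~\ref{chap6:lem:fundamental-is-good} yields that $\tau(A\times_X B)$ is a groupoid in $(\Cat,\covers)$.

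I expect the only real points requiring care to be bookkeeping rather than conceptual: one must confirm that the diagonal $\arrow{X}$\nbdash{}action on the fibre product $A_1\times_{X_1} B_1$ is exactly the action to which Lemma~\ref{chap6:lem:A-X:eqi-A=X-on-A1} refers, and that invoking the Lie-groupoid-style Corollary~\ref{chap1:cor:G-bundles-P-times-Q-pullback} in the present setting is legitimate. Both are routine given the earlier material, so no genuinely hard step is anticipated.
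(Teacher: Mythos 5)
Your proposal is correct and follows essentially the same route as the paper's proof: show $g_1\colon B_1\to X_1$ is a cover via $\Kan(1,0)$ and the cover $g_0$, get principality of the $\arrow{X}$\nbdash{}action on $A_1$ from Lemma~\ref{chap5:lem:shear-morphism-ismorita=left-principal}, apply Corollary~\ref{chap1:cor:G-bundles-P-times-Q-pullback} to the diagonal action on $A_1\times_{X_1}B_1$, and conclude with Lemmas~\ref{chap6:lem:A-X:eqi-A=X-on-A1} and~\ref{chap6:lem:fundamental-is-good}. The only difference is that you spell out the bookkeeping (the identification $P_1=A_1\times_{X_1}B_1$ and the transfer of Chapter~\ref{chap1} results to $(\Cat,\covers)$) that the paper leaves implicit.
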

\begin{proof}
First, since \(g_0\) is a cover and the condition \(\Kan(1,0)\) holds, the composite \(B_1\to B_0\times_{f_0,X_0,\face_0} X_1\to X_1\) is a cover. Since \(A\) gives a principal bundle, \(\arrow{X}\) acts on \(A_1\) principally by Lemma~\ref{chap5:lem:shear-morphism-ismorita=left-principal}. Corollary~\ref{chap1:cor:G-bundles-P-times-Q-pullback} implies that \(\arrow{X}\) acts on \(A_1\times_{X_1} B_1\) principally, and the statement follows from Lemmas~\ref{chap6:lem:A-X:eqi-A=X-on-A1} and~\ref{chap6:lem:fundamental-is-good}.
\end{proof}

In particular, given a right principal \(X\)-\(Y\) bibundle \(\Gamma\) and a right principal \(Y\)-\(Z\) bibundle~\(\Xi\), the fundamental groupoid of \(R'_0\Gamma\times_Y C'_0\Xi\) is a groupoid in \((\Cat,\covers)\). This groupoid can be regarded as a geometric version of a 2-coend.

\section{The composition of 2-groupoid bibundles}

In this section, we compose two right principal bibundles of 2-groupoids in \((\Cat,\covers)\). Suppose that the right principal \(X\)-\(Y\) bibundle \(\Gamma\) is a simplicial object colored by white and gray, and that the right principal \(Y\)-\(Z\) bibundle \(\Xi\) is a simplicial object colored by gray and black. We will construct the desired \(X\)-\(Z\) bibundle as a simplicial object \(\Sigma\) colored by white and black. We first construct \(\Sigma_{0,0}\), \(\Sigma_{0,1}\), and \(\Sigma_{1,0}\). Then we use these data to construct the higher dimensions.

\subsection{Dimension 1}
First, the space \(\Sigma_{0,0}\) is defined by
\[
\Sigma_{0,0}\coloneqq \Gamma_{0,0}\times_{\face_1,Y_0,\face_0} \Xi_{0,0},
\]
illustrated by the left picture in Figure~\ref{chap6:fig:bibd-comp-Kan(1,0)-1-1}, where white dots are in~\(X\), gray dots are in~\(Y\), and black dots are in~\(Z\); a gray dot in the middle of white \(i\) and black \(j\) is labeled by~\(i/j\).

Since \(\Xi\) is right principal, the map \(\face_0\colon \Xi_{0,0}\to Y_0\) is a cover. Thus \(\Sigma_{0,0}\) is representable. The face map \(\face_0\colon \Sigma_{0,0}\to X_0\) is induced by \(\face_0\colon \Gamma_{0,0}\to X_0\). The face map \(\face_1\colon \Sigma_{0,0}\to Z_0\) is defined similarly. Since \(\face_0\colon\Sigma_{0,0}\to X_0\) is the composite of the two covers illustrated by the three pictures in Figure~\ref{chap6:fig:bibd-comp-Kan(1,0)-1-1}, the condition \(\Kan(1,0)[1,1]\) for \(\Sigma\) holds.
\begin{figure}[htbp]
  \centering
  \begin{tikzpicture}%
  [>=latex', hdot/.style={mydot,fill=lightgray},every label/.style={scale=0.6},scale=1.4]
  \node[mydot,label=180:\(0\)] at (-1,0) (g){};
  \node[hdot,label=90:\(0/1\)] at (0,0) (h){};
  \node[mydot,fill,label=0:\(1\)] at (1,0) (k){};
  \path[->]
  (h) edge (g)
  (k) edge (h);
  \begin{scope}[xshift=3.5cm]
  \node[mydot,label=180:\(0\)] at (-1,0) (g){};
  \node[hdot,label=90:\(0/1\)] at (0,0) (h){};
  \path[->]
  (h) edge (g);
  \end{scope}
  \begin{scope}[xshift=6cm]
  \node[mydot,label=180:\(0\)] at (-1,0) (g){};
  \end{scope}
  \end{tikzpicture}
  \caption{The Kan condition \(\Kan(1,0)[1,1]\)}\label{chap6:fig:bibd-comp-Kan(1,0)-1-1}
\end{figure}
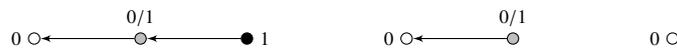

\subsection{Dimension 2}
We are going to construct the space \(\Sigma_{0,1  }\); the space \(\Sigma_{1,0}\) is defined similarly.

\begin{definition}
The space \(\Sigma_{0,1}\) is defined to be the space given by any the pictures in Figure~\ref{chap6:fig:bibd-comp-Sigma01}, where every snake line stands for the quotient by an action of an appropriate groupoid of bigons.
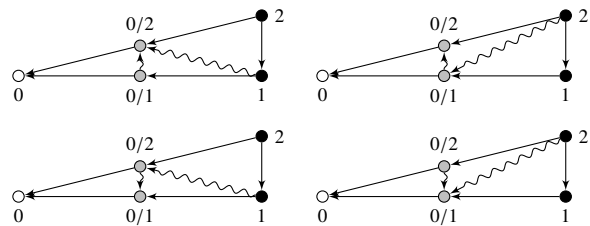
\begin{figure}[htbp]
  \centering
  \begin{tikzpicture}%
  [>=latex', hdot/.style={mydot,fill=lightgray},every label/.style={scale=0.6},scale=0.8]
  \node[mydot,label=-90:\(0\)] at (-2,0) (g0){};
  \node[hdot,label=-90:\(0/1\)] at (0,0) (h1){};
  \node[hdot,label=90:\(0/2\)] at (0,0.5) (h2){};
  \node[mydot,fill,label=-90:\(1\)] at (2,0) (k1){};
  \node[mydot,fill,label=0:\(2\)] at (2,1) (k2){};
  \path[->]
  (h1) edge (g0)
  (h2) edge (g0)
       edge[snake back] (h1)
  (k1) edge (h1)
       edge[snake arrow] (h2)
  (k2) edge (h2)
       edge (k1);
  \begin{scope}[xshift=5cm]
  \node[mydot,label=-90:\(0\)] at (-2,0) (g0){};
  \node[hdot,label=-90:\(0/1\)] at (0,0) (h1){};
  \node[hdot,label=90:\(0/2\)] at (0,0.5) (h2){};
  \node[mydot,fill,label=-90:\(1\)] at (2,0) (k1){};
  \node[mydot,fill,label=0:\(2\)] at (2,1) (k2){};
  \path[->]
  (h1) edge (g0)
  (h2) edge (g0)
       edge[snake back] (h1)
  (k1) edge (h1)
  (k2) edge (h2)
       edge (k1)
       edge[snake arrow] (h1);
  \end{scope}
  \begin{scope}[yshift=-2cm]
  \node[mydot,label=-90:\(0\)] at (-2,0) (g0){};
  \node[hdot,label=-90:\(0/1\)] at (0,0) (h1){};
  \node[hdot,label=90:\(0/2\)] at (0,0.5) (h2){};
  \node[mydot,fill,label=-90:\(1\)] at (2,0) (k1){};
  \node[mydot,fill,label=0:\(2\)] at (2,1) (k2){};
  \path[->]
  (h1) edge (g0)
  (h2) edge (g0)
       edge[snake arrow] (h1)
  (k1) edge (h1)
       edge[snake arrow] (h2)
  (k2) edge (h2)
       edge (k1);
  \end{scope}
  \begin{scope}[xshift=5cm,yshift=-2cm]
  \node[mydot,label=-90:\(0\)] at (-2,0) (g0){};
  \node[hdot,label=-90:\(0/1\)] at (0,0) (h1){};
  \node[hdot,label=90:\(0/2\)] at (0,0.5) (h2){};
  \node[mydot,fill,label=-90:\(1\)] at (2,0) (k1){};
  \node[mydot,fill,label=0:\(2\)] at (2,1) (k2){};
  \path[->]
  (h1) edge (g0)
  (h2) edge (g0)
       edge[snake arrow] (h1)
  (k1) edge (h1)
  (k2) edge (h2)
       edge (k1)
       edge[snake arrow] (h1);
  \end{scope}
  \end{tikzpicture}
  \caption{The constructions of \(\Sigma_{0,1}\)}\label{chap6:fig:bibd-comp-Sigma01}
\end{figure}
\end{definition}

Let us denote the top left picture by TL and the corresponding space by \(\Sigma_{01}^{TL}\). The symbols TR, \(\Sigma_{0,1}^{TR}\), BL, \(\Sigma_{0,1}^{BL}\), and BR, \(\Sigma_{0,1}^{BR}\) have a similar meaning.

For every picture, we have two actions given by the two snake arrows. Since these two actions commute, the order of taking quotients does not matter.

\begin{lemma}
  The space given by every picture in Figure~\ref{chap6:fig:bibd-comp-Sigma01} is corepresentable.
\end{lemma}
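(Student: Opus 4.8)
The plan is to realise each of the four spaces in Figure~\ref{chap6:fig:bibd-comp-Sigma01} as an iterated quotient of a representable fibre product by two commuting \emph{principal} actions of groupoids of bigons, and then to invoke the standard quotient lemmas to conclude that each such quotient is again an object of $\Cat$ — equivalently, that the colimit it defines is corepresentable in the sense of Section~\ref{chap2:sec:presheaf}.

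First I would record the representability of the ``unquotiented'' layer. Reading off, say, the top-left picture and forgetting the two snake arrows, the underlying space is a fibre product of the spaces $\Gamma_{0,0}$, $\Gamma_{0,1}$, $\Xi_{0,0}$, $\Xi_{0,1}$ (together with $Y_1$, $Z_1$) over the appropriate face and moment maps. Each of these maps is a cover: this is exactly what the colored Kan conditions $\Kan(m,k)[i,j]$ for $\Gamma$ and $\Xi$ provide, together with the fact, from Proposition~\ref{chap4:prop:TX-I-lifting-properties}, that the rows and columns $R'_l\Gamma$, $C'_l\Gamma$, $R'_l\Xi$, $C'_l\Xi$ are $2$-groupoid Kan fibrations over $X$, $Y$, $Z$. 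Hence the fibre product is representable.

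Next, for each snake arrow I would identify the acting groupoid of bigons and verify that its action is principal. Each snake line quotients out an $\arrow{Y}$-, $\arrow{X}$-, or $\arrow{Z}$-action along the relevant colored moment map; principality of these actions is precisely the content of the right-principality of $\Gamma$ and $\Xi$, via the proposition showing that $R'_l\Gamma\to\sk_0\Gamma_{l,-1}$ (and its symmetric and $\Xi$-analogues) is acyclic, combined with Lemma~\ref{chap6:lem:A-X:eqi-A=X-on-A1}, which identifies the bigon equivalence relation with the bigon action, and Lemma~\ref{chap5:lem:shear-morphism-ismorita=left-principal}. Since the two snake arrows act along disjoint pairs of vertices, the two actions commute, so the two quotients may be taken in either order. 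To pass from the representable fibre product to the quotient, I would then apply Corollary~\ref{chap1:cor:G-bundles-P-times-Q-pullback} and Lemma~\ref{chap1:lem:action-over-principal-is-principal} step by step: the first action is principal on the fibre product because there is an equivariant cover onto a principal bundle (the relevant row or column of $\Gamma$ or $\Xi$), so its quotient is representable and the quotient map is again principal; the second action descends and remains principal, whence the final quotient is representable. The argument is identical for the other three pictures, with the roles of the two snake arrows permuted.

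The main obstacle I expect is the bookkeeping in the second step: correctly matching each snake arrow to the groupoid of bigons that acts and to the colored moment map along which it acts, and then extracting principality from the correct right-principality condition. Once each snake arrow is pinned down as a principal quotient along a cover, corepresentability becomes a mechanical application of Lemmas~\ref{chap1:lem:invariant-cover-descent} and~\ref{chap1:lem:action-over-principal-is-principal} and Corollary~\ref{chap1:cor:G-bundles-P-times-Q-pullback}; all the genuine content lies in the identification and principality claims, not in the quotient step itself.
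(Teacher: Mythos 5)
Your strategy is the paper's strategy, built from the same toolkit: representability of the un-quotiented fibre product via covers coming from the (colored) Kan conditions, principality of the bigon actions coming from the bibundle structure and right principality, and Corollary~\ref{chap1:cor:G-bundles-P-times-Q-pullback} together with Lemmas~\ref{chap1:lem:invariant-cover-descent} and~\ref{chap1:lem:action-over-principal-is-principal} to handle the quotients. Where you differ is the order of operations. The paper's proof (Figure~\ref{chap6:fig:bibd-comp-Sigma01-rep}) is interleaved: it first quotients only the layer of the picture away from the white vertex $0$, which is a fibre product in which one factor carries a principal bigon action and the other has cover moment map, so Corollary~\ref{chap1:cor:G-bundles-P-times-Q-pullback} applies verbatim; it then forms the fibre product of $\Gamma_{0,1}$ with this quotient over $Y_1$, which is again of the form ``principal $\times$ cover'' because the moment map of the first quotient descends to a cover by Lemma~\ref{chap1:lem:invariant-cover-descent}, and applies the Corollary a second time. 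In that ordering every quotient is a direct instance of the Corollary, and no action ever has to be descended to a quotient.

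Your ordering --- the whole fibre product first, then the two quotients sequentially --- leaves exactly one assertion uncovered: that ``the second action descends and remains principal.'' This is not formal: principality of the second action on the un-quotiented space does not by itself survive passage to the quotient by the first. It is fixable with the lemmas you already cite, but the argument has to be made: the projection from the full fibre product to $\Gamma_{0,1}$ is invariant under the first bigon action (a bigon acting along one edge of a triangle fixes its remaining faces, hence fixes the $\Gamma$-part entirely), so by Lemma~\ref{chap1:lem:invariant-cover-descent} it descends to a cover from the first quotient to $\Gamma_{0,1}$; this descended map is equivariant for the second action and lands on a principal bundle for it (here right principality of $\Gamma$ enters, via $\Kan!(3,0)[1,3]$), so Lemma~\ref{chap1:lem:action-over-principal-is-principal} gives representability of the final quotient. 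Note that this patch is essentially a reshuffling into the paper's order. One bookkeeping correction as well: the un-quotiented TL space is the space of \emph{three} triangles fitting together, $\Gamma_{0,1}\times_{Y_1}\Xi_{1,0}\times_{\Xi_{0,0}}\Xi_{0,1}$; your list of constituents omits the middle triangle, which lives in $\Xi_{1,0}$ (not $\Xi_{0,0}$) and is the carrier of both snake edges, i.e.\ precisely the factor through which the two quotient actions interact.
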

\begin{proof}
We prove the statement for TL; similar arguments work for the other cases. The space can be constructed in two steps illustrated by Figure~\ref{chap6:fig:bibd-comp-Sigma01-rep}. Each time we have two actions, one of which is principal and the other has moment map being a cover. Corollary~\ref{chap1:cor:G-bundles-P-times-Q-pullback} implies that the quotient spaces are corepresentable.
\begin{figure}[htbp]
  \centering
  \begin{tikzpicture}%
  [>=latex', hdot/.style={mydot,fill=lightgray},every label/.style={scale=0.6},scale=0.8]
  \node[hdot,label=-90:\(0/1\)] at (0,0) (h1){};
  \node[hdot,label=90:\(0/2\)] at (0,0.5) (h2){};
  \node[mydot,fill,label=-90:\(1\)] at (2,0) (k1){};
  \node[mydot,fill,label=0:\(2\)] at (2,1) (k2){};
  \path[->]
  (h2) 
       edge[<-] (h1)
  (k1) edge (h1)
       edge[snake arrow] (h2)
  (k2) edge (h2)
       edge (k1);
  \begin{scope}[xshift=5cm]
  \node[mydot,label=-90:\(0\)] at (-2,0) (g0){};
  \node[hdot,label=-90:\(0/1\)] at (0,0) (h1){};
  \node[hdot,label=90:\(0/2\)] at (0,0.5) (h2){};
  \node[mydot,fill,label=-90:\(1\)] at (2,0) (k1){};
  \node[mydot,fill,label=0:\(2\)] at (2,1) (k2){};
  \path[->]
  (h1) edge (g0)
  (h2) edge (g0)
       edge[snake back] (h1)
  (k1) edge (h1)
       edge[snake arrow] (h2)
  (k2) edge (h2)
       edge (k1);
  \end{scope}
  \end{tikzpicture}
  \caption{Corepresentability of \(\Sigma_{0,1}\)}\label{chap6:fig:bibd-comp-Sigma01-rep}
\end{figure}
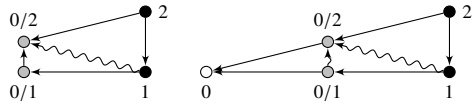
\end{proof}

The lemma below verifies that \(\Sigma_{0,1}\) is well-defined. In the sequel, we will mostly use TL unless otherwise specified.

\begin{lemma}\label{chap6:lem:4-iso-Sigma-0-1}
There is a commutative square
\[
\xymatrix{
\Sigma_{0,1}^{TL}\ar[r]\ar[d] & \Sigma_{0,1}^{TR}\ar[d]\\
\Sigma_{0,1}^{BL}\ar[r] & \Sigma_{0,1}^{BR}\rlap{\ ,}
}
\]
where every morphism is an isomorphism.
\end{lemma}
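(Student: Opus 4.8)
The plan is to build the four edge-isomorphisms of the square separately, each as an instance of an associator of one of the two given bibundles, and then to extract commutativity from the uniqueness of pasting composites. First I would isolate the common ``unquotiented'' space $W$ underlying all four pictures of Figure~\ref{chap6:fig:bibd-comp-Sigma01}: the relevant fibre product of $\Gamma_{0,1}$, $\Xi_{0,1}$ and the auxiliary gray--gray and black--gray data, taken \emph{before} any snake identification is imposed. Each of $\Sigma_{0,1}^{TL}$, $\Sigma_{0,1}^{TR}$, $\Sigma_{0,1}^{BL}$, $\Sigma_{0,1}^{BR}$ is then a quotient of (a close variant of) $W$ by the two actions named by its snake lines, and corepresentability of each such quotient is exactly the content of the preceding lemma.

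For the horizontal edges $TL\to TR$ and $BL\to BR$ I would note that the two pictures in each row differ only in the black--gray snake ($k_1\to h_2$ versus $k_2\to h_1$), a configuration living entirely in the $Y$--$Z$ bibundle $\Xi$; moving this snake across is precisely the associator isomorphism of $\Xi$, produced as in Lemma~\ref{chap6:lem:HS-bibundle-iso-alpha} and Lemma~\ref{chap6:lem:level3:isos} from the conditions \(\Kan!(3,k)\) for $\Xi$. I would write this map first on the pertinent unquotiented spaces and then descend it: since the remaining snake is a \emph{principal} action, its quotient maps are invariant covers, so Lemmas~\ref{chap1:lem:invariant-cover-descent} and~\ref{chap1:lem:action-over-principal-is-principal} push the isomorphism to the quotients. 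The vertical edges $TL\to BL$ and $TR\to BR$ are treated symmetrically: here the two pictures differ only in the gray--gray snake (snake back versus snake arrow), which lives in the $X$--$Y$ bibundle $\Gamma$, so the isomorphism is the associator of $\Gamma$, descended by the same principality argument. This yields all four edges and shows each is an isomorphism.

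The main obstacle is commutativity, i.e.\ that $TL\to TR\to BR$ agrees with $TL\to BL\to BR$. The crux is that the horizontal edges touch only the $\Xi$-part and the vertical edges only the $\Gamma$-part, so the two associators act on independent pieces of the configuration. I would make this precise by regarding the entire collection of simplices in Figure~\ref{chap6:fig:bibd-comp-Sigma01} as a single labelling of an inner pasting scheme: by the uniqueness of pasting composites for $2$-groupoids (the corollary of the pasting theorem of Power and Verity), carrying out the two fillings in either order produces the same composite, so the two routes around the square already coincide on $W$. Because every quotient map in sight is a cover, hence an epimorphism, the commuting square on $W$ descends to a commuting square on the quotients, giving the asserted commutative square of isomorphisms. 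The one point requiring care is to confirm that the relevant pasting scheme is genuinely inner, so that only inner Kan conditions---to which the pasting theorem applies---are invoked; this holds because the black--gray and gray--gray fillings each introduce an interior face rather than an outer horn.
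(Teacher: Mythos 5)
Your overall architecture (build the four edge isomorphisms, then prove commutativity) matches the paper's, and your treatment of the horizontal edges is exactly the paper's: \(\Sigma_{0,1}^{TL}\cong\Sigma_{0,1}^{TR}\) and \(\Sigma_{0,1}^{BL}\cong\Sigma_{0,1}^{BR}\) come from the associator of \(\Xi\) via Lemma~\ref{chap6:lem:HS-bibundle-iso-alpha}, descended to quotients by the invariance/equivariance argument. However, your vertical edges use the wrong tool, and this is a genuine gap. The two pictures in a column contain \emph{the same} simplices --- one triangle of \(\Gamma\) with vertices \((0,0/1,0/2)\) and two triangles of \(\Xi\); what changes from TL to BL is only the \emph{direction} of the bigon-groupoid action along the shared gray edge \((0/1,0/2)\) (snake back versus snake arrow). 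No re-bracketing of two composable pieces of \(\Gamma\) occurs, so there is no associator of \(\Gamma\) to apply: an associator relates two triangulations of a square, i.e.\ it needs two triangles of \(\Gamma\) glued along an edge, and here there is only one. The isomorphism the paper uses instead is the opposite-action isomorphism of Example~\ref{chap6:exa:opposite-action} (Figure~\ref{chap6:fig:Sigma-bigons-iso}): the left and right actions of the groupoid of bigons are intertwined by the \emph{horizontal inverse} of the 2\nbdash{}groupoid, i.e.\ by the space of inverse bigons \(E_\inv\). This is inversion data, not associativity data, and your proposal never invokes it, so the vertical isomorphisms are not actually constructed.

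Your commutativity argument also has a gap: the claim that the horizontal and vertical moves act on ``independent pieces'' is false, because the horizontal move (associator of \(\Xi\)) re-triangulates the \(\Xi\)-square \((0/1,0/2,1,2)\) whose side is precisely the gray edge \((0/1,0/2)\) on which the vertical move changes the quotient convention. Because of this interaction the paper cannot simply interchange the two moves; it replaces BL and BR by the auxiliary pictures BL\('\), BR\('\) carrying both quotient directions (Figure~\ref{chap6:fig:BL'-BR'}), chooses representatives compatible with three of the four isomorphisms, and verifies compatibility with the remaining one via a variant of Lemma~\ref{chap6:lem:bigon-triangle} (composition of a bigon with a triangle lifts to tetrahedra). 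Relatedly, your plan to have the two routes ``already coincide on \(W\)'' before quotienting cannot be carried out literally: the edge isomorphisms are not maps on unquotiented representatives --- the associator sends \([(a,b)]\mapsto[(c,d)]\) only when a Kan filler with faces \(a,b,c,d\) exists, and it is well defined only after dividing by the principal actions --- so commutativity must be checked on carefully chosen representatives and then descended, which is exactly the step your independence claim skips. A pasting-theorem argument in the style you suggest could likely be made to work (the paper uses such arguments elsewhere in this chapter, e.g.\ Lemma~\ref{chap6:lem:Sigma-0-2-var}), but only after the representative-choosing step is made explicit and the inversion structure is brought in for the vertical edges.
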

\begin{proof}
Recall from Lemma~\ref{chap6:lem:HS-bibundle-iso-alpha} that the spaces given by the pictures in Figure~\ref{chap6:fig:Xi01-Xi10-quotients-iso} are isomorphic. Thus the spaces given by TL and TR are isomorphic. Similarly, we have an isomorphism \(\Sigma_{0,1}^{BL}\cong\Sigma_{0,1}^{BR}\).
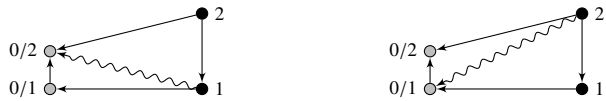
\begin{figure}[htbp]
  \centering
  \begin{tikzpicture}%
  [>=latex', hdot/.style={mydot,fill=lightgray},every label/.style={scale=0.6}]
  \begin{scope}
  \node[hdot,label=180:\(0/1\)] at (0,0) (h1){};
  \node[hdot,label=180:\(0/2\)] at (0,0.5) (h2){};
  \node[mydot,fill,label=0:\(1\)] at (2,0) (k1){};
  \node[mydot,fill,label=0:\(2\)] at (2,1) (k2){};
  \path[->]
  (h2) edge[<-] (h1)
  (k1) edge (h1)
       edge[snake arrow] (h2)
  (k2) edge (h2)
       edge (k1);
  \end{scope}
  \begin{scope}[xshift=5cm]
  \node[hdot,label=180:\(0/1\)] at (0,0) (h1){};
  \node[hdot,label=180:\(0/2\)] at (0,0.5) (h2){};
  \node[mydot,fill,label=0:\(1\)] at (2,0) (k1){};
  \node[mydot,fill,label=0:\(2\)] at (2,1) (k2){};
  \path[->]
  (h2) edge[<-] (h1)
  (k1) edge (h1)
  (k2) edge (h2)
       edge (k1)
       edge[snake arrow] (h1);
  \end{scope}
  \end{tikzpicture}
  \caption{Two isomorphic spaces}\label{chap6:fig:Xi01-Xi10-quotients-iso}
\end{figure}

Example~\ref{chap6:exa:opposite-action} implies that the spaces given by the three pictures in Figure~\ref{chap6:fig:Sigma-bigons-iso} are isomorphic. This gives an isomorphism \(\Sigma_{0,1}^{TL}\cong \Sigma_{0,1}^{BL}\). The isomorphism \(\Sigma_{0,1}^{TR}\cong\Sigma_{0,1}^{BR}\) follows similarly.
\begin{figure}[htbp]
  \centering
  \begin{tikzpicture}%
  [>=latex', hdot/.style={mydot,fill=lightgray},every label/.style={scale=0.6},scale=0.8]
  \begin{scope}[xshift=-5cm,yshift=-2cm]
  \node[mydot,label=90:\(0\)] at (-2,0) (g0){};
  \node[hdot,label=-90:\(0/1\)] at (0,0) (h1){};
  \node[hdot,label=90:\(0/2\)] at (0,0.5) (h2){};
  \node[mydot,fill,label=0:\(1\)] at (2,0) (k1){};
  \path[->]
  (h1) edge (g0)
  (h2) edge (g0)
       edge[snake back] (h1)
  (k1) edge (h1)
       edge (h2);
  \end{scope}
  \begin{scope}[xshift=+5cm, yshift=-2cm]
  \node[mydot,label=90:\(0\)] at (-2,0) (g0){};
  \node[hdot,label=-90:\(0/1\)] at (0,0) (h1){};
  \node[hdot,label=90:\(0/2\)] at (0,0.5) (h2){};
  \node[mydot,fill,label=0:\(1\)] at (2,0) (k1){};
  \path[->]
  (h1) edge (g0)
  (h2) edge (g0)
       edge[snake arrow] (h1)
  (k1) edge (h1)
       edge (h2);
  \end{scope}
  \begin{scope}[yshift=-2cm]
  \node[mydot,label=90:\(0\)] at (-2,0) (g0){};
  \node[hdot,label=-90:\(0/1\)] at (0,0) (h1){};
  \node[hdot,label=90:\(0/2\)] at (0,0.5) (h2){};
  \node[mydot,fill,label=0:\(1\)] at (2,0) (k1){};
  \path[->]
  (h1) edge (g0)
  (h2) edge (g0)
       edge[snake arrow,out=-60, in=60] (h1)
       edge[snake back,out=-120, in=120](h1)
  (k1) edge (h1)
       edge (h2);
  \end{scope}
  \end{tikzpicture}
  \caption{Three isomorphic spaces}\label{chap6:fig:Sigma-bigons-iso}
\end{figure}
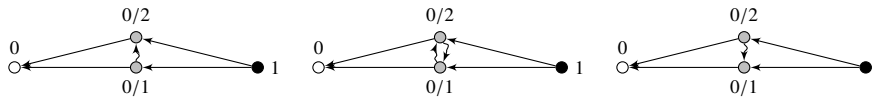

It remains to show that these four isomorphisms form a commutative square. Replace BL and BR by the pictures in Figure~\ref{chap6:fig:BL'-BR'} denoted by BL' and BR', respectively. We now choose representatives for the pictures TR, TL, BR', BL' such that they respect the isomorphisms
\[
  \Sigma_{0,1}^{TL} \cong \Sigma_{0,1}^{BL},\quad \Sigma_{0,1}^{BL} \cong \Sigma_{0,1}^{BR},\quad \Sigma_{0,1}^{BR} \cong \Sigma_{0,1}^{TR}.
\]
We need to show that they respect the remaining isomorphism \(\Sigma_{0,1}^{TL} \cong \Sigma_{0,1}^{TR}\). This is a variant of Lemma~\ref{chap6:lem:bigon-triangle}, and we are done.
\begin{figure}[htbp]
  \centering
  \begin{tikzpicture}%
  [>=latex', hdot/.style={mydot,fill=lightgray},every label/.style={scale=0.6},scale=0.8]
  \node[mydot,label=-90:\(0\)] at (-2,0) (g0){};
  \node[hdot,label=-90:\(0/1\)] at (0,0) (h1){};
  \node[hdot,label=90:\(0/2\)] at (0,0.5) (h2){};
  \node[mydot,fill,label=-90:\(1\)] at (2,0) (k1){};
  \node[mydot,fill,label=0:\(2\)] at (2,1) (k2){};
  \path[->]
  (h1) edge (g0)
  (h2) edge (g0)
       edge[snake arrow,out=-60, in=60] (h1)
       edge[snake back,out=-120, in=120](h1)
  (k1) edge (h1)
       edge[snake arrow]  (h2)
  (k2) edge (k1)
       edge (h2);
  \begin{scope}[xshift=5cm]
  \node[mydot,label=-90:\(0\)] at (-2,0) (g0){};
  \node[hdot,label=-90:\(0/1\)] at (0,0) (h1){};
  \node[hdot,label=90:\(0/2\)] at (0,0.5) (h2){};
  \node[mydot,fill,label=-90:\(1\)] at (2,0) (k1){};
  \node[mydot,fill,label=0:\(2\)] at (2,1) (k2){};
  \path[->]
  (h1) edge (g0)
  (h2) edge (g0)
       edge[snake arrow,out=-60, in=60] (h1)
       edge[snake back,out=-120, in=120](h1)
  (k1) edge (h1)
  (k2) edge (k1)
       edge (h2)
       edge[snake arrow]  (h1);
  \end{scope}
  \end{tikzpicture}
  \caption{BL' and BR'}\label{chap6:fig:BL'-BR'}
\end{figure}
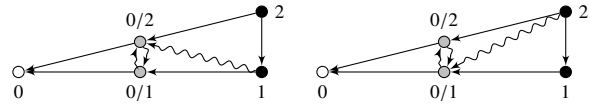
\end{proof}

\begin{remark}
  The construction of \(\Sigma_{0,1}\) is inspired by the composition of HS bibundles of groupoids. A triangle in \(\Sigma_{0,1}\) can be thought of as a formal composite of three smaller triangles in \(\Gamma\) and \(\Xi\). In view of the pasting theorem, the composite should be invariant under the actions of groupoids of bigons.
\end{remark}

The face maps \(\face_1,\face_2\colon \Sigma_{1,0}\to \Sigma_{0,0}\) and \(\face_0\colon \Sigma_{1,0}\to Z_1\) are natural to define. The degeneracy map \(\de_1\colon\Sigma_{0,0}\to \Sigma_{0,1}\) is illustrated by Figure~\ref{chap6:fig:bibd-comp-s1}. We first send the left picture \(\Sigma_{0,0}\) to the right picture using three degeneracy maps (the two shorter vertical arrows are degenerate). Composing with the quotient maps, we obtain \(\de_1\). The degeneracy map \(\de_0\colon \Sigma_{0,0}\to \Sigma_{1,0}\) is defined similarly.
\begin{figure}[htbp]
  \centering
  \begin{tikzpicture}%
  [>=latex',hdot/.style={mydot,fill=lightgray},every label/.style={scale=0.6},scale=0.8]
  \node[mydot,label=180:\(0\)] at (-2,0) (g0){};
  \node[hdot,label=-90:\(0/1\)] at (0,0) (h1){};
  \node[mydot,fill,label=0:\(1\)] at (2,0) (k1){};
  \path[->]
  (h1) edge (g0)
  (k1) edge (h1);
  \begin{scope}[xshift=5cm]
  \node[mydot,label=180:\(0\)] at (-2,0) (g0){};
  \node[hdot,label=-90:\(0/1\)] at (0,0) (h1){};
  \node[hdot,,label=90:\(0/2\)] at (0,0.2) (h2){};
  \node[mydot,fill,label=0:\(1\)] at (2,0) (k1){};
  \node[mydot,fill,label=0:\(2\)] at (2,0.4) (k2){};
  \path[->]
  (h1) edge (g0)
  (h2) edge (g0)
       edge[<-] (h1)
  (k1) edge (h1)
       edge (h2)
  (k2) edge (h2)
       edge (k1);
  \end{scope}
  \end{tikzpicture}
  \caption{The degeneracy map \(\de_1\colon\Sigma_{0,0}\to \Sigma_{0,1}\)}\label{chap6:fig:bibd-comp-s1}
\end{figure}

These face and degeneracy maps satisfy the desired simplicial identities. We can also use any other picture in Figure~\ref{chap6:fig:bibd-comp-Sigma01} to define face and degeneracy maps, and the results are compatible with the isomorphisms in Lemma~\ref{chap6:lem:4-iso-Sigma-0-1}.

\begin{lemma}\label{chap6:lem:Sigma-Kan(2,x)}
The Kan conditions \(\Kan(2,1)[1,2], \Kan(2,2)[1, 2]\) and \(\Kan(2,0)[1,2]\) for~\(\Sigma\) hold. Similarly, \(\Kan(2,1)[2,1]\) and \(\Kan(2,2)[2,1]\) hold.
\end{lemma}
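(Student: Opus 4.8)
The plan is to verify each colored Kan condition by identifying the relevant horn-restriction map out of $\Sigma_{0,1}$ (respectively $\Sigma_{1,0}$) with a composite of covers, in the spirit of Lemma~\ref{chap5:lem:kan2} and the dimension-$2$ computations in Section~\ref{chap6:ssec:levels012}. By Lemma~\ref{chap6:lem:4-iso-Sigma-0-1} the four corepresentations TL, TR, BL, BR of $\Sigma_{0,1}$ are canonically isomorphic and carry the same face maps, so for each condition I am free to choose whichever picture exhibits the target horn as a sub-diagram. The covers I feed in come from two sources: the colored Kan conditions already available for $\Gamma$ and $\Xi$ (these hold because $\Gamma$ and $\Xi$ are right principal bibundles, cf. Definition~\ref{chap4:def:bibundles=inner-over-I} and Proposition~\ref{chap4:prop:TX-I-lifting-properties}), and the quotient maps by the principal actions of the various groupoids of bigons, which descend covers and sit inside pullback squares by Corollary~\ref{chap1:cor:G-bundles-P-times-Q-pullback} and Lemmas~\ref{chap1:lem:invariant-cover-descent} and~\ref{chap1:lem:action-over-principal-is-principal}. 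Since covers are closed under composition and pullback, assembling these blocks yields the claim.

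First I would treat the inner conditions $\Kan(2,1)[1,2]$ and, symmetrically, $\Kan(2,1)[2,1]$. For $\Kan(2,1)[1,2]$ the horn $\Horn{2}{1}$ consists of the white–black edge $\{0,1\}$, an element of $\Sigma_{0,0}$, and the black–black edge $\{1,2\}$, an element of $Z_1$; so I must show that $\Sigma_{0,1}\to \Sigma_{0,0}\times_{Z_0} Z_1$ is a cover. Choosing the picture in which the edge $\{1,2\}$ sits entirely inside $\Xi$, this map factors through the $\Kan(2,1)$-condition for $\Xi$ followed by the principal quotient used to form $\Sigma_{0,1}$, both of which are covers. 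Equivalently, one recognizes the map as the left moment map of an HS bibundle, hence a cover, exactly as in Lemma~\ref{chap5:lem:left-E-is-principal}. The condition $\Kan(2,1)[2,1]$ is handled the same way with the roles of $\Gamma$ and $\Xi$ (white and black) interchanged, using $\Sigma_{1,0}$ and the symmetric picture.

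Next come the outer conditions $\Kan(2,2)[1,2]$, $\Kan(2,2)[2,1]$ and $\Kan(2,0)[1,2]$. The two right-outer conditions $\Kan(2,2)[1,2]$ and $\Kan(2,2)[2,1]$ are covers because the corresponding edge-restriction is again a left moment map of a principal bibundle, so the argument runs parallel to the inner case; here the principality of the right $\arrow{Z}$-action (respectively the $\arrow{X}$-action) on the relevant factor is what is used. The main obstacle is $\Kan(2,0)[1,2]$, the left-outer condition encoding right principality of the composite $\Sigma$: it asks that the shear map $\Sigma_{0,1}\to \Sigma_{0,0}\times_{X_0}\Sigma_{0,0}$ be a cover. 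To produce this I would use that $\Xi$ is right principal, so that the pertinent groupoid of bigons acts principally on the $\Xi$-factor; then Lemma~\ref{chap1:lem:principal-equivariant-map-is-iso} and Corollary~\ref{chap1:cor:G-bundles-P-times-Q-pullback} turn the shear into an equivariant map of principal bundles, which descends to a cover after passing to the quotient. The delicate bookkeeping—matching each snake-line quotient of Figure~\ref{chap6:fig:bibd-comp-Sigma01} with the correct principal action, and checking that the chosen representatives are compatible with the isomorphisms of Lemma~\ref{chap6:lem:4-iso-Sigma-0-1}—is where the real work lies; once a single coherent picture is fixed, each condition reduces to a composite of covers and the remaining verification is routine.
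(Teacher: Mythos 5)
Your overall strategy is the one the paper uses: exhibit each colored horn restriction as a composite of covers out of the un-quotiented configuration space \(\tilde{\Sigma}_{0,1}\) (respectively \(\tilde{\Sigma}_{1,0}\)), the covers being supplied by the colored Kan conditions of \(\Gamma\) and \(\Xi\), and then descend through the bigon quotients via Lemma~\ref{chap1:lem:invariant-cover-descent}; this is precisely what Figures~\ref{chap6:fig:bibd-comp-Kan(2,1)-1-2} and~\ref{chap6:fig:bibd-comp-Kan(2,0)-1-2} encode, and your treatment of the inner conditions is essentially the paper's.

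There is, however, a concrete gap in your \(\Kan(2,0)[1,2]\) argument. Writing the horn space as \(\Sigma_{0,0}\times_{X_0}\Sigma_{0,0}\), the cover upstairs is assembled from three successive horn fillings: the gray--black--black horn \(\Horn{2}{0}\) on \((0/2,1,2)\) in \(\Xi\) (this is where right principality of \(\Xi\) enters), the gray--gray--black horn on \((0/1,0/2,1)\) in \(\Xi\) (the condition \(\Kan(2,0)[2,1]\), satisfied by every bibundle), and finally the white--gray--gray horn on \((0,0/1,0/2)\) in \(\Gamma\) --- which is \(\Kan(2,0)[1,2]\) for \(\Gamma\), i.e.\ exactly the right principality of \(\Gamma\). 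Your sketch invokes only \(\Xi\)'s principality; the ``equivariant map of principal bundles'' mechanism you describe cannot manufacture this last filling, and Lemma~\ref{chap1:lem:invariant-cover-descent} can only descend a cover that has already been built upstairs. The needed hypothesis is available (both \(\Gamma\) and \(\Xi\) are assumed right principal in this chapter), so the argument is repairable, but as written this step would fail. Two smaller inaccuracies: the conditions \(\Kan(2,2)[1,2]\) and \(\Kan(2,2)[2,1]\) require no principality at all --- they follow from the outer conditions \(\Kan(m,m)[i,j]\) for \(j\ge 2\), which hold for every bibundle by Definition~\ref{chap4:def:bibundles=inner-over-I}, together with inner conditions and the covers \(Y_1\to Y_0\); and your aside that one may ``recognize the map as the left moment map of an HS bibundle'' is circular at this stage, since the HS-bibundle structure of \(\Sigma_{0,1}\) over the groupoid of bigons of \(\Sigma\) is a consequence of the Kan conditions being proved, not an input.
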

\begin{proof}
  To show \(\Kan(2,1)[1, 2]\), we consider the four pictures in Figure~\ref{chap6:fig:bibd-comp-Kan(2,1)-1-2}. Let \(\tilde{\Sigma}_{0,1}\) be the space of three triangles fitting together as the left-most picture. Using the Kan conditions for \(\Gamma\) and \(\Xi\), we see that the canonical map from one picture to its right neighbour is a cover. Hence the map \(\tilde{\Sigma}_{0,1}\to \Hom(\Horn{2}{1}[1,2],\Sigma)\) is a cover. Lemma~\ref{chap1:lem:invariant-cover-descent} implies that it descends to a cover after taking quotients. This proves \(\Kan(2,1)[1,2]\).
  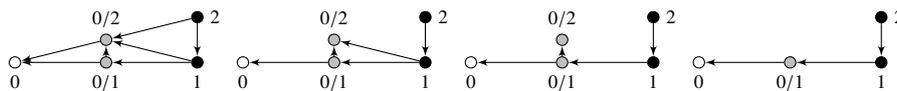
\begin{figure}[htbp]
  \centering
  \begin{tikzpicture}%
  [scale=0.6,>=latex',hdot/.style={mydot,fill=lightgray},every label/.style={scale=0.6}]
  \node[mydot,label=-90:\(0\)] at (-2,0) (g0){};
  \node[hdot,label=-90:\(0/1\)] at (0,0) (h1){};
  \node[hdot,label=90:\(0/2\)] at (0,0.5) (h2){};
  \node[mydot,fill,label=-90:\(1\)] at (2,0) (k1){};
  \node[mydot,fill,label=0:\(2\)] at (2,1) (k2){};
  \path[->]
  (h1) edge (g0)
  (h2) edge (g0)
       edge[<-] (h1)
  (k1) edge (h1)
       edge (h2)
  (k2) edge (h2)
       edge (k1);
  \begin{scope}[xshift=5cm]
  \node[mydot,label=-90:\(0\)] at (-2,0) (g0){};
  \node[hdot,label=-90:\(0/1\)] at (0,0) (h1){};
  \node[hdot,label=90:\(0/2\)] at (0,0.5) (h2){};
  \node[mydot,fill,label=-90:\(1\)] at (2,0) (k1){};
  \node[mydot,fill,label=0:\(2\)] at (2,1) (k2){};
  \path[->]
  (h1) edge (g0)
  (h2) edge[<-] (h1)
  (k1) edge (h1)
       edge (h2)
  (k2) edge (k1);
  \end{scope}
  \begin{scope}[xshift=10cm]
  \node[mydot,label=-90:\(0\)] at (-2,0) (g0){};
  \node[hdot,label=-90:\(0/1\)] at (0,0) (h1){};
  \node[hdot,label=90:\(0/2\)] at (0,0.5) (h2){};
  \node[mydot,fill,label=-90:\(1\)] at (2,0) (k1){};
  \node[mydot,fill,label=0:\(2\)] at (2,1) (k2){};
  \path[->]
  (h1) edge (g0)
  (h2) edge[<-] (h1)
  (k1) edge (h1)
  (k2) edge (k1);
  \end{scope}
  \begin{scope}[xshift=15cm]
  \node[mydot,label=-90:\(0\)] at (-2,0) (g0){};
  \node[hdot,label=-90:\(0/1\)] at (0,0) (h1){};
  \node[mydot,fill,label=-90:\(1\)] at (2,0) (k1){};
  \node[mydot,fill,label=0:\(2\)] at (2,1) (k2){};
  \path[->]
  (h1) edge (g0)
  (k1) edge (h1)
  (k2) edge (k1);
  \end{scope}
  \end{tikzpicture}
  \caption{Proof of \(\Kan(2,1)[1,2]\)}\label{chap6:fig:bibd-comp-Kan(2,1)-1-2}
\end{figure}

The other Kan conditions can be proved similarly. For instance, to prove \(\Kan(2,0)[1,2]\) we consider Figure~\ref{chap6:fig:bibd-comp-Kan(2,0)-1-2}.
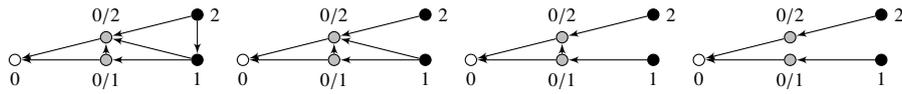
\begin{figure}[htbp]
  \centering
  \begin{tikzpicture}%
  [scale=0.6,>=latex',hdot/.style={mydot,fill=lightgray},every label/.style={scale=0.6}]
  \node[mydot,label=-90:\(0\)] at (-2,0) (g0){};
  \node[hdot,label=-90:\(0/1\)] at (0,0) (h1){};
  \node[hdot,label=90:\(0/2\)] at (0,0.5) (h2){};
  \node[mydot,fill,label=-90:\(1\)] at (2,0) (k1){};
  \node[mydot,fill,label=0:\(2\)] at (2,1) (k2){};
  \path[->]
  (h1) edge (g0)
  (h2) edge (g0)
       edge[<-] (h1)
  (k1) edge (h1)
       edge (h2)
  (k2) edge (h2)
       edge (k1);
  \begin{scope}[xshift=5cm]
  \node[mydot,label=-90:\(0\)] at (-2,0) (g0){};
  \node[hdot,label=-90:\(0/1\)] at (0,0) (h1){};
  \node[hdot,label=90:\(0/2\)] at (0,0.5) (h2){};
  \node[mydot,fill,label=-90:\(1\)] at (2,0) (k1){};
  \node[mydot,fill,label=0:\(2\)] at (2,1) (k2){};
  \path[->]
  (h1) edge (g0)
  (h2) edge (g0)
       edge[<-] (h1)
  (k1) edge (h1)
       edge (h2)
  (k2) edge (h2);
  \end{scope}
  \begin{scope}[xshift=10cm]
  \node[mydot,label=-90:\(0\)] at (-2,0) (g0){};
  \node[hdot,label=-90:\(0/1\)] at (0,0) (h1){};
  \node[hdot,label=90:\(0/2\)] at (0,0.5) (h2){};
  \node[mydot,fill,label=-90:\(1\)] at (2,0) (k1){};
  \node[mydot,fill,label=0:\(2\)] at (2,1) (k2){};
  \path[->]
  (h1) edge (g0)
  (h2) edge (g0)
       edge[<-] (h1)
  (k1) edge (h1)
  (k2) edge (h2);
  \end{scope}
  \begin{scope}[xshift=15cm]
  \node[mydot,label=-90:\(0\)] at (-2,0) (g0){};
  \node[hdot,label=-90:\(0/1\)] at (0,0) (h1){};
  \node[hdot,label=90:\(0/2\)] at (0,0.5) (h2){};
  \node[mydot,fill,label=-90:\(1\)] at (2,0) (k1){};
  \node[mydot,fill,label=0:\(2\)] at (2,1) (k2){};
  \path[->]
  (h1) edge (g0)
  (h2) edge (g0)
  (k1) edge (h1)
  (k2) edge (h2);
  \end{scope}
  \end{tikzpicture}
  \caption{Proof of \(\Kan(2,0)[1,2]\)}\label{chap6:fig:bibd-comp-Kan(2,0)-1-2}
\end{figure}
\end{proof}

\subsection{Dimension 3}

We first construct \(\Sigma_{0,2}\), and verify various simplicial identities and Kan conditions. A symmetric argument works for \(\Sigma_{2,0}\). We then consider \(\Sigma_{1,1}\), which is slightly different.

\begin{definition}
Let \(\tilde{\Sigma}_{0,2}\) be the space of all configurations of four tetrahedra fitting together as in Figure~\ref{chap6:fig:pre-Sigma-0-2}.
\begin{figure}[htbp]
  \centering
  \begin{tikzpicture}%
  [>=latex',
  hdot/.style={mydot,fill=lightgray}, kdot/.style={mydot,fill},
  every label/.style={scale=0.6}]
  \node[mydot,label=180:\(0\)] at (-2,0) (g0){};
  \node[hdot,label=-90:\(0/1\)] at (0,0) (h1){};
  \node[hdot,label=60:\(0/2\)] at (0.5,0.5) (h2){};
  \node[hdot,label=120:\(0/3\)] at (0,1) (h3){};
  \node[kdot,label=-90:\(1\)] at (2,0) (k1){};
  \node[kdot,label=60:\(2\)] at (3,1) (k2){};
  \node[kdot,label=120:\(3\)] at (2,2) (k3){};
  \foreach \i/\j in
  {h1/g0,h2/g0,h3/g0,k1/h1,k2/h2,k3/h3,
  k2/k1,k3/k1,k3/k2,h1/h2,h2/h3,h1/h3,
  k1/h2,k2/h3,k1/h3}{
  \path[->](\i) edge (\j);
  }
  \end{tikzpicture}
  \caption{The construction of \(\tilde{\Sigma}_{0,2}\)}\label{chap6:fig:pre-Sigma-0-2}
\end{figure}
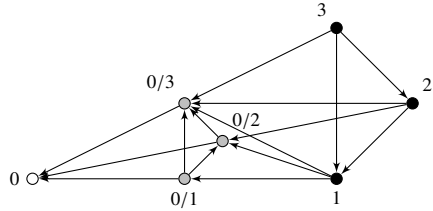
\end{definition}

The space \(\Sigma_{0,2}\) will be a quotient space of \(\tilde{\Sigma}_{0,2}\).

\begin{lemma}\label{chap6:lem:pre-Sigma-0-2-rep}
The space \(\tilde{\Sigma}_{0,2}\) is representable.
\end{lemma}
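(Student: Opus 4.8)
The plan is to read off from Figure~\ref{chap6:fig:pre-Sigma-0-2} that $\tilde{\Sigma}_{0,2}$ is an iterated fibre product of the representable objects $\Gamma_{i,j}$ and $\Xi_{i,j}$, and then to build this fibre product up one tetrahedron at a time, verifying that each attaching map is a cover. First I would identify the four tetrahedra and their mutual faces. Since no white--black edge occurs in the figure, no $3$-simplex may contain both the white vertex $0$ and a black vertex, and inspecting the edges that are present shows that the only admissible tetrahedra are
\[
T_0=\{0,0/1,0/2,0/3\},\quad T_1=\{0/1,0/2,0/3,1\},\quad T_2=\{0/2,0/3,1,2\},\quad T_3=\{0/3,1,2,3\},
\]
which lie in $\Gamma_{0,2}$, $\Xi_{2,0}$, $\Xi_{1,1}$, and $\Xi_{0,2}$ respectively. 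They form a linear staircase: $T_0\cap T_1=\{0/1,0/2,0/3\}=Y_2$ is a gray triangle, while $T_1\cap T_2=\{0/2,0/3,1\}\in\Xi_{1,0}$ and $T_2\cap T_3=\{0/3,1,2\}\in\Xi_{0,1}$ are triangles of $\Xi$. Reading the face-matching conditions off the picture then gives
\[
\tilde{\Sigma}_{0,2}\cong\Gamma_{0,2}\times_{Y_2}\Xi_{2,0}\times_{\Xi_{1,0}}\Xi_{1,1}\times_{\Xi_{0,1}}\Xi_{0,2},
\]
each fibre product being taken over the evident shared face.

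Next I would assemble the $\Xi$-part $\Xi_{2,0}\times_{\Xi_{1,0}}\Xi_{1,1}\times_{\Xi_{0,1}}\Xi_{0,2}$ by an induction along the chain, attaching $T_3,T_2,T_1$ in turn. Each attachment glues a new tetrahedron of $\Xi$ to the previously built configuration along one of its triangular faces; since $\Xi$ is a bibundle this is precisely the act of filling an inner colored horn, so by the colored Kan conditions for $\Xi$ the attaching face map is a cover and each partial fibre product stays representable. This is the exact analogue of the inductive step in the proof of Lemma~\ref{chap3:lem:representable}, with representability of the intervening horn spaces supplied by Remark~\ref{chap4:rem:low-inner-kan-imply-repr}; equivalently one may phrase the whole induction through the column Kan fibration $C'_0\Xi\to Y$ of Proposition~\ref{chap4:prop:TX-I-lifting-properties}, to which Lemmas~\ref{chap3:lem:covers_in_groupoid} and~\ref{chap3:lem:representable} apply directly. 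The result is that the $\Xi$-part is representable and its restriction to the gray triangle $Y_2$ is a cover.

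Finally I would glue $T_0\in\Gamma_{0,2}$ to the assembled $\Xi$-part over $Y_2$. Here the relevant fibre product is representable because the restriction $\Gamma_{0,2}\to Y_2$ is a cover (again a consequence of $\Gamma$ being a bibundle, via the colored Kan conditions for $\Gamma$ and Lemma~\ref{chap3:lem:covers_in_groupoid}), so one of the two maps into $Y_2$ is a cover and the pullback exists. The main obstacle I expect is the colored bookkeeping rather than any new idea: one must check at each stage that the sub-configuration being attached to is a collapsible extension of the correct color inside the ambient tetrahedron (so that the applicable condition is an \emph{inner} colored condition $\Kan(m,k)[i,j]$, or one of the outer conditions actually guaranteed for a bibundle), and in particular that the single cross-gluing over $Y_2$ — the one place where the $\Gamma$-world and the $\Xi$-world meet — is genuinely handled by a cover and not by an outer colored condition that might fail. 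Getting the ordering of the four attachments right, using Proposition~\ref{chap2:prop:join-pushout-collapsible} to recognize the staircase faces as collapsible extensions, is what makes the induction go through.
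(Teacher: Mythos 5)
Your reading of Figure~\ref{chap6:fig:pre-Sigma-0-2} is correct: the four tetrahedra are exactly \(T_0\in\Gamma_{0,2}\), \(T_1\in\Xi_{2,0}\), \(T_2\in\Xi_{1,1}\), \(T_3\in\Xi_{0,2}\), glued along \(Y_2\), \(\Xi_{1,0}\), \(\Xi_{0,1}\), and \(\tilde{\Sigma}_{0,2}\) is the iterated fibre product you write. Your bottom-up assembly is also genuinely different from the paper's proof, which instead deletes six triangles (starting with \((0,0/3,0/1)\)) to reach an un-quotiented ``big horn'' \(\tilde{\Sigma}_{0,2}''\), representable by the proof of Lemma~\ref{chap6:lem:Sigma-Kan(2,x)}, and then re-fills the removed horns. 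One caveat on your \(\Xi\)-part: attaching a tetrahedron along a single face is not ``filling an inner colored horn'' but a colored collapsible extension whose \emph{first} step is an outer \(1\)-horn (the new vertex); in your three attachments this \(1\)-horn happens to be monochromatic (gray--gray or black--black), hence covered by \(Y\) and \(Z\) being \(2\)-groupoids, and the remaining steps are inner, so the face maps \(\face_3\colon\Xi_{0,2}\to\Xi_{0,1}\), \(\face_3\colon\Xi_{1,1}\to\Xi_{1,0}\), \(\face_0\colon\Xi_{2,0}\to\Xi_{1,0}\) are indeed covers and the \(\Xi\)-part \(E\coloneqq\Xi_{2,0}\times_{\Xi_{1,0}}\Xi_{1,1}\times_{\Xi_{0,1}}\Xi_{0,2}\) is representable.

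The genuine gap is your last step: the claim that \(\Gamma_{0,2}\to Y_2\) is a cover ``via the colored Kan conditions for \(\Gamma\)'' is false. To rebuild the white vertex \(0\) from the gray triangle \((0/1,0/2,0/3)\), the first cell one must attach is a white--gray edge given only its gray endpoint, i.e.\ a filling of \(\Horn{1}{1}\to\Simp{1}\) colored \([1,1]\) in \(\Gamma\); the condition \(\Kan(1,1)[1,1]\) is a \emph{left}-principality condition, and by Proposition~\ref{chap4:prop:hihger-groupoid-cogragh-Kan-conditions} it is exactly the case \(m=1\) of the conditions \(\Kan(m,m)[m,1]\) that fail even for cographs. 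Concretely, if \(\Gamma\) is the cograph of a morphism \(\sk_0 M\to \sk_0 N\) of constant \(2\)-groupoids whose underlying map \(f_0\) is not a cover, then \(\Gamma\) is right principal, yet \(\Gamma_{0,2}\to Y_2\) is isomorphic to \(f_0\colon M\to N\). The error is repairable with what you already have: only one leg of \(\Gamma_{0,2}\times_{Y_2}E\) needs to be a cover, and the \(\Xi\)-side leg \(E\to Y_2\) is one — it factors through pullbacks of the covers above followed by \(\face_3\colon\Xi_{2,0}\to Y_2\). But note that this last map is a cover only because its first attaching step, the gray--black \(1\)-horn \(\Kan(1,0)[1,1]\), holds for \(\Xi\); that condition is \emph{not} part of the general bibundle axioms (\(\Kan(m,0)[i,j]\) requires \(i\ge 2\)) and holds precisely because \(\Xi\) is right principal (Definition~\ref{chap4:def:bibundles=inner-over-I}) — a standing hypothesis of this chapter that your write-up never invokes, and without which this step would fail as well.
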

\begin{proof}
  Remove from \(\tilde{\Sigma}_{0,2}\) the triangle \((0,0/3,0/1)\) together with the unique horn associated with it. In this manner, we remove
  \[
  (0,0/3,0/1),\quad (0/3, 0/1, 1),\quad (0/3,1,3),\quad (0/3,1,2).
  \]
 We denote the resulting space by \(\tilde{\Sigma}_{0,2}'\), which consists of only triangles. Further removing the triangles \((0/3,0/2,0/1)\) and \((0/3, 0/2, 1)\) from \(\tilde{\Sigma}_{0,2}'\), we denote the resulting space by~\(\tilde{\Sigma}_{0,2}''\). The space \(\tilde{\Sigma}_{0,2}''\) is a un-quotiented ``big horn''. The proof of Lemma~\ref{chap6:lem:Sigma-Kan(2,x)} implies that \(\tilde{\Sigma}_{0,2}''\) is representable. It is routine to verify that, for each horn we removed from \(\tilde{\Sigma}_{0,2}\), the corresponding Kan condition holds. Therefore, \(\tilde{\Sigma}_{0,2}\) is representable and \(\tilde{\Sigma}_{0,2}\to \tilde{\Sigma}_{0,2}''\) is a cover.
\end{proof}

\begin{definition}
  Let \(\Sigma_{0,2}\) be the quotient space of \(\tilde{\Sigma}_{0,2}\) illustrated by Figure~\ref{chap6:fig:Sigma-0-2}, where every snake line stands for a quotient by an action of a groupoid of bigons (it is well-defined by Lemma~\ref{chap6:lem:bigon-triangle}).
\begin{figure}[htbp]
  \centering
  \begin{tikzpicture}%
  [>=latex',
  hdot/.style={mydot,fill=lightgray}, kdot/.style={mydot,fill},
  every label/.style={scale=0.6}]
  \node[mydot,label=180:\(0\)] at (-2,0) (g0){};
  \node[hdot,label=-90:\(0/1\)] at (0,0) (h1){};
  \node[hdot,label=60:\(0/2\)] at (0.5,0.5) (h2){};
  \node[hdot,label=120:\(0/3\)] at (0,1) (h3){};
  \node[kdot,label=-90:\(1\)] at (2,0) (k1){};
  \node[kdot,label=60:\(2\)] at (3,1) (k2){};
  \node[kdot,label=120:\(3\)] at (2,2) (k3){};
  \foreach \i/\j in
  {h1/g0,h2/g0,h3/g0,k1/h1,k2/h2,k3/h3,
  k2/k1,k3/k1,k3/k2}{
  \path[->](\i) edge (\j);
  }
  \foreach \i/\j in
  {h1/h2,h2/h3,h1/h3,k1/h2,k2/h3,k1/h3}{
  \path[->] (\i) edge[snake arrow] (\j);
  }
  \end{tikzpicture}
  \caption{The construction of \(\Sigma_{0,2}\)}\label{chap6:fig:Sigma-0-2}
\end{figure}
\end{definition}

\begin{lemma}\label{chap6:lem:Sigma-0-2-rep-Kan}
  The space \(\Sigma_{0,2}\) is corepresentable, and \(\Kan!(3,i)[1,3]\) holds for \(0\le i\le 3\).
\end{lemma}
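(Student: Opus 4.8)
The plan is to treat the two assertions separately, reusing the machinery already set up for \(\Sigma_{0,1}\). For corepresentability I would argue exactly as in the \(\Sigma_{0,1}\) case: by Lemma \ref{chap6:lem:pre-Sigma-0-2-rep} the space \(\tilde{\Sigma}_{0,2}\) is representable, and by construction \(\Sigma_{0,2}\) is obtained from it by quotienting out the six snake-line actions of groupoids of bigons of \(Y\) shown in Figure \ref{chap6:fig:Sigma-0-2}. These actions mutually commute --- so the order of quotienting is irrelevant, cf.\ Lemma \ref{chap6:lem:bigon-triangle} --- and each is principal, because \(\Gamma\) and \(\Xi\) are right principal and hence the relevant bigon groupoids act principally on the corresponding edge spaces (as in Sections \ref{chap6:ssec:levels012} and \ref{chap6:ssec:level3}). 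Taking one quotient at a time, at each step the acting bigon groupoid is principal while the remaining moment maps are covers, so Corollary \ref{chap1:cor:G-bundles-P-times-Q-pullback} shows the quotient is corepresentable and the surviving actions stay principal with cover moment maps. Iterating over all six snake lines yields corepresentability of \(\Sigma_{0,2}\).

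For the Kan conditions, note first that \(\Sigma_{0,2}=\Hom(\Simp{3}[1,3],\Sigma)\), so \(\Kan!(3,i)[1,3]\) is the assertion that the face map \(\Sigma_{0,2}\to\Hom(\Horn{3}{i}[1,3],\Sigma)\) is an isomorphism. The strategy is to realize the horn space as a quotient of an un-quotiented horn configuration \(\tilde{H}_i\) built from \(\Gamma\) and \(\Xi\), namely the analogue of \(\tilde{\Sigma}_{0,2}\) with the tetrahedra contributing to the \(i\)-th face deleted. First I would show \(\tilde{H}_i\) is representable by peeling off horns one at a time, exactly as in the proof of Lemma \ref{chap6:lem:pre-Sigma-0-2-rep}. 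Next I would show the filling map \(\tilde{\Sigma}_{0,2}\to\tilde{H}_i\) is an isomorphism: the missing \(\Sigma\)-face is assembled from the missing faces of the constituent \(\Gamma\)- and \(\Xi\)-tetrahedra, each determined uniquely by a colored unique Kan condition of \(\Gamma\) or \(\Xi\). For the inner indices \(i=1,2\) only inner conditions enter; the right-outer index \(i=3\) uses the colored conditions with \(j\ge2\); and the left-outer index \(i=0\) uses the left Kan conditions \(\Kan!(m,0)\) of the right principal bibundle \(\Xi\). Finally I would check that the six bigon actions restrict compatibly to \(\tilde{H}_i\), so that \(\tilde{\Sigma}_{0,2}\cong\tilde{H}_i\) is equivariant and, by Lemmas \ref{chap1:lem:invariant-cover-descent} and \ref{chap1:lem:action-over-principal-is-principal}, descends to the required isomorphism \(\Sigma_{0,2}\cong\Hom(\Horn{3}{i}[1,3],\Sigma)\). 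A colored version of Lemma \ref{chap3:lem:kan(n+1,j)-implies-all} could then cut down the number of indices \(i\) needing a separate filling.

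The hard part will be the coherence bookkeeping at the quotient level: one must verify that filling the horn inside \(\Gamma\) and \(\Xi\) and then passing to the quotient is independent of the chosen representatives and of the order in which the constituent triangles are composed and the bigon identifications are made. This is precisely where the pasting theorem for \(2\)-categories (and its simplicial corollary) is indispensable, as it guarantees that the a priori different composites of the small triangles agree; together with the commuting, principal nature of the bigon actions and the consistency already recorded in Lemma \ref{chap6:lem:4-iso-Sigma-0-1}, it yields a well-defined and unique filling. Keeping careful track of the coloring so that only the inner and the permitted (left and right) outer Kan conditions of \(\Gamma\) and \(\Xi\) are invoked --- the construction deliberately avoids the disallowed outer ones --- will be the most delicate part.
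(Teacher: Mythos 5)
Your corepresentability argument (iterated principal quotients of \(\tilde{\Sigma}_{0,2}\) via Corollary~\ref{chap1:cor:G-bundles-P-times-Q-pullback}) is a workable alternative to the paper's route, but the Kan-condition half of your proposal has a genuine gap, and it sits exactly at the delicate point of the whole construction. You want a single space \(\tilde{H}_i\) satisfying two things at once: (a) its quotient by the bigon actions is the colored horn space \(\Hom(\Horn{3}{i}[1,3],\Sigma)\), and (b) the restriction map \(\tilde{\Sigma}_{0,2}\to\tilde{H}_i\) is an isomorphism forced by unique colored Kan conditions of \(\Gamma\) and \(\Xi\). These are incompatible. If \(\tilde{H}_i\) is the honest un-quotiented horn --- the three composite faces glued only along their edges, which is what (a) requires --- then the deleted data cannot be reconstructed uniquely: for \(i=2\), before any dimension-3 unique Kan condition can be invoked, one must first \emph{choose} the auxiliary internal simplices \((0/1,0/2,0/3)\) (a \(\Kan(2,1)\)-filling in \(Y\)) and \((0/2,0/3,1)\) (a \(\Kan(2,0)[2,1]\)-filling in \(\Xi\)), which also supply the missing edges \((0/1,0/3)\) and \((0/3,1)\). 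Since \(X\), \(Y\), \(Z\) are 2\nbdash{}groupoids and \(\Gamma\), \(\Xi\) bibundles between them, uniqueness of horn fillers holds only above dimension \(2\); these dimension-2 fillings are covers with nontrivial fibres, so \(\tilde{\Sigma}_{0,2}\to\tilde{H}_i\) is a cover, not an isomorphism. If instead you enlarge \(\tilde{H}_i\) to retain the internal simplices so that (b) holds, then (a) fails: the quotient of that enlarged space is not visibly the horn space, and identifying the two is precisely the argument that is missing.

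The paper resolves this with a two-step factorization, which your proposal collapses into one step. In the notation of Lemma~\ref{chap6:lem:pre-Sigma-0-2-rep}, one has \(\tilde{\Sigma}_{0,2}\xrightarrow{\cong}\tilde{\Sigma}_{0,2}'\to\tilde{\Sigma}_{0,2}''\): the first map is an isomorphism because the removed triangles \((0,0/1,0/3)\), \((0/1,0/3,1)\), \((0/3,1,3)\), \((0/3,1,2)\) are recovered by dimension-3 unique Kan conditions alone; the second map, forgetting the auxiliary triangles \((0/1,0/2,0/3)\) and \((0/2,0/3,1)\), is only an equivariant cover. The crucial step is then descent: the fibres of this cover are exactly orbits of the bigon actions being quotiented, so by Lemma~\ref{chap1:lem:action-over-principal-is-principal} it descends, and one obtains \(\Sigma_{0,2}=\tilde{\Sigma}_{0,2}/{\sim}\cong\tilde{\Sigma}_{0,2}''/{\sim}\cong\Hom(\Horn{3}{2}[1,3],\Sigma)\), which yields corepresentability and \(\Kan!(3,2)[1,3]\) simultaneously. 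The uniqueness in \(\Kan!\) is thus a statement about quotients and cannot be extracted from un-quotiented unique fillings; your final step (``the bigon actions restrict compatibly, so the isomorphism descends'') silently presupposes that the quotient of your \(\tilde{H}_i\) is the horn space, which is the very thing to be proved. (A minor point: the pasting theorem is not what is needed here --- it enters later, for dimension 4 and for comparing different decompositions; in this lemma the workhorse is equivariant descent.)
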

\begin{proof}
  We show the condition \(\Kan!(3,2)[1,3]\); the other cases are similar. Consider the canonical maps in the proof of Lemma~\ref{chap6:lem:pre-Sigma-0-2-rep}:
  \[
  \tilde{\Sigma}_{0,2}\xrightarrow{\cong} \tilde{\Sigma}_{0,2}'\to \tilde{\Sigma}_{0,2}''.
  \]
  The last map is an equivariant cover, so it descends to a cover by Lemma~\ref{chap1:lem:action-over-principal-is-principal}. Observe that \(\tilde{\Sigma}_{0,2}/{\sim}\cong\tilde{\Sigma}_{0,2}'/{\sim} \cong \tilde{\Sigma}_{0,2}''/{\sim}\) and \(\tilde{\Sigma}_{0,2}''/{\sim}\cong \Hom(\Horn{3}{2}[1,3], \Sigma)\). It follows that \(\Sigma_{0,2}\) is corepresentable and that \(\Kan!(3,2)[1,3]\) holds.
\end{proof}

\begin{remark}
  The proof tells us that, given a horn \(\Horn{3}{2}[1,3]\) in \(\Sigma\), we can choose any representative and add arbitrary auxiliary simplices \((0/1,0/2,0/3)\) and \((0/2,0/3,1)\) to obtain the missing faces \((0,0/1,0/3)\), \((0/1,0/3,1)\), and \((0/3,1,3)\). After taking quotients, the result is independent of the choices.
\end{remark}

The face and degeneracy maps \(\de_1, \de_2\colon \Sigma_{0,1}\to \Sigma_{0,2}\) are natural to define. For instance, to define \(\de_2\), we first define a map \(\tilde{\de}_2\colon\tilde{\Sigma}_{0,1}\to \tilde{\Sigma}_{0,2}\) by degeneracy maps as in Figure~\ref{chap6:fig:Sigma-0-1-s2-Sigma-0-2}. This map is equivariant with respect to the actions, so it descends to a map \(\de_{2}\colon \Sigma_{0,1}\to \Sigma_{0,2}\). The simplicial identities hold by construction.
\begin{figure}[htbp]
  \centering
  \begin{tikzpicture}%
  [>=latex',
  hdot/.style={mydot,fill=lightgray}, kdot/.style={mydot,fill},
  every label/.style={scale=0.6}]
  \node[mydot,label=180:\(0\)] at (-2,0) (g0){};
  \node[hdot,label=-90:\(0/1\)] at (0,0) (h1){};
  \node[hdot,label=-60:\(0/2\)] at (0.5,0.5) (h2){};
  \node[hdot,label=120:\(0/3\)] at (0.5,0.7) (h3){};
  \node[kdot,label=-90:\(1\)] at (2,0) (k1){};
  \node[kdot,label=60:\(2\)] at (3,1) (k2){};
  \node[kdot,label=120:\(3\)] at (3,1.4) (k3){};
  \foreach \i/\j in
  {h1/g0,h2/g0,h3/g0,k1/h1,k2/h2,k3/h3,
  k2/k1,k3/k1,k3/k2,h1/h2,h2/h3,h1/h3,
  k1/h2,k2/h3,k1/h3}{
  \path[->](\i) edge (\j);
  }
  \end{tikzpicture}
  \caption{``Degeneracy'' map \(\tilde{\de}_2\colon\tilde{\Sigma}_{0,1}\to \tilde{\Sigma}_{0,2}\)}\label{chap6:fig:Sigma-0-1-s2-Sigma-0-2}
\end{figure}

Like \(\Sigma_{0,1}\), we may use other decompositions to define~\(\tilde{\Sigma}_{0,2}\) hence~\(\Sigma_{0,2}\). It is routine to verify that, for every other decomposition, we also have \(\Kan!(3,i)[1,3]\) for \(0\le i<1\). We claim that the result is independent of the choice.

First, for arrows in \(Y\) we may choose other direction. In fact, applying a suitable variant of Lemma~\ref{chap6:lem:bigon-triangle}, we can invert arrows in \(Y\).

Consider a map \(\partial\Simp{3}[1,3]\to \Sigma\), consisting of four faces, in two different decompositions. Each pair of faces may be decomposed differently but respects the isomorphisms in Lemma~\ref{chap6:lem:4-iso-Sigma-0-1}. Suppose that for certain decomposition these four faces are compatible that they respect the Kan conditions in Lemma~\ref{chap6:lem:Sigma-0-2-rep-Kan}, then we shall show that they are compatible for the other decomposition. Choosing appropriate representatives, we may arrange that the tetrahedra \((0,0/1,0/2,0/3)\) in \(\Gamma\) are the same, so we need only consider the remaining part in \(\Xi\). Without loss of generality, we may suppose that the two decompositions only differ on one face. We show below for one case; the other cases can be proved similarly.

\begin{lemma}\label{chap6:lem:Sigma-0-2-var}
Consider the two sets of eight triangles in \(\Xi\) as in Figure~\ref{chap6:fig:pre-Sigma-0-2-rep-var}. The left picture uses the standard decomposition, and the right differs only by one face. Suppose that the two decompositions are related by isomorphisms as in Lemma~\ref{chap6:lem:4-iso-Sigma-0-1}. If the eight triangles in the standard decomposition respect the Kan conditions in Lemma~\ref{chap6:lem:Sigma-0-2-rep-Kan}, then so do the eight triangles in the other decomposition.
\begin{figure}[htbp]
  \centering
  \begin{tikzpicture}%
  [>=latex',
  hdot/.style={mydot,fill=lightgray}, kdot/.style={mydot,fill},
  every label/.style={scale=0.6}]
  \node[hdot,label=-90:\(0/1\)] at (0,0) (h1){};
  \node[hdot,label=60:\(0/2\)] at (0.5,0.5) (h2){};
  \node[hdot,label=120:\(0/3\)] at (0,1) (h3){};
  \node[kdot,label=-90:\(1\)] at (2,0) (k1){};
  \node[kdot,label=60:\(2\)] at (3,1) (k2){};
  \node[kdot,label=120:\(3\)] at (2,2) (k3){};
  \foreach \i/\j in
  {
  k1/h1,k2/h2,k3/h3,
  k2/k1,k3/k1,k3/k2,h1/h2,h2/h3,h1/h3,
  k1/h2,k2/h3,k1/h3}{
  \path[->](\i) edge (\j);
  }
  \begin{scope}[xshift=5cm]
  \node[hdot,label=-90:\(0/1\)] at (0,0) (h1){};
  \node[hdot,label=60:\(0/2\)] at (0.5,0.5) (h2){};
  \node[hdot,label=120:\(0/3\)] at (0,1) (h3){};
  \node[kdot,label=-90:\(1\)] at (2,0) (k1){};
  \node[kdot,label=60:\(2\)] at (3,1) (k2){};
  \node[kdot,label=120:\(3\)] at (2,2) (k3){};
  \foreach \i/\j in
  {
  k1/h1,k2/h2,k3/h3,
  k2/k1,k3/k1,k3/k2,h1/h2,h2/h3,h1/h3,
  k2/h1,k2/h3,k1/h3}{
  \path[->](\i) edge (\j);
  }
  \end{scope}
  \end{tikzpicture}
  \caption{Two different decompositions}\label{chap6:fig:pre-Sigma-0-2-rep-var}
\end{figure}
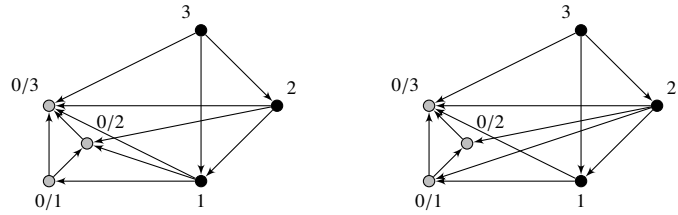
\end{lemma}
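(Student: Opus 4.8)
The plan is to recognise the passage from the left to the right decomposition as a single bistellar $2$–$2$ flip performed entirely inside $\Xi$, and to implement this flip by a unique inner Kan filling. Comparing the two lists of edges, the only difference is that the left picture contains the edge $1\to 0/2$ while the right contains $2\to 0/1$; these are precisely the two diagonals of the internal quadrilateral with corners $0/1,0/2$ (gray) and $1,2$ (black). Consequently the standard decomposition contains the two triangles $(0/1,0/2,1)$ and $(0/2,1,2)$, whereas the other contains $(0/1,0/2,2)$ and $(0/1,1,2)$; the remaining six triangles are common to both. First I would isolate the $3$-simplex $\tau=(0/1,0/2,1,2)$, which, having two gray and two black vertices, is an element of $\Xi_{1,1}$.

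Next I would fill $\tau$ using the Kan conditions for $\Xi$. Since $\Xi$ is a bibundle between $2$-groupoids and $\dim\tau=3>2$, it satisfies the unique inner colored conditions $\Kan!(3,1)[2,2]$ and $\Kan!(3,2)[2,2]$; equivalently, by Proposition~\ref{chap4:prop:TX-I-lifting-properties} the relevant row and column restrictions are $2$-groupoid Kan fibrations, so any inner horn on $\{0/1,0/2,1,2\}$ has a unique filler. Feeding the two left faces $(0/1,0/2,1)$ and $(0/2,1,2)$, together with the two faces of $\tau$ that are shared by both pictures, into such an inner horn produces the unique simplex $\tau\in\Xi_{1,1}$, and the two remaining faces of $\tau$ are exactly the triangles $(0/1,0/2,2)$ and $(0/1,1,2)$ of the right decomposition. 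This simultaneously defines the flipped faces and records the relation between the two diagonals.

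With $\tau$ in hand, the two decompositions differ only in this pair of faces, all other data being shared. To transfer compatibility I would glue $\tau$ onto the configuration and invoke the pasting theorem for $2$-groupoids in its simplicial form: the six shared triangles together with $\tau$ assemble into a labelling of a pasting scheme whose composite can be read off in two ways, and the inner conditions $\Kan!(4,k)[2,3]$ for $\Xi$ (on the $4$-simplex $(0/1,0/2,1,2,3)\in\Xi_{1,2}$, say) force the two readings to agree. Hence if the eight standard triangles respect the fillings of Lemma~\ref{chap6:lem:Sigma-0-2-rep-Kan}, so do the eight triangles of the other decomposition. Because the filler $\tau$ is unique and the horn-filling is equivariant for the actions of the groupoids of bigons $\arrow{X}$, $\arrow{Y}$, and $\arrow{Z}$, this correspondence descends to the quotients and, by construction, intertwines the isomorphisms of Lemma~\ref{chap6:lem:4-iso-Sigma-0-1} on the shared faces; a variant of Lemma~\ref{chap6:lem:bigon-triangle} handles the case where an edge of $Y$ is inverted.

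The main obstacle is the bookkeeping of the colourings: I must verify that the $3$- and $4$-simplices used in the flip and in the pasting argument are of colored type $[2,2]$ and $[2,3]$ respectively, so that only inner (hence legitimate) Kan conditions are invoked — the paper stresses that the pasting theorem is available only for $2$-categories, so outer horns must be avoided throughout. A secondary but routine point is confirming that the unique filler $\tau$ is equivariant under the three bigon actions, since this is precisely what guarantees that the flip is well defined after passing to the quotient $\Sigma_{0,2}$.
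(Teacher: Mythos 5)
Your identification of the combinatorics is exactly right: the two pictures differ by a flip of the diagonal of the quadrilateral $(0/1,0/2,1,2)$, and the engine of any proof must be a $3$\nbdash{}simplex $\tau=(0/1,0/2,1,2)\in\Xi_{1,1}$ whose four faces are the left pair $(0/1,0/2,1)$, $(0/2,1,2)$ together with the right pair $(0/1,0/2,2)$, $(0/1,1,2)$. But your construction of $\tau$ is where the argument breaks. A $3$\nbdash{}dimensional inner horn consists of exactly three faces, and the four faces of $\tau$ are precisely the two left triangles plus the two right ones --- there are no ``faces of $\tau$ shared by both pictures'', so the horn you propose to fill does not exist. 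Even if you repaired the count by adjoining one of the \emph{given} right triangles as the third face of an inner horn, the unique filler's fourth face would be \emph{some} triangle, and you would still owe a proof that it equals the other \emph{given} right triangle; instead you simply assert that ``the two remaining faces of $\tau$ are exactly the triangles of the right decomposition'', which begs the question. The deeper problem is that your proposal never uses the hypothesis that the two decompositions are related by the isomorphisms of Lemma~\ref{chap6:lem:4-iso-Sigma-0-1} --- and without it the statement is false: an arbitrary pair of triangles filling the flipped quadrilateral with the correct boundary edges need not be compatible with the other six. That hypothesis is exactly what supplies $\tau$: the isomorphism of Lemma~\ref{chap6:lem:4-iso-Sigma-0-1} is built from the associator~\eqref{chap6:eq:associator-alpha}, under which two pairs correspond precisely when, for suitable representatives in their orbits under the bigon actions, the four triangles cobound a $3$\nbdash{}simplex --- equivalently, as the paper phrases it, exactly when their pasting composites are equal.

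Once $\tau$ is extracted from the hypothesis in this way, the rest of your plan does go through, and only inner horns are needed: fill the inner $3$\nbdash{}horn of $(0/1,0/2,0/3,2)$ on the faces $(0/2,0/3,2)$, $(0/1,0/2,2)$, $(0/1,0/2,0/3)$ to get the tetrahedron $T_1'$; then the faces $T_2=(0/2,0/3,1,2)$, $\tau$, $T_1'$, $T_1=(0/1,0/2,0/3,1)$ constitute the inner horn $\Horn{4}{1}$ of $(0/1,0/2,0/3,1,2)$, whose unique filler produces $(0/1,0/3,1,2)$ with precisely the prescribed faces, including the given right triangle $(0/1,1,2)$ as a face of $\tau$. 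Compatibility is invariant under the bigon actions (variants of Lemma~\ref{chap6:lem:bigon-triangle}), so the choice of representatives is harmless. Note that the paper's own proof is shorter: it converts the hypothesis into the statement that the two pairs have equal pasting composites, reads each picture as expressing the face $(0/2,0/3,2,3)$ as the pasting composite of the remaining boundary faces, and concludes by the uniqueness assertion of the pasting theorem, with no simplex-by-simplex filling at all.
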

\begin{proof}
Two pairs of triangles on a face are related by isomorphisms as in Lemma~\ref{chap6:lem:4-iso-Sigma-0-1} if and only if their pasting composites are equal. The two pictures in Figure~\ref{chap6:fig:pre-Sigma-0-2-rep-var} can be interpreted as expressing \((0/2,0/3,2,3)\) as the pasting composites of other faces on the boundary. The claim then follows from the pasting theorem.
\end{proof}

We now construct \(\Sigma_{1,1}\); the idea is the same.

\begin{definition}
 The space \(\Sigma_{1,1}\) is defined by Figure~\ref{chap6:fig:Sigma-1-1}, which has six tetrahedra fitting together.
\begin{figure}[htbp]
  \centering
  \begin{tikzpicture}%
  [>=latex',
  hdot/.style={mydot,fill=lightgray}, kdot/.style={mydot,fill},
  every label/.style={scale=0.6}]
  \node[mydot,label=180:\(0\)]  at (-3,0)(g0){};
  \node[mydot,label=-120:\(1\)] at (1,-1)(g1){};
  \node[kdot,label=-60:\(2\)]   at (3,0) (k2){};
  \node[kdot,label=0:\(3\)]     at (2,3) (k3){};
  \node[hdot,label=120:\(0/3\)] at (-0.5,1.5) (h03){};
  \node[hdot,label=-120:\(0/2\)] at (0,0) (h02){};
  \node[hdot,label=-60:\(1/2\)] at (2,-0.5)(h12){};
  \node[hdot,label=30:\(1/3\)] at (1.5,1) (h13){};
  \foreach \i/\j in
  {g1/g0,h02/g0,k2/h02,h03/g0,k3/h03,
  h12/g1,k2/h12,h13/g1,k3/h13,k3/k2}{
  \path[->] (\i) edge (\j);
  }
  \foreach \i/\j in
  {h02/g1,h12/h02,h03/g1,h13/h03,
  h12/h13,k2/h13,h02/h03,k2/h03,h13/h02}{
  \path[->] (\i) edge[snake arrow] (\j);
  }
\end{tikzpicture}
\caption{The construction of \(\Sigma_{1,1}\)}\label{chap6:fig:Sigma-1-1}
\end{figure}
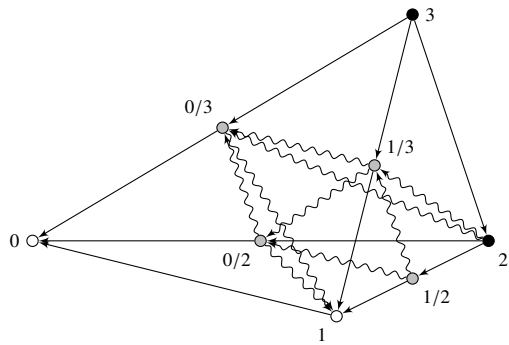
\end{definition}

The lemma below is similar to Lemma~\ref{chap6:lem:Sigma-0-2-rep-Kan}.
\begin{lemma}
The space \(\Sigma_{1,1}\) is corepresentable and \(\Kan(3,i)[2,2]\) holds for \(0\le i\le 3\).
\end{lemma}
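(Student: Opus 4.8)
The plan is to follow the proof of Lemma~\ref{chap6:lem:Sigma-0-2-rep-Kan} for $\Sigma_{0,2}$ almost verbatim, the only change being that the configuration in Figure~\ref{chap6:fig:Sigma-1-1} now consists of six tetrahedra rather than four. First I would introduce the un-quotiented space $\tilde{\Sigma}_{1,1}$, defined by the same picture as Figure~\ref{chap6:fig:Sigma-1-1} with every snake arrow replaced by an ordinary arrow; concretely, $\tilde{\Sigma}_{1,1}$ is the space of all ways six compatible tetrahedra of $\Gamma$ and $\Xi$ glue along the prescribed shared faces. Its representability is proved exactly as in Lemma~\ref{chap6:lem:pre-Sigma-0-2-rep}: one successively removes a triangle together with the (inner or colored outer) horn it completes, each such horn filling being an instance of a Kan condition already available for $\Gamma$ or $\Xi$, until one is left with an un-quotiented ``big horn''. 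That big horn is representable by the proof of Lemma~\ref{chap6:lem:Sigma-Kan(2,x)}, assembled from the lower-dimensional configuration spaces $\tilde{\Sigma}_{0,1}$ and $\tilde{\Sigma}_{1,0}$, and each removal step is a cover, so $\tilde{\Sigma}_{1,1}$ is representable and the canonical map onto the big horn is a cover.

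Next I would set $\Sigma_{1,1}$ to be the quotient of $\tilde{\Sigma}_{1,1}$ by the actions of the groupoids of bigons recorded by the snake arrows of Figure~\ref{chap6:fig:Sigma-1-1}. As for $\Sigma_{0,2}$, the composites of a bigon with a triangle and of a bigon with a tetrahedron are well defined by Lemma~\ref{chap6:lem:bigon-triangle} and its inverse variant (Remark~\ref{chap6:lem:inverse-bigon-triangle}); since the bigon actions attached to distinct snake arrows pairwise commute, the order in which the quotients are taken is immaterial, so $\Sigma_{1,1}$ is well defined.

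For corepresentability and the Kan conditions I would run the descent argument of Lemma~\ref{chap6:lem:Sigma-0-2-rep-Kan}. Fix $0\le i\le 3$ and reduce $\tilde{\Sigma}_{1,1}$ through a chain of equivariant maps $\tilde{\Sigma}_{1,1}\xrightarrow{\cong}\tilde{\Sigma}_{1,1}'\to\tilde{\Sigma}_{1,1}''$, where $\tilde{\Sigma}_{1,1}''$ is the un-quotiented big horn whose quotient by the bigon actions is $\Hom(\Horn{3}{i}[2,2],\Sigma)$. The last map is an equivariant cover between spaces carrying principal bigon actions (principality coming from the right principality of $\Gamma$ and $\Xi$), hence it descends to a cover by Lemma~\ref{chap1:lem:action-over-principal-is-principal}, while the invariance of the outer face yields the descent of the remaining data by Lemma~\ref{chap1:lem:invariant-cover-descent}. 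Since $\tilde{\Sigma}_{1,1}/{\sim}\cong\tilde{\Sigma}_{1,1}'/{\sim}\cong\tilde{\Sigma}_{1,1}''/{\sim}$, this exhibits $\Sigma_{1,1}$ as corepresentable and identifies the canonical map $\Sigma_{1,1}\to\Hom(\Horn{3}{i}[2,2],\Sigma)$ with a cover, which is precisely $\Kan(3,i)[2,2]$. For the inner horns $i=1,2$ the reduction maps are isomorphisms, so one in fact obtains the unique conditions $\Kan!(3,1)[2,2]$ and $\Kan!(3,2)[2,2]$.

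The main obstacle will be entirely combinatorial: one must organise the six tetrahedra and the associated bigon actions so that, for each $i$, the horn-stripping produces exactly the big horn matching $\Horn{3}{i}[2,2]$, and one must check---as in Lemma~\ref{chap6:lem:Sigma-0-2-var}, via the pasting theorem---that the resulting space does not depend on the chosen triangulation of the prism joining the edge $\{0,1\}$ in $X$ to the edge $\{2,3\}$ in $Z$. No ingredient beyond those used for $\Sigma_{0,2}$ is needed; every step reduces to Kan conditions for $\Gamma$ and $\Xi$ together with Lemmas~\ref{chap1:lem:invariant-cover-descent} and~\ref{chap1:lem:action-over-principal-is-principal}.
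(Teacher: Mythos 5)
Your proposal matches the paper's proof essentially verbatim: the paper likewise introduces the un-quotiented space \(\tilde{\Sigma}_{1,1}\), strips a list of triangles together with their associated horns to reach a big-horn space \(\tilde{\Sigma}_{1,1}''\), verifies that \(\tilde{\Sigma}_{1,1}\to\tilde{\Sigma}_{1,1}''\) is a cover, and identifies \(\tilde{\Sigma}_{1,1}/{\sim}\cong\tilde{\Sigma}_{1,1}''/{\sim}\cong\Hom(\Horn{3}{2}[2,2],\Sigma)\), from which corepresentability and \(\Kan(3,2)[2,2]\) follow, the remaining horns being treated similarly. Your closing remark that the inner-horn reductions are isomorphisms (giving the unique Kan conditions) is likewise consistent with the isomorphisms exhibited in the paper's argument.
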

\begin{proof}
Denote the un-quotiented space by \(\tilde{\Sigma}_{1,1}\). Removing from \(\tilde{\Sigma}_{1,1}\) the following six triangles and the associated horns in order:
\begin{gather*}
 (0,1,0/3),\quad (1,0/3,1/3),\quad (1,0/2,1/2),\\
 (0/3,1/3,3),\quad (0/3,1/3,2), \quad (0/2,1/2,1/3),
\end{gather*}
we denote the resulting space by \(\tilde{\Sigma}_{1,1}'\). Removing from \(\tilde{\Sigma}_{1,1}'\) the following triangles
\[
  (1,0/2,0/3), \quad (0/2,0/3,1/3),\quad (0/2,1/2,1/3),
\]
we denote the resulting space by \(\tilde{\Sigma}_{1,1}''\). It is routine to verify that \(\tilde{\Sigma}_{1,1}\to\tilde{\Sigma}_{1,1}''\) is a cover, and we get isomorphisms in \(\Cat\)
\[
\tilde{\Sigma}_{1,1}/{\sim}\cong \tilde{\Sigma}_{1,1}''/{\sim}\cong\Hom(\Horn{3}{2}[2,2],\Sigma).
\]
This shows that \(\Sigma_{1,1}=\tilde{\Sigma}_{1,1}/{\sim}\) is corepresentable and that \(\Kan(3,2)[2,2]\) holds. The other Kan conditions follow similarly.
\end{proof}

The face maps \(\face_0,\face_1\colon \Sigma_{1,1}\to \Sigma_{0,1}\) and  \(\face_2,\face_3\colon \Sigma_{1,1}\to \Sigma_{1,0}\) and the degeneracy maps \(\de_{2}\colon \Sigma_{1,0}\to \Sigma_{1,1}\) and \(\de_{0}\colon\Sigma_{0,1}\to \Sigma_{1,1}\) are natural to define. The simplicial identities hold by construction.

\begin{remark}
  We can also use different decompositions for \(\tilde{\Sigma}_{1,1}\) and \(\Sigma_{1,1}\). Four faces can be decomposed differently, and we can also connect \((1/2,0/3)\) instead of \((0/2,1/3)\) in Figure~\ref{chap6:fig:Sigma-1-1}. We claim that the results are the same for different decompositions. First, the direction of the arrows in \(Y\) does not matter. For two different decompositions, we choose appropriate representatives such that the four faces of \((0/2,0/3,1/2,1/3)\) are compatible or the same, then we divide the bigger tetrahedron into two parts. It suffices to consider the two parts separately. This is almost the same as Lemma~\ref{chap6:lem:Sigma-0-2-var}, except that the vertices are labeled differently, and the proof is the similar.
\end{remark}

\begin{proposition}
  The groupoid of bigons in \(\Sigma\) \textup(with color white and black\textup) is isomorphic to the fundamental groupoid \(\tau(R'_0\Gamma\times_{Y} C'_0\Xi)\).
\end{proposition}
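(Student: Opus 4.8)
The plan is to exhibit an isomorphism of groupoids $\arrow{\Sigma}\cong\tau(R'_0\Gamma\times_{Y} C'_0\Xi)$ in $(\Cat,\covers)$ by matching object spaces, morphism spaces, and all four structure maps, using the explicit low-dimensional construction of $\Sigma$ together with Lemma~\ref{chap6:lem:A-X:eqi-A=X-on-A1}. Throughout I write $W=R'_0\Gamma\times_{Y} C'_0\Xi$. Since both $\Gamma$ and $\Xi$ are right principal, Corollary~\ref{chap6:cor:princial-1-implies-pullback} applies (the acyclic fibration $R'_0\Gamma\to\sk_0 Y_0$ comes from right principality of $\Gamma$), so $\tau(W)$ is a genuine groupoid in $(\Cat,\covers)$; dually, the groupoid of bigons $\arrow{\Sigma}$ is the fibre of $R'_0\Sigma\to Z$, a $1$-groupoid in $(\Cat,\covers)$ by Proposition~\ref{chap4:prop:TX-I-lifting-properties} and Lemma~\ref{chap3:lem:fibre_exists}. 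Thus both sides are known a priori to be objects of the same category, and it suffices to build a structure-preserving bijection.

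First I would match objects. By construction $\arrow{\Sigma}_0=\Sigma_{0,0}=\Gamma_{0,0}\times_{\face_1,Y_0,\face_0}\Xi_{0,0}$, while $(R'_0\Gamma)_0=\Gamma_{0,0}$ and $(C'_0\Xi)_0=\Xi_{0,0}$, so $W_0=\Gamma_{0,0}\times_{Y_0}\Xi_{0,0}=\tau(W)_0$; the two object spaces coincide on the nose. Next, morphisms. On one side, $\tau(W)_1=W_1/{\sim}$ with $W_1=(R'_0\Gamma)_1\times_{Y_1}(C'_0\Xi)_1=\Gamma_{0,1}\times_{Y_1}\Xi_{1,0}$; applying Lemma~\ref{chap6:lem:A-X:eqi-A=X-on-A1} to the Kan fibrations $R'_0\Gamma\to Y$ and $C'_0\Xi\to Y$ identifies the bigon equivalence $\sim$ with the diagonal action of $\arrow{Y}$ on the shared $Y_1$-component, so $\tau(W)_1\cong(\Gamma_{0,1}\times_{Y_1}\Xi_{1,0})/\arrow{Y}$. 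On the other side, unwinding the TL-construction of $\Sigma_{0,1}$ in Figure~\ref{chap6:fig:bibd-comp-Sigma01} and imposing the bigon condition $\face_0\in\de_0\Sigma_{-1,0}$ (degeneracy of the black edge $(1,2)$) collapses the $\Xi_{0,1}$-triangle and leaves exactly a $\Gamma_{0,1}$-triangle $(0,0/1,0/2)$ glued to a $\Xi_{1,0}$-triangle $(0/1,0/2,1)$ along their common $Y_1$-edge $(0/1,0/2)$, modulo the $\arrow{Y}$-quotient recorded by the snake line. This presents $\arrow{\Sigma}_1$ as the same quotient $(\Gamma_{0,1}\times_{Y_1}\Xi_{1,0})/\arrow{Y}$. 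I would then read off from the face and degeneracy maps that the source $\face_1$ and target $\face_2$ of $\arrow{\Sigma}$ correspond to $\source([\gamma])=\face_0\gamma$ and $\target([\gamma])=\face_1\gamma$ in $\tau(W)$ (Proposition~\ref{chap6:prop:fundamental-groupoid}), and that the unit $\de_1$ of $\arrow{\Sigma}$ corresponds to $x\mapsto[\de_0 x]$.

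The main obstacle, and the step on which I would spend the most effort, is the composition. In $\tau(W)$ the product is computed from a $2$-simplex $\theta\in W_2=\Gamma_{0,2}\times_{Y_2}\Xi_{2,0}$ via $[\gamma_2]\circ[\gamma_0]=[\gamma_1]$, whereas in $\arrow{\Sigma}$ it is computed from the quotient construction of $\Sigma_{0,2}$ in Figure~\ref{chap6:fig:Sigma-0-2} together with the colored Kan condition $\Kan!(3,2)[1,3]$ of Lemma~\ref{chap6:lem:Sigma-0-2-rep-Kan}. I would show that, restricted to the bigon subspaces, a $\Sigma_{0,2}$-configuration is the same datum as an element of $W_2$ glued over $Y_2$ and quotiented by $\arrow{Y}$ — once more by Lemma~\ref{chap6:lem:A-X:eqi-A=X-on-A1} applied to $W\to Y$ — and that the Kan lift determining the composite face in $\Sigma$ corresponds under this identification to the $2$-groupoid composition in $W$.

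The key tool making this last match work is the pasting theorem for $2$-categories: it guarantees that the composite extracted from the $\Sigma_{0,2}$-quotient is independent of the many auxiliary simplices and of the order in which the $\arrow{Y}$-quotients are taken, exactly as it was used to prove the well-definedness statements for $\Sigma_{0,2}$ (Lemma~\ref{chap6:lem:Sigma-0-2-var}). Once source, target, unit, and composition are all shown to correspond, the bijections on objects and morphisms — each realized as an isomorphism of representable (or corepresentable) objects of $\Cat$ — assemble into an isomorphism of groupoids in $(\Cat,\covers)$, which is the assertion.
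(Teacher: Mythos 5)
Your proposal is correct and follows essentially the same route as the paper's proof: object spaces agree on the nose, the arrow spaces are both identified with \((\Gamma_{0,1}\times_{Y_1}\Xi_{1,0})/\arrow{Y}\) by collapsing the \(\Xi\)-bigon factor via principality (the paper's Figure-based isomorphism is exactly your Lemma~\ref{chap6:lem:A-X:eqi-A=X-on-A1} argument), and the compositions are matched by specializing the \(\Sigma_{0,2}\)-configuration to the bigon case and discarding the redundant black vertices, which is precisely the paper's degenerate-filler trick. The only cosmetic difference is that you frame the representability of both sides upfront via Corollary~\ref{chap6:cor:princial-1-implies-pullback} and invoke the pasting theorem for well-definedness, points the paper leaves implicit.
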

\begin{proof}
  First, these two groupoids have the same spaces of objects. Considering Figure~\ref{chap6:fig:iso-gpd-bigon-is-tau}, we deduce that
  \[
  ((\Gamma_{0,1}\times_{Y_1} \Xi_{1,0})/\arrow{X}\times_{\Xi_{0,0}} \arrow{\Xi}_1/\arrow{\Xi}\cong (\Gamma_{0,1}\times_{Y_1} \Xi_{1,0})/\arrow{X},
  \]
  which proves that the spaces of arrows are the same. It is also clear that they have the same unit, source, and target.
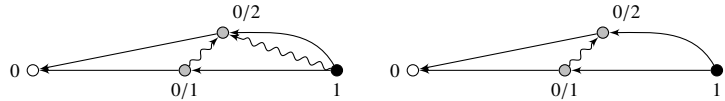
\begin{figure}[htbp]
  \centering
  \begin{tikzpicture}%
  [>=latex',
  hdot/.style={mydot,fill=lightgray}, kdot/.style={mydot,fill},
  every label/.style={scale=0.6}]
  \node[mydot,label=180:\(0\)] at (-2,0) (g0){};
  \node[hdot,label=-90:\(0/1\)] at (0,0) (h1){};
  \node[hdot,label=60:\(0/2\)] at (0.5,0.5) (h2){};
  \node[kdot,label=-90:\(1\)] at (2,0) (k1){};
  \foreach \i/\j in {h1/g0,h2/g0,k1/h1}{
  \path[->] (\i) edge (\j);
  }
  \path[->] (k1) edge[out=120,in=0] (h2);
  \path
  (h1) edge[snake arrow] (h2)
  (k1) edge[snake arrow] (h2);
  \begin{scope}[xshift=5cm]
  \node[mydot,label=180:\(0\)] at (-2,0) (g0){};
  \node[hdot,label=-90:\(0/1\)] at (0,0) (h1){};
  \node[hdot,label=60:\(0/2\)] at (0.5,0.5) (h2){};
  \node[kdot,label=-90:\(1\)] at (2,0) (k1){};
  \foreach \i/\j in {h1/g0,h2/g0,k1/h1}{
  \path[->] (\i) edge (\j);
  }
  \path[->] (k1) edge[out=120,in=0] (h2);
  \path (h1) edge[snake arrow] (h2);
  \end{scope}
  \end{tikzpicture}
  \caption{Two expressions for the space of bigons}\label{chap6:fig:iso-gpd-bigon-is-tau}
\end{figure}

It remains to show that the compositions are the same. Suppose that vertices \(1, 2, 3\) in Figure~\ref{chap6:fig:pre-Sigma-0-2} are identical and that \((0/3,0/2,1,2)=\de_2 (0/3,0/2,1)\) and \((0/3,1,2,3)=\de_2(0/3,1,2)\) are degenerate. Using this picture we compute the composition in the groupoids of bigons. Ignoring the vertices \(2,3\) from the picture, we get the composition in \(\tau(R'_0\Gamma\times_{Y} C'_0\Xi)\). This proves that the two compositions are the same.
\end{proof}

Indeed, we first figured out what is the groupoid of bigons for the desire composite bibundle, then we realise how to construct the actions.

\subsection{Dimension 4}

To complete the construction of \(\Sigma\), we need to verify all desired conditions \(\Kan!(4,k)[i,j]\) for
\(0\le k<4, i, j\ge 0\), and \(k=4, j\ge 2 \). For fixed \((i,j)\), considering a colored version of Lemma~\ref{chap3:lem:kan(n+1,j)-implies-all}, we need only verify \(\Kan!(4,k)[i,j]\) for some~\(k\).

\begin{lemma}
The condition \(\Kan!(4,3)[1,4]\) holds for\/ \(\Sigma\); the condition \(\Kan!(4,1)[4,1]\) holds by symmetry.
\end{lemma}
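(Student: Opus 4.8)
The plan is to reduce the filling problem for the colored inner horn $\Horn{4}{3}[1,4]$ in $\Sigma$ to the corresponding unique inner Kan conditions in the two factor bibundles $\Gamma$ and $\Xi$, exactly as the dimension-3 statement $\Kan!(3,i)[1,3]$ (Lemma~\ref{chap6:lem:Sigma-0-2-rep-Kan}) was handled one dimension lower. First I would record representability: since $3$ is an interior vertex, $\Sp(4)\to\Horn{4}{3}$ is an inner collapsible extension (Lemma~\ref{chap2:lem:spine-horn-inner}), and the lower colored inner Kan conditions already established in dimension~$3$ show, as in Remark~\ref{chap4:rem:low-inner-kan-imply-repr}, that $\Hom(\Horn{4}{3}[1,4],\Sigma)$ is representable. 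Note also that by the colored variant of Lemma~\ref{chap3:lem:morphism-kan(n+1,j)-implies-all} it suffices to treat the single index $k=3$.

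Next I would lift a given horn $\Horn{4}{3}[1,4]\to\Sigma$ to an un-quotiented configuration of simplices in $\Gamma$ and $\Xi$ glued along $Y$, introducing the gray shadow vertices $0/1,0/2,0/3,0/4$ as in Figure~\ref{chap6:fig:pre-Sigma-0-2} but with one additional black vertex. The white vertex $0$ together with the four gray shadows spans a $4$-simplex of $\Gamma$ colored $[1,4]$, while the gray shadows together with the black vertices $1,2,3,4$ assemble into a configuration of $4$-simplices of $\Xi$. The missing face $(0,1,2,4)$ of the horn refines into the face $(0,0/1,0/2,0/4)$ of the $\Gamma$-part, which omits the gray vertex $0/3$, and into corresponding $\Xi$-faces; in every factor the omitted vertex is interior, so all of the resulting factor-horns are inner.

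Then I would fill these factor-horns uniquely: in $\Gamma$ using $\Kan!(4,3)[1,4]$, which is valid since $\Gamma$ is a bibundle between $2$-groupoids and $4>2$ (Definition~\ref{chap4:def:bibundles=inner-over-I}), and in $\Xi$ using its unique inner Kan conditions; equivalently, one may invoke the pasting theorem for the $2$-categories underlying $\Gamma$ and $\Xi$, which produces the face $(0,1,2,4)$ canonically as a pasting composite. Gluing the two fillings along $Y$ and descending through the bigon-groupoid quotients, using Lemma~\ref{chap1:lem:invariant-cover-descent} and Lemma~\ref{chap1:lem:action-over-principal-is-principal} exactly as in the proof of Lemma~\ref{chap6:lem:Sigma-0-2-rep-Kan}, yields a filler $\Simp{4}[1,4]\to\Sigma$, whose uniqueness follows from the uniqueness of the factor-fillings.

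The main obstacle I anticipate is well-definedness of this filler: I must check that the descended composite is independent of the chosen representatives and of the chosen gray decomposition. This is precisely where the pasting theorem is indispensable, and the argument is the higher-dimensional analogue of Lemma~\ref{chap6:lem:Sigma-0-2-var} together with the bigon-triangle composition of Lemma~\ref{chap6:lem:bigon-triangle}: two decompositions differ only by a re-bracketing of faces, any two such bracketings have equal pasting composites, and hence they agree after passing to the quotient by the groupoids of bigons. The bookkeeping of which refined simplices are white, gray, or black, and the verification that each omitted vertex is interior so that only inner horns ever occur, will be the tedious but routine part.
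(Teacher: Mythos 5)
Your proposal follows essentially the same route as the paper: lift the four given 3-simplices to un-quotiented representatives in \(\Gamma\) and \(\Xi\) agreeing on common faces, determine the \(\Gamma\)-part of the missing face by \(\Kan!(4,3)[1,4]\) for \(\Gamma\), obtain the \(\Xi\)-part via the pasting theorem, and handle independence of the chosen representatives and decomposition as in the dimension-3 lemmas. The only caution is that the \(\Xi\)-part of the missing face is a pasting configuration of several tetrahedra rather than a single inner horn of \(\Xi\), so it is the pasting-theorem formulation (which you also invoke) rather than a literal unique inner Kan condition in \(\Xi\) that does the work --- exactly as in the paper's proof.
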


\begin{proof}
Given an element in \(\Hom(\Horn{4}{3}[1,4], \Sigma)\), which is four 3-simplices in \(\Sigma\) fitting together, we can choose representatives of these 3-simplices such that they agree on common 2-simplices. In other words, these representatives are 3\nbdash{}simplices in \(\Gamma\) and \(\Xi\) fitting together. Suppose that these representatives are given by
\begin{gather*}
(0,0/2,0/3,0/4,2,3,4)\in \tilde{\Sigma}_{0,2},\\
(0,0/1,0/3,0/4,1,3,4)\in \tilde{\Sigma}_{0,2},\\
(0,0/1,0/2,0/4,1,2,4)\in \tilde{\Sigma}_{0,2},\\
(1,2,3,4) \in Z_3
\end{gather*}
fitting together. We need to show that it determines \((0,0/1,0/2,0/4,1,2,4)\). The condition \(\Kan!(4,3)[1,4]\) for \(\Gamma\) implies that the four faces of the 3-simplex \((0,0/1,0/2,0/4)\) are compatible. It remains to consider the part in \(\Xi\). This follows by the pasting theorem.
\end{proof}

\begin{lemma}
The condition \(\Kan!(4,1)[2,3]\) holds for \(\Sigma\); by symmetry, \(\Kan!(4,3)[3,2]\) holds.
\end{lemma}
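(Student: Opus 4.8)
The plan is to follow the same template as the preceding lemma for $\Kan!(4,3)[1,4]$, separating the white/gray part (which lives in $\Gamma$) from the gray/black part (which lives in $\Xi$ and $Z$). First I would take an element of $\Hom(\Horn{4}{1}[2,3],\Sigma)$, that is, the four $3$\nbdash{}simplices $\face_0,\face_2,\face_3,\face_4$ fitting together around the horn whose missing face is $\face_1=(0,2,3,4)$. Here the vertices $0,1$ are white and $2,3,4$ are black, so $\face_0=(1,2,3,4)$ lies in $\Sigma_{0,2}$ while $\face_2,\face_3,\face_4$ lie in $\Sigma_{1,1}$. As in the construction of $\Sigma_{0,2}$ and $\Sigma_{1,1}$, I would lift these faces to representatives in the un\nbdash{}quotiented configuration spaces $\tilde\Sigma$ that agree along their common $2$\nbdash{}faces; this presents the horn as a single configuration of simplices in $\Gamma$ (on the white vertices $0,1$ and the intermediate gray vertices $0/k,1/k$) glued to simplices in $\Xi$ and $Z$ (on the gray vertices and the black vertices $2,3,4$).

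Next I would fill the two parts separately. The $\Gamma$\nbdash{}part of the missing face $(0,2,3,4)$ is the tetrahedron $(0,0/2,0/3,0/4)$, whose three faces containing the vertex $0$ are already supplied by the representatives of $\face_2,\face_3,\face_4$; applying the appropriate inner unique Kan condition of $\Gamma$ (together with its lower Kan conditions for representability, as in Remark~\ref{chap4:rem:low-inner-kan-imply-repr}) produces this tetrahedron uniquely. The remaining gray/black simplices assemble into a labelling of a pasting scheme for the $2$\nbdash{}groupoids $\Xi$ and $Z$, so by the pasting theorem --- in the form of its corollary for inner Kan complexes satisfying unique horn filling above dimension $2$ --- their composite exists and is unique, giving the gray/black part of $\face_1$. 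Gluing the two parts along the shared gray vertices yields a filler of the horn.

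Finally I would check that this filler descends to $\Sigma$, i.e.\ that it is independent of the chosen representatives. This is where the bigon\nbdash{}groupoid actions enter: the maps defining $\Sigma_{i,j}$ as quotients are covers arising from principal actions (cf.\ Lemma~\ref{chap6:lem:bigon-triangle}), so the constructed simplex is invariant under the relevant actions and the passage to the quotient is handled by Lemmas~\ref{chap1:lem:invariant-cover-descent} and~\ref{chap1:lem:action-over-principal-is-principal}, exactly as in the proof of Lemma~\ref{chap6:lem:Sigma-0-2-rep-Kan} and of the analogous statement for $\Sigma_{1,1}$. Uniqueness of both the missing face and the whole $4$\nbdash{}simplex then gives $\Kan!(4,1)[2,3]$, and $\Kan!(4,3)[3,2]$ follows by the evident left/right symmetry interchanging $\Gamma$ with $\Xi$ and $X$ with $Z$. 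The main obstacle, compared with the $[1,4]$ case, is that the coloring $[2,3]$ is genuinely mixed --- no face lies purely in $X$ or in $Z$ --- so a nontrivial $\Gamma$\nbdash{}fill and a nontrivial $\Xi$/$Z$\nbdash{}pasting are both required; the delicate point is to keep the two compatible along the gray vertices and to confirm well\nbdash{}definedness modulo the bigon quotients.
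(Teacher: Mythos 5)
Your skeleton matches the paper's (choose representatives agreeing on common $2$\nbdash{}simplices, split the missing face $(0,0/2,0/3,0/4,2,3,4)$ into a $\Gamma$\nbdash{}part and a gray/black part, invoke the pasting theorem), but the step where you fill the $\Gamma$\nbdash{}part is wrong as stated. The three faces of $(0,0/2,0/3,0/4)$ containing the white vertex $0$ form the \emph{outer} horn $\Horn{3}{0}[1,3]$, not an inner horn: the missing face is $(0/2,0/3,0/4)$, opposite the vertex $0$. No inner Kan condition of $\Gamma$ fills this; you would need $\Kan!(3,0)[1,3]$, which holds only because $\Gamma$ is right principal. That fill does exist, but relying on it collides with the constraint the whole chapter is built around: ``Since we only have a pasting theorem for 2-categories, we will carefully avoid using outer Kan conditions.'' Once a simplex in your configuration is produced by an outer fill, the pasting theorem (Power/Verity, and its corollary for nerves) no longer certifies that different ways of completing the configuration agree.

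That is exactly where your proof breaks down, at the point you yourself flag as ``the delicate point'' and then leave unargued. Concretely: after your outer fill produces the triangle $(0/2,0/3,0/4)\in Y_2$, you must still build the three $\Xi$\nbdash{}tetrahedra $(0/2,0/3,0/4,2)$, $(0/3,0/4,2,3)$, $(0/4,2,3,4)$ compatibly with it and with the given data; at the last one, all four boundary triangles are already pinned down (three from the representatives of $\face_0,\face_2,\face_3$, one from your previous fill), so what remains is a genuine compatibility \emph{identity}, not a horn to fill, and the pasting theorem cannot deliver it for a configuration assembled with outer fills. The paper's proof supplies precisely the missing mechanism: it first fills three auxiliary all\nbdash{}gray horns in $Y$, namely $(0/2,\overline{1/2},1/3,1/4)$, $(0/2,0/3,\overline{1/3},1/4)$, and $(0/2,0/3,0/4,\overline{1/4})$, bridging the $0/k$ vertices to the $1/k$ vertices. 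After that, both the $\Gamma$\nbdash{}part $(0,0/2,0/3,0/4)$ and the gray/black part of the missing face are pasting composites of known $2$\nbdash{}simplices, and a single application of the pasting theorem gives existence, uniqueness, and the compatibility along the gray vertices simultaneously. Without those bridge fills, your gray/black simplices do not assemble into a pasting scheme whose composite is the missing face, so neither existence nor the uniqueness required for $\Kan!(4,1)[2,3]$ is established.
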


\begin{proof}
Given an element in \(\Hom(\Horn{4}{1}[2,3],\Sigma)\), as above we choose representatives such that it is a collection of 3-simplices in \(\Gamma\) and \(\Xi\) fitting together. Suppose that these representatives are given by
\begin{gather*}
(1,1/2,1/3,1/4,2,3,4) \in \tilde{\Sigma}_{0,2},\\
(0,1,0/1,0/3,1/3,1/4,3,4)\in \tilde{\Sigma}_{1,1},\\
(0,1,0/1,0/4,1/2,1/3,2,4)\in \tilde{\Sigma}_{1,1},\\
(0,1,0/2,0/3,1/2,1/3,2,3)\in \tilde{\Sigma}_{1,1}
\end{gather*}
fitting together. We need to show that they determine \((0,0/2,0/3,0/4,2,3,4)\) in \(\tilde{\Sigma}_{0,2}\). Fill the 3-dimensional horns
\begin{gather*}
  (0/2,\overline{1/2},1/3,1/4), \\
  (0/2,0/3,\overline{1/3},1/4), \\
  (0/2,0/3,0/4,\overline{1/4})
\end{gather*}
by Kan conditions. We break \((0,0/2,0/3,0/4,2,3,4)\) into two parts, which lie in \(\Gamma\) and \(\Xi\), respectively. It suffices to show that each part is compatible. This follows by applying the pasting theorem.
\end{proof}

We now complete the construction of the \(X\)-\(Z\) bibundle \(\Sigma\). This bibundle is called the composite of \(\Gamma\) and \(\Xi\), and we denote it by \(\Gamma\otimes \Xi\).

\section{Functoriality of the bundlisation}

Given a morphism of \(2\)\nbdash{}groupoids \(f\colon X\to Y\), we recall from Section~\ref{chap4:sec:cograph-morphism-higher-gpd} that the cograph (bundlisation) \(\Gamma_{i,j}=X_i\times_{Y_i} Y_{i+j}\) gives a right principal \(X\)-\(Y\) bibundle. We show that this construction respects compositions up to a weak equivalence.

\begin{proposition}\label{chap6:prop:bibundle-functorial}
Let \(f\colon X\to Y\) and \(g\colon Y\to Z\) be morphisms of \(2\)\nbdash{}groupoids in \((\Cat,\covers)\). Denote by \(\Gamma,\Xi,\Sigma\) the bibundles given by the cograph of \(f, g, g\circ f\). Then there is a morphism of \(X\)-\(Z\) bibundles \(\Sigma\to \Gamma\otimes \Xi\) which induces a weak equivalence on the groupoid of bigons, \(\arrow{\Sigma}\to \arrow{\Gamma\otimes\Xi}\).
\end{proposition}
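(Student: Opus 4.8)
The plan is to build the comparison morphism $\Phi\colon\Sigma\to\Gamma\otimes\Xi$ by inserting a single degenerate gray vertex, and then to read off its effect on the groupoids of bigons. Recall that $\Sigma_{i,j}=X_i\times_{Z_i}Z_{i+j+1}$ records a white $i$-simplex together with a $Z$-simplex, carrying no $Y$-data, whereas the combinatorial construction of $\Gamma\otimes\Xi$ in the previous section glues white, gray and black pieces. First I would define $\Phi$ on the low-dimensional spaces $\Sigma_{0,0},\Sigma_{0,1},\Sigma_{1,0},\Sigma_{1,1},\dots$ by sending a configuration to the one obtained by placing, between the white and the black vertices, a single gray vertex whose incident edges are the degenerate $Y$-arrows $\unit_Y$ attached to $f$; composing with the bigon-quotient maps that define $\Gamma\otimes\Xi$ lands in the correct representable targets, and the insertion is manifestly compatible with those quotients. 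Since the cographs $\Gamma,\Xi,\Sigma$ and the composite $\Gamma\otimes\Xi$ are all $3$-coskeletal (an $n$-groupoid object is $(n+1)$-coskeletal), it suffices to carry this out and check the simplicial identities up to dimension three; $\Phi$ then extends uniquely to all dimensions by coskeletality. Conceptually $\Phi$ is the lax-functoriality comparison cell of the bundlisation, but the explicit unit-insertion gives a concrete handle.

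Next I would verify that $\Phi$ is a morphism of $X$-$Z$ bibundles. By construction $\Phi$ lies over $\Simp{1}$ and restricts to the identity on the two ends $C'_{-1}=X$ and $R'_{-1}=Z$, because inserting a degenerate gray vertex changes neither the white $X$-faces nor the black $Z$-faces; compatibility with $\face$ and $\de$ is the same bookkeeping already performed in the construction of $\Gamma\otimes\Xi$. To analyse the induced functor on bigons I would use the identification $\arrow{\Gamma\otimes\Xi}\cong\tau(R'_0\Gamma\times_Y C'_0\Xi)$ established at the end of the preceding section. A direct inspection of $R'_0\Gamma$ and $C'_0\Xi$ for cographs shows that an object of $\arrow{\Gamma\otimes\Xi}$ is a triple $(x,\gamma,\xi)$ with $x\in X_0$, a $Y$-arrow $\gamma\colon f_0(x)\to y$ and a $Z$-arrow $\xi\colon g_0(y)\to z$, taken modulo the equivalence generated by $Y$- and $Z$-bigons; an object of $\arrow{\Sigma}$ is a pair $(x,\zeta)$ with $\zeta\colon h_0(x)\to z$, where $h=g\circ f$, and $\arrow\Phi$ sends $(x,\zeta)$ to $(x,\unit_{f_0(x)},\zeta)$.

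It remains to prove that $\arrow\Phi\colon\arrow{\Sigma}\to\arrow{\Gamma\otimes\Xi}$ is a weak equivalence of groupoids in $(\Cat,\covers)$ in the sense of Definition~\ref{chap1:def:weak-equi-groupoid}, that is, essentially surjective and fully faithful. For essential surjectivity the key move is \emph{straightening}: given $(x,\gamma,\xi)$, apply $g$ to the gray arrow $\gamma$ and compose, producing $\zeta\coloneqq\xi\cdot g(\gamma)\colon h_0(x)\to z$; the $1$-simplex of $R'_0\Gamma\times_Y C'_0\Xi$ built from $\gamma$ together with the induced $Z$-datum represents an arrow of $\tau(R'_0\Gamma\times_Y C'_0\Xi)$ from $(x,\gamma,\xi)$ to the image object $(x,\unit_{f_0(x)},\zeta)$. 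Internally this exhibits essential surjectivity through the cover $Y_1\to Y_0$ and a principal $Y$-bigon action, so the relevant map is a cover by Lemma~\ref{chap1:lem:invariant-cover-descent} and Corollary~\ref{chap1:cor:G-bundles-P-times-Q-pullback}. For full faithfulness I would show that any arrow of $\tau(R'_0\Gamma\times_Y C'_0\Xi)$ between two image objects is represented, modulo $Y$-bigons, by a $1$-simplex whose white and gray parts are degenerate, hence by a $Z$-bigon alone; packaged as a pullback square this identifies the hom-data of $\arrow{\Gamma\otimes\Xi}$ between image objects with those of $\arrow\Sigma$, using Lemma~\ref{chap1:lem:action-over-principal-is-principal}, Lemma~\ref{chap6:lem:fundamental-is-good} and Corollary~\ref{chap6:cor:princial-1-implies-pullback}.

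The hard part will be carrying out the straightening step internally rather than set-theoretically: one must show that, modulo the principal action of the groupoids of $Y$-bigons, the gray $Y$-data in $R'_0\Gamma\times_Y C'_0\Xi$ is canonically absorbed into $Z$, so that $\tau(R'_0\Gamma\times_Y C'_0\Xi)$ is Morita equivalent to $\arrow\Sigma$. Concretely the obstacle is to assemble the various quotients-by-bigons into genuine pullback squares of principal bundles in $(\Cat,\covers)$, exactly as in the proofs of Lemma~\ref{chap6:lem:fundamental-is-good} and Corollary~\ref{chap6:cor:princial-1-implies-pullback}; this is where the right-principality of $\Gamma$ and $\Xi$ (which holds precisely because $f$ and $g$ are honest morphisms) is essential, since it guarantees that the $Y$-bigon actions are free enough for the straightening arrows to descend to covers and for the hom-comparison to be a pullback. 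As in the rest of the chapter, the verification is combinatorial and is most cleanly organised through the pasting theorem and the accompanying pictures.
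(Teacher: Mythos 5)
Your construction of the comparison morphism is exactly the paper's: insert degenerate gray simplices in dimensions $\le 3$ (the unit-insertion maps $c_{0,0},c_{0,1},c_{1,0},c_{0,2},c_{1,1},c_{2,0}$), check compatibility with faces and degeneracies, and extend by $3$-coskeletality. Where you diverge is the weak-equivalence verification. The paper never passes to the fundamental groupoid $\tau(R'_0\Gamma\times_Y C'_0\Xi)$; it instead proves two lemmas saying that $c\colon\Sigma\to\Gamma\otimes\Xi$ is a colored version of a weak acyclic fibration (a colored $\Acyc(1)$-type cover condition for $\Simp{1}[1,1]\to\Simp{1}\star\Simp{0}[1,2]$, and colored $\Acyc!(2)$), and then invokes the machinery behind Proposition~\ref{chap6:prop:weak-equi=weak-acyc} to conclude that the bigon functor is a weak equivalence. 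Your essential-surjectivity and full-faithfulness steps are precisely the bigon-level translations of those two lemmas, so the routes converge; yours is more concrete for cographs (it exploits that $g$ is an honest morphism, so the gray arrow can be pushed into $Z$), while the paper's stays uniform with the weak-acyclic-fibration formalism. One small slip: objects of $\arrow{\Gamma\otimes\Xi}$ are \emph{not} taken modulo bigons; by Proposition~\ref{chap6:prop:fundamental-groupoid} only the arrows of $\tau(W)$ are equivalence classes, $\tau(W)_0=W_0$.

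The soft spot is the internalization of the straightening step, which as written does not go through in a general pretopology. Composing $\xi$ with $g(\gamma)$ requires filling a horn in $Z$, so straightening is not a morphism $\arrow{\Gamma\otimes\Xi}_0\to\arrow{\Gamma\otimes\Xi}_1$ but a span: a cover $W\to\arrow{\Gamma\otimes\Xi}_0$ of straightening data together with a map from $W$ into $\arrow{\Sigma}_0\times_{\arrow{\Gamma\otimes\Xi}_0,\source}\arrow{\Gamma\otimes\Xi}_1$ over $\arrow{\Gamma\otimes\Xi}_0$. Knowing that a cover \emph{factors through} the essential-surjectivity map $\target\circ\pr_2$ does not make that map a cover: in $(\Mfd,\covers_\subm)$ the map $h(x,y)=xy$ is surjective but not a submersion, although the identity cover factors through it via $t\mapsto(t,1)$ --- this is exactly why Assumption~\ref{chap4:asmp:ct} fails for manifolds. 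The correct organization, which is what the paper's verification lemma does with its pictures, is to exhibit the essential-surjectivity map itself as (a pullback of) a map of quotients that descends from an equivariant cover of \emph{un-quotiented} spaces, using the Kan conditions of $Y$ and $Z$ together with Lemmas~\ref{chap1:lem:invariant-cover-descent} and~\ref{chap1:lem:action-over-principal-is-principal}. You cite these lemmas and flag this as the hard part, so your plan is salvageable, but it must be restructured in that direction rather than argued from the section you describe.
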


We divide the proof into several lemmas.

\begin{lemma}
 There is a morphism of \(X\)-\(Z\) bibundles \(\Sigma\to \Gamma\otimes \Xi\).
\end{lemma}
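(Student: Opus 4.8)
The plan is to build the morphism directly on low-dimensional pieces and then propagate it by coskeletality. Write $\Pi \coloneqq \Gamma\otimes\Xi$ for the composite, and recall that $\Sigma$, being the cograph of $g\circ f$, has $\Sigma_{i,j} = X_i\times_{Z_i}Z_{i+j+1}$, so that a datum of $\Sigma_{i,j}$ is a pair consisting of an $i$\nbdash{}simplex $x\in X_i$ and an $(i+j+1)$\nbdash{}simplex $z\in Z_{i+j+1}$ matching under $g\circ f$. Since $\Sigma$ and $\Pi$ are both bibundles between the $2$\nbdash{}groupoids $X$ and $Z$ (Definition~\ref{chap4:def:bibundles=inner-over-I}), they satisfy the unique colored Kan conditions above dimension $2$ and are therefore $3$\nbdash{}coskeletal; by the adjunction $\tr_3\dashv\cosk_3$ a simplicial morphism $\Sigma\to\Pi$ over $\Simp{1}$ is the same as a compatible family of maps $\Sigma_{i,j}\to\Pi_{i,j}$ in all bidegrees with $i+j\le 2$. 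Thus it suffices to construct these and check the simplicial identities among them.

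The construction uses degenerate simplices in $Y$ as canonical representatives. Given $(x,z)\in\Sigma_{i,j}$, I put all the interpolating gray vertices of the defining configuration of $\Pi_{i,j}$ equal to $f(x)$: the part lying in $\Gamma$ is taken totally degenerate at $(x,f(x))$, while the part lying in $\Xi$ carries $z$ together with the degenerate $Y$\nbdash{}simplex on $f(x)$. Concretely in dimension $0$ this reads
\[
  \Sigma_{0,0}=X_0\times_{Z_0}Z_1\longrightarrow \Pi_{0,0},\qquad (x,z)\longmapsto \big[(x,\de_0 f_0(x)),\,(f_0(x),z)\big],
\]
where $\de_0 f_0(x)\in Y_1$ is the degenerate edge at $f_0(x)$; the $\Xi$\nbdash{}matching holds because $g_0 f_0(x)=\face_0(z)$, and the gluing over $Y_0$ holds because the relevant endpoint of $\de_0 f_0(x)$ and the $Y$\nbdash{}vertex $f_0(x)$ both equal $f_0(x)$. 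Each such map is assembled from $f$, $g$, degeneracy maps, and the quotient maps defining $\Pi$, all of which are morphisms in $\Cat$, so $\Sigma_{i,j}\to\Pi_{i,j}$ is a morphism in $\Cat$.

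Next I would verify compatibility with the face and degeneracy operators among the bidegrees $i+j\le 2$, and with the two ends. On the ends there are no interpolating gray vertices to degenerate, so the construction restricts to the identity on $C_{-1}\Sigma = X$ and on $R_{-1}\Sigma = Z$; hence the resulting map is a morphism over $\Simp{1}$ fixing both ends, that is, a morphism of $X$\nbdash{}$Z$ bibundles. A face that deletes a $Z$\nbdash{}vertex or an $X$\nbdash{}vertex is matched by the corresponding simplicial identity of the cographs and of $\Pi$, and the degeneracies follow since inserting a unit in $X$ or $Z$ commutes with inserting units in $Y$. Once these hold in dimensions $\le 3$, $3$\nbdash{}coskeletality extends the morphism uniquely to all dimensions.

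The main obstacle is well-definedness against the bigon-quotients used to define $\Pi$ and the compatibility with $\Pi$'s structure maps, both of which were specified in the previous section only through the combinatorial pictures. Concretely one must check that the degenerate-middle representatives respect the equivalence relations and are simultaneously compatible with the $3$\nbdash{}multiplications $m_1,m_2$ and the face and degeneracy maps; as in the construction of $\Gamma\otimes\Xi$ itself, these reduce to variants of Lemma~\ref{chap6:lem:bigon-triangle} together with the pasting theorem, and amount to the routine but lengthy picture-chases that I would carry out degree by degree.
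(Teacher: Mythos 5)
Your proposal is correct and follows essentially the same route as the paper: reduce to bidegrees $i+j\le 2$ by $3$\nbdash{}coskeletality, define $c_{0,0}$ by exactly the formula you give, and obtain the remaining maps by inserting degenerate $Y$\nbdash{}simplices on $f(x)$ and composing with the quotient maps defining $\Gamma\otimes\Xi$, with compatibility with faces and degeneracies checked by inspection of the configurations. Your only over-caution is the worry about well-definedness against the bigon quotients: since $\Sigma$ is an honest fiber product (not a quotient), your maps are composites of genuine morphisms into the quotient and hence automatically well-defined, leaving only the simplicial identities to verify.
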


\begin{proof}
Denote \(\Gamma\otimes \Xi\) by \(\Pi\). Since \(\Sigma\) and \(\Pi\) are both 3-coskeletal, it suffices to construct a morphism \(c\colon \Sigma\to \Pi\) on dimensions \(1,2,3\).

First, there is a natural inclusion
\[
\Sigma_{0,0}\to \Pi_{0,0},\quad (x_0, y_1)\mapsto \big((x_0,\de_0 f_0(x_0)),(f_0(x_0),y_1)\big),
\]
which is defined to be~\(c_{0,0}\). This map is illustrated by Figure~\ref{chap6:fig:bibundle-c00}, where white dots are in~\(X\), gray dots in \(Y\), and black dots in \(Z\). The left picture stands for \(\Sigma_{0,0}\); it has three parts in \(X, Y, Z\) matched by \(f\) and \(g\). We omit all labels here. The right picture stands for the image of \(\Sigma_{0,0}\) in \(\Pi_{0,0}\), where two gray dots are identical and the 1-simplex between them is degenerate.
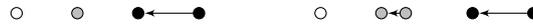
\begin{figure}[htbp]
  \centering
  \begin{tikzpicture}[>=latex',scale=0.8]
    \node[mydot] at (0,0) (x0) {};
    \node[mydot,fill=lightgray] at (1,0)(y0) {};
    \node[mydot,fill] at (2,0) (z0) {};
    \node[mydot,fill] at (3,0) (z1) {};
    \path[->]
    (z1) edge (z0);
    \begin{scope}[xshift=5cm]
    \node[mydot] at (0,0) (x0) {};
    \node[mydot,fill=lightgray] at (1,0)(y0) {};
    \node[mydot,fill=lightgray] at (1.4,0)(y1) {};
    \node[mydot,fill] at (2.5,0) (z0) {};
    \node[mydot,fill] at (3.5,0) (z1) {};
    \path[->]
    (z1) edge (z0)
    (y1) edge (y0);
    \end{scope}
  \end{tikzpicture}
  \caption{The construction of \(c_{0,0}\)}\label{chap6:fig:bibundle-c00}
\end{figure}

In the same spirit, we define \(c_{0,1}\) by Figure~\ref{chap6:fig:bibundle-c01}. We first send the left picture \(\Sigma_{0,1}\) to the right picture by adding two degenerate 2\nbdash{}simplices in \(Y\) and \(Z\). Composing with the quotient map, we obtain \(c_{0,1}\).
\begin{figure}[htbp]
  \centering
  \begin{tikzpicture}[>=latex',scale=0.8]
    \node[mydot] at (0,0) (x0) {};
    \node[mydot,fill=lightgray] at (1,0)(y0) {};
    \node[mydot,fill] at (2,0) (z0) {};
    \node[mydot,fill] at (3,0) (z1) {};
    \node[mydot,fill] at (3,1) (z2) {};
    \path[->]
    (z2) edge (z0)
         edge (z1)
    (z1) edge (z0);
   \begin{scope}[xshift=5cm]
    \node[mydot] at (0,0) (x0) {};
    \node[mydot,fill=lightgray] at (1,0)(y0) {};
    \node[mydot,fill=lightgray] at (1.4,0)(y1){};
    \node[mydot,fill=lightgray] at (1.2,0.4)(y2){};
    \node[mydot,fill] at (2.5,0) (z0) {};
    \node[mydot,fill] at (3.5,0) (z1) {};
    \node[mydot,fill] at (3.5,1) (z2) {};
    \node[mydot,fill] at (2.5,0.4)(zy2){};
    \path[->]
    (z2) edge (zy2)
         edge (z1)
    (z1) edge (z0)
         edge (zy2)
    (zy2)edge (z0)
    (y2) edge (y0)
         edge (y1)
    (y1) edge (y0);
    \end{scope}
  \end{tikzpicture}
  \caption{The construction of \(c_{0,1}\)}\label{chap6:fig:bibundle-c01}
\end{figure}
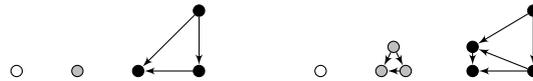

Similarly, the map \(c_{1,0}\) is given by Figure~\ref{chap6:fig:bibundle-c10}.
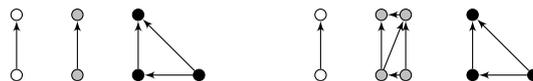
\begin{figure}[htbp]
  \centering
  \begin{tikzpicture}[>=latex',scale=0.8]
    \node[mydot] at (0,1) (x0) {};
    \node[mydot] at (0,0) (x1) {};
    \node[mydot,fill=lightgray] at (1,1)(y0) {};
    \node[mydot,fill=lightgray] at (1,0)(y1) {};
    \node[mydot,fill] at (2,1) (z0) {};
    \node[mydot,fill] at (2,0) (z1) {};
    \node[mydot,fill] at (3,0) (z2) {};
   \foreach \i/\j in
   {x1/x0,y1/y0,z1/z0,z2/z0,z2/z1}{
   \path[->](\i) edge (\j);}
   \begin{scope}[xshift=5cm]
    \node[mydot] at (0,1) (x0) {};
    \node[mydot] at (0,0) (x1) {};
    \node[mydot,fill=lightgray] at (1,1)(y0) {};
    \node[mydot,fill=lightgray] at (1,0)(y1) {};
    \node[mydot,fill=lightgray] at (1.4,1)(y00){};
    \node[mydot,fill=lightgray] at (1.4,0)(y11){};
    \node[mydot,fill] at (2.5,1) (z0) {};
    \node[mydot,fill] at (2.5,0) (z1) {};
    \node[mydot,fill] at (3.5,0) (z2) {};
    \foreach \i/\j in
   {x1/x0,y1/y0,y1/y00,y00/y0,y11/y1,y11/y00,z1/z0,z2/z0,z2/z1}{
   \path[->](\i) edge (\j);}
    \end{scope}
  \end{tikzpicture}
  \caption{The construction of \(c_{1,0}\)}\label{chap6:fig:bibundle-c10}
\end{figure}

Finally, \(c_{0,2},c_{1,1}\), and \(c_{2,0}\) are given by the three pictures in Figure~\ref{chap6:fig:bibundle-c02c11c20}, respectively. These maps are constructed first by adding appropriate degenerate 3\nbdash{}simplices, then composing with the quotient maps.
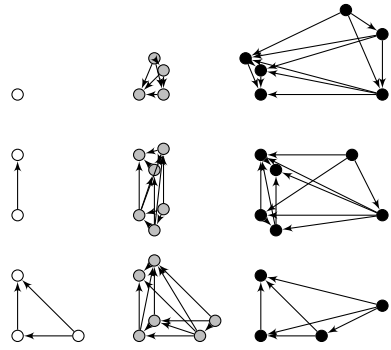
\begin{figure}[htbp]
  \centering
  \begin{tikzpicture}[>=latex',scale=0.8]
    \node[mydot] at (0,0) (x0) {};
    \node[mydot,fill=lightgray] at (2,0)(y0) {};
    \node[mydot,fill=lightgray] at (2.4,0)(y1){};
    \node[mydot,fill=lightgray] at (2.4,0.4)(y2){};
    \node[mydot,fill=lightgray] at (2.25,0.6)(y3){};
    \node[mydot,fill] at (4,0) (z11) {};
    \node[mydot,fill] at (6,0) (z1) {};
    \node[mydot,fill] at (6,1) (z2) {};
    \node[mydot,fill] at (5.4,1.4)(z3){};
    \node[mydot,fill] at (4,0.4)(z22){};
    \node[mydot,fill] at (4-0.25,0.6)(z33){};
   \foreach \i/\j in
   {y3/y0,y3/y1,y3/y2,y2/y0,y2/y1,y1/y0,
   z33/z22,z33/z11,z22/z11,z3/z2,z3/z1,z2/z1,z3/z33,z2/z22,z2/z33,z1/z11,z1/z22,z1/z33}{
   \path[->](\i) edge (\j);}
  \begin{scope}[yshift=-2cm]
   \node[mydot] at (0,1) (x0){};
   \node[mydot] at (0,0) (x1){};
   \node[mydot,fill=lightgray] at (2,1) (y0){};
   \node[mydot,fill=lightgray] at (2,0) (y1){};
   \node[mydot,fill=lightgray] at (2+0.25,1-0.25) (y02){};
   \node[mydot,fill=lightgray] at (2+0.25,0-0.25) (y12){};
   \node[mydot,fill=lightgray] at (2+0.4,1+0.1) (y03){};
   \node[mydot,fill=lightgray] at (2+0.4,0+0.1) (y13){};
   \node[mydot,fill] at (4,1) (z0){};
   \node[mydot,fill] at (4,0) (z1){};
   \node[mydot,fill] at (4+0.25,1-0.25) (z00){};
   \node[mydot,fill] at (4+0.25,0-0.25) (z11){};
   \node[mydot,fill] at (6,0) (z2){};
   \node[mydot,fill] at (5.5,1) (z3){};
   \foreach \i/\j in
   {x1/x0,y1/y0,y1/y02,y1/y03, y12/y02,y12/y1,y12/y03,y13/y03,y13/y12,y13/y1,y02/y0,y03/y02,y03/y0,
   z1/z0,z2/z1,z2/z0,z3/z2,z3/z1,z3/z0,z0/z00,z1/z11,z2/z00,z2/z11,z11/z00,z11/z0}{
   \path[->](\i) edge (\j);}
  \end{scope}
  \begin{scope}[yshift=-4cm]
   \node[mydot] at (0,1) (x0) {};
   \node[mydot] at (0,0) (x1) {};
   \node[mydot] at (1,0) (x2) {};
   \node[mydot,fill=lightgray] at (2,1) (y0) {};
   \node[mydot,fill=lightgray] at (2,0) (y1) {};
   \node[mydot,fill=lightgray] at (3,0) (y2) {};
   \node[mydot,fill=lightgray] at (2+0.25,1+0.25) (y00) {};
   \node[mydot,fill=lightgray] at (2+0.25,0+0.25) (y11) {};
   \node[mydot,fill=lightgray] at (3+0.25,0+0.25) (y22) {};
   \node[mydot,fill] at (4,1) (z0) {};
   \node[mydot,fill] at (4,0) (z1) {};
   \node[mydot,fill] at (5,0) (z2) {};
   \node[mydot,fill] at (6,0.5)(z3){};
   \foreach \i/\j in
   {x2/x1,x2/x0,x1/x0,
   y2/y1,y2/y0,y1/y0,y22/y11,y22/y00,y11/y00,y22/y2,y11/y1,y00/y0,y2/y00,y2/y11,y1/y00,
   z2/z1,z2/z0,z1/z0,z3/z2,z3/z1,z3/z0}{
   \path[->](\i) edge (\j);}
  \end{scope}
  \end{tikzpicture}
  \caption{The construction of \(c_{0,2},c_{1,1}\), and \(c_{2,0}\)}\label{chap6:fig:bibundle-c02c11c20}
\end{figure}

It is clear from the construction that the constructed maps \(c_{0,0}\), \(c_{0,1}\), \(c_{1,0}\), \(c_{2,0}\), \(c_{1,1}\), and~\(c_{2,0}\) are compatible with the face and degeneracy maps. This is the bibundle morphism \(c\colon \Sigma\to \Pi\) we are looking for.
\end{proof}

We now explain that the bibundle morphism \(c\) constructed above induces a weak equivalence between the groupoids of bigons, \(\arrow{\Sigma}\to \arrow{\Pi}\).

\begin{lemma}
  Let \(c\colon \Sigma\to \Pi\) be given as above. Then
\begin{multline*}
  \Hom(\Simp{1}[1,1]\to\Simp{1}\star \Simp{0}[1,2], \Sigma\to \Pi) \to \\
  \Hom(\partial\Simp{1}[1,1]\to\partial\Simp{1}\star\Simp{0}[1,2], \Sigma\to \Pi)
\end{multline*}
is a cover.
\end{lemma}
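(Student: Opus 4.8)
The plan is to read off both $\Hom$-presheaves explicitly and to recognise the asserted morphism as the colored analogue, for $c$, of the condition $\Acyc'(1)$ of Definition~\ref{chap6:def:weak-acyclic}, in the coloring that governs essential surjectivity of $\arrow{\Sigma}\to\arrow{\Pi}$. First I would unfold the two sides using the pullback definition of $\Hom(R\to T,\Sigma\to\Pi)$ together with the adjunctions of Section~\ref{chap4:sec:aug-simplicial-bisimplicial}. Since $\Simp{1}\star\Simp{0}=\Simp{2}$, the domain becomes the fibre product $\Sigma_{0,0}\times_{c_{0,0},\Pi_{0,0},\face_2}\Pi_{0,1}$, a white–black edge of $\Sigma$ together with a compatible triangle of $\Pi$ whose base $\face_2$ is the image of that edge under $c_{0,0}$; the codomain becomes $\Pi_{0,0}\times_{Z_0}Z_1$, recording the two outer faces $\face_1\in\Pi_{0,0}$ (white–black) and $\face_0\in Z_1$ (black–black) of such a triangle. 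The asserted map sends $(\sigma,\pi)$ to $(\face_1\pi,\face_0\pi)$.

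The content is then the familiar essential-surjectivity step: given a white–black edge of $\Pi$ and a black–black edge of $Z$ sharing a black vertex, one must fill them to a triangle of $\Pi$ whose remaining face lies, up to the right $\arrow{Z}$-action, in the image of $c_{0,0}$. I would follow the descent-through-quotient argument used in the proof that a weak equivalence is a weak acyclic fibration (the part establishing $\Acyc'(1)$, illustrated by Figure~\ref{chap6:fig:weak-acyc-1}). Concretely, I would lift the filling problem to the un-quotiented space $\tilde{\Sigma}_{0,1}$ of fitting-together simplices from which $\Pi_{0,1}$ is built (Figure~\ref{chap6:fig:bibd-comp-Sigma01}); there the relevant map is a cover by the Kan conditions for $\Gamma$ and $\Xi$, which hold because both are cographs, hence right principal, with $f$ and $g$ supplying the degenerate sections that turn the boundary faces into covers.

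The map $c_{0,1}$ is realised by adjoining two degenerate $2$-simplices in $Y$ and $Z$ and then passing to the quotient (Figure~\ref{chap6:fig:bibundle-c01}), while $c_{0,0}$ keeps the white vertex and inserts a degenerate edge in $Y$ (Figure~\ref{chap6:fig:bibundle-c00}). I would verify that this factorisation is compatible with the fibre products describing the two sides. The groupoids of bigons $\arrow{X},\arrow{Y},\arrow{Z}$ act principally on the un-quotiented spaces, and the lifting map is invariant under some of these actions and equivariant under the rest; Lemma~\ref{chap1:lem:invariant-cover-descent} and Lemma~\ref{chap1:lem:action-over-principal-is-principal} then let the cover descend through the quotients, yielding the desired cover.

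The hard part will be the combinatorial bookkeeping. Because $\Pi=\Gamma\otimes\Xi$ is itself a quotient of configurations of fitting simplices, I must track how $c$ interacts with those quotients and, above all, keep the colorings (white–black versus black–black faces) straight, so that the un-quotiented filling descends to \emph{precisely} the fibre products $\Sigma_{0,0}\times_{\Pi_{0,0}}\Pi_{0,1}$ and $\Pi_{0,0}\times_{Z_0}Z_1$ rather than to some larger or smaller space. Checking that the principality and invariance hypotheses of the descent lemmas genuinely hold at each intermediate stage, and not merely for the standard decomposition, is where the argument is most delicate and most error-prone.
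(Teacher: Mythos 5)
Your proposal is correct and takes essentially the same route as the paper's own proof: the paper identifies the two $\Hom$-spaces with exactly the fibre products you describe (its ``left'' and ``right'' pictures), produces the cover on the un-quotiented configuration space by the Kan conditions for $Y$ and $Z$ (equivalently, for the cographs $\Gamma$ and $\Xi$), and then descends through the bigon quotients precisely via Lemmas~\ref{chap1:lem:invariant-cover-descent} and~\ref{chap1:lem:action-over-principal-is-principal}. The only loose phrase is ``up to the right $\arrow{Z}$-action'': the fibre product defining the domain is strict, and the un-quotiented filling one constructs has its remaining face \emph{exactly} in the image of $c_{0,0}$ (degenerate gray edge), which is what the descent argument in fact delivers, so this does not affect the validity of your plan.
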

\begin{proof}
We need to show that the map from the left picture to the right picture in Figure~\ref{chap6:fig:bibundle-c-acyclic-1-right} is a cover. The Kan conditions for \(Z\) and \(Y\) imply that the two spaces given by the middle picture and the right picture are the same. The map from the un-quotiented version of the left picture to that of the middle picture is a cover, which descends to a cover after taking quotients. This proves the lemma.
\begin{figure}[htbp]
  \centering
  \begin{tikzpicture}[>=latex',scale=0.8]
    \node[mydot] at (0,0) (x0) {};
    \node[mydot,fill=lightgray] at (1,0)(y0) {};
    \node[mydot,fill=lightgray] at (1.4,0)(y1){};
    \node[mydot,fill=lightgray] at (1.5,1)(y2){};
    \node[mydot,fill] at (2.5,0) (z0) {};
    \node[mydot,fill] at (3.5,0) (z1) {};
    \node[mydot,fill] at (3.5,1.5) (z2) {};
    \node[mydot,fill] at (2.5,1)(zy2){};
    \path[->]
    (z2) edge (zy2)
         edge (z1)
    (z1) edge (z0)
         edge[snake arrow] (zy2)
    (zy2)edge[snake arrow] (z0)
    (y2) edge (y0)
         edge[snake arrow] (y1)
    (y1) edge (y0);
    \begin{scope}[xshift=5cm]
    \node[mydot] at (0,0) (x0) {};
    \node[mydot,fill=lightgray] at (1,0)(y0) {};
    \node[mydot,fill=lightgray] at (1.4,0)(y1){};
    \node[mydot,fill=lightgray] at (1.5,1)(y2){};
    \node[mydot,fill] at (2.5,0) (z0) {};
    \node[mydot,fill] at (3.5,0) (z1) {};
    \node[mydot,fill] at (3.5,1.5) (z2) {};
    \node[mydot,fill] at (2.5,1)(zy2){};
    \path[->]
    (z2) edge (zy2)
         edge (z1)
    (z1) edge[snake arrow] (zy2)
    (zy2)edge[snake arrow] (z0)
    (y2) edge (y0)
         edge[snake arrow] (y1)
    (y1) edge (y0);
    \end{scope}
    \begin{scope}[xshift=10cm]
    \node[mydot] at (0,0) (x0) {};
    \node[mydot,fill=lightgray] at (1,0)(y0) {};
    \node[mydot,fill=lightgray] at (1.5,1)(y2){};
    \node[mydot,fill] at (3.5,0) (z1) {};
    \node[mydot,fill] at (3.5,1.5) (z2) {};
    \node[mydot,fill] at (2.5,1)(zy2){};
    \path[->]
    (z2) edge (zy2)
         edge (z1)
    (y2) edge (y0);
    \end{scope}
  \end{tikzpicture}
  \caption{A pictorial proof}\label{chap6:fig:bibundle-c-acyclic-1-right}
\end{figure}
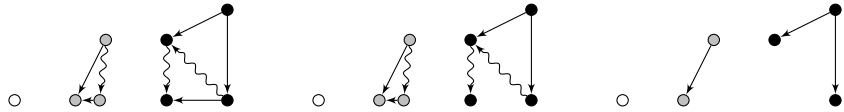
\end{proof}

\begin{lemma}
  The morphism \(c\colon \Sigma\to \Pi\) satisfies \(\Acyc!(2)\).
\end{lemma}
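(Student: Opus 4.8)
The plan is to unwind $\Acyc!(2)$ for $c$ as the assertion that the canonical map
\[
\Sigma_2\to\Hom(\partial\Simp{2}\to\Simp{2},\Sigma\to\Pi)
\]
is an isomorphism, and then---exactly as in the proof of Lemma~\ref{chap6:lem:acyclic->weak-equivalence}---to restrict to the subspace of bigons so as to read off the full faithfulness of $\arrow{\Sigma}\to\arrow{\Pi}$. Since both $\Sigma$ and $\Pi$ are augmented bisimplicial objects over $j(\Simp{1})$ and $c$ preserves the coloring, extensivity of $\Cat$ lets me decompose this map over the colorings $[i,j]$ with $i+j=3$, much as in the argument assembling the colored Kan conditions into the uncolored ones. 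On the two extreme colorings $[3,0]$ and $[0,3]$ the bibundles $\Sigma$ and $\Pi$ both restrict to the ends $X$ and $Z$, on which $c$ is the identity, so these cases are trivially isomorphisms; by the white--black symmetry of the whole construction it therefore suffices to treat the colored analogue $\Acyc!(2)[1,2]$, i.e.\ to show that $c$ induces an isomorphism from $\Sigma_{0,1}$ onto the sub-object of $\Pi_{0,1}$ cut out by a prescribed boundary in $\Sigma$.

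Next I would use the explicit descriptions on both sides. Here $\Sigma$ is the cograph of $g\circ f$, so $\Sigma_{0,1}=X_0\times_{Z_0}Z_2$ carries only trivial bigon-actions, whereas $\Pi_{0,1}$ is the quotient built in Figure~\ref{chap6:fig:bibd-comp-Sigma01} from the bibundle data of $\Gamma$ and $\Xi$. The point is that $\Gamma$ and $\Xi$ are themselves cographs of the honest morphisms $f$ and $g$, so the snake-line quotients defining $\Pi_{0,1}$ admit canonical sections: the left actions of the bigon groupoids on the cograph spaces come from genuine functors, hence from degenerate data. Feeding in the degenerate $2$-simplices in $Y$ and $Z$ exactly as in the construction of $c_{0,1}$ (Figure~\ref{chap6:fig:bibundle-c01}) identifies $c_{0,1}$ with a map whose source and target I can read off combinatorially, and shows in particular that $c_{0,1}$ is a monomorphism onto the triangles of $\Pi_{0,1}$ whose boundary lifts to $\Sigma$.

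The verification of the isomorphism then becomes the familiar pictorial argument. Starting from a $\Pi$-triangle of colour $[1,2]$ whose boundary already lies in $\Sigma$ (three edges lifting compatibly under $c$), I would adjoin the degenerate simplices in $Y$ and $Z$ prescribed by $c$, paste the resulting configuration into a single tetrahedron using the Kan conditions for $\Gamma$, $\Xi$, $Y$, $Z$ together with the pasting theorem, and project back to $\Sigma_{0,1}$; uniqueness of the filler follows from $\Kan!(3,k)[1,3]$ for $\Gamma$ and $\Xi$ (Lemma~\ref{chap6:lem:Sigma-0-2-rep-Kan}) and the pasting theorem, surjectivity from the low-dimensional covers of Lemma~\ref{chap6:lem:Sigma-Kan(2,x)}. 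The main obstacle will be bookkeeping the snake-line (bigon) identifications in $\Pi_{0,1}$: I must check that the degenerate data I insert is compatible with every quotient, so that the recovered $\Sigma$-triangle is independent of all choices---precisely the kind of reasoning carried out in Lemma~\ref{chap6:lem:4-iso-Sigma-0-1} and, at the level of tetrahedra, in Lemma~\ref{chap6:lem:bigon-triangle} and Lemma~\ref{chap6:lem:Sigma-0-2-var}. Once injectivity and surjectivity of $c_{0,1}$ onto the boundary-constrained triangles are both in hand, $\Acyc!(2)[1,2]$, and with it $\Acyc!(2)$, follows, and restricting to bigons yields the full faithfulness of $\arrow{\Sigma}\to\arrow{\Pi}$ that completes Proposition~\ref{chap6:prop:bibundle-functorial}.
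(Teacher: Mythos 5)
Your skeleton is the right one, and it matches the paper's: unwind \(\Acyc!(2)\) as the assertion that \(\Sigma_2\to\Hom(\partial\Simp{2}\to\Simp{2},\Sigma\to\Pi)\) is an isomorphism, split it by extensivity into colored conditions, note that the colorings \([3,0]\) and \([0,3]\) are trivial because \(c\) restricts to the identity on the two ends, and identify spaces explicitly in the remaining cases. But two of your steps fail. First, there is no white--black symmetry to invoke: the composition \(\Gamma\otimes\Xi\) is defined only for \emph{right} principal bibundles, and inverting the coloring (Remark~\ref{chap4:rem:opposite-bibundle}) turns right principal bibundles into left principal ones, for which the construction is not defined. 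The asymmetry is concrete: the composite satisfies \(\Kan(2,0)[1,2]\) but \(\Kan(2,0)[2,1]\) is not available (Lemma~\ref{chap6:lem:Sigma-Kan(2,x)}), and on the cograph side \(\Sigma_{0,1}=X_0\times_{Z_0}Z_2\) and \(\Sigma_{1,0}=X_1\times_{Z_1}Z_2\) are different spaces. So \(\Acyc!(2)[2,1]\) and \(\Acyc!(2)[1,2]\) both require proof; the paper indeed gives two separate, parallel but distinct, pictorial arguments.

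Second, your central mechanism---that the snake-line quotients defining \(\Pi_{0,1}\) ``admit canonical sections'' because \(\Gamma\) and \(\Xi\) are cographs---is false, and this is what your monomorphism claim for \(c_{0,1}\) rests on. In a representative of a class in \(\Pi_{0,1}\) the gray triangle is a genuine element of \(Y_2\) (the cograph gives \(\Gamma_{0,1}=X_0\times_{Y_0}Y_2\)), and the \(\arrow{Y}\)-action along its gray edge is principal, with quotient the corresponding horn space \(X_0\times_{Y_0}(Y_1\times_{Y_0}Y_1)\). A section of that quotient map would be a smooth, globally defined choice of horn filler, i.e.\ a canonical composition for the 2\nbdash{}groupoid \(Y\)---exactly what does not exist, and the very reason the multiplication of a 2\nbdash{}groupoid is an HS bibundle rather than a functor. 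Being a cograph rigidifies the outer \(X\)- and \(Z\)-data but does not trivialize the bigon quotients over \(Y\). The paper's proof never chooses representatives: for each of the two colorings it passes from \(\Hom(\partial\Simp{2}[i,j]\to\Simp{2}[i,j],\Sigma\to\Pi)\) to \(\Hom(\Simp{2}[i,j],\Sigma)\) through an intermediate quotient space, every step being an isomorphism of quotients justified by principality of the bigon actions (Lemmas~\ref{chap1:lem:invariant-cover-descent} and~\ref{chap1:lem:action-over-principal-is-principal}) and the unique Kan conditions of \(Y\), \(Z\), \(\Gamma\), and \(\Xi\). Your final pasting paragraph is compatible with that argument, but the injectivity and surjectivity bookkeeping has to be carried out on the quotient spaces themselves, not via sections that do not exist.
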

\begin{proof}
The lemma consists of two non-trivial statements. Consider Figure~\ref{chap6:fig:map-bibundle-c-acyclic2-2-1}. The first statement says that \(c\) induces an isomorphism between \(\Hom(\partial\Simp{3}[2,1]\to \Simp{3}[2,1], \Sigma\to \Pi)\), given by the left picture, and \(\Hom(\Simp{3}[1,2], \Sigma)\), given by the right picture. The claim follows because the space given by the middle picture is isomorphic to the spaces given by the remaining two pictures.
\begin{figure}[htbp]
  \centering
  \begin{tikzpicture}[>=latex',scale=0.8]
    \node[mydot] at (0,0) (x0) {};
    \node[mydot,fill=lightgray] at (1,0)(y0) {};
    \node[mydot,fill=lightgray] at (1.4,0)(y1){};
    \node[mydot,fill=lightgray] at (1.2,0.4)(y2){};
    \node[mydot,fill] at (2.5,0) (z0) {};
    \node[mydot,fill] at (3.5,0) (z1) {};
    \node[mydot,fill] at (3.5,1) (z2) {};
    \node[mydot,fill] at (2.5,0.4)(zy2){};
    \path[->]
    (z2) edge (zy2)
         edge (z1)
    (z1) edge (z0)
         edge[snake arrow] (zy2)
    (zy2)edge[out=-20,in=60,snake arrow] (z0)
    (y2) edge (y0)
         edge[out=-20,in=60,snake arrow] (y1)
    (y1) edge (y0);
    \begin{scope}[xshift=5cm]
        \node[mydot] at (0,0) (x0) {};
    \node[mydot,fill=lightgray] at (1,0)(y0) {};
    \node[mydot,fill=lightgray] at (1.4,0)(y1){};
    \node[mydot,fill=lightgray] at (1.2,0.4)(y2){};
    \node[mydot,fill] at (2.5,0) (z0) {};
    \node[mydot,fill] at (3.5,0) (z1) {};
    \node[mydot,fill] at (3.5,1) (z2) {};
    \node[mydot,fill] at (2.5,0.4)(zy2){};
    \node[mydot,fill] at (2.1,0) (zy0){};
    \path[->]
    (z2) edge (zy2)
         edge (z1)
    (z1) edge (z0)
         edge[snake arrow] (zy2)
    (zy2)edge[out=-20,in=60,snake arrow] (z0)
    (z0) edge (zy0)
    (zy2)edge (zy0)
    (y2) edge (y0)
    (y1) edge (y0);
    \end{scope}
    \begin{scope}[xshift=10cm]
    \node[mydot] at (0,0) (x0) {};
    \node[mydot,fill=lightgray] at (1,0)(y0) {};
    \node[mydot,fill] at (2,0) (z0) {};
    \node[mydot,fill] at (3,0) (z1) {};
    \node[mydot,fill] at (3,1) (z2) {};
    \path[->]
    (z2) edge (z0)
         edge (z1)
    (z1) edge (z0);
    \end{scope}
  \end{tikzpicture}
  \caption{Proof of \(\Acyc!(2)[2,1]\)}\label{chap6:fig:map-bibundle-c-acyclic2-2-1}
\end{figure}

The other statement is
\[
\Hom(\Simp{3}[1,2], \Sigma)\cong \Hom(\partial\Simp{3}[1,2]\to \Simp{3}[1,2], \Sigma\to \Pi),
\]
which follows by considering Figure~\ref{chap6:fig:map-bibundle-c-acyclic2-1-2}.
\begin{figure}[htbp]
  \centering
  \begin{tikzpicture}%
  [>=latex',every label/.style={scale=0.6},scale=0.8]
    \node[mydot] at (0,1) (x0) {};
    \node[mydot] at (0,0) (x1) {};
    \node[mydot,fill=lightgray] at (1,1)(y0) {};
    \node[mydot,fill=lightgray] at (1,0)(y1) {};
    \node[mydot,fill=lightgray] at (1.4,1)(y00){};
    \node[mydot,fill=lightgray] at (1.4,0)(y11){};
    \node[mydot,fill] at (2.5,1) (z0) {};
    \node[mydot,fill] at (2.5,0) (z1) {};
    \node[mydot,fill] at (3.5,0) (z2) {};
    \foreach \i/\j in
   {x1/x0,y1/y0,y00/y0,y11/y1,z2/z0,z2/z1}{
   \path[->](\i) edge (\j);}
   \path[->]
   (y1) edge[snake arrow] (y00)
   (y11)edge[snake arrow] (y00)
   (z1) edge[snake arrow] (z0);
   \begin{scope}[xshift=5cm]
   \node[mydot] at (0,1) (x0) {};
   \node[mydot] at (0,0) (x1) {};
   \node[mydot,fill=lightgray] at (1,1)(y0) {};
   \node[mydot,fill=lightgray] at (1,0)(y1) {};
   \node[mydot,fill] at (2,1) (z0) {};
   \node[mydot,fill] at (2,0) (z1) {};
   \node[mydot,fill] at (3,0) (z2) {};
   \foreach \i/\j in
   {x1/x0,y1/y0,z2/z0,z2/z1}{
   \path[->](\i) edge (\j);}
   \path[->]
   (y1) edge[out=60,in=-60,snake arrow] (y0)
   (z1) edge[snake arrow] (z0)
        edge[out=120,in=-120](z0);
   \end{scope}
   \begin{scope}[xshift=10cm]
   \node[mydot] at (0,1) (x0) {};
   \node[mydot] at (0,0) (x1) {};
   \node[mydot,fill=lightgray] at (1,1)(y0) {};
   \node[mydot,fill=lightgray] at (1,0)(y1) {};
   \node[mydot,fill] at (2,1) (z0) {};
   \node[mydot,fill] at (2,0) (z1) {};
   \node[mydot,fill] at (3,0) (z2) {};
   \foreach \i/\j in
   {x1/x0,y1/y0,z1/z0,z2/z0,z2/z1}{
   \path[->](\i) edge (\j);}
   \end{scope}
  \end{tikzpicture}
  \caption{Proof of \(\Acyc!(2)[1,2]\)}\label{chap6:fig:map-bibundle-c-acyclic2-1-2}
\end{figure}
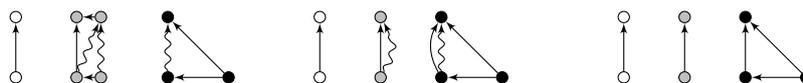
\end{proof}

The two lemmas above show that \(c\colon \Sigma\to\Pi\) is a colored version of a weak acyclic fibration (Definition~\ref{chap6:def:weak-acyclic}). It follows that the induced functor on groupoids of bigons, \(\arrow{\Sigma}\to \arrow{\Pi}\), is a weak equivalence, and this proves Proposition~\ref{chap6:prop:bibundle-functorial}.

\section{Unitality of the composition}

We recall from Section~\ref{chap4:sec:cograph-morphism-higher-gpd} that the total décalage \(\Dec(X)\) is a two-sided principal \(X\)-\(X\) bibundle. Our goal in this section is to prove the following result.

\begin{proposition}\label{chap6:prop:2gpd-bibundle-comp-unit}
Let \(\Gamma\) be a right principal \(X\)-\(Y\) bibundle. There is a canonical morphism of \(X\)-\(Y\) bibundles \(\Gamma \to \Dec (X)\otimes \Gamma\) which induces a weak equivalence on \(\arrow{\Gamma}\to \arrow{\Dec (X)\otimes \Gamma}\). A similar statement holds for \(\Gamma\otimes\Dec (X)\).
\end{proposition}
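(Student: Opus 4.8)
The plan is to follow the blueprint of Proposition~\ref{chap6:prop:bibundle-functorial}, exploiting that the total décalage $\Dec(X)$ is precisely the cograph (bundlisation) of the identity morphism $\id_X$, since $\Dec(X)_{i,j}=X_{i+j+1}=X_i\times_{X_i}X_{i+j+1}$ (see Section~\ref{chap4:sec:cograph-morphism-higher-gpd}). Because both $\Gamma$ and $\Dec(X)\otimes\Gamma$ are $3$\nbdash{}coskeletal, it suffices to produce a compatible family of maps $c_{i,j}$ in the colored dimensions $i+j+1\le 3$ and to check the simplicial identities; the morphism $c$ then extends uniquely to all dimensions. Here $\Gamma$, viewed as the second factor of the composite, is recolored so that its $X$\nbdash{}side becomes gray and its $Y$\nbdash{}side stays black, while $\Dec(X)\otimes\Gamma$ is colored white for $X$ and black for $Y$; the map $c$ must send white to white and black to black.

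First I would construct the $c_{i,j}$ explicitly. On objects, $c_{0,0}\colon\Gamma_{0,0}\to\Dec(X)_{0,0}\times_{X_0}\Gamma_{0,0}$ sends $p$ with $X$\nbdash{}moment $x_0$ to the pair $(\de_0 x_0,\,p)$, inserting the degenerate edge $\de_0 x_0\in X_1=\Dec(X)_{0,0}$ whose gray end $\face_0(\de_0 x_0)=x_0$ matches the gray moment of $p$. In exactly the same spirit, the maps $c_{0,1},c_{1,0},c_{0,2},c_{1,1},c_{2,0}$ are obtained by adjoining to a simplex of $\Gamma$ the appropriate degenerate simplices from $\Dec(X)$ (all supplied by degeneracies of $X$, since $\Dec(X)$ is the cograph of $\id_X$) and then composing with the quotient maps that define the composite. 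Well-definedness after the quotients is guaranteed by the same independence-of-choice arguments used in the composition construction (e.g.\ Lemma~\ref{chap6:lem:bigon-triangle}), and compatibility with all face and degeneracy maps is immediate from the simplicial identities. Thus $c$ is a genuine morphism of $X$\nbdash{}$Y$ bibundles; this is simply the unit specialization of the maps $c_{0,0},\dots,c_{2,0}$ built in Proposition~\ref{chap6:prop:bibundle-functorial}.

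Next I would verify that $c$ is a colored version of a weak acyclic fibration in the sense of Definition~\ref{chap6:def:weak-acyclic}, that is, that the colored conditions $\Acyc'(0)$, $\Acyc'(1)$ and $\Acyc'!(2)$ hold. The lower conditions reduce, after choosing representatives, to showing that the maps obtained by deleting the inserted degenerate décalage simplices are covers; these follow from the Kan conditions of $X$ together with the right-principality of $\Gamma$, by the pictorial covering arguments already employed in the composition construction. I expect the decisive step to be $\Acyc'!(2)$: once the décalage part has been collapsed by degeneracies, one must show that the two colored faces are uniquely determined, which amounts to a pasting identity in $X$ glued to one in $\Gamma$. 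This is exactly where the pasting theorem for $2$\nbdash{}groupoids is needed, in the same way as the verification of $\Acyc!(2)[2,1]$ and $\Acyc!(2)[1,2]$ in Proposition~\ref{chap6:prop:bibundle-functorial}. The main obstacle is therefore not the formal scaffolding but the careful bookkeeping of the degenerate insertions and this single pasting-based uniqueness check.

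Finally, with $c$ established as a colored weak acyclic fibration, the induced functor on groupoids of bigons $\arrow{\Gamma}\to\arrow{\Dec(X)\otimes\Gamma}$ is a weak equivalence by Proposition~\ref{chap6:prop:weak-equi=weak-acyc}, applied to the relevant rows exactly as in Proposition~\ref{chap6:prop:bibundle-functorial}. The symmetric statement for $\Gamma\otimes\Dec(X)$ then follows by running the same argument with the two colors interchanged, or alternatively by passing to opposite bibundles via Remark~\ref{chap4:rem:opposite-bibundle}, so that the entire proof is a direct transcription of the functoriality proof with the general morphism replaced by $\id_X$.
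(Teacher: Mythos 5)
Your proposal is correct and follows essentially the same route as the paper's proof: construct the bibundle morphism \(f\colon\Gamma\to\Dec(X)\otimes\Gamma\) only in dimensions at most \(3\) (both sides being \(3\)-coskeletal) by inserting degenerate simplices into a given simplex of \(\Gamma\) and composing with the quotient maps defining the composite, then verify that \(f\) is a colored weak acyclic fibration --- the colored \(\Acyc'(1)\) cover condition and \(\Acyc!(2)\) --- and conclude that the induced functor on groupoids of bigons is a weak equivalence, with \(\Gamma\otimes\Dec(X)\) handled by symmetry.

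Two caveats, neither fatal but both worth fixing. First, since \(\Gamma\) is an arbitrary right principal bibundle and need not be the cograph of any morphism \(X\to Y\), Proposition~\ref{chap6:prop:bibundle-functorial} cannot literally be ``specialized'' to \(f=\id_X\) to yield this statement: that proposition only compares cographs of morphisms with composites of cographs. It is your explicit construction, not the specialization remark, that carries the proof; fortunately you state it for a general \(\Gamma\), and in that form it coincides with the paper's. Second, the degenerate simplices one must adjoin are not all supplied by degeneracies of \(X\): for instance, to produce an element of \((\Dec(X)\otimes\Gamma)_{0,1}\) one inserts, besides a degenerate \(2\)-simplex of \(X\), also the degenerate triangle \(\de_0\) of an edge of \(\Gamma\) lying in the \(\Gamma\)-part of the composite. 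Finally, a cosmetic difference: the paper's verification of \(\Acyc!(2)\) proceeds by direct pictorial isomorphisms of the relevant quotient spaces (adding and removing degenerate simplices), rather than by invoking the pasting theorem as you anticipate; the pasting theorem is used in the construction of the composition itself, not in this unitality check.
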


We divided the proof into several lemmas.

\begin{lemma}
  There are morphisms of \(X\)-\(Y\) bibundles \(\Gamma \to \Dec (X)\otimes \Gamma\) and \(\Gamma\to \Gamma\otimes \Dec(X)\).
\end{lemma}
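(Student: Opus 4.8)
The plan is to construct the morphism in low dimensions and then extend it by coskeletality, imitating the construction of the bibundle morphism $c\colon\Sigma\to\Pi$ in the proof of Proposition~\ref{chap6:prop:bibundle-functorial}. Write $\Pi\coloneqq\Dec(X)\otimes\Gamma$, a simplicial object over $\Simp{1}$ whose two ends are $X=C'_{-1}\Gamma$ and $Y=R'_{-1}\Gamma$, exactly as for $\Gamma$ itself; here $\Dec(X)$ is the left factor, so the fused middle copy of $X$ is recorded by the décalage. Since both $\Gamma$ and $\Pi$ are $3$-coskeletal, it suffices to produce a family of maps $c_{i,j}$ on the pieces $\Gamma_{0,0},\Gamma_{0,1},\Gamma_{1,0}$ and $\Gamma_{0,2},\Gamma_{1,1},\Gamma_{2,0}$ that is compatible with all face and degeneracy maps and restricts to the identity on the ends $X$ and $Y$.

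First I would define $c_{0,0}$. Recall $\Dec(X)_{0,0}=X_1$, so that $\Pi_{0,0}=X_1\times_{X_0}\Gamma_{0,0}$, where the middle copy of $X_0$ is glued to the image of $\Gamma_{0,0}$ in $X_0$. I set
\[
c_{0,0}\colon \Gamma_{0,0}\to \Pi_{0,0},\qquad e\mapsto\bigl(\de_0(\face(e)),\,e\bigr),
\]
inserting the degenerate $1$-simplex of $X$ at $\face(e)\in X_0$, which plays the role of an identity arrow for the unit bibundle $\Dec(X)$. The higher components $c_{0,1},c_{1,0}$ and $c_{0,2},c_{1,1},c_{2,0}$ are built in the same spirit: starting from a simplex of $\Gamma$, one first adds the appropriate degenerate simplices of $X$ (supplied by the degeneracy maps of $X$ inside $\Dec(X)$) to reach the un\-quotiented configuration space $\tilde\Pi_{i,j}$ of the shape used to define $\Pi_{i,j}$ in Chapter~\ref{chap6:composition}, and then postcomposes with the quotient morphism $\tilde\Pi_{i,j}\to\Pi_{i,j}$. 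This is precisely the pattern of the figures for $c_{0,0},c_{0,1},c_{1,0}$ and $c_{0,2},c_{1,1},c_{2,0}$ in the proof of Proposition~\ref{chap6:prop:bibundle-functorial}, with the colours shifted so that the inserted degenerate simplices now live in the left factor $X$ rather than in the factors $Y$ and $Z$.

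Because each $c_{i,j}$ is the composite of an honest morphism $\Gamma_{i,j}\to\tilde\Pi_{i,j}$ (degeneracy maps of $X$ together with the identity on the $\Gamma$-data) with the quotient morphism, there is no well-definedness issue for the individual components. The genuine content is twofold: that $c_{i,j}$ does not depend on the decomposition chosen to present $\Pi_{i,j}$ (recall that different decompositions are identified by the canonical isomorphisms of Lemma~\ref{chap6:lem:4-iso-Sigma-0-1} and its analogues), and that the family $\{c_{i,j}\}$ commutes with all face and degeneracy maps. Both follow by tracking the inserted degenerate simplices and invoking the simplicial identities for $X$ and for $\Gamma$; on the two ends $c$ is the identity by construction. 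The symmetric construction, inserting degenerate arrows into the right-hand factor $\Dec(X)$, yields the morphism $\Gamma\to\Gamma\otimes\Dec(X)$.

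The main obstacle is purely combinatorial: verifying simultaneously that the degenerate-insertion maps are independent of the decomposition and compatible with every face and degeneracy map of $\Pi$. As in Proposition~\ref{chap6:prop:bibundle-functorial}, the cleanest way to organise this is pictorially, by tracking the colours of the vertices and the degenerate simplices inserted; Lemma~\ref{chap6:lem:bigon-triangle} lets one move bigons past triangles, and the pasting theorem for $2$-categories guarantees that the various ways of reading off a given face of $\Pi$ agree, so no genuine coherence difficulty arises beyond careful bookkeeping.
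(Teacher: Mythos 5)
Your proposal is correct and follows essentially the same route as the paper's proof: define the morphism only in dimensions $\le 3$ (using $3$-coskeletality), take $f_{0,0}=(\de_0\circ\face_0,\id)$ into $X_1\times_{X_0}\Gamma_{0,0}$, and obtain the higher components by inserting suitable degenerate simplices and composing with the quotient maps, with compatibility read off from the simplicial identities. The only cosmetic difference is that some of the inserted degenerate simplices are degeneracies of $\Gamma$ itself (e.g.\ $(0/1,0/2,1)=\de_0(0/2,1)$) rather than of the factor $X$ alone, but this does not affect the argument.
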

\begin{proof}
  We construct the morphism of bibundles \(\Gamma\to \Dec (X)\otimes \Gamma\); the other morphism can be constructed by symmetry.

  Denote \(\Dec (X)\otimes \Gamma\) by \(\Sigma\). We need a morphism \(f\colon \Gamma\to \Sigma\) on the dimensions \(n\le 3\). First, on \(X\) and \(Y\), \(f\) is the identity. Recall that \(\Sigma_{0,0}=X_1\times_{\face_1,X_0,\face_0} \Gamma_{0,0}\). We define
  \[
  f_{0,0}\colon \Gamma_{0,0}\to \Sigma_{0,0}, \quad f_{0,0}=(\de_0\circ \face_0, \id),
  \]
  illustrated by Figure~\ref{chap6:fig:may-unit-bibudle-f00}, where the left picture is sent to the right picture by adding a degenerate 1-simplex (and then relabeling the vertices).
\begin{figure}[htbp]
  \centering
  \begin{tikzpicture}%
  [>=latex', hdot/.style={mydot,fill=lightgray},every label/.style={scale=0.6}]
  \node[mydot,label=180:\(0\)] at (0,0) (g0){};
  \node[hdot,label=0:\(1\)] at (2,0) (h1){};
  \path[->] (h1) edge (g0);
  \begin{scope}[xshift=5cm]
  \node[mydot,label=90:\(0/1\)] at (0,0) (g01){};
  \node[mydot,label=180:\(0\)] at (-1,0) (g0){};
  \node[hdot,label=0:\(1\)] at (2,0) (h1){};
  \path[->]
  (h1) edge (g01)
  (g01) edge (g0);
  \end{scope}
  \end{tikzpicture}
  \caption{The construction of \(f_{0,0}\)}\label{chap6:fig:may-unit-bibudle-f00}
\end{figure}
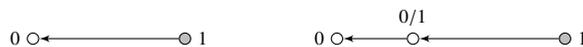

In the same spirit, we define \(f_{0,1}\colon \Gamma_{0,1}\to \Sigma_{0,1}\) by Figure~\ref{chap6:fig:may-unit-bibudle-f01}. We first send \((0,1,2)\) on the left to \((0/2,1,2)\) on the right. Then add degenerate simplices, \((0/1,0/2,1)=\de_0(0/2,1)\) and \((0,0/2,0/1)=\de_0(0/2,0/1)\). Composing with the quotient map, we get~\(f_{0,1}\).
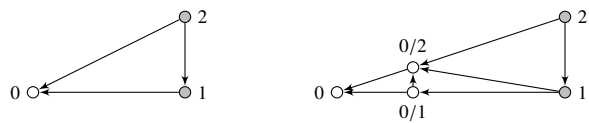
\begin{figure}[htbp]
  \centering
  \begin{tikzpicture}%
  [>=latex', hdot/.style={mydot,fill=lightgray},every label/.style={scale=0.6}]
  \node[mydot,label=180:\(0\)] at (0,0) (g0){};
  \node[hdot,label=0:\(1\)] at (2,0) (h1){};
  \node[hdot,label=0:\(2\)] at (2,1) (h2){};
  \foreach \i/\j in
  {h1/g0,h2/g0,h2/h1}{
  \path[->] (\i) edge (\j);}
  \begin{scope}[xshift=5cm]
  \node[mydot,label=180:\(0\)] at (-1,0) (g0){};
  \node[mydot,label=-90:\(0/1\)] at (0,0) (g01){};
  \node[mydot,label=90:\(0/2\)] at (0,0.33) (g02){};
  \node[hdot,label=0:\(1\)] at (2,0) (h1){};
  \node[hdot,label=0:\(2\)] at (2,1) (h2){};
  \foreach \i/\j in
  {g01/g0,g02/g0,g01/g02,
  h1/g01,h2/g02,h2/h1,h1/g02}{
  \path[->] (\i) edge (\j);}
  \end{scope}
  \end{tikzpicture}
  \caption{The construction of \(f_{0,1}\)}\label{chap6:fig:may-unit-bibudle-f01}
\end{figure}

Similarly, the map \(f_{1,0}\colon \Gamma_{1,0}\to \Sigma_{1,0}\) is given by Figure~\ref{chap6:fig:may-unit-bibudle-f10}. It sends \((0,1,2)\) on the left to \((0/2,1/2,2)\) on the right. The other 2\nbdash{}simplices of the right picture are degenerate, \((0,0/2,1/2)=\de_0(0/2,1/2)\) and \((0,1,0/1)=\de_1(0,1/2)\). The decomposition of \(\Sigma_{1,0}\) that we use here is different from the standard one.
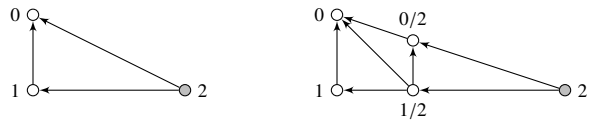
\begin{figure}[htbp]
  \centering
  \begin{tikzpicture}%
  [>=latex', hdot/.style={mydot,fill=lightgray},every label/.style={scale=0.6}]
  \node[mydot,label=180:\(0\)] at (0,1) (g0){};
  \node[mydot,label=180:\(1\)] at (0,0) (g1){};
  \node[hdot,label=0:\(2\)] at (2,0) (h2){};
  \foreach \i/\j in
  {g1/g0,h2/g0,h2/g1}{
  \path[->] (\i) edge (\j);}
  \begin{scope}[xshift=5cm]
  \node[mydot,label=180:\(0\)] at (-1,1) (g0){};
  \node[mydot,label=180:\(1\)] at (-1,0) (g1){};
  \node[mydot,label=90:\(0/2\)] at (0,0.66) (g02){};
  \node[mydot,label=-90:\(1/2\)] at (0,0) (g12){};
  \node[hdot,label=0:\(2\)] at (2,0) (h2){};
  \foreach \i/\j in
  {g1/g0,g02/g0,g12/g1,g12/g02,g12/g0,
  h2/g02,h2/g12}{
  \path[->] (\i) edge (\j);}
  \end{scope}
  \end{tikzpicture}
  \caption{The construction of \(f_{1,0}\)}\label{chap6:fig:may-unit-bibudle-f10}
\end{figure}

Finally, \(f_{2,0}, f_{1,1}\), and \(f_{2,0}\) are given by the three pictures in Figure~\ref{chap6:fig:map-unit-bibundle-f3s}, respectively. In these pictures, the 3-simplices \((0/2,1,2,3)\), \((0/3,1/3,2,3)\), and \((0/3,1/3,2/3,3)\) come from \(\Gamma\). The remaining 3-simplices are suitable degenerate simplices.
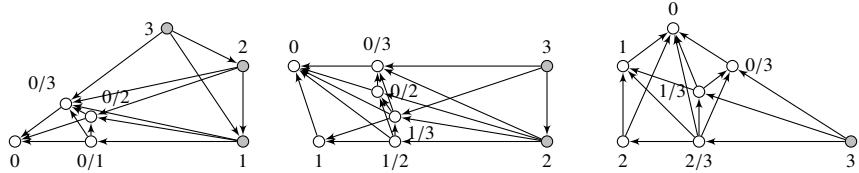
\begin{figure}[htbp]
  \centering
  \begin{tikzpicture}%
  [>=latex', hdot/.style={mydot,fill=lightgray},every label/.style={scale=0.6}]
  \node[mydot,label=-90:\(0\)] at (-1,0) (g0){};
  \node[mydot,label=-90:\(0/1\)] at (0,0) (g01){};
  \node[mydot,label=45:\(0/2\)] at (0,0.33) (g02){};
  \node[mydot,label=120:\(0/3\)] at (-0.33,0.5) (g03){};
  \node[hdot,label=-90:\(1\)] at (2,0) (h1){};
  \node[hdot,label=90:\(2\)] at (2,1) (h2){};
  \node[hdot,label=180:\(3\)] at (1,1.5) (h3){};
  \foreach \i/\j in
  {g01/g0,g02/g0,g03/g0,g01/g02,g02/g03,g01/g03,
  h1/g01,h2/g02,h3/g03,h2/h1,h3/h1,h3/h2,h1/g02,h2/g03,h1/g03}{
  \path[->] (\i) edge (\j);}
  \begin{scope}[xshift=3cm]
  \node[mydot,label=90:\(0\)] at (-0.33,1) (g0){};
  \node[mydot,label=-90:\(1\)] at (0,0) (g1){};
  \node[hdot,label=-90:\(2\)] at (3,0) (h2){};
  \node[hdot,label=90:\(3\)] at (3,1) (h3){};
  \node[mydot,label=0:\(0/2\)] at (0.77,0.66) (g02){};
  \node[mydot,label=90:\(0/3\)] at (0.77, 1) (g03){};
  \node[mydot,label=-90:\(1/2\)] at (1,0) (g12){};
  \node[mydot,label=-30:\(1/3\)] at (1,0.33) (g13){};
  \foreach \i/\j in
  {g1/g0,g02/g0,g03/g0,g12/g1,g13/g1,
  h3/g03,h2/g02,h3/g13,h2/g12,h3/h2,
  g02/g03,h2/g03,g12/g13,h2/g13,
  g12/g02,g12/g0,g13/g03,g13/g0,g13/g02}{
  \path[->] (\i) edge (\j);}
  \end{scope}
  \begin{scope}[xshift=7cm]
  \node[mydot,label=90:\(0\)] at (0.66,1.5)(g0){};
  \node[mydot,label=90:\(1\)] at (0,1)  (g1){};
  \node[mydot,label=-90:\(2\)] at (0,0) (g2){};
  \node[hdot,label=-90:\(3\)] at (3,0) (h3){};
  \node[mydot,label=0:\(0/3\)] at (1.44,1) (g03){};
  \node[mydot,label=180:\(1/3\)] at (1,0.66) (g13){};
  \node[mydot,label=-90:\(2/3\)] at (1,0) (g23){};
  \foreach \i/\j in
  {g1/g0,g2/g0,g2/g1,g03/g0,g13/g1,g23/g2,h3/g03,h3/g13,h3/g23,
  g13/g03,g13/g0,g23/g13,g23/g1,g23/g03,g23/g0}{
  \path[->] (\i) edge (\j);}
  \end{scope}
  \end{tikzpicture}
  \caption{The construction of \(f_{2,0}, f_{1,1}\), and \(f_{2,0}\)}\label{chap6:fig:map-unit-bibundle-f3s}
\end{figure}

It is clear that the maps \(f_{0,0}\), \(f_{0,1}\), \(f_{1,0}\), \(f_{2,0}\), \(f_{1,1}\), and \(f_{2,0}\) are compatible with face and degeneracy maps. This defines a bibundle morphism \(f\colon \Sigma\to \Gamma\).
\end{proof}

We now explain that the bibundle morphism \(f\) constructed above induces a weak equivalence on the groupoid of bigons, \(\arrow{\Sigma}\to \arrow{\Gamma}\).

\begin{lemma}
  Let \(f\colon \Sigma\to \Gamma\) be given as above. Then
\begin{multline*}
  \Hom(\Simp{1}[1,1]\to\Simp{1}\star \Simp{0}[1,2], \Sigma\to \Gamma) \to \\
  \Hom(\partial\Simp{1}[1,1]\to\partial\Simp{1}\star\Simp{0}[1,2], \Sigma\to \Gamma)
\end{multline*}
is a cover.
\end{lemma}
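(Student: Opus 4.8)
The plan is to follow the pictorial method used for the analogous functoriality lemma (the proof of the corresponding statement for $c\colon\Sigma\to\Pi$), since the assertion here is exactly the colored $k=1$ instance of the weak acyclic fibration condition of Definition~\ref{chap6:def:weak-acyclic}, applied to the morphism $f$ relating $\Gamma$ and $\Sigma=\Dec(X)\otimes\Gamma$ constructed above. First I would unravel the two $\Hom$-of-arrows presheaves as spaces of configurations of colored simplices. The target $\Hom(\partial\Simp{1}[1,1]\to\partial\Simp{1}\star\Simp{0}[1,2], \Sigma\to\Gamma)$ records a $[1,1]$-colored edge of $\Gamma$ together with an extension of its image over the outer horn $\partial\Simp{1}\star\Simp{0}=\Horn{2}{2}$ carrying the extra black vertex, while the source records a genuine $[1,2]$-colored $2$-simplex filling that horn. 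Recalling the construction of $f$ (Figures~\ref{chap6:fig:may-unit-bibudle-f00}--\ref{chap6:fig:may-unit-bibudle-f01}), the image of $\Gamma$ in $\Sigma=\Dec(X)\otimes\Gamma$ is produced by inserting a degenerate edge in the $\Dec(X)$ (middle-$X$) direction, so every simplex in sight is a formal composite of a piece coming from $\Dec(X)$ (hence from $X$) and a piece coming from $\Gamma$, taken modulo the quotient by the relevant groupoids of bigons.

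Next I would pass to the un-quotiented configuration spaces, exactly as in the earlier proof. After discarding the degenerate simplices supplied by the décalage unit, the remaining data decomposes into an $X$-part and a $\Gamma$-part. On the $X$-part the pertinent extension map is a cover by the Kan conditions for the $2$-groupoid $X$ (equivalently, the décalage $\dec(X)\to X$ is a Kan fibration, Example~\ref{chap3:exa:decalage}); on the $\Gamma$-part it is a cover because $\Gamma$ is right principal, so the left moment maps and shear maps that appear are covers. Composing these, the map between the un-quotiented configuration spaces is a cover.

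Finally I would descend to the quotients. The cover just produced is equivariant for the actions of the groupoids of bigons $\arrow{X}$ and $\arrow{\Gamma}$ indicated by the snake lines, and these actions are principal by the right principality of $\Gamma$ together with the principality of the décalage action. Hence Lemmas~\ref{chap1:lem:invariant-cover-descent} and~\ref{chap1:lem:action-over-principal-is-principal} show that the map descends to a cover on the quotient spaces, which is precisely the asserted map. The main obstacle is not any single step — each is a routine invocation of the Kan conditions for $X$, the right principality of $\Gamma$, and the two descent lemmas — but rather the combinatorial bookkeeping: correctly identifying which simplices become degenerate under $f$, which middle-$X$ edges must be inverted or filled, and which snake-line quotients occur, so that the two configuration spaces line up and the cover descends. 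This is best carried out picture by picture, in direct analogy with Figure~\ref{chap6:fig:bibundle-c-acyclic-1-right}.
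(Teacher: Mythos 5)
Your overall strategy --- unravel the two $\Hom$ spaces as configuration pictures, exploit the fact that $f$ inserts degenerate simplices, pass to un-quotiented spaces, produce a cover from Kan conditions, and descend through Lemmas~\ref{chap1:lem:invariant-cover-descent} and~\ref{chap1:lem:action-over-principal-is-principal} --- is exactly the paper's. But your opening identification of the two $\Hom$ spaces is wrong, and the error sits precisely where the content of the lemma lies. The target $\Hom(\partial\Simp{1}[1,1]\to\partial\Simp{1}\star\Simp{0}[1,2],\,\cdot\,)$ does \emph{not} record a $[1,1]$-colored edge of $\Gamma$: the simplicial set $\partial\Simp{1}$ has no $1$-simplices, so the target records only a pair of vertices together with the horn data $\Horn{2}{2}[1,2]$ in $\Sigma$, i.e.\ a $\Sigma$-edge $(0,2)$ (an $X$-edge $(0,0/2)$ plus a $\Gamma$-edge $(0/2,2)$) and a $Y$-edge $(1,2)$ --- the right picture in Figure~\ref{chap6:fig:map-unit-bibundle-f-acyclic-1-right}. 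The $\Gamma$-edge $(0,1)$ belongs to the \emph{source}: it is part of what a lift must produce. (The paper's own statement writes ``$\Sigma\to\Gamma$'' although the construction gives $f\colon\Gamma\to\Sigma$, which may have misled you, but under either reading the target contains no edge.) With your reading --- the $\Gamma$-edge given, and the $2$-simplex of $\Sigma$ to be filled with its third face prescribed to be the image of that edge --- the assertion becomes a boundary-filling (acyclicity-type) condition, which is false even for right principal bibundles: already for the cograph of a $1$-groupoid HS bibundle, a $[1,2]$-triangle with all three faces $p_{01},p_{02},h$ prescribed exists only when $p_{02}=p_{01}\cdot h$. The actual proof must \emph{construct} the missing $\Gamma$-edge, and this is what the paper's appeal to ``the Kan conditions for $\Gamma$'' does: fill the right-outer colored horn $\Horn{2}{2}[1,2]$ on $(0/2,1,2)$, legitimate since $\Kan(m,m)[i,j]$ for $j\ge 2$ is part of the bibundle structure (Definition~\ref{chap4:def:bibundles=inner-over-I}), and then the inner horn $\Horn{2}{1}[2,1]$ on $(0,0/2,1)$ to obtain the edge $(0,1)$.

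Two further misattributions in your covering step, less damaging but symptomatic of the deferred bookkeeping: after the degenerate d\'ecalage datum is removed (this is the paper's identification of the left and middle pictures, an isomorphism of spaces, not a covering extension), there is no ``$X$-part'' left to fill --- the $X$-edge $(0,0/2)$ is part of the given horn --- so no composition of an $X$-cover with a $\Gamma$-cover occurs; and the covers on the $\Gamma$-side come from the inner and $j\ge 2$-outer colored Kan conditions that every bibundle satisfies, not from right principality (which supplies the $\Kan(m,0)$ conditions, unused here). Since $\Gamma$ is right principal by hypothesis, citing it is harmless, but together with the swapped source and target it shows the plan, taken literally, proves the wrong statement.
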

\begin{proof}
We need to show that the canonical map from the left picture to the right picture in Figure~\ref{chap6:fig:map-unit-bibundle-f-acyclic-1-right} is a cover. In the left picture, \((0, 0/1)\) is degenerate. Since the middle picture and the left picture represent the same spaces, it suffices to consider the map from the middle picture to the right picture. The Kan conditions for \(\Gamma\) implies that the map from the un-quotiented version of the middle picture to the right picture is a cover, so it descends to a cover. This proves the lemma.
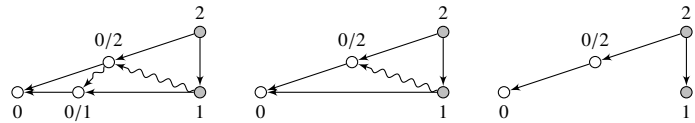
\begin{figure}[htbp]
  \centering
  \begin{tikzpicture}%
  [>=latex', hdot/.style={mydot,fill=lightgray},every label/.style={scale=0.6},scale=0.8]
  \node[mydot,label=-90:\(0\)] at (0,0) (g0){};
  \node[hdot,label=-90:\(1\)] at (3,0) (h1){};
  \node[hdot,label=90:\(2\)] at (3,1) (h2){};
  \node[mydot,label=-90:\(0/1\)] at (1,0)(g01){};
  \node[mydot,label=90:\(0/2\)] at (1.5,0.5)(g02){};
  \foreach \i/\j in
  {g01/g0,g02/g0,h1/g01,h2/g02,h2/h1}{
  \path[->](\i) edge (\j);}
  \path[->] (h1) edge[snake arrow] (g02)
  (g02) edge[snake arrow] (g01);
  \begin{scope}[xshift=4cm]
  \node[mydot,label=-90:\(0\)] at (0,0) (g0){};
  \node[hdot,label=-90:\(1\)] at (3,0) (h1){};
  \node[hdot,label=90:\(2\)] at (3,1) (h2){};
  \node[mydot,label=90:\(0/2\)] at (1.5,0.5)(g02){};
  \foreach \i/\j in
  {g02/g0,h1/g0,h2/g02,h2/h1}{
  \path[->](\i) edge (\j);}
  \path[->] (h1) edge[snake arrow] (g02);
  \end{scope}
  \begin{scope}[xshift=8cm]
  \node[mydot,label=-90:\(0\)] at (0,0) (g0){};
  \node[hdot,label=-90:\(1\)] at (3,0) (h1){};
  \node[hdot,label=90:\(2\)] at (3,1) (h2){};
  \node[mydot,label=90:\(0/2\)] at (1.5,0.5)(g02){};
  \foreach \i/\j in
  {g02/g0,h2/g02,h2/h1}{
  \path[->](\i) edge (\j);}
  \end{scope}
  \end{tikzpicture}
  \caption{A pictorial proof}\label{chap6:fig:map-unit-bibundle-f-acyclic-1-right}
\end{figure}
\end{proof}

\begin{lemma}
  The morphism \(f\colon \Sigma\to \Gamma\) satisfies \(\Acyc!(2)\).
\end{lemma}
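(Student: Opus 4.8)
The plan is to imitate, step for step, the proof that the bundlisation comparison map $c\colon \Sigma\to\Pi$ satisfies $\Acyc!(2)$ given in the previous section. In the colored setting the condition $\Acyc!(2)$ for $f\colon\Sigma\to\Gamma$ asserts that $\Hom(\Simp{2}\to\Simp{2}\star\Simp{0},\Sigma\to\Gamma)\to\Hom(\partial\Simp{2}\to\partial\Simp{2}\star\Simp{0},\Sigma\to\Gamma)$ is an isomorphism; since $\Simp{2}\star\Simp{0}=\Simp{3}$, this unravels, along the coloring of the configurations, into the two nontrivial statements
\[
\Hom(\Simp{3}[1,2],\Sigma)\cong\Hom(\partial\Simp{3}[1,2]\to\Simp{3}[1,2],\Sigma\to\Gamma)
\]
and the corresponding $[2,1]$ isomorphism. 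The monochrome cases $[3,0]$ and $[0,3]$ reduce to $\Acyc!(2)$ for the $X$- and $Y$-ends, on which $f$ is the identity, so they hold automatically.

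First I would treat the coloring $[1,2]$. Using the explicit description of the low-dimensional components $f_{0,1}$, $f_{1,1}$, and $f_{0,2}$ — where $f$ inserts the degenerate simplices coming from the $\Dec(X)$ factor and is essentially the identity on the $\Gamma$-part — I would write out both presheaves as quotients of configurations of simplices in $\Dec(X)$ and $\Gamma$. The crucial point is that adjoining the cone point together with its prescribed degenerate faces collapses the $\Dec(X)$ part of a configuration: on the un-quotiented level the source reduces to the $3$\nbdash{}simplex space of $\Gamma$ and the target to a matching boundary datum of $\Gamma$, whereupon the unique Kan condition $\Kan!(3,\cdot)[1,3]$ for $\Gamma$ supplies the required bijection. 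Every map occurring here is an invariant cover that is equivariant for the pertinent groupoids of bigons, so Lemmas~\ref{chap1:lem:invariant-cover-descent} and~\ref{chap1:lem:action-over-principal-is-principal} let me descend these bijections to the quotients, yielding the stated isomorphism. The coloring $[2,1]$ is handled by the mirror-image argument, now using $f_{1,0}$, $f_{2,0}$, the two\nbdash{}sided principality of $\Dec(X)$, and the conditions $\Kan!(3,\cdot)[2,2]$ and $\Kan!(3,\cdot)[3,1]$ for $\Gamma$.

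I expect the main obstacle to be purely combinatorial bookkeeping rather than any conceptual difficulty: one must track, face by face, which simplices in each picture are degenerate, which originate from $\Dec(X)$ and which from $\Gamma$, and check that the quotients by the several groupoids of bigons on the two sides are matched up by the explicit maps $f_{i,j}$. As with the other lemmas in this section, the cleanest route is pictorial — I would draw the two colored configurations, display the intermediate un-quotiented spaces, and reduce the entire claim to the $\Acyc!$ and $\Kan!$ conditions already available for $\Gamma$. Once $\Acyc!(2)$ is established, it combines with the preceding lemma to show that $f$ is a colored weak acyclic fibration; hence, as in Proposition~\ref{chap6:prop:weak-equi=weak-acyc}, it induces a weak equivalence $\arrow{\Sigma}\to\arrow{\Gamma}$, which completes the proof of Proposition~\ref{chap6:prop:2gpd-bibundle-comp-unit}.
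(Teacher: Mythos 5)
Your proposal matches the paper's proof in both substance and method: the paper likewise reduces \(\Acyc!(2)\) to the two non-trivial colored cases \([2,1]\) and \([1,2]\) (the monochrome ends being trivial), and proves each by exactly the pictorial argument you describe --- collapsing the degenerate \(\Dec(X)\) data inserted by \(f\) via an intermediate configuration, invoking the unique Kan conditions of \(\Gamma\), and descending along the quotients by the groupoids of bigons. The only discrepancy is notational and is inherited from the paper itself: the lemma writes \(f\colon \Sigma\to\Gamma\) although \(f\) is constructed as \(\Gamma\to\Sigma\), so the fiber products in your displayed conditions should read \(\Hom(\cdots,\Gamma\to\Sigma)\) rather than \(\Hom(\cdots,\Sigma\to\Gamma)\), exactly as in the paper's own proof.
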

\begin{proof}
The lemma consists of two non-trivial statements. Consider Figure~\ref{chap6:fig:map-unit-bibundle-f-acyclic2-2-1}. The first statement says that \(f\) induces an isomorphism between \(\Hom(\partial\Simp{3}[2,1]\to \Simp{3}[2,1], \Gamma\to \Sigma)\), given by the left picture, and \(\Hom(\Simp{3}[1,2], \Gamma)\), given by the right picture. In the left picture, \((0,0/2)\) and \((1,1/2)\) are both degenerate. The statement follows because the space given by the middle picture is isomorphic to the spaces given by the other two pictures.
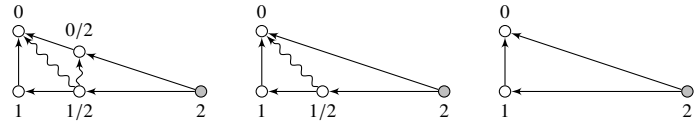
\begin{figure}[htbp]
  \centering
  \begin{tikzpicture}%
  [>=latex', hdot/.style={mydot,fill=lightgray},every label/.style={scale=0.6},scale=0.8]
  \node[mydot,label=90:\(0\)] at (0,1) (g0){};
  \node[mydot,label=-90:\(1\)] at (0,0) (g1){};
  \node[hdot,label=-90:\(2\)] at (3,0) (h2){};
  \node[mydot,label=-90:\(1/2\)] at (1,0)(g12){};
  \node[mydot,label=90:\(0/2\)] at (1,0.66)(g02){};
  \foreach \i/\j in
  {g1/g0,g12/g1,g02/g0,h2/g12,h2/g02}{
  \path[->](\i) edge (\j);}
  \path[->] (g12) edge[snake arrow] (g0)
  (g12) edge[snake arrow] (g02);
  \begin{scope}[xshift=4cm]
  \node[mydot,label=90:\(0\)] at (0,1) (g0){};
  \node[mydot,label=-90:\(1\)] at (0,0) (g1){};
  \node[hdot,label=-90:\(2\)] at (3,0) (h2){};
  \node[mydot,label=-90:\(1/2\)] at (1,0)(g12){};
  \foreach \i/\j in
  {g1/g0,g12/g1,h2/g0,h2/g12}{
  \path[->](\i) edge (\j);}
  \path[->] (g12) edge[snake arrow] (g0);
  \end{scope}
  \begin{scope}[xshift=8cm]
  \node[mydot,label=90:\(0\)] at (0,1) (g0){};
  \node[mydot,label=-90:\(1\)] at (0,0) (g1){};
  \node[hdot,label=-90:\(2\)] at (3,0) (h2){};
  \foreach \i/\j in
  {g1/g0,h2/g1,h2/g0}{
  \path[->](\i) edge (\j);}
  \end{scope}
  \end{tikzpicture}
  \caption{Proof of \(\Acyc!(2)[2,1]\)}\label{chap6:fig:map-unit-bibundle-f-acyclic2-2-1}
\end{figure}

The other statement is that
\[
\Hom(\Simp{3}[1,2], \Gamma)\cong \Hom(\partial\Simp{3}[1,2]\to \Simp{3}[1,2], \Gamma\to \Sigma),
\]
which follows by considering Figure~\ref{chap6:fig:map-unit-bibundle-f-acyclic2-1-2}.
\begin{figure}[htbp]
  \centering
  \begin{tikzpicture}%
  [>=latex', hdot/.style={mydot,fill=lightgray},every label/.style={scale=0.6},scale=0.8]
  \node[mydot,label=-90:\(0\)] at (0,0) (g0){};
  \node[hdot,label=-90:\(1\)] at (3,0) (h1){};
  \node[hdot,label=90:\(2\)] at (3,1) (h2){};
  \node[mydot,label=-90:\(0/1\)] at (1,0)(g01){};
  \node[mydot,label=90:\(0/2\)] at (1,0.33)(g02){};
  \foreach \i/\j in
  {g01/g0,g02/g0,h1/g01,h2/g02,h2/h1}{
  \path[->](\i) edge (\j);}
  \path[->] (h2) edge[snake arrow] (g01)
  (g02) edge[snake arrow] (g01);
  \begin{scope}[xshift=4cm]
  \node[mydot,label=-90:\(0\)] at (0,0) (g0){};
  \node[hdot,label=-90:\(1\)] at (3,0) (h1){};
  \node[hdot,label=90:\(2\)] at (3,1) (h2){};
  \node[mydot,label=-90:\(0/1\)] at (1,0)(g01){};
  \foreach \i/\j in
  {g01/g0,h2/g0,h1/g01,h2/h1}{
  \path[->](\i) edge (\j);}
  \path[->] (h2) edge[snake arrow] (g01);
  \end{scope}
  \begin{scope}[xshift=8cm]
  \node[mydot,label=-90:\(0\)] at (0,0) (g0){};
  \node[hdot,label=-90:\(1\)] at (3,0) (h1){};
  \node[hdot,label=90:\(2\)] at (3,1) (h2){};
  \foreach \i/\j in
  {h1/g0,h2/g0,h2/h1}{
  \path[->](\i) edge (\j);}
  \end{scope}
  \end{tikzpicture}
  \caption{Proof of \(\Acyc!(2)[1,2]\)}\label{chap6:fig:map-unit-bibundle-f-acyclic2-1-2}
\end{figure}
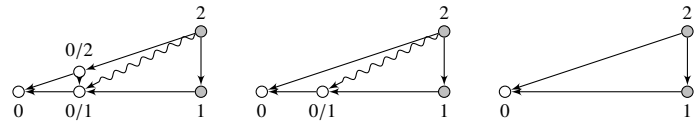
\end{proof}

The two lemmas above show that \(f\) is a colored version of a weak acyclic fibration. Hence the induced morphism on the groupoids of bigons, \(\arrow{\Sigma}\to \arrow{\Gamma}\), is a weak equivalence, and this finishes the proof of Proposition~\ref{chap6:prop:2gpd-bibundle-comp-unit}.

\section{Associativity of the composition}

In this section, we show that the composition of 2\nbdash{}groupoid bibundles is associative up to an isomorphism (not only a weak equivalence). Let \(X, Y, Z\), and  \(W\) be 2-groupoids in \((\Cat,\covers)\) in this section.

\begin{proposition}\label{chap6:prop:2gpd-bibundle-comp-ass}
  Let \(\Gamma, \Xi\), and \(\Omega\) be \(X\)\nbdash{}\(Y\), \(Y\)\nbdash{}\(Z\), and \(Z\)\nbdash{}\(W\) right principal bibundles, respectively. There is a canonical isomorphism \((\Gamma\otimes \Xi)\otimes\Omega \to \Gamma\otimes (\Xi\otimes \Omega)\).
\end{proposition}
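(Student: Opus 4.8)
The plan is to exploit that both iterated composites are $X$-$W$ bibundles between $2$\nbdash{}groupoids, hence $3$-coskeletal (they are built as $\cosk_3$ of their truncations, exactly as in Section~\ref{chap6:composition}). Consequently it suffices to construct the isomorphism in dimensions $\le 3$, that is, on the spaces $\Lambda_{i,j}$ with $i+j\le 2$, and to check that it commutes with all face and degeneracy maps; the isomorphism in higher dimensions is then forced by the $3$-coskeletal property.

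First I would give an explicit, \emph{symmetric} description of a single object $\Lambda$ that will serve as a common value for both sides. At the lowest level I set $\Lambda_{0,0}=\Gamma_{0,0}\times_{Y_0}\Xi_{0,0}\times_{Z_0}\Omega_{0,0}$; in dimensions $(0,1),(1,0)$ and then $(0,2),(1,1),(2,0)$, I take $\Lambda_{i,j}$ to be the space of configurations of triangles (resp.\ tetrahedra) in $\Gamma$, $\Xi$, and $\Omega$ glued along their shared gray ($Y$- and $Z$-colored) faces, in the manner of the pictures defining $\Sigma_{0,1}$ and $\Sigma_{0,2}$, quotiented by the actions of the groupoids of bigons $\arrow{Y}$ and $\arrow{Z}$ along the two gray interfaces. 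Because all three bibundles are right principal, the relevant $\arrow{Y}$- and $\arrow{Z}$-actions are principal (as in Corollary~\ref{chap6:cor:princial-1-implies-pullback} and Lemma~\ref{chap6:lem:fundamental-is-good}), so these quotients are corepresentable and the representability and Kan-condition arguments of Lemmas~\ref{chap6:lem:pre-Sigma-0-2-rep} and~\ref{chap6:lem:Sigma-0-2-rep-Kan} apply verbatim to $\Lambda$.

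The core step is then to produce canonical isomorphisms $(\Gamma\otimes\Xi)\otimes\Omega\cong \Lambda$ and $\Gamma\otimes(\Xi\otimes\Omega)\cong\Lambda$ in each low dimension. Unravelling the inner composite in $(\Gamma\otimes\Xi)\otimes\Omega$, a simplex is a formal composite of a $\Gamma$-$\Xi$ configuration, already quotiented by $\arrow{Y}$, with a simplex of $\Omega$ glued along the $Z$-gray interface and then quotiented by $\arrow{Z}$; unravelling the outer composite in $\Gamma\otimes(\Xi\otimes\Omega)$ performs the $\arrow{Z}$-quotient first. The two agree because the $\arrow{Y}$-action, supported on the $\Gamma$-$\Xi$ interface, and the $\arrow{Z}$-action, supported on the $\Xi$-$\Omega$ interface, are independent commuting principal actions, so the iterated quotient in either grouping coincides with the single quotient by $\arrow{Y}\times\arrow{Z}$ defining $\Lambda$; associativity of the fibre products supplies the remaining identifications. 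I would carry this out level by level, the $(0,0)$ case being immediate from associativity of fibre products and the $(0,2),(1,1),(2,0)$ levels being handled by the same tetrahedral configurations used to define $\Lambda$.

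The main obstacle will be compatibility with the face and degeneracy maps and, hidden inside it, the fact that the two groupings decompose the formal triple composites differently: one side may fill a given inner face via $m_1$ where the other uses $m_2$, or connect the two gray interfaces through different auxiliary simplices, so that the two configurations are not literally equal but only equal as pasting composites. Here I would invoke the pasting theorem for $2$\nbdash{}categories together with its simplicial corollary, exactly as in Lemma~\ref{chap6:lem:Sigma-0-2-var}: since all glued simplices form labellings of pasting schemes whose composites are uniquely determined, any two admissible decompositions yield the same quotient class, and every compatibility check reduces to the statement that two pasting schemes with the same boundary have the same composite. Having matched both sides with $\Lambda$ in dimensions $\le 3$ compatibly with all faces and degeneracies, the $3$-coskeletal property upgrades the isomorphism to all dimensions, and the resulting composite isomorphism $(\Gamma\otimes\Xi)\otimes\Omega\to\Gamma\otimes(\Xi\otimes\Omega)$ is the desired canonical associator.
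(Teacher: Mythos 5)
Your scaffolding---reduction to dimensions \(\le 3\) by \(3\)-coskeletality, level-by-level identification, and the pasting theorem to handle different decompositions---matches the paper's proof, and routing the comparison through a symmetric triple composite \(\Lambda\) is a reasonable variant of its direct construction of \(f\colon (\Gamma\otimes\Xi)\otimes\Omega\to\Gamma\otimes(\Xi\otimes\Omega)\). The gap is in your pivotal claim that each grouping ``performs the \(\arrow{Y}\)-quotient first and then the \(\arrow{Z}\)-quotient'' (or vice versa), so that both coincide with the single quotient by \(\arrow{Y}\times\arrow{Z}\). That is not what the construction produces. When you form \((\Gamma\otimes\Xi)\otimes\Omega\), the outer composition quotients the glued configurations by the groupoid of bigons of the \emph{inner composite}, \(\arrow{\Gamma\otimes\Xi}\) (together with that of \(\Omega\)); and by the paper's proposition on bigons of a composite, \(\arrow{\Gamma\otimes\Xi}\) is the fundamental groupoid \(\tau(R'_0\Gamma\times_{Y} C'_0\Xi)\), whose arrows are themselves equivalence classes mixing \(\Gamma\)- and \(\Xi\)-data across the \(Y\)-interface. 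So the middle quotient in the iterated composite is not an action supported on a single gray interface, and the unravelled description of \(((\Gamma\otimes\Xi)\otimes\Omega)_{0,1}\) carries a genuine redundancy (extra middle vertices acted on by a quotient groupoid) that ``independent commuting principal actions'' does not eliminate. Comparing with your \(\Lambda\) faces exactly the same problem as comparing the two sides directly, so nothing has been gained at the point where the real work happens.

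This is precisely where the paper's proof of the lemma constructing \(f_{0,1}\) (Figure~\ref{chap6:fig:Sigma01-Pi01}) earns its keep: after unravelling, \(\Pi_{0,1}=(\Gamma\otimes(\Xi\otimes\Omega))_{0,1}\) carries an action of \(\arrow{\Xi\otimes\Omega}\), and the identification with \(\Sigma_{0,1}\) is obtained by collapsing that redundancy via the groupoid identity \(\Theta_1\cong(\Theta_1\times_{\source,\Theta_0,\target}\Theta_1)/\Theta\) applied to \(\Theta=\arrow{\Xi\otimes\Omega}\). Your argument needs the same ingredient, applied once for each side against \(\Lambda\); the fact that quotienting by bibundle bigons agrees with quotienting by \(\arrow{Y}\)- and \(\arrow{Z}\)-actions (Lemma~\ref{chap6:lem:A-X:eqi-A=X-on-A1}) is the harmless part of your identification, but the fundamental-groupoid collapse is not and must be proved. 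Once that is added, your \(\Lambda\)-route does go through, and dimension \(3\) can be finished either by your pasting argument or, as the paper does, by invoking \(\Kan!(3,2)[1,3]\) on both sides to reduce to the compatibility of unique horn fillers with \(f_{0,1}\).
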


Denote \((\Gamma\otimes \Xi)\otimes\Omega\) by \(\Sigma\) and \(\Gamma\otimes (\Xi\otimes \Omega)\) by \(\Pi\). It suffices to construct the isomorphism \(f\colon \Sigma\to \Pi\) on dimensions \(1, 2\), and \(3\). We divide the construction into several lemmas. First, let~\(f_{0,0}\) be the canonical isomorphism
\[
(\Gamma_{0,0}\times_{Y_0} \Xi_{0,0})\times_{Z_0}\Omega_{0,0}\to \Gamma_{0,0}\times_{Y_0} (\Xi_{0,0}\times_{Z_0}\Omega_{0,0})
\]
illustrated by Figure~\ref{chap6:fig:Sigma00-Pi00}, where white dots are in \(X\), gray dots in \(Y\), gray squares in \(Z\), and black dots in \(W\).
\begin{figure}[htbp]
  \centering
  \begin{tikzpicture}%
  [>=latex', hdot/.style={mydot,fill=lightgray},
  kdot/.style={inner sep=1.8pt,rectangle,draw,fill=lightgray},
  ldot/.style={mydot,fill},every label/.style={scale=0.6},scale=0.6]
  \node[mydot,label=180:\(0\)] at(0,0) (g0){};
  \node[hdot] at (1,0) (h0){};
  \node[kdot] at (2,0) (k0){};
  \node[ldot,label=0:\(1\)] at (4,0) (l0){};
  \foreach \i/\j in
  {h0/g0,k0/h0,l0/k0}{
  \path[->] (\i) edge (\j);
  }
  \begin{scope}[xshift=6cm]
  \node[mydot,label=180:\(0\)] at(0,0) (g0){};
  \node[hdot] at (2,0) (h0){};
  \node[kdot] at (3,0) (k0){};
  \node[ldot,label=0:\(1\)] at (4,0) (l0){};
  \foreach \i/\j in
  {h0/g0,k0/h0,l0/k0}{
  \path[->] (\i) edge (\j);
  }
  \end{scope}
  \end{tikzpicture}
  \caption{\(\Sigma_{0,0}\) and \(\Pi_{0,0}\)}\label{chap6:fig:Sigma00-Pi00}
\end{figure}
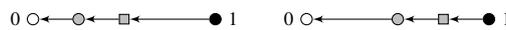

\begin{lemma}
  There are isomorphisms \(f_{0,1}\colon \Sigma_{0,1}\to \Pi_{0,1}\) and \(f_{1,0}\colon \Sigma_{1,0}\to \Pi_{1,0}\).
\end{lemma}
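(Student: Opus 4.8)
The plan is to unfold both $\Sigma_{0,1}$ and $\Pi_{0,1}$ into a single ``master configuration'' built from simplices of the three original bibundles $\Gamma$, $\Xi$, $\Omega$ together with quotients by the groupoids of bigons $\arrow{Y}$ and $\arrow{Z}$, and then to exhibit a canonical isomorphism between the two unfoldings. By definition $\Sigma_{0,1}=((\Gamma\otimes\Xi)\otimes\Omega)_{0,1}$ is the space of triangles with white vertex $0$ (in $X$) and black vertices $1,2$ (in $W$), assembled as in the construction of $\Sigma_{0,1}$ from cells of $\Phi\coloneqq\Gamma\otimes\Xi$ and $\Omega$ and quotiented by $\arrow{Z}$. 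Since $\Phi$ is itself a composite, each $\Phi$-cell occurring here is in turn a quotient of $\Gamma$- and $\Xi$-cells by $\arrow{Y}$. Substituting, I would obtain an explicit diagram of $\Gamma$-, $\Xi$-, $\Omega$-triangles, with intermediate $Y$- and $Z$-vertices labelled $0/1$ and $0/2$ in both colours, taken modulo the commuting actions of $\arrow{Y}$ and $\arrow{Z}$.

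First I would carry out the analogous unfolding for $\Pi_{0,1}=(\Gamma\otimes(\Xi\otimes\Omega))_{0,1}$, now composing $\Psi\coloneqq\Xi\otimes\Omega$ with $\Gamma$ first, so that the $\arrow{Y}$-quotient is taken outermost; after substituting the definition of $\Psi$, the inner $\arrow{Z}$-quotient reappears and one lands in a diagram of the same $\Gamma$-, $\Xi$-, $\Omega$-triangles modulo the same two commuting actions. Because the two bigon actions commute---precisely the feature exploited throughout the binary construction---the order in which the quotients are taken is irrelevant, so the two master configurations have the same underlying description. The identification of the two unfolded diagrams as the same pasting scheme, with the composite of the constituent cells independent of bracketing, is then supplied by the pasting theorem and its corollary together with the associator isomorphisms of the type established in Lemma~\ref{chap6:lem:HS-bibundle-iso-alpha}; this produces the desired bijection $f_{0,1}\colon\Sigma_{0,1}\to\Pi_{0,1}$. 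Representability and the fact that $f_{0,1}$ is an isomorphism in $\Cat$ (not merely a set-theoretic bijection) I would obtain exactly as in the binary case: each quotient is by a principal $\arrow{Y}$- or $\arrow{Z}$-action whose complementary moment map is a cover, so Corollary~\ref{chap1:cor:G-bundles-P-times-Q-pullback} together with Lemmas~\ref{chap1:lem:invariant-cover-descent} and~\ref{chap1:lem:action-over-principal-is-principal} guarantee that the unfolded spaces are representable and that the comparison map descends to an isomorphism of covers. The construction of $f_{1,0}\colon\Sigma_{1,0}\to\Pi_{1,0}$ is entirely symmetric: one unfolds the two bracketings of the $(1,0)$-cell (two white, one black) in the same fashion and matches them by the mirror-image diagram.

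The hard part will be the combinatorial bookkeeping of the nested unfoldings---keeping track of which intermediate $Y$- and $Z$-vertices are glued, which edges are degenerate, and which triangles belong to which constituent bibundle---and verifying that the two bracketings genuinely present the same pasting scheme. This is where the pasting theorem does the essential work: rather than chasing a long sequence of Kan-lifting identities by hand, I would package the entire unfolded diagram as a single labelling of a pasting scheme in the ambient $2$-category and invoke uniqueness of its composite, forcing the two orders of composition to agree. As in the earlier well-definedness arguments (Lemma~\ref{chap6:lem:4-iso-Sigma-0-1} and the remarks following Lemma~\ref{chap6:lem:Sigma-0-2-var}), the residual subtlety is to choose representatives compatibly so that the relevant associator isomorphisms assemble into a commuting square; this compatibility is again reduced to the pasting theorem.
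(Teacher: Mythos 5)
Your proposal contains a genuine gap at its central step. You claim that after fully unfolding both bracketings into configurations of $\Gamma$-, $\Xi$-, $\Omega$-cells, ``the two master configurations have the same underlying description'' because the two bigon actions commute and hence the order of the quotients is irrelevant. This is not true: the unfoldings are genuinely \emph{different} diagrams. Unfolding $\Sigma_{0,1}=((\Gamma\otimes\Xi)\otimes\Omega)_{0,1}$ produces a configuration with two intermediate $Z$-vertices, whereas unfolding $\Pi_{0,1}=(\Gamma\otimes(\Xi\otimes\Omega))_{0,1}$ produces a configuration with \emph{three} intermediate $Z$-vertices (in the paper's Figure for this lemma, the extra vertex $k_{12}$) together with an extra web of quotient relations around it. Commutativity of the two quotient actions lets you interchange quotients of the \emph{same} space; it cannot identify quotients of two spaces of different shapes, so your argument stalls exactly where the real content of the lemma lies.

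The missing idea, which is the heart of the paper's proof, is how to contract that extra vertex: the configuration around $k_{12}$ (two horizontal quotient edges) is precisely a composable pair of arrows in the groupoid $\Theta=\arrow{\Xi\otimes\Omega}$ of bigons of the composite, taken modulo the diagonal $\Theta$-action, and the canonical groupoid identity
\[
\Theta_1\cong(\Theta_1\times_{\source,\Theta_0,\target}\Theta_1)/\Theta
\]
collapses it onto the configuration without the extra vertex, inducing $f_{0,1}$ (and $f_{1,0}$ by symmetry). Your appeal to the pasting theorem cannot substitute for this step: the pasting theorem gives uniqueness of composites of 2-cells labelling a pasting scheme \emph{inside} a fixed 2-groupoid or bibundle, and in this chapter it is used for well-definedness of decomposition choices and for higher Kan conditions; it does not produce isomorphisms in $\Cat$ between two structurally different quotient constructions. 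Likewise, the associator of Lemma~\ref{chap6:lem:HS-bibundle-iso-alpha} interchanges two commuting actions within a single bibundle and does not perform the required contraction. Your representability argument (descent of covers along principal bigon actions) is fine, but it only applies once the correct comparison map exists.
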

\begin{proof}
The spaces \(\Sigma_{0,1}\) and \(\Pi_{0,1}\) are given by the two pictures in the first row in Figure~\ref{chap6:fig:Sigma01-Pi01}, respectively. We use various decompositions to construct the compositions.
\begin{figure}[htbp]
  \centering
  \begin{tikzpicture}%
  [>=latex', hdot/.style={mydot,fill=lightgray},
  kdot/.style={inner sep=1.8pt,rectangle,draw,fill=lightgray},
  ldot/.style={mydot,fill},every label/.style={scale=0.6},scale=0.6]
  \node[mydot,label=180:\(0\)] at (0,0) (g0){};
  \node[ldot,label=0:\(1\)] at (4,0) (l1){};
  \node[ldot,label=0:\(2\)] at (4,2) (l2){};
  \node[kdot] at (2,0) (k01){};
  \node[kdot] at (2,1) (k02){};
  \node[hdot] at (1,0) (h01){};
  \node[hdot] at (1,0.5)(h02){};
  \foreach \i/\j in
  {h01/g0,h02/g0,k01/h01,k02/h02,l1/k01,l2/k02,l2/l1}{
  \path[->] (\i) edge (\j);}
  \foreach \i/\j in
  {h02/h01,k02/k01,l1/k02,k01/h02}{
  \path[->] (\i) edge[snake arrow] (\j);}
  \begin{scope}[xshift=6cm]
  \node[mydot,label=180:\(0\)] at (0,0) (g0){};
  \node[ldot,label=0:\(1\)] at (4,0) (l1){};
  \node[ldot,label=0:\(2\)] at (4,2) (l2){};
  \node[hdot] at (2,0) (h01){};
  \node[hdot] at (2,1)(h02){};
  \node[kdot] at (3,0) (k01){};
  \node[kdot] at (3,1.5) (k02){};
  \node[kdot] at (3,0.5)(k12){};
  \foreach \i/\j in
  {h01/g0,h02/g0,k01/h01,k02/h02,l1/k01,l2/k02,l2/l1}{
  \path[->] (\i) edge (\j);}
  \foreach \i/\j in
  {h02/h01,k02/k12,k12/k01,l1/k12,k12/h02,l1/k02,k01/h02}{
  \path[->] (\i) edge[snake arrow] (\j);}
  \end{scope}
  \begin{scope}[yshift=-2cm]
  \node[hdot] at (0,0)(h02){};
  \node[kdot] at (2,-1) (k01){};
  \node[kdot] at (2,1) (k02){};
  \node[ldot] at (4,0) (l1){};
  \foreach \i/\j in
  {k02/h02,l1/k01,k01/h02,l1/k02}{
  \path[->] (\i) edge (\j);}
  \path[->] (k02) edge[snake arrow] (k01);
  \end{scope}
  \begin{scope}[xshift=6cm, yshift=-2cm]
  \node[hdot] at (0,0)(h02){};
  \node[kdot] at (2,-1) (k01){};
  \node[kdot] at (2,1) (k02){};
  \node[ldot] at (4,0) (l1){};
  \node[kdot] at (2,0) (k12){};
  \foreach \i/\j in
  {k02/h02,l1/k01,k01/h02,l1/k02}{
  \path[->] (\i) edge (\j);}
  \foreach \i/\j in
  {k02/k12,k12/k01,l1/k12,k12/h02}{
  \path[->] (\i) edge[snake arrow] (\j);}
  \end{scope}
  \end{tikzpicture}
  \caption{\(\Sigma_{0,1}\) and \(\Pi_{0,1}\)}\label{chap6:fig:Sigma01-Pi01}
\end{figure}
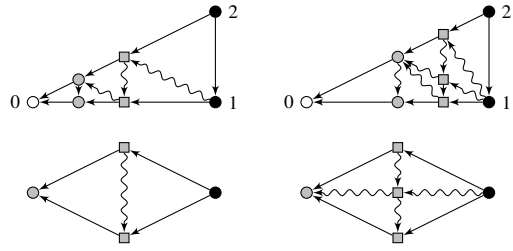

It suffices to prove that the two spaces in the second row are isomorphic. The horizontal two snake arrows in the bottom right picture stand for an action of \(\arrow{\Xi\otimes\Omega}\). Since \(\Theta_1\cong (\Theta_1\times_{\source,\Theta_0,\target}\Theta_1)/\Theta\) for a groupoid \(\Theta\) in \((\Cat,\covers)\), there is an induced isomorphism \(\Sigma_{0,1}\to \Pi_{0,1}\), denoted by \(f_{0,1}\).

The isomorphism \(f_{1,0}\colon \Sigma_{1,0}\to \Pi_{1,0}\) is given by a symmetric construction by considering the two pictures in Figure~\ref{chap6:fig:Sigma10-Pi10}.
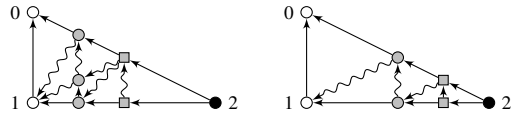
\begin{figure}[htbp]
  \centering
  \begin{tikzpicture}%
  [>=latex', hdot/.style={mydot,fill=lightgray},
  kdot/.style={inner sep=1.8pt,rectangle,draw,fill=lightgray},
  ldot/.style={mydot,fill},every label/.style={scale=0.6},scale=0.6]
  \node[mydot,label=180:\(0\)] at (0,2) (g0){};
  \node[mydot,label=180:\(1\)] at (0,0) (g1){};
  \node[ldot,label=0:\(2\)] at (4,0) (l2){};
  \node[kdot] at (2,1) (k02){};
  \node[kdot] at (2,0) (k12){};
  \node[hdot] at (1,1.5) (h02){};
  \node[hdot] at (1,0) (h12){};
  \node[hdot] at (1,0.5) (h01){};
  \foreach \i/\j in
  {g1/g0,h02/g0,h12/g1,k02/h02,k12/h12,l2/k02,l2/k12}{
  \path[->] (\i) edge (\j);}
  \foreach \i/\j in
  {k12/k02,k02/h12,k02/h01,h01/g1,h02/g1,h12/h01,h01/h02}{
  \path[->] (\i) edge[snake arrow] (\j);}
  \begin{scope}[xshift=6cm]
  \node[mydot,label=180:\(0\)] at (0,2) (g0){};
  \node[mydot,label=180:\(1\)] at (0,0) (g1){};
  \node[ldot,label=0:\(2\)] at (4,0) (l2){};
  \node[hdot] at (2,1) (h02){};
  \node[hdot] at (2,0) (h12){};
  \node[kdot] at (3,0.5) (k02){};
  \node[kdot] at (3,0) (k12){};
  \foreach \i/\j in
  {g1/g0,h02/g0,h12/g1,k02/h02,k12/h12,l2/k02,l2/k12}{
  \path[->] (\i) edge (\j);}
  \foreach \i/\j in
  {h12/h02,h02/g1,k12/k02,k02/h12}{
  \path[->] (\i) edge[snake arrow] (\j);}
  \end{scope}
  \end{tikzpicture}
  \caption{\(\Sigma_{1,0}\) and \(\Pi_{1,0}\)}\label{chap6:fig:Sigma10-Pi10}
\end{figure}
\end{proof}

It is easy to see that the maps \(f_{0,0}, f_{0,1}\), and \(f_{1,0}\) constructed above are compatible with face and degeneracy maps.

\begin{remark}\label{chap6:rem:map-ass-f-01-f10-reps}
The isomorphism \(f_{0,1}\colon \Sigma_{0,1}\to \Pi_{0,1}\) can be described on representatives as follows. For an element in \(\Sigma_{0,1}\) given by representatives as in the left picture in Figure~\ref{chap6:fig:map-ass-f-0-1}, we send it to \(\Pi\) by adding two degenerate 2-simplices illustrated by the right picture in Figure~\ref{chap6:fig:map-ass-f-0-1}, where the two thick arrows indicate suitable degenerate 2-simplices. There is a symmetric construction for \(f_{1,0}^{-1}\colon \Pi_{0,1}\to \Sigma_{1,0}\).
\begin{figure}[htbp]
  \centering
  \begin{tikzpicture}%
  [>=latex', hdot/.style={mydot,fill=lightgray},
  kdot/.style={inner sep=1.8pt,rectangle,draw,fill=lightgray},
  ldot/.style={mydot,fill},every label/.style={scale=0.6},scale=0.6]
  \node[mydot,label=180:\(0\)] at (0,0) (g0){};
  \node[ldot,label=0:\(1\)] at (4,0) (l1){};
  \node[ldot,label=0:\(2\)] at (4,2) (l2){};
  \node[kdot] at (2,0) (k01){};
  \node[kdot] at (2,1) (k02){};
  \node[hdot] at (1,0) (h01){};
  \node[hdot] at (1,0.5)(h02){};
  \foreach \i/\j in
  {h01/g0,h02/g0,k01/h01,k02/h02,l1/k01,l2/k02,l2/l1}{
  \path[->] (\i) edge (\j);}
  \foreach \i/\j in
  {h02/h01,k02/k01,l1/k02,k01/h02}{
  \path[->] (\i) edge (\j);}
  \begin{scope}[xshift=6cm]
  \node[mydot,label=180:\(0\)] at (0,0) (g0){};
  \node[ldot,label=0:\(1\)] at (4,0) (l1){};
  \node[ldot,label=0:\(2\)] at (4,2) (l2){};
  \node[hdot] at (2,0) (h01){};
  \node[hdot] at (2,1)(h02){};
  \node[kdot] at (3,0) (k01){};
  \node[kdot] at (3,1.5) (k02){};
  \foreach \i/\j in
  {h01/g0,h02/g0,k01/h01,k02/h02,l2/k02,l2/l1}{
  \path[->] (\i) edge (\j);}
  \foreach \i/\j in 
  {h02/h01,l1/k02,k02/k01}{
  \path[->] (\i) edge (\j);}
  \path[->,very thick]
    (l1)  edge (k01)
    (k01) edge (h02);
  \end{scope}
  \end{tikzpicture}
  \caption{A simple construction of \(f_{0,1}\colon \Sigma_{0,1}\to \Pi_{0,1}\)}\label{chap6:fig:map-ass-f-0-1}
\end{figure}
\end{remark}

\begin{lemma}
There are isomorphisms \(f_{0,2}\colon \Sigma_{0, 2}\to \Pi_{0,2}\) and \(f_{0,2}\colon\Sigma_{2,0}\to \Pi_{2,0}\). Moreover, these isomorphisms are compatible with \(f_{0,0}\), \(f_{0,1}\), \(f_{1,0}\), face, and degeneracy maps.
\end{lemma}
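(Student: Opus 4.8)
The plan is to give explicit corepresentable models for both $\Sigma_{0,2}$ and $\Pi_{0,2}$ as quotients of a single configuration space of simplices living in the three bibundles $\Gamma$, $\Xi$, $\Omega$, and then to exhibit the isomorphism by re-bracketing. Recall that $\Sigma_{0,2}=((\Gamma\otimes\Xi)\otimes\Omega)_{0,2}$; applying the construction of the $(0,2)$-space from the single composite (the four-tetrahedra picture, corepresentable by Lemma~\ref{chap6:lem:Sigma-0-2-rep-Kan}) and then unrolling each $(\Gamma\otimes\Xi)$-tetrahedron into its $\Gamma$- and $\Xi$-constituents, I would describe $\Sigma_{0,2}$ as a quotient of a space $\tilde{\Sigma}_{0,2}$ of compatible simplices carrying one white ($X$) vertex, gray ($Y$) vertices, gray ($Z$) vertices, and black ($W$) vertices glued along shared faces, modulo the commuting actions of the groupoids of bigons $\arrow{X}$, $\arrow{Y}$, $\arrow{Z}$, $\arrow{W}$. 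The space $\Pi_{0,2}=(\Gamma\otimes(\Xi\otimes\Omega))_{0,2}$ unrolls to the quotient of an analogous configuration space $\tilde{\Pi}_{0,2}$ that differs from $\tilde{\Sigma}_{0,2}$ only in the order in which auxiliary simplices are inserted and faces are Kan-filled. Corepresentability of both, and the fact that the quotient maps involved are covers, would follow exactly as in Lemma~\ref{chap6:lem:Sigma-0-2-rep-Kan} from Corollary~\ref{chap1:cor:G-bundles-P-times-Q-pullback} and Lemma~\ref{chap1:lem:action-over-principal-is-principal}.

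Next I would construct $f_{0,2}$ on representatives in the spirit of Remark~\ref{chap6:rem:map-ass-f-01-f10-reps}. Given a representative configuration for an element of $\Sigma_{0,2}$, I add suitable degenerate simplices at the gray $Y$- and $Z$-levels to pass to a representative of a $\Pi_{0,2}$-configuration, inverting $Y$- and $Z$-coloured arrows where necessary by means of Lemma~\ref{chap6:lem:bigon-triangle} and its variants. This rule is equivariant with respect to all four bigon actions and is an invariant cover on the un-quotiented spaces, so by Lemmas~\ref{chap1:lem:invariant-cover-descent} and~\ref{chap1:lem:action-over-principal-is-principal} it descends to a well-defined map $f_{0,2}\colon\Sigma_{0,2}\to\Pi_{0,2}$; the reverse re-bracketing furnishes its inverse, so $f_{0,2}$ is an isomorphism. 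The isomorphism $f_{2,0}\colon\Sigma_{2,0}\to\Pi_{2,0}$ is then obtained by the symmetric construction, exchanging the roles of white and black.

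The main obstacle, and where I expect most of the work to lie, is well-definedness: showing that the descended map is independent of the chosen decompositions and of the inserted auxiliary simplices, and that $f_{0,2}$, $f_{2,0}$ are compatible with $f_{0,0}$, $f_{0,1}$, $f_{1,0}$ and with all face and degeneracy maps. Here I would invoke the pasting theorem for $2$-categories and its corollary for geometric nerves: both $\Sigma_{0,2}$ and $\Pi_{0,2}$ present the \emph{same} pasting scheme on the $\Gamma$-$\Xi$-$\Omega$ configuration, merely composed in two different orders, so uniqueness of the pasting composite forces the two quotient spaces to agree and forces the re-bracketing map to intertwine the structure maps. Concretely, any two competing decompositions differ, after choosing common representatives, on a single interior face, and the argument of Lemma~\ref{chap6:lem:Sigma-0-2-var}, rephrased as an identity of pasting composites, settles each such case; compatibility with $f_{0,1}$ and $f_{1,0}$ then follows by tracing the representative-level description of Remark~\ref{chap6:rem:map-ass-f-01-f10-reps} through the face and degeneracy maps. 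Completing these checks supplies the last of the dimension-$1$, $2$, and $3$ data of $f$, after which the full isomorphism $f\colon\Sigma\to\Pi$ extends uniquely to higher dimensions by $3$-coskeletality, yielding Proposition~\ref{chap6:prop:2gpd-bibundle-comp-ass}.
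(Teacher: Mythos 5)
Your overall strategy---explicit configuration-space models for both \(\Sigma_{0,2}\) and \(\Pi_{0,2}\), a re-bracketing map on representatives, descent along invariant covers, and the pasting theorem for well-definedness---is viable, but it is genuinely different from, and considerably heavier than, what the paper does. The paper exploits the fact, already established when each composition was constructed, that both \(\Sigma\) and \(\Pi\) satisfy the unique Kan condition \(\Kan!(3,2)[1,3]\), so that
\[
\Sigma_{0,2}\cong \Hom(\Horn{3}{2}[1,3],\Sigma), \qquad \Pi_{0,2}\cong\Hom(\Horn{3}{2}[1,3],\Pi).
\]
Since these horn spaces are limits of the dimension-\(\le 2\) data, where the isomorphisms \(f_{0,0}\), \(f_{0,1}\), \(f_{1,0}\) are already in place and compatible with face maps, the horn spaces are isomorphic, and \(f_{0,2}\) is simply the induced map; compatibility with three of the four face maps is then automatic. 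The only substantive check left is that the unique horn fillers (the missing second face) correspond under \(f\), and this is done by choosing representatives related as in Remark~\ref{chap6:rem:map-ass-f-01-f10-reps}, adding suitable degenerate \(2\)- and \(3\)-simplices, and reading one and the same \(\Gamma\)-\(\Xi\)-\(\Omega\) picture once as the filler computation in \(\Sigma\) and once as the filler computation in \(\Pi\). Your approach instead re-derives corepresentability and well-definedness for doubly-unrolled configuration spaces from scratch, which duplicates work that the paper's reduction sidesteps; what it buys is a self-contained explicit description of \(f_{0,2}\) on representatives rather than an indirect characterisation through horn spaces.

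One concrete correction to your setup: when you unroll \(\Sigma_{0,2}\) into a single configuration space of simplices in \(\Gamma\), \(\Xi\), \(\Omega\), the quotient must be taken only by the actions of the bigon groupoids of the two \emph{middle} \(2\)-groupoids, \(\arrow{Y}\) and \(\arrow{Z}\)---these are the snake-arrow quotients appearing in the composition construction---and not by \(\arrow{X}\) or \(\arrow{W}\) as you state. Quotienting by \(\arrow{X}\) and \(\arrow{W}\) would identify configurations whose faces involving the outer colours differ by a bigon, collapsing genuinely distinct elements (already the faces in \(\Sigma_{0,1}\) would become identified), so the resulting space would not be \(\Sigma_{0,2}\). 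With this corrected, and with your pasting-theorem argument fleshed out along the lines of Lemma~\ref{chap6:lem:Sigma-0-2-var} (after choosing common representatives, reduce any two competing decompositions to ones differing on a single face and compare pasting composites), your route does go through.
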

\begin{proof}
  We only consider \(\Sigma_{0, 2}\to \Pi_{0,2}\); the other case is given by symmetry. We already know that \(\Hom(\Horn{3}{2}[1,3],\Sigma)\) and \(\Hom(\Horn{3}{2}[1,3], \Pi)\) are isomorphic. Since both \(\Sigma\) and \(\Pi\) satisfy \(\Kan!(3,2)[1,3]\), it suffices to show that, given two horns \(\Horn{2}{3}[1,3]\) in \(\Sigma\) and in \(\Pi\) that are related by \(f_{0,1}\), the missing triangles determined by \(\Kan!(3,2)[1,3]\) are related by \(f_{0,1}\). We can then define \(f_{0,2}\) to be the isomorphism induced by
  \[
  \Sigma_{0,2}=\Hom(\Horn{3}{2}[1,3],\Sigma)\xrightarrow{\cong} \Hom(\Horn{3}{2}[1,3], \Pi)=\Pi_{0,2}.
  \]
  To this end, we choose representatives such that each pair of 2\nbdash{}simplices is related as in Remark~\ref{chap6:rem:map-ass-f-01-f10-reps}, then the claim will become obvious. The left picture in Figure~\ref{chap6:fig:Sigma02-Pi02} illustrates how to obtain \((0,1,3)\) with \((0,1,2), (0,2,3)\), and \((1,2,3)\) given in~\(\Sigma\). Adding some degenerate 2-simplices to the left picture, we obtain a horn in \(\Pi\) as shown in the right picture. Now we add some appropriate degenerate 3-simplices as well, then the same picture can be reinterpreted as how to obtain \((0,1,3)\) in~\(\Pi\). This proves the claim.
\begin{figure}[htbp]
  \centering
  \begin{tikzpicture}%
  [>=latex', hdot/.style={mydot,fill=lightgray},
  kdot/.style={inner sep=1.8pt,rectangle,draw,fill=lightgray},
  ldot/.style={mydot,fill},every label/.style={scale=0.6},scale=0.6]
  \node[mydot,label=180:\(0\)] at (0,0) (g0){};
  \node[ldot,label=0:\(1\)] at (4,0) (l1){};
  \node[ldot,label=0:\(2\)] at (4,1.2) (l2){};
  \node[kdot] at (2,0) (k01){};
  \node[kdot] at (2,0.6) (k02){};
  \node[hdot] at (1,0) (h01){};
  \node[hdot] at (1,0.3)(h02){};
  \foreach \i/\j in
  {h01/g0,h02/g0,k01/h01,k02/h02,l1/k01,l2/k02,l2/l1,h02/h01,k02/k01,l1/k02,k01/h02}{
  \path[->] (\i) edge (\j);}
  \node[ldot,label=180:\(3\)] at (3,3)(l3){};
  \node[hdot] at (0.75,0.75) (h03){};
  \node[kdot] at (1.5,1.5) (k03){};
  \foreach \i/\j in
  {h03/g0,k03/h03,l3/k03,l3/l2,l3/l1,h03/h02,k03/k02,l2/k03,k02/h03,h03/h01,k03/k01,l1/k03,k01/h03}{
  \path[->] (\i) edge (\j);}
  \begin{scope}[xshift=6cm]
  \node[mydot,label=180:\(0\)] at (0,0) (g0){};
  \node[ldot,label=0:\(1\)] at (4,0) (l1){};
  \node[ldot,label=0:\(2\)] at (4,1.2) (l2){};
  \node[kdot] at (3,0) (k01){};
  \node[kdot] at (3,0.9) (k02){};
  \node[hdot] at (2,0) (h01){};
  \node[hdot] at (2,0.6)(h02){};
  \foreach \i/\j in
  {h01/g0,h02/g0,k01/h01,k02/h02,l1/k01,l2/k02,l2/l1,h02/h01,k02/k01,l1/k02,k01/h02}{
  \path[->] (\i) edge (\j);}
  \node[ldot,label=180:\(3\)] at (3,3)(l3){};
  \node[hdot] at (1.5,1.5) (h03){};
  \node[kdot] at (2.25,2.25) (k03){};
  \foreach \i/\j in
  {h03/g0,k03/h03,l3/k03,l3/l2,l3/l1,h03/h02,k03/k02,l2/k03,k02/h03,h03/h01,k03/k01,l1/k03,k01/h03}{
  \path[->] (\i) edge (\j);}
  \foreach \i/\j in
  {l1/k01,k01/h02,l2/k02,k02/h03,k01/h03}{
  \path[->,very thick] (\i) edge (\j);}
  \end{scope}
  \end{tikzpicture}
  \caption{The construction of \(f_{0,2}\colon \Sigma_{0, 2}\to \Pi_{0,2}\)}\label{chap6:fig:Sigma02-Pi02}
\end{figure}
\end{proof}

In the same spirit, it is not hard to show the following lemma.

\begin{lemma}
There is an isomorphism \(f_{1,1}\colon \Sigma_{1, 1}\to \Pi_{1,1}\). Moreover, this isomorphism is compatible with \(f_{0,0}\), \(f_{0,1}\), \(f_{1,0}\), face, and degeneracy maps.
\end{lemma}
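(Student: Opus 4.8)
The plan is to construct $f_{1,1}$ exactly as $f_{0,2}$ was built in the preceding lemma, replacing the colored Kan condition $\Kan!(3,2)[1,3]$ by $\Kan!(3,2)[2,2]$. Recall from the construction of $\Sigma_{1,1}$ that $\Sigma_{1,1}\cong\Hom(\Horn{3}{2}[2,2],\Sigma)$, and that the analogous isomorphism $\Pi_{1,1}\cong\Hom(\Horn{3}{2}[2,2],\Pi)$ holds; both come from $\Kan!(3,2)[2,2]$, which is valid for $\Sigma$ and $\Pi$ since these are bibundles of $2$-groupoids. Thus it suffices to produce a natural isomorphism $\Hom(\Horn{3}{2}[2,2],\Sigma)\xrightarrow{\cong}\Hom(\Horn{3}{2}[2,2],\Pi)$ compatible with the already-constructed lower-dimensional maps, and to define $f_{1,1}$ to be the map it induces.

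First I would analyse the faces of a $3$-simplex of color $[2,2]$: removing a white vertex yields a face of color $[1,2]$ (a $\Sigma_{0,1}$-type triangle, governed by $f_{0,1}$), while removing a black vertex yields a face of color $[2,1]$ (a $\Sigma_{1,0}$-type triangle, governed by $f_{1,0}$). A horn $\Horn{3}{2}[2,2]$ therefore consists of faces, some carried by $f_{0,1}$ and the rest by $f_{1,0}$. Following the recipe of Remark~\ref{chap6:rem:map-ass-f-01-f10-reps}, I would choose representatives for all these faces simultaneously so that each $[1,2]$-face is transported to $\Pi$ by adjoining the degenerate $2$-simplices prescribed for $f_{0,1}$ and each $[2,1]$-face by the symmetric degenerate simplices prescribed for $f_{1,0}^{-1}$. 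With such a coherent choice, the horn in $\Sigma$ and the corresponding horn in $\Pi$ differ only by a prescribed collection of degenerate simplices, and I would reinterpret one and the same diagram (with suitable degenerate $3$-simplices adjoined) as computing, on one side via $\Kan!(3,2)[2,2]$ in $\Sigma$ and on the other via $\Kan!(3,2)[2,2]$ in $\Pi$, the missing face. A picture argument identical in spirit to the one proving the $f_{0,2}$-lemma, and ultimately justified by Lemma~\ref{chap6:lem:bigon-triangle} together with the pasting theorem, shows the two missing faces are again related by the appropriate $f_{0,1}$ or $f_{1,0}$, so the induced map on $\Hom(\Horn{3}{2}[2,2],-)$ is well-defined. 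Since both source and target satisfy the unique condition $\Kan!(3,2)[2,2]$, this map is an isomorphism; compatibility of $f_{1,1}$ with $f_{0,0}$, $f_{0,1}$, $f_{1,0}$ and with the face and degeneracy maps is then immediate from the construction, because the chosen representatives were made to respect degeneracies.

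The main obstacle is precisely the mixing of $f_{0,1}$ and $f_{1,0}$ on the faces of a $[2,2]$-simplex, which did not occur for $f_{0,2}$, where every face carried the single color $[1,2]$. One must verify that the degenerate $2$-simplices adjoined by $f_{0,1}$ on the $[1,2]$-faces and those adjoined by $f_{1,0}^{-1}$ on the $[2,1]$-faces assemble into one globally consistent labelling of the enlarged diagram; this coherence is again an instance of the pasting theorem, and checking it amounts to confirming that the two ways of recovering each shared edge of color $[1,1]$ agree. Once this coherence is established, the remainder is routine bookkeeping over the finitely many faces.
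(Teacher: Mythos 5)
Your proposal is correct and is essentially the paper's intended argument: the paper gives no written proof of this lemma, saying only that it follows ``in the same spirit'' as the \(f_{0,2}\) lemma, and your construction — identifying \(\Sigma_{1,1}\) and \(\Pi_{1,1}\) with \(\Hom(\Horn{3}{2}[2,2],-)\) via the unique inner Kan condition \(\Kan!(3,2)[2,2]\) (valid since \(m=3>n=2\)) and transporting horns using the representative-level descriptions of \(f_{0,1}\) and \(f_{1,0}\) — is exactly that adaptation. Your identification of the one genuinely new point, namely that the faces of a \([2,2]\)-simplex mix the two colorings so that both \(f_{0,1}\) and \(f_{1,0}\) enter and their adjoined degenerate fillers must be checked to assemble coherently (via the pasting theorem, as in Lemma~\ref{chap6:lem:bigon-triangle}), is accurate and correctly handled.
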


We now complete the proof of Proposition~\ref{chap6:prop:2gpd-bibundle-comp-ass}. With the maps \(f_{0,0}\), \(f_{0,1}, f_{1,0}\), \(f_{0,2}, f_{1,1}\), and \(f_{2,0}\) constructed above, we obtain an isomorphism \(f\colon\Sigma\to \Pi\).

\chapter{Differentiation of Higher Lie Groupoids}\label{chap7}
\thispagestyle{empty}

In this chapter, we study the differentiation of higher Lie groupoids. Our main aim is to prove that \v{S}evera's 1-jet functor of a higher Lie groupoid is representable. Thus the 1\nbdash{}jet functor assigns an NQ-manifold (higher Lie algebroid) to a higher Lie groupoid. We also show explicitly that the 1\nbdash{}jet functor coincides with the usual notion of differentiation for three special cases: Lie groups, crossed modules, and Lie groupoids.

\section{Basics on supermanifolds}

We start by collecting some basic facts about supermanifolds. Our main references are~\cite{Carmeli-Caston-Fioresi, Leites, Varadarajan}. The ground field is assumed to be the real numbers.

\subsection{Basic definitions}
Recall that a \emph{super-vector space} \(V\) is a \(\bZ_2\)-graded vector space, that is, \(V=V^\even\oplus V^\odd\). The elements in \(V^\even\) are even, and those in \(V^\odd\) are odd. An even or odd element \(v\) is called a \emph{homogeneous element}, its parity in \(\bZ_2\) is denoted by \(\parity{v}\). A parity-preserving (-reversing) linear map of super-vector spaces is called even (odd).

A \emph{superalgebra} \(A\) is an algebra on a super-vector space such that \(A^i A^j\subset A^{i+j}\). A superalgebra \(A\) is \emph{commutative} if\footnote{When we write formulas involving the parity of certain objects, we implicitly assume that these objects are homogeneous.}
\[
uv=(-1)^{\parity{u}\parity{v}}vu, \quad u, v\in A.
\]
In the following, all superalgebras are assumed to be commutative. Homomorphisms of superalgebras are even algebra homomorphisms. Denote the category of superalgebras by \(\SAlg\).

The \emph{tensor product} of two superalgebras \(A\) and \(B\) is a superalgebra \(A \otimes B\) with the multiplication determined by
\[
(a_1\otimes b_1)(a_2\otimes b_2) = (-1)^{\parity{b_1}\parity{a_2}}(a_1a_2\otimes b_1b_2).
\]

\begin{example}
The following two examples are important for us.
\begin{enumerate}
  \item A superalgebra \(A\) with \(A^\odd=\{0\}\) is a usual algebra; for instance, the algebra of smooth functions on an open subset \(U\subset \bR^p\);
  \item The \emph{Grassmann algebra} (exterior algebra) on \(q\) odd generators \(\theta^1,\dots,\theta^q\), denoted by~\(\Lambda(q)\), is a superalgebra.
\end{enumerate}
\end{example}

\begin{definition}
  Let \(f_0\colon A\to B\) be a homomorphism of superalgebras. An even (odd) \emph{derivation} with respect to \(f_0\) is an even (odd) linear map \(f_1\colon A\to B\) satisfying
  \[
  f_1(uv)=f_1(u)f_0(v)+(-1)^{\parity{f_1}\parity{u}}f_0(u)f_1(v), \quad u,v \in A.
  \]
\end{definition}

The \emph{standard supermanifold} \(\bR^{p|q}\) is the topological space \(\bR^p\) endowed with the sheaf \(C^\infty(\bR^p)\otimes \Lambda(q)\), where \(C^\infty(\bR^p)\) is the sheaf of smooth functions on~\(\bR^p\). We call \(\bR^{0|1}\) the \emph{odd line} and denote it by~\(D\).

\begin{definition}
A \emph{supermanifold} of dimension \(p|q\) is a ringed space \(M=(|M|, C_M)\), where \(|M|\) is a second countable Hausdorff space and \(C_M\) is a sheaf of superalgebras on~\(|M|\) that is locally isomorphic to \(\bR^{p|q}\).

A \emph{morphism between supermanifolds} is a morphism of the corresponding ringed spaces. Denote by \(\SMfd\) the category of supermanifolds.
\end{definition}

Explicitly, a morphism of supermanifolds \(f\colon M\to N \) consists of a continuous map \(|f|\colon |M|\to |N|\) and a morphism of sheaves \(f^*\colon C_N \to f_*C_M\), where \(f_*C_M\) is the direct image sheaf
\[
|U|\mapsto C_M(|f|^{-1}(|U|)),\quad |U|\subset |N|.
\]
Local sections of \(C_M\) are called local \emph{functions} on \(M\) and \(C_M\) is called the \emph{structure sheaf} of functions. The set of \emph{global sections} \(C_M(|M|)\) is denoted by \(C(M)\).

Odd functions generate a sheaf of nilpotent ideals, denoted by \(\Nil\). Then \((|M|,C_M/\Nil)\) is manifold of dimension \(p\), which is called the \emph{reduced manifold} and denoted by \(\reduce{M}\). This amounts to a functor \(\tilde{\phantom{r}}\colon\SMfd\to\Mfd\). For a local function \(f\) around \(x\in |U|\subset |M|\) its \emph{value at \(x\)} is the number \(\tilde{f}(x)\). This gives a homomorphism \(\ev_x\colon C_M(|U|)\to \bR\).

\begin{example}\label{chap7:exa:smfds}
We need the following examples in the future.
\begin{enumerate}
  \item Let \(M\) be a supermanifold. An open subset \(|U|\subset |M|\) determines an \emph{open subsupermanifold} \(U=(|U|,  C_M\big|_{|U|})\). Open subsupermanifolds of \(\bR^{p|q}\) are called \emph{superdomains}.
  \item Let \(E\to M\) be a vector bundle. The \emph{shifted vector bundle} \(\Pi E\) is a supermanifold given by the sheaf of sections \( \Gamma \Lambda^\bullet E^*\) on~\(M\). The supermanifold \(\Pi TM\) is called the \emph{odd tangent bundle} of~\(M\).
\end{enumerate}
\end{example}

If \((x^1,\dots,x^p)\) are global coordinates of \(\bR^p\) and \( (\theta^1,\dots,\theta^q) \) are generators of \(\Lambda(q)\), we call \((x^1,\dots,x^p, \theta^1,\dots,\theta^q)\) a system of \emph{global coordinates} for the supermanifold \(\bR^{p|q}\). We can speak of \emph{local coordinates} for a general supermanifold. Generally, we denote even variables by Latin letters and odd variables by Greek letters.

The following is a generalisation of Hadamard's Lemma; see~\cite{Leites} for the precise formulation of the statement.
\begin{lemma}\label{chap7:lem:taylor}
A function on a superdomain can be expanded uniquely as a Taylor series in coordinates with remainder term in an ideal.
\end{lemma}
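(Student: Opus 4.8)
The plan is to reduce the statement to the classical Hadamard Lemma applied coefficientwise, exploiting that on a superdomain the odd coordinates contribute only a finite, automatically polynomial part. Since the assertion is local, I would first fix a point $x_0$ of the reduced domain and an order $N\ge 0$, and shrink $|U|$ to a star-shaped open neighbourhood of $x_0$ inside $\bR^p$; this costs nothing and is exactly what the integral form of the classical remainder needs. On such a superdomain the structure sheaf is $C^\infty(|U|)\otimes\Lambda(q)$, so every function admits a unique presentation
\[
f=\sum_{I\subseteq\{1,\dots,q\}} f_I(x)\,\theta^I,\qquad f_I\in C^\infty(|U|),
\]
where $\theta^I=\theta^{i_1}\cdots\theta^{i_k}$ for $I=\{i_1<\dots<i_k\}$ and $\{\theta^I\}$ is the standard $\bR$-basis of $\Lambda(q)$. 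One extracts the $f_I$ from $f$ by applying the odd coordinate derivations $\partial/\partial\theta^j$ followed by evaluation along $\reduce{M}$, which also makes clear that this decomposition is canonical.

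Next I would apply the classical Taylor/Hadamard expansion to each $f_I$, but truncated at order $N-|I|$ rather than $N$. Writing $\mathfrak{m}$ for the ideal of functions vanishing at $x_0$, i.e.\ the ideal generated by the $(x^i-x_0^i)$ and the $\theta^j$, the classical lemma gives
\[
f_I(x)=\sum_{|\alpha|\le N-|I|}\frac{(\partial^\alpha f_I)(x_0)}{\alpha!}\,(x-x_0)^\alpha+R_I(x),
\]
with $R_I$ lying in the purely even ideal generated by the monomials $(x-x_0)^\beta$, $|\beta|=N-|I|+1$, hence $R_I\in\mathfrak{m}^{\,N-|I|+1}$. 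Reassembling $f=\sum_I f_I\theta^I$ and collecting the polynomial terms yields
\[
f=\sum_{|\alpha|+|I|\le N}\frac{(\partial^\alpha f_I)(x_0)}{\alpha!}\,(x-x_0)^\alpha\theta^I+R,\qquad R=\sum_I R_I\,\theta^I.
\]
The key degree count is that $\theta^I\in\mathfrak{m}^{|I|}$, so $R_I\theta^I\in\mathfrak{m}^{\,N-|I|+1}\cdot\mathfrak{m}^{|I|}\subseteq\mathfrak{m}^{\,N+1}$, and therefore the total remainder satisfies $R\in\mathfrak{m}^{\,N+1}$. This is precisely the asserted Taylor expansion in the coordinates $(x-x_0,\theta)$ up to total degree $N$ with remainder in an ideal; note that the odd directions terminate automatically because products of more than $q$ distinct $\theta^j$ vanish.

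For uniqueness I would argue that the difference of two such expansions is a polynomial of total degree $\le N$ in $(x-x_0,\theta)$ that also lies in $\mathfrak{m}^{\,N+1}$, and that such a polynomial must vanish: applying the $\partial/\partial\theta^j$ and evaluating reduces the claim, $\theta^I$-component by $\theta^I$-component, to the statement that an even polynomial of degree $\le N-|I|$ contained in $\mathfrak{m}_{x_0}^{\,N-|I|+1}$ is zero, which is the classical uniqueness of Taylor coefficients. I do not expect any deep obstacle here; the real work is bookkeeping. The most delicate point is setting up the single bigraded filtration by powers of $\mathfrak{m}$ and matching the truncation orders $N-|I|$ across odd degrees so that all the separate even remainders land in one ideal $\mathfrak{m}^{\,N+1}$; the only genuinely analytic input, star-shapedness for the integral remainder, is disposed of at the outset by localising. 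I would make sure to state the ideal explicitly (rather than leaving ``an ideal'' vague), since it is this filtration that is used when reading off the $1$-jet data in the differentiation arguments that follow.
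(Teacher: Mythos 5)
Your proof is correct, and it is essentially the standard argument: the paper itself gives no proof of this lemma but defers to \cite{Leites}, and the argument given there (and in \cite{Carmeli-Caston-Fioresi}) is exactly your reduction — write $f=\sum_I f_I(x)\theta^I$ using $C^\infty(|U|)\otimes\Lambda(q)$, apply the classical Hadamard/Taylor expansion to each $f_I$ truncated at order $N-\card I$, and use $\theta^I\in\mathfrak{m}^{\card I}$ to collect all remainders into $\mathfrak{m}^{N+1}$, with uniqueness by coefficientwise extraction. Your closing suggestion to name the ideal $\mathfrak{m}^{N+1}$ explicitly is also consistent with how the lemma is actually invoked later (the $O_2$, $O_3$ remainders in the local formulas for $\inv$, $\mult$, $\tau$, $\alpha$), where it is precisely this filtration, together with nilpotency of the odd variables, that kills the remainder terms.
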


\subsection{Functor of points}

The language of functor of points is a very useful tool for studying supermanifolds.

\begin{definition}
The \emph{functor of points} \(h_M\) of a supermanifold \(M\) is the presheaf on \(\SMfd\) represented by~\(M\).
An element \(f\) of \(h_M(T)\) is called a \emph{\(T\)\nbdash{}point} of \(M\), which is also denoted by \(f\in_T  M\). More generally, for a presheaf \(F\) on \(\SMfd\), elements in \(F(T)\) are called \(T\)\nbdash{}points of \(F\).
\end{definition}

The Yoneda Lemma (Lemma~\ref{chap2:lem:Yoneda}) allows us to replace a supermanifold by its functor of points. A morphism of two supermanifolds \(f\colon M\to N\) is equivalent to a functorial family of morphisms \(h_M(T)\to h_N(T)\) for every supermanifold~\(T\).

The following two theorems are very useful to study morphisms of supermanifolds.

\begin{theorem}[\cite{Leites}]\label{chap7:thm:morphism}
Let \(U^{p|q}\) be a  superdomain with coordinates \((x, \theta)=(x^i,\theta^j)\). Let \(M\) be a supermanifold. There is a bijection between morphisms \(M\to U^{p|q}\) and the set of pullbacks of \((x, \theta)\), which are \((p+q)\)-tuples \((u^i,\zeta^j)\), where \(u^i\in C(M)^\even\) and \(\zeta^j\in C(M)^\odd\), such that \((\reduce{u}^i)\) gives a morphism of manifolds \(\reduce{M}\to U^p\).
\end{theorem}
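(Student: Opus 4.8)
The plan is to exhibit the claimed bijection by constructing both directions explicitly and checking they are mutually inverse, with Lemma~\ref{chap7:lem:taylor} carrying the essential weight in both the existence and the uniqueness arguments. First I would dispose of the easy direction. Given a morphism $f\colon M\to U^{p|q}$, set $u^i=f^*(x^i)$ and $\zeta^j=f^*(\theta^j)$. Since $f^*$ is an even homomorphism of sheaves of superalgebras it preserves parity, so $u^i\in C(M)^\even$ and $\zeta^j\in C(M)^\odd$; quotienting by the nilpotent ideal $\Nil$ and applying the reduction functor shows that $(\reduce u^i)$ are exactly the coordinate pullbacks of the underlying manifold map $\reduce f\colon\reduce M\to U^p$, so the stated compatibility condition is automatic. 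This assigns to every morphism a legitimate tuple, visibly functorially in $M$.

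The construction of the inverse is the heart of the matter. Given a tuple $(u^i,\zeta^j)$ whose bodies $(\reduce u^i)$ define a manifold morphism $\varphi\colon\reduce M\to U^p$, I would first declare the underlying continuous map $|f|$ to be $|\varphi|$ (the topological spaces of $M$ and $\reduce M$ coincide). To define $f^*$ on a local function $g$ on $U^{p|q}$, I would use Lemma~\ref{chap7:lem:taylor} to write $g$ uniquely as a finite expansion $g=\sum_{I}g_I(x)\,\theta^I$ with $g_I$ smooth functions on the body $U^p$, and then set $f^*(g)=\sum_I (g_I\circ u)\,\zeta^I$, where $\zeta^I$ is the corresponding product of the $\zeta^j$. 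The substitution $g_I\circ u$ is itself defined, working in a local chart of $M$, by a second finite Taylor expansion: writing $u^i=\reduce u^i+n^i$ with $n^i$ an even nilpotent, one puts $g_I\circ u=\sum_\alpha \tfrac1{\alpha!}(\partial^\alpha g_I)(\reduce u)\,n^\alpha$, a finite sum precisely because each $n^i$ is nilpotent, the factors $(\partial^\alpha g_I)(\reduce u)$ being obtained from $\varphi$.

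The hard part will be verifying that this prescription is well defined and genuinely yields a morphism of supermanifolds rather than a mere set map. Concretely I would need to check: that $f^*$ is independent of the chosen coordinates on both $U^{p|q}$ and $M$, which rests squarely on the uniqueness clause of Lemma~\ref{chap7:lem:taylor}; that $f^*$ is even and multiplicative, hence an algebra homomorphism, where multiplicativity reduces, again via uniqueness of Taylor expansions, to the product rule for the finite nilpotent series together with the sign bookkeeping in the products $\zeta^I\zeta^J$; and that $f^*$ is local and commutes with restriction to opens, so that the local formulas glue to a genuine sheaf morphism $C_{U^{p|q}}\to |f|_*C_M$. The finiteness of every series, forced by nilpotence, is exactly what legitimizes the substitution and removes any convergence question. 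Finally I would confirm the two constructions are mutually inverse: applying the easy direction to the $f$ just built returns $(u^i,\zeta^j)$ by construction, while rebuilding an arbitrary morphism from its coordinate pullbacks recovers it because the two agree on $x^i,\theta^j$ and therefore on all functions by uniqueness of the $\theta$-expansion. That same uniqueness yields the injectivity of the easy direction outright, completing the bijection.
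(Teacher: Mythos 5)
You cannot be measured against the paper's own argument here, because the paper has none: Theorem~\ref{chap7:thm:morphism} is quoted from \cite{Leites} (and is Theorem~4.3.1 in the cited book of Carmeli--Caston--Fioresi) and used as a black box. Judged on its own, your plan reconstructs the standard proof from that literature, and its architecture is sound: the forward direction is immediate; the inverse is the chartwise construction \(f^*(g)=\sum_I(g_I\circ u)\,\zeta^I\) with \(g_I\circ u\) the finite nilpotent Taylor sum; and you are right to work in a chart of \(M\), since the splitting \(u^i=\reduce{u}^i+n^i\) requires a lift of \(\reduce{u}^i\in C(\reduce{M})\) to \(C(M)\), which exists canonically only chart-by-chart (there is no natural algebra embedding \(C(\reduce{M})\hookrightarrow C(M)\) globally). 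Your list of verifications (coordinate independence, multiplicativity via formal multiplication of finite Taylor series, locality and gluing) is exactly what has to be done.

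The one step whose justification, as written, would not go through is uniqueness. You claim that two morphisms agreeing on \(x^i,\theta^j\) ``agree on all functions by uniqueness of the \(\theta\)-expansion.'' The \(\theta\)-expansion together with multiplicativity only reduces the problem to functions \(g_I(x)\) of the even coordinates; it cannot finish it, because \(C^\infty(U^p)\) is \emph{not} generated as an algebra by \(x^1,\dots,x^p\), so agreement on coordinates does not formally propagate to arbitrary smooth functions. What closes this is Lemma~\ref{chap7:lem:taylor} once more, used pointwise: for \(m\in|M|\), expand \(g_I\) around \(|f|(m)\) to order \(N\) with remainder in the \((N+1)\)-st power of the vanishing ideal; applying the homomorphism and writing \(f^*(x^i)-x^i_0\) as a body term vanishing at \(m\) plus a nilpotent, every monomial coming from the remainder retains a factor whose coefficient functions vanish at \(m\) once \(N\) exceeds the number of odd generators, so all coefficients of \(f^*(g_I)\) at \(m\) are determined by the \(f^*(x^i)\). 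This forced-formula argument is what gives injectivity, and it is also what makes your chartwise constructions agree on overlaps so that they glue. Your opening sentence (Hadamard carries the weight ``in both the existence and the uniqueness arguments'') shows you intend this; just make the final uniqueness step actually invoke it rather than the \(\theta\)-expansion alone.
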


A morphism of supermanifolds is then a compatible family of such local morphisms. On the other hand, a morphism of supermanifolds is determined by global sections.

\begin{theorem}[\cite{Leites}]\label{chap7:thm:global}
For supermanifolds \(M\) and \(N\), there is a natural bijection
\[
\SMfd(M,N)=\SAlg(C(N),C(M)).
\]
\end{theorem}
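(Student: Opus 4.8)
The plan is to show that the evident map sending a morphism of supermanifolds to its effect on global functions is a bijection, and then to observe naturality. Given $f\colon M\to N$ in $\SMfd$, the sheaf morphism $f^*\colon C_N\to f_*C_M$ taken on global sections yields a superalgebra homomorphism $C(N)\to C(M)$; this assignment is functorial, so it defines a natural transformation $\SMfd(M,N)\to \SAlg(C(N),C(M))$, and the whole content of Theorem~\ref{chap7:thm:global} is that this transformation is bijective. I would first reduce the target to a superdomain: a general $N$ is covered by superdomains and both sides are assembled from local data, so the statement for $N=\bR^{p|q}$ together with a gluing argument gives the general case. This reduction is exactly where Theorem~\ref{chap7:thm:morphism} enters, since it already describes $\SMfd(M,\bR^{p|q})$ in terms of tuples of global functions subject to a parity and a reducedness condition.

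For surjectivity, start with a homomorphism $\phi\colon C(N)\to C(M)$ and first recover the underlying continuous map. Since $\phi$ preserves products it carries the nilpotent ideal $\Nil$ into $\Nil$, hence descends to an ordinary algebra homomorphism $C^\infty(\reduce{N})\to C^\infty(\reduce{M})$; by the classical fact that $\Mfd(\reduce{M},\reduce{N})\cong\SAlg(C^\infty(\reduce{N}),C^\infty(\reduce{M}))$ for ordinary smooth manifolds (the Milnor–Pursell–Shanks theorem), this is $\reduce{f}^{\,*}$ for a unique smooth map $\reduce{f}\colon\reduce{M}\to\reduce{N}$, and I take $|f|\coloneqq|\reduce{f}|$. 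Next I reconstruct the sheaf morphism. Working in a superdomain chart $V\subset N$ with coordinates $(x^i,\theta^j)$, choose a bump function $\rho$ on $N$ supported in $|V|$ and equal to $1$ near a given point; the products $\rho x^i,\rho\theta^j$ are global, so $\phi(\rho x^i),\phi(\rho\theta^j)$ make sense, and over the region where $\rho\equiv1$ these tuples satisfy the hypotheses of Theorem~\ref{chap7:thm:morphism}, hence assemble into a local morphism $f^{-1}(V)\to V$. The compatibility $|\reduce{f}|=|f|$ guarantees matching sources, and the fact that all pieces arise from the single global $\phi$ shows independence of $\rho$ and agreement on overlaps, so they glue to $f\colon M\to N$. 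By construction $f^*$ agrees with $\phi$ on functions of the form $\rho x^i$, and Lemma~\ref{chap7:lem:taylor} propagates this locally to all of $C(N)$, giving $f^*=\phi$.

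For injectivity, suppose $f,g\colon M\to N$ induce the same $\phi$. On the reduced level both give the same map $C^\infty(\reduce{N})\to C^\infty(\reduce{M})$, so $\reduce{f}=\reduce{g}$ and $|f|=|g|$. Then in any superdomain chart the coordinate functions, truncated by a bump function as above, have equal images under $f^*$ and $g^*$, and Theorem~\ref{chap7:thm:morphism} shows a morphism into a superdomain is determined by the pullbacks of its coordinates; hence $f^*=g^*$ chartwise and therefore as sheaf morphisms. Naturality in $M$ and $N$ is immediate from the functoriality of the global-sections construction used throughout.

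The main obstacle I expect is the surjectivity step, specifically the passage from the global homomorphism $\phi$ to a well-defined \emph{local} morphism of ringed spaces: since the coordinate functions live only on a chart, one must thread the bump-function truncations through $\phi$, verify via Theorem~\ref{chap7:thm:morphism} that the truncated tuples really define morphisms on the correct open sets, and check that they patch. The technical heart is that truncation by $\rho$ loses no information, because Hadamard's lemma (Lemma~\ref{chap7:lem:taylor}) reconstructs the full local pullback from the images of the coordinates, and because an algebra homomorphism automatically respects the nilpotent filtration that governs the odd directions.
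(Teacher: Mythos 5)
First, note that the paper contains no proof of Theorem~\ref{chap7:thm:global} at all: it is quoted from \cite{Leites} and used as a black box, so your proposal can only be judged on its own merits, not against an in-paper argument. Your injectivity argument is fine: because $f^*$ and $g^*$ are already \emph{sheaf} morphisms, restricting $f^*(\rho x^i)=g^*(\rho x^i)$ to $f^{-1}(\{\rho\equiv 1\})$ legitimately recovers $f^*(x^i)=g^*(x^i)$ there, and the uniqueness half of Theorem~\ref{chap7:thm:morphism} finishes chartwise. The gap is in surjectivity, and it is exactly the point you flag but do not fill: you need the \emph{localization (support) lemma} for a bare homomorphism of global sections, namely that if $h\in C(N)$ vanishes on an open set $U\subseteq |N|$, then $\phi(h)$ vanishes on $|f|^{-1}(U)$. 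This is what makes the chartwise tuples independent of $\rho$, makes them patch, and yields $f^*=\phi$ at the end. Your proposed substitute --- Lemma~\ref{chap7:lem:taylor} plus the fact that $\phi$ preserves the nilpotent filtration --- does not do the job: when you Taylor-expand $\rho^k g$ and apply $\phi$, the expansion unavoidably involves $\phi(\rho)$ itself (and images of other even functions that are not polynomials in the truncated coordinates), and nothing in your setup controls $\phi(\rho)$ beyond its reduced part. A priori $\phi(\rho)$ and $f^*(\rho)$ differ by a nilpotent, and these discrepancies enter at every order, so the Hadamard argument only gives agreement modulo the odd ideal, not equality.

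The missing lemma is short, but it is the actual technical heart. Given $x\in |f|^{-1}(U)$, choose an even bump $\sigma\in C(N)$ supported in $U$ with $\reduce{\sigma}\equiv 1$ near $|f|(x)$. Then $\sigma h=0$, hence $\phi(\sigma)\phi(h)=0$. Since the reduced homomorphism induced by $\phi$ is $\reduce{f}^{\,*}$ (this is how you built $|f|$), the reduced part of $\phi(\sigma)$ is $\reduce{\sigma}\circ\reduce{f}$, which equals $1$ near $x$; a function on a supermanifold is invertible on the open set where its reduced part is nonvanishing, so $\phi(\sigma)$ is invertible near $x$ and therefore $\phi(h)=0$ near $x$. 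With this lemma, independence of $\rho$ and the patching are immediate, and $f^*=\phi$ follows from $\phi(g)-\phi(\rho g)=\phi((1-\rho)g)=0$ near $x$. In fact, once you have the lemma you can define the sheaf morphism $C_N\to |f|_*C_M$ directly by $g\mapsto \phi(\rho g)$ near $x$, bypassing charts and Theorem~\ref{chap7:thm:morphism} entirely (this is the route taken in \cite{Carmeli-Caston-Fioresi}). So the architecture of your proof is the standard one and is repairable, but as written the well-definedness and the final identification $f^*=\phi$ rest on an unproved claim, and the tools you name (Hadamard plus nilpotency) cannot replace it.
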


\begin{example}\label{chap7:exa:T-points-of-R-p-q}
Let \((x,\theta)=(x^i,\theta^j)\) be coordinates on \(\bR^{p|q}\). Theorem~\ref{chap7:thm:morphism} implies that we can identify \(T\)\nbdash{}points of \(\bR^{p|q}\) with the set of all \((p+q)\)-tuples \((u,\zeta)=(u^i,\zeta^j)\), where \(u^i\in C(T)^\even\) and \(\zeta^j\in C(T)^\odd\). This yields an isomorphism by comparing \(T\)\nbdash{}points
\[
\bR^{p|q} \times \bR^{s|t}\cong \bR^{p+s|q+t}.
\]
Similarly, for a supermanifold \(M\) there is an isomorphism of sheaves of superalgebras on~\(|M|\)
\[
C_{M\times \bR^{0|q}}\cong C_M\otimes \Lambda(q).
\]
\end{example}

\begin{definition}\label{chap7:def:surj-subm}
A morphism of supermanifolds \(f\colon M\to N\) is a \emph{submersion} if it is locally isomorphic to the projection \(\bR^{p|q} \times \bR^{s|t}\to \bR^{p|q} \). If, in addition, the reduced morphism \(\reduce{f}\) is a surjection then \(f\) is a \emph{surjective submersion}.
\end{definition}

\begin{definition}\label{chap7:def:parity}
The \emph{parity involution} of a superalgebra \(A\) is the automorphism given by \(a\mapsto (-1)^{\parity{a}}a \). This defines an endofunctor of \(\SAlg\). It induces the parity involution on supermanifolds and further on presheaves on~\(\SMfd\).
\end{definition}

\subsection{Symbolic notation}
We can use local coordinates for supermanifolds just as we did for manifolds. Let \(M\) and \(N\) be supermanifolds with local coordinates \((x,\theta)=(x^i,\theta^j)\) and \((y,\eta)=(y^k,\eta^l)\), respectively. Locally, we can think of a morphism \(\psi\colon M\to N\) symbolically as
\[
(x,\theta) \mapsto (y,\eta), \qquad y=y(x, \theta),\quad \eta=\eta(x,\theta),
\]
which is an abuse of notation for the map \(\psi^*\)
\[
\psi^*y^k=y^k(x,\theta)\in C(M)^\even,\quad \psi^*\eta^l=\eta^l(x,\theta)\in C(M)^\odd.
\]
This notation is convenient since it resembles the usual function notation. We will specify explicitly when we use this notation.

The following example demonstrates how this works and how it relates to the functor of points.
\begin{example}\label{chap7:exa:T-points-coordinates}
Let us continue Example~\ref{chap7:exa:T-points-of-R-p-q}. The supermanifold \(\bR^{p|q}\) is a super Lie group with addition in symbolic notation
\begin{equation}\label{chap7:eq:Rpq-m}
\begin{gathered}
m\colon \bR^{p|q}\times \bR^{p|q} \xrightarrow{+} \bR^{p|q}\\
 m((x_1,\theta_1), (x_2,\theta_2))= (x_1+x_2, \theta_1+\theta_2),
\end{gathered}
\end{equation}
which really means in pullback notation
\[
m^*(x,\theta)=(x_1+x_2,\theta_1+\theta_2),
\]
where low indices indicate functions on different copies. Let \(f_1,f_2\colon T\to \bR^{p|q}\) be \(T\)-points of \(\bR^{p|q}\) such that \(f_i^* (x,\theta)=(u_i,\zeta_i)\). Their addition \(f=m\circ (f_1,f_2)\) is a \(T\)-point of \(\bR^{p|q}\) such that \(f^*(x,\theta)=(u_1+u_2,\zeta_1+\zeta_2)\). Thus the addition in \(T\)\nbdash{}points is
\begin{equation}\label{chap7:eq:Rpq-m-p}
\begin{gathered}
m\colon \hom(T, \bR^{p|q})\times \hom(T, \bR^{p|q}) \xrightarrow{+} \hom(T,\bR^{p|q})\\
 m((u_1,\zeta_1), (u_2,\zeta_2))= (u_1+u_2, \zeta_1+\zeta_2).
\end{gathered}
\end{equation}
This gives a group structure on the set of \(T\)\nbdash{}points that is functorial in~\(T\).
\end{example}

Observe that~\eqref{chap7:eq:Rpq-m} and~\eqref{chap7:eq:Rpq-m-p} have the same pattern. The formula in \(T\)\nbdash{}points is just the pullback of that in coordinates to the parameter space~\(T\).\footnote{Some authors use the same letters to denote both coordinates and their pullbacks on~\(T\).} It is often the case that the formula in \(T\)\nbdash{}points is easier to obtain. ``Thus the informal or symbolic way of thinking and writing about supermanifolds is essentially the same as the mode of operation furnished by the language of the functor of points''~\cite[p. 151]{Varadarajan}.

\begin{remark}\label{chap7:rem:RD-morphisms}
Similarly, we have morphisms of supermanifolds in \(T\)\nbdash{}points:
\begin{align*}
  m\colon &D\times D \xrightarrow{\cdot} \bR,\quad m(\ul{\delta}_1,\ul{\delta}_2)=\ul{\delta}_1 \ul{\delta}_2,\\
  m\colon & \bR\times D\xrightarrow{\cdot} D,\quad m(\ul{r},\ul{\delta})=\ul{r}\,\ul{\delta},
\end{align*}
where \(r\) is a coordinate on \(\bR\) and \(\ul{r} \in C(T)\) represents a \(T\)\nbdash{}point of \(\bR\) in this coordinate, \(\delta\) is a coordinate on \(D\) and \(\ul{\delta}\in C(T)\) represents a \(T\)\nbdash{}point of \(D\) in this coordinate. The morphism \(\bR\times D\xrightarrow{\cdot} D\) gives an action of \((\bR,\times)\) on~\(D\). The semidirect product \(\bR\ltimes D\cong \bR^{1|1}\) has multiplication in \(T\)-points
\[
\bR^{1|1}\times \bR^{1|1} \to \bR^{1|1},\quad
(\ul{r}_1,\ul{\delta}_1)\cdot (\ul{r}_2,\ul{\delta}_2)=( \ul{r}_1\ul{r}_2,\ul{r}_2\ul{\delta}_1+\ul{\delta}_2).
\]
\end{remark}

\subsection{Tangent vectors}

Like manifolds, tangent vectors on supermanifolds are defined by derivations.

\begin{definition}
Let \(M\) be a supermanifold. An even (odd) \emph{tangent vector} \(v \) at \(x\in |M|\) is an even (odd) derivation of the stalk, that is, an even (odd) linear map \(v\colon C_{M,x}\to \bR\) such that \(v(fg)=v(f)\ev_x(g)+(-1)^{\parity{v}\parity{f}}\ev_x(f) v(g)\).
\end{definition}

Let \(v\) be a tangent vector at \(x\). Let \(U\) be a neighbourhood of \(x\). Composing with the restriction map \(C_M(U)\to C_{M,x}\) yields a derivation \(C_M(U)\to \bR\),denoted also by \(v\).

\begin{definition}
   A \emph{vector filed} \(V\) on \(M\) is a derivation of \(C_M\), that is, a family of derivations \(V_U\colon C_M(U)\to C_M(U)\) which is compatible with restrictions.
\end{definition}

Let \((x^i,\theta^j)\) be coordinates on an open subsupermanifold \(U\subset M\). One can show that a derivation is determined by its value on coordinates~\cite[Lemma 4.4.4]{Carmeli-Caston-Fioresi}. The coordinate vector fields \(\partial_{x^i}\) and \(\partial_{\theta^j}\) are derivations determined by
\[
\partial_{x^i} x^{i}=1,\quad \partial_{x^i}x^k=0 \text{ for }k \neq i, \quad \partial_{x^i}\theta^j=0,
\]
and
\[
\partial_{\theta^j}x^i=0,\quad \partial_{\theta^j}\theta^j=1, \quad \partial_{\theta^j}\theta^k \text{ for }k\neq j.
\]
Coordinate vectors at \(x\) are derivations given by
\[
\partial_{x^i}|_x=\ev_x\circ \partial_{x^i},\quad \partial_{\theta^j}|_x=\ev_x\circ \partial_{\theta^j}.
\]
Coordinate vectors form a basis of the tangent space at~\(x\).

We will need the infinitesimal action of a super Lie group action. The following is~\cite[Definition 8.2.1]{Carmeli-Caston-Fioresi}; see~\cite{Carmeli-Caston-Fioresi} for more details.

\begin{definition}\label{chap7:def:induced-vector}
Let \(G\) be a super Lie group, \(M\) a supermanifold, and let \(\alpha\colon G\times M\to M \) be an action. Given \(v\in T_e G\), the composite
\[
\rho_\alpha(v)\colon C_M(U) \xrightarrow{\alpha^*} C_{G\times M}(U_e\times U) \xrightarrow{v_M} C_M(U)
\]
is a derivation of \(C_M(U)\) for every open \(U\subset |M|\) and \(U_e \subset |G|\), where \(v_M\) is the unique \(C_M\)-linear derivation with \(v_M(a\otimes b)=v(a)b\). The derivation~\(\rho_\alpha(v)\) is called the \emph{induced vector field}.
\end{definition}

\subsection{A criterion for representability}

We recall a criterion for representability of presheaves on \(\SMfd\). This is a special case of a general statement in algebraic geometry; see~\cite{Carmeli-Caston-Fioresi} for more details.

Let us introduce some terminology. Let \(M\) be a supermanifold. If \(\{|U_i|\}\) is an open cover of \(|M|\), then we call the open subsupermanifolds determined by \(\{|U_i|\}\) an \emph{open cover} of~\(M\). With the notion of open covers, we can speak of sheaves on \(\SMfd\). The notion of open covers extends to presheaves. Let \(U\to F\) be a morphism of presheaves on \(\SMfd\). We call \(U\) a \emph{subfunctor} of \(F\) if \(U(T)\to F(T)\) is a subset for every~\(T\). We call a subfunctor \(U\to F\) \emph{open} if for every representable \(V\) with a morphism \(V\to F\) the pullback \(V\times_F U\) is representable and \(V\times _F U\to V\) is an open supermanifold. A family of open subfunctors \(\{U_i\to F\}\) is an \emph{open cover} of \(F\) if for every representable \(V\) with a presheaf morphism \(V\to F\), the pullbacks \(\{V\times_F U_i\}\) form an open cover of \(V\). Furthermore, a representable open subfunctor is called an \emph{open supermanifold subfunctor}.

\begin{theorem}[{\cite[Theorem~4.3]{Carmeli-Caston-Fioresi}}]
A presheaf on \(\SMfd\) is representable if and only if it is a sheaf and covered by a family of open supermanifold subfunctors.
\end{theorem}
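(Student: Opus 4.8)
The plan is to prove the two implications separately; the content lies in the \emph{if} direction, which is a gluing construction, while the \emph{only if} direction merely records that $\SMfd$ carries a subcanonical open-cover topology.

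For the \emph{only if} direction, suppose $F = h_M$ is representable. To see that $h_M$ is a sheaf, recall that a morphism $T \to M$ consists of a continuous map $|T| \to |M|$ together with a sheaf morphism $C_M \to f_* C_T$, and both data are local on $|T|$; hence a family of morphisms $U_i \to M$ defined on an open cover $\{U_i\}$ of $T$ and agreeing on the overlaps glues to a unique morphism $T \to M$, which is exactly the equaliser condition defining a sheaf. For the required open cover of $F$, I would choose any cover $\{M_i\}$ of $M$ by open subsupermanifolds and verify that each $h_{M_i} \to h_M$ is an open supermanifold subfunctor: for an arbitrary $g \colon T \to M$ the pullback $T \times_{h_M} h_{M_i}$ is the open subsupermanifold of $T$ supported on $|g|^{-1}(|M_i|)$, which is representable and open in $T$, as required.

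For the \emph{if} direction, let $F$ be a sheaf equipped with an open cover $\{U_i \to F\}$ by open supermanifold subfunctors, and write $U_i$ also for the representing supermanifold. First I would form the overlaps $U_{ij} \coloneqq U_i \times_F U_j$. Since $U_j \to F$ is an open subfunctor and $U_i$ is representable, $U_{ij}$ is representable and $U_{ij} \to U_i$ is an open subsupermanifold, and symmetrically $U_{ij} \to U_j$; transposing the fibre product gives isomorphisms $U_{ij} \cong U_{ji}$, and the triple overlaps $U_{ijk} \coloneqq U_i \times_F U_j \times_F U_k$ supply the cocycle condition automatically, since all the identifications take place over $F$. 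I would then glue the ringed spaces $U_i$ along these open pieces to obtain a ringed space $M$; because each chart is locally isomorphic to some $\bR^{p|q}$ and the transition maps are isomorphisms of open subsupermanifolds, $M$ is again locally modelled on $\bR^{p|q}$.

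It remains to identify $h_M$ with $F$. The open immersions $h_{U_i} \to h_M$ form an open cover of $h_M$, and the composites $U_i \to F$ agree on the overlaps $U_{ij}$ by construction; since $F$ is a sheaf they glue to a unique morphism $\varphi \colon h_M \to F$. To prove $\varphi$ is an isomorphism I would test on $T$-points, exploiting that both $h_M$ and $F$ are sheaves. For surjectivity, given $t \in F(T)$, pulling back the cover along $t$ yields an open cover $\{T_i \coloneqq T \times_F U_i\}$ of $T$ on which $t$ factors through $U_i$, producing $T_i$-points of $M$ that agree on overlaps and hence glue, by the sheaf property of $h_M$, to a $T$-point mapping to $t$. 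Injectivity runs the same argument in reverse, using that $U_i(T) \to F(T)$ is an injection because $U_i \to F$ is a subfunctor. I expect the main obstacle to be not any of these formal manipulations but the gluing step itself: one must check that the ringed space produced by gluing is a genuine supermanifold, and in particular that the second-countable Hausdorff conditions in the definition hold for the glued space, which is not a formal consequence of the functorial data and may require a mild additional hypothesis on the cover.
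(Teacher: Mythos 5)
There is nothing in the paper to compare your argument against: the theorem is quoted verbatim from Carmeli--Caston--Fioresi with a citation and no proof, and the paper only ever applies the relative version stated immediately after it (Corollary~\ref{chap7:cor:criterion-rep}, where the cover of \(F\) is pulled back along a morphism \(F\to B\) with \(B\) representable). Your argument is the standard Demazure--Gabriel-style gluing proof, and its formal steps are correct: representables are sheaves because morphisms of ringed spaces glue over open covers; the overlaps \(U_{ij}=U_i\times_F U_j\) are representable open pieces whose cocycle condition is automatic because all identifications happen inside the subfunctors of \(F\); and the comparison \(h_M\cong F\) follows on \(T\)-points by pulling the cover back, using the sheaf property of both sides and the injectivity of \(U_i(T)\to F(T)\).

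However, the caveat you flag at the end is not a technicality you left unchecked --- it is a genuine failure of the statement as literally quoted, given the paper's definition of a supermanifold (second countable \emph{Hausdorff} body). Take two copies of \(\bR\) and let \(F\) be the sheaf obtained by gluing \(h_{\bR}\sqcup h_{\bR}\) along \(h_{\bR\setminus\{0\}}\) (the ``line with two origins''): one checks directly that the two maps \(h_{\bR}\to F\) are open supermanifold subfunctors covering \(F\), yet \(F\) is not representable, since a representing \(M\) would contain open subsupermanifolds \(M_1\cong M_2\cong \bR\) with \(|M_1|\cup|M_2|=|M|\) and \(M_1\times_M M_2\cong \bR\setminus\{0\}\), forcing \(|M|\) to be non-Hausdorff; an uncountable disjoint union of points likewise defeats second countability. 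So no argument can close the gap without an additional separation/countability hypothesis on the cover. This also explains why the paper is nevertheless safe: in Corollary~\ref{chap7:cor:criterion-rep} the charts are \(U_i\times_B F\) for an open cover \(\{U_i\}\) of a representable \(B\), so two points of the glued space over the same point of \(B\) lie in a \emph{common} chart and are separated there, points over distinct base points are separated using Hausdorffness of \(|B|\), and second countability follows because \(|B|\) is Lindel\"of, so the cover may be taken countable. The clean way to repair your write-up is either to add such a hypothesis to the theorem, or to prove the relative statement of the corollary directly, which is all the paper's differentiation argument in Chapter~8 requires.
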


In practice, we use the following direct corollary.

\begin{corollary}\label{chap7:cor:criterion-rep}
Let \(F \to B\) be a morphism of sheaves on \(\SMfd\). Suppose that \(B\) is representable and covered by a family of open supermanifold subfunctors \(\{U_i\}\). If \(U_i\times_B F\) is representable for every \(i\), then \(F\) is representable and covered by \(\{U_i\times_B F\}\).
\end{corollary}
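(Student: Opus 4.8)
The plan is to reduce everything to the representability criterion \cite[Theorem~4.3]{Carmeli-Caston-Fioresi} stated just above: a presheaf on $\SMfd$ is representable precisely when it is a sheaf and admits a cover by open supermanifold subfunctors. Since $F \to B$ is by assumption a morphism of \emph{sheaves}, $F$ is already a sheaf, so the only thing left to produce is a cover of $F$ by representable open subfunctors. The natural candidate is the family $\{F_i\}$ with $F_i \coloneqq U_i \times_B F$, which is representable for each $i$ by hypothesis.

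First I would check that each $F_i$ is genuinely a subfunctor of $F$. Because $U_i \to B$ is an open subfunctor, $U_i(T) \to B(T)$ is an inclusion for every $T$; hence $F_i(T) = U_i(T) \times_{B(T)} F(T) \to F(T)$ is injective, identifying $F_i(T)$ with the preimage of $U_i(T) \subseteq B(T)$ under $F(T) \to B(T)$. Thus $F_i$ is a subfunctor of $F$, and it is representable by assumption.

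The core computation is a standard pullback rewriting. Given any representable $V$ with a morphism $V \to F$, let $V \to B$ be the composite $V \to F \to B$. Then associativity of fibre products gives a canonical isomorphism $V \times_F F_i = V \times_F (U_i \times_B F) \cong V \times_B U_i$. Now I would invoke the two defining properties of $\{U_i\}$ as an open cover of $B$. First, openness of the subfunctor $U_i \to B$ says that $V \times_B U_i$ is representable and $V \times_B U_i \to V$ is an open supermanifold; transporting this across the isomorphism shows that $F_i \to F$ is open, so $\{F_i\}$ is a family of open supermanifold subfunctors of $F$. Second, the covering property of $\{U_i\}$ says that $\{V \times_B U_i\}$ is an open cover of $V$; transporting this shows that $\{V \times_F F_i\}$ is an open cover of $V$, so $\{F_i\}$ is an open cover of $F$. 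Applying the theorem then yields that $F$ is representable and covered by $\{F_i\} = \{U_i \times_B F\}$, as claimed.

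Since this is a formal consequence of the definitions, there is no analytic content; the only point demanding care is the bookkeeping of which base morphism each fibre product is taken over. The mild obstacle will be to apply the isomorphism $V \times_F (U_i \times_B F) \cong V \times_B U_i$ with the morphism $V \to B$ consistently understood as the composite through $F$, and to observe that both openness and the covering condition are \emph{defined} by testing against all representable $V \to F$, so that the transport across this isomorphism is exactly what the definitions demand.
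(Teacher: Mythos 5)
Your proof is correct and is exactly the routine verification the paper has in mind: the paper states this as a ``direct corollary'' of \cite[Theorem~4.3]{Carmeli-Caston-Fioresi} without proof, and your argument---$F$ is a sheaf by hypothesis, the $F_i = U_i\times_B F$ are representable open subfunctors covering $F$ via the identification $V\times_F(U_i\times_B F)\cong V\times_B U_i$---is precisely the intended reduction. The bookkeeping of base morphisms you flag is handled correctly.
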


\begin{example}
  The product \(M\times N\) is representable for all supermanifolds \(M\) and \(N\). Applying Corollary~\ref{chap7:cor:criterion-rep} to \(M\times N\to M\), we are reduced to the case of \(M=\bR^{p|q}\); it is further reduced to the case of \(N=\bR^{s|t}\), which is given by Example~\ref{chap7:exa:T-points-of-R-p-q}.
\end{example}

\begin{definition}\label{chap7:def:internal-hom}
Let \(L\) and \(M\) be supermanifolds. The \emph{internal hom} is the presheaf on \(\SMfd\) defined by
\[
 \bhom(L, M)\colon T\mapsto \hom(L\times T, M), \quad T\in \SMfd.
\]
We also denote the internal hom by \(M^L\). For \(f\colon L\times T\to M\), let \(\hat{f}\) denote the corresponding morphism \( T\to M^L\).
\end{definition}

The \emph{evaluation} is the morphism defined in \(T\)\nbdash{}points by
\[
\ev\colon \bhom(L, M)\times L\to M, \quad(f,g)\mapsto f\circ(g,\id_T),
\]
where \(f\colon L\times T\to M\) and \(g\colon T\to L\). The \emph{composition} is the morphism defined by
\[
\mu\colon \bhom(L, M)\times \bhom(M, N)\to \bhom(L, N),\quad (g, f)\mapsto f\circ (g,\pr_T),
\]
where \(f\colon M\times T\to N\) and \(g\colon L\times T\to M\).

It is not hard to see that the internal hom is a sheaf on \(\SMfd\). We claim that \(\bhom(D^n, M)\) is representable for every supermanifold \(M\). Applying Corollary~\ref{chap7:cor:criterion-rep} to \(\bhom(D^n, M)\to M\), we are reduced to the case of \(M=\bR^{p|q}\). There are isomorphisms
\begin{align*}
\bhom(L, M_1\times M_2)&\cong \bhom(L, M_1)\times \bhom(L, M_2),\\
\bhom(L_2, \bhom(L_1, M))&\cong \bhom(L_1\times L_2, M).
\end{align*}
Thus it suffices to consider \(\bhom(D, D)\) and \(\bhom(D, \bR)\), which will be done in the next section.

\section{The odd tangent bundles and NQ-manifolds}

We show that the internal hom \(M^D\) can be identified with the odd tangent bundle and that the natural action of \(D^D\) on \(M^D\) gives the de~Rham complex structure. This old observation appeared in many places, for instance, \cite{Vaintrob}; see also~\cite{Kontsevich, Kochan-Severa, Severa06}. We then introduce the notion of NQ-manifolds.

\subsection{The odd tangent bundles}

Recall from Example~\ref{chap7:exa:smfds} that the odd tangent bundle \(\Pi TM\) of a manifold \(M\) is a supermanifold given by the sheaf of differential forms.

\begin{proposition}
Let \(M\) be a manifold. The internal hom \(M^D\) is representable and naturally isomorphic to \(\Pi TM\).
\end{proposition}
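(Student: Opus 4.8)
The plan is to prove representability by exhibiting an explicit isomorphism of presheaves $h_{\Pi TM}\cong M^D$ via the Yoneda Lemma, that is, to produce for each supermanifold $T$ a natural bijection
\[
\hom(T,\Pi TM)\cong \hom(D\times T, M)
\]
and to check naturality in $T$. The right adjoint nature of the internal hom and the criterion in Corollary~\ref{chap7:cor:criterion-rep} already guarantee representability once we handle the local models, so the real content is to identify the representing object \emph{canonically} with $\Pi TM$. First I would reduce to the local case: by Corollary~\ref{chap7:cor:criterion-rep} applied to the projection $M^D\to M$ (induced by the inclusion $\Simp{0}\hookrightarrow D$, i.e.\ evaluation at the reduced point of $D$), and using the compatibility of both $\bhom(D,\blank)$ and $\Pi T(\blank)$ with open covers and products, the statement for general $M$ follows from the statement for $M=\bR^p$, which in turn reduces to $M=\bR$.

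Next I would compute $\bhom(D,\bR)$ directly in $T$\nbdash{}points. By Theorem~\ref{chap7:thm:morphism} and Example~\ref{chap7:exa:T-points-of-R-p-q}, a $T$\nbdash{}point of $M^D=\bhom(D,\bR)$ is a morphism $D\times T\to \bR$, i.e.\ an even global function on $D\times T$. Using the isomorphism of structure sheaves $C_{T\times \bR^{0|1}}\cong C_T\otimes\Lambda(1)$ from Example~\ref{chap7:exa:T-points-of-R-p-q} together with Lemma~\ref{chap7:lem:taylor}, such a function expands uniquely as $f_0 + \delta\, f_1$ where $\delta$ is the odd coordinate on $D$, with $f_0\in C(T)^\even$ and $f_1\in C(T)^\odd$ (so that $\delta f_1$ is even). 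Thus a $T$\nbdash{}point of $\bhom(D,\bR)$ is precisely a pair $(f_0,f_1)$, an even function together with an odd function on $T$. On the other hand, $\Pi T\bR=\bR^{1|1}$ with coordinates $(x,dx)$ where $dx$ is odd, and by Theorem~\ref{chap7:thm:morphism} a $T$\nbdash{}point of $\bR^{1|1}$ is exactly such a pair $(f_0,f_1)$. This gives the bijection on $T$\nbdash{}points, and naturality in $T$ is immediate since both sides are computed by pulling back along a morphism $T'\to T$, which acts on the coefficients $(f_0,f_1)$ in the same way. More invariantly, identifying $C(\Pi TM)$ with the algebra of differential forms $\Omega^\bullet(M)$, one sees that a morphism $D\times T\to M$ corresponds by Theorem~\ref{chap7:thm:global} to a superalgebra homomorphism $C(M)\to C(D\times T)=C(T)\otimes\Lambda(1)$, whose two components $(f_0,\delta f_1)$ constitute a homomorphism $g_0\colon C(M)\to C(T)$ together with an odd derivation $f_1$ over $g_0$; this is exactly the data of a homomorphism $\Omega^\bullet(M)\to C(T)$, hence a $T$\nbdash{}point of $\Pi TM$.

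To assemble the global statement I would verify that these local isomorphisms are compatible with coordinate changes, so that they glue; this is where one must be slightly careful, since the even part of a $T$\nbdash{}point of $M^D$ transforms by the underlying map $\reduce{M}$ while the odd part transforms like a tangent vector, matching precisely the transition functions of $\Pi TM$ (which are the derivatives of the chart transition maps acting on the fibre coordinates). The main obstacle I anticipate is not any single hard estimate but rather the bookkeeping of this gluing: one must confirm that the canonical identification $C(D\times T)\cong C(T)\otimes\Lambda(1)$, the Taylor expansion in $\delta$, and the identification of the odd coefficient with a derivation are all natural with respect to both $T$ and the chart on $M$, so that the resulting isomorphism $M^D\cong\Pi TM$ is independent of all choices. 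Once this naturality is in place, the Yoneda Lemma (Lemma~\ref{chap2:lem:Yoneda}) upgrades the $T$\nbdash{}point bijection to an isomorphism of presheaves, and representability of $M^D$ follows together with the explicit description of its representing object as the odd tangent bundle.
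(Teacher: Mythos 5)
Your proposal is correct, but its primary route differs from the paper's. The paper proves the statement in one global step: by Theorem~\ref{chap7:thm:global}, a \(T\)\nbdash{}point of \(\Pi TM\) is a superalgebra homomorphism \(\Omega(M)\to C(T)\), which is unpacked (via the projection \(\Pi TM\to M\) and the de~Rham differential \(d\)) into a morphism \(f_0\colon T\to M\) together with an odd derivation over \(f_0^*\); on the other side, the decomposition \(y=v+\delta\eta\) in \(C(D\times T)\) identifies a \(T\)\nbdash{}point of \(M^D\) with exactly the same data, and the two identifications are visibly functorial in \(T\). Since \(\Omega(M)\) and \(d\) are globally defined, no charts and no gluing ever enter. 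Your main plan instead reduces to \(M=\bR\) via Corollary~\ref{chap7:cor:criterion-rep} and then glues the local identifications, which forces the chart-compatibility check (the odd coefficient transforming by the Jacobian, matching the fibre coordinates \(dx^i\) of \(\Pi TM\)) that you rightly flag as the main burden; that check does go through, so your argument is sound, but it is precisely the bookkeeping the paper's invariant argument avoids. Notably, your closing ``more invariantly'' paragraph \emph{is} the paper's proof --- a morphism \(D\times T\to M\) is a homomorphism \(C(M)\to C(T)\otimes\Lambda(1)\), i.e.\ a homomorphism plus an odd derivation over it, i.e.\ a homomorphism \(\Omega(M)\to C(T)\) --- and promoting it from an aside to the main argument makes the reduction and gluing unnecessary for the isomorphism (the reduction remains the right tool for representability of \(\bhom(D^n,M)\) for general supermanifolds, which is how the paper uses it in the preceding section). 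One notational quibble: the projection \(M^D\to M\) is induced by the point inclusion \(\bR^{0|0}\to D\), not by \(\Simp{0}\hookrightarrow D\), which is simplicial-set notation.
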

\begin{proof}
We prove the claim by comparing the \(T\)\nbdash{}points.

Let \(f\colon T \to \Pi TM\) be a \(T\)\nbdash{}point of \(\Pi TM\). Theorem~\ref{chap7:thm:global} implies that~\(f\) is equivalent to a superalgebra homomorphism \(f^*\colon \Omega(M)\to C(T)\). Denote by \(\pi\) the projection \(\Pi TM \to M\). Define \(f_0=\pi \circ f\colon T\to M\) and \(f_1=f^*\circ d\colon C(M)\to C(T)\), where \(d\) is the de~Rham differential. It is easy to check that \(f_1\) is an odd derivation with respect to \(f^*_0\). Conversely, a supermanifold morphism \(f_0\colon T\to M\) and an odd derivation with respect to~\(f^*_0\) determine~\(f\).

Let \(g\colon D\times T\to M \) be a \(T\)\nbdash{}point of \(M^D\). This gives a superalgebra homomorphism \(g^*\colon C(M) \to C(D\times T)\). Let \(\iota\colon T \to D\times T\) be the natural inclusion induced by \(\{0\} \xrightarrow{0} D\). Define \(g_0=g\circ \iota\colon T \to M\). Example~\ref{chap7:exa:T-points-of-R-p-q} implies that each \(y\in C(D\times T)\) can be uniquely written as \(y=v+\delta \eta\), where \(\delta\) is a fixed coordinate on \(D\) and \(v, \eta \in C(T)\). It is clear that \(\iota^*y=v\). Define \(p\colon C(D\times T)\to C(T)\) by \(y\mapsto \eta\). It is easy to verify that \(p\) is an odd derivation with respect to~\(\iota^*\), hence \(g_1=p\circ g^*\colon C(M) \to C(T)\) is an odd derivation with respect to \(g_0^*\). Conversely, a supermanifold morphism \(g_0\colon T\to M\) and an odd derivation with respect to \(g_0^*\) determine~\(g\).

It is clear that the argument above is functorial in \(T\). We conclude that \(M^D\) can be naturally identified with \(\Pi TM\).
\end{proof}

Let us write the above isomorphism in local coordinates. Let \((x^i)\) be local coordinates on~\(M\). We can take \((x^i,dx^i)\) as local coordinates on \(\Pi TM\), where \(d\) is the de~Rham differential. A morphism \(f\colon T\to \Pi TM\) is locally determined by \(f^*x^i\in  C(T)^\even\) and \(f^*d x^i\in C(T)^\odd\). It is clear that \(\{f^*x^i\}\) give the morphism \(f_0\colon T\to M\) and \(\{f^*d x^i\}\) give the odd derivation~\(f_1\). Conversely, let \(g\colon D\times T \to M \) be a morphism corresponding to \(\hat{g}\colon T\to \Pi TM\). Suppose that \(g^*x^i=u^i+\delta\zeta^i\). Then \(\{u^i\}\) give a morphism \(g_0\colon T\to M\) and \(\{\zeta^i\}\) give an odd derivation. Therefore, we can identify \((u^i,\zeta^i)\) with \((\hat{g}^*x^i,\hat{g}^*d x^i)\).

\begin{proposition}
The presheaf \(D^D\) is representable and naturally isomorphic to \(\bR\times D \).
\end{proposition}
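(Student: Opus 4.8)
The plan is to compute the $T$-points of $D^D$ directly from the (very small) structure algebra of $D$ and match them with the $T$-points of $\mathbb{R}\times D=\bR^{1|1}$, then invoke the Yoneda Lemma (Lemma~\ref{chap2:lem:Yoneda}); this parallels the proof of the preceding proposition but is cleaner because $D$ has so few functions. By Definition~\ref{chap7:def:internal-hom}, a $T$-point of $D^D$ is a morphism $D\times T\to D$, and by the global-sections description of morphisms (Theorem~\ref{chap7:thm:global}) such a morphism is the same datum as a superalgebra homomorphism $C(D)\to C(D\times T)$. Since $C(D)=\Lambda(1)$ is the Grassmann algebra on a single odd generator $\delta$, and since Example~\ref{chap7:exa:T-points-of-R-p-q} gives $C(D\times T)\cong C(T)\otimes\Lambda(1)$, the whole computation reduces to understanding homomorphisms out of $\Lambda(1)$.

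First I would observe that a superalgebra homomorphism $\varphi\colon\Lambda(1)\to A$ into any commutative superalgebra $A$ is completely determined by the image $\varphi(\delta)$, which must be an odd element, and that conversely every odd element of $A$ is admissible: the only relation in $\Lambda(1)$ is $\delta^2=0$, and in a commutative superalgebra any odd element squares to zero. Hence $\SAlg(\Lambda(1),A)$ is in natural bijection with $A^\odd$. Applying this with $A=C(D\times T)\cong C(T)\otimes\Lambda(1)$, I would identify a $T$-point of $D^D$ with an odd element of $C(T)\otimes\Lambda(1)$, that is, with a pair $(\zeta,u)$, where $\zeta\in C(T)^\odd$ is the coefficient of $1$ and $u\in C(T)^\even$ is the coefficient of $\delta$.

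Next I would compare this with the right-hand side. By Example~\ref{chap7:exa:T-points-of-R-p-q}, a $T$-point of $\bR^{1|1}=\mathbb{R}\times D$ is exactly a pair consisting of an even function (the pullback of the $\mathbb{R}$-coordinate) and an odd function (the pullback of the $D$-coordinate). Sending the $T$-point $(\zeta,u)$ of $D^D$ to the $T$-point of $\bR^{1|1}$ whose $\mathbb{R}$-coordinate is $u$ and whose $D$-coordinate is $\zeta$ gives a bijection. Because each step is effected by pullback of functions along morphisms of supermanifolds, this bijection is natural in $T$, and the Yoneda Lemma then yields the asserted isomorphism of presheaves $D^D\cong\mathbb{R}\times D$; in particular $D^D$ is representable. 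Together with the already-settled case $\bhom(D,\bR)$ and the exponential and product laws recorded before the statement, this completes the representability of $\bhom(D^n,M)$.

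The computation is essentially formal, so there is no deep obstacle; the one point demanding care is the parity bookkeeping in the tensor product. Concretely, the main thing to get right is that the odd part of $C(T)\otimes\Lambda(1)$ consists of elements of the form $\zeta\otimes 1+u\otimes\delta$ with $\zeta$ \emph{odd} and $u$ \emph{even}, rather than the naive guess, precisely because $\delta$ itself is odd and so the coefficient multiplying $\delta$ must be even. It is this crossing of parities that makes the \emph{even} function $u$ correspond to the $\mathbb{R}$-direction and the \emph{odd} function $\zeta$ correspond to the $D$-direction of $\mathbb{R}\times D$, and verifying that the two parity grades are paired in this way is what the argument really turns on.
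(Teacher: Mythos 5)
Your proof is correct and follows essentially the same route as the paper: both arguments identify a $T$-point of $D^D$ with the pullback $g^*\delta\in C(D\times T)^\odd$, decompose it as $\eta+\delta y$ with $\eta$ odd and $y$ even using $C(D\times T)\cong C(T)\otimes\Lambda(1)$, and match the pair $(y,\eta)$ with the $T$-point data $(f^*r,f^*\delta)$ of $\bR\times D$. The only cosmetic difference is that you route through Theorem~\ref{chap7:thm:global} and an explicit analysis of homomorphisms out of $\Lambda(1)$ (noting that any odd element is an admissible image of $\delta$), where the paper invokes the coordinate description of morphisms directly; the parity bookkeeping you emphasise is exactly the point the paper's decomposition encodes.
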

\begin{proof}
Let \(\delta\) be a coordinate on \(D\) and let \(r\) be a coordinate on \(\bR\). A \(T\)\nbdash{}point of \(\bR\times D \), \(f\colon T\to \bR\times D \), is determined by \(f^*\delta\in C(T)^\odd\) and \(f^*r\in C(T)^\even\). On the other hand, a \(T\)\nbdash{}point of \(D^D\), \(g\colon D\times T\to D\), is given by \(g^*\delta\in C(D\times T)^\odd\). Decompose \(g^*\delta=\eta+\delta y\) with \(\eta \in C(T)^\odd\) and \(y \in C(T)^\even\). Thus~\(g\) is also determined by an odd element and an even element of \(C(T)\), and this proves the claim.
\end{proof}

Our next goal is to describe the monoid structure of \(D^D\) and the natural action of \(D^D\) on \(M^D\). As above, let \(g\colon D\times T\to D\) be a \(T\)-point of \(D^D\). Let \(\delta\) be a coordinate on \(D\) and let \((r,\delta)\) be coordinates on \(D^D\cong \bR\times D\). Suppose that \(g^*\delta=\eta+\delta y\) with \(\eta \in C(T)^\odd\) and \(y \in C(T)^\even\). Hence \(\hat{g}^*(r,\delta)=(y,\eta)\). Let \(f\) be a \(T\)-point of \(D\) such that \(f^*\delta=\theta\) with \(\theta\in C(T)^\odd\). Then \(g\circ(f,\id_T)\) is a \(T\)-point of \(D\) such that \((g\circ(f,\id_T))^*\delta=\eta+\theta y\). This gives the evaluation map in \(T\)-points
\[
 D^D\times D\xrightarrow{\ev}D, \quad \ev((y,\eta),\theta)=\eta+\theta y.
\]
Therefore, in coordinates we have (recall the rule in Example~\ref{chap7:exa:T-points-coordinates})
\[
\ev^*\delta'=\delta + \delta'r,
\]
where \(\delta'\) being the same as \(\delta\) is renamed.

Similarly, let \(g_1,g_2\colon D\times T\to D\) be \(T\)-points of \(D^D\) such that \(g_i^*\delta=\eta_i+\delta y_i\). Then \(g_2\circ (g_1,\pr_T)\) is a \(T\)-point of \(D^D\) such that \((g_2\circ (g_1,\pr_T))^*\delta=\eta_2+\eta_1 y_2+\delta y_1y_2\). This gives the composition in \(T\)\nbdash{}points
\[
\mu\colon D^D \times D^D \to D^D,\quad \mu((y_1,\eta_1),(y_2,\eta_2))=(y_1y_2,\eta_2+\eta_1 y_2).
\]
Therefore, in coordinates \((r,\delta)\) we have
\[
\mu^*(r,\delta)=(r_1 r_2,\delta_2+\delta_1 r_2).
\]
The monoid \(D^D\) is isomorphic to the semidirect product \(\bR \ltimes D\) in Remark~\ref{chap7:rem:RD-morphisms}.

\begin{proposition}
  Let \(M\) be a manifold. The infinitesimal action of \(\mu\colon D^D\times M^D \to M^D\) gives the de~Rham complex structure on differential forms of~\(M\).
\end{proposition}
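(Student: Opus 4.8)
The plan is to compute the two induced vector fields directly in local coordinates, using the functor-of-points description of the action already in hand, and to recognise the even generator as the degree (Euler) operator $N$ and the odd generator as the de~Rham differential $Q=d$ on $\Omega(M)=C(M^D)$. Throughout I identify $M^D$ with $\Pi TM$ via the proposition just proved, and use local coordinates $(x^i,\xi^i)$ on $M^D$ with $\xi^i\coloneqq dx^i$ (odd), together with $(r,\delta)$ on $D^D\cong\bR\ltimes D$; note the identity $e\in D^D$ is the point $(r,\delta)=(1,0)$.

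First I would record the action $\mu\colon D^D\times M^D\to M^D$ in coordinates, evaluating it on $T$\nbdash{}points exactly as was done above for the evaluation and composition maps. A $T$\nbdash{}point $f\colon D\times T\to M$ of $M^D$ has $f^*x^i=u^i+\delta\zeta^i$ (so $\hat f^*x^i=u^i$, $\hat f^*\xi^i=\zeta^i$), and a $T$\nbdash{}point $g$ of $D^D$ has $g^*\delta=\eta+\delta y$ (so $\hat g^*r=y$, $\hat g^*\delta=\eta$). Then $\mu(g,f)=f\circ(g,\pr_T)$ satisfies $\bigl(f\circ(g,\pr_T)\bigr)^*x^i=u^i+\eta\zeta^i+\delta\,y\zeta^i$. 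Reading off the coordinates of the resulting $T$\nbdash{}point gives, in symbolic notation,
\[
\mu^*x^i=x^i+\delta\,\xi^i,\qquad \mu^*\xi^i=r\,\xi^i.
\]

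Next, the super Lie algebra $T_e(D^D)$ is spanned by the even vector $\partial_r|_e$ and the odd vector $\partial_\delta|_e$. Applying Definition~\ref{chap7:def:induced-vector}, for a homogeneous form $F=a(x)\,\xi^{i_1}\cdots\xi^{i_k}$ I would expand (the Taylor expansion of Lemma~\ref{chap7:lem:taylor} terminates, since $(\delta\xi^j)(\delta\xi^k)=-\delta^2\xi^j\xi^k=0$)
\[
\mu^*F=\Bigl(a(x)+\delta\sum_j\xi^j\,\partial_{x^j}a\Bigr)\,r^k\,\xi^{i_1}\cdots\xi^{i_k},
\]
and then apply $v_{M^D}$. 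For $v=\partial_r|_e$ only the $r^k$ factor contributes (the $\delta r^k$ term vanishes at $\delta=0$), yielding $\rho_\mu(\partial_r|_e)F=kF$, which is precisely the degree operator $N$. For $v=\partial_\delta|_e$ only the $\delta r^k$ term survives, and since $\partial_\delta(\delta r^k)|_e=r^k|_{r=1}=1$, one gets $\rho_\mu(\partial_\delta|_e)F=\sum_j\partial_{x^j}a\,\xi^j\xi^{i_1}\cdots\xi^{i_k}=dF$, the de~Rham differential $Q$.

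Finally I would observe that $N$ and $d$ are globally defined operators on $\Omega(M)$, so the local expressions glue to the globally defined induced vector fields, and close by verifying the relations of the introduction: a one-line check on $k$\nbdash{}forms gives $QN F=k\,dF$ and $NQF=(k+1)\,dF$, whence $[N,Q]=Q$, while $[Q,Q]=2d^2=0$ and $[N,N]=0$ are immediate. This exhibits exactly the de~Rham DGCA structure on $\Omega(M)$. The only delicate points are the sign bookkeeping for the odd derivation $\partial_\delta$ and confirming that $\mu$ is a genuine $D^D$\nbdash{}action so that Definition~\ref{chap7:def:induced-vector} applies; both are routine given the monoid structure on $D^D$ computed just above, and the sign conventions are pinned down by the explicit coordinate computation.
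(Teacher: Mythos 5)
Your proposal is correct and follows essentially the same route as the paper: compute \(\mu^*x = x+\delta\xi\), \(\mu^*\xi = r\xi\) in \(T\)\nbdash{}points, then apply the induced-vector-field definition to identify \(\partial_r\big|_{r=1}\) with the degree operator and \(\partial_\delta\big|_{r=1}\) with the de~Rham differential. The only (harmless) difference is that the paper evaluates the induced derivations on the coordinates \((x,\xi)\) alone, a derivation being determined by its values on coordinates, whereas you expand on a general homogeneous form \(a(x)\,\xi^{i_1}\cdots\xi^{i_k}\) and append the verification of the relations \([N,Q]=Q\), \([Q,Q]=0\).
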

\begin{proof}
We first write~\(\mu\) in local coordinates. Let \(x=(x^i)\) be local coordinates on \(M\). Let \(f\colon D\times T\to M\) be a \(T\)\nbdash{}point of \(M^D\) such that \(f^*x^i=u^i+\delta\zeta^i\), where \(u^i\in C(T)^\even,\zeta^i\in C(T)^\odd\). Let \(g\colon D\times T \to D\) be a \(T\)\nbdash{}point of \(D^D\) such that \(g^*\delta=\eta+\delta y\), where \(y\in C(T)^\even,\eta\in C(T)^\odd\). Then \(f\circ (g,\pr_T)\colon D\times T \to M\) is a \(T\)\nbdash{}point of \(M^D\) such that
\[
(f\circ(g,\pr_T))^*x^i=u^i+(g^*\delta)\eta^i=(u^i+\eta\zeta^i)+\delta(y\zeta^i),
\]
which describes the action \(\mu\) in \(T\)\nbdash{}points. Take local coordinates \((x,\xi)=(x^i,\xi^i)\) on~\(M^D\) and \((r,\delta)\) on \(D^D\cong \bR\ltimes D\) such that \(\hat{f}^*(x,\xi)=(u,\zeta)\) and \(\hat{g}^*(r,\delta)=(y,\eta)\). The action in local coordinates \((x,\xi)\) and \((r,\delta)\) is
\begin{equation}\label{chap7:eq:action}
\mu^* x=x+ \delta\xi, \quad \mu^* \xi=r \xi.
\end{equation}

By Definition~\ref{chap7:def:induced-vector}, the coordinate vectors \(\partial_r\big|_{r=1}\) and \(\partial_\delta\big|_{r=1}\) on \(D^D\) induce vector fields on \(M^D\) by the following formulas:
\begin{align*}
\partial_r\big|_{r=1} x   &= \partial_r\big|_{r=1}\mu^*x  =\partial_r\big|_{r=1}( x+ \delta\xi)=0,\\
\partial_r\big|_{r=1} \xi &= \partial_r\big|_{r=1}\mu^*\xi=\partial_r\big|_{r=1} (r \xi)=\xi,\\
\partial_\delta\big|_{r=1} x&=\partial_\delta\big|_{r=1}\mu^*x=\partial_\delta\big|_{r=1}( x+ \delta\xi)=\xi,\\
\partial_\delta\big|_{r=1} \xi&=\partial_\delta\big|_{r=1}\mu^*\xi=\partial_\delta\big|_{r=1} (r \xi)=0.
\end{align*}
Since the coordinates \(\xi\) can be identified with the coordinates \(dx\), we deduce that the induced infinitesimal action of \(\partial_r\big|_{r=1}\) is the degree and that of \(\partial_\delta\big|_{r=1}\) is the de~Rham differential.
\end{proof}

\subsection{NQ-manifolds}

The notion of Q\nbdash{}manifolds is introduced in~\cite{Vaintrob:normal,Vaintrob:lie,AKSZ:Qmfd}; the following definition of NQ\nbdash{}manifolds, due to \v{S}evera~\cite{Severa05}, generalises that of both \(L_\infty\)\nbdash{}algebras and Lie algebroids. Vaintrob~\cite{Vaintrob:lie} first observed that Lie algebroids are NQ-manifolds. NQ-manifolds can and should be viewed as higher Lie algebroids~\cite{Voronov,Sheng-Zhu,Bonavolonta-Poncin}; see also~\cite{Voronov,Sheng-Zhu,Bonavolonta-Poncin} for more examples of NQ-manifold.

\begin{definition}
An \emph{N\nbdash{}manifold} is a supermanifold equipped with an action of the monoid \((\bR, \times)\) such that \(-1\) acts as the parity involution (Definition~\ref{chap7:def:parity}). An \emph{NQ\nbdash{}manifold} is a supermanifold equipped with an action of \(D^D\) such that \(-1\) acts as the parity involution.
\end{definition}
\begin{remark}
In more elementary terms~\cite{Severa05}, N\nbdash{}manifolds are \(\bN\)-graded manifolds, and NQ\nbdash{}manifolds are \(\bN\)-graded manifolds equipped with a degree-1-vector field \(Q\) such that \([Q, Q]=0\). The induced infinitesimal action of \(\bR\subset D^D\) gives the degree, and the induced infinitesimal action of \(D\subset D^D\) gives the vector field \(Q\). The parity condition ensures that the induced degree is compatible with the supermanifold parity.
\end{remark}

\begin{example}
 First examples of NQ\nbdash{}manifolds include odd tangent bundles of manifolds and (shifted) Lie algebras; their NQ\nbdash{}manifold structures are given by the de~Rham cochain complex and the Chevalley--Eilenberg cochain complex, respectively.
\end{example}

\section{Differentiation of higher Lie groupoids}

In this section, we prove that \v{S}evera's 1-jet of a higher Lie groupoid is representable, hence we obtain an NQ\nbdash{}manifold (higher Lie algebroid) associated with a higher Lie groupoid.

\subsection{The 1-jets of higher Lie groupoids}

Let \(0\le n<\infty\). Recall that a Lie \(n\)\nbdash{}groupoid~\(X\) is a simplicial manifold such that the natural map
\[
\horn{m}{k}\colon X_m \to X_{m,k}\coloneqq \Hom(\Horn{m}{k}, X), \quad 0\le k \le m
\]
is a surjective submersion for \(m\ge 1\) and a diffeomorphism for \(m>n\).

The following definition is due to \v{S}evera~\cite{Severa06}. Let \(P\) denote the nerve of the pair groupoid (in the category \(\SMfd\)) of the odd line \(D\); hence \(P_k=D^{k+1}\).

\begin{definition}
Let \(X\) be a Lie \(n\)\nbdash{}groupoid. Its \emph{1\nbdash{}jet} \(\bhom(P,X)\) is the presheaf on \(\SMfd\) given by \( T\to \hom(P\times T, X)\) for \(T\in \SMfd\).
\end{definition}

This defines a functor from higher Lie groupoids to presheaves on \(\SMfd\). The 1\nbdash{}jet of a Lie \(n\)-groupoid is, \emph{a priori}, only a presheaf. It is clear that \(\bhom(P,X)\) carries a natural action of \(D^D\) and that \(-1\) acts as the parity involution. \v{S}evera claimed that

\begin{theorem}\label{chap7:thm:rep}
Let \(X\) be a Lie \(n\)\nbdash{}groupoid. The presheaf \(\bhom(P, X)\) is representable, hence an NQ-manifold.
\end{theorem}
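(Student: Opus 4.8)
The plan is to prove representability by an induction on simplicial degree, using the representability of internal homs from the previous section as the base case and the Kan (surjective submersion) conditions of $X$ together with the criterion of Corollary \ref{chap7:cor:criterion-rep} for the inductive step; once $\bhom(P,X)$ is known to be a supermanifold, the natural $D^D$-action makes it an NQ-manifold. First I would record two structural facts. Surjective submersions of supermanifolds (Definition \ref{chap7:def:surj-subm}) make $(\SMfd,\covers_\subm)$ a category with a singleton Grothendieck pretopology, and a Lie $n$-groupoid $X$ is exactly an $n$-groupoid object in it (Definition \ref{chap3:def:n-groupoid-pretopology}); in particular $X$ is $(n+1)$-coskeletal by Proposition \ref{chap2:prop:coskeletal-kan}. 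Hence every simplicial map $P\times T\to X$ is determined by its $(n+1)$-truncation,
\[
\hom(P\times T,X)=\hom\bigl(\tr_{n+1}(P\times T),\tr_{n+1}X\bigr),
\]
and, since $\cosk_0$ commutes with products, $P\times T=\cosk_0(D\times T)$ is itself $0$-coskeletal, which makes this end tractable. In particular $\bhom(P,X)$ is a finite end of the presheaves $\bhom(P_m,X_m)=\bhom(D^{m+1},X_m)$, each representable by the previous section, so $\bhom(P,X)$ is a sheaf on $\SMfd$, being a limit of sheaves.

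Next I would set up the induction. Let $F_m$ be the presheaf $T\mapsto\hom(\tr_m(P\times T),\tr_m X)$ of $m$-truncated jets, so that $F_{n+1}=\bhom(P,X)$ and $F_0=\bhom(D,X_0)=\Pi T X_0$ is representable. The heart of the argument is to show that the restriction map $F_m\to F_{m-1}$ is representable, i.e.\ that the presheaf of extensions of a given $(m-1)$-truncated jet to degree $m$ is representable over $F_{m-1}$. This is the exact analogue of Lemma \ref{chap3:lem:representable}: I would build $\bhom(P,X)$-versions of the relative mapping presheaves $\Hom(\Horn{m}{k}\to\Simp{m},\cdot)$, filtering by collapsible extensions so that each elementary step adjoins one horn. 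The geometric input is that $\bhom(D^{m+1},-)$ carries the Kan surjective submersion $X_m\to X_{m,k}=\Hom(\Horn{m}{k},X)$ to a surjective submersion $\bhom(D^{m+1},X_m)\to\bhom(D^{m+1},X_{m,k})$: locally $X_m\to X_{m,k}$ is a projection, and $\bhom(D^{m+1},-)$ turns this into a projection of representable supermanifolds, while on reduced manifolds it restricts to the original surjection.

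Feeding these covers into Corollary \ref{chap7:cor:criterion-rep} — with base $F_{m-1}$ (or the relevant representable $\bhom(D^{m+1},X_{m,k})$), covered by open supermanifold subfunctors, and fibre the representable $\bhom(D^{m+1},X_m)$ — yields representability of $F_m$, and the induction terminates at $F_{n+1}=\bhom(P,X)$. Finally I would upgrade representability to the NQ-statement: the tautological action $D^D\times P\to P$ obtained from $D^D$ acting on the vertex object $D$, together with the composition of internal homs, induces a $D^D$-action on $\bhom(P,X)$ that is automatically functorial in $T$, and $-1\in\bR\subset D^D$ acts as the parity involution (Definition \ref{chap7:def:parity}) because it does so on $D$, hence on $P$. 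Once $\bhom(P,X)$ is a supermanifold, this is precisely the NQ-structure asserted in Theorem \ref{chap7:thm:rep}.

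I expect the main obstacle to be exactly the inductive step, where one must reconcile the pair-groupoid face maps of $P$ (pre-composition, i.e.\ the odd/infinitesimal directions $D^{m+1}$) with the horn-filling structure of $X$ (post-composition). The two delicate points are (a) that $\bhom(D^{m+1},-)$ genuinely preserves surjective submersions of supermanifolds, including surjectivity after passage to reduced manifolds and the correct behaviour on nilpotents controlled by Lemma \ref{chap7:lem:taylor}, and (b) that the latching and matching (degeneracy) data coming from $P=\cosk_0(D\times T)$ do not obstruct the successive fibre products. This is the step mishandled in the original argument, and making it rigorous via the collapsible-extension filtration together with the criterion of Corollary \ref{chap7:cor:criterion-rep} is the crux of the proof.
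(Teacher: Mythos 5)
Your proposal has a genuine gap, and it sits exactly where you locate the ``crux'': the inductive step. The problem is that your induction runs over the truncation level of the jet, $F_m\to F_{m-1}$, and within such a step the filtration by horn-fillings that you invoke cannot exist. In $\tr_{m-1}(P\times T)$ \emph{all} $(m-1)$-simplices of $P\times T$ are already present, so every $m$-simplex of $P$ that you must now map to $X_m$ has its entire boundary prescribed by $g_{m-1}$. Extending $g_{m-1}$ to $g_m$ is therefore a lifting problem against the boundary map $X_m\to \Hom(\partial\Simp{m},X)$, not against a horn map $X_m\to X_{m,k}$. For a general Lie $n$-groupoid the boundary map is \emph{not} a surjective submersion --- that would be the acyclicity condition $\Acyc(m)$ (hypercover condition), which the Kan conditions do not give; already for $m=1$ the map $(\face_0,\face_1)\colon X_1\to X_0\times X_0$ fails to be a submersion for a non-transitive Lie groupoid. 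So the key geometric input you cite (Kan covers, transported through $\bhom(D^{m+1},-)$, fed into Corollary~\ref{chap7:cor:criterion-rep}) simply does not apply to the fibres of $F_m\to F_{m-1}$. This is precisely why the paper does \emph{not} filter by truncation: it filters $P$ by the subcomplexes $P^{(k)}$ generated by the $k$-simplices whose $0$-th vertex is the basepoint. Adjoining such a simplex also adjoins its (previously absent) $0$-th face, so each elementary step is an honest $\Horn{k}{0}$-filling, matched exactly to the Kan covers $X_k\to X_{k,0}$; the limit diagram~\eqref{chap7:eq:diagram-Gk} then has only the horn map and degeneracy constraints, with no extra face condition.

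A second, related gap: even with the correct filtration, representability of the limit is not formal. The fibre product of $\bhom(D^k,X_k)$ with $G^{(k-1)}$ over $\bhom(D^k,X_{k,0})$ must be intersected with the degeneracy conditions $(\de_i^P)^*(g_k)=\de_i^X(g_{k-1})$, and cutting a representable object by equations need not be representable. The paper resolves this by the explicit odd-coordinate analysis: expanding $g_k^*x_k=\sum_J\delta'_J y_k^J$, Lemma~\ref{chap7:lem:g(k)-g(k-1)-lower} shows the coefficients with $\card J<k$ are exactly determined by $g_{k-1}$ (this is where the degeneracy constraints go), and Lemma~\ref{chap7:lem:highest-term-constraint} shows the horn condition constrains only the top coefficient, which after putting the submersion $U_k\to U_{k,0}$ in canonical form contributes $d_k$ free functions (Lemma~\ref{chap7:lem:G-k}). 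Your proposal never engages with this mechanism; citing Lemma~\ref{chap7:lem:taylor} for ``behaviour on nilpotents'' does not substitute for it. (A minor slip besides: $P\times T$ is not $\cosk_0(D\times T)$ when $T$ is the constant simplicial object, since $(P\times T)_k=D^{k+1}\times T$ whereas $\cosk_0(D\times T)_k=D^{k+1}\times T^{k+1}$; what your reduction to finite data actually uses is the $(n+1)$-coskeletality of $X$, which is fine.)
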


The proof in~\cite[appendix]{Severa06} is incomplete and contains some mistakes. The rest of this section is devoted to a detailed proof of the above statement with gaps fixed. More generally, for any positive integer~\(k\), we can define a \(k\)\nbdash{}jet functor for super Lie \(n\)\nbdash{}groupoids. Our argument can be easily adapted to show that the \(k\)\nbdash{}jet of a super Lie \(n\)\nbdash{}groupoid is representable.

\subsection{A discrete version of the theorem}

As suggested by~\cite{Severa06}, we first prove a discrete version of the theorem. This will, in turn, guide us how to approach Theorem~\ref{chap7:thm:rep}. The argument presented in~\cite{Severa06} has mistakes; our proof is inspired by the idea therein.

\begin{theorem}
Let \(X\) be a Lie \(n\)\nbdash{}groupoid. Let \(S\) be a finite set, and denote by \(P(S)\) the nerve of the pair groupoid of \(S\). The presheaf\/ \(\bhom(P(S),X)=\Hom(P(S),X)\) is representable.
\end{theorem}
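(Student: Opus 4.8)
The plan is to prove representability of $\Hom(P(S),X)$ by induction on the cardinality of the finite set $S$, using the combinatorial structure of $P(S)$ together with the horn-filling (Kan) conditions that define a Lie $n$-groupoid. The key observation is that $P(S)$, the nerve of the pair groupoid on a finite set, is a very rigid simplicial set: its $k$-simplices are exactly the $(k+1)$-tuples of elements of $S$, and since the pair groupoid is $0$-coskeletal, $P(S)$ is built from its $0$- and $1$-skeleton in a controlled way. Concretely, if $|S|=1$ then $P(S)=\Simp{0}$ and $\Hom(P(S),X)=X_0$, which is a manifold; this is the base case. For the inductive step I would add one point to $S$, i.e.\ pass from $S$ to $S'=S\sqcup\{*\}$, and analyse how $P(S')$ is obtained from $P(S)$ by attaching cells corresponding to the new simplices that involve the vertex $*$.

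First I would set up the inductive mechanism precisely. The inclusion $S\hookrightarrow S'$ gives a simplicial inclusion $P(S)\hookrightarrow P(S')$, and the complementary simplices are those tuples containing the new vertex $*$. Because the pair groupoid is chaotic, each such new simplex is uniquely determined by a horn once enough lower-dimensional data is fixed; in simplicial language, the inclusion $P(S)\hookrightarrow P(S')$ should be expressible as a finite composite of pushouts of horn inclusions $\Horn{m}{k}\to\Simp{m}$, i.e.\ as a \emph{collapsible extension} in the sense of Definition~\ref{chap2:def:collapsible}. Once this is established, I would invoke Lemma~\ref{chap3:lem:covers_in_groupoid}: given that $\Hom(P(S),X)$ is representable (the inductive hypothesis) and that $P(S)\hookrightarrow P(S')$ is a collapsible extension into a higher groupoid $X$, the lemma immediately yields that $\Hom(P(S'),X)$ is representable and that the restriction map $\Hom(P(S'),X)\to\Hom(P(S),X)$ is a cover (surjective submersion). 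This is exactly the engine that converts horn-filling into representability, and it is why the Kan conditions on $X$ are the crucial hypothesis.

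The main obstacle, and the step I would spend the most care on, is verifying that $P(S)\hookrightarrow P(S')$ really is a collapsible extension, with a filtration by horn-fillings that uses only horns $\Horn{m}{k}$ with $m\ge 1$. The naive worry is that adding the vertex $*$ introduces simplices in many dimensions simultaneously, and one must order the horn-fillings so that at each stage the horn to be filled already has all its other faces present. Here the chaotic/$0$-coskeletal nature of $P(S')$ helps decisively: any simplex $(s_0,\dots,s_m)$ containing $*$ can be filled from a horn obtained by deleting one face, and one can organise the attachment by, say, increasing dimension and then by the position of $*$, so that each attaching map $\Horn{m}{k}\to$ (current stage) is well-defined. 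I would make this explicit by describing the filtration combinatorially and checking that the image of each horn inclusion lands in the already-constructed subcomplex; this is the genuinely technical bookkeeping of the argument, though conceptually it is forced by the rigidity of pair groupoids.

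Finally, I would note two cleanups. Since $P(S)$ is a finite simplicial set (each $P(S)_k=S^{k+1}$ is finite), the functor $\Hom(P(S),X)$ is computed as a finite limit of representables $\hom(-,X_m)$ over the category of simplices, so once representability is known the sheaf condition is automatic and no separate descent argument is needed; the notation $\bhom(P(S),X)=\Hom(P(S),X)$ in the statement reflects exactly that the presheaf hom coincides with the internal/naive hom here. I would also remark that the covers produced along the way show the restriction maps between different $S$ are surjective submersions, which is the structural fact that will later be needed when passing from the discrete set $S$ to the odd line $D$ in the proof of Theorem~\ref{chap7:thm:rep}; indeed this discrete version is meant as the blueprint, with $D$ replacing the finite set and supermanifold representability (Corollary~\ref{chap7:cor:criterion-rep}) replacing ordinary manifold representability.
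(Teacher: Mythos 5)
Your inductive step has a genuine gap: the inclusion \(P(S)\hookrightarrow P(S\sqcup\{*\})\) is \emph{not} a collapsible extension, and no ordering of horn-fillings can make it one. A collapsible extension in the sense of Definition~\ref{chap2:def:collapsible} admits a \emph{finite} filtration, each step of which adjoins exactly one nondegenerate \(m\)-simplex together with its missing face, so only finitely many nondegenerate simplices can be added in total. But \(P(S\sqcup\{*\})\) contains infinitely many nondegenerate simplices not in \(P(S)\), in \emph{every} dimension: already for \(S=\{a\}\) the alternating tuples \((*,a,*,a,\dots)\) and \((a,*,a,*,\dots)\) are nondegenerate \(k\)-simplices for all \(k\). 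Hence Lemma~\ref{chap3:lem:covers_in_groupoid} cannot be invoked for this inclusion, and even if one allowed an infinite filtration, the conclusion would fail: you would only exhibit \(\Hom(P(S'),X)\) as the limit of an infinite tower of covers of manifolds, which has no reason to be representable. Your closing ``cleanup'' suffers from the same problem: the category of simplices of \(P(S)\) is infinite (again because \(P(S)\) has nondegenerate simplices in all dimensions), so \(\Hom(P(S),X)\) is \emph{not} a priori a finite limit of representables; that it reduces to a finite-stage computation is precisely what has to be proved.

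The missing ingredient is the stabilization supplied by the unique Kan conditions, which your argument never uses — indeed it never uses \(n<\infty\), yet the theorem is genuinely about Lie \(n\)-groupoids and would fail for Lie \(\infty\)-groupoids. The paper filters \(P(S)\) by dimension relative to a fixed basepoint \(*\in S\): \(P^{(k)}(S)\) is generated by the \(k\)-simplices with \(0\)-th vertex \(*\). Each inclusion \(P^{(k-1)}(S)\to P^{(k)}(S)\) \emph{is} a collapsible extension: for a nondegenerate \((*,s_1,\dots,s_k)\), the faces \(\face_1,\dots,\face_k\) lie in \(P^{(k-1)}(S)\), so one fills finitely many outer horns \(\Horn{k}{0}\), each filling adding the \(k\)-simplex together with its \(0\)-th face \((s_1,\dots,s_k)\). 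Lemma~\ref{chap3:lem:covers_in_groupoid} then gives a tower of representables \(G^{(k)}(S)=\Hom(P^{(k)}(S),X)\) with covers between them, and the conditions \(\Kan!(m,j)\) for \(m>n\) force \(G^{(k)}(S)\to G^{(k-1)}(S)\) to be an isomorphism for \(k>n\) and \(\Hom(P(S),X)\to G^{(k)}(S)\) to be an isomorphism for \(k\ge n\): the a priori infinite limit is attained at a finite stage, which is what representability means here. If you insist on inducting on \(\card S\), you would still have to run this dimension filtration inside each inductive step, at which point the induction on \(\card S\) buys nothing.
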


\begin{proof}
Fix an element \(*\) in \(S\). Let \(P^{(k)}(S)\) be the smallest simplicial subset of \(P(S)\) containing all \(k\)\nbdash{}simplices in \(P(S)\) with 0-th vertex \(*\). This yields a filtration
\[
P^{(0)}(S)\to P^{(1)}(S)\to \dots \to P^{(k)}(S)\to \dots \to P(S).
\]
We observe that the inclusion \(P^{(k-1)}(S)\to P^{(k)}(S)\) is a collapsible extension and that \(P^{(k)}(S)\) is obtained from \(P^{(k-1)}(S)\) by filling finite many \(k\)-dimensional horns.

Let \(G^{(k)}(S)=\Hom(P^{(k)}(S), X)\). Lemma~\ref{chap3:lem:covers_in_groupoid} shows that \(G^{(k)}(S)\) is representable for each \(k\) and that \(G^{(k)}(S)\to G^{(k-1)}(S)\) is a surjective submersion for each~\(k\) and an isomorphism for \(k>n\). Since \(X\) is a Lie \(n\)\nbdash{}groupoid, \(\Hom(P(S),X)\to G^{(k)}(S)\) is an isomorphism for \(k \ge n\). This proves that \(\Hom(P(S), X)\) is representable.
\end{proof}

We shall give a direct description of how to construct \(G^{(k)}(S)\) out of \(G^{(k-1)}(S)\). This will be useful for proving Theorem~\ref{chap7:thm:rep}. By the definition of \(P^{(k)}(S)\), a simplicial morphism \(g^{(k)}\colon P^{(k)}(S)\to X\) is determined by the map \(g_k\colon S^k\cong\{*\}\times S^k\subset P_k(S)\to X_k\). The space \(G^{(k)}(S)\) is thus a subspace of \(\hom(S^k, X_k)\).

First, \(P^{(0)}(S)=\Simp{0}\{*\}\) and \(G^{(0)}(S)=X_0\). Suppose that we know \(G^{(k-1)}(S)\) and let us describe \(G^{(k)}(S)\). For any \(g^{(k-1)}\in G^{(k-1)}(S)\) given by \(g_{k-1}\colon S^{k-1} \to X_{k-1}\), we shall describe all \(g^{(k)}\) sitting over \(g^{(k-1)}\). For any \((s_1,\dots,s_k) \in S^k\) the \(k\) elements \(g_{k-1}(s_1,\dots,\hat{s}_i,\dots,s_k)\in X_{k-1}\) for \(1 \le i\le k\) form an element of~\(X_{k,0}\). This gives a morphism
\[
\Gamma_{k-1} \colon G^{(k-1)}(S)\to \hom(S^{k+1}, X_{k,0}).
\]
On non-degenerate \(k\)-simplices of \(P^{(k)}(S)\), that is \(s_i\ne s_{i-1}\) for all \(1\le i\le k\) (suppose \(s_0=*\)), we choose an arbitrary element of \(X_k\) lying over \(X_{k,0}\); on degenerate \(k\)-simplices, \(g_k\) is determined by \(g_{k-1}\):
\begin{equation}\label{chap7:eq:gk-degenerate}
\begin{aligned}
  g_k(*,s_1,\dots,s_{k-1})&=\de^X_0 \circ g_{k-1}(s_1,\dots,s_{k-1}), \\
  g_k(s_1,\dots,s_i,s_i,\dots,s_{k-1})&=\de^X_i\circ g_{k-1}(s_1,\dots,s_i,\dots,s_{k-1}).
\end{aligned}
\end{equation}
This way we choose an element of \(X_k\) for each \(k\)-tuple \((s_1,\dots,s_k)\), and declare it to be \(g_k(s_1,\dots,s_k)\). When \(k> n\) the map \(g_k\) is uniquely determined by \(g_{k-1}\), since there is a diffeomorphism \(X_k\cong X_{k,0}\).

\begin{remark}
  It is worth noticing that \(g_k\) on degenerate \(k\)-simplices given by~\eqref{chap7:eq:gk-degenerate} is well-defined and  compatible with face maps and \(g_{k-1}\). We will see an analogy between these facts and Lemmas~\ref{chap7:lem:g(k)-g(k-1)-lower} and~\ref{chap7:lem:highest-term-constraint}.
\end{remark}

We summarise the discussion above by the following lemma.
\begin{lemma}\label{chap7:lem:Gk}
Let \(G^{(k)}(S)\) be defined as above. Denote by \(\de_i^{P(S)}\colon S^{k-1}\to S^k\) the restriction of the degeneracy map
\[
\{*\}\times S^{k-1}\subset P_{k-1}(S) \xrightarrow{\de_i^{P(S)}} P_k(S) \supset \{*\}\times S^k.
\]
Let \(G^{(k-1)}(S)\xrightarrow{\de_i^X} \hom(S^{k-1},X_k)\) be the composite
\[
G^{(k-1)}(S)\to \hom(S^{k-1},X_{k-1})\xrightarrow{\de_i^X} \hom(S^{k-1},X_k).
\]
Then \(G^{(k)}(S)\) is the limit of the following diagram
\begin{equation}\label{chap7:eq:diagram-Gk-S}
\begin{gathered}
\xymatrix{
  &\hom(S^k,X_k)\ar[ld]_{\horn{k}{0}}\ar[rd]^{(\de_i^{P(S)})^*}&\\
  \hom(S^k,X_{k,0})&&\hom(S^{k-1},X_k)\rlap{\, }\\
  &G^{(k-1)}(S)\ar[lu]^{\Gamma_{k-1}}\ar[ru]_{\de_i^X}\rlap{\ ,} &
}
\end{gathered}
\end{equation}
where \(i\) runs over\footnote{\v{S}evera considered only \(i=k-1\). The limit of that diagram is easy to compute.} \(0\le i\le k-1\) and for each \(i\) there is a copy of \(\hom(S^{k-1},X_k)\).
\end{lemma}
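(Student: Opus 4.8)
The plan is to prove Lemma~\ref{chap7:lem:Gk} by checking directly that the space $G^{(k)}(S)$, which by construction is the subspace of $\hom(S^k, X_k)$ consisting of maps $g_k$ that sit over a given $g^{(k-1)}\in G^{(k-1)}(S)$ and satisfy the degeneracy constraints~\eqref{chap7:eq:gk-degenerate}, satisfies exactly the universal property of the limit of diagram~\eqref{chap7:eq:diagram-Gk-S}. Since the diagram lives over the representable cone point $G^{(k-1)}(S)$ and all the spaces involved are representable (this was established in the proof of the discrete theorem via Lemma~\ref{chap3:lem:covers_in_groupoid}), it suffices to compare $T$-points for every test supermanifold $T$ and invoke the Yoneda Lemma~\ref{chap2:lem:Yoneda}.

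First I would unwind what a $T$-point of the limit of~\eqref{chap7:eq:diagram-Gk-S} is: it consists of a $T$-point $g^{(k-1)}$ of $G^{(k-1)}(S)$ together with a $T$-point $g_k\colon S^k \to X_k$ of $\hom(S^k, X_k)$ such that the two compatibility squares commute. The left leg $\horn{k}{0}\circ g_k = \Gamma_{k-1}(g^{(k-1)})$ asserts precisely that $g_k$ lies over the $0$-horn assembled from the $(k-1)$-faces of $g^{(k-1)}$, that is, that for every tuple $(s_1,\dots,s_k)$ the element $g_k(s_1,\dots,s_k)$ restricts along $\Horn{k}{0}\to\Simp{k}$ to the horn built from $g_{k-1}(s_1,\dots,\hat s_i,\dots,s_k)$. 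The right legs, one for each $0\le i\le k-1$, assert that $(\de_i^{P(S)})^*g_k = \de_i^X g_{k-1}$, which is exactly the system of degeneracy identities~\eqref{chap7:eq:gk-degenerate}. Thus a $T$-point of the limit is the same datum as a simplicial morphism $P^{(k)}(S)\times T\to X$ extending the given $P^{(k-1)}(S)\times T\to X$, which is by definition a $T$-point of $G^{(k)}(S)$.

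The remaining verification, which I expect to be the only genuine content, is to confirm that the degeneracy constraints imposed by the right legs are consistent with the horn constraint imposed by the left leg, and that together they impose no more and no less than membership in $G^{(k)}(S)$. Here I would rely on the remark following~\eqref{chap7:eq:gk-degenerate}: the values of $g_k$ on degenerate $k$-simplices prescribed by~\eqref{chap7:eq:gk-degenerate} automatically agree with the face and degeneracy structure inherited from $g_{k-1}$, so that the two families of constraints are compatible rather than contradictory. This is a bookkeeping check on the simplicial identities in $P(S)$ restricted to simplices with $0$-th vertex $*$, combined with the corresponding identities in $X$; it is routine because $P^{(k)}(S)$ is obtained from $P^{(k-1)}(S)$ by filling finitely many $k$-dimensional $0$-horns, and filling a $0$-horn is precisely the data recorded by the left leg of~\eqref{chap7:eq:diagram-Gk-S}.

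Having matched $T$-points naturally in $T$, the Yoneda Lemma yields the claimed isomorphism of $G^{(k)}(S)$ with the limit, completing the proof. The main obstacle, such as it is, lies not in any deep argument but in correctly indexing the degeneracy maps $\de_i^{P(S)}$ and their restrictions to the $\{*\}\times S^{k-1}\subset P_{k-1}(S)$ subspaces, and in verifying that ranging $i$ over all of $0\le i\le k-1$ (rather than only $i=k-1$, as in \v{S}evera's original account) captures every degenerate $k$-simplex of $P^{(k)}(S)$; once this indexing is pinned down, the identification with~\eqref{chap7:eq:diagram-Gk-S} is immediate.
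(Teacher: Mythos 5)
Your proposal is correct and is essentially the paper's own argument: the paper gives no separate proof, stating the lemma as a summary of the discussion preceding it, which makes precisely your identification --- the left leg records the \(\Horn{k}{0}\)-filling data coming from the description of \(P^{(k)}(S)\) as obtained from \(P^{(k-1)}(S)\) by filling finitely many \(0\)-horns at the non-degenerate \(k\)-simplices, while the right legs record the degeneracy prescriptions~\eqref{chap7:eq:gk-degenerate}, whose well-definedness and compatibility with the horn constraint is exactly the content of the remark you cite. Your Yoneda/\(T\)-point packaging merely formalizes that same discussion, so there is nothing to correct.
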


\subsection{Proof of the theorem}
We are now ready to prove the representability Theorem~\ref{chap7:thm:rep}.

Starting with \(P^{(0)}=\sk_0\{0\}\), we define \(P^{(k)}\) approximating \(P\) similar to \(P^{(k)}(S)\). Explicitly, we set
\[
P^{(k)}_i=P_i, \quad 0\le i\le k-1,\qquad P^{(k)}_k=\{0\}\times D^k\bigcup_{0\le j\le k-1} \de_j^P(P_{k-1}),
\]
and higher dimensions are degenerate \(P^{(k)}_{i+1}=\cup_{0\le j\le i} \de_j^P P^{(k)}_i\) for \(i\ge k\). Notice that \(P^{(k)}_i\) for \(i\ge k\) may not be a supermanifold but a union of supermanifolds. Thus \(G^{(k)}\coloneqq \bhom(P^{(k)},X)\) is a limit of sheaves on \(\SMfd\), hence also a sheaf. It is clear that Lemma~\ref{chap7:lem:Gk} carries over to this case; that is, \(G^{(k)}\) is the limit of the following diagram
\begin{equation}\label{chap7:eq:diagram-Gk}
\begin{gathered}
\xymatrix{
  &\hom(D^k,X_k)\ar[ld]_{\horn{k}{0}}\ar[rd]^{(\de_i^{P})^*}&\\
  \hom(D^k,X_{k,0})&&\hom(D^{k-1},X_k)\rlap{\, }\\
  &G^{(k-1)}\ar[lu]^{\Gamma_{k-1}}\ar[ru]_{\de_i^X}\rlap{\ ,} &
}
\end{gathered}
\end{equation}
where all maps are defined as above. We will prove inductively that \(G^{(k)}\) is representable for \(k\ge 0\). Corollary~\ref{chap7:cor:criterion-rep} for \(G^{(k)}\to G^{(0)}=X_0\) shows that it suffices to consider \(G^{(k)}\) over a local piece of~\(X_0\).

Let us first introduce some notation. Fix \(p_0\in X_0\). The unique map \([i]\to [0]\) induces a map \(X_0\to X_i\), which gives \(p_i\in X_i\) for \(i\ge 0\). Similarly, we obtain a point \(p_{i,0}\in X_{i,0}\). Let \(g\) be a \(T\)\nbdash{}point of \(\bhom(P,X)\) approximated by \(g^{(i)}\colon P^{(i)}\times T\to X\) which is given by \(g_i\colon D^i\times T\to X_i\) for \(i \ge 0\). Suppose that the image of the reduced morphism \(\reduce{g}_0\colon\reduce{T}\to X_0\) is contained in a neighbourhood \(U_0\) of \(p_0\). Let \((U_i,x_i)\) be local coordinates on \(X_i\) around~\(p_i\), where \(x_i\) is a \((\dim U_i)\)-tuple of functions on~\(U_i\). Suppose that \(\de_j^X(U_i)\subset U_{i+1}\) for all \(0\le j\le i\) and \(0\le i\le k-1\).

Let~\(\delta\) be a coordinate on \(D\) and let \((\delta_1, \dots, \delta_k)\) be the standard coordinates on~\(D^k\). Let us make a change of coordinates of \(D^k\)
\[
  \delta'_1 = \delta_1, \quad  \delta'_i = \delta_i-\delta_{i-1}, \quad 2\le i\le k.
\]
For an ordered set \(J=\{J_1,\dots,J_j\}\), denote \(\delta'_{J_1}\cdots\delta'_{J_j}\) by \(\delta'_{J}\); let \(\delta'_{J}=1\) if \(J=\emptyset\). Denote the ordered set \(\{1,\dots,k\}\) by \(\oset{k}\). Suppose that
\[
  g_k^*x_k=\sum_{J\subset \oset{k}} \delta'_J y_k^J,
\]
where \(J\) runs over all subset of \(\oset{k}\) and each term \(y_k^J\) is a \((\dim U_k)\)-tuple of functions on \(T\) of parity \( \card J \pmod{2}\).

The strategy is as follows: we compute the limit of the diagram~\eqref{chap7:eq:diagram-Gk} in two steps. Lemma~\ref{chap7:lem:g(k)-g(k-1)-lower} deals with the right part of the diagram. We show that \(y_k^{J}\) is determined by \(g_{k-1}\) if \(\card J<k\). Lemma~\ref{chap7:lem:highest-term-constraint} concerns the left part of the diagram. We deduce that \(y_k^{J}\) for \(\card J=k\) satisfies a constraint condition that produces some new degrees of freedom. To this end, we need to show two compatibility conditions for \(y_{k-1}^{J}\). The following two lemmas allow us to deduce the compatibility conditions for coordinates from the simplicial identities.

\begin{lemma}\label{chap7:lem:sub-surj}
There is a one-to-one correspondence between injections\footnote{Here and in what follows, injections and surjections between ordered sets are order-preserving maps.} \(\oset{l}\to \oset{k}\) and surjections \([k]\to [l]\) for \(0\le l\le k\). Moreover, this extends to a contravariant isomorphism of categories.
\end{lemma}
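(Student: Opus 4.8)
The plan is to exhibit the correspondence explicitly, identifying an order-preserving injection $\oset{l}\to\oset{k}$ with the \emph{cut points} of an order-preserving surjection $[k]\to[l]$, and then to check that this assignment reverses composition. The two categories in play are: the category whose objects are the ordered sets $\oset{k}$ and whose morphisms are order-preserving injections, and the category whose objects are the $[k]$ and whose morphisms are order-preserving surjections. On objects the contravariant functor sends $\oset{k}$ to $[k]$. On morphisms, given an order-preserving injection $f\colon\oset{l}\to\oset{k}$, I would set
\[
 f^\vee\colon[k]\to[l],\qquad f^\vee(j)=\card\{\,i\in\oset{l}:f(i)\le j\,\}.
\]
A short check shows $f^\vee$ is order-preserving, sends $0\mapsto 0$ and $k\mapsto l$, and is surjective, because as $j$ runs from $0$ to $k$ the count $f^\vee(j)$ increases by exactly one each time $j$ passes one of the $l$ distinct values in the image of $f$ (injectivity) and is constant otherwise; hence it attains every value in $\{0,\dots,l\}$.

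Next I would produce the inverse. Given an order-preserving surjection $\phi\colon[k]\to[l]$, each fibre $\phi^{-1}(i)$ is a nonempty interval, and I would set $\phi^\vee(i)=\min\phi^{-1}(i)$ for $i\in\oset{l}$; since $\phi(0)=0<i$ the minimum lies in $\oset{k}$, and strict monotonicity of the fibre minima makes $\phi^\vee$ an order-preserving injection. I would then verify that $f\mapsto f^\vee$ and $\phi\mapsto\phi^\vee$ are mutually inverse: writing the image of $f$ as $m_1<\dots<m_l$ one gets $(f^\vee)^\vee(i)=m_i=f(i)$, and dually $(\phi^\vee)^\vee=\phi$ from $\phi^\vee(i)\le j\iff i\le\phi(j)$. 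Both families are in any case counted by $\binom{k}{l}$, so the mutual inverses pin down the one-to-one correspondence for each pair $0\le l\le k$, including the edge case $l=0$ where $\oset{0}=\emptyset$.

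It then remains to verify contravariant functoriality: for composable injections $g\colon\oset{j}\to\oset{l}$ and $f\colon\oset{l}\to\oset{k}$ I must show $(f\circ g)^\vee=g^\vee\circ f^\vee$. The crux, which I would isolate as a one-line observation, is that for fixed $j\in[k]$ the set $\{a\in\oset{l}:f(a)\le j\}$ is an initial segment $\{1,\dots,N\}$ with $N=f^\vee(j)$, precisely because $f$ is order-preserving and injective. Consequently $f(g(i))\le j$ if and only if $g(i)\le N=f^\vee(j)$, and counting the indices $i$ on each side yields $(f\circ g)^\vee(j)=g^\vee\big(f^\vee(j)\big)$. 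Identities clearly map to identities, so together with the bijection on morphisms this gives the asserted contravariant isomorphism of categories.

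The main obstacle I anticipate is entirely bookkeeping rather than conceptual: fixing conventions so that the direction-reversal stays consistent (an injection $\oset{l}\to\oset{k}$ must yield a surjection $[k]\to[l]$), and carrying the ``initial segment'' identity cleanly through the composition. Once the explicit formulas for $f^\vee$ and $\phi^\vee$ are in place, every remaining verification is a finite, elementary manipulation, so the real work is choosing formulas transparent enough that functoriality is immediate.
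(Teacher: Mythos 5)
Your proposal is correct and matches the paper's proof: your counting formula $f^\vee(j)=\card\{i : f(i)\le j\}$ is exactly the paper's piecewise definition ($j\mapsto i$ for $A(i)\le j<A(i+1)$), and both use $\min\phi^{-1}(i)$ for the inverse direction. The only difference is that the paper declares the verifications (mutual inversion, contravariant functoriality) to be routine, whereas you carry them out explicitly via the initial-segment observation.
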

\begin{proof}
We construct the correspondence only, and the rest is routine. Given an injection \(A\colon \oset{l}\to \oset{k}\), we define a surjection \(\alpha\colon [k]\to [l]\) by \(j\mapsto i\) for \(A(i)\le j <A(i+1)\) and \(0\le i\le l\) (assume that \(A(0)=0\) and \(A(l+1)=k+1\)). Conversely, given a surjection \(\alpha\colon [k]\to [l]\), then \(A(i)=\min \alpha^{-1}\{i\}\) for \(1\le i\le l\) gives an injection \(A\colon \oset{l}\to \oset{k}\).
\end{proof}

Let the injection \(A\colon \oset{l}\to \oset{k}\) correspond to the surjection \(\alpha\colon [k]\to [l]\) as above. For a simplicial object \(Y\), denote \(Y(\alpha)\colon Y_l\to Y_k\) by~\(Y(A)\).

\begin{lemma}\label{chap7:lem:sA}
Let \(A\colon \oset{l}\to \oset{k}\) be an injection. Denote the restriction of \(P(A)\colon P_l \to P_k\) on \(D^l\cong\{0\}\times D^l\to \{0\}\times D^k\cong D^k\) and the induced map \(D^l\times T \to D^k\times T\) still by the same letter \(P(A)\). Let \(\sum_{J\subset \oset{k}}\delta'_J z^J\) be an element in  \(C(D^k\times T)\). Then we have
\begin{equation}\label{chap7:eq:sA}
  P(A)^* \big(\sum_{J\subset \oset{k}}\delta'_J z^J\big)=\sum_{J\subset \oset{l}} \delta'_J z^{A(J)}.
\end{equation}
\end{lemma}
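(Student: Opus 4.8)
The plan is to prove the pullback formula \eqref{chap7:eq:sA} by reducing everything to the level of the coordinate change $\delta'_i$ and tracking how the map $P(A)$ acts on these primed coordinates. First I would recall that $P = NP(D)$ is the nerve of the pair groupoid of $D$, so $P_k = D^{k+1}$ with the simplicial structure coming from the pair groupoid; restricting to the subspace $\{0\}\times D^k\cong D^k$ (the $k$-simplices with $0$-th vertex equal to the base point $0$) means we think of a $k$-simplex as a string of vertices $(0,\delta_1,\dots,\delta_k)$. Under this identification the standard coordinates $(\delta_1,\dots,\delta_k)$ are the vertex coordinates, and I want to see precisely what $P(A)$ does to them.

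The key computation is to identify $P(A)^*$ on the vertex coordinates. Since $A\colon\oset{l}\to\oset{k}$ corresponds to the surjection $\alpha\colon[k]\to[l]$ via Lemma~\ref{chap7:lem:sub-surj}, and $P(A)=P(\alpha)$ acts on the nerve of the pair groupoid, a $k$-simplex is sent to the $l$-simplex by reindexing vertices along $\alpha$. Concretely, writing the $l$-simplex with vertices $(0,\delta_1,\dots,\delta_l)$, the pullback reads $P(A)^*\delta_j = \delta_{A(j)}$ for the vertex coordinates (with $A(0)=0$ giving the base vertex $0$), because $\alpha$ sends the $A(j)$-th slot to the $j$-th. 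Passing to the primed coordinates via $\delta'_1=\delta_1$, $\delta'_i=\delta_i-\delta_{i-1}$, I would then compute, for $1\le j\le l$,
\[
P(A)^*\delta'_j = P(A)^*(\delta_j-\delta_{j-1}) = \delta_{A(j)}-\delta_{A(j-1)} = \sum_{A(j-1)<i\le A(j)} \delta'_i .
\]
This is the crucial intermediate identity: each primed coordinate upstairs pulls back to a \emph{sum} of consecutive primed coordinates downstairs over the ``gap'' determined by $A$.

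With this identity in hand, I would substitute into $P(A)^*\bigl(\sum_{J\subset\oset{k}}\delta'_J z^J\bigr)$ and expand the product $P(A)^*\delta'_J=\prod_{j\in J}\bigl(\sum_{A(j-1)<i\le A(j)}\delta'_i\bigr)$. The monomials $\delta'_I$ that survive in $C(D^l\times T)$ are exactly the square-free ones, i.e. those where at most one index is chosen from each gap; but since the gaps $(A(j-1),A(j)]$ for distinct $j$ are disjoint and their union over $j\in\oset{l}$ is all of $\oset{l}$ only when one takes exactly one index per gap, the nonvanishing contributions to a given $\delta'_I$ with $I\subset\oset{l}$ come precisely from choosing $i=A(j)$ for $j$ ranging over the indices determined by $I$. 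After collecting terms (and being careful with the Koszul signs from reordering the odd generators $\delta'_i$, which cancel because the chosen indices are already in increasing order), the coefficient of $\delta'_J$ for $J\subset\oset{l}$ is exactly $z^{A(J)}$, yielding \eqref{chap7:eq:sA}.

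The main obstacle I anticipate is the bookkeeping in this expansion: showing that only the ``diagonal'' choice $i=A(j)$ survives and that all cross terms either vanish (because $\delta'^2_i=0$ for odd $\delta'_i$, or because two indices land in the same gap) or recombine correctly, together with verifying that the sign incurred by reordering the product of odd elements $\prod_{j\in J}\delta'_{A(j)}$ into the standard increasing order $\delta'_{A(J)}$ is trivial. The cleanest way to handle this is probably to first establish the vertex-coordinate formula $P(A)^*\delta_j=\delta_{A(j)}$ rigorously from the definition of the nerve of the pair groupoid, then treat the primed-coordinate computation as a purely algebraic lemma about substituting telescoping sums into square-free monomials over a Grassmann algebra, where the disjointness of the gaps makes the surviving-term analysis transparent.
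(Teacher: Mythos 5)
Your key computation goes the wrong way around, and the lemma cannot be recovered from it. By the paper's convention (stated just before the lemma), \(P(A)\) denotes \(P(\alpha)\colon P_l\to P_k\), where \(\alpha\colon[k]\to[l]\) is the surjection corresponding to \(A\) under Lemma~\ref{chap7:lem:sub-surj}; on the nerve of the pair groupoid this map reindexes vertices along \(\alpha\), so its pullback on vertex coordinates is \(P(A)^*\delta_j=\delta_{\alpha(j)}\) for \(j\in[k]\), which is a function on the source \(D^l\times T\). Your formula \(P(A)^*\delta_j=\delta_{A(j)}\) is instead the pullback along the \emph{other} induced map \(P_k\to P_l\), obtained by viewing \(A\) as a coface-type injection \([l]\to[k]\); as written it does not even typecheck for the map in the lemma, since the pullback of a coordinate on the target \(D^k\) must be a function on \(D^l\), whereas your right-hand side lives on \(D^k\). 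The same inversion propagates into your telescoping identity and then into the expansion step, where the indexing becomes inconsistent: you apply \(P(A)^*\delta'_j=\sum_{A(j-1)<i\le A(j)}\delta'_i\) to \(j\in J\subset\oset{k}\), but \(A(j)\) is undefined for \(j>l\).

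With the direction fixed, your strategy does work and in fact becomes simpler than you anticipate. From \(P(A)^*\delta_j=\delta_{\alpha(j)}\) one gets, for \(j\in\oset{k}\),
\[
P(A)^*\delta'_j=\delta_{\alpha(j)}-\delta_{\alpha(j-1)}=
\begin{cases}
\delta'_i, &\text{if } j=A(i) \text{ for some } i\in\oset{l},\\
0, &\text{if } j\notin\im A,
\end{cases}
\]
because the surjection \(\alpha\) jumps by one exactly at the elements of \(\im A\) and is constant otherwise. So there are no telescoping sums, no surviving-term analysis, and no Koszul signs to track: \(P(A)^*\delta'_J\) vanishes unless \(J=A(I)\) for some \(I\subset\oset{l}\), in which case it equals \(\delta'_I\) (the order of the odd factors is preserved since \(A\) is order-preserving), and~\eqref{chap7:eq:sA} follows at once. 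This corrected direct computation would be a legitimate alternative to the paper's proof, which instead reduces to the codimension-one case \(l=k-1\) by composing injections (via Lemma~\ref{chap7:lem:sub-surj}) and then reads off the formula from the fact that \(P(A^i)=\de_i^P\) is, in primed coordinates, the insertion of a zero in the \((i+1)\)-st slot.
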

\begin{proof}
Lemma~\ref{chap7:lem:sub-surj} implies that if the statement holds for injections \(A\colon \oset{l}\to \oset{k}\) and \(A'\colon \oset{m}\to \oset{l}\), then it holds for \(A\circ A'\). So it suffices to consider the case \(l=k-1\). Let \(A^i\) be the unique injection \(\oset{k-1}\to\oset{k}\) with \(i+1\not\in\im A^i\). Notice that \(\de_{i}^P=P(A^i)\colon P_{k-1}\to P_k\) for \(0\le i\le k-1\). In coordinates \((\delta'_1,\dots,\delta'_k)\), the restriction \(P(A^i)\colon D^{k-1}\to D^k\) is given by
\begin{equation*}
(\delta'_1,\dots,\delta'_{k-1})\mapsto
\begin{cases}
  (0,\delta'_1,\dots,\delta'_{k-1}),                              &\text{if \( i=0\),}\\
  (\delta'_1,\dots,\delta'_i,0,\delta'_{i+1},\dots,\delta'_{k-1}),&\text{otherwise.}
\end{cases}
\end{equation*}
It follows that
\[
  P(A^i)^*\big(\sum_{J\subset \oset{k}}\delta'_J z^J\big)
                           =\sum_{J\subset \oset{k-1}} \delta'_{J} z^{J^i},
\]
where \(J^i=\{J^i_1,\dots, J^i_j\}\) is given by
\[
J^i_u=
\begin{cases}
  J_u   &\text{if \(J_u<i+1\)},\\
  J_u+1 &\text{if \(J_u\ge i+1\)}.
\end{cases}
\]
It clear that \(J^i=A^i(J)\), and we are done.
\end{proof}

Let us back to the problem. We first consider the right part of diagram~\eqref{chap7:eq:diagram-Gk}.

\begin{lemma}\label{chap7:lem:g(k)-g(k-1)-lower}
Let \(A\colon \oset{l}\to \oset{k}\) be an injection. The following relation holds
\begin{equation}\label{chap7:eq:g(k)-g(k-1)-lower}
\sum_{J\subset \oset{l}} \delta'_J y_k^{A(J)} =X(A) \big(\sum_{J\subset \oset{l}} \delta'_J y_{l}^J\big),
\end{equation}
where the symbolic notation is used on the right hand side. It follows that if \(l<k\) and \(I\subset\im A\) then \(y_k^I\) is determined by \(g_l\) hence by \(g_{k-1}\). We may use different~\(A\) to determine~\(y_k^I\), but the results are the same.
\end{lemma}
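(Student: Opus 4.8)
The plan is to recognise equation~\eqref{chap7:eq:g(k)-g(k-1)-lower} as nothing more than the coordinate form of a single naturality square, the one expressing that $g\colon P\times T\to X$ is a morphism of simplicial objects. First I would use Lemma~\ref{chap7:lem:sub-surj} to convert the injection $A\colon\oset{l}\to\oset{k}$ into the corresponding surjection $\alpha\colon[k]\to[l]$, and hence into the simplicial operators $P(A)=P(\alpha)\colon P_l\to P_k$ and $X(A)=X(\alpha)\colon X_l\to X_k$. Since $g$ commutes with every simplicial operator, the square
\[
g_k\circ\bigl(P(A)\times\id_T\bigr)=X(A)\circ g_l
\]
commutes. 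Restricting the source to $\{0\}\times D^l\subset P_l$, which $P(A)$ carries into $\{0\}\times D^k$ because $\alpha(0)=0$, this is exactly the restricted map $P(A)\colon D^l\to D^k$ treated in Lemma~\ref{chap7:lem:sA}.

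Next I would pull back the coordinate tuple $x_k$ through both sides of the square. On the left, $\bigl(P(A)\times\id_T\bigr)^{*}g_k^{*}x_k=\bigl(P(A)\times\id_T\bigr)^{*}\bigl(\sum_{J\subset\oset{k}}\delta'_J\,y_k^{J}\bigr)$, and applying Lemma~\ref{chap7:lem:sA}, i.e.\ equation~\eqref{chap7:eq:sA}, collapses this to $\sum_{J\subset\oset{l}}\delta'_J\,y_k^{A(J)}$, the left-hand side of~\eqref{chap7:eq:g(k)-g(k-1)-lower}. On the right, $g_l^{*}\bigl(X(A)^{*}x_k\bigr)$ is by definition the substitution of the $(D^l\times T)$-point $g_l$, whose coordinates are $g_l^{*}x_l=\sum_{J\subset\oset{l}}\delta'_J\,y_l^{J}$, into the morphism $X(A)$; in the symbolic convention set up in Example~\ref{chap7:exa:T-points-coordinates} this is precisely $X(A)\bigl(\sum_{J\subset\oset{l}}\delta'_J\,y_l^{J}\bigr)$. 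This establishes~\eqref{chap7:eq:g(k)-g(k-1)-lower}.

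For the consequences I would argue coefficientwise. The monomials $\delta'_J$, $J\subset\oset{l}$, form a $C(T)$-basis of $C(D^l\times T)\cong C(T)\otimes\Lambda(l)$, so matching coefficients of $\delta'_J$ in~\eqref{chap7:eq:g(k)-g(k-1)-lower} expresses each $y_k^{A(J)}$ as a coefficient in the expansion of $X(A)\bigl(\sum_{J}\delta'_J\,y_l^{J}\bigr)$, which involves only the $y_l^{J}$, hence only $g_l$. Given $I\subset\oset{k}$ with $\card I<k$, I would take $l=\card I$ together with the increasing injection $A_I\colon\oset{\card I}\to\oset{k}$ whose image is exactly $I$; then the top coefficient, that of $\delta'_{\oset{\card I}}$, on the left-hand side is $y_k^{A_I(\oset{\card I})}=y_k^{I}$, while the right-hand side is determined by $g_{\card I}$, and therefore by $g_{k-1}$ since $\card I\le k-1$.

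The only genuinely delicate point is the asserted independence of the choice of $A$. Because $g$ is an honest $T$-point of $\bhom(P,X)$, the function $g_k^{*}x_k$ exists and its coefficient $y_k^{I}$ of $\delta'_{I}$ is well defined; for any injection $A$ with $I\subset\im A$, extracting the coefficient of $\delta'_{A^{-1}(I)}$ in~\eqref{chap7:eq:g(k)-g(k-1)-lower} recovers this same genuine $y_k^{I}$, so independence is automatic. For the use of this lemma inside the inductive construction of $G^{(k)}$, where $g_k$ is being built rather than assumed, I would instead invoke functoriality: any such $A$ admits a factorisation $A_I=A\circ B$ with $B=A^{-1}\!\circ A_I\colon\oset{\card I}\to\oset{l}$, and the relations $P(A\circ B)=P(A)\circ P(B)$ and $X(A\circ B)=X(A)\circ X(B)$ (Lemma~\ref{chap7:lem:sub-surj}) show that computing $y_k^{I}$ through $A$ and then through $B$ reproduces the computation through $A_I$. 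I expect the bookkeeping in this factorisation---keeping straight the variance of $A\mapsto P(A)$ and $A\mapsto X(A)$ and the passage through $\alpha$---to be the main, though routine, source of friction once Lemma~\ref{chap7:lem:sA} is in hand.
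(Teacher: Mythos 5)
Your proposal is correct and follows essentially the same route as the paper: the identity is the coordinate form of the naturality square \(X(A)\circ g_l = g_k\circ P(A)\) (which the paper obtains by iterating the degeneracy-compatibility squares that define \(g_k\) — exactly the observation needed in the inductive setting, since \(P(A)\) is a composite of the degeneracies appearing in the limit diagram) combined with Lemma~\ref{chap7:lem:sA}. Your independence argument for the inductive construction — factoring the canonical injection with image \(I\) as \(A\circ B\) and invoking \(X(A\circ B)=X(A)\circ X(B)\) via Lemma~\ref{chap7:lem:sub-surj} — is precisely the paper's second half.
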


\begin{proof}
Suppose that the lemma holds for \(0,\dots,k-1\). Let us consider the case \(k\). It suffices to consider \(l<k\). By the definition of \(g_k\), the diagram below commutes for \(0\le i\le k-1\):
\[\xymatrix{
  T\ar[d]_{\hat{g}_{k-1}}\ar[r]^-{\hat{g}_k} &\bhom(D^k, X_k)\ar[d]^{(\de_i^P){*}}\\
  G^{(k-1)}\ar[r]^-{\de_i^X}                & \bhom(D^{k-1}, X_k)\rlap{\ .}
}\]
Repeated application of the diagram above shows that
\[
X(A)\circ g_l=g_k\circ P(A)\colon D^l \times T \to X_k.
\]
Lemma~\ref{chap7:lem:sA} implies that
\[
 (g_k\circ P(A))^* x_k= P(A)^*\big(\sum_{J\subset \oset{k}} \delta'_J y_k^J\big)= \sum_{J\subset \oset{l}} \delta'_J y_k^{A(J)}.
\]
On the other hand, we have
\[
(X(A) \circ g_l)^* x_k=X(A) \big(\sum_{J\subset \oset{l}} \delta'_J y_l^J\big),
\]
establishing~\eqref{chap7:eq:g(k)-g(k-1)-lower}.

We now show that if \(\card I<k\) then \(y_k^I\) is uniquely determined by \(g_{k-1}\). Let \(C\colon \oset{m}\to\oset{k}\) be an injection such that \(\im C=I\). Then \(y_k^I\) is determined by~\(g_m\). For any injection \(A\colon \oset{l}\to \oset{k}\) with \(I\subset\im A\), we can also determine \(y_k^I\) by \(g_l\). Suppose that \(B\colon\oset{m}\to\oset{l}\) is an injection with \(C=A\circ B\). Comparing coefficients of \(\delta'_J\) for \(A(J)\subset I\) on both sides of~\eqref{chap7:eq:g(k)-g(k-1)-lower}, we need to show that
\[
\sum_{\mathclap{J\subset\oset{l},A(J)\subset I}} \delta'_J y_k^{A(J)} =X(A) \big(\sum_{\mathclap{J\subset\oset{l}, A(J)\subset{I}}}\delta'_J y_l^J\big),
\]
The condition \(A(J)\subset I\) implies that \(J\subset\im B\). Renaming the indices, we can rewrite the above identity as
\[
\sum_{J\subset \oset{m}} \delta'_J y_k^{A\circ B(J)}=X(A) \big(\sum_{J\subset \oset{m}} \delta'_J y_l^{B(J)}\big).
\]
Applying~\eqref{chap7:eq:g(k)-g(k-1)-lower} to both sides, we turn the equation above into
\[
X(A\circ B) \big(\sum_{J\subset \oset{m}} \delta'_J y_m^{J}\big)=X(A) \circ X(B) \big(\sum_{J \subset \oset{m}} \delta'_J y_m^J\big).
\]
This follows from Lemma~\ref{chap7:lem:sub-surj} and we are done.
\end{proof}

Hence only \(y_k^J\) for \(\card J=k\) is undetermined. We now consider the left part of diagram~\eqref{chap7:eq:diagram-Gk}.

\begin{lemma}\label{chap7:lem:highest-term-constraint}
Let \(g_{k,0}\colon D^k\times T\to X_{k,0}\) be given by
\[
\hat{g}_{k,0}\colon T\xrightarrow{\hat{g}_{k-1}} G^{(k-1)}\xrightarrow{\Gamma_{k-1}} \bhom(D^k, X_{k,0}).
\]
Let \((U_{k,0},x_{k,0})\) be local coordinates around \(p_{k,0}\in X_{k,0}\). The commutative diagram
\[
\xymatrix{
  T\ar[r]^{\hat{g}_{k-1}}\ar[d]_{\hat{g}_k}& G^{(k-1)}\ar[d]^{\Gamma_{k-1}}\\
  \bhom(D^k, X_k) \ar[r]^{\horn{k}{0}} & \bhom(D^{k}, X_{k,0})\\
}
\]
is expressed in coordinates as follows
\begin{equation}\label{chap7:eq:highest-term-constraint}
\horn{k}{0} (\sum_{J\subset \oset{k}}\delta'_J y_k^J)=g_{k,0}^* x_{k,0},
\end{equation}
where the symbolic notation is used on the left hand side. Compare coefficients of \(\delta'_J\) on two sides. We obtain a constraint condition for \(y_k^J\) with \(\card J=k\). We claim that if \(\card J<k\) then the coefficients of \(\delta'_J\) on two sides are automatically equal.
\end{lemma}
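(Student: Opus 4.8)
The plan is to reduce the coefficient comparison in~\eqref{chap7:eq:highest-term-constraint} to a family of face-wise identities and then read off the coefficients one degree at a time. Since $\Horn{k}{0}$ is the union of the faces opposite the vertices $1,\dots,k$, one may choose the coordinates $x_{k,0}$ near $p_{k,0}$ so that they restrict to the coordinates $x_{k-1}$ on each of these faces; with such adapted coordinates the single equation~\eqref{chap7:eq:highest-term-constraint} becomes equivalent to the $k$ componentwise equations
\[
\face_i^X\circ g_k = g_{k-1}\circ \face_i^P\colon D^k\times T\to X_{k-1},\qquad 1\le i\le k,
\]
where $\face_i^P\colon \{0\}\times D^k\to \{0\}\times D^{k-1}$ is the restriction of the $i$-th face of $P$ (deletion of the $i$-th vertex), which is exactly the face built into $\Gamma_{k-1}$ by the construction of $g_{k,0}$. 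First I would make the left-hand sides explicit by pulling back $x_{k-1}$ along $\face_i^X$ and substituting $g_k^*x_k=\sum_{J\subset\oset{k}}\delta'_J y_k^J$.

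Next I would establish a face-analogue of Lemma~\ref{chap7:lem:sA} computing $(\face_i^P)^*$ on a $\delta'$-expansion. Deleting the $i$-th vertex merges the two edges adjacent to it, so in the $\delta'$-coordinates it identifies $\delta'_i$ and $\delta'_{i+1}$ with a single coordinate equal to their sum for $1\le i\le k-1$, and simply drops $\delta'_k$ for $i=k$. The crucial consequence is that $(\face_i^P)^*$ annihilates the top monomial: because $\face_i^P$ factors through a space with only $k-1$ odd directions, its image consists of functions whose $\delta'_{\oset{k}}$-coefficient vanishes. Hence on the right-hand side of each componentwise equation only the terms $y_{k-1}^J$ with $\card J\le k-1$ occur, and the coefficient of $\delta'_{\oset{k}}$ is identically zero; comparing the $\delta'_{\oset{k}}$-coefficients therefore yields precisely the constraint on the top term $y_k^{\oset{k}}$, while for $\card J<k$ it remains to match the two sides.

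For $\card J<k$ the plan is to show that both coefficients are the \emph{same} function of $g_{k-1}$. By Lemma~\ref{chap7:lem:g(k)-g(k-1)-lower} every $y_k^I$ with $\card I<k$ is already expressed through $g_{k-1}$ and a degeneracy operator $X(A)$; substituting these into $\face_i^X\circ g_k$ and using the simplicial identities relating the cofaces to the codegeneracies reduces the left-hand coefficient to an expression in $g_{k-1}$ alone. Matching it against $(\face_i^P)^*\bigl(\sum_{J}\delta'_J y_{k-1}^J\bigr)$ then comes down to two compatibility conditions on the $y_{k-1}^J$: one expressing that the prescribed horn faces of $g_{k,0}$ glue along their common $(k-2)$-faces (a consequence of $\face_i^X\face_j^X=\face_{j-1}^X\face_i^X$ for $i<j$), and one expressing the compatibility of these faces with the degenerate pieces already fixed by Lemma~\ref{chap7:lem:g(k)-g(k-1)-lower} (the face--degeneracy identities). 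Both conditions are inherited from the simplicial identities for $X$ together with the relations~\eqref{chap7:eq:g(k)-g(k-1)-lower}, so the $\card J<k$ coefficients agree automatically. The hard part will be the combinatorial bookkeeping in this last step: tracking how the merge structure of $(\face_i^P)^*$ interacts with the degeneracy expressions coming from Lemma~\ref{chap7:lem:g(k)-g(k-1)-lower}, and organising the index sets $J$, $A(J)$ and their images under the cofaces so that the two compatibility conditions can be verified uniformly in $i$ and $J$.
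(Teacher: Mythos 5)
Your reduction of~\eqref{chap7:eq:highest-term-constraint} to the componentwise equations \(\face_i^X\circ g_k=g_{k-1}\circ\face_i^P\) (\(1\le i\le k\)), your formula for \((\face_i^P)^*\) in the \(\delta'\)-coordinates, and your observation that the image of \((\face_i^P)^*\) contains no \(\delta'_{\oset{k}}\)-term — so the constraint sits entirely in the top coefficient, while the \(\card J<k\) coefficients of the left-hand side involve only \(y_k^I\) with \(\card I<k\) — are all correct and parallel to the paper. The gap is in your final step: you claim that the matching of the \(\card J<k\) coefficients is ``inherited from the simplicial identities for \(X\) together with the relations~\eqref{chap7:eq:g(k)-g(k-1)-lower}''. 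Those ingredients are insufficient. The indispensable missing input, which the paper supplies by an explicit induction on \(k\), is the face equation at the \emph{lower} levels, \(\face_{i'}^X\circ g_l=g_{l-1}\circ\face_{i'}^P\) for \(l<k\), valid in \emph{all} coefficients. This is not a formal identity: its own top coefficient is the level-\(l\) constraint cutting out \(G^{(l)}\), and it holds only because \(g^{(k-1)}\) is an actual point of \(G^{(k-1)}\). In the paper's proof this enters exactly once per injection: after writing \(\face_i^X\circ g_k\circ P(A)=\face_i^X\circ X(A)\circ g_l\) (Lemma~\ref{chap7:lem:g(k)-g(k-1)-lower}) and commuting the face past the degeneracy, \(\face_i^X\circ X(A)=X(A')\circ\face_{i'}^X\), one must invoke \(\face_{i'}^X\circ g_l=g_{l-1}\circ\face_{i'}^P\) before applying Lemma~\ref{chap7:lem:g(k)-g(k-1)-lower} again at level \(k-1\). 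Your ``gluing'' condition has the same defect: the faces \(g_{k-1}\circ\face_i^P\) agree on overlaps because of the level-\((k-1)\) face equations, not merely because \(\face_i^X\face_j^X=\face_{j-1}^X\face_i^X\). Note also that working with pullbacks along \(P(A)\) as in Lemma~\ref{chap7:lem:sA}, instead of with raw coefficients, is exactly what removes the ``combinatorial bookkeeping'' you defer: every step becomes an equation between compositions of maps.

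The insufficiency of your ingredients is visible already at \(k=2\). Pull \(\face_2^X\circ g_2=g_1\circ\face_2^P\) back along \(P(A)\) for the injection \(A\colon\oset{1}\to\oset{2}\), \(1\mapsto2\), so that \(X(A)=\de_0^X\) and, by Lemma~\ref{chap7:lem:sA}, this extracts the coefficients indexed by \(J\subseteq\{2\}\). Using \(g_2\circ P(A)=\de_0^X\circ g_1\) and \(\face_2^X\circ\de_0^X=\de_0^X\circ\face_1^X\), together with \(\face_2^P\circ P(A)=\de_0^P\circ\face_1^P\) and \(g_1\circ\de_0^P=\de_0^X\circ g_0\), the required equality becomes \(\de_0^X\circ\face_1^X\circ g_1=\de_0^X\circ g_0\circ\face_1^P\); since \(\de_0^X\) has a retraction, this is the \emph{full} level-\(1\) equation \(\face_1^X\circ g_1=g_0\circ\face_1^P\) (note \(\face_1^X=\horn{1}{0}\)). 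Its \(\delta\)-coefficient says that the odd vector \(y_1^{\{1\}}\) is annihilated by the differential of \(\horn{1}{0}\colon X_1\to X_{1,0}=X_0\) — precisely the level-\(1\) constraint that produces the Lie algebroid in the paper's final section, and visibly not a consequence of simplicial identities or of~\eqref{chap7:eq:g(k)-g(k-1)-lower}. So your argument must be restructured as an induction on \(k\), or must explicitly cite that \(g^{(k-1)}\in G^{(k-1)}\) satisfies the horn equations at every level \(l<k\). One further point such an induction should record: the index \(i'\) produced by commuting \(\face_i^X\) past \(X(A)\) is never \(0\), because the surjection corresponding to \(A\) preserves \(0\); this matters since \(\Horn{l}{0}\) omits the \(0\)-th face, so only the lower-level equations with \(i'\ge1\) are available.
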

\begin{proof}
Suppose that the claim is true for \(0, \dots, k-1\), and let us consider the case~\(k\). By Lemma~\ref{chap7:lem:sA}, we need to show that
\[
P(A)^* \horn{k}{0} \big(\sum_{J\subset\oset{k}}\delta'_J y_k^J\big)= P(A)^* g_{k,0}^* x_{k,0}
\]
for every injection \(A\colon \oset{l}\to\oset{k}\) with \(l<k\), that is,
\[
\horn{k}{0} \circ g_k\circ P(A)=g_{k,0}\circ P(A).
\]
By the definition of \(g_{k,0}\), the above equation is equivalent to
\begin{equation}\label{chap7:eq:face-g=g-face}
\face_i^X \circ g_k\circ P(A)=g_{k-1}\circ \face_i^P\circ P(A), \quad \text{for \(1\le i\le k\)}.
\end{equation}
There are an injection \(A'\colon \oset{l-1}\to\oset{k-1}\) and some \(i'\) such that
\begin{equation}\label{chap7:eq:face-injection=injection-face}
\face_i^X \circ X(A)=X(A') \circ \face_{i'}^X,\quad \face_i^P \circ P(A)=P(A') \circ \face_{i'}^P.
\end{equation}
Lemma~\ref{chap7:lem:g(k)-g(k-1)-lower} implies that \(g_k\circ P(A)=X(A)\circ g_l\), hence
\[
  \face_i^X \circ g_k\circ P(A) =\face_i^X \circ X(A)\circ g_l =X(A') \circ \face_{i'}^X \circ g_l.
\]
The inductive hypothesis \(\face_{i'}^X \circ g_l=g_{l-1} \circ \face_{i'}^P\) implies that
\[
\face_i^X \circ g_k\circ P(A)=X(A') \circ g_{l-1} \circ \face_{i'}^P.
\]
By Lemma~\ref{chap7:lem:g(k)-g(k-1)-lower} and~\eqref{chap7:eq:face-injection=injection-face}, we deduce that
\[
\face_i^X \circ g_k\circ P(A)=g_{k-1}\circ P(A') \circ \face_{i'}^P=g_{k-1} \circ \face_i^P\circ P(A).
\]
This proves~\eqref{chap7:eq:face-g=g-face} and we are done.
\end{proof}

Combining the two lemmas above, we obtain the following result:
\begin{lemma}\label{chap7:lem:G-k}
The presheaf \(G^{(k)}\) is locally isomorphic to \(U_0\times \Pi_{1\le i\le k} \bR^{d_i [i]} \), where
\(d_i=\dim U_i-\dim U_{i,0}\), and \(\bR^{d_i[i]}=\bR^{d_i}\) if \(i\) is even or \(\bR^{0|d_i}\) if \(i\) is odd.
\end{lemma}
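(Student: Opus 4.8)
The plan is to prove Lemma~\ref{chap7:lem:G-k} by induction on $k$, analysing the fibres of the forgetful map $G^{(k)}\to G^{(k-1)}$ through the functor of points and the limit description of diagram~\eqref{chap7:eq:diagram-Gk}. The base case $k=0$ is $G^{(0)}=X_0$, which is locally $U_0$. For the inductive step I would fix a local piece of $G^{(k-1)}$: by the inductive hypothesis $G^{(k-1)}$ is representable and locally isomorphic to $U_0\times\prod_{1\le i\le k-1}\bR^{d_i[i]}$, hence covered by open supermanifold subfunctors, and by Corollary~\ref{chap7:cor:criterion-rep} applied to $G^{(k)}\to G^{(k-1)}$ it suffices to identify the restriction of $G^{(k)}$ over each such piece.

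The heart of the argument is to describe, for a $T$-point $g^{(k-1)}$ of $G^{(k-1)}$, the set of lifts $g^{(k)}$ sitting over it, i.e.\ the choices of $g_k\colon D^k\times T\to X_k$ lying over the horn $g_{k,0}$ produced by $\Gamma_{k-1}$. Using the surjective submersion $\horn{k}{0}\colon X_k\to X_{k,0}$, I would choose adapted local coordinates $x_k=(x_{k,0}\circ\horn{k}{0},\,w_k)$ on $X_k$, where $w_k$ is a $d_k$-tuple of fibre coordinates with $d_k=\dim U_k-\dim U_{k,0}$. Writing $g_k^*x_k=\sum_{J\subset\oset{k}}\delta'_J y_k^J$ as in diagram~\eqref{chap7:eq:diagram-Gk}, Lemma~\ref{chap7:lem:g(k)-g(k-1)-lower} shows that every coefficient $y_k^J$ with $\card J<k$ is already fixed by $g_{k-1}$, while Lemma~\ref{chap7:lem:highest-term-constraint} shows that the horn constraint~\eqref{chap7:eq:highest-term-constraint} is automatic in every degree $\card J<k$ and reduces to prescribing the $x_{k,0}$-part of the unique top coefficient $y_k^{\oset{k}}$. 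Consequently the only genuinely free datum is the $w_k$-component of $y_k^{\oset{k}}$, a $d_k$-tuple of functions on $T$ of parity $k\pmod 2$. By Theorem~\ref{chap7:thm:morphism} and Example~\ref{chap7:exa:T-points-of-R-p-q}, such a tuple is precisely a $T$-point of $\bR^{d_k}$ when $k$ is even and of $\bR^{0|d_k}$ when $k$ is odd, that is, of $\bR^{d_k[k]}$.

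This identification is natural in $T$, so over the chosen piece the restriction of $G^{(k)}$ is isomorphic to (that piece of $G^{(k-1)}$)$\,\times\,\bR^{d_k[k]}$, which is representable; Corollary~\ref{chap7:cor:criterion-rep} then yields representability of $G^{(k)}$ together with the stated local product form $U_0\times\prod_{1\le i\le k}\bR^{d_i[i]}$, completing the induction.

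I expect the main obstacle to be the clean decoupling of the top-degree coefficient from the lower ones and the verification that the horn constraint does not secretly restrict it further. This, however, is exactly what Lemmas~\ref{chap7:lem:g(k)-g(k-1)-lower} and~\ref{chap7:lem:highest-term-constraint} deliver, so the remaining effort is bookkeeping: choosing the submersion-adapted coordinates, tracking the single parity-$k$ fibre coordinate, and checking that the local trivialisations are natural in $T$ so that the representability criterion applies and the pieces glue.
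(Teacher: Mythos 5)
Your proposal is correct and follows essentially the same route as the paper: induction on \(k\), coordinates adapted to the local canonical form of the submersion \(\horn{k}{0}\colon U_k\to U_{k,0}\), Lemmas~\ref{chap7:lem:g(k)-g(k-1)-lower} and~\ref{chap7:lem:highest-term-constraint} to isolate the top coefficient, and Theorem~\ref{chap7:thm:morphism} to identify the \(d_k\) free parity-\(k\) functions with a \(T\)-point of \(\bR^{d_k[k]}\). The only cosmetic difference is that you apply Corollary~\ref{chap7:cor:criterion-rep} stepwise to \(G^{(k)}\to G^{(k-1)}\), whereas the paper applies it once to \(G^{(k)}\to X_0\) and fixes compatible charts \(U_i\) for all levels at the outset; the two bookkeeping schemes are interchangeable.
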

\begin{proof}
Suppose that the statement holds for \(0,\dots,k-1\). Let us consider the case~\(k\). Choose local coordinates \(x_{k+1}\) and \(x_{k,0}\) that give a local canonical form of the submersion \(\horn{k}{0}\colon U_k\to U_{k,0}\). Then the constraint condition~\eqref{chap7:eq:highest-term-constraint} for \(y_k^J\) with \(\card J=k\) produces \(d_k\) new free functions on \(T\) of parity \(k\pmod{2}\). Applying Theorem~\ref{chap7:thm:morphism}, we prove the lemma.
\end{proof}

\begin{proof}[Final step of the proof]
It is clear from the proof of Lemma~\ref{chap7:lem:G-k} that \(G^{(k)}\) is representable, and that \(G^{(k+1)}\to G^{(k)}\) is a surjective submersion (Definition~\ref{chap7:def:surj-subm}) for \(k \ge 0\) and a diffeomorphism for \(k\ge n\). Thus \(\bhom(P, X)\cong G^{(k)}\) for \(k\ge n\) is representable, and this completes the proof of Theorem~~\ref{chap7:thm:rep}.
\end{proof}

\begin{remark}\label{chap7:rem:degree}
The degrees of \(G^{(k)}\) can be lifted to \(\mathbb{N}\), giving the N-manifold structure on \(G^{(k)}\) (hence on \(\bhom(P,X)\)). In fact, since the \(\bR\)-actions are respected in the process from \(g^{(k)}\colon P^{(k)}\times T\to X\) to \(g_k\colon  D^k\times T\to X_k\), and the changing of coordinates from \((\delta_1,\dots,\delta_k)\) to \((\delta'_1,\dots,\delta'_k)\), the degree of \(\bR^{d_k[k]}\) can be lifted to \(k\).
\end{remark}

\section{Three special cases}

We work out the 1-jet in detail for three special cases, namely, Lie groups, crossed modules, and Lie groupoids. Since the differentiation for these three cases is well-known, we shall verify that the 1-jet functor yields the same results. The results are stated very concisely in~\cite{Severa06}. Moreover, Jur\v{c}o~\cite{Jurco} showed that the 1-jet for simplicial Lie groups produces the expected results. This justifies viewing the 1-jet as differentiation for higher Lie groupoids.

\subsection{Lie groups}

Let \(G\) be a Lie group. Let \(\inv\) be the inverse and \(\mult\) the multiplication. Denote by \(X\) be the nerve of the associated one-object groupoid; hence \(X_k=G^k\).

\subsubsection{Functor of points}

Suppose temporarily that \(D\) is a set with a distinguished point \(0\in D\). We can describe \(\hom(P, X)\) as follows. Since \(P\) and \(G\) are groupoids, an element of \(\hom(P, X)\) is given by a map \(\Psi\colon D\times D \to G \) such that \(\Psi(d,d)=e\) and \(\Psi(d_1,d_3)=\Psi(d_1,d_2)\Psi(d_2,d_3)\) for \(d, d_1,d_2,d_3 \in D\). This is equivalent to a map \(\psi\colon D \to G\) with \(\psi(0)=e\). The relation between \(\Psi\) and \(\psi\) is
\begin{align*}
  \Psi(d_1,d_2) &= \psi(d_1)^{-1}\psi(d_2), \\
  \psi(d)   &= \Psi(0,d).
\end{align*}
Let us say that \(\psi\) is the \emph{normalised form} of \(\Psi\).

Now assume that \(D\) is the odd line. The argument above carries over to the setting of supermanifolds if we use the language of \(T\)-points. First, the odd line has a distinguished \(T\)-point \(0\in_T D\). A \(T\)-point of \(\bhom(P, X)\) is given by \(\Psi\in\hom(D \times D\times T, G)\) such that \(\Psi(d,d, t)=e\) and \(\Psi(d_1,d_3, t)=\Psi(d_1,d_2, t)\Psi(d_2,d_3, t)\) for \(d, d_1,d_2, d_3 \in_S D\) and \(t\in_S T\), where \(S\) is a supermanifold. Such a \(T\)\nbdash{}point has a normalised form \(\psi\in\hom(D\times T, G)\) with \(\psi(0, t)=e\). The relation between \(\Psi\) and \(\psi\) is
\begin{equation}\label{chap7:eq:lie-gpd-Psi-and-psi}
\begin{aligned}
  \Psi(d_1,d_2, t) &= \psi^{-1}(d_1, t)\psi(d_2, t), \\
  \psi(d, t)   &= \Psi(0, d, t).
\end{aligned}
\end{equation}
Let \(s\) and \(d\) be \(T\)-points of \(\bR\) and \(D\). Then
\begin{equation}\label{chap7:eq:lie-group-action}
\Psi_s(d_1,d_2,t)=\Psi(sd_1,sd_2,t),\quad \Psi_d(d_1,d_2,t)=\Psi(d+d_1,d+d_2,t),
\end{equation}
describes the action of \(D^D=\bR\ltimes D\) on \(\bhom(P, X)\) in \(T\)-points. In diagrams
\begin{equation*}
\begin{gathered}
\Psi_s\colon D\times D\times T\to D\times D\times \bR\times T\to D\times D\times T\to G, \\
(d_1,d_2,t)\mapsto (d_1,d_2,s,t)\mapsto (sd_1,sd_2,t)\mapsto \Psi(sd_1,sd_2,t);\\
\Psi_d\colon D\times D\times T\to D\times D\times D\times T\to  D\times D\times T\to G,\\
(d_1,d_2,t)\mapsto (d_1,d_2,d,t)\mapsto (d+d_1,d+d_2,t)\mapsto \Psi(d+d_1,d+d_2,t).\\
\end{gathered}
\end{equation*}

\subsubsection{Local structure of Lie groups}

Let \(x=(x^i)\) be local coordinates around \(e\in G\) such that \(x(e)=0\). Applying Lemma~\ref{chap7:lem:taylor}, we suppose that \(\inv\) and \(\mult\) have the following local form near~\(e\):
\begin{equation}\label{chap7:eq:lie-grp-inv-mult}
\begin{aligned}
  \inv^*(x^i) &= -x^i +O_2(x),\\
  \mult^*(x^i) &= x_1^i+x_2^i+\frac{1}{2} b_{jk}^i x_1^j x_2^k +O_3(x_1, x_2),
\end{aligned}
\end{equation}
where \(x_1^i\) and \(x_2^i\) stand for the same functions \(x^i\) on different copies, and the remainders \(O_2(x)\), \(O_3(x_1,x_2)\) lie in appropriate ideals.

\begin{lemma}\label{chap7:lem:group:mult-formula}
The coordinate vectors \(\{\partial_i=\partial_{x^i}\big|_{x=0}\}\) form a basis of the Lie algebra \(\mathfrak{g}\).
Let~\(c_{jk}^i\) be the structure constants defined by \([\partial_j, \partial_k]=c_{jk}^i\partial_i\). Then we have
\[
c_{jk}^i=\frac{1}{2}(b_{jk}^i-b_{kj}^i).
\]
\end{lemma}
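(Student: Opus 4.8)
The plan is to compute the bracket $[\partial_j,\partial_k]$ directly from the definition of the Lie algebra $\mathfrak{g}=T_eG$, namely by realising each $\partial_i$ as the value at $e$ of a left-invariant vector field and evaluating the commutator of these fields at the identity. With this convention the structure constants $c_{jk}^i$ determined by $[\partial_j,\partial_k]=c_{jk}^i\partial_i$ are exactly the components of this commutator in the basis $\{\partial_i\}$.

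First I would write the left-invariant extension $X_j$ of $\partial_j$ in the coordinates $x=(x^i)$. Since a left-invariant field acts by $(X_jf)(g)=\frac{d}{dt}\big|_{t=0}f(g\cdot\gamma_j(t))$ for a curve $\gamma_j$ through $e$ with velocity $\partial_j$, its components are obtained by differentiating the pullback $\mult^*(x^i)$ with respect to its second argument and setting that argument to $0$. Feeding in the local form \eqref{chap7:eq:lie-grp-inv-mult} of the multiplication, a one-line differentiation gives
\[
X_j^i=\delta_j^i+\tfrac{1}{2}b_{kj}^i\,x^k+O_2(x),
\]
so that $X_j=\sum_i\big(\delta_j^i+\tfrac{1}{2}b_{kj}^i x^k+O_2\big)\partial_{x^i}$; note that it is the \emph{transposed} coefficient $b_{kj}^i$ that appears, because the base point occupies the first slot of $\mult$ while we differentiate the second.

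Next I would evaluate $[X_j,X_k]=\sum_i\big(X_j(X_k^i)-X_k(X_j^i)\big)\partial_{x^i}$ at $x=0$. Using $X_j^m(e)=\delta_j^m$ and $\partial_{x^m}(X_k^i)\big|_e=\tfrac{1}{2}b_{mk}^i$, the first term contributes $\tfrac{1}{2}b_{jk}^i$ and the second $\tfrac{1}{2}b_{kj}^i$, while the $O_2$ remainders vanish upon setting $x=0$. This yields $[\partial_j,\partial_k]=\tfrac{1}{2}(b_{jk}^i-b_{kj}^i)\partial_i$, which is the asserted formula.

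The computation is routine; the one point demanding care—and the main obstacle—is the bookkeeping of which argument of $\mult$ is differentiated, and hence whether $b_{jk}^i$ or $b_{kj}^i$ enters each step. This transposition is precisely what turns the symmetric second-order coefficient into its antisymmetrisation, and choosing left- rather than right-invariant fields fixes the overall sign; the right-invariant choice would produce the opposite sign.
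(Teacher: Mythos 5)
Your proposal is correct and follows essentially the same route as the paper: both extend the coordinate vectors to left-invariant vector fields, read off their components $\delta_j^i+\tfrac12 b_{kj}^i x^k+O_2(x)$ by differentiating the second argument of $\mult^*$, and evaluate the commutator at $x=0$ to obtain $c_{jk}^i=\tfrac12(b_{jk}^i-b_{kj}^i)$. Your explicit attention to the transposed index $b_{kj}^i$ matches the paper's formula $v_jx^l=\delta_j^l+\tfrac12 b_{mj}^l x^m+O_2(x)$ exactly.
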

\begin{proof}
Let \(v_i\) be the left invariant vector field on \(G\) generated by \(\partial_i\). By definition we have
\[
v_j x^l=(\id\otimes \partial_j)\mult^* (x^l)=\delta^l_j+\frac{1}{2}b^l_{mj}x^m+O_2(x).
\]
The claim follows by comparing two sides of the following equation at \(x=0\)
\[
[v_j,v_k]x^l=c_{jk}^i v_i x^l. \qedhere
\]
\end{proof}

\subsubsection{The main calculation}
Let \(x=(x^i)\) be local coordinates on \(G\) as above. Let \(\delta\) be a coordinate of \(D\).

Recall that a \(T\)\nbdash{}point of \(\bhom(P,X)\) is given by \(\psi\colon D\times T\to G\) with \(\psi(0,t)=e\) for \(t\in_S T\). Suppose that
\[
\psi^*x^i=y^i+\delta\eta^i,
\]
where \(y^i\in C(T)^\even\) and \(\eta^i\in C(T)^\odd\). The condition \(\psi(0, t)=e\) implies \(y^i=0\), hence
\begin{equation}\label{chap7:eq:grp:psi}
\psi^*x^i=\delta\eta^i.
\end{equation}
This proves that the presheaf \(\bhom(P, X)\) is representable and isomorphic to \(\Pi\mathfrak{g}\) as a supermanifold. We equip \(\bhom(P, X)\) with odd coordinates \((\xi^i)\) such that their pullbacks to~\(T\) are \((\eta^i)\).

We now compute the action of \(D^D=\bR\times D\) on \(\bhom(P, X)\), that is, the NQ\nbdash{}manifold structure on \(\bhom(P, X)\). First, by Remark~\ref{chap7:rem:degree} we can write down the infinitesimal action of \(\bR\) directly:
\[
\partial_r\big|_{r=1} \xi^i = \xi^i,
\]
which gives the N\nbdash{}manifold structure. To compute the action of \(D\) we need to work with~\(\Psi\). By~\eqref{chap7:eq:lie-gpd-Psi-and-psi} and~\eqref{chap7:eq:lie-grp-inv-mult}, \eqref{chap7:eq:grp:psi}, we can write \(\Psi \in \hom( D\times D\times T, G)\) in local coordinates\footnote{Notice that the remainders in~\eqref{chap7:eq:lie-grp-inv-mult} is killed by the nilpotency of odd variables.}:
\begin{equation*}
\begin{aligned}
\Psi^*x^i&=-\delta_1\eta^i+\delta_2\eta^i-\frac{1}{2}b_{jk}^i (\delta_1\eta^j)(\delta_2\eta^k),\\
        &=-\delta_1\eta^i+\delta_2\eta^i+\frac{1}{2}c_{jk}^i \delta_1 \delta_2\eta^j\eta^k.
\end{aligned}
\end{equation*}
Let \(\varepsilon\) be a coordinate on the copy \(D\) which acts. For a \(T\)\nbdash{}point of~\(D\) given by \(\ul{\varepsilon}\in C(T)^\odd\), the action~\eqref{chap7:eq:lie-group-action} is expressed in coordinates by
\[
(\Psi_{\ul{\varepsilon}})^*x^i=- (\delta_1+\ul{\varepsilon})\eta^i+(\delta_2+\ul{\varepsilon})\eta^i+
\frac{1}{2}c_{jk}^i (\delta_1+\ul{\varepsilon})(\delta_2+\ul{\varepsilon})\eta^j\eta^k.
\]
In view of~\eqref{chap7:eq:lie-gpd-Psi-and-psi}, the action of \(\ul{\varepsilon}\) on \(\psi\) is
\[
(\psi_{\ul{\varepsilon}})^*x^i=\delta\eta^i-\frac{1}{2}c_{jk}^i\delta \ul{\varepsilon}\eta^j\eta^k,
\]
which gives the action of \(D\) on \(\bhom(P, X)\) in \(T\)\nbdash{}points. It follows that the action, denoted by \(\mu\), in coordinates is
\[
\mu^* \xi^i =\xi^i-\frac{1}{2}c_{jk}^i\varepsilon\xi^j\xi^k.
\]
The infinitesimal action is thus given by
\[
\partial_\varepsilon (\xi^i) =-\frac{1}{2}c_{jk}^i \xi^j\xi^k.
\]
This is the Chevalley--Eilenberg differential which is dual to the Lie bracket. We see that the 1-jet of a Lie group is an NQ\nbdash{}manifold that is equivalent to the associated Lie algebra.

\subsection{Crossed modules}

We calculate the 1-jet of crossed modules in this subsection. This is an analogue of the previous computation.

\subsubsection{Basic facts about crossed modules}

Let us recall some facts about crossed modules from~\cite[Section XII.8]{MacLane}. A crossed module of Lie groups \(X=(H, G)\) consists of the following data: a Lie group homomorphism \(\tau\colon H \to G\) and a left \(G\)-action on \(H\) as automorphisms, \(\alpha\colon G \times H\to H\), or denoted by \(\cdot\) for simplicity, such that
\[
  \tau(g \cdot h) = g\tau(h)g^{-1},\quad \tau (h_{1}) \cdot h_{2} = h_{1}h_{2}h_{1}^{-1}.
\]

Given a crossed module \(X=(H, G)\), there is an associated simplicial manifold \(X\) with
\[
  X_0=\pt,\quad  X_1=G,\quad  X_2=G^2\times H,\quad X_3=G^3\times H^3,\quad \dots.
\]
This simplicial manifold \(X\) is a Lie 2\nbdash{}groupoid with one object. Its 1\nbdash{}simplices are elements of \(G\), the composition of 1-simplices is the multiplication of \(G\); 2\nbdash{}simplices are triples \( (g_1, g_2, h) \), where \(g_1, g_2\in G\) and \(h\in H\), which can be regarded as triangles with boundary \((g_1,g_2,\tau(h)g_1g_2)\) and interior \(h\); and 3\nbdash{}simplices are three triangles fitting together (recall that the unique Kan conditions hold).

\subsubsection{Functor of points}

A \(T\)\nbdash{}point of \(\bhom(P,X)\) is determined by a pair of \(T\)-parameterized morphisms
\[
  \Psi\colon D \times D\times T\to G,\quad \Phi\colon D \times D \times D\times T\to H,
\]
such that~\cite[Example 13.5]{Severa06}
\begin{equation*}
\begin{split}
  \Psi(d, d, t)&=e_G,\\
  \Phi(d_1,d_1,d_2, t)&=\Phi(d_1,d_2,d_2, t)=e_H,\\
  \Psi(d_1,d_2, t)\Psi(d_1, d_2, t)&=\tau(\Phi(d_1,d_2,d_3, t))\Psi(d_1,d_3, t),\\
  \Phi(d_1,d_2,d_3, t)\Phi(d_1, d_3, d_4, t)&=(\Psi(d_1, d_2, t) \cdot \Phi(d_1, d_2, d_3, t)) \Phi(d_1, d_2, d_4, t),
\end{split}
\end{equation*}
where \(t\in_S T, d, d_1, d_2, d_3, d_4 \in_S D\) for every supermanifold \(S\).

Alternatively, a \(T\)-point of \(\bhom(P,X)\) can be given by a normalised form
\[
  \psi\colon D \times T \to G, \quad \varphi\colon D\times D \times T\to H
\]
such that
\begin{equation}\label{chap7:eq:psiphi}
  \psi(0, t)=e_G, \quad \varphi(d, d, t)=\varphi(0,d, t)=e_H.
\end{equation}
The relation between \((\Psi,\Phi)\) and \((\psi,\varphi)\) is
\begin{equation}\label{chap7:eq:xmod-relation}
\begin{aligned}
  \psi(d, t)&=\Psi(0,d, t),\\
  \varphi(d_1, d_2, t)&=\Phi(0, d_1, d_2, t),\\
  \Psi(d_1,d_2, t)&=\psi(d_1, t)^{-1}\tau(\varphi(d_1,d_2, t))\psi(d_2, t),\\
  \Phi(d_1,d_2,d_3, t)&=\psi(d_1, t)^{-1}\cdot(\varphi(d_1,d_2, t)\varphi(d_2,d_3, t)\varphi(d_1,d_3, t)^{-1}).
\end{aligned}
\end{equation}

\subsubsection{Local structure of crossed modules}
Let \(x=(x^i)\) be local coordinates around \(e_G \in G\) with \(x(e_G)=0\), and let \(y=(y^l)\) be local coordinates around \(e_H\in H\) with \(y(e_H)=0\). We reserve indices \(i,j,k\) for \(G\) and \(l,m\) for \(H\). Write \(\partial_i=\partial_{x^i}|_{x=0}\) and \(\partial_l=\partial_{y^l}|_{y=0}\).

Let \(b_{jk}^i\) be as in~\eqref{chap7:eq:lie-grp-inv-mult} and let \(c_{jk}^i\) be the structure constants of \(\mathfrak{g}\) as in Lemma~\ref{chap7:lem:group:mult-formula}. Let~\(\tau_l^i\) be the components of the induced homomorphism \(\bar{\tau}\colon \mathfrak{h}\to \mathfrak{g}\) defined by \(\bar{\tau} \partial_l=\tau_l^i\partial_i\). We have
\begin{equation}\label{chap7:eq:tau}
\tau^* x^i=\tau_l^i y^l + O_2(y).
\end{equation}
Let \(\alpha_{im}^l \) be the structure constants of the induced action of Lie algebras \(\bar{\alpha}\colon \mathfrak{g}\times \mathfrak{h}\to \mathfrak{h}\) defined by \(\bar{\alpha}(\partial_i)\partial_m=\alpha_{im}^l\partial_l\). We have
\begin{equation}\label{chap7:eq:alpha}
\alpha^* y^l=y^l+\alpha_{im}^l x^i y^m+ O_3(x, y).
\end{equation}

\subsubsection{The main calculation}
Let \((x^i)\) and \((y^l)\) be local coordinates on \(G\) and \(H\) as above. Let \(\delta\) be a coordinate of \(D\).

Let \(\psi\colon D\times T\to G\) and \(\varphi\colon D\times D\times T\to H\) give a \(T\)\nbdash{}point of \(\bhom(P, X)\). Suppose that
\[
\psi^*x^i=u^i+\delta\upsilon^i, \quad \varphi^* y^l=v^l+\delta_1\eta^l+\delta_2\gamma^l+ \delta_1 \delta_2 w^l,
\]
where \(u^i, v^l, w^l \in C(T)^\even\) and \(\upsilon^i, \eta^l, \gamma^l,\in C(T)^\odd\). The condition~\eqref{chap7:eq:psiphi} implies \(u^i=v^l=0\), and \(\eta^l=\gamma^l=0\), hence
\begin{equation}\label{chap7:eq:pullback-psiphi}
\psi^*x^i=\delta\upsilon^i, \quad \varphi^* y^l=\delta_1 \delta_2 w^l .
\end{equation}
This proves that \(\bhom(P, X)\) is representable and isomorphic to \(\Pi\mathfrak{g}\times \mathfrak{h}\) as a supermanifold. We equip \(\bhom(P, X)\) with odd coordinates \((\xi^i)\) and even coordinates \((z^l)\) such that their pullbacks to~\(T\) are \((\upsilon^i,w^l)\).

We now compute the NQ\nbdash{}manifold structure. By Remark~\ref{chap7:rem:degree}, we can write down the infinitesimal action of \(\bR\) or the N\nbdash{}manifold structure directly:
\[
  \partial_r\big|_{r=1} \xi^i =\xi^i,\quad \partial_r\big|_{r=1} z^l = 2z^l.
\]
To calculate the action of \(D\) we consider \((\Psi,\Phi)\). By~\eqref{chap7:eq:xmod-relation}, and~\eqref{chap7:eq:lie-grp-inv-mult}, \eqref{chap7:eq:tau}, \eqref{chap7:eq:pullback-psiphi}, we deduce that
\[
  \Psi^*x^i =-\delta_1\upsilon^i+\tau_l^i\delta_1\delta_2 w^l+\delta_2 \upsilon^i- \frac{1}{2}c_{jk}^i \delta_1\upsilon^j\delta_2\upsilon^k.
\]
Let \(\varepsilon\) be a coordinate on the copy \(D\) which acts. A \(T\)\nbdash{}point of \(D\) given by \(\ul{\varepsilon}\in C(T)^\odd\) acts on \(\Psi\) by
\[
  (\Psi_{\ul{\varepsilon}})^*x^i=-\delta_1\upsilon^i+\delta_2\upsilon^i+
  \tau_l^i(\delta_1+\ul{\varepsilon})(\delta_2+\ul{\varepsilon}) w^l+\frac{1}{2}c_{jk}^i (\delta_1+\ul{\varepsilon})(\delta_2+\ul{\varepsilon})\upsilon^j \upsilon^k.
\]
Applying~\eqref{chap7:eq:xmod-relation} gives the action of this \(T\)\nbdash{}point on \(\psi\):
\[
  (\psi_{\ul{\varepsilon}})^*x^i=\delta\upsilon^i-\tau_l^i\delta\ul{\varepsilon} w^l -\frac{1}{2}c_{jk}^i \delta\ul{\varepsilon}\upsilon^j \upsilon^k.
\]
Considering the corresponding expression in \((\xi, z)\), we get the infinitesimal action
\begin{equation}\label{chap7:eq:xi-action}
  \partial_\varepsilon \xi^i=-\tau_l^i z^l -\frac{1}{2}c_{jk}^i \xi^j \xi^k.
\end{equation}
By~\eqref{chap7:eq:xmod-relation} and~\eqref{chap7:eq:lie-grp-inv-mult}, \eqref{chap7:eq:alpha}, \eqref{chap7:eq:pullback-psiphi}, we deduce that
\[
  \Phi^* y^l=(\delta_1\delta_2+\delta_2\delta_3-\delta_1\delta_3)w^l-\alpha_{im}^l\delta_1 \upsilon^i \delta_2\delta_3 w^m.
\]
A \(T\)\nbdash{}point of \(D\) given by \(\ul{\varepsilon}\) acts on \(\Phi\) by
\begin{multline*}
  (\Phi_{\ul{\varepsilon}})^*y^l=
((\delta_1+\ul{\varepsilon})(\delta_2+\ul{\varepsilon})+(\delta_2+\ul{\varepsilon})(\delta_3+\ul{\varepsilon})
-(\delta_1+\ul{\varepsilon})(\delta_3+\ul{\varepsilon}))w^l \\
  -\alpha_{im}^l(\delta_1+\ul{\varepsilon})(\delta_2+\ul{\varepsilon})(\delta_3+\ul{\varepsilon})\upsilon^i w^m.
\end{multline*}
Relation~\eqref{chap7:eq:xmod-relation} implies that this \(T\)\nbdash{}point acts on \(\varphi\) by
\[
  (\varphi_{\ul{\varepsilon}})^*y^l=\delta_1\delta_2 w^l -\alpha_{im}^l \delta_1\delta_2\ul{\varepsilon} \upsilon^i w^m.
\]
Taking the infinitesimal action, we get
\begin{equation}\label{chap7:eq:w-action}
  \partial_\varepsilon z^l=-\alpha_{im}^l \xi^i z^m.
\end{equation}
Identities~\eqref{chap7:eq:xi-action} and~\eqref{chap7:eq:w-action} combined give the dual form of the associated crossed module of Lie algebras (see~\cite[p. 16]{Schreiber-Stasheff}). This proves that the 1\nbdash{}jet of a crossed module is an NQ\nbdash{}manifold which is equivalent to the usual differentiation.

\subsection{Lie groupoids}

First, we briefly recall below the Lie algebroid associated with a Lie groupoid; for more details see~\cite{Moerdijk-Mrcun2003}.

Let \(G=(G_0,G_1,\unit,\inv,\source,\target,\circ)\) be a Lie groupoid. The left action of \(G\) on \(G_1\) lifts to \(T^\target G_1\coloneqq\ker d\, \target\subset TG_1\). Denote the set of invariant sections by \(\mathfrak{A}\), which are uniquely determined by their restriction on \(\unit(G_0)\). Then \(\mathfrak{A}\) is a subalgebra of \(\mathfrak{X}(G_1)\), and the source map induces a Lie algebra homomorphism \(d\,\source\colon \mathfrak{A}\to \mathfrak{X}(G_0)\).

\subsubsection{Local structure of Lie groupoids}
Fix \(g_0\in G_0\). Let \((U, (x^i))\) be local coordinates on \(G_0\) around \(g_0\) with \(x(g_0)=0\). Assume that \(U\) is small enough. We can choose local coordinates \((U\times V, (x^i, y^l) )\) on \(G_1\) around \(\unit(g_0)\) such that
\[
y(\unit(g_0))=0, \quad\target(a, b)=a, \quad \unit(a)=(a,0), \quad a\in U, b\in V.
\]

The following two lemmas due to~\cite{Ramazan} are analogous to the Baker--Campbell--Hausdorff formula (or Lemma \ref{chap7:lem:group:mult-formula}). They determine the local structure of the associated Lie algebroid.

\begin{lemma}\label{chap7:lem:groupoid-local}
Let \(p\colon U\times V\times V\to V\) be given by \((a,b_1)\circ(\source(a,b_1),b_2)=(a, p(a,b_1,b_2))\), where \(a\in U, b_1,b_2\in V\). We have
\[
p^*y=y_1+y_2+B(y_1,y_2)+O_3(y_1, y_2),
\]
where \(B(y_1, y_2)\) depends on \(x\) and is bilinear in \(y_1, y_2\). Let \(\tau\colon U\times V\to V\) be given by \(\inv(a, b)=(\source(a,b), \tau(a,b))\). Then we have
\begin{gather*}
\tau^*y=-y+B(y,y)+O_3(y),\\
\source^*y=x+A(y)+O_2(y),
\end{gather*}
where \(A(y)\) depends on \(x\) and is linear in \(y\).
\end{lemma}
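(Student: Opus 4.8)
The plan is to prove everything by Taylor expansion in the fibre coordinate $y$ (Hadamard's lemma for smooth functions, the manifold analogue of Lemma~\ref{chap7:lem:taylor}), reading off the successive coefficients from the groupoid axioms once they are translated into identities among the coordinate maps. This is the groupoid counterpart of Lemma~\ref{chap7:lem:group:mult-formula}, and the argument follows the Baker--Campbell--Hausdorff-type analysis of~\cite{Ramazan}. Throughout I would treat the base coordinate $x$ (equivalently $a$) as a parameter, so that ``linear'', ``bilinear'' and the remainders $O_k$ refer to the $y$-variables only, while all coefficients are allowed to depend smoothly on $x$.

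First I would translate the unit axioms into the chosen coordinates. Since $\unit(a)=(a,0)$ and $\target(a,b)=a$, the right unit law $g\circ\unit(\source(g))=g$ applied to $g=(a,b_1)$ gives $p(a,b_1,0)=b_1$, while the left unit law $\unit(\target(g))\circ g=g$ applied to $g=(a,b_2)$, so that $(a,0)=\unit(a)$ is the left factor, gives $p(a,0,b_2)=b_2$; in particular $p(a,0,0)=0$. Taylor expanding $p(a,b_1,b_2)$ jointly in $(b_1,b_2)$, the vanishing constant term kills the zeroth order; the identity $p(a,b_1,0)=b_1$ forces the $b_1$-linear part to be exactly $b_1$ and kills every pure power of $b_1$ of order $\ge 2$; symmetrically $p(a,0,b_2)=b_2$ handles the $b_2$-only terms. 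Hence the only surviving second-order contribution is the mixed term, which defines a bilinear (in $y_1,y_2$), $x$-dependent map $B$, and we obtain $p^* y=y_1+y_2+B(y_1,y_2)+O_3(y_1,y_2)$.

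Next I would treat the inverse. The source/target relations for $\inv$ together with the coordinate conventions force the first component, so $\inv(a,b)=(\source(a,b),\tau(a,b))$ with only $\tau$ unknown; moreover $\inv(\unit(a))=\unit(a)$ yields $\tau(a,0)=0$, so $\tau$ has no constant term. The right inverse identity $g\circ\inv(g)=\unit(\target(g))$ with $g=(a,b)$ becomes, through the definition of $p$, the single equation $p(a,b,\tau(a,b))=0$. Substituting the expansion of $p$ and solving order by order in $b$: at first order $b+\tau_1(b)=0$ gives $\tau_1(b)=-b$; at second order $\tau_2(b)+B(b,\tau_1(b))=0$ gives, by bilinearity, $\tau_2(b)=-B(b,-b)=B(b,b)$. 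This is precisely $\tau^* y=-y+B(y,y)+O_3(y)$. Finally, $\source\circ\unit=\id$ reads $\source(a,0)=a$ in coordinates, so $\source^{*}x$ agrees with $x$ at $y=0$, and its Taylor expansion is $\source^{*}x=x+A(y)+O_2(y)$ with $A$ linear in $y$ and $x$-dependent. (I read the printed $\source^* y$ as $\source^* x$, since $\source$ maps $G_1\to G_0$ and $x$ is the coordinate on~$G_0$.)

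The computation is essentially routine formal inversion, and I expect no deep obstacle; the only real care is bookkeeping. The main point to get right is which groupoid axiom produces which coordinate identity under the nonstandard normalisation $\target(a,b)=a$, $\unit(a)=(a,0)$ --- in particular, checking that the two factors in each composite are genuinely composable in the fibre product $G_1\times_{\source,G_0,\target}G_1$, and that $B$ is well defined independently of the order in which one expands. Once those identities are in hand, the recursive solution for $\tau$ and the expansions of $p$ and $\source$ are forced.
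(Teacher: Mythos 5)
Your proof is correct. One thing to be aware of: the paper does not actually prove this lemma---it is stated with a citation to Ramazan's Baker--Campbell--Hausdorff-type paper---so the only internal guide is the Lie-group analogue~\eqref{chap7:eq:lie-grp-inv-mult}, which the paper likewise just asserts from Taylor expansion. Your derivation supplies exactly the missing elementary argument, and it is the natural one: with the normalisation \(\target(a,b)=a\), \(\unit(a)=(a,0)\), the right and left unit laws give \(p(a,b_1,0)=b_1\) and \(p(a,0,b_2)=b_2\) (the latter also uses \(\source(a,0)=a\), i.e.\ \(\source\circ\unit=\id\), which you leave implicit but which is immediate), and these two identities kill the constant term and every pure power of \(b_1\) or of \(b_2\), leaving \(y_1+y_2+B(y_1,y_2)+O_3\); the inverse law \(g\circ\inv(g)=\unit(\target(g))\) applied to \(g=(a,b)\) is precisely \(p(a,b,\tau(a,b))=0\), whose order-by-order solution forces \(\tau^*y=-y+B(y,y)+O_3(y)\); and the expansion of \(\source\) is Hadamard's lemma plus \(\source(a,0)=a\). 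Your reading of the misprint \(\source^*y\) as \(\source^*x\) is the right one, since \(\source\) maps \(G_1\to G_0\) and only the \(x\)-coordinates can be pulled back along it. The only points you gloss over are routine: one should shrink \(U\) and \(V\) so that all composites and inverses involved remain in the chart \(U\times V\) (the paper's ``assume \(U\) is small enough''), and note that \(\inv\) is smooth as a structure map of a Lie groupoid, so \(\tau\) admits a Taylor expansion in the first place.
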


\begin{corollary}\label{chap7:cor:gpd-q}
Suppose that \( \inv(a,b_1)\circ (a, b_2)=(\source(a,b_1), q(a,b_1,b_2))\), where \(a\in U\), \(b_1, b_2\in V\) and \(q\colon U\times V\times V\to V \). Then we have
\[
q^* y=-y_1+y_2-B(y_1, y_2)+O_2(y_1)+O_3(y_1, y_2).
\]
\end{corollary}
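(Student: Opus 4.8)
The plan is to derive Corollary~\ref{chap7:cor:gpd-q} directly from Lemma~\ref{chap7:lem:groupoid-local} by expressing the map $q$ as the composite of the two maps $p$ and $\tau$ already analysed there. Indeed, by definition $q(a,b_1,b_2)$ is the second $V$-coordinate of $\inv(a,b_1)\circ(a,b_2)$. Writing $\inv(a,b_1)=(\source(a,b_1),\tau(a,b_1))$, I see that composing this arrow with $(a,b_2)$ is an instance of the composition $p$, but based at the point $\source(a,b_1)$ rather than at $a$. So the first step is to set up the substitution carefully: $q(a,b_1,b_2)=p\bigl(\source(a,b_1),\tau(a,b_1),b_2\bigr)$, matching the source/target conditions so that the composition is defined.

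First I would substitute the expansions from Lemma~\ref{chap7:lem:groupoid-local} into this composite. The inner term contributes $\tau^*y=-y_1+B(y_1,y_1)+O_3(y_1)$, and the source coordinate is shifted by $\source^*y=x+A(y_1)+O_2(y_1)$. Then I would plug these into $p^*y=y_1+y_2+B(y_1,y_2)+O_3$, being careful that the bilinear form $B$ and the remainders now depend on the shifted base point $\source(a,b_1)$ rather than on $a$. The leading behaviour is immediate: the linear part is $-y_1+y_2$, and the bilinear cross-term $B(-y_1,y_2)=-B(y_1,y_2)$ reproduces the claimed $-B(y_1,y_2)$. The diagonal term $B(y_1,y_1)$ coming from $\tau$ is quadratic purely in $y_1$, hence absorbed into the $O_2(y_1)$ allowed by the statement.

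The main bookkeeping step, and the place where I expect the only real subtlety, is controlling how the base-point shift $x\mapsto \source(a,b_1)=x+A(y_1)+\dotsb$ feeds back into $B$ and into the higher-order remainders. Since $B$ depends smoothly on the base coordinate $x$, expanding $B$ at the shifted point introduces a correction $\partial_x B\cdot A(y_1)$ applied to $(-y_1,y_2)$ together with $(y_1,y_1)$; these corrections are all at least quadratic and, after separating those that involve $y_2$ from those that do not, fall into either the allowed $-B(y_1,y_2)$ term up to cubic order or into $O_2(y_1)$ and $O_3(y_1,y_2)$. Thus I would organise the terms by total degree and by whether they involve $y_2$, collecting everything independent of $y_2$ into $O_2(y_1)$ and everything of total degree $\ge 3$ into $O_3(y_1,y_2)$, as Lemma~\ref{chap7:lem:taylor} licenses. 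This is the analogue of the computation relating $b^i_{jk}$ to $c^i_{jk}$ in Lemma~\ref{chap7:lem:group:mult-formula}, and the verification is routine Taylor expansion; I would not grind through the explicit coefficients but simply record that only the linear and the mixed bilinear terms survive in the prescribed normal form.
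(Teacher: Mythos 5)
Your proof is correct and is exactly the derivation the paper leaves implicit (the corollary is stated without proof immediately after Lemma~\ref{chap7:lem:groupoid-local}): writing \(q(a,b_1,b_2)=p\bigl(\source(a,b_1),\tau(a,b_1),b_2\bigr)\), substituting the expansions of \(\tau^*\) and \(\source^*\), and noting that \(B(y_1,y_1)\) lands in \(O_2(y_1)\) while the base-point shift and the term \(B(B(y_1,y_1),y_2)\) land in \(O_3(y_1,y_2)\). No gaps; the bookkeeping you describe is precisely what is needed.
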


\begin{lemma}\label{chap7:lem:lie-algbroid-local}
Let \(e_l=\partial_{y^l}\big|_{y=0}\). Then \(\{e_l\}\) forms a local frame of the Lie algebroid associated with~\(G\). Suppose that the Lie algebroid has local structure
\begin{equation}\label{chap7:eq:lie-algbroid}
a(e_l)=a_l^i \partial_{x^i}, \quad [e_m, e_n]=c_{mn}^l e_l.
\end{equation}
Then the structure constants are given by \(A\) and \(B\)
\[
a_l^i=A_l^i, \quad c_{mn}^l= B_{mn}^l-B_{nm}^l.
\]
\end{lemma}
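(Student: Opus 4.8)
The plan is to run a coordinate computation built on the multiplication, inverse, and source expansions supplied by Lemma~\ref{chap7:lem:groupoid-local}, in close analogy with the Lie group case of Lemma~\ref{chap7:lem:group:mult-formula}. Recall that the Lie algebroid $\mathfrak{A}$ consists of the left-invariant sections of $\ker d\,\target$, identified with their restrictions to the units $\unit(G_0)$, with anchor $d\,\source$ and bracket inherited from $\mathfrak{X}(G_1)$. First I would pin down the frame: since the chosen coordinates satisfy $\target(x,y)=x$, the target-vertical bundle is $\ker d\,\target=\operatorname{span}(\partial_{y^l})$, and along the unit section $\unit(a)=(a,0)$ this restricts to $e_l=\partial_{y^l}\big|_{y=0}$. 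Hence $\{e_l\}$ is a local frame of $A=\ker d\,\target|_{\unit(G_0)}$, which is the first assertion.

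Next I would compute the anchor. Because $\source\circ L_g=\source$ for every arrow $g$, the left-invariant extension $\tilde e_l\in\mathfrak{A}$ of $e_l$ is $\source$-projectable, so $a(e_l)=d\,\source(\tilde e_l)$, and at a unit $(a,0)$ this is just $d\,\source(\partial_{y^l}\big|_{(a,0)})$. Feeding in the expansion $\source^* x^i=x^i+A^i_l(x)\,y^l+O_2(y)$ from Lemma~\ref{chap7:lem:groupoid-local} gives $a(e_l)x^i=\partial_{y^l}(\source^* x^i)\big|_{y=0}=A^i_l$, whence $a_l^i=A_l^i$.

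The main work, and the step I expect to be the real obstacle, is the bracket. Here I would first produce the left-invariant extension $\tilde e_l$ explicitly to the order needed. Writing an arbitrary arrow as $(x,y)=L_{(x,y)}\unit(\source(x,y))$ and using that left translation $L_{(x,y_1)}$ acts in coordinates by $(\source(x,y_1),y_2)\mapsto(x,p(x,y_1,y_2))$, differentiating in $y_2$ at $y_2=0$ identifies the target-vertical field $\tilde e_l=\tilde e_l^k\,\partial_{y^k}$ with coefficients $\tilde e_l^k(x,y)=\partial_{y_2^l}p^k(x,y,y_2)\big|_{y_2=0}$. Substituting $p^k(x,y_1,y_2)=y_1^k+y_2^k+B^k_{mn}(x)\,y_1^m y_2^n+O_3$ from Lemma~\ref{chap7:lem:groupoid-local} yields $\tilde e_l^k(x,y)=\delta_l^k+B^k_{ml}(x)\,y^m+O_2(y)$; in particular $\tilde e_l^k|_{y=0}=\delta_l^k$, consistent with the frame. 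Computing $[\tilde e_m,\tilde e_n]=(\tilde e_m^k\partial_{y^k}\tilde e_n^j-\tilde e_n^k\partial_{y^k}\tilde e_m^j)\,\partial_{y^j}$ and restricting to $y=0$ gives $[\tilde e_m,\tilde e_n]^j\big|_{y=0}=B^j_{mn}-B^j_{nm}$; comparing with $c_{mn}^l\,\tilde e_l^j|_{y=0}=c_{mn}^j$ reads off $c_{mn}^l=B_{mn}^l-B_{nm}^l$.

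The delicate points to get right are the bookkeeping in the left-invariant extension: that $\tilde e_l$ has only $\partial_y$-components because left translation preserves $\ker d\,\target$, that the relevant unit sits over $\source(x,y)$ rather than over $x$, and that only the first-order-in-$y$ term of $\tilde e_l^k$ survives the antisymmetrization, while the $O_2$ and $O_3$ remainders in $p$ drop out at $y=0$. Once these are handled, the identities $a_l^i=A_l^i$ and $c_{mn}^l=B_{mn}^l-B_{nm}^l$ fall out, paralleling the group computation of Lemma~\ref{chap7:lem:group:mult-formula} (now without the factor $\tfrac12$, owing to the different normalisation of $B$ in $p^* y=y_1+y_2+B(y_1,y_2)+O_3$).
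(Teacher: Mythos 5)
Your proposal is correct. There is, however, no in-paper argument to compare it with: the thesis states this lemma (together with Lemma~\ref{chap7:lem:groupoid-local}) as quoted from~\cite{Ramazan}, so your computation supplies a self-contained verification that the paper itself omits. All three steps check out under the paper's conventions: since \(\target(x,y)=x\), the bundle \(\ker d\,\target\) is spanned by the \(\partial_{y^l}\), so \(\{e_l\}\) is a frame of the algebroid; \(\source\)-projectability of invariant sections reduces the anchor to \(\partial_{y^l}(\source^*x^i)\big|_{y=0}=A^i_l\); and the left-invariant extension read off from \(p\) is \(\tilde e_l^k(x,y)=\delta_l^k+B^k_{ml}(x)\,y^m+O_2(y)\), which has only \(\partial_y\)-components (left translation fixes the target coordinate), so the commutator at \(y=0\) involves no \(x\)-derivatives and yields \(c_{mn}^l=B_{mn}^l-B_{nm}^l\), with the \(O_2\) and \(O_3\) remainders contributing nothing there, exactly as you argue. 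Your closing remark is also the right explanation of the discrepancy with Lemma~\ref{chap7:lem:group:mult-formula}: the missing factor \(\tfrac12\) is purely the normalisation \(B(y_1,y_2)\) versus \(\tfrac12 b_{jk}^i x_1^j x_2^k\) in the two expansions, not a difference in the geometry.
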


\subsubsection{The main calculation}
Denote the nerve of \(G\) by the same letter. A \(T\)\nbdash{}point of \(\bhom(P, G)\) is given by a pair
\[
\psi\colon T\to G_0, \quad \varphi\colon D\times T \to G_1
\]
such that
\begin{equation}\label{chap7:eq:gpd:g0g1}
\varphi(0, t)=\unit\circ \psi (t), \quad \target\circ \varphi (d, t)= \psi (t),
\end{equation}
or, equivalently, by a pair (satisfying some relation that we do not need)
\[
\Psi \colon D\times T\to G_0,\quad \Phi \colon D\times D\times T \to G_1.
\]
The relation between \((\psi, \varphi)\) and \((\Psi, \Phi)\) is
\begin{equation}\label{chap7:eq:gpd:relation}
\begin{aligned}
\Psi(d, t)&=\source\circ \varphi(d, t),& \Phi(d_1, d_2, t) &=\varphi(d_1, t)^{-1} \varphi (d_2, t),\\
\psi(t)&=\Psi(0, t),&       \varphi(d, t)&=\Phi(0, d, t),
\end{aligned}
\end{equation}
where \(d, d_1, d_2\in_S D\), and \(t\in_S T\).

Let \((U, (x^i))\) be local coordinates on \(G_0\), and let \( (U\times V, (x^i, y^l)) \) be local coordinates on~\(G_1\) as above. Let \(\delta\) be a coordinate of~\(D\). We consider \(\bhom(P, G)\) over \(U\subset G_0\). Suppose that
\[
\psi^* x^i= u^i, \quad \varphi^*x^i=v^i+\delta \gamma^i, \quad \varphi^* y^l =w^l +\delta \zeta^l,
\]
where \(u^i, v^i, w^l\in C(T)^\even\), and \(\gamma^i, \zeta^l \in C(T)^\odd\). Equation~\eqref{chap7:eq:gpd:g0g1} implies   \(u^i=v^i\), \(\gamma^i=0\), and \(w^l=0\). Thus a \(T\)\nbdash{}point of \(\bhom(P, G)\) is given by functions \(\{u^i\}\) and \(\{\zeta^l\}\). Therefore, the presheaf \(\bhom(P, G)\) is representable. We can take \((x^i, \eta^l)\) to be local coordinates on \(\bhom(P, G)\) such that their pullbacks to \(T\) are \((u^i, \zeta^l)\).

Locally, \(\bhom(P, G)\) consists of all odd tangent vectors of \(G_1\) on \(G_0\) that are tangent to \(\target\)\nbdash{}fibres. Since this description is independent of local coordinates, the supermanifold \(\bhom(P, G)\) globally consists of all odd tangent vectors of \(G_1\) on \(G_0\) that are tangent to \(\target\)\nbdash{}fibres. As a supermanifold \(\bhom(P, G)\) is isomorphic to the shifted vector bundle associated with the Lie algebroid.

We now compute the NQ\nbdash{}manifold structure. As above the infinitesimal action of \(\bR\) can be written down directly:
\[
\partial_r\big|_{r=1} x^i= 0, \quad \partial_r\big|_{r=1} \eta^l =\eta^l.
\]
To see the action of \(D\) we consider \((\Psi,\Phi)\). Equation~\eqref{chap7:eq:gpd:relation} together with Lemma~\ref{chap7:lem:groupoid-local} and Corollary~\ref{chap7:cor:gpd-q} imply that
\[
\Psi^*x^i= \Phi^* x^i= u^i+\delta A^i_l \zeta^l, \quad
\Phi^* y^l= (-\delta_1+\delta_2) \zeta^l+\delta_1\delta_2 B_{mn}^l \zeta^m \zeta^n.
\]
Let \(\varepsilon\) be a coordinate on the copy \(D\) which acts. A \(T\)-point of \(D\) given by \(\ul{\varepsilon} \in C(T)\) acts on \((\Psi, \Phi)\) by
\begin{gather*}
  \Psi_{\ul{\varepsilon}}^* x^i= \Phi_{\ul{\varepsilon}}^* x^i=u^i+(\ul{\varepsilon}+\delta) A^i_l \zeta^l, \\
  \Phi_{\ul{\varepsilon}}^* y^l= (-\delta_1+\delta_2) \zeta^l+(\ul{\varepsilon}+\delta_1)(\ul{\varepsilon}+\delta_2) B_{mn}^l \zeta^m \zeta^n.
\end{gather*}
In view of~\eqref{chap7:eq:gpd:relation}, the induced action on \((\psi, \varphi)\) is
\begin{gather*}
  \psi_{\ul{\varepsilon}}^* x^i= \varphi_{\ul{\varepsilon}}^* x^i=u^i+\ul{\varepsilon} A^i_l \zeta^l, \\
  \varphi_{\ul{\varepsilon}}^* y^l= \delta \zeta^l-\delta\ul{\varepsilon} B_{mn}^l \zeta^m \zeta^n.
\end{gather*}
Considering the expressions in coordinates \((x,\eta)\) and \(\varepsilon\), we obtain the infinitesimal action
\[
\partial_\varepsilon x^i=A^i_l \eta^l, \quad \partial_\varepsilon \eta^l =-B_{mn}^l \eta^m \eta^n=-\frac{1}{2}c_{mn}^l \eta^m \eta^n,
\]
which is the dual form of identities~\eqref{chap7:eq:lie-algbroid}. Hence the 1\nbdash{}jet of a Lie groupoid is an NQ\nbdash{}manifold that is equivalent to the associated Lie algebroid.

\appendix

\chapter{Appendix}\label{app}
\thispagestyle{empty}
\section{Colored outer Kan conditions}\label{app:sec:colored-out-kan}

This section is dedicated to colored outer Kan conditions.

\subsection{Statement of the result}
Let \((\Cat, \covers)\) be an extensive category with a pretopology satisfying Assumption~\ref{chap4:asmp:extensive-pretopology}. Our aim is to prove the following result.

\begin{theorem}\label{chap4:thm:enriched}
Let \(Y\) be a simplicial object in \(\Cat\) with a morphism \(\pi\colon Y\to j(\Simp{1})\) such that the two ends are \(n\)\nbdash{}groupoids in \((\Cat, \covers)\). Assume that \(\pi\) satisfies \(\Kan(m, k)\) for \(m\ge 2, 0<k<m \) and \(\Kan!(m, k)\) for \(m>n, 0<k<m\). Suppose that Assumption~\ref{chap4:asmp:ct} is fulfilled.
\begin{enumerate}
  \item\label{chap4:enm:it:i}
  The colored outer Kan conditions \(\Kan(m,0)[i,j]\) for \(i \ge 2\) and \(\Kan(m,m)[i,j]\) for \(j \ge 2\) hold.
  \item\label{chap4:enm:it:ii}
 For \(n=2\) and \((\Sets,\covers_\surj)\) and \((\Mfd, \covers_\subm)\) in Example~\ref{chap3:exa:singleton-pretopoloty}, the corresponding unique outer Kan conditions hold for \(m> 2\) .
\end{enumerate}
\end{theorem}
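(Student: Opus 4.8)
The plan is to follow the strategy of Joyal~\cite{Joyal2002}, reducing the colored outer Kan conditions to the inner ones by exploiting the groupoid structure at the two ends. The key idea is that an outer horn $\Horn{m}{0}[i,j]$ with $i\ge 2$ can be filled once we know how to ``invert'' one of the initial edges, which lives entirely in the end $n$-groupoid $X$ (the white end). Concretely, for $\Kan(m,0)[i,j]$ with $i\ge 2$, the missing face is opposite the vertex $0$, and the first edge $\Simp{1}\{0,1\}$ lies in $X_1$ (a white edge). Since $X$ is an $n$-groupoid, this edge has a homotopy inverse, and I would use it to build a larger collapsible configuration out of which the desired simplex can be extracted by inner lifting.

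First I would set up the combinatorial input precisely. Given a colored outer horn $\Horn{m}{0}[i,j]\to TY$, I would adjoin an auxiliary white vertex $-1$ together with a degenerate-type filler realizing an inverse to the edge $\{0,1\}$, forming an extended simplicial set $S$ on vertex set $\{-1,0,1,\dots,m\}$ whose relevant white part lives in $X$. The goal is to show that the inclusion $\Horn{m}{0}[i,j]\hookrightarrow S$ and the inclusion of $\Simp{m}[i,j]$ into the relevant subcomplex of $S$ are both (colored) inner collapsible extensions, so that by Proposition~\ref{chap2:prop:join-pushout-collapsible} and the closure properties from Lemma~\ref{chap2:lem:lift-composition-pushout} the map $\pi$, which satisfies all inner $\Kan(m,k)$ with $0<k<m$, has the required lifting. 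The case $\Kan(m,m)[i,j]$ with $j\ge 2$ is handled symmetrically, inverting a black edge $\{m-1,m\}$ in $Y$. Part~\ref{chap4:enm:it:i} thus follows once the collapsibility of these extensions is established purely combinatorially, using that $i\ge 2$ (resp.\ $j\ge 2$) guarantees an available edge in the pure end-groupoid to invert.

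For part~\ref{chap4:enm:it:ii}, I would upgrade the existence statements to uniqueness in the two concrete settings $(\Sets,\covers_\surj)$ and $(\Mfd,\covers_\subm)$ for $m>2$. The mechanism is that for $n=2$ the objects are $2$-groupoids, hence $3$-coskeletal as noted after Lemma~\ref{chap3:lem:acyclic-finite}, so above dimension $2$ every filler is forced by its boundary. I would argue that the inner unique conditions $\Kan!(m,k)$ for $m>2$, $0<k<m$, which hold by hypothesis, propagate through the collapsible extensions constructed in part~\ref{chap4:enm:it:i}: each elementary horn-filling step in the filtration becomes an isomorphism rather than merely a cover, so the composite lift is unique. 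The restriction to the two listed pretopologies is what licenses ``Assumption~\ref{chap4:asmp:ct}'' and, more importantly, the passage from covers to isomorphisms via a Lemma~\ref{chap3:lemma:iso}-style monomorphism argument, which requires the pretopology to be subcanonical and the covers to be effective epimorphisms.

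The main obstacle I anticipate is the bookkeeping of colors through the collapsible filtration: I must verify that every horn filled in passing from $\Horn{m}{0}[i,j]$ to $S$ is an \emph{inner} colored horn of a shape whose Kan condition is actually available, and in particular that the auxiliary simplices introduced to invert the white (resp.\ black) edge never force an \emph{outer} filling of the same problematic type. This is exactly where the hypothesis $i\ge 2$ (resp.\ $j\ge 2$) is essential, since it provides a spare end-groupoid edge; the case $i=1$ genuinely fails, matching the excluded condition $\Kan(m,m)[m,1]$ from Proposition~\ref{chap4:prop:hihger-groupoid-cogragh-Kan-conditions}. I expect the representability of the intermediate presheaves $\Hom(\Horn{m}{0}[i,j],TY)$ to follow smoothly from Remark~\ref{chap4:rem:low-outer-kan-imply-repr} together with the lower-dimensional cases handled inductively, so the real content is the careful identification of the join-type decompositions in~\eqref{chap2:eq:join} that exhibit the relevant inclusions as inner collapsible extensions.
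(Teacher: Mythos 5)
There is a genuine gap at the heart of your plan for part~\ref{chap4:enm:it:i}. You assert that the inclusions \(\Horn{m}{0}[i,j]\hookrightarrow S\) and \(\Simp{m}[i,j]\hookrightarrow S\) can be exhibited as \emph{inner} collapsible extensions, so that the lifting follows from the inner Kan conditions of \(\pi\) alone. This cannot work: the auxiliary data realizing an inverse of the white edge \(\Simp{1}\{0,1\}\) (the triangle witnessing invertibility) can only be produced by filling an \emph{outer} horn, e.g.\ \(\Horn{2}{0}\) or \(\Horn{2}{2}\), lying entirely in the white end, and no sequence of inner horn fillings creates such a simplex. Indeed, if your claim were correct, the theorem would hold when the two ends are merely \(n\)\nbdash{}categories rather than \(n\)\nbdash{}groupoids, contradicting the essential role of quasi-invertibility in Joyal's theorem. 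The paper's proof instead introduces \emph{special} collapsible extensions --- filtrations in which every step fills either an inner horn (handled by \(\Kan(m,k)\) for \(\pi\)) or a horn all of whose vertices have the same color (handled by the full groupoid Kan conditions of the corresponding end) --- and the derived notion of \emph{admissible} inclusions. Relatedly, the lift you construct lands in the big auxiliary complex \(T\), not in \(\Simp{m}[i,j]\); to conclude that \(\Hom_{/\Simp{1}}(\Simp{m}[i,j],Y)\to\Hom_{/\Simp{1}}(\Horn{m}{0}[i,j],Y)\) is itself a cover one must ``forget'' the auxiliary simplices, and this forgetting is exactly where Assumption~\ref{chap4:asmp:ct} (right cancellation: if \(g_2\circ g_1\) is a cover then so is \(g_2\)) is used in part~\ref{chap4:enm:it:i}. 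Your proposal invokes that assumption only in part~\ref{chap4:enm:it:ii}, and for the wrong purpose; what singles out \((\Sets,\covers_\surj)\) and \((\Mfd,\covers_\subm)\) in part~\ref{chap4:enm:it:ii} is rather the fact that a bijective cover there is an isomorphism.

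Your uniqueness argument for part~\ref{chap4:enm:it:ii} also has a gap. The claim that ``each elementary horn-filling step in the filtration becomes an isomorphism'' fails precisely in the critical case \(m=3\): the steps that create the inverse are dimension\nbdash{}\(2\) outer fillings inside the end \(2\)\nbdash{}groupoid, which are covers but not isomorphisms, since \(\Kan!(2,k)\) is not available. For \(m>3\) uniqueness is immediate because \(\Horn{m}{0}\) contains every \(2\)\nbdash{}simplex of \(\Simp{m}\) and the inner conditions \(\Kan!(m,k)\) force the filler; but for \(m=3\) one must prove two nontrivial facts that your outline does not address: (a) every admissible filler of the missing face \((1,2,3)\) arises from the auxiliary construction, and (b) the resulting face is independent of the choice of auxiliary vertex \(1^{+}\) and its attendant simplices. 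The paper establishes (b) by comparing two choices \(1^{+}\), \(1^{+'}\) via \(\Kan!(3,0)\) inside the end groupoid and \(\Kan!(4,2)\) for \(\pi\), showing the two extracted faces agree; without an argument of this kind, uniqueness of the composite lift simply does not follow from uniqueness of the individual inner steps.
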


\begin{assumption}\label{chap4:asmp:ct}
  Let \(g=g_2\circ g_1\) be morphisms in \(\Cat\). If \(g\) is a cover in \(\covers\), then so is \(g_2\).
\end{assumption}

This assumption is satisfied by \((\Sets,\covers_\surj)\).\footnote{This assumption is not satisfied by \((\Mfd, \covers_\subm)\). A weaker assumption as (i) in Lemma~\ref{chap1:lem:surj-sub-2-prop} will ensure that Theorem~\ref{chap4:thm:enriched} hold for \((\Mfd, \covers_\subm)\).}

\subsection{Basic idea}
When \((\Cat,\covers)=(\Sets, \covers_\surj)\), statement~\ref{chap4:enm:it:i} follows from a theorem of Joyal; see also~\cite{Lurie}.

\begin{theorem}[\cite{Joyal2002}]\label{chap4:thm:joyal}
Let \(X\) be an inner Kan complex and \(\varphi:\Simp{1}\to X\) a quasi-invertible arrow~\footnote{An arrow \(\varphi\in X_1\) is quasi-invertible if it is invertible in the fundamental category of \(X\).} in \(X\). Let \(m\ge 2\). For every map \(f\colon\Horn{m}{0}\to X\) with \(f|_{\Simp{1}\{0,1\}} =\varphi\), there is a lift of \(f\) to \(\Simp{m}\to X\).
\end{theorem}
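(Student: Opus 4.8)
The plan is to prove this via the ``walking isomorphism'' interval. Let $J$ denote the nerve of the groupoid with two objects $0,1$ and a single isomorphism between them; by Proposition \ref{chap2:prop:category-is-inner-kan} it is a Kan complex, and it carries a distinguished edge $e\colon \Simp{1}\to J$ from $0$ to $1$. The proof splits into two parts. First I would establish the characterization \emph{an edge $\psi\colon \Simp{1}\to X$ of an inner Kan complex is quasi-invertible if and only if it extends along $e$ to a map $\bar\psi\colon J\to X$}. Second, granting such an extension $\bar\varphi$ of the given $\varphi$, I would reduce the horn-filling problem to a purely combinatorial lifting statement against $X$.

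For the second part: since $\Horn{m}{0}$ and $J$ both contain the edge $\Simp{1}\{0,1\}$, and $f$ and $\bar\varphi$ agree there (both restrict to $\varphi$), they glue to a single map $g\colon \Horn{m}{0}\cup_{\Simp{1}}J\to X$. It then suffices to show that the inclusion
\[
\Horn{m}{0}\cup_{\Simp{1}}J\hookrightarrow \Simp{m}\cup_{\Simp{1}}J
\]
is inner anodyne (an inner collapsible extension, in the terminology of Definition \ref{chap2:def:collapsible}): for then Lemma \ref{chap2:lem:lift-composition-pushout}, together with the inner Kan property of $X$, provides an extension $\bar g\colon \Simp{m}\cup_{\Simp{1}}J\to X$, whose restriction to $\Simp{m}$ is the desired filler (it restricts on $\Horn{m}{0}$ to $f$). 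The cells added in passing from source to target are exactly the top simplex of $\Simp{m}$ and its $0$-th face $\Simp{m-1}\{1,\dots,m\}$; the task is to attach them through a filtration whose every stage fills an \emph{inner} horn, using the nondegenerate cells of $J$ as intermediate scaffolding. Here the join identities \eqref{chap2:eq:join}, Proposition \ref{chap2:prop:join-pushout-collapsible}, and the spine Lemma \ref{chap2:lem:spine-horn-inner} are the natural bookkeeping tools.

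The first part is where invertibility is genuinely consumed. I would build $\bar\varphi$ by induction over a skeletal filtration $\Simp{1}=F_1\subset F_2\subset\dots$ of $J$. The step $F_1\subset F_2$ requires choosing an inverse edge for $\varphi$ together with two $2$-simplices witnessing $\varphi^{-1}\varphi\simeq\id$ and $\varphi\varphi^{-1}\simeq\id$; these attachments are along \emph{outer} horns, and it is precisely the quasi-invertibility of $\varphi$ that makes them solvable. Every higher attachment $F_{k-1}\subset F_k$ should proceed along inner horns, since a nondegenerate simplex $(0,1,0,1,\dots)$ of $J$ has all of its interior faces degenerate, so it can be filled using only the inner Kan property of $X$. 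I expect the main obstacle to be this coherence step: translating the bare homotopy-level datum of an inverse in $\mathrm{h}X$ into a fully coherent map out of the infinite-dimensional complex $J$, and checking that all but the first attachment are inner. A cleaner but heavier alternative would route the whole argument through marked simplicial sets, recognizing a quasi-invertible edge as a cocartesian edge for $X\to\Simp{0}$ and invoking the stability properties of left and inner fibrations; I would fall back on this if the explicit filtration proves unwieldy.
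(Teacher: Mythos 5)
The paper does not actually reprove this theorem: it quotes Joyal and then sketches his argument, which factors the lifting problem through the slice projections \(X_{/\Simp{m-2}}\to X_{/\partial\Simp{m-2}}\to X\), shows both are right fibrations of inner Kan complexes, and lifts the quasi-invertible arrow through them. Your route through the walking isomorphism \(J\) is a genuinely different outline, but as written it is circular exactly where invertibility has to be consumed. Your Part 1 cannot be carried out by the proposed induction: the observation that the nondegenerate simplices \((0,1,0,1,\dots)\) of \(J\) have degenerate inner faces works \emph{against} you, not for you, because those faces are already present at every stage of any filtration, so the face missing at each attachment is forced to be an outer one. Concretely, \(J\) is obtained from \(\Simp{1}\) by successively filling the horns \(\Horn{k}{0}\) of the simplices \((0,1,0,\dots)\) for \(k=2,3,\dots\), whose initial edge is \(\varphi\) --- precisely the special outer horns of the theorem being proved. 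If you instead filter by skeleta, then from dimension \(3\) on \emph{all} faces of each new nondegenerate simplex are already determined (two are degenerate, the other two are the nondegenerate simplices one dimension below), so each step is a \(\partial\Simp{k}\)-extension problem rather than a horn problem, and the witnessing \(2\)-simplices chosen at the base need not be coherent. The statement ``quasi-invertible \(\Rightarrow\) extends to \(J\)'' is equivalent to the theorem (the implication one can verify by hand is the converse) and is deduced from it in the literature; your marked-simplicial-set fallback is circular for the same reason, since ``a quasi-invertible edge is cocartesian for \(X\to\Simp{0}\)'' is, after unwinding the slice definitions, again exactly this theorem.

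Your Part 2 also fails as stated: the inclusion \(\Horn{m}{0}\cup_{\Simp{1}}J\to\Simp{m}\cup_{\Simp{1}}J\) is not an inner collapsible extension. The pushout adds exactly two nondegenerate simplices, \(\sigma=\Simp{m}\) and \(\tau=\face_0\sigma\), and within any chain of subcomplexes of the codomain, \(\tau\) can never appear as the missing face of an inner horn: it is an \emph{outer} face of \(\sigma\), it is not a face of any simplex of \(J\), and the degeneracies \(\de_j\tau\), \(\de_j\sigma\) each contain two adjacent copies of \(\tau\) resp.\ \(\sigma\), of which a horn omits only one. So no filtration by inner horn fillings of the literal inclusion exists; adding \(\sigma\) via an inner horn of \(\sigma\) itself already requires \(\tau\). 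What could be salvaged is the weaker ``admissible'' property of the Appendix --- embedding the inclusion into a larger special extension \(T\) --- and the Appendix proof of Theorem \ref{chap4:thm:enriched} does exactly this kind of scaffolding with auxiliary vertices \(1^{+},1^{++}\); but there the needed outer fillings are supplied by the groupoid ends (same-color horns), a crutch unavailable for a bare inner Kan complex, where quasi-invertibility must be spent through a substantive argument such as the slice/right-fibration one the paper records. In short, both halves of your plan quietly presuppose the special outer horn filling they are meant to establish.
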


This theorem states that certain special outer horns can be filled in an inner Kan complex. Given a map \(f\colon \Horn{m}{0}[i,j]\to Y\) with \(i\ge 2\), the arrow \(f|_{\{0,1\}}\) is quasi-invertible since it lives in the higher groupoid \(\pi^{-1}(0)\). Theorem~\ref{chap4:thm:joyal} implies \(\Kan(m,0)[i,j]\) if \((\Cat,\covers)=(\Sets, \covers_\surj)\).

Let us first sketch Joyal's proof of Theorem~\ref{chap4:thm:joyal}. Roughly, since~\(\varphi\) is quasi-invertible, we can turn the outer horn to an inner horn.

Unwinding the definitions, to find the desired lift of \(f\) we must find the diagonal lift of
\begin{equation*}
\begin{gathered}
  \xymatrix{
  \{0\} \ar[r] \ar@{^{(}->}[d] & X_{/ \Simp{m-2} } \ar[d]^{q}\\
  \Simp{1} \ar[r]^{\varphi'} \ar@{-->}[ru]^? & X_{/ \partial\Simp{m-2} }\rlap{\ ,}
}
\end{gathered}
\end{equation*}
where \(\Simp{m-2}\) is \(\Simp{m-2}\{2,\cdots,m\}\) and the slice simplicial sets are given by \(f\) (see~\cite{Joyal2002} for the definition). The proof is divided into three steps:
\begin{itemize}
  \item The projections of the slice simplicial sets \(q\colon X_{/\Simp{m-2}} \to X_{/\partial\Simp{m-2}}\) and \(p\colon X_{/\partial\Simp{m-2}} \to X \) are right fibrations of inner Kan complexes~\cite[Corollary 3.9]{Joyal2002}.
 \item The arrow \(\varphi'\) as a preimage of \(\varphi\) under the right fibration~\(p\) is quasi-invertible~\cite[Proposition 2.7]{Joyal2002}.
 \item The quasi-invertible arrow \(\varphi'\) can be lifted under the right fibration~\(q\), and this gives the desired lift~\cite[Proposition 2.7]{Joyal2002}.
\end{itemize}

Although these steps use slice simplicial sets, they can be understood on the level of \(X\). For instance, the first step follows from the fact that filling a right horn of \(X_{/\Simp{m-2}} \to X_{/\partial\Simp{m-2}}\) or \(X_{/\partial\Simp{m-2}} \to X \) corresponds to filling an inner horn in \(X\). For the next two steps, we add some auxiliary diagrams, then fill certain horns by known Kan conditions. In the end, we forget some auxiliary diagrams. This motivates the following definitions.

\begin{definition}
  Let \(s\colon S\to T\) be a collapsible extension of colored simplicial sets. We say that \(s\) is \emph{special} if every horn added is either an inner horn or its vertices are of the same color.
\end{definition}

It is clear that for special collapsible extension of colored simplicial sets \(s\colon S\to T\), the map below is a cover:
\[
\Hom_{\Simp{1}}(T,Y)\to \Hom_{\Simp{1}}(S, Y).
\]

\begin{definition}
  An inclusion of colored simplicial sets \(a\colon R\to S\) is called \emph{admissible} if there is an inclusion of colored simplicial sets \(i\colon S\to T\) such that \(i\circ a\colon R\to T\) is special.
\end{definition}

If \(a\colon R\to S\) is an admissible inclusion of colored simplicial sets, then Assumption~\ref{chap4:asmp:ct} implies that the map below is a cover:
\[
\Hom_{\Simp{1}}(S,Y)\to \Hom_{\Simp{1}}(R, Y).
\]

\subsection{Proof of the first statement}

Guided by Joyal's argument, we prove statement~\ref{chap4:enm:it:i} in Theorem~\ref{chap4:thm:enriched}. Our goal is to show that \(\Horn{m}{0}[i,j]\to \Simp{m}[i,j]\) is admissible for \(i\ge 2\); the other cases follow by symmetry.

We need to construct a colored simplicial set \(T\) containing \(\Simp{m}[i, j]\) such that \(\Horn{m}{0}[i,j]\to T\) is a special collapsible extension.

First, add a vertex \(1^+\) with color \(0\) to \(\Horn{m}{0}[i,j]\) such that \(1<1^+<2\) and \((0,1^{+})\) is degenerate. Fill \(\Simp{m}\{0,1^+,2,\cdots,m\}\) by the degenerate simplex \(\de_0(\Simp{m-1}\{0,2,\cdots,m\})\).

Next we fill \(\Simp{2} \{ 0,1,1^+ \} \star\partial \Simp{m-2}\{2,\cdots,m\} \). If \(m=2\), we may fill the horn \(\Horn{2}{0}\{ 0,1,1^+ \}\) since they are of the same color. When \(m>2\), we add a vertex \(1^{++}\) with color \(0\) such that \(1^+<1^{++}<2\) and \((1,1^{++})\) is degenerate. Fill \(\Simp{2}\{0,1,1^{++}\} \star \partial\Simp{m-2}\{2,\cdots,m\}\) and \(\Simp{2}\{0,1^+,1^{++}\} \star \partial\Simp{m-2}\{2,\cdots,m\}\) by degenerate simplices. Since the vertices \((0,1,1^+,1^{++})\) are of the some color, we can fill \(\Simp{2}\{0,1,1^{+}\}\) and then \(\Simp{3}\{0,1,1^+,1^{++}\}\). Since \(\Horn{2}{2}\star\partial\Simp{m-2} \cup \Simp{2} \subset \Simp{2} \star\partial \Simp{m-2}\) is inner collapsible by Proposition~\ref{chap2:prop:join-pushout-collapsible}, we can fill \(\Simp{2} \{ 1,1^+,1^{++} \} \star\partial \Simp{m-2} \{2,\cdots,m\} \) by filling several inner horns. Since \(\Horn{3}{3}\star\partial\Simp{m-2} \cup \Simp{3} \subset \Simp{3} \star\partial \Simp{m-2}\) is inner collapsible, we can fill \(\Simp{3}\{0,1,1^+,1^{++}\}\star \partial \Simp{m-2}\{2,\cdots,m\}\) by filling several inner horns.

Next, we fill the horn \(\Horn{m}{1}\{1,1^+, 2,\cdots,m\}\), and then \(\Horn{m+1}{2} \{ 0,1,1^+, 2,\cdots,m\} \). The resulting simplicial set is the simplicial set \(T\) we are looking for, and we are done.

\subsection{Uniqueness}

We now prove statement~\ref{chap4:enm:it:ii} in Theorem~\ref{chap4:thm:enriched}. In these cases, we observe that a bijective cover is an isomorphism. It then remains to show that
\begin{equation}\label{eq:kanmap}
 \hom(\Simp{m}[i,j], Y ) \to \hom(\Horn{m}{0}[i,j], Y), \quad i \ge 2
\end{equation}
is injective. If \(m>3\), then \(\Horn{m}{0}\) contains all 2-simplices in \(\Simp{m}\). Since \(\Kan!(m,k)\) holds for \(m>2\) and \( 0<k<m \), there is at most one way to lift \(f\colon \Horn{m}{0}\to Y\) to \(\Simp{m}\to Y\), and the claim follows.

When \(m=3\), it suffices to show that for any \(f\colon \Horn{3}{0}\to Y\) with vertices \((0, 1)\) of color \(0\), there is a unique way to determine \((1,2,3)\).

In the previous subsection, we gave a way to obtain \((1,2,3)\). We claim that every qualified \((1,2,3)\) arises in this way. Indeed, given \((0,1,2,3)\) and a vertex \(1^+\) such that \(1<1^+<2\) and \((0,1^+,2,3)=\de_0 (0,2,3)\) is degenerate,  Theorem~\ref{chap4:thm:joyal} implies that we can get \((0,1,1^+,2,3)\). Thus \((1,2,3)\) arises from \((1,1^+,2)\) as we described. Similarly, \((1,1^+,2)\) arises in the way we described, and this proves the claim.

\begin{figure}[htbp]
\centering
\begin{tikzpicture}[scale=1.2,>=latex',every label/.style={scale=0.7}]
  \begin{scope}[<-]
  \node[mydot,label=180:$0$] at (0,0)           (p0) {};
  \node[mydot,label=-30:$1$] at (1.5,-0.6)      (p1) {};
  \node[mydot,fill,label=0:$2$] at (2,0)   (p2) {};
  \node[mydot,label=120:$1^{+}$]     at (0,0.3)  (p11){};
  \node[mydot,label=0:$1^{++}$]      at (1.8,-0.4)(p12){};
  \path
    (p0) edge (p1)
         edge (p2)
    (p1) edge (p2);
  \path[dashed]
     (p0) edge (p11)
     (p1) edge (p11)
     (p11)edge (p2)
          edge (p12)
     (p0) edge (p12)
     (p1) edge (p12)
     (p12)edge (p2);
  \end{scope}
  \begin{scope}[<-, xshift=3.5cm]
  \node[mydot,label=180:$0$] at (0,0)           (q0) {};
  \node[mydot,label=-30:$1$] at (1.5,-0.6)      (q1) {};
  \node[mydot,fill,label=0:$2$] at (2,0)   (q2) {};
  \node[mydot,label=150:$1^+$] at (0,0.3)       (q11){};
  \node[mydot,label=60:$1^{+'}$] at (0.4,0.4)   (q12){};
  \path
  (q0) edge (q1)
       edge (q2)
  (q1) edge (q2);
  \path
  (q0) edge (q11)
  (q1) edge (q11)
  (q11)edge (q2);
  \path
  (q0) edge (q12)
  (q1) edge (q12)
  (q12)edge (q2);
  \draw (q11) to (q12);
  \end{scope}
  \begin{scope}[<-, xshift=7cm]
  \node[mydot,label=180:$1^+$] at (0,0)           (r11){};
  \node[mydot,label=120:$1^{+'}$] at (0,0.3)      (r12){};
  \node[mydot,label=-30:$1$]         at (1.5,-0.6)(r1) {};
  \node[mydot,fill,label=0:$2$] at (2,0)     (r2) {};
  \node[mydot,fill,label=90:$3$]at (1,1.45)  (r3) {};
  \path
   (r1) edge (r2)
        edge (r3)
   (r2) edge (r3);
  \path
   (r1) edge (r11)
   (r11)edge (r2)
   (r11)edge (r3);
  \path
   (r1)  edge (r12)
   (r12) edge (r2)
         edge (r3);
  \draw (r11) to (r12);
  \end{scope}
\end{tikzpicture}
\caption{An illustration of the proof}\label{chap4:fig:inner-imply-outer}
\end{figure}
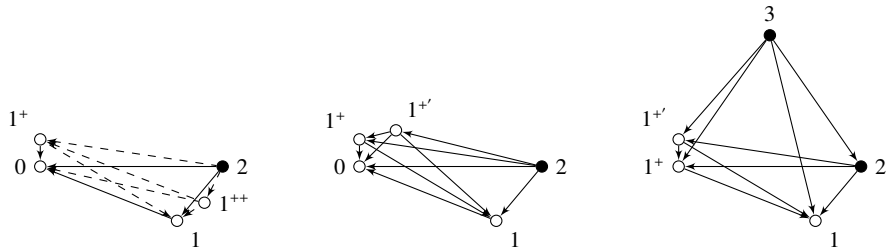

We now show that \((1,2,3)\) given by the previous subsection is unique. We claim (see Figure~\ref{chap4:fig:inner-imply-outer}):
\begin{enumerate}
 \item Given \((0,1,1^+)\), there is a unique way to determine \((1,1^+,2)\) and \((1,1^+,3)\);
 \item The ``difference'' of two given different \((0,1,1^+)\) equals the ``difference'' of two resulting \( (1,1^+,2) \) and \( (1,1^+,3) \);
 \item For different \((1,1^+,2)\) and \((1,1^+,3)\), we get the same \((1,2,3)\).
\end{enumerate}

If \(i=3\), the first claim is obvious. Let us consider \(i=2\). Recall how to obtain \((1,1^+,2)\). Add a vertex \(1^{++}\) with color \(0\) such that \((1,1^{++})\) is degenerate, and fill \(\Simp{3}\{0,1,1^{++},2\}\) and \(\Simp{3}\{0,1^+,1^{++},2\}\) with degenerate simplices. If \((0,1,1^+)\) is given, then there is a unique way to fill the whole diagrams, since each step is filling a horn \(\Horn{3}{i}\) or \(\Horn{4}{3}\). This shows that \((1,1^{+},2)\) is determined by \((0,1,1^+)\) uniquely.

Let \((0,1,1^+)\) and \((0,1,1^{+'})\) be two different choice for \((0,1,1^+)\). Their ``difference'' \((1,1^+,1^{+'})\) is given by \(\Kan!(3,0)\) for the horn \(\Horn{3}{0}\{0,1,1^{+},1^{+'}\}\). Accordingly, we have different faces \((1,1^+,2)\) and \((1,1^{+'},2)\). Let \(m_i\) be the 3-multiplications. We claim that
\[
 (1,1^{+'},2)=m_1((1,1^+,2),(1,1^+,1^{+'}),\de_0(1^+,2)).
\]
Since \((1,1^{+'},2)\) is unique, it suffices to show that
\[
m_1((1,1^+,2),(1,1^+,1^{+'}),\de_0(1^+,2))
\]
is compatible with \((0,1,1^{+'})\), \((0,1^{+'},2)\) and \((0,1,2)\). This follows by applying \(\Kan!(4,2)\) to the horn \(\Horn{4}{2}\{0,1,1^+,1^{+'},2\}\). Similarly, we have
\[
 (1,1^{+'},3)=m_1((1,1^+,3),(1,1^+,1^{+'}),\de_0(1^+,3)).
\]

Finally, applying \(\Kan!(4,2)\) to the horn \(\Horn{4}{2}\{1,1^+,1^{+'},2,3\}\), we conclude that for different \((1,1^+,2)\) and \((1,1^+,3)\), we get the same \((1,2,3)\). This completes the proof.

\backmatter
\printbibliography[heading=bibintoc]

\end{document}